\documentclass[12pt, twoside]{article}
\usepackage[T1, OT1]{fontenc}
\usepackage{hyperref}
\usepackage{authblk, lipsum}
\usepackage{amssymb}
\usepackage{amsmath, enumerate, mathabx, tikz}
\usepackage{amsthm}
\usepackage{color}
\usepackage{mathrsfs}
\usepackage{txfonts}
\usepackage{tikz}
\usetikzlibrary{positioning, patterns, decorations.pathreplacing}
\usetikzlibrary{positioning, patterns}
\usetikzlibrary{shapes.geometric,arrows}
\tikzstyle{box} = [rectangle,text centered, draw=black]
\tikzstyle{arrow} = [thick, ->, >=stealth]
\usepackage{float} 

\allowdisplaybreaks

%\pagestyle{myheadings}\markboth{\footnotesize\rm\sc
%Yiyu Liang}
%{\footnotesize\rm\sc Commutators of Multilinear
%Calder\'on-Zygmund Operators}

\textwidth=16cm
\textheight=21cm
\topmargin=-0.5cm
\oddsidemargin 0.35cm
\evensidemargin 0.35cm

\parindent=13pt

\def\eps{\varepsilon}

\def\supp{\mathop\mathrm{\,supp\,}}

\newtheorem{theorem}{Theorem}[section]
\newtheorem{lemma}[theorem]{Lemma}
\newtheorem{proposition}[theorem]{Proposition}
\newtheorem{corollary}[theorem]{Corollary}
\theoremstyle{definition}
\newtheorem{definition}[theorem]{Definition}

\numberwithin{equation}{section}

\begin{document}
\UseRawInputEncoding
\arraycolsep=1pt

\title{\bf Measures supported on partly normal numbers 
%Numbers with limited normality 
%Numbers that are normal in restricted bases   
\footnotetext{\hspace{-0.35cm} 2020 {\it
Mathematics Subject Classification}.
Primary 42A63, 28A78, 11K16; Secondary 42A38, 28A80, 11K36.
\endgraf {\it Key words and phrases}.
Fourier analysis, Fourier series, trigonometric series,  sets of uniqueness and of multiplicity, normal and non-normal numbers, Rajchman measures, metric number theory 
\endgraf
}}
\author{Malabika Pramanik and Junqiang Zhang}  
%\author[$\ddag$]{Malabika Pramanik}
\date{ }
%\affil[$\dagger$]{Department of Mathematics, Beijing Jiatong University, Beijing, China \\ {\tt{yyliang@bjtu.edu.cn}}}
%\affil[$\ddag$]{Department of Mathematics, 1984 Mathematics Road, University of British Columbia, Vancouver, Canada V6T 1Z2 \\ {\tt{malabika@math.ubc.ca}}}
\maketitle
%\lipsum[2]
\newcommand{\Addresses}{{% additional braces for segregating \footnotesize
  \bigskip
  \footnotesize

  M.~Pramanik, \textsc{Department of Mathematics, 1984 Mathematics Road, University of British Columbia, Vancouver, Canada V6T 1Z2} \par\nopagebreak
  \textit{E-mail address}: \texttt{malabika@math.ubc.ca}
  
  \medskip

  J.~Zhang, \textsc{School of Science, China University of Mining and Technology, Beijing 100083, P.~R.~China}\par\nopagebreak
  \textit{E-mail address}: \texttt{jqzhang@cumtb.edu.cn}
  
%  \medskip
%
%  J.~Jones, \textsc{Department of Philosophy, Freedman College,
%    Periwinkle, Colorado 84320}\par\nopagebreak
%  \textit{E-mail address}, J.~Jones: \texttt{id739e@@oseoi44 (Bitnet)}

}}

\vspace{-0.8cm}

\begin{center}
\begin{minipage}{13cm}
{\small {\bf Abstract.} A real number $x$ is {\em{normal}} with respect to an integer base $b \geq 2$ if its digit expansion in this base is ``equitable'', in the sense that for $k \geq 1$, every ordered sequence of $k$ digits from $\{0, 1, \ldots, b-1\}$ occurs in the digit expansion of $x$ with the same limiting frequency. Borel's classical result \cite{b09} asserts that Lebesgue-almost every number $x$ is normal in every base $b \geq 2$. This three-part article considers sets of partial normality. Given any choice of integer bases $\mathscr{B}, \mathscr{B}' \subseteq \{2, 3, \ldots\}$, we investigate measure-theoretic properties of the set $\mathscr{N}(\mathscr{B}, \mathscr{B}')$, whose members are, by definition, normal in the bases of $\mathscr{B}$ and non-normal in the bases of $\mathscr{B}'$.
\vskip0.1in 
\noindent A pair of sets $(\mathscr{B}, \mathscr{B}')$ is {\em{compatible}} if 
%$\mathscr{B} \sqcup \mathscr{B}' = \{2, 3, \ldots\}$, and any pair of bases 
any $(b, b') \in \mathscr{B} \times \mathscr{B}'$ is multiplicatively independent. For compatible $(\mathscr{B}, \mathscr{B}')$ with $\mathscr{B}' \ne \emptyset$, we construct singular probability measures supported on $\mathscr N(\mathscr{B}, \mathscr{B}')$  that are both Frostman and Rajchman, extending prior work of Pollington \cite{p81} and Lyons \cite{l86}. The Rajchman property completely answers a question of Kahane and Salem \cite{Kahane-Salem-64}, identifying $\mathscr N(\mathscr{B}, \mathscr{B}')$ as a set of multiplicity (in the Fourier-analytic sense) if and only if $(\mathscr{B}, \mathscr{B}')$ is compatible.  
\vskip0.1in
\noindent The methodological contribution of the article is the construction of a class of probability measures called {\em{skewed measures}}. These measures depend on a number of parameters that can be independently adjusted to ensure (subsets of) properties such as almost everywhere normality, non-normality, ball conditions and Fourier decay. As a consequence, certain skewed measures can be crafted  to enjoy the maximum possible normality subject to their parametric constraints. The first part of the article describes the construction of skewed measures and lists their properties needed for the main results. The next two parts establish these properties. The second part focuses on non-normality, Frostman and Rajchman properties of skewed measures. The third part is devoted exclusively to the study of normality on their support. Distinctive features of the last part include exponential sum estimates influenced by number-theoretic lemmas of Schmidt \cite{s60}.   
%\noindent This paper serves as a case study of the measure-theoretic properties of Lebesgue-null sets containing numbers normal only in certain bases.  We consider the set $\mathcal N(\mathscr{O}, \mathscr{E})$ of reals that are normal in odd bases but not in even ones. This set has full Hausdorff dimension \cite{p81}, but zero Fourier dimension. Namely, it cannot support a probability measure whose Fourier transform has power decay at infinity. Our main result is that $\mathcal N(\mathscr{O}, \mathscr{E})$ supports a Rajchman measure $\mu$, whose Fourier transform $\widehat{\mu}(\xi)$ approaches 0 as $|\xi| \rightarrow \infty$ by definiton, albeit slower than any negative power of $|\xi|$. The methods draw inspiration from the number-theoretic results of Schmidt \cite{s60} and Lyons' construction of skewed measures  \cite{l86}.
%\vskip0.1in
%\noindent  As a consequence, $\mathcal N(\mathscr{O}, \mathscr{E})$ emerges as a set of multiplicity, in the sense of Fourier analysis. This addresses a question posed by Kahane and Salem \cite{Kahane-Salem-64} in the special case of $\mathcal N(\mathscr{O}, \mathscr{E})$.
}    
\end{minipage}
\end{center}
\newpage
\tableofcontents
\newpage

\section{Introduction} \label{intro-section} 
Given a number $x \in [0,1)$ and an integer $b \geq 2$, let 
\[ x = \sum_{j=1}^{\infty} \frac{x_j}{b^{j}}, \qquad x_j \in \mathbb Z_b :=  \{0, 1, \ldots, b-1\} \]  denote the digit expansion of $x$ with respect to base $b$. We say that $x$ is {\em{normal}} in this base \cite{{b09}, {nz51}}, or {\em{$b$-normal}} for short, if for any $k \geq 1$, every $k$-long ordered sequence of integers with entries in $\mathbb Z_b$ occurs equally often in the digit expansion of $x$. Specifically, $x$ is $b$-normal if for every $k \geq 1$ and every block of digits $(d_1, \ldots, d_k) \in \mathbb Z_b^k$,  
  \begin{equation}  \lim_{N \rightarrow \infty} \frac{1}{N} \#\Bigl\{1 \leq n \leq N : (x_n, x_{n+1}, \ldots, x_{n+k-1}) = (d_1, \ldots, d_k) \Bigr\} = \frac{1}{b^k}. \label{b-normal} \end{equation}  
A real number $x$, not necessarily in $[0,1)$, is said to be $b$-normal if its fractional part $\{x\} := x - \lfloor x \rfloor \in [0, 1)$ is $b$-normal in the sense of \eqref{b-normal}. Here
$\lfloor x \rfloor$ denotes the largest integer less than or equal to $x$. It is said to be {\em{absolutely normal}} if it is normal in all integer bases $b \geq 2$. Otherwise, it is said to be {\em{non-normal}}. To paraphrase, a number $x$ is non-normal if it fails to be normal in some base $b \geq 2$. The research area surrounding normal numbers is vast and diverse, with connections to harmonic analysis, geometric measure theory, ergodic theory, metrical number theory and computer science. Different perspectives are reflected in Koksma \cite{Koksma-book}, Niven \cite{Niven-book}, Salem \cite{Salem-book}, Kuipers and Niederreiter \cite{KN-book}, Harman \cite{{Harman-book}, {Harman-2}}, Bugeaud \cite{b12}, and in the bibliography therein. This article focuses on sets of {\em{partly normal numbers}} or {\em{numbers of limited normality}}. Such sets are characterized by the property that  their members are normal with respect to a given choice of bases and not in others. In particular, this will include the set of {\em{absolutely non-normal numbers}}, namely those that are non-normal in every integer base at least 2.   

\subsection{On a question of Kahane and Salem} A classical theorem of \'E.~Borel \cite{b09} says that Lebesgue-almost every $x \in [0,1)$ is absolutely normal. This implies that $\lambda$-almost every point is absolutely normal as well, for any probability measure $\lambda$ on $[0, 1)$ that is absolutely continuous with respect to Lebesgue. It is well-known, by the Riemann-Lebesgue lemma, that 
\[ |\widehat{\lambda}(n)| \rightarrow 0 \quad \text{ as } \quad |n| \rightarrow \infty, \] 
%for any probability measure $\lambda$ . 
where $\widehat{\lambda}(n)$ denotes the $n^{\text{th}}$ Fourier coefficient of $\lambda$:  
\begin{equation} \widehat{\lambda}(n) := \int_{0}^{1} e(nx) \, d\lambda(x), \qquad e(x) := e^{-2 \pi i x}, \quad n \in \mathbb Z := \{0, \pm 1, \pm 2, \ldots \}. \label{Fourier-coefficient-def} \end{equation}  
Further, the decay rate of $\widehat{\lambda}(n)$ can be arbitrarily slow as $|n| \rightarrow \infty$. It is therefore natural to ask if a statement like Borel's theorem holds for {\em{any}} Borel probability measure on $[0, 1)$ whose Fourier coefficients vanish at infinity. Indeed, the following question was posed by Kahane and Salem in \cite[p 261]{Kahane-Salem-64} in the context of a problem on dyadic expansions: 
\begin{equation} \label{KS-q1}  
\begin{aligned} 
&\text{``{\em{Is the set of non-normal numbers a set $U^{\ast}$? }}} \\  
&\text{\em{ In other words, is it a set of measure zero with respect to every }} \\
&\text{\em{ positive measure whose Fourier coefficients tend to zero at infinity?''}}
\end{aligned}
\end{equation}  
\footnote{The notation $U^{\ast}$ was used in \cite{Kahane-Salem-64} as shorthand for ``sets of uniqueness in the wide sense'', the description of which appears in the second part of their question \eqref{KS-q1}. The connection of this question to sets of uniqueness is explained in Section \ref{multiplicity-section}} 
Measures of the latter type are known in the literature as {\em{Rajchman measures}}, and a set supporting a Rajchman measure is said to possess the {\em{Rajchman property}}. So the question of Kahane and Salem can be reformulated as: 
\begin{equation} \label{KS-q} 
{\text{\em{Can a set of non-normal numbers support a Rajchman measure?}}}
\end{equation} 
A negative answer to Question \eqref{KS-q1}, which means an affirmative answer to Question \eqref{KS-q}, was obtained by Lyons \cite{l86}, for the special set of numbers that are not 2-normal. The proof in \cite{l86} generalizes easily to sets of numbers that are non-normal in a fixed single base, not necessarily base 2. But the problem remained open for general non-normal sets, whose members are non-normal in (potentially infinitely) many bases, and normal in others. The sharper version of this question appears in \eqref{KS-q2} below. One of the objectives of this article is to answer the question \eqref{KS-q} of Kahane and Salem in full generality.  See Theorem \ref{mainthm-1} in this section. 
%for the set of numbers that are odd-normal but not even-normal. 
\vskip0.1in
\noindent Let us introduce the following notation. For $\mathbb N := \{1, 2, 3, \ldots \}$ and non-empty collections $\mathscr{B}, \mathscr{B}' \subseteq \mathbb N \setminus \{1\}$, we set
\begin{align}  
\mathscr N(\mathscr B, \cdot) &:=  \Bigl\{x \in \mathbb R: x \text{ is $b$-normal for all } b \in \mathscr B \Bigr\}, \label{NB} \\  
\mathscr N(\cdot, \mathscr B') &:=  \Bigl\{x \in \mathbb R: x \text{ is not $b'$- normal for any } b' \in \mathscr B' \Bigr\}, \label{NB'}  \\
\mathscr N(\mathscr B, \mathscr B') &:= \left\{x \in \mathbb R \; \Biggl| \;  \begin{aligned} &x \text{ is normal in base } b \text{ for all } b \in \mathscr B, \\  &x \text{ is non-normal in base } b' \text{ for all } b' \in \mathscr B' \end{aligned}  \right\}. \label{NBB'} 
\end{align}  
Clearly $\mathscr N(\mathscr B, \mathscr B') = \mathscr N(\mathscr B, \cdot) \cap \mathscr N(\cdot, \mathscr B')$. If exactly one of the collections $\mathscr{B}, \mathscr{B}'$ is empty, we follow the conventions
\[ \mathscr N(\cdot, \emptyset) := \mathscr N(\mathbb N \setminus \{1\}, \cdot) \quad \text{ and } \quad \mathscr N(\emptyset, \cdot) := \mathscr N(\cdot, \mathbb N \setminus \{1\}). \] 
Thus $\mathscr N(\cdot, \emptyset)$ and $\mathscr N(\emptyset, \cdot)$ represent respectively the sets of numbers that are normal to every base (absolutely normal) and no base (absolutely non-normal). We always assume $\mathscr{B} \cup \mathscr{B}' \ne \emptyset$.
\vskip0.1in 
\noindent The notion of multiplicative independence is important in determining normality with respect to different bases. We say that two bases $r, s \in \mathbb N \setminus \{1\}$ are {\em{multiplicatively dependent}}, and write 
\begin{equation} \label{def-mult-dep} r \sim s \quad \text{ if } \quad \frac{\log r}{\log s} \text{ is rational }, \; \text{ that is, if there exist $m, n \in \mathbb N$ such that } r^m = s^n.  \end{equation}    
Otherwise, we write $r \not\sim s$ and call $r, s$ {\em{multiplicatively independent}}. Schmidt \cite[Theorem 1]{s60} has shown that if $r \sim s$, then any number that is $r$-normal must also be $s$-normal. In other words, the property of normality of a number, or lack thereof, remains invariant for multiplicatively dependent bases. As a result,  
\begin{equation} \label{N4} 
\mathscr N(\mathscr{B}, \mathscr{B}') = \mathscr N(\overline{\mathscr{B}}, \overline{\mathscr{B}'}), \text{ where } 
\overline{\mathscr{B}} := \left\{b \in \mathbb N \setminus \{1\} \; \Bigl| \; b \sim b_0 \text{ for some } b_0 \in \mathscr{B} \right\}
\end{equation}
denotes the closure of $\mathscr{B}$ under the relation $\sim$. The result of Schmidt also implies that 
\begin{equation} 
\text{ if } \overline{\mathscr{B}} \cap \overline{\mathscr{B'}} \neq \emptyset,
 \text{ then } 
 \mathscr{N}(\mathscr{B}, \mathscr{B}') = \emptyset. 
 \label{empty-normal} \end{equation} 
To avoid trivialities, such situations need to be eliminated from consideration. 
\vskip0.12in 
\begin{definition} \label{compatibility-definition}
Suppose that $\mathscr{B}, \mathscr{B}' \subseteq \mathbb N \setminus \{1\}$ are any two non-empty collections of integer bases. We call the pair $(\mathscr{B}, \mathscr{B}')$ {\em{compatible}} if 
%such a pair $(b, b')$ does not exist, i.e. 
\begin{equation} b \not\sim b' \text{ for all $(b,b') \in \mathscr{B} \times \mathscr{B}'$,} \; \; \text{and therefore for all $(b, b') \in \overline{\mathscr{B}} \times \overline{\mathscr{B}'}$.} \label{mult-indep-def} \end{equation}
\end{definition} 
\noindent A compatible pair $(\mathscr{B}, \mathscr{B}')$ will be called {\em{maximally compatible}} if $\overline{\mathscr{B}} \sqcup \overline{\mathscr{B}'} = \mathbb N \setminus \{1\}$. If exactly one of $\mathscr{B}$ or $\mathscr{B}'$ is empty, then \eqref{mult-indep-def} is taken to be vacuously true, so  $(\mathbb N \setminus \{1\}, \emptyset)$ and $(\emptyset, \mathbb N \setminus \{1\})$ are considered maximally compatible pairs. 
\vskip0.12in
\noindent For compatible pairs of bases $(\mathscr{B}, \mathscr{B}')$, points in $\mathscr N(\mathscr{B}, \mathscr{B}')$ are plentiful. Pollington \cite{p81} has shown that  the set $\mathscr N(\mathscr{B}, \mathscr{B}')$ has full Hausdorff dimension for any maximally compatible pair $(\mathscr{B}, \mathscr{B}')$, even though it is Lebesgue-null for $\mathscr{B}' \neq \emptyset$.  In this terminology, Question \eqref{KS-q} of Kahane and Salem \cite{Kahane-Salem-64} can be stated as: \begin{equation} \label{KS-q2}
{\text{\em{Does $\mathscr N(\mathscr{B}, \mathscr{B}')$ support a Rajchman measure for every compatible pair $(\mathscr{B}, \mathscr{B}')$?}}} 
\end{equation} 
Lyons' result \cite{l86} proves that $\mathscr{N}(\cdot, \{2\})$ supports such a measure $\nu_{\text{L}}$. But it does not specify any base with respect to which points in the support of $\nu_{\text{L}}$ are normal. As a result, this does not fully address the question posed in \eqref{KS-q2} even in the special case where $2 \in \mathscr{B}'$. In an earlier article \cite{PZ-1}, we used the measure $\nu_{\mathtt L}$ constructed by Lyons as a case study, and showed that $\nu_{\mathtt L}$-almost every point is normal in all odd bases, and non-normal in all even ones. Motivated by the analysis in \cite{PZ-1}, this article provides an affirmative answer to Question \eqref{KS-q2} for all compatible pairs $(\mathscr{B}, \mathscr{B}')$, including the edge case $(\mathscr{B}, \mathscr{B}') = (\emptyset, \mathbb N \setminus \{1\})$. 
\begin{theorem} \label{mainthm-1} 
For every compatible pair $(\mathscr{B}, \mathscr{B}')$, there exists a Rajchman probability measure $\mu = \mu[\mathscr{B}, \mathscr{B}']$ such that $\mu$-almost every point lies in $\mathscr N(\mathscr{B}, \mathscr{B}')$. In particular, this is true for $\mathscr N(\emptyset, \mathbb N \setminus \{1\})$, the set of absolutely non-normal numbers.  
\end{theorem}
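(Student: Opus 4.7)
The plan is to construct an explicit Cantor-type probability measure $\mu$ on $[0,1)$ — a \emph{skewed measure} in the terminology of the abstract — that simultaneously enjoys $\mu$-almost everywhere normality in every base of $\mathscr{B}$, $\mu$-almost everywhere non-normality in every base of $\mathscr{B}'$, and the Rajchman decay $\widehat{\mu}(n) \to 0$. A preliminary reduction simplifies the task: by \eqref{N4}, I may replace $(\mathscr{B}, \mathscr{B}')$ by $(\overline{\mathscr{B}}, \overline{\mathscr{B}'})$, and then enlarge $\overline{\mathscr{B}'}$ to $(\mathbb N\setminus\{1\}) \setminus \overline{\mathscr{B}}$, since adding bases to $\mathscr{B}'$ only strengthens the conclusion and compatibility is preserved. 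This reduces the theorem to the maximally compatible case and absorbs the edge case $(\emptyset, \mathbb N \setminus \{1\})$.

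For the construction, enumerate $\overline{\mathscr{B}'} = \{b_1', b_2', \ldots\}$ and build $\mu$ through successive refinements of $[0,1)$. In the $k$-th block of generations, work at resolution $(b_{j(k)}')^{-m}$ with $m$ ranging over a long finite window, where $j(k)$ cycles through all indices of $\overline{\mathscr{B}'}$ infinitely often. At each generation, redistribute mass over the $b_{j(k)}'$ digit slots using a probability vector $\mathbf{p}^{(k)}$ that is close to, but at a fixed positive distance from, the uniform vector $(1/b_{j(k)}', \ldots, 1/b_{j(k)}')$; interleave these skewed refinements with ``smoothing'' generations that restore near-uniform digit distributions. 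The skewed portion, applied at infinitely many generations dedicated to each $b' \in \mathscr{B}'$, will force the limiting base-$b'$ digit frequencies to deviate from $1/b'$, while the smoothing portion supplies the cancellation required for $\widehat{\mu}$ to vanish at infinity. The parameters (depth and frequency of skewed vs.\ smoothing generations, size of the perturbation $\mathbf{p}^{(k)}$) are the independently tunable inputs advertised in the abstract.

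The verification then splits into three parts. Non-normality in each $b' \in \mathscr{B}'$ follows from a strong-law argument on the independent skewed digit distributions. The Rajchman property exploits the Riesz-product-like structure of $\mu$: $\widehat{\mu}(n)$ factors along generations into a product of trigonometric averages, each of modulus at most one, and the smoothing layers drive this product to zero along every sequence $|n|\to\infty$, provided the parameters are tuned carefully. The principal obstacle — and the technical heart of the theorem — is the remaining piece: proving $\mu$-almost everywhere $b$-normality for each $b \in \mathscr{B}$, despite $\mu$ having been engineered to distort digit statistics in bases $b' \not\sim b$. My plan is to apply Weyl's criterion and estimate the exponential sums $\sum_{n \leq N} e(\ell b^n x)$ in $L^2(\mu)$, using the multiplicative independence $b \not\sim b'$ through the number-theoretic lemmas of Schmidt \cite{s60} to show that $b$-adic orbits decorrelate from the $b'$-adic skewness embedded in $\mu$. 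A decay rate polynomial in $N$, combined with Borel--Cantelli along a sparse subsequence of $N$'s, then yields $\mu$-almost everywhere equidistribution of $(\ell b^n x)_{n\geq 1}$ modulo one for every nonzero integer $\ell$, and hence $b$-normality for all $b \in \mathscr{B}$.
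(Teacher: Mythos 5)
Your proposal follows the paper's broad architecture — iterated skewed Cantor construction cycling through bases of $\mathscr{B}'$, Weyl criterion plus Schmidt's number-theoretic lemmas for normality, reduction to the maximally compatible case — so the normality and overall blueprint are essentially the paper's. But there is a genuine gap in how you plan to reconcile non-normality with the Rajchman property, and it is not a cosmetic one.

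You propose to establish $\mu$-a.e.\ non-normality in each $b' \in \mathscr{B}'$ by ``a strong-law argument on the independent skewed digit distributions,'' with each skewed generation using a digit distribution ``at a fixed positive distance from the uniform vector.'' This is self-undermining. If the per-generation bias $\varepsilon_m$ is bounded below by a fixed $\varepsilon_0 > 0$, then $\widehat{\mu}$ cannot tend to zero: the Fourier coefficient of a skewed measure factorizes (approximately) into per-generation factors, and along the frequency $\xi = \mathtt{s}_m^{\mathtt{a}_m}$ — aligned with the start of the $m$-th biased block — the factor $\mathfrak{B}_m(\xi) = \varepsilon_m \mathfrak{B}_m^\ast(\xi)$ has modulus $\gtrsim \varepsilon_m$, while the earlier factors are essentially $1$ because $\xi$ is a multiple of $\mathtt{s}_r^{\mathtt{b}_r}$ for $r < m$; this is exactly the content of the paper's Lemma \ref{single-scale-product-lemma} and Proposition \ref{Rajchman-prop}, and Corollary \ref{Rajchman-corollary} makes $\varepsilon_m \to 0$ an explicit hypothesis for the Rajchman conclusion. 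So you must let $\varepsilon_m \to 0$. But once $\varepsilon_m \to 0$, a strong-law-of-large-numbers argument predicts the \emph{opposite} of what you want: the digit positions are (conditionally) independent with means $p_k$, almost all of which equal $1/b'$ and the biased exceptions shrink to zero in influence, so the Ces\`aro averages $\tfrac{1}{n}\sum_{k\le n} p_k$ converge to $1/b'$ and the SLLN would yield $b'$-normality $\mu$-a.e. In short, the strong law with shrinking bias proves normality, not non-normality.

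The argument that actually works — the paper's — is a Borel--Cantelli lim\,sup argument, and it forces a different design of the skewing. The bias must satisfy the \emph{divergence} condition $\sum_{m \in \mathfrak{M}(\mathtt t)} \varepsilon_m = \infty$ for each $\mathtt t \in \mathscr{C}$ (condition \eqref{s-condition}), so the events $\mathscr{E}_m$ (``all digits of $x$ in positions $\mathtt{a}_m{+}1,\dots,\mathtt{b}_m$ of its base-$\mathtt{s}_m$ expansion come from $\mathscr{D}_m$'') have $\mu(\mathscr{E}_m) \geq \varepsilon_m$, are independent, and hence occur infinitely often $\mu$-a.e. Crucially, the bias is not a small tilt of the whole digit distribution; it is concentrated on a \emph{block} $\{\mathtt{a}_m{+}1,\dots,\mathtt{b}_m\}$ whose relative length $1 - \mathtt{a}_m/\mathtt{b}_m$ tends to $1$ (condition \eqref{am/bmzero}). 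When $\mathscr{E}_m$ occurs, the first $n_j\mathtt b_{m_j}$ digits in base $\mathtt t$ contain at most $n_j\mathtt a_{m_j}$ instances of any digit $\mathtt d \notin \mathscr{D}(\mathtt t)$, so the $\liminf$ of the empirical frequency of $\mathtt d$ is $0$, and simple normality fails. Nothing in your sketch captures this ``bias on a growing block'' structure or the divergence condition; without them, the non-normality claim cannot be substantiated once you shrink the bias to secure Rajchman. You should replace the strong-law step with this Borel--Cantelli mechanism and make the divergence and block-growth hypotheses explicit in the construction.

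A secondary, lesser imprecision: the construction is not an exact Riesz product, because when you change base from $b'_{j(k)}$ to $b'_{j(k+1)}$ the old cell endpoints do not align with the new grid, so the Fourier coefficient factors only up to spill-over errors (the paper controls these in Lemma \ref{error-lemma} and Lemma \ref{approximation-lemma}); this does not break the argument, but it needs to be addressed rather than waved at with ``Riesz-product-like.''
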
 
\noindent{\em{Remarks:}}
\begin{enumerate}[1.]
\item Theorem 1 is proved in Section \ref{mainthm-1-proof-section}. See Section \ref{proof-overview-section} for details.  
\item One naturally wonders about the Fourier decay rate of the Rajchman measure $\mu[\mathscr{B}, \mathscr{B}']$ provided by Theorem \ref{mainthm-1}. Our proof technique is constructive and effective; given any compatible pair $(\mathscr{B}, \mathscr{B}')$, it will produce a Rajchman measure whose decay rate is explicitly computable. While we have not attempted to quantify the rate of decay of  $\widehat{\mu}(\cdot)$ for an arbitrary compatible pair $(\mathscr{B}, \mathscr{B}')$, in earlier work \cite{PZ-1} we have provided an example where the rate is optimal, in the following sense: the set $\mathscr{N}(\mathscr{O}, \mathscr{E})$ consisting of odd-normal but not even-normal numbers supports a probability measure $\mu$ with the property
\[ \bigl| \widehat{\mu}(n) \bigr| \leq \bigl(\log \log |n| \bigr)^{-1 + \kappa} \text{ for all } \kappa > 0 \text{ and all sufficiently large } |n|, \; n \in \mathbb Z. \]    
This decay rate is sharp up to $\kappa$-loss; it is known \cite[Remark 1, page 8573]{PVZZ} that if a set $E \subseteq \mathbb R$ supports a measure $\nu$ such that 
\[ \bigl| \widehat{\nu}(n) \bigr| \leq  \bigl(\log \log |n| \bigr)^{-1 - \kappa} \text{ for some } \kappa > 0 \text{ and all sufficiently large } |n|,  \]
then $\nu$-almost every number in $E$ is absolutely normal.   
\vskip0.1in 
\noindent  On the other hand, for the set of absolutely non-normal numbers $\mathscr{N}(\emptyset, \cdot)$ we show in Section \ref{Appendix} that the Rajchman measure $\mu$ given by Theorem \ref{mainthm-1} decays like $1/(\log^{(3)}|\xi|)$, where $\log^{(3)}$ denotes the thrice-iterated logarithm. We do not know whether this is optimal. More generally, the relation between sets of partial normality and the optimal Fourier decay of probability measures supported therein remains an interesting open problem. 
\end{enumerate} 
\subsection{Partly normal numbers as sets of multiplicity} \label{multiplicity-section} Apart from its obvious connection with the theory of measures and numbers, Question \eqref{KS-q} is important in Fourier analysis. A set $E \subseteq [0,1)$ is a {\em{set of uniqueness}} if any complex-valued trigonometric series of the form 
\begin{equation*} 
\sum_{n \in \mathbb Z} a_n e(nx), \qquad e(y) := e^{-2 \pi i y}
\end{equation*} 
that converges to zero on $[0,1) \setminus E$ must be identically zero, i.e. $a_n = 0$ for all $n \in \mathbb Z$. If $E$ is not a set of uniqueness, it is called a {\em{set of multiplicity}}. With a rich history rooted in the early works of Riemann \cite{Riemann} and Cantor \cite{Cantor} and spanning more than a century, these sets have been studied in a variety of settings \cite{{Young}, {Menshov}, {Salem}, {Zygmund}, {Salem-Zygmund}, {Kahane-1}, {Bary}}. A comprehensive account may be found in the book of Kechris and Louveau  \cite{KL-book}. Recent advances in ergodic theory, fractal geometry and metrical number theory have generated renewed interest in properties of uniqueness, which have been examined for self-similar fractal sets \cite{{Li-Sahlsten}, {Varju-Yu}, {Bremont}, {Algom-Hertz-Wang}, {Gao-Ma-Song-Zhang}, {Rapaport}} and for number-theoretic sets occurring in Diophantine approximation \cite{{Lyons-thesis}, {Lyons-85}, {l86}, {b12}, {Bluhm}}. This article is a contribution to the literature on uniqueness and multiplicity of sets comprising numbers with limited normality. It is a natural follow-up of \cite{l86}, which established $\mathscr{N}(\cdot, \{2\})$ as a set of multiplicity, but did not specify whether the property of multiplicity is retained in any of its subsets that enjoy normality in other bases. Building on the case study of \cite{l86} conducted in \cite{PZ-1}, we are able to identify all non-trivial sets of partial (including zero) normality as sets of multiplicity.   
\vskip0.1in 
\noindent The connection between sets of uniqueness and the Rajchman property is classical, and described in \cite[Chapter 2, \S4-5]{KL-book}.  The following theorem \cite[Chapter 2, Theorem 4.1]{KL-book}, attributed to the combined work of Piatetski-Shapiro \cite{{P-S1}, {P-S2}} and Kahane and Salem \cite{Kahane-Salem-63}, is particularly relevant for this article: {\em{if $E$ supports a Rajchman measure, then $E$ is a set of multiplicity}}. Thus, an affirmative answer to Question \eqref{KS-q} for a set of non-normal numbers automatically identifies it as a set of multiplicity. 
\begin{corollary} 
Let $(\mathscr{B}, \mathscr{B}')$ be a compatible pair of base sets in the sense of Definition \ref{compatibility-definition}. Then the set of partly normal numbers $\mathscr N(\mathscr{B}, \mathscr{B}')$ is a set of multiplicity.  This includes $\mathscr N(\emptyset, \mathbb N \setminus \{1\})$, the set of real numbers that are non-normal in every base.  
\end{corollary}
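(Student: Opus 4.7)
The proof is essentially a direct assembly of ingredients already cited in the excerpt, so I would present it as a short deduction rather than a multi-step argument.

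First, I would invoke Theorem \ref{mainthm-1} on the compatible pair $(\mathscr{B}, \mathscr{B}')$ to produce a Rajchman probability measure $\mu = \mu[\mathscr{B}, \mathscr{B}']$ with the property that
\[ \mu\bigl(\mathscr N(\mathscr{B}, \mathscr{B}')\bigr) = 1. \]
The set $\mathscr N(\mathscr{B}, \mathscr{B}')$ is Borel, being defined via countable combinations of limits of digit-frequency counts, so this mass statement is meaningful. Hence the Borel restriction $\nu := \mu \restriction \mathscr N(\mathscr{B}, \mathscr{B}')$ is a well-defined non-zero measure, and since $\mu(\mathscr N(\mathscr{B}, \mathscr{B}')^c) = 0$, one has $\widehat{\nu}(n) = \widehat{\mu}(n)$ for every $n \in \mathbb Z$. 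In particular $\widehat{\nu}(n) \to 0$ as $|n| \to \infty$, so $\nu$ is a Rajchman measure fully supported (in the measure-theoretic sense $\nu(E^c) = 0$) on $E := \mathscr N(\mathscr{B}, \mathscr{B}')$.

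Next, I would quote the Piatetski-Shapiro--Kahane--Salem theorem mentioned right before the corollary's statement (\cite[Chapter 2, Theorem 4.1]{KL-book}): any Borel set that supports a Rajchman measure is a set of multiplicity. Applying this to $E = \mathscr N(\mathscr{B}, \mathscr{B}')$ with the measure $\nu$ produced above immediately yields that $\mathscr N(\mathscr{B}, \mathscr{B}')$ is a set of multiplicity. The special case $\mathscr N(\emptyset, \mathbb N \setminus \{1\})$ is included since $(\emptyset, \mathbb N \setminus \{1\})$ was declared to be a (maximally) compatible pair in Definition \ref{compatibility-definition}, so Theorem \ref{mainthm-1} applies to it verbatim.

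There is no real obstacle here: the entire content of the corollary is packaged into Theorem \ref{mainthm-1}, and the passage from ``supports a Rajchman measure'' to ``set of multiplicity'' is one of the classical cornerstones of the subject already recalled in the text. The only small technical point worth mentioning in the write-up is the Borel measurability of $\mathscr N(\mathscr{B}, \mathscr{B}')$, which legitimizes the restriction step; everything else is a citation.
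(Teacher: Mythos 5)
Your proof is correct and follows exactly the route the paper intends: cite Theorem \ref{mainthm-1} for the Rajchman measure and then invoke the Piatetski-Shapiro--Kahane--Salem theorem recalled just before the corollary. The extra care you take in restricting $\mu$ to the full-measure Borel set $\mathscr N(\mathscr{B}, \mathscr{B}')$ (so that the ``supports'' hypothesis of the cited theorem is literally met) is a sensible clarification that the paper leaves implicit, but it is not a different argument.
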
 
\noindent Combining the corollary with previously known results (for instance \cite{{s60},{p81}} and \eqref{empty-normal}) yields the following dichotomy for partly normal numbers:
\begin{itemize} 
\item Either $(\mathscr{B}, \mathscr{B}')$ is compatible, in which case $\mathscr N(\mathscr{B}, \mathscr{B}')$ is a set of multiplicity of full Hausdorff dimension;
\item Otherwise $(\mathscr{B}, \mathscr{B}')$ is incompatible, in which case $\mathscr N(\mathscr{B}, \mathscr{B}') = \emptyset$ and therefore a set of uniqueness (of zero Hausdorff dimension).   
\end{itemize}  
\subsection{Measures seeking maximal normality} Another motivation for this paper originates in a paper of Cassels \cite{c59}, in response to a question posed by Steinhaus. The intent in that original question was to explore the inter-dependencies of normality in different bases \footnote{The record of the original question seems to be lost in reprint. It appears in \cite{c59} and has been stated in the above form by de Bruijn \cite{deBruijn} as part of the review of \cite{c59}. Nagasaka has noted the omission \cite[p 91]{Nagasaka} in later editions of the reference: ``H. Steinhaus once raised a question in the ”New Scottish Book” as to how far the property of being normal with respect to different bases is independent. This problem was cited as Problem 144 by J.~W.~S. Cassels [4], but we cannot find any trace of this problem in the ”Scottish Book” newly edited by R. Daniel Mauldin [71].'' }: {\em{``... does normality with respect to infinitely many $b$-s imply normality with respect to all other $b$-s''?}} \cite[Problem 144, p14]{Steinhaus-Q},  \cite{deBruijn}. Cassels \cite{c59} answered this question in the negative by showing that almost every point in the middle-third Cantor set is normal in every base $b \notin \{3^{n}: n \in \mathbb N\}$, with respect to the Cantor-Lebesgue measure. Of course, every point in the middle-third Cantor set is non-normal in bases $\{3^n : n \in \mathbb N\}$, by construction. Since then, stronger negative answers to certain aspects of Steinhaus' question have been furnished \cite{{p81}, {s60}, {s61-62}, {s66}, {BBFKW-2010}}. For example, Pollington \cite{p81} has shown that all partly normal sets $\mathscr{N}(\mathscr{B}, \mathscr{B}')$ have full Hausdorff dimension, provided $(\mathscr{B}, \mathscr{B}')$ is a compatible pair in the sense of Definition \ref{compatibility-definition}. Nonetheless, Cassels' result remains striking in the following sense: it shows that the natural measure on the Cantor middle-third set, which is non-normal by construction, embodies the maximum allowable normality subject to its definition. Subsequently, the works of Schmidt \cite{{s60}, {s61-62}, {s66}} and many others \cite{{Furstenberg-1967}, {Furstenberg-1970}, {Furstenberg-2008}, {Hochman-JEMS-2012}, {Hochman-Lectures-2014}, {Hochman-Shmerkin-2012}, {Hochman-Shmerkin-2015}, {Peres-Shmerkin}, {Shmerkin-2019}} have substantiated the phenomenon of natural randomness inherent in multiplicatively independent bases, in a variety of ways. For instance, \cite[Lemma 5]{s60} quantifies a lack of shared structure in the digit sequence of integers in bases $r$ and $s$, provided $r \not\sim s$. Integers whose digit sequences are highly structured in base $r$ behave essentially randomly when expressed in base $s$.  
\vskip0.1in
\noindent Drawing upon the insights provided in \cite{{s60}, {p81}, {l86}} and for a given collection of bases $\mathscr{B}' \subseteq \mathbb N \setminus \{1\}$, we describe in this paper a class of probability measures called {\em{skewed measures}}. The mass distribution underpinning skewed measures favours, by design, certain digits in the bases of $\mathscr{B}'$. It is therefore no surprise that under mild assumptions, points in the support of such measures are almost surely non-normal in every base of $\mathscr{B}'$. The surprising observation is that depending on the choice of favoured digits, a large but strict subclass of these measures enjoys the maximum possible normality subject to this constraint, i.e., almost every point in their support is in $\mathscr N(\mathscr{B}, \mathscr{B}')$, where $\mathscr{B} = \{2, 3, \ldots\} \setminus \overline{\mathscr{B}'}$. We call this behaviour {\em{normality-seeking}}. 
%As we will see in Sections \ref{elem-op-section} and \ref{iteration-section}, an arbitrary skewed measure $\mu$ is generated by a Cantor-like iterative process depending only on $\mathscr{B}'$. The mass distribution underpinning the iterative process favours numbers that are non-normal with respect to the bases in $\mathscr{B}'$, without explicit reference to any other base.  
%%our work shows that in a quantifiable sense, points that are non-normal in certain bases ``tend'' to be normal in all multiplicatively independent bases. 
%The surprising observation is that for a suitable choice of construction parameters, such measures automatically enjoy almost everywhere normality with respect to {\em{all}} bases $\mathscr{B}$ not explicitly eliminated by the construction; in other words, 
%Thus the phenomenon of natural normality observed by Cassels is not unique to the Cantor measure or even to self-similar measures. 
\vskip0.1in
\noindent The class of normality-seeking skewed measures is rich in other ways.  
%Neither does this phenomenon imply or exclude the Rajchman property. 
Although the middle-third Cantor set cannot support a Rajchman measure, our work shows that some normality-seeking measures can accommodate the Rajchman property. Further, one can construct a sequence of normality-seeking measures on $\mathscr{N}(\mathscr{B}, \mathscr{B}')$ with the Frostman property. This means that this sequence of measures obeys near-optimal ball conditions \cite[Chapter 8, p 112]{Mattila-1} ensuring the full Hausdorff dimension of their support \cite{p81}. This offers a finer response to Steinhaus' question, generalizing \cite{{c59}, {p81}, {l86}}: for every choice of $\mathscr{B}' \subseteq \mathbb N \setminus \{1\}$, there exist (normality-seeking) natural measures that are simultaneously Frostman and Rajchman, that target non-normality in bases of $\mathscr{B}'$, and in the process, encompass normality in all other allowable bases. 
%given any choice of bases $\mathscr{B}' \subseteq \mathbb N \setminus \{1\}$, there is a natural skewed measure depending only on $\mathscr{B}'$ (and Rajchman to boot) such that almost every point in its support is in $\mathcal N(\mathscr{B}, \mathscr{B}')$ where $\mathscr{B} = \{2, 3, \ldots \} \setminus \overline{\mathscr{B}'}$  in the notation of \eqref{NBB'} and \eqref{N4}. 
\begin{theorem} \label{mainthm-2} 
For any non-empty choice of bases $\mathscr{B}' \subseteq \{2, 3, \ldots\}$, there are two collections of probability measures $\mathscr{M}(\mathscr{B}') \subseteq \mathscr{M}^{\ast}(\mathscr{B}')$ on $[0, 1)$ with the following properties. 
\begin{enumerate}[(a)] 
\item For any $\mu \in \mathscr{M}^{\ast}(\mathscr{B}')$, $\mu$-almost every point is non-normal in every base of $\mathscr{B}'$. \label{mainthm-2-parta}
\item For every $\mu \in \mathscr{M}(\mathscr{B}')$ and in the notation of \eqref{NBB'} and \eqref{N4}, 
\begin{equation} \text{$\mu$-almost every point $x$ lies in } \mathscr N(\mathscr{B}, \mathscr{B}'), \quad \text{ where } \mathscr{B} := \{2, 3, \ldots \} \setminus \overline{\mathscr{B}'} \label{normality-seeking-def} \end{equation}   
is the largest collection of bases for which $(\mathscr{B}, \mathscr{B}')$ is a compatible pair. \label{mainthm-2-partb} 
\item For every $\eta > 0$, there exists a measure $\mu_{\eta} \in \mathscr{M}(\mathscr{B}')$ obeying 
%the Frostman condition with exponent $(1-\eta)$: 
 \begin{equation} \label{Frostman-condition} 
 \sup \left\{\frac{\mu_{\eta}([x-r, x+r])}{r^{1 -\eta}} \; \Biggl| \; x \in \mathbb R, r > 0 \right\} < \infty. 
 \end{equation} 
The measure $\mu_{\eta}$ can be chosen to be Rajchman.  \label{mainthm-2-partc} 
 \end{enumerate} 
\end{theorem}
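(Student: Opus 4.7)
The plan is to construct both families $\mathscr{M}(\mathscr{B}')$ and $\mathscr{M}^{\ast}(\mathscr{B}')$ from the parametric class of skewed probability measures promised in the introduction. Each skewed measure is a generalized Cantor-type mass distribution on $[0,1)$ whose construction depends on (i) a sequence of ``host'' bases drawn from $\mathscr{B}'$, (ii) a sequence of digit-weights, biased in favor of a preferred subset of digits in each host base, and (iii) auxiliary geometric parameters controlling the sizes of the surviving intervals at each generation. I would declare $\mathscr{M}^{\ast}(\mathscr{B}')$ to be the collection of all skewed measures whose digit-weights are sufficiently biased in every $b' \in \mathscr{B}'$, and $\mathscr{M}(\mathscr{B}') \subseteq \mathscr{M}^{\ast}(\mathscr{B}')$ the subcollection on which additional constraints (such as uniform lower bounds on weights and an equidistribution-type condition on the unfavored digits within each host block) are imposed.

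Part (\ref{mainthm-2-parta}) should then be a direct consequence of the definition of $\mathscr{M}^{\ast}$: by construction the generation-$n$ digit in base $b'$ of a $\mu$-typical point obeys the skewed law, and an application of the strong law of large numbers (or a Borel--Cantelli argument applied to the independent digit blocks dictated by the Cantor construction) produces a limiting digit frequency $\neq 1/b'$. Because the construction handles every $b' \in \mathscr{B}'$ through a common host-base skeleton, this gives non-normality in every base of $\mathscr{B}'$ simultaneously on a set of full $\mu$-measure.

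For part (\ref{mainthm-2-partb}), the target is to verify Weyl's criterion for normality in every $b \in \mathscr{B} = \{2,3,\ldots\} \setminus \overline{\mathscr{B}'}$: for each $k \geq 1$ and each nonzero $m \in \mathbb{Z}$, the exponential sums $\frac{1}{N}\sum_{n=1}^{N} e(m b^n x)$ must tend to zero $\mu$-almost everywhere. I would bound these sums in $L^2(\mu)$ by expanding the square, reducing the question to controlling the Fourier coefficients $\widehat{\mu}(m(b^n \pm b^{n'}))$ along differences of the lacunary sequence $\{m b^n\}$. The crucial input is Schmidt's lemma \cite{s60} on the lack of shared structure between the $r$-adic and $s$-adic expansions of integers when $r \not\sim s$: since every host base $b' \in \mathscr{B}'$ is multiplicatively independent of $b \in \mathscr{B}$, the skewed measure's characteristic sums along $\{b^n\}$ admit geometric cancellation. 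This will yield a summable $L^2(\mu)$ bound, after which Koksma's inequality and Borel--Cantelli close the argument; the extra defining constraints of $\mathscr{M}(\mathscr{B}')$ are precisely what make this cancellation quantitative.

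The main obstacle is part (\ref{mainthm-2-partc}), the simultaneous realization of the Frostman ball condition \eqref{Frostman-condition} and the Rajchman property inside the normality-seeking class $\mathscr{M}(\mathscr{B}')$. The Frostman bound with exponent $1-\eta$ forces the geometric generation scales to shrink nearly as slowly as the mass, hence demands weights close to uniform; yet (\ref{mainthm-2-parta}) requires the weights to be genuinely biased. The skewed-measure framework reconciles these competing demands by allocating the bias to a sparse, but positive-density, subsequence of generations while keeping the construction near-self-similar on the complementary generations; tuning the density of biased generations produces the full range $\eta\in(0,1)$. To obtain the Rajchman property I would introduce, in the spirit of Lyons \cite{l86} and of \cite{PZ-1}, a randomization of digit positions within each generation block, producing cancellation in $\widehat{\mu}(n)$ uniformly as $|n|\to\infty$. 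Establishing that the parameters for (a), (b), Frostman and Rajchman can be chosen consistently---that is, that the intersection of the corresponding constraint sets in parameter space is non-empty---is exactly the content of the skewed-measure catalogue developed in the first part of the article; once that catalogue is in hand, Theorem \ref{mainthm-2} reduces to selecting the parameters accordingly.
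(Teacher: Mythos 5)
Your outline for parts (\ref{mainthm-2-parta}) and (\ref{mainthm-2-partb}) matches the paper's strategy essentially: the paper proves non-normality via Borel--Cantelli applied to the independent digit-block events $\mathscr{E}_m$ (Lemma \ref{lemma-prob} and the proof of Proposition \ref{non-normality-prop}), and proves normality in the bases of $\mathscr{B}$ via the Davenport--Erd\H{o}s--LeVeque criterion (Lemma \ref{DEL-lemma}), which is exactly the $L^2(\mu)$ Weyl-criterion reduction you describe, followed by Schmidt's lemmas to handle multiplicatively independent bases.

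Part (\ref{mainthm-2-partc}), however, contains a genuine gap, and in fact both of your proposed mechanisms differ from the paper's and your Frostman mechanism does not work. You propose to reconcile bias with the Frostman ball condition by biasing a sparse, positive-density subsequence of generations and leaving the rest nearly self-similar. But even a single biased generation is fatal if the favored digit set is small: if $\mathscr{D}_m$ is a singleton, the mass assigned to the corresponding basic interval of length $\approx \mathtt s_m^{-\mathtt b_m}$ picks up a factor $\approx \varepsilon_m \mathtt N_m^{-1}$, and comparing with $(\mathtt s_m^{-\mathtt b_m})^{1-\eta}$ one sees the ratio is $\gtrsim \varepsilon_m \mathtt s_{m-1}^{\mathtt b_{m-1}} \mathtt s_m^{\mathtt b_m(1-\eta) - \mathtt a_m}$, which blows up once $\mathtt a_m/\mathtt b_m \to 0$ as required for non-normality (see condition \eqref{am/bmzero} and the paper's Remark 2 after Proposition \ref{Frostman-prop}, which makes exactly this computation for Lyons' measure). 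Sparsity of biased generations does not help; the failure is local at each biased scale, not cumulative. The paper's actual mechanism, stated in \eqref{size-Dt}, is orthogonal: one biases at every generation but takes the favored digit set $\mathscr{D}_m$ to contain a positive fraction $\geq 1/\mathtt C_0$ of all digits in $\mathbb{Z}_{\mathtt s_m}$, so the biased mass is spread over $\mathtt r_m^{\mathtt b_m - \mathtt a_m}$ intervals rather than one, and the resulting weight per interval is controlled (Lemma \ref{Frostman-lemma}). Likewise, your ``randomization of digit positions'' for Rajchman is not what the paper does; the Rajchman property is achieved deterministically through the two-step elementary operation $\mathscr{O}$, whose first step distributes mass uniformly over the $\mathtt N$ intermediate intervals of length $\mathtt s^{-\mathtt a}$, and the paper explicitly remarks (Section \ref{comparison-section}) that a Lyons-style single-step construction can still be Rajchman but does not furnish the finer Fourier decay needed to prove normality in the bases of $\mathscr{B}$. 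So the parameter choices that simultaneously realize Frostman, Rajchman and normality-seeking behavior are tied to \eqref{C0N0}--\eqref{size-Dt} and the two-step structure of $\mathscr{O}$, not to the sparse-biasing and randomization you suggest.
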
  
%\noindent {\em{Remarks:}} \begin{enumerate} 
%\item Sections \ref{elem-op-section} and \ref{iteration-section} contain a description of skewed measures; see Definition \ref{skewed-measure-def}. All measures in $\mathscr{M}^{\ast}(\mathscr{B}')$ are skewed.
% A measure $\mu$ that obeys \eqref{normality-seeking-def} for a given choice of bases $\mathscr{B}'$ is said to seek normality.
\noindent {\em{Remarks: }} \begin{enumerate}[1.]
\item Theorem \ref{mainthm-2} is proved in Section \ref{mainthm-2-proof-section}; more details of the proof structure are in Section \ref{proof-overview-section}. 
\item Lyons' measure $\nu_{\mathtt L}$ \cite{l86} is not skewed, nor does it not seek normality, in the sense described above. It is however created to attain non-normality in base 2, and is Rajchman. 
%The underlying base for this measure is 2, i.e., 
%The construction of measures in $\mathscr{M}(\mathscr{B}')$, as , bears 
%some similarities with the measure  $\nu_{\mathtt L}$ created by Lyons \cite{l86}. This perceived similarity is  misleading; $\nu_{\mathtt L}$ in \cite{l86} is {\em{not}} normality-seeking, in the sense above.  Its construction depends only on base 2, 
\item The construction of $\nu_{\mathtt L}$ can be lightly modified to obtain a skewed measure $\tilde{\nu}_{\mathtt L} \in \mathscr{M}^{\ast}(\mathscr{B}')$ with $\mathscr{B}' = \{2\}$. As we point out in Section \ref{nonnormality-discussion-section}, the analysis in \cite[Section 3.3]{PZ-1} can be adpated to show that (like $\nu_{\mathtt L}$) $\tilde{\nu}_{\mathtt L}$-almost every point is non-normal in all even bases $\mathtt e$ and therefore for certain bases $\mathtt e  \nsim 2$. Thus the inclusion $\mathscr{M}(\mathscr{B}') \subseteq \mathscr{M}^{\ast}(\mathscr{B}')$ is in general strict. 
\end{enumerate}  
\subsection{Proof overview and layout of the article} \label{proof-overview-section}
The proofs of Theorems \ref{mainthm-1} and \ref{mainthm-2} are presented in three distinct parts, as seen from Figure \ref{layout-fig}. 
\begin{enumerate}[1.]
\item {\em{Part \ref{intro-part}: Construction and properties of skewed measures (Sections \ref{elem-op-section}--\ref{mainproof-section}):}} Section \ref{elem-op-section} describes an elementary operation that forms the building block of an iterative construction.Though inspired by earlier constructions of Lyons \cite{l86} and Pollington \cite{p81} in the sense that it favours certain digit sequences to encourage non-normality, the operation is not identical to either. A comparison and contrast of all three constructions is included in Section \ref{comparison-section}. A skewed measure is defined in Section \ref{iteration-section} as the final outcome of this iterative process. Section \ref{skewed-measure-properties-section} lists (in Propositions \ref{non-normality-prop}--\ref{normal-prop}), but does not prove, the key properties of skewed measures necessary for Theorems \ref{mainthm-1} and \ref{mainthm-2}. Assuming these, Theorems \ref{mainthm-1} and \ref{mainthm-2} are proved in Section \ref{mainproof-section}.
\item {\em{Part \ref{Part-nonnorm-F-R}: Three properties of skewed measures (Sections \ref{non-normality-section}--\ref{Appendix}):}} The rest of the article proves the four propositions of Section \ref{skewed-measure-properties-section} supporting the main results. Part \ref{Part-nonnorm-F-R} is devoted to
\begin{itemize} 
\item Non-normality (stated in Proposition \ref{non-normality-prop}, proved in Section \ref{non-normality-section})
\item Frostman property (stated in Proposition \ref{Frostman-prop}, proved in Section \ref{Frostman-section}), and 
\item Rajchman property (stated in Proposition \ref{Rajchman-prop}, proved in Section \ref{Rajchman-section}).  
\end{itemize}    
Establishing the Rajchman property is substantially more involved compared to the others, and relies on exponential sum estimates derived in Sections \ref{exp-sum-section} and \ref{mu-Rajchman-section}. 
%Section \ref{exp-sum-section}, in  particular, is the analytical core of this article. 
These estimates are not reliant on number-theoretic properties of $\mathscr{B}'$, and are also useful 
%even beyond the establishment of the Rajchman property (e.g. S
in later Sections \ref{u-pointwise-estimate-section}, \ref{v-pointwise-section} and \ref{v-pointwise-section-take-2} of Part \ref{part-normality}. This section highlights the role of the elementary operation in attaining the Rajchman property. As an application, we find a probability measure supported on absolutely non-normal numbers with quantifiable Fourier decay; see Section \ref{Appendix}.  
\item {\em{Part \ref{part-normality}: Normality on the support of skewed measures (Section \ref{normality-overview-section}--\ref{estimating-vm-section-Part2}):}} Part \ref{part-normality} is given over to the proof of Proposition \ref{normal-prop}, which describes the normality-seeking behaviour of skewed measures. This is the most technically demanding part of the article. A detailed proof structure for this part is given on page \pageref{normality-proof}. In addition to the content of Section \ref{exp-sum-section}, the proof of normality requires finer exponential sum estimates. These are given in Sections \ref{u-pointwise-estimate-section}, \ref{v-pointwise-section} and \ref{v-pointwise-section-take-2}. 
%As is to be expected, multiplicative independence of bases plays an important role in this segment of the article. 
Key ingredients of the proof involve number-theoretic consequences of multiplicative independence, noted by Schmidt \cite{s60}, and are recalled in Section \ref{number-theoretic-tools-section}, along with their impact on the problem at hand. This part of the article offers more granular information about the features of normality than is required for Theorems \ref{mainthm-1} and \ref{mainthm-2}; e.g. see Proposition \ref{normality-special-prop}. Such results may be of independent interest.    
\end{enumerate} 
\begin{center}
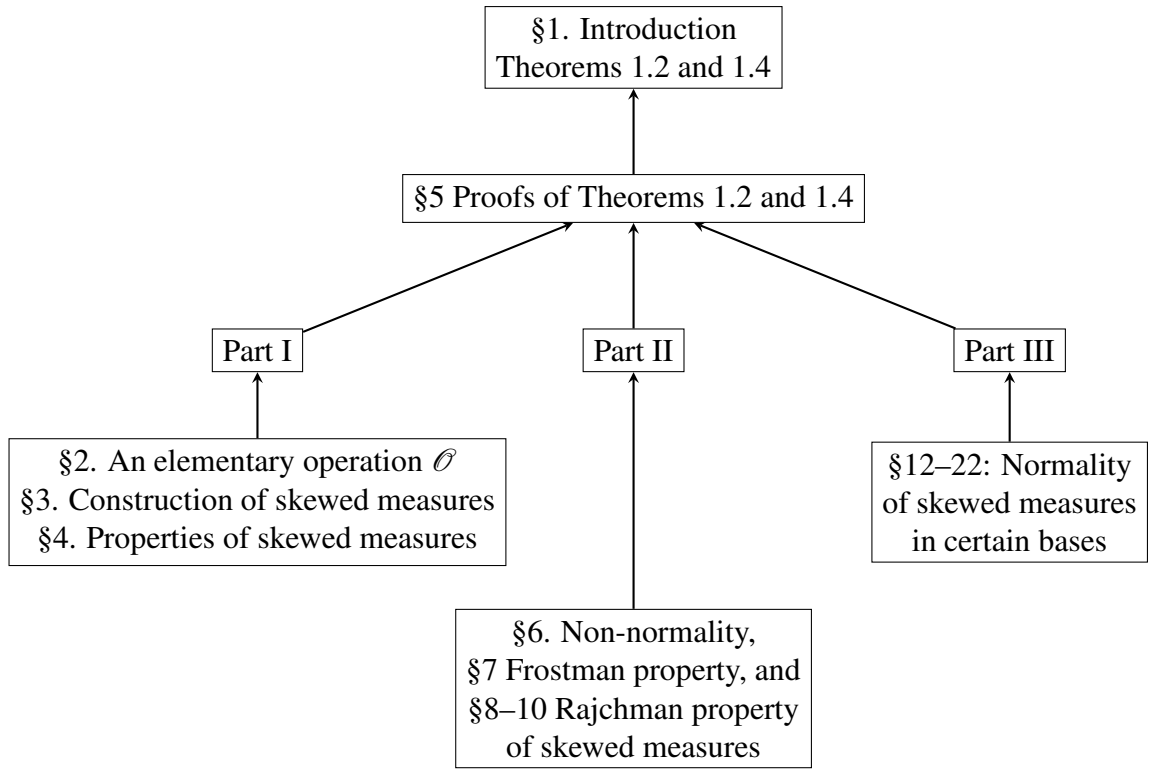
\begin{figure}
\begin{tikzpicture}[node distance=2cm]
\node (thm) [box, align=center] {\S \ref{intro-section}. Introduction \\ Theorems \ref{mainthm-1} and \ref{mainthm-2}};
\node (proof) [box, below of= thm, xshift=0cm, yshift =-0.01cm, align=center] {\S \ref{mainproof-section} Proofs of Theorems \ref{mainthm-1} and \ref{mainthm-2}};   
\node (S5) [box, below of= proof, xshift=-5cm, yshift=-0.02cm, align=center] {Part \ref{intro-part}};
%\node (S23) [box, below of = S5, xshift = 0cm, yshift = -0.05cm, align=center] {\S 2-3. Construction of skewed measures}; 
\node (S4) [box, below of=proof, xshift=0cm, yshift=-0.02cm, align=center]{Part \ref{Part-nonnorm-F-R}};
\node (S6) [box, below of=proof, xshift=5cm, yshift=-0.02cm,align=center] {Part \ref{part-normality}};
\node (S7) [box, below of= S5, xshift=0cm, yshift=-0.02cm,align=center] {\S \ref{elem-op-section}. An elementary operation $\mathscr{O}$ \\ \S \ref{iteration-section}. Construction of skewed measures \\ \S \ref{skewed-measure-properties-section}. Properties of skewed measures};
\node (S8) [box, below of= S6, xshift=0cm, yshift=-0.02cm,align=center] {\S \ref{normality-overview-section}--\ref{estimating-vm-section-Part2}: Normality \\ of skewed measures \\ in certain bases};
\node (Sn) [box, below of= S4, xshift=0cm, yshift=-2.5cm,align=center] {\S \ref{non-normality-section}. Non-normality, \\ \S \ref{Frostman-section} Frostman property, and \\ \S \ref{exp-sum-section}--\ref{Rajchman-section} Rajchman property \\ of skewed measures};
%\node (S11-13)[box, below of=Sn, xshift=-2cm, yshift=-0.001cm, align=center] {\S 12-13.}; 
%\node (S15-16)[box, below of=Sn, xshift=2cm, yshift=-0.001cm, align=center] {\S 15-16.};
%\node (S8)  [box, below of= S9, xshift=0cm, yshift=-2cm, align=center] {\S 8. Analytical tools};
%\node (S14)  [box, below of= S15-16, xshift=0cm, yshift=-0.003cm,align=center] {\S 14. Number-theoretic tools};
\draw [arrow] (proof) -- (thm);
\draw [arrow] (S5) -- (proof) ; 
\draw [arrow] (S4) -- (proof) ;
\draw [arrow] (S6) -- (proof) ;
\draw [arrow] (S7) -- (S5) ;
\draw [arrow] (S8) -- (S6) ;
\draw [arrow] (Sn) -- (S4);
%\draw [arrow] (Sn) -- (S4);
%\draw [arrow] (S8) -- (S9);
%\draw [arrow] (S8) -- (S11-13);
%\draw [arrow] (S11-13) -- (Sn);
%\draw [arrow] (S15-16) -- (Sn);  
%\draw [arrow] (S14) -- (S15-16);
%\draw [arrow] (S8)--(S15-16); 
\end{tikzpicture}
\caption{Layout of the article} \label{layout-fig}
\end{figure} 
\end{center} 
\subsection{Acknowledgements} 
The authors thank Dr.~Xiang Gao for valuable discussions and references in metrical number theory. This work was initiated in 2022, when JZ was visiting University of British Columbia on a study leave from China University of Mining and Technology-Beijing, funded by China Scholarship Council. He would like to thank all three organizations for their support that enabled his visit. JZ was also supported by National Natural Science Foundation of China  (Grant nos.~11801555, 11971058 and 12071431). MP was partially supported by a Discovery grant from Natural Sciences and Engineering Research Council of Canada (NSERC). 
\newpage 
\part{Skewed measures} \label{intro-part}
\section{A building block construction} \label{elem-op-section} 
As mentioned in the introduction, all the main results of this paper rely on a class of measures called skewed measures, so named because they tend to favour numbers that are not normal in certain bases. This section and the next are given over to the construction of such measures. A skewed measure $\mu$ is built through the repeated application of an elementary operation $\mathscr{O}$. We describe $\mathscr{O}$ in this section and use it to define  $\mu$ in the next.  
\subsection{The construction parameters} \label{param-elem-op-section}
The operation $\mathscr{O}$ requires the following parameters as input:
\begin{itemize}  
\item {\em{Scale parameters}} $\mathtt t \in \mathbb N$, $\mathtt s \in \mathbb N \setminus \{1, 2\}$ and exponents $\mathtt a, \mathtt b \in \mathbb N$ with 
\begin{equation} 
\mathtt a < \mathtt b, \qquad \mathtt s^{\mathtt a} > 2\mathtt t.
\end{equation}  
Define $\mathtt N \in \mathbb N$ to be the unique integer, depending only on $\mathtt t, \mathtt s, \mathtt a$ such that 
\begin{equation} \label{N}
\mathtt N + 1 \leq \frac{\mathtt s^{\mathtt a}}{\mathtt t} < \mathtt N+2, \quad \text{ i.e., } \quad \mathtt N + 1 = \lfloor \frac{\mathtt s^{\mathtt a}}{\mathtt t} \rfloor \geq 2,
\end{equation}  
where the floor function $\lfloor x \rfloor$ denotes the largest integer less than or equal to $x$.
\item A {\em{restricted collection of (favoured) digits}}  $\mathscr{D} = \mathscr{D}(\mathtt s)$ that will play a special role:
\begin{equation} \label{special-digits} 
0 \in \mathscr{D}(\mathtt s) \subsetneq \mathbb Z_{\mathtt s} := \{0, 1, \ldots, \mathtt s-1\}, \; \; (\mathtt s-1) \not\in \mathscr{D}(\mathtt s), \; \; 1 \leq \mathtt r := \# \bigl(\mathscr{D}(\mathtt s) \bigr) < \mathtt s. 
\end{equation}  
The assumptions $0 \in \mathscr{D}(\mathtt s)$, $(\mathtt s-1) \not\in \mathscr{D}(\mathtt s)$ are for concreteness only. Other choices of members or non-members of $\mathscr{D}(\mathtt s)$ would also work, but we will henceforth use \eqref{special-digits}.  
\item A {\em{``bias'' parameter}} $\varepsilon \in (0,1)$.  
\item A collection $\mathscr{I}_0$ of closed intervals in $[0, 1]$ of the form 
\begin{align} 
\mathtt I &= n \mathtt t^{-1} + \bigl[0, \mathtt t^{-1} \bigr], \quad n \in \{0, 1, \ldots, \mathtt t-1\}, \text{ so that } \label{I0-1} \\   
%&\mathtt I = n \mathtt t^{-1} + \bigl[0, \mathtt t^{-1} \bigr] \subseteq [0, 1] \text{ for some non-negative integer $n$, so that }  \\
|\mathtt I| &= \mathtt t^{-1} \leq 1 \; \text{ and } \;    
\text{int}(\mathtt I) \cap \text{int}(\mathtt I') = \emptyset \; \text{ for all } \mathtt I, \mathtt I' \in \mathscr{I}_0, \; \mathtt I \ne \mathtt I'.   \label{I0-2}
\end{align} 
The number of intervals in $\mathscr{I}_0$ could be strictly less than $\mathtt t$. Throughout this paper and for a measurable set $\mathtt U \subseteq \mathbb R$, the symbol $|\mathtt U|$ will refer to its Lebesgue measure. 
\item A collection $\mathscr{W}_0 := \bigl\{\mathtt w(\mathtt I) : \mathtt I \in \mathscr{I}_0  \bigr\}$ of numbers associated with the intervals in $\mathscr{I}_0$: 
\begin{equation} \label{weights}
0 < \mathtt w(\mathtt I) \leq 1, \qquad \sum_{\mathtt I \in \mathscr{I}_0} \mathtt w(\mathtt I) = 1. 
\end{equation} 
We say that $\mathtt I \in \mathscr{I}_0$ has ``mass'' $\mathtt w(\mathtt I)$. 
\end{itemize} 
The starting point of the building block construction is the set  
\begin{equation} \label{E0}
\mathtt E_0 := \bigcup \bigl\{\mathtt I : \mathtt I \in \mathscr{I}_0 \bigr\} \subseteq [0, 1],
\end{equation} 
equipped with a mass distribution $\mathscr{W}_0$ among its constituent basic intervals. Depending on the choice of $\mathscr{I}_0$, the inclusion in \eqref{E0} can be strict. The probability measure on $\mathtt E_0$ is the density function $\Phi_0$ associated with $\mathscr{W}_0$, which assigns mass $\mathtt w(\mathtt I)$ uniformly to each interval $\mathtt I \in \mathscr{I}_0$: 
\begin{equation} \label{Phi0}
  \Phi_0 (x)  := \sum_{\mathtt I \in \mathscr{I}_0} \frac{\mathtt w(\mathtt I)}{|\mathtt I|} \mathtt 1_{\mathtt I}(x), \quad \text{ so that } \quad \Phi_0(\mathtt I) = \int_{\mathtt I} \Phi_0(x) \, dx = \mathtt w(\mathtt I).  
\end{equation} 
Here and below, $\mathtt 1_{\mathtt I}$ denotes the indicator function of the interval $\mathtt I$, which takes the value 1 on $\mathtt I$ and 0 outside it. 
\vskip0.1in 
\noindent We will execute a two-part elementary operation on the pair $(\mathtt E_0, \Phi_0)$, decomposing each basic interval $\mathtt I \in \mathscr{I}_0$ into successive pieces of length $\mathtt s^{-\mathtt a}$ and $\mathtt s^{-\mathtt b}$ respectively. The final outcome of this operation will be another set-function pair $(\mathtt E_1, \Phi_1)$, with an accompanying collection of basic intervals $\mathscr{I}_1$ and a mass distribution $\mathscr{W}_1$. Each interval in $\mathscr{I}_1$ will be of length $\mathtt s^{-\mathtt b}$, and will lie in some interval $\mathtt I \in \mathscr{I}_0$. The mass distribution $\mathscr{W}_1$ will spread the mass $\mathtt w(\mathtt I)$ among the subintervals of $\mathtt I$ that belong to $\mathscr{I}_1$, in a deliberately inequitable manner. The basic intervals in the output set $\mathtt E_1 \subseteq \mathtt E_0$ are the intervals in $\mathscr{I}_1$. The probability measure $\Phi_1$ supported on $\mathtt E_1$ will reflect the mass distribution $\mathscr{W}_1$.  We will denote this two-part operation by $\mathscr{O}$ and write
\begin{equation} \label{O-def} 
(\mathtt E_1, \Phi_1) := \mathscr{O}(\mathtt E_0, \Phi_0) = \mathscr{O}(\pmb{\gamma}), \quad \text{ where } \quad \pmb{\gamma} := \bigl(\mathtt E_0, \Phi_0; \mathtt t, \mathtt s, \mathtt a, \mathtt b, \mathscr{D}, \varepsilon \bigr)
\end{equation} 
encodes the input vector of the operation and the underlying set of parameters. Let us describe each step in more detail. 
\subsection{The elementary operation $\mathscr{O}$} \label{O-description-section}
\subsubsection{The first part}
The length $\mathtt t^{-1}$ of an interval $\mathtt I \in \mathscr{I}_0$ need not be an integer multiple of $\mathtt s^{-\mathtt a}$. Our first task is to isolate a single subinterval $\mathtt J(\mathtt I) \subseteq \mathtt I$ with $|\mathtt J(\mathtt I)| = \mathtt N \mathtt s^{- \mathtt a} < \mathtt t^{-1}$,  where $\mathtt N$ is the positive integer defined in \eqref{N}. The left endpoint of $\mathtt J(\mathtt I)$ lies in $\mathtt I \cap \{0, \mathtt s^{-\mathtt a}, 2 \mathtt s^{-\mathtt a}, \ldots \}$ and is the smallest such point with this property. Let $\alpha(\mathtt I)$ denote the left endpoint of $\mathtt I$. Then for every $\mathtt I \in \mathscr{I}_0$, there exist a unique integer $\mathtt m = \mathtt m(\mathtt I)$ and a real number  $\eta(\mathtt I) \in [0, \mathtt s^{-\mathtt a})$ such that 
\begin{equation} \label{def-m}
\frac{\mathtt m-1}{\mathtt s^{\mathtt a}} < \alpha(\mathtt I) \leq \frac{\mathtt m}{\mathtt s^{\mathtt a}}, \qquad \alpha(\mathtt I) + \eta(\mathtt I) := \frac{\mathtt m}{\mathtt s^{\mathtt a}}.
\end{equation} 
Let us observe that 
\begin{equation} 
\label{step-1-inclusion} 
\mathtt J(\mathtt I) := \alpha(\mathtt I) + \eta(\mathtt I) + [0, \mathtt N \mathtt s^{-\mathtt a}] = \bigl[ \mathtt m \mathtt s^{-\mathtt a}, (\mathtt m+\mathtt N) \mathtt s^{-\mathtt a} \bigr] \subsetneq \mathtt I = \alpha(\mathtt I) + [0, \mathtt t^{-1}]
\end{equation} 
Indeed, the definitions \eqref{N} and \eqref{def-m} of $\mathtt N$ and $\mathtt m$ respectively dictate that 
\[ \alpha(\mathtt I) \leq \mathtt m \mathtt s^{-\mathtt a} < \frac{\mathtt m+ \mathtt N}{\mathtt s^{\mathtt a}} < \alpha(\mathtt I) + \frac{\mathtt N + 1}{\mathtt s^{\mathtt a}} \leq \alpha(\mathtt I) + \mathtt t^{-1}, \] 
proving the inclusion \eqref{step-1-inclusion}. Even though the inclusion is strict, the interval $\mathtt J(\mathtt I)$ covers most of $\mathtt I$; specifically, the relation \eqref{N} implies 
\begin{equation}  \label{right-hand-spill} 
\alpha(\mathtt I) + \mathtt t^{-1} - \frac{\mathtt m + \mathtt N}{\mathtt s^{\mathtt a}} = \mathtt t^{-1} - \eta(\mathtt I) - \frac{\mathtt N}{\mathtt s^{\mathtt a}} < \mathtt t^{-1} + (2\mathtt s^{-\mathtt a} - \mathtt t^{-1}) = 2 \mathtt s^{-\mathtt a}. \end{equation} 
Combining \eqref{def-m} and \eqref{right-hand-spill}, we find that the portion of $\mathtt I$ not covered by $\mathtt J(\mathtt I)$ is a union of at most two disjoint intervals at the two edges of $\mathtt I$:
\begin{equation}  \label{spill}
\mathtt I \setminus \mathtt J(\mathtt I)  = \bigl[\alpha(\mathtt I),  {\mathtt m}{\mathtt s^{-\mathtt a}} \bigr) \bigcup \bigl((\mathtt m + \mathtt N){\mathtt s^{- \mathtt a}}, \alpha(\mathtt I) + \mathtt t^{-1} \bigr], \; \text{ so that } \; 
\bigl|\mathtt I \setminus \mathtt J(\mathtt I) \bigr| < 3 \mathtt s^{-\mathtt a}. 
\end{equation} 
\vskip0.1in 
\noindent We are now ready to describe the basic intervals of the first step of the elementary operation. These are the subintervals of $\mathtt J(\mathtt I)$ with length $\mathtt s^{-\mathtt a}$ (the intermediate scale) and left endpoints in $\{\mathtt m \mathtt s^{-\mathtt a}, (\mathtt m+1) \mathtt s^{-\mathtt a}, \ldots, (\mathtt m + \mathtt N-1) \mathtt s^{-\mathtt a} \}$. Set 
\begin{align} \mathtt J_k(\mathtt I) &:= \alpha(\mathtt I) + \eta(\mathtt I) + \bigl[ k \mathtt s^{-\mathtt a}, (k+1) \mathtt s^{-\mathtt a} \bigr] \subseteq \mathtt J(\mathtt I) \subseteq \mathtt I, \; \text{ where } \label{JkI} \\  k \in \mathbb K &:= \mathbb Z_{\mathtt N} = \{0, 1, \ldots, \mathtt N-1\}, \; \text{ so that } \; \mathtt J(\mathtt I) = \bigcup_{k \in \mathbb K} \mathtt J_k(\mathtt I).   \label{def-K}
\end{align}  
%In other words, $\mathtt J_k(\mathtt I) \subseteq \mathtt I$ for all $k \in \{0, 1, \ldots, \mathtt N-1\}$. 
For each $\mathtt I \in \mathscr{I}_0$, its mass $\mathtt w(\mathtt I)$ is divided equally among the descendants $\{\mathtt J_k(\mathtt I) : k \in \mathbb K \}$ at this intermediate scale. Thus each interval $\mathtt J_k(\mathtt I)$ is endowed with a weight
\begin{equation} \label{mass-wk} \mathtt w(\mathtt J_k(\mathtt I)) := \frac{1}{\mathtt N} \mathtt w(\mathtt I), \qquad \text{ so that } \qquad \sum_{k=0}^{\mathtt N-1} \mathtt w(\mathtt J_k(\mathtt I)) = \mathtt w(\mathtt I). \end{equation}  
%Thus at the conclusion of the first step, the mass $\mathtt w(\mathtt I)$ of $\mathtt I$ has been re-distributed equally among the descendants $\mathtt J_k(\mathtt I)$. 
The ``spill-overs'', i.e. the edge intervals of $\mathtt I \setminus \mathtt J(\mathtt I)$ given in \eqref{spill}, get zero mass. The density function encoding this mass distribution is given by $\Psi_1 = \Psi_1[\pmb{\gamma}]$, with $\pmb{\gamma}$ as in \eqref{O-def}:
\begin{align}
 \Psi_1(x) &:= \sum_{\mathtt I \in \mathscr{I}_0} \mathtt w(\mathtt I)  \mathtt 1_{\mathtt J(\mathtt I)}(x) \label{intermediate-density-2} \\ &= \sum_{\mathtt I \in \mathscr{I}_0} \sum_{k=0}^{\mathtt N-1} \frac{\mathtt w(\mathtt J_k(\mathtt I))}{|\mathtt J_k(\mathtt I)|} \mathtt 1_{\mathtt J_k(\mathtt I)}(x), \; \text{ so that } \; \Psi_1(\mathtt J_k(\mathtt I)) = \mathtt w(\mathtt J_k(\mathtt I)).  \label{intermediate-density}
\end{align} 
This concludes the first step of the construction. 
\subsubsection{The second part} \label{second-step-section}  
Let us continue to the next stage. For each $\mathtt I \in \mathscr{I}_0$ and $k \in \mathbb K = \mathbb Z_{\mathtt N}$, we now decompose the interval $\mathtt J_k(\mathtt I)$ in \eqref{JkI} into $\mathtt s^{\mathtt b - \mathtt a}$ subintervals of equal length $\mathtt s^{-\mathtt b}$. Thus for $k \in \mathbb K$ and $\mathbf j = (k, \ell)$, 
\begin{align} \label{step-2-decomp}
\mathtt J_k(\mathtt I) &= \bigcup_{\ell \in \mathbb L} \mathtt I (\mathbf j)  \; \text{ where } \; \mathbb L := \mathbb Z_{\mathtt s^{\mathtt b-\mathtt a}} = \bigl\{0, 1, \ldots, \mathtt s^{\mathtt b - \mathtt a} - 1\bigr\},  \\ 
\mathtt I (\mathbf j)  &:= \alpha(\mathtt I) + \eta(\mathtt I) + k\mathtt s^{-\mathtt a} + \ell \mathtt s^{-\mathtt b} + [0, \mathtt s^{-\mathtt b}] \subseteq \mathtt J_k(\mathtt I) \subseteq \mathtt I, \label{step-2-basic}
\end{align} 
The intervals $\mathtt I(\mathbf j)$ are the basic intervals of the final set $\mathtt E_1$ resulting from this two-step elementary operation. We define
\begin{align} 
\mathbb J &:= \mathbb K\times \mathbb L, \qquad \mathscr{I}_1 := \bigl\{\mathtt I(\mathbf j) : \mathtt I \in \mathscr{I}_0, \; \mathbf j \in \mathbb J  \bigr\},  \label{JKL} \\ \mathtt E_1 &:= \bigcup \bigl\{\mathtt J : \mathtt J \in \mathscr{I}_1 \bigr\} = \bigcup \bigl\{ \mathtt I(\mathbf j) : \mathtt I \in \mathscr{I}_0, \; \mathbf j \in \mathbb J\bigr\}. \label{E1}
\end{align} 
Unlike the first step, the mass distribution among the intervals $\mathtt I(\mathbf j)$ is non-uniform and depends on the nature of the index $\ell$, with a bias towards indices $\ell$ whose digits lie in $\mathscr{D}$. Specifically, suppose that $\mathbf d(\ell, \mathtt s) := \bigl(\mathtt d_0(\ell, \mathtt s), \ldots, \mathtt d_{\mathtt b - \mathtt a-1}(\ell, \mathtt s) \bigr) $ denotes the $(\mathtt b - \mathtt a)$-long sequence of digits of the integer $\ell$ written in base $\mathtt s$: 
\begin{align} \ell &= \sum_{j=0}^{\mathtt b - \mathtt a-1} \mathtt d_j(\ell, \mathtt s) \mathtt s^j, \quad \mathtt d_j(\ell, \mathtt s) \in \mathbb Z_{\mathtt s}, \quad  \text{ and set } \label{rep-l-s} \\ \mathbb L^{\ast} &= \mathbb L^{\ast}(\mathtt s, \mathtt a, \mathtt b, \mathscr{D}) := \bigl\{\ell \in \mathbb L : \mathbf d(\ell, \mathtt s) \in \mathscr{D}^{\mathtt b- \mathtt a} \bigr\},  \label{L*}
\end{align} 
where $\mathscr{D} = \mathscr{D}(\mathtt s)$ is the special set of digits \eqref{special-digits} fixed at the outset in Section \ref{param-elem-op-section} with $\# \mathscr{D} (\mathtt s) = \mathtt r < \mathtt s$. Therefore $\#\bigl(\mathbb L^{\ast}(\mathtt s, \mathtt a, \mathtt b, \mathscr{D}) \bigr) = {\mathtt r}^{\mathtt b - \mathtt a}$. 
\vskip0.1in
\noindent For each $\mathtt I \in \mathscr{I}_0$, the mass on $\mathtt J_k(\mathtt I)$ given by \eqref{mass-wk} is distributed among its descendants $\{\mathtt I(\mathbf j) : \ell \in \mathbb L\}$; the mass $\mathtt w(\mathtt I(\mathbf j))$ allocated to $\mathtt I(\mathbf j)$ is defined via the formula 
\begin{equation} 
\mathtt w\bigl(\mathtt I(\mathbf j) \bigr) := \mathtt w(\mathtt J_k(\mathtt I)) \times \vartheta_{\ell}  =  \frac{\mathtt w(\mathtt I)}{\mathtt N} \times \vartheta_{\ell}, \label{second-step-weight} 
\end{equation} 
where $\bigl\{\vartheta_{\ell} : \ell \in \mathbb L \bigr\} \subseteq (0,1)$ is a collection of positive numbers summing up to to 1. While any such choice of $\vartheta_{\ell}$ serves as a mass distribution in principle, the following choice will be important both for ensuring non-normality as well as the Rajchman property:
\begin{equation} \label{choice-of-theta}  
\vartheta_{\ell} = \vartheta_{\ell}(\mathtt s, \mathtt a, \mathtt b, \mathscr{D}, \varepsilon) :=  (1 - \varepsilon) \mathtt s^{-(\mathtt b - \mathtt a)} + \begin{cases} \varepsilon  \mathtt r^{-(\mathtt b - \mathtt a)} &\text{ if } \; \ell \in \mathbb L^{\ast}, \\ 0 &\text{ if } \; \ell \in \mathbb L \setminus \mathbb L^{\ast}.  \end{cases} 
\end{equation} 
The density function on $\mathtt E_1$ associated with this mass distribution is given by 
\begin{equation} \label{Phi1} 
\Phi_1(x) := \sum_{\mathtt J \in \mathscr{I}_1} \frac{\mathtt w(\mathtt J)}{|\mathtt J|} 1_{\mathtt J}(x) = \sum_{\mathtt I \in \mathscr{I}_0} \sum_{\mathbf j \in \mathbb J} \mathtt w(\mathtt I(\mathbf j)) |\mathtt I(\mathbf j)|^{-1} 1_{\mathtt I(\mathbf j)}(x). 
\end{equation} 
The pair $(\mathtt E_1, \Phi_1)$ given by \eqref{E1} and \eqref{Phi1} completes the description of $\mathscr{O}$ in \eqref{O-def}. 
\subsection{The random variables in $\mathscr{O}$} \label{random-variables-section} 
 In \cite{PZ-1}, we gave an alternative description of the elementary operation $\mathscr{L}$ in \cite{l86} in terms of random variables on a probability space $(\mathbb R, \mathbb P)$. We do the same for $\mathscr{O}$ here. 
\vskip0.1in 
\noindent Starting with a collection of intervals $\mathscr{I}_0$ satisfying \eqref{I0-1} and \eqref{I0-2}, let $\mathfrak X_0$ be a discrete random variable on $(\mathbb R, \mathbb P)$ with probability distribution  
\[ \mathbb P \left(\mathfrak X_0 = \alpha(\mathtt I) \right) = \mathtt w(\mathtt I) \quad \text{ for } \mathtt I \in \mathscr{I}_0. \]   
Then $\Phi_0$ given by \eqref{Phi0} is the probability density function of the continuous random variable $X_0 = \mathfrak X_0 + \mathfrak U_0$, where the random variable $\mathfrak U_0$ is uniform on $[0, \mathtt t^{-1}]$ and independent of $\mathfrak X_0$. The intermediate and final densities $\Psi_1$ and $\Phi_1$, given by \eqref{intermediate-density} and \eqref{Phi1} respectively, admit similar interpretations. For example, $\Phi_1$
is the probability density of the random variable
\[ X_1 = \mathfrak X_1 + \mathfrak U_1 \text{ where } \; \mathfrak X_1 = \mathfrak X_0' + \mathfrak Y_1 \mathtt s^{-\mathtt a} + \mathfrak Z_1 \mathtt s^{-\mathtt b}.  \] 
Here  $\mathfrak U_1$ is a uniform random variable on $[0, \mathtt s^{-\mathtt b}]$, while $\mathfrak X_0'$, $\mathfrak Y_1$ and $ \mathfrak Z_1$ are discrete random variables with the following mass distributions:
\begin{align*} 
&\mathbb P\bigl(\mathfrak X_0' = \alpha(\mathtt I) + \eta(\mathtt I) \bigr) = \mathtt w(\mathtt I) \text{ for } \mathtt I \in \mathscr{I}_0,  \\ 
&\mathbb P \left(\mathfrak Y_1 = k \right) = {\mathtt N}^{-1} \text{ for } k \in \mathbb K, \quad \; \mathbb P \left(\mathfrak Z_1 = \ell \right) = \vartheta_{\ell} \text{ for } \ell \in \mathbb L. 
\end{align*} 
The set of random variables $\{\mathfrak X_0', \mathfrak Y_1, \mathfrak Z_1, \mathfrak U_1\}$ is independent. In addition, $\{\mathfrak Y_1, \mathfrak Z_1, \mathfrak U_1\}$ is independent of $\{\mathfrak X_0, \mathfrak U_0\}$. The variables $\mathfrak X_0$ and $\mathfrak X_0'$ are {\em{not}} independent; they depend on each other via a deterministic relation.

%\begin{center} 
\begin{figure}
\begin{center}
\begin{tikzpicture}[scale=0.7]
%[>=stealth]

\def\ycoord{0} %y-coord of first interval
\def\eps{0.05}
\def\tick{0.15}
\def\endpt{15}
\def\midpt{\endpt * 0.5}

\draw[fill=lightgray] (0,\ycoord) rectangle ++(\endpt,4*\tick);

\draw (0,\ycoord) -- (\endpt,\ycoord);
\draw (0,\ycoord-\tick) node [below] {$\alpha(\mathtt I)$};
\draw (\endpt,\ycoord-\tick) node [below] {$\alpha(\mathtt I) + \mathtt t^{-1}$};
\draw (\midpt,\ycoord+5*\tick) node [above] {$\mathtt I$};
\draw (0,\ycoord-\tick) -- (0, \ycoord+\tick);
\draw (\endpt,\ycoord-\tick) -- (\endpt,\ycoord+\tick);

\def\ycoord{-3} %y-coord of second interval in step 1

\draw[dotted] (0,\ycoord) -- (\endpt,\ycoord);

\def\subint{\endpt / 12}
\foreach \x in {0, 0.5, 1.5, 2.5, 3.5, 4.5, 5.5, 6.5, 7.5, 8.5, 9.5, 10.5,
11.5,  12}
\draw (\x*\subint,\ycoord-\tick) -- (\x*\subint,\ycoord+\tick);

\foreach \x in {0.5, 1.5, 2.5, 3.5, 4.5, 10.5}
\draw[fill=lightgray] (\x*\subint,\ycoord-\eps) rectangle ++(\subint,5*\tick);
\draw (0.5*\subint,\ycoord-\tick) node [below] {$\frac{\mathtt m}{\mathtt s^{\mathtt a}}$};
\draw (\subint,\ycoord+4*\tick) node [above] {$\mathtt J_0(\mathtt I)$};
\draw (2*\subint,\ycoord-\tick) node [below] {$\mathtt J_1(\mathtt I)$};
\draw (5*\subint,\ycoord+4*\tick) node [above] {$\mathtt J_k(\mathtt I)$};
\draw (11*\subint ,\ycoord+4*\tick) node [above] {$\mathtt J_{\mathtt N-1}(\mathtt I)$};
\draw (11.5*\subint,\ycoord-\tick) node [below] {$\frac{\mathtt m+\mathtt N-1}{\mathtt s^{\mathtt a}}$};

\draw[dotted] (5.5*\subint,\ycoord - \eps + 5*\tick) -- (10.5*\subint,\ycoord - \eps + 5*\tick);

\draw[->](5*\subint, \ycoord - 5*\eps) -- (5.5*\subint, \ycoord - 8*\tick);

% Main interval of Step 2
\def\ycoord{-6}
\draw (0,\ycoord) -- (\endpt,\ycoord);
\draw (0,\ycoord-\tick) -- (0, \ycoord+\tick);
\draw (\endpt,\ycoord-\tick) -- (\endpt,\ycoord+\tick);
\draw (\midpt,\ycoord+6*\tick) node [above] {$\mathtt J_k(\mathtt I)$};

\draw[fill=lightgray] (0,\ycoord) rectangle ++(12*\subint,5*\tick);
\draw (0,\ycoord-\tick) node [below] {$\frac{\mathtt m+k}{\mathtt s^{\mathtt a}}$};
\draw (\endpt,\ycoord-\tick) node [below] {$\frac{\mathtt m+(k+1)}{\mathtt s^{\mathtt a}}$};

% Second interval of Step 2
\def\ycoord{-9}
%\draw (0,\ycoord) -- (\endpt,\ycoord);
\draw (0,\ycoord) -- (3*\subint,\ycoord);
%\draw (9*\subint,\ycoord) -- (11*\subint,\ycoord);
%\draw (13*\subint, \ycoord) -- (\endpt, \ycoord);  
\draw[dotted](3*\subint, \ycoord) -- (4*\subint, \ycoord); 
\draw[dotted](5*\subint, \ycoord) -- (6*\subint, \ycoord); 
\draw[dotted](6*\subint, \ycoord) -- (7*\subint, \ycoord);
\draw[dotted](7*\subint, \ycoord) -- (9*\subint, \ycoord);
\draw[dotted](10*\subint, \ycoord) -- (11*\subint, \ycoord);
\draw[dotted](3*\subint + 2*\tick, \ycoord + 3*\tick) -- (3*\subint + 6*\tick, \ycoord + 3*\tick);
\draw[dotted](10*\subint + 2*\tick, \ycoord + 3*\tick) -- (10*\subint + 6*\tick, \ycoord + 3*\tick);
\draw[dotted](5*\subint + 2*\tick, \ycoord + 3*\tick) -- (6*\subint + 6*\tick, \ycoord + 3*\tick);
\foreach \x in {0,1, 2, 3, 4, 5, 6, 7, 8, 9, 10, 11, 12}
\draw (\x*\subint,\ycoord-\tick) -- (\x*\subint, \ycoord+\tick);
\draw (0.5*\subint,\ycoord-\tick) node [below] {{\scriptsize{$\mathtt I(k,0)$}}};
\draw (1.5*\subint,\ycoord+8*\tick) node [above] {{\scriptsize{$\mathtt I(k,1)$}}};
\draw (0.5*\subint,\ycoord-\tick) node [below] {{\scriptsize{$\mathtt I(k,0)$}}};
\draw (4.5*\subint,\ycoord+5*\tick) node [above] {{\scriptsize{$\mathtt I(k,\ell)$}}};
\draw (7.5*\subint,\ycoord-\tick) node [below] {{\scriptsize{$\mathtt I(k,\mathtt s)$}}};
\draw (8.75*\subint,\ycoord+8*\tick) node [above] {{\scriptsize{$\mathtt I(k,\mathtt s+1)$}}};
\draw (11.75*\subint,\ycoord-\tick) node [below] {{\scriptsize{$\mathtt I(k, \mathtt s^2-1)$}}};

\draw[fill=lightgray] (0*\subint,\ycoord) rectangle ++(\subint,8*\tick);
\draw[fill=lightgray] (1*\subint,\ycoord) rectangle ++(\subint,8*\tick);
\draw[fill=lightgray] (2*\subint,\ycoord) rectangle ++(\subint,5*\tick);
%\draw[fill=lightgray] (3*\subint,\ycoord) rectangle ++(\subint,5*\tick);
\draw[fill=lightgray] (4*\subint,\ycoord) rectangle ++(\subint,5*\tick);
\draw[fill=lightgray] (7*\subint,\ycoord) rectangle ++(\subint,8*\tick);
\draw[fill=lightgray] (8*\subint,\ycoord) rectangle ++(\subint,8*\tick);
\draw[fill=lightgray] (9*\subint,\ycoord) rectangle ++(\subint,5*\tick);
\draw[fill=lightgray] (11*\subint,\ycoord) rectangle ++(\subint,5*\tick);

\draw [decorate,decoration={brace,amplitude=5pt,mirror,raise=2ex}]
(5,\ycoord) -- (6.2,\ycoord) node [midway,yshift=-2em] {{\scriptsize{$\mathtt s^{-\mathtt b}$}}};

%\draw [decorate,decoration={brace,amplitude=5pt,mirror,raise=2ex}] (-1,0) -- (-1,-3) node [left] {$$};
\draw [decorate,decoration={brace,amplitude=5pt,mirror,raise=2ex}] (-0.5,0.5) -- (-0.5,-3);

\draw (-1.3,-1.3) node[left] {Step 1};

\draw [decorate,decoration={brace,amplitude=5pt,mirror,raise=2ex}] (-0.5,-5.2) -- (-0.5,-9.2);

\draw (-1.3,-7.3) node[left] {Step 2};

\end{tikzpicture}
\caption{The elementary operation $\mathscr{O}$ with $\mathtt b = \mathtt a +2$, $\mathscr{D}(\mathtt s) = \{0, 1\}$} \label{Interval}
\end{center} 
\end{figure}
%\end{center}
\subsection{Elementary operations for (non)-normality: a comparative study} \label{comparison-section} 
It is instructive to compare the elementary operation $\mathscr{O}$ in Section \ref{O-description-section} with the corresponding units of construction in \cite{l86} and \cite{p81}, which we denote by $\mathscr{L}$ and $\mathscr{P}$ respectively. Despite certain features of similarity among the three, they are all distinct. 
\subsubsection{$\mathscr{O}$ and $\mathscr{L}$} In $\mathscr{L}$, one starts with two integer scale parameters $\mathtt K_0 < \mathtt K_1$ and a bias parameter $\varepsilon \in (0,1)$. The initial set $\mathtt E_0$ consists of basic intervals $\mathtt I \in \mathscr{I}_0$ of length $\mathtt t^{-1} = 2^{-\mathtt K_0}$. The operation $\mathscr{L}$ divides each $\mathtt I$ into subintervals $\{\mathtt I(\ell) : \ell = 0, 1, \ldots, 2^{\mathtt K_1-\mathtt K_0}-1\}$ of length $\mathtt s^{-\mathtt b} = 2^{- \mathtt K_1}$. Each integer $\ell$ is uniquely identified with a $(\mathtt K_1-\mathtt K_0)$-long binary sequence $\mathbf d(\ell, 2)$ representing the digits of $\ell$ in base 2. Of these descendants of $\mathtt I$ only the leftmost subinterval $\mathtt I( 0)$, which corresponds to $\ell = 0$ and  $\mathbf d(\ell, 2) = \mathbf 0$, receives an extra mass of $\varepsilon$ compared to the others. 
\vskip0.1in 
\noindent The points of distinction between $\mathscr{O}$ and $\mathscr{L}$ are the following.
\begin{itemize} 
\item The successive scales in $\mathscr{L}$ are powers of 2, so the decomposition of each interval $\mathtt I$ into the descendants $\mathtt I(\ell)$ is exact. Since there are no spill-overs, the initial set $\mathtt E_0$ and the final set $\mathtt E_1$ are the same, though their mass distributions are different. In contrast, the elementary operation $\mathscr{O}$ does not require the scale parameter $\mathtt t$ to divide either $\mathtt s^{\mathtt a}$ or $\mathtt s^{\mathtt b}$. As a result, $\mathtt E_0 \setminus \mathtt E_1$ is in general non-empty, as we have seen in \eqref{spill}. 

\item The construction in $\mathscr{L}$ favours only a single digit sequence $\mathbf 0$, assigning it the entirety of the extra mass $\varepsilon$. The operation $\mathscr{O}$ spreads the bias $\varepsilon$ over a certain subset of digit sequences. The counterpart of $\mathbb L^{\ast}$ in $\mathscr{L}$ is a singleton. Having the bias concentrated only on $\mathtt I(0)$ allows the resulting measure to be pointwise non-normal in more bases than $\{2^n : n \in \mathbb N\}$ (as we have shown in \cite{PZ-1}), preventing the normality-seeking behaviour.  The size of the favoured digit set $\mathscr{D}$ affects both the Frostman behaviour as well as the features of normality of the resulting measure, as we will see in Propositions \ref{Frostman-prop} and \ref{normal-prop}. 

\item The first part of $\mathscr{O}$ has no analogue in $\mathscr{L}$, which is a single step operation. Unlike $\mathscr{L}$, the first step of $\mathscr{O}$ distributes the mass $\mathtt w(\mathtt I)$ evenly among the descendants of an intermediate scale. As a consequence, one obtains better decay estimates than merely Rajchman for ``most'' Fourier coefficients. It is still possible to achieve the Rajchman property without this step, as shown in \cite{l86}. However, establishing normality for a specified collection of bases requires Fourier decay estimates that are finer than merely Rajchman. Distributing the mass uniformly over the first generation intervals achieves the desired effect. 
\end{itemize} 
\subsubsection{$\mathscr{O}$ and $\mathscr{P}$} \label{O-vs-P-section} 
The elementary operation $\mathscr{P}$ in \cite{p81} has three scale parameters $\mathtt t < \mathtt s^{\mathtt a} < \mathtt s^{\mathtt b}$ and two parts, similar to $\mathscr{O}$. At the first step of $\mathscr{P}$, a basic interval $\mathtt I \in \mathscr{I}_0$ of length $\mathtt t^{-1}$ is decomposed into $\mathtt N$ sub-intervals $\mathtt J_k$ of equal length $\mathtt s^{-\mathtt a}$, modulo small spill-overs. Depending on a a pre-fixed choice of $(\mathtt b - \mathtt a)$-long favoured digit sequences, certain sub-intervals $\mathtt I(\mathbf j) \subseteq \mathtt J_k(\mathtt I)$ of length $\mathtt s^{-\mathtt b}$ are retained, others are removed. This part of the construction ensures non-normality in base $\mathtt s$, and is identical to its counterpart in $\mathscr{O}$.  The second step of $\mathscr{P}$ involves another round of elimination of certain intervals $\mathtt I(\mathbf j)$ based on a different criterion \cite[Lemma 3]{p81}. This step is non-constructive and depends on the existence of certain subintervals $\mathtt I(\mathbf j)$, in general a strict subset of those obtained in the previous step, whose left endpoints give rise to small exponential sums in bases that are multiplicatively independent of $\mathtt s$. This establishes normality.     
\vskip0.1in 
\noindent The points of distinction between $\mathscr{O}$ and $\mathscr{P}$ are the following:
\begin{itemize} 
\item The elementary operation $\mathscr{P}$ is the building block for a Cantor-type construction of a set, not a measure. It is natural to ask whether the resulting set, which is of specified (non)-normality, supports a Rajchman measure. The answer turns out to be no, in general, as we prove in Section \ref{Pollington-not-Rajchman-section}. In contrast, $\mathscr{O}$ is crafted to achieve the Rajchman property.  
\vskip0.1in 
\item The second step of $\mathscr{P}$ has no analogue in $\mathscr{O}$. Whereas $\mathscr{P}$ uses an exponential sum estimate as a criterion for selection of basic intervals, the construction $\mathscr{O}$ does not require any such constraint. While this makes $\mathscr{O}$ simpler to describe, the normality that it imparts is not as simple to verify as for a set built using $\mathscr{P}$. Indeed the bulk of the technical complexity in Part \ref{part-normality} of this article lies in {\em{establishing}} the smallness of certain exponential sums, a property that was easily available as a defining feature for $\mathscr{P}$.  
\end{itemize} 

\section{Construction of a skewed measure} \label{iteration-section}
In Section \ref{elem-op-section}, we described an elementary operation $\mathscr{O}$ that transforms one set-density pair $(\mathtt E_0, \Phi_0)$ into another $(\mathtt E_1, \Phi_1)$.  In this section, we iterate this elementary operation on many scales to craft a singular measure $\mu$ on $[0,1]$. Its properties, as claimed in Theorems \ref{mainthm-1} and \ref{mainthm-2}, will be verified in later sections. 
\subsection{Choosing the bases for non-normality} \label{iteration-parameters-section} 
%We will iterate the elementary operation in Section \ref{elem-op-section} on many scales to find a singular measure $\mu$ on $[0,1]$. 
Let us fix a choice of parameters:
\begin{align}  \label{param-seq}
&\pmb{\Pi} := (\mathcal S, \mathcal A, \mathcal B, \mathcal D, \mathcal E), \; \text{ where each of the collections } \; \mathcal S := \bigl\{\mathtt s_m \bigr\} \subseteq \mathbb N \setminus \{1, 2\}, \\ & \mathcal A := \bigl\{\mathtt a_m \bigl\} \subseteq \mathbb N, \quad \mathcal B := \bigl\{\mathtt b_m \bigr\} \subseteq \mathbb N, \quad \mathcal D := \bigl\{\mathscr{D}_m \bigr\}, \quad  \mathcal E := \bigl\{\varepsilon_m\bigr\} \subseteq (0, 1), \nonumber
\end{align} 
is a sequence (either of numbers or sets) indexed by $m \in \mathbb N$. The entries of $\mathcal S, \mathcal A, \mathcal B$ are positive integers with the property that 
\begin{equation} \label{abs}
\mathtt a_m < \mathtt b_m \quad \text{ and } \quad 2 \mathtt s_{m}^{\mathtt b_m} < \mathtt s_{m+1}^{\mathtt a_{m+1}} \quad \text{ for all } \quad m \geq 1. 
\end{equation} 
The sequence $\mathcal S$, whose members need not be distinct, identifies the bases in which points in supp$(\mu)$ are most likely to be non-normal. The defining feature of non-normality is a bias towards certain digits or sequences of digits. For $m \geq 1$, the non-empty set $\mathscr{D}_m$, which satisfies
\begin{equation} \label{rmsm-assumption}
 0 \in \mathscr{D}_m \subsetneq \mathbb Z_{\mathtt s_m} = \bigl\{0, 1, \ldots, \mathtt s_m-1 \bigr\}, \quad 1 \leq \mathtt r_m := \# \bigl(\mathscr{D}_m \bigr) < \mathtt s_{m}, 
\end{equation}
represents the choice of preferred digits in base $\mathtt s_m$. The indices $\mathtt a_m \in \mathcal A$ and $\mathtt b_m \in \mathcal B$ specify the endpoints of a block of a digit sequence in base $\mathtt s_m$ where the elements of $\mathscr{D}_m$ receive preferential treatment. The collective bias towards the digit sequences in $\mathscr{D}_m^{\mathtt b_m-\mathtt a_m}$ is quantified by $\varepsilon_m$ in $\mathcal E$.  
\subsection{The iteration} \label{iteration-subsection}
Starting with $E_0 := [0,1]$, $\mathtt s_0 := \mathtt b_0 := 1$, $\varphi_0(x) := 1_{[0,1]}(x)$, let us define 
\begin{equation} \label{iteration} 
(E_{m}, \varphi_m) := \mathscr{O}(\pmb{\gamma}_m) \; \text{ where } \; \pmb{\gamma}_m := \bigl(E_{m-1}, \varphi_{m-1}; \mathtt s_{m-1}^{\mathtt b_{m-1}}, \mathtt s_{m}, \mathtt a_m, \mathtt b_m, \mathscr{D}_m, \varepsilon_m \bigr), \quad m \geq 1, 
\end{equation} 
where $\mathscr{O}$ is the elementary operation \eqref{O-def} described in Section \ref{elem-op-section}. It follows from \eqref{E1} in that section that $E_m$ is a union of basic intervals of length $\mathtt s_m^{-\mathtt b_m}$ with disjoint interiors:  
\begin{equation} \label{def-Em}
E_m = \bigcup \bigl\{\mathtt I(\mathbf i_m) : \mathbf i_m \in \mathbb I_m \bigr\} \quad \text{ where } \quad \mathtt I(\mathbf i_m) := \alpha(\mathbf i_m) + \bigl[0, \mathtt s_m^{- \mathtt b_m} \bigr].
\end{equation} 
The multi-index $\mathbf i_m$ runs over a set $\mathbb I_m$ described in Section \ref{indexing-section-1} below. 
The operation $\mathscr O$ dictates that the sets $\{E_m\}$ are closed and nested with $E_{m+1} \subseteq E_m \subseteq [0,1]$. The limiting non-empty compact set $E := \bigcap_m E_m$ is the support of the singular measure $\mu$, also defined in that section. 

\subsection{Indexing the basic intervals of $E_m$} \label{indexing-section-1} 
The index set $\mathbb I_m$ in \eqref{def-Em} representing the basic intervals of $E_m$ is constructed recursively. Set
%through the operation $\mathscr{O}$. 
%For $m \geq 1$, set
\begin{equation} 
 \mathbb J_m := \mathbb K_m \times \mathbb L_m, \; \text{ with } \; \mathbb K_m  := \mathbb Z_{\mathtt N_m}, \; \; \mathtt N_m + 1 := \lfloor \mathtt s_m^{\mathtt a_m}/\mathtt s_{m-1}^{\mathtt b_{m-1}}\rfloor, \; \; \mathbb L_m := \mathbb Z_{\mathtt s_m^{\mathtt b_m - \mathtt a_m}}, \; \; m\geq 1 \label{def-J} 
 \end{equation} 
 so that $\mathbb J_m, \mathbb K_m, \mathbb L_m$ are the counterparts of $\mathbb J, \mathbb K, \mathbb L$ in \eqref{JKL}, \eqref{def-K}, \eqref{step-2-decomp} respectively. Then 
 %Given these, the index sets $\mathbb I_m$ are defined as follows, 
 \begin{equation} 
  %\bigl\{0, 1, \ldots, \mathtt N_m-1 \bigr\} \times \bigl\{0, 1, \ldots, \mathtt s_m^{\mathtt b_m-\mathtt a_m}-1 \bigr\} \subseteq \mathbb Z_{\geq 0}^{2}, 
 \mathbb I_1 := \mathbb J_1, \quad \mathbb I_m := \mathbb I_{m-1} \times \mathbb J_m = \bigl\{(\mathbf i_{m-1}, \mathbf j_m): \mathbf i_{m-1} \in \mathbb I_{m-1}, \; \mathbf j_m \in \mathbb J_m \bigr\} \subseteq \mathbb Z_{\geq 0}^{2m}. \label{def-I}
 \end{equation} 
The relation \eqref{step-2-basic} connects $\alpha(\mathbf i_m)$ and $\alpha(\mathbf i_{m-1})$ given by \eqref{def-Em}.
%between the left endpoint of the basic interval $\mathtt I (\mathbf i_{m-1})$ in $E_{m-1}$ and the left endpoints of its descendants $\mathtt I(\mathbf i_m)$ in $E_m$; 
For $\mathbf i_m = (\mathbf i_{m-1}, \mathbf j_m)$,  
 \begin{equation} \label{relation-between-endpoints}
\alpha(\mathbf i_m) = \alpha(\mathbf i_{m-1}) + \eta(\mathbf i_{m-1}) + k_m \mathtt s_m^{-\mathtt a_m} + \ell_m \mathtt s_m^{-\mathtt b_m},  \text{ where } \; \mathbf j_m = (k_m, \ell_m) \in \mathbb J_m.        
\end{equation} 
In \eqref{def-J} and \eqref{relation-between-endpoints} above, the quantities $\mathtt N_m$ and $\eta(\mathbf i_{m-1})$ are the corresponding counterparts of $\mathtt N$ and $\eta(\mathtt I)$, defined by the relations \eqref{N} and \eqref{def-m} respectively, with 
\begin{equation} 
\mathtt I = \mathtt I (\mathbf i_{m-1}), \; \mathtt s = \mathtt s_m, \; \mathtt a = \mathtt a_m \; \text{ and } \; \mathtt t = \mathtt s_{m-1}^{\mathtt b_{m-1}}, \; \text{ so that } \; \eta(\mathbf i_{m-1}) \in \bigl[0, \mathtt s_m^{-\mathtt a_m} \bigr), \; m \geq 1.  \label{etaim}
\end{equation} 
The initial left endpoint is set to $\alpha(\mathbf i_0) = 0$ by convention. 
\subsection{The densities $\varphi_m$ and associated random variables} \label{indexing-section-2}
For $m \geq 1$, the iteration process \eqref{iteration} identifies a probability density $\varphi_m$ supported on the set $E_m$ in \eqref{def-Em}. The mass assigned by $\varphi_m$ to the basic interval $\mathtt I(\mathbf i_m)$ is denoted by $\mathtt w(\mathbf i_m)$, an abbreviation of $\mathtt w \bigl(\mathtt I(\mathbf i_m)\bigr)$ from \eqref{second-step-weight}. In this notation, \eqref{weights}, \eqref{mass-wk} and \eqref{second-step-weight} take the form 
\begin{equation} \label{wt-sum}
\sum_{\mathbf i_m \in \mathbb I_m} \mathtt w(\mathbf i_m) = 1, \quad \sum_{\mathbf j_m \in \mathbb J_m} \mathtt w(\mathbf i_m) = \mathtt w(\mathbf i_{m-1}), \quad \mathtt w(\mathbf i_m) = \frac{\mathtt w(\mathbf i_{m-1})}{\mathtt N_m} \times \vartheta_{\ell_m}
(\mathtt s_m, \mathtt a_m, \mathtt b_m, \mathscr{D}_m, \varepsilon_m). 
\end{equation} 
with $\vartheta_{\ell}$ as in \eqref{choice-of-theta}. The collection $\mathbb L^{\ast}$ in \eqref{L*} used to define $\vartheta_{\ell}$ in \eqref{choice-of-theta} is replaced by
\begin{equation} \label{Lm-star} 
\mathbb L_m^{\ast} := \mathbb L^{\ast} (\mathtt s_m, \mathtt a_m, \mathtt b_m, \mathscr{D}_m) = \bigl\{\ell \in \mathbb L_m : \mathbf d(\ell, \mathtt s_m) \in \mathscr{D}_m^{\mathtt b_m- \mathtt a_m}\bigr\}. 
\end{equation} 
The formula for $\varphi_m$ follows from \eqref{Phi1}: 
\begin{equation}  \label{density} 
\varphi_m(x) := \mathtt s_m^{\mathtt b_m} \sum_{\mathbf i_m \in \mathbb I_m} \mathtt w(\mathbf i_m) 1_{\mathtt I(\mathbf i_m)}(x). 
\end{equation}   
\noindent The discussion in Section \ref{random-variables-section} permits $\varphi_m$ to be realized as the probability density of an appropriately defined random variable. Such a description is often useful in determining dependencies between events, as seen in \cite{{l86}, {PZ-1}}. Given a probability space $(\mathbb R, \mathbb P)$ with 
\begin{align}  
&{\text{an infinite sequence of independent random variables $\{\mathfrak Y_m, \mathfrak Z_m, \mathfrak U_m : m \geq 1\}$}}, \label{independence-rvs} \\
%\mathbb P(\mathfrak X_{m} &= \alpha(\mathbf i_{m})) = \mathtt w(\mathbf i_{m}), \; \mathbf i_{m} \in \mathbb I_{m}, \nonumber \\ 
&\mathbb P(\mathfrak Y_m = k) = \mathtt N_m^{-1} \text{ for } k \in \mathbb K_m, \; \; \mathbb P(\mathfrak Z_m = \ell) = \vartheta_{\ell} \text{ for } \ell \in \mathbb L_m, \;\;  \mathfrak U_m \text{ uniform on $[0, \mathtt s_m^{-\mathtt b_m}]$},  \nonumber
\end{align} 
the function $\varphi_m$ in \eqref{density} is the density of the random variable $X_m$, where
\begin{equation}  X_m = \mathfrak X_m + \mathfrak U_m, \qquad \mathfrak X_m = \mathfrak X_{m-1}' + \mathfrak Y_m \mathtt s_m^{-\mathtt a_m}+ \mathfrak Z_m \mathtt s_m^{-\mathtt b_m}.  \label{XmYmZmUm} 
\end{equation}  
The variable $\mathfrak X_m$, which depends only on $\{\mathfrak X_{m-1}', \mathfrak Y_m, \mathfrak Z_m\}$, represents the left endpoints of the basic intervals $\mathtt I(\mathbf i_m)$ at the end of the $m^{\text{th}}$ step of the iteration. 
\[ \mathbb P(\mathfrak X_m = \alpha(\mathbf i_m)) = \mathtt w(\mathbf i_m), \qquad \mathbf i_m \in \mathbb I_m. \]  
The variable $\mathfrak X_{m}'$ is completely determined by $\mathfrak X_m$; it takes the value $\alpha(\mathbf i_{m}) + \eta(\mathbf i_{m})$ whenever $\mathfrak X_{m} = \alpha(\mathbf i_{m})$. This description of $\varphi_m$ will play an important role in Section \ref{non-normality-section}.  
\subsection{Existence of a limiting measure} \label{measure-definition-section} 
The sequence \eqref{density} of probability densities $\{\varphi_{m}\}$ will lead to a measure $\mu$. We define it now. 
\begin{proposition} \label{def-mu-prop}  
Let $\pmb{\Pi} = (\mathcal S, \mathcal A, \mathcal B, \mathcal D, \mathcal E)$ be a choice of parameters given by \eqref{param-seq}, obeying the properties  \eqref{abs} and \eqref{rmsm-assumption}. Let  $\{(E_m, \varphi_m): m \geq 1 \}$ be the sequence of set-density pairs given by \eqref{iteration}, \eqref{def-Em} and \eqref{density}. Then there exists a Borel probability measure $\mu = \mu(\pmb{\Pi})$ supported on $E \subseteq [0,1]$ given by 
\begin{equation}
\mu = \mu(\pmb{\Pi}) := \text{weak}^{\ast} \lim_{m \rightarrow \infty} \varphi_m \quad \text{ with } \quad  E := \bigcap_{m=1}^{\infty} E_m.  \label{mu-weak-limit}
\end{equation} 
\end{proposition}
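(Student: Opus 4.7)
The strategy is to realize $\mu$ as the Riesz representative of a positive linear functional $L$ on $C([0,1])$, obtained as the pointwise limit $L(f) := \lim_m \int_0^1 f(x) \varphi_m(x)\, dx$. The cornerstone of the argument is a consistency property of the iteration: for every $m \geq 1$, every $\mathbf i_m \in \mathbb I_m$, and every $m' \geq m$,
\begin{equation}
\int_{\mathtt I(\mathbf i_m)} \varphi_{m'}(x)\, dx = \mathtt w(\mathbf i_m). \label{plan-consistency}
\end{equation}
This follows by induction on $m'$ from \eqref{wt-sum}: passing from level $m'$ to $m'+1$, the operation $\mathscr{O}$ deletes mass only on the ``spill-over'' portion $\mathtt I(\mathbf i_{m'}) \setminus \mathtt J(\mathtt I(\mathbf i_{m'}))$, but the density $\varphi_{m'}$ is already zero there by \eqref{Phi1}, and inside $\mathtt J(\mathtt I(\mathbf i_{m'}))$ the new mass distribution sums exactly to $\mathtt w(\mathbf i_{m'})$.

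The next step is to show that $\{\int f \, d\varphi_m\}$ is Cauchy for every $f \in C([0,1])$. Fix $\eps > 0$ and, using uniform continuity of $f$, choose $m$ large enough so that the oscillation of $f$ on any interval of length $\mathtt s_m^{-\mathtt b_m}$ is less than $\eps$; this is possible because \eqref{abs} forces $\mathtt s_m^{-\mathtt b_m} \to 0$. Since every basic interval of $E_{m'}$ lies inside some $\mathtt I(\mathbf i_m)$ for $m' \geq m$, combining \eqref{plan-consistency} with the oscillation bound yields
\[ \left| \int f\, d\varphi_{m'} - \sum_{\mathbf i_m \in \mathbb I_m} f(\alpha(\mathbf i_m))\, \mathtt w(\mathbf i_m) \right| \leq \eps \qquad \text{for all } m' \geq m, \]
so the sequence $\{\int f\, d\varphi_{m'}\}_{m'\geq 1}$ is Cauchy, and $L(f) := \lim_{m\to\infty}\int f\, d\varphi_m$ is well defined. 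The map $L$ is manifestly linear, positive, and normalized ($L(1) = 1$), so the Riesz representation theorem produces a unique Borel probability measure $\mu$ on $[0,1]$ with $L(f) = \int f\, d\mu$ for all $f \in C([0,1])$; this is precisely the weak-$*$ limit asserted in \eqref{mu-weak-limit}.

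Finally, one checks that $\supp(\mu) \subseteq E$. For each fixed $m$, the set $E_m$ is closed, and for every $m' \geq m$ the measure $\varphi_{m'}(x)\, dx$ is supported on $E_{m'} \subseteq E_m$. The Portmanteau theorem applied to the weak-$*$ convergence then gives
\[ \mu(E_m) \geq \limsup_{m'\to\infty} \int_{E_m} \varphi_{m'}(x)\, dx = 1, \]
hence $\mu(E_m) = 1$ for every $m$. Since $\{E_m\}$ is a nested decreasing sequence of closed sets, continuity of measure from above yields $\mu(E) = \lim_m \mu(E_m) = 1$, so $\mu$ is supported on the compact set $E$.

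The only step that requires genuine care is the consistency identity \eqref{plan-consistency}, which is really the engine of the whole argument; once it is in place, the Cauchy property, the Riesz representation, and the support statement are standard. There is no serious obstacle, because the construction in Section \ref{elem-op-section} was designed so that mass is reallocated only inside each basic interval and never across its boundary.
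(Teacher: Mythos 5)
Your proof is correct and follows essentially the same approach as the paper: both establish the Cauchy property by exploiting mass preservation on basic intervals (your consistency identity \eqref{plan-consistency} is what the paper achieves via the product form of $\mathbb I_{m'}$ and the telescoping in \eqref{wt-sum}), then invoke Riesz representation. The only cosmetic difference is in the support argument, where you use Portmanteau plus continuity from above while the paper uses compactness of $\supp(\mathtt f)$ and the increasing open cover $\{[0,1]\setminus E_m\}$; both are standard and equally clean.
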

\vskip0.1in     
\begin{proof} 
The conclusions of the proposition will follow from the assertions below. For any $\mathtt f \in \mathcal C[0,1]=$ the space of 1-periodic complex-valued continuous functions on $[0,1]$,
\begin{align} 
\langle \mu, \mathtt f \rangle :=  &\lim_{m \rightarrow \infty} \int_{0}^{1} \mathtt f(x) \varphi_m(x) \, dx \text{ exists, and is non-negative if } \mathtt f \geq 0, \label{limit} \\ 
&\bigl| \langle \mu, \mathtt f \rangle \bigr| \leq || \mathtt f || _{\infty}, \quad  \langle \mu, \mathtt 1_{[0,1]} \rangle = 1. \label{limit-2} \\
&  \langle \mu, \mathtt f \rangle = 0 \; \text{ if } \; \text{supp}(\mathtt f) \cap E = \emptyset. \label{limit-3}
\end{align} 
Indeed, the existence of the limit in \eqref{limit} combined with the first inequality in \eqref{limit-2} establishes $\mu$ as a non-negative bounded linear functional on $\mathcal C[0,1]$. The Riesz representation theorem identifes $\mu$ as a unique non-negative Borel measure on $[0,1]$; it is a probability measure by the second relation in \eqref{limit-2}. The final claim \eqref{limit-3} shows that $\mu$ is supported on $E$. 
\vskip0.1in 
\noindent We set about proving the assertions \eqref{limit}-\eqref{limit-3}.  Recalling from \eqref{density} the expression for $\varphi_m$, we obtain for any $\mathtt f \in \mathcal C[0,1]$ and every $m \geq 1$,   
\begin{equation} \label{cauchy-0}
\int_{0}^{1} \mathtt f(x) \varphi_m(x) \, dx = \mathtt s_m^{\mathtt b_m} \sum_{\mathbf i \in \mathbb I_m} \mathtt w(\mathbf i) \int_{\mathtt I(\mathbf i)} \mathtt f(x) \, dx = \sum_{\mathbf i \in \mathbb I_m} \mathtt w(\mathbf i) \mathtt f\bigl(x(\mathbf i)\bigr).
\end{equation} 
At the last step of \eqref{cauchy-0}, we have used the mean value theorem for integrals, so that $x(\mathbf i) \in \mathtt I(\mathbf i)$. 
%Since $m$ will remain fixed throughout the rest of the argument, we will drop the suffix $m$ for simplicity, using $\mathbb I$ to denote $\mathbb I_m$. 
For $m' > m$, let us note from \eqref{def-I} that 
\[ \mathbb I_{m'} = \mathbb I_m \times \overline{\mathbb J} \; \text{ where } \; \overline{\mathbb J} = \mathbb J_{m+1} \times \cdots \times \mathbb J_{m'}, \; \text{ so that } \sum_{\mathbf j \in \overline{\mathbb J}} \mathtt w(\mathbf i, \mathbf j) = \mathtt w(\mathbf i) \text{ for all } \mathbf i \in \mathbb I_m. \] 
The last identity follows from the second relation in \eqref{wt-sum}, applied $(m'-m)$ times. Inserting these representations of $\mathbb I_{m'}$ and $\mathtt w(\mathbf i)$ into \eqref{cauchy-0} leads to  
\begin{align*} 
\left| \int_{0}^{1} \mathtt f(x) \varphi_{m'}(x) \, dx - \int_{0}^{1} \mathtt f(x) \varphi_m(x) \, dx  \right| &= \Bigl| \sum_{\mathbf i \in \mathbb I_m} \sum_{\mathbf j \in \overline{\mathbb J}}\mathtt w(\mathbf i, \mathbf j) \mathtt f\bigl(x(\mathbf i, \mathbf j)\bigr) - \sum_{\mathbf i \in \mathbb I} \mathtt w(\mathbf i) \mathtt f\bigl(x(\mathbf i)\bigr) \Bigr| \\ 
&= \Bigl| \sum_{\mathbf i \in \mathbb I_m} \sum_{\mathbf j \in \overline{\mathbb J}}\mathtt w(\mathbf i, \mathbf j) \Bigl[ \mathtt f\bigl(x(\mathbf i, \mathbf j)\bigr) - \mathtt f\bigl(x(\mathbf i)\bigr) \Bigr] \Bigr| \\
&\leq \omega_m(\mathtt f) \sum_{\mathbf i \in\mathbb I_m} \sum_{\mathbf j \in \overline{\mathbb J}} \mathtt w(\mathbf i, \mathbf j) = \omega_m(\mathtt f).
\end{align*}
The final step in the display above uses the first relation in \eqref{wt-sum}, namely that the weights $\{\mathtt w(\mathbf i, \mathbf j) : (\mathbf i, \mathbf j) \in \mathbb I_{m'}\}$ sum up to 1. The penultimate step follows from the observation that $x(\mathbf i) \in \mathtt I(\mathbf i)$ and $x(\mathbf i, \mathbf j) \in \mathtt I(\mathbf i, \mathbf j) \subseteq \mathtt I(\mathbf i)$; this means $|x(\mathbf i) - x(\mathbf i, \mathbf j)| \leq |\mathtt I(\mathbf i)| = \mathtt s_m^{-\mathtt b_m}$, and therefore  
 \[ \bigl| \mathtt f\bigl(x(\mathbf i, \mathbf j)\bigr) - \mathtt f\bigl(x(\mathbf i)\bigr) \bigr| \leq  \omega_m(\mathtt f) := \sup \bigl\{|\mathtt f(x) - \mathtt f(x')| : x, x' \in [0,1], \; |x - x'| \leq \mathtt s_m^{-\mathtt b_m}  \bigr\}. \] 
The uniform continuity of $\mathtt f$ on $[0,1]$ dictates that its modulus of continuity $\omega_m(\mathtt f) \rightarrow 0$ as $m \rightarrow \infty$. Thus the complex-valued sequence $\bigl\{\langle \varphi_m, \mathtt f \rangle : m \geq 1\}$ is Cauchy, confirming the existence of the limit in \eqref{limit}. Since $\varphi_m \geq 0$, the statement concerning non-negativity of $\mu$ follows. Since the function $\varphi_m$ is a probability density, we have $||\varphi_m||_1=1$ for each $m$. Thus  
\[ |\langle \mu, \mathtt f \rangle | = \lim_{m \rightarrow \infty} |\langle \varphi_m, \mathtt f \rangle | = \lim_{m \rightarrow \infty} \Bigl| \int_{0}^{1} \mathtt f(x) \varphi_m(x) \, dx \Bigr| \leq \lim_{m \rightarrow \infty}||\mathtt f||_{\infty} ||\varphi_m||_1 = ||\mathtt f||_{\infty}, \] 
which leads to \eqref{limit-2}, with equality when $\mathtt f$ is the constant function 1.   
\vskip0.1in 
\noindent Finally, let us choose $\mathtt f \in \mathcal C[0,1]$ with 
\[ \text{supp}(\mathtt f) \subseteq [0,1] \setminus E = \bigcup_{m=1}^{\infty} [0,1] \setminus E_m. \]
Since supp$(\mathtt f)$ is compact and its (relatively) open cover $\{[0,1] \setminus E_m: m \geq 1\}$ is a monotone increasing sequence of sets, there exists $m_0 \geq 1$ such that 
\[ \text{supp}(\mathtt f) \subseteq [0,1] \setminus E_{m_0}, \quad \text{ which in turn implies }\quad \supp(\mathtt f) \subseteq [0,1] \setminus E_{m} \text{ for all } m \geq m_0. \] We know from the construction in Section \ref{iteration-section} that $\varphi_m$ is supported on $E_m$. This means that $\langle \varphi_m, \mathtt f \rangle = 0$ for all $m \geq m_0$, from which \eqref{limit-3} follows.    
%$x \notin E$. Then there exists $m \geq 1$ such that $x \in E_m^{c}$, and one can find $\kappa > 0$ such that $[x - \kappa, x+ \kappa] \subseteq E_m^{c}$. Since $E_m^{c} \subseteq E_{m'}^c$ for all $m' > m$, 
\end{proof} 
\begin{definition} \label{skewed-measure-def} 
Let  $\pmb{\Pi}= (\mathcal S, \mathcal A, \mathcal B, \mathcal D, \mathcal E)$ be a choice of parameters \eqref{param-seq} obeying \eqref{abs} and \eqref{rmsm-assumption}. The probability measure $\mu = \mu(\pmb{\Pi})$ constructed by the iterative application of $\mathscr{O}$, as described in Sections \ref{iteration-subsection}--\ref{indexing-section-2},  will henceforth referred to as a {\em{skewed measure}}. Proposition \ref{def-mu-prop} ensures its existence.
\end{definition}  

\section{Properties of skewed measures} \label{skewed-measure-properties-section} 
%{\color{red}{\bf{Need to restructure this section to identify which conditions imply which property.}}}
Having defined a skewed measure $\mu = \mu(\pmb{\Pi})$ in Section \ref{measure-definition-section}, the next step is to investigate its properties. This is the focus of this section.  
%A common set of assumptions on $\pmb{\Pi}$ is needed to establish these properties, beyond those required for its definition. 
Finer assumptions on the parameter $\pmb{\Pi}$ are needed to establish these properties, beyond the necessary ones \eqref{param-seq}--\eqref{rmsm-assumption} for creating a skewed measure.
Section \ref{preliminary-Pi-choice-section} summarizes these assumptions. Each of Sections \ref{nonnormality-statement-section}--\ref{normality-statement-section} states without proof a property of $\mu$, some under further conditions on $\pmb{\Pi}$. These properties, namely normality and non-normality, Frostman and Rajchman properties, are the four pillars of the proofs of Theorems \ref{mainthm-1} and \ref{mainthm-2}. Assuming the results recorded in this section, we prove the theorems in Section \ref{mainproof-section}, by furnishing choices of $\pmb{\Pi}$ that ensure simultaneous occurrence of these properties. Parts \ref{Part-nonnorm-F-R} and \ref{part-normality} of this article are devoted to proving the four main results, Propositions \ref{non-normality-prop}-\ref{normal-prop}, stated here.    
\vskip0.1in 
%n the remainder of this section, we will list several key properties of admissible skewed measures, some of them in full generality, some under additional constraints on $\pmb{\Pi}$. Assuming these properties, Theorems \ref{mainthm-1} and \ref{mainthm-2} are proved in Sections \ref{mainproof-1-section} and \ref{mainproof-2-section}. 
\noindent A few words about the notation, which will be henceforth used without further reference.  
\begin{itemize}
\item The sum (respectively union) of a collection of numbers (respectively elements) $\sigma(n)$ over an index set $\mathbb S$ will be written as 
\[ \sum_n \bigl\{\sigma(n) : n \in \mathbb S \bigr\}, \quad \text{respectively} \quad \bigcup_n  \bigl\{\sigma(n) : n \in \mathbb S \bigr\}. \] 
\item Inequalities from this section onward will often involve large (or small) positive constants, denoted generically by $C, C_0$ (or $c, c_0$ respectively). As a rule of thumb, $C_0$, $c_0$ refer to absolute constants, whereas $C$, $c$  indicate fixed constants for a given $\mu=\mu(\pmb{\Pi})$. Exact values of these constants may change from one occurrence to the next.
\end{itemize}
\subsection{Basic assumptions on $\pmb{\Pi}$} \label{preliminary-Pi-choice-section}
%{\color{blue} Comment: I believe the order of choosing parameters is as follows: }
Let us henceforth fix a choice of bases $\mathscr{B}' \subsetneq \mathbb N \setminus \{1\}$, as dictated by Theorems \ref{mainthm-1} and \ref{mainthm-2}. Let $\mathscr{C} \subseteq \mathbb N \setminus \{1, 2\}$ be a collection, minimal with respect to inclusion, such that \begin{equation} \label{what-is-C} 
\overline{\mathscr{C}} = \overline{\mathscr{B}'}.
\end{equation} 
While the choice of $\mathscr{C}$ is non-unique, the definition decrees that any two elements of $\mathscr{C}$ are multiplicatively independent, and every base in $\mathscr{B}'$ depends multiplicatively on a unique element of $\mathscr{C}$. Certain fundamental assumptions on $\pmb{\Pi}= (\mathcal S, \mathcal A, \mathcal B, \mathcal D, \mathcal E)$ involving $\mathscr{B}'$ and $\mathscr{C}$ underpin all the results in this paper. We state them here, defining en route auxiliary parameters $\mathfrak M, \mathfrak N$ and $\mathscr{D}$. Additional hypotheses beyond these baseline assumptions will be introduced as needed in the statements of the relevant results.  The order of selection of the parameters is as follows: 
\[\mathscr{B}' \to \mathscr{C} \to \mathcal{S} \to (\mathfrak M, \mathfrak N) \to \mathcal{E} \to (\mathcal{A},\,\mathcal{B}),\qquad \mathscr{C} \to \mathscr{D}(t) \to \mathcal{D}. \]
\vskip0.1in 
\begin{itemize}
	\item Let $\mathcal S = \{\mathtt s_m : m \geq 1\}$ be an indexed sequence of bases such that for each $\mathtt s \in \mathcal S$, 
		\begin{align}\label{s-condition-plus}
			\mathtt s = \mathtt{t}^n \; \text{ for some } \; n \in \mathbb{N} \; \text{ and } \; \mathtt t \in \mathscr{C}.
		\end{align} 
	Unlike $\mathscr{C}$, repetitions and dependencies are allowed in $\mathcal S$; in other words, there may exist $\mathtt t \in \mathscr{C}$ and $m, m', n, n' \in \mathbb N$, $m \ne m'$, such that $\mathtt s_m = \mathtt t^n \sim \mathtt t^{n'} = \mathtt s_{m'}$. This results in two integer-valued infinite sequences 
	\[ \mathfrak M(\mathtt t) = \{m_j = m_j(\mathtt t) : j \geq 1\} \subseteq \mathbb N \quad \text{ and } \quad \mathfrak N(\mathtt t) = \{n_j = n_j(\mathtt t) : j \geq 1 \} \subseteq \mathbb N \] with the following properties:
	\item The sequence $\mathfrak M(\mathtt t) \subseteq \mathbb N$ is strictly increasing, with
		\begin{equation} 
			\mathfrak M(\mathtt t) \cap \mathfrak M(\mathtt t') = \emptyset \; \text{ for } \; \mathtt t \ne \mathtt t', \quad \bigsqcup \bigl\{\mathfrak M(\mathtt t) : \mathtt t \in \mathscr{C} \bigr\} = \mathbb N.  \label{s-condition-0}
		\end{equation} 
	\item The sequence $\mathfrak N(\mathtt t)$ need not be monotone or distinct. It is related to $\mathfrak M(\mathtt t)$ by
		\begin{equation} 
			\mathtt s_{m_j} = \mathtt t^{n_j} \in \mathcal S \quad \text{ for all } m_j \in \mathfrak M(\mathtt t). \label{M-and-N} 
		\end{equation}  The first-time reader can choose $\mathfrak N(\mathtt t)$ to be the constant sequence 1 for simplicity. In this case, $\mathcal S$ and $\mathscr{C}$ coincide as sets, and $\mathfrak M(\mathtt t)$ identifies the indices of the sequence $\mathcal S$ that correspond to the occurrence of $\mathtt t$.     
	%&1 \leq m_1(\mathtt t) < m_2(\mathtt t) < \cdots \text{ such that }\nonumber  \\ 
	
%Suppose  that $\mathscr{C}$ is a minimal representation, as described above, of the collection $\mathscr{B}'$ consisting of the distinct elements of $\mathcal S$.
%then for every $\mathtt t \in \mathscr{B}'$, 
%\begin{equation} 
%\varepsilon_m \in \mathscr{K} \text{ for all } m, \qquad \varepsilon_m \rightarrow 0 \text{ as } m \rightarrow \infty; \label{e-condition-1} 
%\end{equation} Further, 
%We make the following assumption on $\pmb{\Pi}$: 
 \item In Propositions \ref{non-normality-prop} and \ref{Frostman-prop}, we assume that  for every $\mathtt t \in \mathscr{C}$, there exist a collection of restricted digits 
\begin{equation} \label{0-and-t-1}
\mathscr{D}(\mathtt t) \subsetneq \mathbb Z_{\mathtt t}, \quad 0 \in \mathscr{D}(\mathtt t), \quad (\mathtt t-1) \not\in \mathscr{D}(\mathtt t). \end{equation} 
The collection $\mathcal D = \bigl\{ \mathscr{D}_m : m \geq 1\bigr\}$ is defined as follows. For $m \in \mathbb N$, the relation \eqref{s-condition-0} identifies unique elements $\mathtt t \in \mathscr{C}$ and $j$ such that $m = m_j(\mathtt t) \in \mathfrak M(\mathtt t)$. We set  
\begin{equation} \label{s-condition-1}
\mathscr{D}_{m} := \Biggl\{\sum_{\ell=0}^{n-1} \mathtt d_{\ell} \mathtt t^{\ell} \, \bigl| \, \mathtt d_{\ell} \in \mathscr{D}(\mathtt t) \text{ for } 0 \leq \ell < n \Biggr\} \subsetneq \mathbb Z_{\mathtt s_{m}}, \text{ with } \mathtt s_m = \mathtt t^{n}, \;  n = n_j(\mathtt t).
\end{equation} 
Then \eqref{0-and-t-1} implies that $0 \in \mathscr{D}_m $ and $(\mathtt s_m - 1) \notin \mathscr{D}_m$ as required in \eqref{rmsm-assumption}. This special construction of $\mathscr{D}_{m}$ is not needed in Proposition \ref{Rajchman-prop} or \ref{normal-prop}.
\item The two sequences $\mathfrak M(\mathtt t), \mathfrak N(\mathtt t)$ are related to each other and to the parameters $\mathcal E$  via
\begin{equation}
	\sum_m \bigl\{ \varepsilon_{m} : m \in \mathfrak M(\mathtt t) \bigr\} = \infty.  \label{s-condition}
\end{equation} 
Put together, the assumptions \eqref{s-condition-plus}-\eqref{s-condition} say that for every $\mathtt t \in \mathscr{C}$, powers of $\mathtt t$ occur in the sequence $\mathcal S$ infinitely often. The restricted digit set $\mathscr{D}_m\subsetneq \mathbb Z_{\mathtt s_m}$ corresponding to $\mathtt s_m = \mathtt t^n$ consists of integers whose $n$-long digit sequence in base $\mathtt t$ lies in $\mathscr{D}(\mathtt t)^n$. The bias parameters $\varepsilon_m$ associated with these infinitely many indices $m$ form a divergent sum.
\item In Proposition \ref{non-normality-prop}, the sequences $\mathcal A, \mathcal B$ are required to obey  
\begin{equation} \label{am/bmzero}
\mathtt a_m < \mathtt b_m, \; 2 \mathtt s_{m-1}^{\mathtt b_{m-1}} < \mathtt s_{m}^{\mathtt a_{m}},\; \text{for all } m\geq 2, \quad\text{and} \quad \lim_{m \rightarrow \infty} \frac{\mathtt a_m}{\mathtt b_m} = 0.
\end{equation} 
\item Propositions \ref{Frostman-prop} and \ref{Rajchman-prop} need a different growth condition on $\mathcal A, \mathcal B$ than \eqref{abs}; namely,  
\begin{equation} \label{ambm-take-2}
\mathtt a_m < \mathtt b_m \quad \text{and}\quad  \mathtt s_{m-1}^{3\mathtt b_{m-1}} < \mathtt s_m^{\mathtt a_m} \quad \text{ for all } m. 
\end{equation}  
Proposition \ref{normal-prop} requires an even stronger growth property than \eqref{ambm-take-2}.  
%\label{e-condition-2} 
%\end{equation} 
\end{itemize}     
With these assumptions in place, we are ready to state the main results.
\subsection{Pointwise non-normality} \label{nonnormality-statement-section}
The iterative construction of a skewed measure $\mu = \mu(\pmb{\Pi})$ embeds a bias towards certain digits in some base of $\mathcal S$ at every step. One therefore expects points in the support of $\mu$ to develop a tendency for non-normality in these bases. Indeed, normality fails in a rather strong sense (even in its simplest form) under fairly mild hypotheses. Proposition \ref{non-normality-prop} below makes this precise. 
\vskip0.1in 
\noindent  
%Given any (indexed) sequence of bases $\mathcal S$ whose members may repeat, let $\mathscr{B}'$ denote the set of bases consisting of the distinct elements of $\mathcal S$. 
Recall that a number $x \in [0, 1)$ is {\em{simply $b$-normal}} if the defining condition \eqref{b-normal} holds for $k=1$. Clearly every $b$-normal number is simply $b$-normal, but not conversely. We describe a set of conditions under which $\mu$-almost every point fails to be simply $c$-normal, for every $c \in \mathscr{C}$.  

\begin{proposition}
%[Skewed measures $\mu(\pmb{\Pi})$ that are almost everywhere non-normal in $\mathscr{B}'$] 
\label{non-normality-prop}
For any choice of bases $\mathscr{B}' \subset \mathbb{N} \setminus \{1\}$, 
assume that $\mathscr{C}$ is a minimal representation set of $\mathscr{B}'$ as  in \eqref{what-is-C}. Suppose 
$\pmb{\Pi} = (\mathcal S,  \mathcal A, \mathcal B, \mathcal D, \mathcal E)$ obeys the conditions \eqref{s-condition-plus}-\eqref{am/bmzero}.  
%\eqref{s-condition-0}, \eqref{s-condition}, \eqref{s-condition-1} and \eqref{am/bmzero}. 
Let $\mu = \mu(\pmb{\Pi})$ be the resulting skewed measure, as in Definition \ref{skewed-measure-def}. Then 
%Suppose $\mathscr{B}' \subseteq \mathbb N \setminus \{1\}$ is a non-empty collection of bases. Then for any $\mathtt N_0 \geq 2$, any choice of $\mathscr{C}$ obeying \eqref{B'C} and any admissible choice \eqref{admissible-parameter-def} of parameters $\pmb{\Pi}$ as in Section \ref{setting-new-parameters-section},
$\mu$-almost every $x$ belongs to $\mathscr N(\cdot, \mathscr{B}')$, i.e., $\mu$-almost every point is not $b'$-normal for any $b' \in \mathscr{B}'$.
  %\quad \text{ where } \quad \mu = \mu(\pmb{\Pi}) \] 
 %is the probability measure constructed in Proposition \ref{def-mu-prop}. 
 %In other words, . 
More precisely, $\mu$-almost every point is not even simply $c$-normal for any $c \in \mathscr{C}$. 
 %= \mathscr{C}(\mathtt N_0)$, for $\mathscr{C}$ as in \eqref{B'C} and any choice of $\mathtt N_0 \geq 2$.  
% \vskip0.1in
% \noindent The infinite sequence $\mathcal S$ consists of elements of $\mathscr{B}'$.  In addition, $\mathcal A, \mathcal B, \mathcal E$ can be chosen to satisfy the hypotheses of Corollary \ref{Rajchman-corollary}, so that $\mu$ is Rajchman. 
\end{proposition}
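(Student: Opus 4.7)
The plan is to show that for each $c \in \mathscr{C}$, a specific base-$c$ digit appears with asymptotic frequency strictly less than $1/c$ in the expansion of $\mu$-almost every $x$. The mechanism is the digit bias embedded in the random variables $\mathfrak Z_m$ at indices $m \in \mathfrak M(c)$, and the main probabilistic tool is the second Borel--Cantelli lemma. Fix $c \in \mathscr{C}$ and choose $d^{\ast} := c - 1 \in \mathbb Z_c \setminus \mathscr{D}(c)$, available by \eqref{0-and-t-1}. For $m = m_j \in \mathfrak M(c)$, let $A_m := \{\mathfrak Z_m \in \mathbb L_m^{\ast}\}$. Direct computation from \eqref{choice-of-theta} using $|\mathbb L_m^{\ast}| = \mathtt r_m^{\mathtt b_m - \mathtt a_m}$ gives $\mathbb P(A_m) = \varepsilon_m + (1 - \varepsilon_m)(\mathtt r_m / \mathtt s_m)^{\mathtt b_m - \mathtt a_m} \geq \varepsilon_m$. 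Since $\{\mathfrak Z_m\}_{m \geq 1}$ is independent by \eqref{independence-rvs}, so is $\{A_m\}_{m \in \mathfrak M(c)}$; combined with the divergence $\sum_{m \in \mathfrak M(c)} \varepsilon_m = \infty$ from \eqref{s-condition}, the second Borel--Cantelli lemma yields that $A_m$ occurs infinitely often (for $m \in \mathfrak M(c)$) with full $\mu$-probability.

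The next step is to convert $A_m$ into a statement about the base-$c$ digits of $x$. Writing $\mathtt s_m = c^{n_j}$, definition \eqref{s-condition-1} says that $\ell \in \mathbb L_m^{\ast}$ is equivalent to all $n_j(\mathtt b_m - \mathtt a_m)$ base-$c$ digits of $\ell$ lying in $\mathscr{D}(c)$. From \eqref{XmYmZmUm} and \eqref{etaim}, both $\mathfrak X_{m-1}'$ and $\mathfrak Y_m \mathtt s_m^{-\mathtt a_m}$ are integer multiples of $\mathtt s_m^{-\mathtt a_m}$, so their sum contributes no base-$\mathtt s_m$ digit beyond position $\mathtt a_m$. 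Hence the base-$\mathtt s_m$ digits of $\mathfrak X_m$ at positions $\mathtt a_m + 1, \ldots, \mathtt b_m$ coincide with those of $\mathfrak Z_m$, and equivalently the base-$c$ digits of $\mathfrak X_m$ at positions $n_j \mathtt a_m + 1, \ldots, n_j \mathtt b_m$ equal the base-$c$ digits of $\mathfrak Z_m$. On $A_m$ each such digit lies in $\mathscr{D}(c)$, so none equals $d^{\ast}$. Finally, $x \in \mathtt I(\mathbf i_m) = [\alpha(\mathbf i_m), \alpha(\mathbf i_m) + c^{-n_j \mathtt b_m}]$, so the tail $x - \alpha(\mathbf i_m)$ is at most $c^{-n_j \mathtt b_m}$; a carry into a position $\leq n_j \mathtt b_m$ would require the base-$c$ digit of $\alpha(\mathbf i_m)$ at position $n_j \mathtt b_m$ to equal $c - 1$, but on $A_m$ that digit lies in $\mathscr{D}(c) \subseteq \{0, \ldots, c - 2\}$, ruling out any such carry off a $\mu$-null set.

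Combining these facts, on $A_m$ the count of $d^{\ast}$ among the first $n_j \mathtt b_m$ base-$c$ digits of $x$ is at most $n_j \mathtt a_m$, so the empirical frequency is bounded by $\mathtt a_m / \mathtt b_m$. Hypothesis \eqref{am/bmzero} drives this ratio to $0$ along the almost-surely infinite subsequence of $m$ for which $A_m$ occurs, so the limiting frequency of $d^{\ast}$ (if it exists) cannot equal $1/c$. Thus $\mu$-a.e.\ $x$ fails to be simply $c$-normal. Intersecting the resulting $\mu$-conull sets over the countable collection $c \in \mathscr{C}$ and invoking Schmidt's $\sim$-invariance of normality as in \eqref{N4} yields $\mu(\mathscr N(\cdot, \mathscr{B}')) = 1$. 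The principal technical hurdle is the carry analysis at the boundary position $n_j \mathtt b_m$: the assumption $(c - 1) \notin \mathscr{D}(c)$ in \eqref{0-and-t-1} is tailor-made to preclude a propagating carry there, making $d^{\ast} = c - 1$ (rather than, say, $0 \in \mathscr{D}(c)$) the natural digit to track.
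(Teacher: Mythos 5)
Your proof is correct and follows essentially the same route as the paper's: define the biased events (your $A_m$ coincide with the paper's $\mathscr{E}_{m_j}$), invoke independence of $\{\mathfrak Z_m\}$ plus the divergence \eqref{s-condition} to get Borel--Cantelli, convert the digit restriction from base $\mathtt s_m = c^{n_j}$ into base $c$, and use $\mathtt a_m/\mathtt b_m \to 0$ to kill the asymptotic frequency of $d^{\ast}$; the condition $(c-1)\notin\mathscr{D}(c)$ handles the expansion-uniqueness/carry issue in both treatments. The only cosmetic difference is that you track the single digit $c-1$ while the paper argues for every $\mathtt d\in\mathbb Z_{\mathtt t}\setminus\mathscr{D}(\mathtt t)$, and the paper packages the probabilistic computation as a standalone lemma (Lemma \ref{lemma-prob}) before the Borel--Cantelli step.
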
  
\noindent {\em{Remarks: }}
\begin{enumerate}[1.]
\item Proposition \ref{non-normality-prop} is proved in Section \ref{non-normality-section}.  
%\item Proposition \ref{non-normality-prop} does not assume any special property of the restricted digit sets $\mathcal D = \{\mathscr{D}_m\}$ beyond the basic requirement that each $\mathscr{D}_m$ is non-empty. 
\item As we will see in Section \ref{mainproof-section}, the criteria \eqref{s-condition-plus}-\eqref{M-and-N} and \eqref{s-condition} required by Proposition \ref{non-normality-prop} are relatively easy to meet. In fact, given any non-empty $\mathscr{B}' \subseteq \{2, 3, \ldots\}$, one can always find sequences $\mathcal S$ and $\mathcal E$ that obey \eqref{s-condition-plus}-\eqref{M-and-N} and \eqref{s-condition}.   
\item \label{non-normality-prop-variant} While Proposition \ref{non-normality-prop} has been stated in the form needed for Theorems \ref{mainthm-1} and \ref{mainthm-2}, its proof goes through for an arbitrary $\mathscr{C}$, not necessarily minimal, whose members may be multiplicatively dependent. For any $\pmb{\Pi}$ obeying \eqref{s-condition-plus}-\eqref{am/bmzero}, the proof yields that $\mu$-almost every $x$ is in $\mathscr{N}(\cdot, \overline{\mathscr{C}})$. An application of this statement may be found in Section \ref{abs-non-normal-section}.
\item It is natural to ask whether $\overline{\mathscr{C}} = \overline{\mathscr{B}'}$ is the largest set of bases, with respect to inclusion, for which the measure $\mu(\pmb{\Pi})$ is non-normal. The answer turns out to depend crucially on $\mathcal D$, and perhaps to a lesser extent on $\mathcal A, \mathcal B$. In Proposition \ref{normal-prop}, we provide a set of conditions on $\mathcal D, \mathcal A, \mathcal B$ for which $\overline{\mathscr{C}} = \overline{\mathscr{B}'}$ is indeed the maximal base set for non-normality. In Proposition \ref{less-than-maximal-nonnormaility-prop} of Section \ref{nonnormality-discussion-section}, we offer a criterion where it is not.   
\end{enumerate} 

\subsection{Frostman condition} 
Frostman's lemma \cite[Chapter 8, p112]{Mattila-1} is a ubiquitous tool in geometric measure theory, characterizing the size of a Borel set in terms of properties of probability measures supported on it. It establishes the following identity for all Borel sets $E$ in Euclidean space: 
%Specifically, it connects the Hausdorff dimension of a Borel set $E \subseteq \mathbb R^d$ to the mass that a measure on $E$ allocates to arbitrary Euclidean balls:
\begin{equation}  \text{dim}_{\mathbb H}(E) := \sup \left\{\alpha > 0 \; \Biggl| \; \begin{aligned} &\exists \text{ a probability measure $\nu$ supported on $E$ such } \\ &{\text{ that }} \sup \Bigl\{ \frac{\nu(B(x, r))}{r^{\alpha}} : x \in \mathbb R^d, r > 0 \Bigr\} < \infty. \end{aligned}  \right\}. \label{Hausdorff-dim-def}  \end{equation}    
A probability measure $\nu$ obeying the finite supremum condition in \eqref{Hausdorff-dim-def} is said to satisfy a Frostman-like ball condition of order $\alpha$. A family of $E$-supported measures $\{\nu_{\alpha} : \alpha < \text{dim}_{\mathbb H}(E) \}$ that obeys this condition for a sequence of exponents $\alpha$ leading up to the extremal value $\dim_{\mathbb H}(E)$ is termed a sequence of Frostman measures.  It is known from \cite{p81} that $\text{dim}_{\mathbb H}\bigl(\mathscr N(\mathscr{B}, \mathscr{B}')\bigr) = 1$ for any pair $(\mathscr{B}, \mathscr{B}')$ that is compatible in the sense of Definition \ref{compatibility-definition}. In this sub-section, we extract a sequence of Frostman measures from the class of skewed measures. 
\vskip0.1in 
\noindent Let $\mathtt C_0 > 1$ be a fixed integer; for instance $\mathtt C_0 = 100$ will suffice. For any $\eta > 0$,  let $\mathtt K_0 \geq 2$  be an integer depending only on $\mathtt C_0$ and $\eta$, satisfying
\begin{equation} \label{C0N0}
16\mathtt C_0^2 < \mathtt K_0^{\eta}. 
\end{equation} 
%We will describe a choice of parameters $\pmb{\Pi} = \pmb{\Pi}_{\eta} = (\mathcal S, \mathcal A, \mathcal B, \mathcal D, \mathcal E)$ with \eqref{param-seq}, \eqref{abs} and \eqref{rmsm-assumption} that obey the hypotheses (and therefore the conclusion) of Proposition \ref{non-normality-prop}, and which will additionally ensure a Frostman-like ball condition of order $(1 - \eta)$. 
For a collection $\mathscr{B}' \subseteq \mathbb N \setminus \{1\}$ as in Section \ref{preliminary-Pi-choice-section}, we choose, following \eqref{what-is-C}, a minimal set $\mathscr{C}_0$,     
\begin{equation} \label{C0-large}
 \overline{\mathscr{C}}_0 = \overline{\mathscr{B}'}  \quad \text{ with the additional restriction } \quad \mathscr{C}_0 \subseteq \bigl\{\mathtt K_0, \mathtt K_0+1, \ldots \bigr\}. 
\end{equation}
For each $\mathtt t \in \mathscr{C}_0$, the collection $\mathscr{D}(\mathtt t)$ in \eqref{0-and-t-1} has to be large, in the sense that   
\begin{equation} 
%&\frac{\mathtt C_0}{4} \leq \max_n \Bigl\{ \# \bigl[\mathscr{D}(\mathtt t) \cap [n, n+\mathtt C_0] \bigr] : 0\leq n < \mathtt t-1-\mathtt C_0  \Bigr\} \leq \frac{\mathtt C_0}{2}; \label{uniform-Dt} \\
 %&\text{ in particular, this means } 
 \frac{\mathtt t}{\mathtt C_0}  \leq \underline{\mathtt t} := \# \bigl[\mathscr D(\mathtt t)\bigr] < \mathtt t. \label{size-Dt} 
\end{equation} 
%Let $\mathfrak M(\mathtt t)$ and $\mathfrak N(\mathtt t)$ be the integer sequences satisfying the conditions \eqref{s-condition-0} and \eqref{s-condition}. A consequence of  \eqref{size-Dt} is the following size estimate for the restricted digit set $\mathscr{D}_m$ given by \eqref{s-condition-1}, for $m = m_j(\mathtt t) \in \mathfrak M(\mathtt t)$ and $n = n_j(\mathtt t) \in \mathfrak N(\mathtt t)$:  
%\begin{equation} \label{large-restricted-digit-set}
%\frac{\mathtt s_m}{4^n} \leq \mathtt r_m < \frac{\mathtt s_m}{2^n} \; \;  \text{ where } \; \; \mathtt r_m := \#(\mathscr{D}_m) = \underline{\mathtt t}^{n} < \mathtt s_m = \mathtt t^{n}. 
%\end{equation} 
\begin{proposition} \label{Frostman-prop} 
%For any non-empty collection of bases 
%$\mathscr{B}' \subseteq \mathbb N \setminus \{1\}$ and its equivalent representation 
%$\mathscr{C} \subseteq \{\mathtt N_0, \mathtt N_0+1, \ldots \}$, %he ordered sequence $\mathcal S$ only consists of the elements of $\mathscr{C}$. We will also assume that the restricted digit sets $\mathscr{D}_m$ obey the size restriction  
Assume $\mathscr{B}' \subseteq \mathbb N \setminus \{1\}$ is any non-empty collection of bases. Given any $\eta > 0$, let $\mathscr{C}_0 = \mathscr{C}_0 (\eta)$ be a minimal representation set of $\mathscr{B}'$ satisfying \eqref{C0N0}--\eqref{C0-large}. For any choice of parameters $\pmb{\Pi}_{\eta}$
satisfying the conditions \eqref{s-condition-plus}--\eqref{s-condition-1}, \eqref{ambm-take-2} and \eqref{size-Dt}, 
there is a constant $C_{\eta} = C(\pmb{\Pi}_\eta) > 0$ for which the measure $ \mu_{\eta} = \mu(\pmb{\Pi}_{\eta})$ satisfies
\begin{equation} \label{ball-condition-mu}
\mu_{\eta}(I) \leq C_{\eta} |I|^{1 - \eta} \quad \text{ for all intervals } I \subseteq \mathbb R. 
\end{equation} 
\end{proposition}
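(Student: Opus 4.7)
The strategy is to first establish the ball condition for basic intervals $\mathtt I(\mathbf i_m)$ at every scale $m$ of the iteration, and then extend to an arbitrary interval $I$ via a case analysis based on the relative size of $|I|$ and the scales $\mathtt s_m^{-\mathtt a_m}$, $\mathtt s_m^{-\mathtt b_m}$. The single key preliminary is the inequality $\mathtt r_m \geq \mathtt s_m^{1-\eta}$ for every $m$: writing $\mathtt s_m = \mathtt t^n$ with $\mathtt t \in \mathscr{C}_0$ (so $\mathtt t \geq \mathtt K_0$) via \eqref{s-condition-plus} and \eqref{C0-large}, the product structure \eqref{s-condition-1} gives $\mathtt r_m = (\#\mathscr{D}(\mathtt t))^n \geq (\mathtt t/\mathtt C_0)^n = \mathtt s_m^{1 - \log \mathtt C_0/\log \mathtt t}$, and \eqref{C0N0} guarantees $\log \mathtt C_0/\log \mathtt t \leq \log \mathtt C_0/\log \mathtt K_0 < \eta$.

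For a basic interval, the recursive weight formula \eqref{second-step-weight} together with the bounds $\vartheta_{\ell_k} \leq \mathtt r_k^{-(\mathtt b_k - \mathtt a_k)}$, $\mathtt N_k^{-1} \leq 2\mathtt s_{k-1}^{\mathtt b_{k-1}}/\mathtt s_k^{\mathtt a_k}$, and $\mathtt r_k \geq \mathtt s_k^{1-\eta}$ yields, after telescoping (noting $\mathtt s_0^{\mathtt b_0}=1$),
\[ \mathtt w(\mathbf i_m) \leq 2^m\, |\mathtt I(\mathbf i_m)|^{1-\eta}\, \mathtt s_m^{-\eta \mathtt a_m} \prod_{k=1}^{m-1} \mathtt s_k^{\eta(\mathtt b_k - \mathtt a_k)}.\]
The lacunary condition \eqref{ambm-take-2}, iterated, forces $\sum_{k=1}^{m-1} \mathtt b_k \log \mathtt s_k < \tfrac{1}{2} \mathtt a_m \log \mathtt s_m$, so the trailing factor is dominated by $\mathtt s_m^{-\eta\mathtt a_m/2}$; since $\mathtt s_m^{\mathtt a_m}$ triples in its logarithm at each step and hence grows doubly exponentially in $m$, the prefactor $2^m \mathtt s_m^{-\eta \mathtt a_m/2}$ is uniformly bounded. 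This gives $\mathtt w(\mathbf i_m) \leq C_\eta |\mathtt I(\mathbf i_m)|^{1-\eta}$ for all $\mathbf i_m$ and all $m$.

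For an arbitrary interval $I$ with $|I| < 1$, select $m$ with $\mathtt s_m^{-\mathtt b_m} \leq |I| < \mathtt s_{m-1}^{-\mathtt b_{m-1}}$, so $I$ meets at most two basic intervals at scale $m-1$. If $|I| \geq \mathtt s_m^{-\mathtt a_m}$, the mass inside each such parent $\mathtt I(\mathbf i_{m-1})$ is distributed \emph{uniformly} across $\mathtt N_m$ intermediate pieces $\mathtt J_k$ of length $\mathtt s_m^{-\mathtt a_m}$; since $I$ meets at most $3|I|\mathtt s_m^{\mathtt a_m}$ of them and $\mu(\mathtt J_k) \leq 2\mathtt w(\mathbf i_{m-1})\mathtt s_{m-1}^{\mathtt b_{m-1}}/\mathtt s_m^{\mathtt a_m}$, the basic bound at scale $m-1$ combined with $|I| < \mathtt s_{m-1}^{-\mathtt b_{m-1}}$ yields $\mu(I) \leq 12 C_\eta |I| \mathtt s_{m-1}^{\eta \mathtt b_{m-1}} \leq 12 C_\eta |I|^{1-\eta}$.

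The remaining sub-case, $\mathtt s_m^{-\mathtt b_m} \leq |I| < \mathtt s_m^{-\mathtt a_m}$, is the main obstacle: $I$ sits inside at most two intermediate pieces $\mathtt J_k$, but the mass of $\mu$ inside $\mathtt J_k$ is highly non-uniform, and the naive (maximum-weight)$\times$(basic-interval-count) bound overshoots by a factor $(\mathtt s_m/\mathtt r_m)^{\mathtt b_m - \mathtt a_m}$ that cannot be absorbed. I will instead exploit that the favored basic intervals inside $\mathtt J_k$ form a generalized Cantor set in base $\mathtt s_m$ with surviving digit set $\mathscr{D}_m$ and depth $\mathtt b_m - \mathtt a_m$; a standard self-similar counting estimate based on $\mathtt r_m \geq \mathtt s_m^{1-\eta}$ bounds the number of favored intervals in a ball of radius $r$ by $C\, \mathtt r_m^{\mathtt b_m - \mathtt a_m}\, \mathtt s_m^{1-\eta}\, (r \mathtt s_m^{\mathtt a_m})^{1-\eta}$, while the unfavored intervals have uniform density at scale $\mathtt s_m^{-\mathtt b_m}$ and contribute at most $|I|\mathtt s_m^{\mathtt a_m} \mathtt w(\mathbf i_{m-1})/\mathtt N_m$. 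Combining the two contributions, substituting the basic-interval bound $\mathtt w(\mathbf i_{m-1}) \leq C_\eta \mathtt s_{m-1}^{-(1-\eta)\mathtt b_{m-1}}$ together with $\mathtt N_m^{-1} \leq 2 \mathtt s_{m-1}^{\mathtt b_{m-1}}/\mathtt s_m^{\mathtt a_m}$, and invoking \eqref{ambm-take-2} to absorb the residual factor $\mathtt s_m^{1-\eta-\eta\mathtt a_m}\mathtt s_{m-1}^{\eta \mathtt b_{m-1}}$ (which is $\leq \mathtt s_{m-1}^{-\eta \mathtt b_{m-1}/2}$ once $\mathtt a_m$ is sufficiently large, the finitely many small-$m$ exceptions being harmless), yields $\mu(I) \leq C_\eta |I|^{1-\eta}$, which completes the proof.
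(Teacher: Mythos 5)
Your proposal is correct and follows essentially the same route as the paper: both rest on the key inequality $\mathtt r_m \geq \mathtt s_m^{1-\eta}$ forced by \eqref{C0N0}--\eqref{size-Dt}, both first establish the ball condition on basic intervals of scale $\mathtt s_m^{-\mathtt b_m}$, and both then handle the intermediate range $\mathtt s_m^{-\mathtt b_m} \le |I| < \mathtt s_m^{-\mathtt a_m}$ by exploiting the favored/unfavored dichotomy of subintervals of $\mathtt J_k$. The only differences are presentational: you prove the basic-interval bound by a direct telescoping product rather than the paper's induction, and where the paper first establishes a separate ball condition at every intermediate scale $\mathtt s_m^{-\mathtt a_m-j}$ (its Lemma with \eqref{Frostman-nonspecial-scales}) and then covers $I$, you instead count favored and unfavored basic intervals inside $I$ directly via the Cantor-set counting estimate (which you cite as standard but do not write out; it is correct, and amounts to the same favored/unfavored case analysis the paper carries out explicitly).
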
 
\noindent {\em{Remarks: }} 
\begin{enumerate}[1.]
\item Proposition \ref{Frostman-prop} is proved in Section \ref{Frostman-section}.  
%\item For a fixed constant $\mathtt C_0$ and as $\eta \rightarrow 0+$, one must choose $\mathtt N_0 \rightarrow \infty$ in order to meet \eqref{C0N0}.  
%\item Proposition \ref{Frostman-prop} requires a few new assumptions beyond those needed for Proposition \ref{non-normality-prop}. These criteria, namely \eqref{uniform-Dt}, \eqref{size-Dt} and \eqref{large-restricted-digit-set}), are independent of $\mathcal A, \mathcal B, \mathcal E$, but rely on the composition and cardinality of the sets $\mathscr{D}_m$. A skewed measure that distributes its bias among a larger class of restricted digits of roughly even spread has a stronger Frostman exponent.
%\item The behaviour of $\mathcal E$, key to the Rajchman condition, does not affect the Frostman property. 
\item On a related note, the probability measure $\nu_{\text{L}}$ on $\mathscr{N}(\cdot, \{2\})$ constructed by Lyons \cite{l86} does not obey a Frostman ball condition of any positive order. In that construction, for every $\ell \geq 1$, there exists a basic interval $I_{\ell} = [0, 2^{-K_{\ell}}]$ that receives the mass 
\[ \nu_{\text{L}}(I_{\ell}) = \prod_{j=1}^{\ell} \bigl[\varepsilon_j + (1 - \varepsilon_j) 2^{-K_j} \bigr]. \]
Choosing $\varepsilon_{\ell} = \ell^{-1}$ and $K_{\ell} = K^{\ell \lfloor \sqrt{\log \ell} \rfloor}$ for some large integer $K$ ensures that supp$(\nu_{\text{L}})$ is odd-normal but not even-normal, as has been shown in \cite{PZ-1}. However, 
%for this choice of parameters 
\begin{align*} 
&\nu_{\text{L}}(I_{\ell}) |I_{\ell}|^{-\eta} \geq 2^{\eta K_{\ell}}\prod_{j=1}^{\ell} \varepsilon_j = 2^{\eta K_{\ell}}/ \ell! \rightarrow \infty \text{ as } \ell \rightarrow \infty, \text{ for every $\eta > 0$}, \\ 
&\text{as a result of which } \not\exists \; \text{ any } \eta > 0 \text{ satisfying } \sup\Bigl\{ {\nu_{\text{L}}(I_{\ell})}{|I_{\ell}|^{-\eta}} : \ell \geq 1 \Bigr\} < \infty.   
 \end{align*}   
\item The Frostman property does not need further assumptions on parameters $\mathcal E$ in $\pmb{\Pi}$ except for $\varepsilon_m \in (0,1)$ as in \eqref{param-seq}. In particular, the divergence condition \eqref{s-condition} is not used. 
\end{enumerate} 

\subsection{Rajchman property}
Next we investigate the behaviour of the Fourier coefficients of $\mu$ at infinity. The formula for the $n^{\text{th}}$ Fourier coefficient of a measure has been given in \eqref{Fourier-coefficient-def}. Let us recall from \eqref{def-J} the definition of the integer $\mathtt N_m$. We will also need two quantities $\mathtt Q_m > 1$ and $\mathtt \tau_m < 1$ based on $\pmb{\Pi}$:  
\begin{equation} \label{Qtau}
\mathtt N_m + 1 \leq \mathtt s_m^{\mathtt a_m} \mathtt s_{m-1}^{-\mathtt b_{m-1}} < \mathtt N_m + 2, \qquad \mathtt Q_m := \mathtt s_m^{{(\mathtt a_m + \mathtt b_m)}/{2}}, \qquad \tau_m := \mathtt s_m^{{(\mathtt a_m - \mathtt b_m)}/{2}}.
\end{equation} 
The pointwise estimate on $\widehat{\mu}(\xi)$ will be based on these quantities.
\begin{proposition}
%[Admissible skewed measures are Rajchman]  
\label{Rajchman-prop}
Let $\pmb{\Pi}$ be a choice of parameters as in \eqref{param-seq} obeying \eqref{ambm-take-2} and \eqref{rmsm-assumption}. 
Then there exists a constant $C = C(\pmb{\Pi}) > 0$ for which the skewed measure $\mu = \mu(\pmb{\Pi})$ enjoys  the following property. 
%For any choice of parameters $\mathcal E = \{\varepsilon_m\}, \mathcal S = \{\mathtt s_m\}, \mathcal A = \{ \mathtt a_m \}, \mathcal B = \{\mathtt b_m\}$ as in \eqref{param-seq}, with 
%\begin{equation} \label{new-growth} 
%\mathtt s_m^{3\mathtt b_m} < \mathtt s_{m+1}^{\mathtt a_{m+1}} \quad \text{ for all } m \geq 1,  
%\end{equation} 
For all $m \in \mathbb N$,
\begin{equation} \label{pre-Rajchman}
|\widehat{\mu}(\xi)| \leq C(\pmb{\Pi}) \Bigl[ \varepsilon_{m} + \varepsilon_{m+1} + \tau_{m} + \tau_{m+1} + \mathtt N_{m+1}^{-\frac{1}{2}}\Bigr]  \quad
\text{ provided } \quad |\xi| \in (\mathtt Q_{m}, \mathtt Q_{m+1} ] \cap \mathbb N.  
\end{equation} 
Here $\mathtt N_m, \mathtt Q_m$ and $\tau_m$ are the parameters depending on $\pmb{\Pi}$ defined in \eqref{Qtau}. 
\end{proposition}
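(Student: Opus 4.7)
\medskip

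\noindent\textbf{Proof proposal.} The plan is to exploit the random-variable representation of $\mu$ from Section \ref{indexing-section-2}: writing $\mathfrak X_\infty = \lim_{m'\to\infty} \mathfrak X_{m'}$ with $\mathfrak X_{m'} = \sum_{j=1}^{m'}\bigl[\eta(\mathfrak X_{j-1}) + \mathfrak Y_j \mathtt s_j^{-\mathtt a_j} + \mathfrak Z_j \mathtt s_j^{-\mathtt b_j}\bigr]$, the weak-$\ast$ limit in Proposition \ref{def-mu-prop} gives $\widehat{\mu}(\xi) = E\bigl[e(\xi \mathfrak X_\infty)\bigr]$. Because $\{\mathfrak Y_j, \mathfrak Z_j : j\ge 1\}$ are jointly independent, the only obstruction to a clean product formula across levels is the $\eta$-correction $\eta(\mathfrak X_{j-1})$, which couples level $j$ to all earlier levels. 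I would exploit this partial factorization by isolating the critical level $m+1$, writing schematically
\[
\widehat{\mu}(\xi) \;=\; E\Bigl[e(\xi \mathfrak X_{m}')\,\mathcal G_{m+1}(\xi)\,\mathcal T_{m+1}(\xi)\Bigr],
\]
where
\[
\mathcal G_{m+1}(\xi) := \Bigl(\tfrac{1}{\mathtt N_{m+1}}\sum_{k=0}^{\mathtt N_{m+1}-1}e(\xi k\mathtt s_{m+1}^{-\mathtt a_{m+1}})\Bigr)\Bigl(\sum_{\ell\in\mathbb L_{m+1}}\vartheta_\ell\,e(\xi\ell\mathtt s_{m+1}^{-\mathtt b_{m+1}})\Bigr),
\]
and $\mathcal T_{m+1}(\xi)$ represents the contribution of levels $j\ge m+2$ (which depends on $\mathfrak Y_{m+1},\mathfrak Z_{m+1}$ only through $\eta$-corrections further down the cascade).

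\medskip

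The second step is to show that $\mathcal T_{m+1}$ is essentially trivial. Using \eqref{ambm-take-2}, for $|\xi|\le\mathtt Q_{m+1}\le\mathtt s_{m+1}^{\mathtt b_{m+1}}$ and $j\ge m+2$, one has $|\xi|\mathtt s_j^{-\mathtt a_j}\le \mathtt s_{m+1}^{\mathtt b_{m+1}}\mathtt s_{m+2}^{-\mathtt a_{m+2}}\le \mathtt s_{m+1}^{-2\mathtt b_{m+1}}$, so that each factor $e\bigl(\xi[\eta(\mathfrak X_{j-1})+\mathfrak Y_j\mathtt s_j^{-\mathtt a_j}+\mathfrak Z_j\mathtt s_j^{-\mathtt b_j}]\bigr)$ equals $1+O(\mathtt s_{m+1}^{-2\mathtt b_{m+1}})$ uniformly in the randomness. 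A telescoping/summation of these errors gives $|\mathcal T_{m+1}(\xi)-1|$ bounded by a tiny quantity much smaller than any of the desired terms on the right of \eqref{pre-Rajchman}. This reduces the estimate to bounding $|E[e(\xi \mathfrak X_m')\mathcal G_{m+1}(\xi)]|$. Next I would bound $\mathcal G_{m+1}(\xi)$ in two ways according to the diophantine position of $\xi/\mathtt s_{m+1}^{\mathtt a_{m+1}}$. The geometric-sum identity
\[
\Bigl|\tfrac{1}{\mathtt N_{m+1}}\sum_{k=0}^{\mathtt N_{m+1}-1}e(\xi k\mathtt s_{m+1}^{-\mathtt a_{m+1}})\Bigr|\;\le\;\min\!\Bigl(1,\;\tfrac{1}{\mathtt N_{m+1}\|\xi\mathtt s_{m+1}^{-\mathtt a_{m+1}}\|}\Bigr)
\]
yields the bound $\mathtt N_{m+1}^{-1/2}$ whenever $\|\xi\mathtt s_{m+1}^{-\mathtt a_{m+1}}\|\ge \mathtt N_{m+1}^{-1/2}$. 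In the remaining case, where $\xi$ is $\mathtt N_{m+1}^{-1/2}\mathtt s_{m+1}^{\mathtt a_{m+1}}$-close to an integer multiple of $\mathtt s_{m+1}^{\mathtt a_{m+1}}$, the second factor of $\mathcal G_{m+1}$ is estimated via the explicit formula \eqref{choice-of-theta}: splitting $\vartheta_\ell = (1-\varepsilon_{m+1})\mathtt s_{m+1}^{-(\mathtt b_{m+1}-\mathtt a_{m+1})} + \varepsilon_{m+1}\mathtt r_{m+1}^{-(\mathtt b_{m+1}-\mathtt a_{m+1})}\mathbf 1_{\mathbb L_{m+1}^{\ast}}(\ell)$, the biased piece contributes at most $\varepsilon_{m+1}$ while the uniform piece is a geometric sum bounded by a constant multiple of $\tau_{m+1}$ at the relevant scale $|\xi|\sim\mathtt Q_{m+1}$.

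\medskip

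Finally, to obtain the $\varepsilon_m+\tau_m$ terms, I would apply the same analysis one level earlier: $\mathfrak X_{m}'$ itself inherits the product structure of $\varphi_m$, and the identical splitting applied to level $m$ produces $\bigl|E[e(\xi\mathfrak X_m')]\bigr|$ bounded by $\varepsilon_m+\tau_m$ for $|\xi|>\mathtt Q_m$ (here using $|\xi|>\mathtt Q_m$ to ensure the corresponding geometric sum is effectively summable). Combining the factor-wise bounds, together with the negligible tail error from $\mathcal T_{m+1}$, delivers \eqref{pre-Rajchman}. The main technical obstacle is the first step: justifying the near-factorization of $\widehat\mu(\xi)$ across levels despite the cascading $\eta$-dependence. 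This is where the growth hypothesis \eqref{ambm-take-2} is crucial, since it forces $|\xi|\mathtt s_j^{-\mathtt a_j}$ for $j\ge m+2$ to be overwhelmingly small, rendering all subsequent $\eta$-corrections (and indeed the full tail) a perturbation of size easily absorbed by the $\mathtt N_{m+1}^{-1/2}$ term. The secondary difficulty — the diophantine case split that produces the $\mathtt N_{m+1}^{-1/2}$ versus $\tau_{m+1}$ trade-off — is standard but must be executed with care so that in the \textit{good} case only the geometric-sum bound is needed, and in the \textit{resonant} case the $\mathfrak Z_{m+1}$-factor delivers the $\varepsilon_{m+1}+\tau_{m+1}$ replacement.
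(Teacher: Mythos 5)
Your approach shares the basic architecture of the paper's proof — factorize across levels, isolate the level-$(m+1)$ exponential sums $\mathfrak A_{m+1},\mathfrak B_{m+1}$, and case-split by the diophantine position of $\xi$ — but there are two gaps, and the second one is fatal.

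First, the claim that $\mathcal T_{m+1}\approx 1$ is incorrect. The level-$j$ contribution for $j\ge m+2$ contains not only the $\eta$-correction $\eta(\mathfrak X_{j-1})\in[0,\mathtt s_j^{-\mathtt a_j})$ but also $\mathfrak Y_j\mathtt s_j^{-\mathtt a_j}$, and $\mathfrak Y_j$ ranges up to $\mathtt N_j-1\sim\mathtt s_j^{\mathtt a_j}/\mathtt s_{j-1}^{\mathtt b_{j-1}}$, so $\mathfrak Y_j\mathtt s_j^{-\mathtt a_j}$ is as large as $\mathtt s_{j-1}^{-\mathtt b_{j-1}}$ — nowhere near the $\mathtt s_{m+1}^{-2\mathtt b_{m+1}}$ scale you invoke. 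Consequently $e(\xi\cdot[\text{level-}j])$ need not be close to $1$, and the expectation $\mathcal T_{m+1}$ is not approximately $1$; it is bounded by $1$ in modulus, which is all that is needed, but only if the factorization across levels is clean. Because of the cascading $\eta$-dependence, the factorization you write does not hold exactly, and the paper resolves this by introducing the truncated sum $\Delta_{\mathtt M}(\xi;n)$ (which does factorize cleanly) and estimating the error $|\Gamma_{\mathtt M}-\Delta_{\mathtt M}|$ via Lemma~\ref{approximation-lemma}. This is a real technical point, not just a matter of phrasing, but it is fixable along the lines you indicate.

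The serious gap is in the case split. Your diophantine criterion $\|\xi\mathtt s_{m+1}^{-\mathtt a_{m+1}}\|\ge\mathtt N_{m+1}^{-1/2}$ does handle the non-resonant frequencies correctly, but the resonant case is broader than you account for. Take $\xi=\mathtt s_m^{\mathtt b_m}$: this lies in $(\mathtt Q_m,\mathtt Q_{m+1}]$ (since $\mathtt s_m^{\mathtt a_m}<\mathtt s_m^{\mathtt b_m}$ and $\mathtt s_m^{\mathtt b_m}\ll\mathtt s_{m+1}^{\mathtt a_{m+1}}$), and
\[
\|\xi\mathtt s_{m+1}^{-\mathtt a_{m+1}}\|=\mathtt s_m^{\mathtt b_m}\mathtt s_{m+1}^{-\mathtt a_{m+1}}<\mathtt s_m^{-2\mathtt b_m}\ll\mathtt N_{m+1}^{-1/2},
\]
so $\xi$ is firmly resonant by your criterion. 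But $|\xi|\ll\mathtt s_{m+1}^{\mathtt a_{m+1}}$, so your ``uniform piece bounded by $\tau_{m+1}$ at the scale $|\xi|\sim\mathtt Q_{m+1}$'' is not available: the formula of Lemma~\ref{BC-lemma} gives $\mathfrak C_{m+1}(\xi)\approx 1$. Your fallback to the level-$m$ estimate also fails, because $\mathtt s_m^{\mathtt b_m}\mid\xi$ forces $\mathfrak B_m(\xi)=\varepsilon_m\mathfrak B_m^{\ast}(\xi)+(1-\varepsilon_m)\mathfrak C_m(\xi)=\varepsilon_m+(1-\varepsilon_m)=1$ exactly. And the naive geometric-sum bound $|\mathfrak A_{m+1}(\xi)|\le(\mathtt N_{m+1}\|\xi\mathtt s_{m+1}^{-\mathtt a_{m+1}}\|)^{-1}$ is trivial (comparable to $1$) for this $\xi$. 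So none of the three quantities you propose to exploit is small. What actually saves the day is the paper's Lemma~\ref{A2-lemma}: when $\mathtt s_m^{\mathtt b_m}\mid\xi$ and $|\xi|^2\le\mathtt s_m^{\mathtt b_m}\mathtt s_{m+1}^{\mathtt a_{m+1}}$, one has $|\mathfrak A_{m+1}(\xi)|\le C_0\mathtt N_{m+1}^{-1}$, a much stronger bound that comes from a cancellation in the \emph{numerator} of $\mathfrak A_{m+1}$ — specifically, $\xi\mathtt N_{m+1}\mathtt s_{m+1}^{-\mathtt a_{m+1}}$ is close to an integer because $\mathtt N_{m+1}+1=\lfloor\mathtt s_{m+1}^{\mathtt a_{m+1}}/\mathtt s_m^{\mathtt b_m}\rfloor$ and $\mathtt s_m^{\mathtt b_m}\mid\xi$. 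This is the heart of Proposition~\ref{prop-G} and of the estimate for the cross-level product $\mathfrak G_m=\mathfrak B_m\mathfrak A_{m+1}$ used on the low range $\Xi_1(m)$. Without this observation, the resonant, low-$|\xi|$ frequencies are left uncontrolled; you need either to supply it or to find a substitute.
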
 
\noindent The growth condition \eqref{ambm-take-2} implies that $\mathtt s_{m-1}^{\mathtt a_{m-1}} < \mathtt s_{m-1}^{\mathtt b_{m-1}} < \mathtt s_{m}^{\frac{\mathtt a_{m}}{3}} < \mathtt s_m^{\mathtt a_m}$. In other words, the sequence $\{\mathtt s_m^{\mathtt a_m}: m \geq 1\}$ is strictly increasing, and hence $\mathtt Q_m, \mathtt N_m \rightarrow \infty$. If $\mathcal A, \mathcal B$ also satisfy \eqref{am/bmzero}, then $\tau_m \rightarrow 0$ as well. Substituting these into \eqref{pre-Rajchman} yields the following.  
\begin{corollary} \label{Rajchman-corollary}
If the parameter $\pmb{\Pi} = (\mathcal S, \mathcal A, \mathcal B, \mathcal D, \mathcal E)$ obeys the conditions \eqref{am/bmzero}, \eqref{ambm-take-2} and $\varepsilon_m \rightarrow 0$, then $\mu = \mu(\pmb{\Pi})$ is Rajchman.  
\end{corollary}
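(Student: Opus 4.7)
The plan is simply to apply the pointwise bound \eqref{pre-Rajchman} of Proposition \ref{Rajchman-prop} and verify that each of the five quantities on its right-hand side tends to zero as $m \to \infty$. For this, I would first check that the intervals $(\mathtt Q_m, \mathtt Q_{m+1}]$ cover all sufficiently large positive integers, so that for every $\xi \in \mathbb Z$ with $|\xi|$ large enough there is a unique index $m = m(\xi)$ with $|\xi| \in (\mathtt Q_m, \mathtt Q_{m+1}] \cap \mathbb N$, and $m(\xi) \to \infty$ as $|\xi| \to \infty$. That $\mathtt Q_m \to \infty$ follows immediately from \eqref{ambm-take-2}: the sequence $\{\mathtt s_m^{\mathtt a_m}\}$ is strictly (and super-geometrically) increasing, hence $\mathtt Q_m = \mathtt s_m^{(\mathtt a_m + \mathtt b_m)/2} \ge \mathtt s_m^{\mathtt a_m} \to \infty$.

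Next I would dispose of the five terms in \eqref{pre-Rajchman} one by one. The $\varepsilon_m$ and $\varepsilon_{m+1}$ terms vanish by the standing hypothesis $\varepsilon_m \to 0$. For the $\tau_m$ terms, the condition $\mathtt a_m/\mathtt b_m \to 0$ in \eqref{am/bmzero}, together with $\mathtt a_m \ge 1$, forces $\mathtt b_m \to \infty$ (otherwise $\mathtt a_m/\mathtt b_m$ would stay bounded below) and $\mathtt b_m - \mathtt a_m \ge \mathtt b_m/2$ eventually; since $\mathtt s_m \ge 3$ by $\mathcal S \subseteq \mathbb N \setminus \{1,2\}$, one obtains
\[
\tau_m = \mathtt s_m^{-(\mathtt b_m - \mathtt a_m)/2} \le 3^{-\mathtt b_m/4} \to 0.
\]
For the last term, \eqref{Qtau} gives $\mathtt N_{m+1} \ge \mathtt s_{m+1}^{\mathtt a_{m+1}} \mathtt s_m^{-\mathtt b_m} - 2$, and \eqref{ambm-take-2} then yields $\mathtt N_{m+1} \ge \mathtt s_m^{3\mathtt b_m - \mathtt b_m} - 2 = \mathtt s_m^{2\mathtt b_m} - 2 \to \infty$, so $\mathtt N_{m+1}^{-1/2} \to 0$. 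Substituting all of this into \eqref{pre-Rajchman} and letting $|\xi| \to \infty$ (equivalently $m(\xi) \to \infty$) proves $\widehat{\mu}(\xi) \to 0$.

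There is no real obstacle here, since Proposition \ref{Rajchman-prop} does all the work; the only thing to be careful about is confirming that \emph{both} growth conditions \eqref{am/bmzero} and \eqref{ambm-take-2} together are genuinely needed — the former to drive $\tau_m \to 0$ through the exponent $\mathtt b_m - \mathtt a_m$, and the latter both to push $\mathtt Q_m \to \infty$ (so that the intervals partition large integers) and to push $\mathtt N_{m+1} \to \infty$. Once these three elementary observations are lined up, the corollary is an immediate consequence.
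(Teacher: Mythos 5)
Your proposal is correct and takes essentially the same approach as the paper: apply the pointwise bound \eqref{pre-Rajchman} from Proposition \ref{Rajchman-prop}, note that \eqref{ambm-take-2} forces $\mathtt Q_m \to \infty$ and $\mathtt N_m \to \infty$ while \eqref{am/bmzero} forces $\tau_m \to 0$, and combine with $\varepsilon_m \to 0$. Your write-up simply spells out the elementary verifications that the paper compresses into the paragraph immediately preceding the corollary.
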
 
\noindent {\em{Remarks: }} 
\begin{enumerate}[1.] 
\item Proposition \ref{Rajchman-prop} is proved in Section \ref{Rajchman-section}, based on estimates gathered in Sections \ref{exp-sum-section} and \ref{mu-Rajchman-section}.
\item Like non-normality (Proposition \ref{non-normality-prop}), the Rajchman property (Proposition \ref{Rajchman-prop}) is also relatively easy to meet, by imposing the additional mild restriction that $\mathcal E$ converges to zero.  
\item \label{remark-Rajchman} Proposition \ref{Rajchman-prop} does not bear an explicit relation with $\mathscr{B}'$ or $\mathscr{C}$ as mentioned in Section \ref{preliminary-Pi-choice-section}. That means the parameters $\mathcal{S}$ can be an arbitrary indexed sequence of bases, which is only required to satisfy \eqref{ambm-take-2}, and need not obey \eqref{s-condition-plus}--\eqref{M-and-N}.  Neither does the proposition require any assumptions on the restricted digit sets $\mathscr{D}_m$ beyond \eqref{rmsm-assumption}.
%Proposition \ref{Rajchman-prop} does not require any assumptions on the restricted digit sets $\mathscr{D}_m$ beyond \eqref{rmsm-assumption}. Neither does it bear an explicit relation with $\mathscr{B}'$ or $\mathscr{C}$ as mentioned in Section \ref{preliminary-Pi-choice-section}. That means the parameters $\mathcal{S}$ need not obey \eqref{s-condition-plus}-\eqref{M-and-N}, only \eqref{abs}. 
%\item Comparing Propositions \ref{Frostman-prop} and \ref{Rajchman-prop} yields the following observation. Unlike the Frostman property (Proposition \ref{Frostman-prop}), the Rajchman property (Proposition \ref{Rajchman-prop}) is relatively mild and easy to ensure for skewed measures. Although Proposition \ref{Rajchman-prop} demands a stronger growth condition \eqref{ambm-take-2} compared to \eqref{abs}, it holds for any choice of $\mathcal S \subseteq \{3, 4, \ldots\}$ and for {\em{any}} choice of restricted digit sets $\mathscr{D}_m \subseteq \mathbb Z_{\mathtt s_m}$, regardless of their size or structure. In contrast, Proposition \ref{Frostman-prop} requires the elements of $\mathcal S$ to grow, both as $m \rightarrow \infty$ and as $\eta \rightarrow 0$. As a consequence, the results of Sections \ref{exp-sum-section}, \ref{mu-Rajchman-section} and \ref{Rajchman-section} leading to the proof of Proposition \ref{Rajchman-prop} are independent of $\mathtt r_m = \#(\mathscr{D}_m)$ as long as $1 \leq \mathtt r_m < \mathtt s_m$.  \label{Rajchman-independence}
\end{enumerate}

\subsection{Pointwise normality} \label{normality-statement-section} 
Finally we address the question: is there any base with respect to which $\mu$-almost every point is normal? In view of Proposition \ref{non-normality-prop}, certain bases must be eliminated from this consideration. Our goal is to show that, under appropriate hypotheses on $\mathcal S, \mathcal A, \mathcal B, \mathcal D$ and regardless of $\mathcal E$, the answer is affirmative for all other allowable integer bases. 
\vskip0.1in 
\noindent The proof of pointwise normality requires the exponent sequences $\mathcal A$ and $\mathcal B$ to grow far more rapidly than they did in previous propositions, for instance in \eqref{ambm-take-2}. The choice of such rapidly increasing sequences $\mathcal A$ and $\mathcal B$ depends on the sequence of bases $\mathcal{S}$.
For any given $\mathcal{S}=\{\mathtt s_m:\; m\geq 1\} \subset \mathbb N \setminus \{1\}$, the proof determines a numerical sequence $\{\kappa_m : m \geq 1\} \subseteq (0, 1)$; see Lemma \ref{Schmidt-lemma-extremal-digits} for the definition of $\kappa_m$. 
Fixing a large absolute constant $C_0 \geq 10$, we then choose a fast-increasing sequence of integers $\mathcal K = \{\mathtt K_m : m \geq 1\}$ such that 
\begin{align} 
%&1 < \mathtt K_1 < \mathtt K_2 < \cdots < \mathtt K_m < \mathtt K_{m+1} \text{ for all $m$ with the property that} \label{Km-increasing}  \\ 
& \mathtt K_1 \geq C_0, \; \log(\mathtt K_m) \geq (m \mathtt K_{m-1} \mathtt s_{m-1})^{C_0} \; \text{ and } \; \kappa_m \mathtt K_m/\mathtt s_m^{C_0} \rightarrow \infty \text{ as } m \rightarrow \infty;\label{Km-choice} \\  
&\text{for instance, choosing } C_0 = 100 \text{ and } \log(\mathtt K_m) \geq \bigl( \kappa_m^{-1} \mathtt s_m + m \mathtt K_{m-1} \mathtt s_{m-1} \bigr)^{100} \text{ will suffice.} \nonumber 
\end{align}  
The sequences $\mathcal A = \{\mathtt a_m\}$, $\mathcal B = \{\mathtt b_m\}$ will be of the following form: 
\begin{equation} \label{normality-hypotheses}
\mathtt a_m = \mathtt s_m^{\mathtt K_m^2}, \quad \mathtt b_m = 2m\mathtt a_m, \quad
\text{which implies} \quad 1 \leq \mathtt a_m < \mathtt a_{m+1}. 
\end{equation}   
We ask the reader to verify the criterion \eqref{ambm-take-2} for this choice of $\mathcal A, \mathcal B$, in light of \eqref{Km-choice}. 
%Precisely, $\mu$-almost every point is normal in every base of $\mathscr{B} = \{2, 3, \ldots \} \setminus \overline{\mathscr{B}'}$, provided the parameter $\pmb{\Pi}$ in the construction of $\mu = \mu(\pmb{\Pi})$ is suitably chosen. 
\begin{proposition} \label{normal-prop}
%Given any non-empty collection of bases $\mathscr{B}' \subseteq \mathbb N \setminus \{1\}$,
Given any non-empty collection of bases $\mathscr{B}' \subseteq \mathbb N \setminus \{1\}$ and one of its minimal representations $\mathscr{C}$ obeying \eqref{what-is-C}, let $\mathcal S = \{\mathtt s_m : m \geq 1\} \subseteq \mathbb N \setminus \{1,2\}$ be an ordered sequence of (not necessarily distinct) bases obeying \eqref{s-condition-plus}.
Then there exists a numerical sequence $\{\kappa_m = \kappa (\mathtt s_m): m \geq 1\} \subseteq (0,1)$ as follows.  
\vskip0.1in 
\noindent Suppose that $\mu = \mu(\pmb{\Pi})$ is the skewed measure associated with $\pmb{\Pi} = (\mathcal S, \mathcal A, \mathcal B, \mathcal D, \mathcal E)$, where $\mathcal A, \mathcal B$ obey \eqref{Km-choice} and \eqref{normality-hypotheses} with this choice of $\{\kappa_m\}$, $\mathcal E = \{\varepsilon_m \} \subseteq (0, 1)$, and  
%any $\mathcal E = \{\varepsilon_m \} \subseteq (0, 1)$, and any sequence of digit sets $\mathcal D = \{\mathscr{D}_m \}$ obeying 
\begin{equation} \label{normality-digit-01} 
\{0, 1\} \subseteq \mathscr{D}_m \subsetneq \mathbb Z_{\mathtt s_m} \quad \text{ for all } m \geq 1.
\end{equation}
Then  $\mu$-almost every point $x$ is $b$-normal for every
\begin{equation} \label{bases-BC-bar}
b \in \mathscr{B} := \{2, 3, \ldots\} \setminus \overline{\mathscr B'}.
\end{equation} 
%Here $\mathscr{B'}$ denotes the (unordered) collection of bases consisting of the elements of $\mathcal S$, 
%and $\overline{\mathscr C}$, defined as in \eqref{N4}, is the closure of $\mathscr{C}$ under $\sim$, 
%so that $(\mathscr{B}, \overline{\mathscr{B}'})$ is a maximally compatible pair in the sense of Definition \ref{compatibility-definition}. 
\end{proposition}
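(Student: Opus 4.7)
The plan is to fix $b \in \mathscr{B}$ and a nonzero integer $h$, and reduce $\mu$-a.e.\ $b$-normality to pointwise decay of the Weyl averages
\[ S_N(x;h) := \frac{1}{N} \sum_{n=1}^{N} e(hb^n x). \]
By Wall's reformulation, $x$ is $b$-normal iff the orbit $(b^n x)_{n\geq 1}$ is equidistributed mod $1$, which by Weyl's criterion is equivalent to $S_N(x;h) \to 0$ for every $h \in \mathbb Z\setminus\{0\}$. Since the set of bases $\mathscr{B}$ and the set of multipliers $h$ are countable, a single $\mu$-null exceptional set will suffice. A standard variance argument (Davenport–Erdős–LeVeque) then reduces the $\mu$-a.e.\ vanishing of $S_N(\cdot;h)$ to a summability/averaging condition on $\int |S_N|^2\,d\mu$, which expands into a double sum of Fourier coefficients $\widehat{\mu}\bigl(h(b^{n_1}-b^{n_2})\bigr)$. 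The entire problem thus distills to proving sharp enough pointwise bounds on $|\widehat{\mu}(\xi)|$ along the sparse integer sequence $\xi = h(b^{n_1}-b^{n_2})$.

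The Rajchman bound of Proposition \ref{Rajchman-prop} is far too crude for this — $\varepsilon_m$ need not even tend to $0$, and $\tau_m, \mathtt N_{m+1}^{-1/2}$ are not quantitatively tied to $b$. The next step, carried out in Sections \ref{u-pointwise-estimate-section}–\ref{v-pointwise-section-take-2}, is to derive a refined factorization of $\widehat{\mu}(\xi)$ using the independent random-variable representation of Section \ref{random-variables-section}:
\[ \widehat{\mu}(\xi) \;=\; \prod_{j\geq 1} \bigl[ u_j(\xi)\, v_j(\xi)\, w_j(\xi) \bigr], \qquad u_j(\xi) := \frac{1}{\mathtt N_j}\!\sum_{k \in \mathbb K_j}\! e\bigl(k\xi\, \mathtt s_j^{-\mathtt a_j}\bigr),\ \ v_j(\xi):= \!\sum_{\ell \in \mathbb L_j}\! \vartheta_\ell\, e\bigl(\ell\xi\, \mathtt s_j^{-\mathtt b_j}\bigr), \]
with $w_j$ the contribution of the uniform factors $\mathfrak U_j$. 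The smallness of $u_j(\xi)$ requires $\xi\mathtt s_j^{-\mathtt a_j}$ to be quantitatively bounded away from $\mathbb Z$, and similarly for $v_j(\xi)$ at scale $\mathtt s_j^{-\mathtt b_j}$. For $\xi = h(b^{n_1}-b^{n_2})$, multiplicative independence $b\not\sim \mathtt t$ for every $\mathtt t\in \mathscr{C}$ (equivalently $b\not\sim \mathtt s_j$ for all $j$) is exactly the hypothesis that, by Schmidt's number-theoretic lemma \cite[Lemma 5]{s60} and its quantitative refinements recalled in Section \ref{number-theoretic-tools-section}, supplies an effective lower bound on $\|\xi\, \mathtt s_j^{-\mathtt a_j}\|_{\mathbb R/\mathbb Z}$ governed by the constants $\kappa_m = \kappa(\mathtt s_m)$ of Lemma \ref{Schmidt-lemma-extremal-digits}.

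This separation must now be converted into a usable bound on the factors. The role of the hypothesis $\{0,1\}\subseteq \mathscr{D}_m$ is to guarantee that the biased measure $\vartheta_\ell$ on the digit alphabet is non-degenerate enough for $v_j(\xi)$ to be estimated by an exponential sum over a digit set containing two consecutive residues, which cannot conspire to cancel against Schmidt-separated phases. Given $\xi$, one selects the scale $m=m(\xi)$ via \eqref{Qtau}, uses the effective Schmidt separation at scale $m$ to bound $|u_{m+1}(\xi)|$ or $|v_{m+1}(\xi)|$ by a quantity decaying polynomially in $\kappa_m^{-1}$ against $\mathtt s_m^{\mathtt a_m}$, and combines with the trivial bound $|u_j|,|v_j|\leq 1$ at other scales. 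The astronomical growth \eqref{Km-choice}–\eqref{normality-hypotheses} is calibrated precisely so that, when the resulting pointwise bound is inserted into the double sum for $\int |S_N|^2\,d\mu$, the off-diagonal contribution is summable along a Davenport–Erdős–LeVeque subsequence for every $h$ and every $b\in\mathscr{B}$.

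The principal obstacle is the second step: converting $b\not\sim \mathtt s_j$ into an \emph{effective} lower bound on $\|h b^n\, \mathtt s_j^{-\mathtt a_j}\|_{\mathbb R/\mathbb Z}$, uniform enough in $(j,n)$ to survive taking a product across all scales and a double sum in $(n_1,n_2)$. Schmidt's bookkeeping loses a factor depending on $\kappa_m$, which feeds back into the choice of $\mathcal A, \mathcal B$; the delicate matching of growth rates, scale localization of $\xi$, and number-theoretic separation — while simultaneously controlling the ``wrong scale'' factors $u_j, v_j$ for $j\neq m$ — constitutes the bulk of the technical work in Part \ref{part-normality}.
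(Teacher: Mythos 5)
The high-level scaffolding you describe — Wall's criterion, Weyl's criterion, the Davenport--Erd\H{o}s--LeVeque summability reduction, Schmidt's number-theoretic input, calibration of $\mathcal A,\mathcal B$ against $\kappa_m$ — is all correct and matches the paper. However, your central analytic claim, the infinite product formula $\widehat{\mu}(\xi)=\prod_{j\geq 1}\bigl[u_j(\xi)v_j(\xi)w_j(\xi)\bigr]$, is false. The elementary operation $\mathscr{O}$ does not align basic intervals exactly across scales: each $\mathtt J(\mathtt I)\subsetneq\mathtt I$ is offset by the spill-over $\eta(\mathtt I)\in[0,\mathtt s^{-\mathtt a})$ introduced in \eqref{def-m}, needed because $\mathtt t$ need not divide $\mathtt s^{\mathtt a}$. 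These offsets accumulate across generations and destroy exact multiplicativity of the Fourier coefficient. The paper says this explicitly in the discussion following \eqref{not-a-product}, and works around it by introducing the spill-over-free approximant $\Delta_{\mathtt M}(\cdot;m)$ of \eqref{PsiM0} (which \emph{does} factorize) and then bounding the error $\Gamma_{\mathtt M}-\Delta_{\mathtt M}$ in Lemma \ref{approximation-lemma}. Your proposal does not acknowledge this obstruction; everything downstream of the factorization would need to be reorganized around an approximation argument.

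A second, independent gap concerns how Schmidt's lemmas enter. You propose to obtain, for a given $\xi=h(b^{n_1}-b^{n_2})$, an \emph{effective pointwise} lower bound on $\|\xi\mathtt s_j^{-\mathtt a_j}\|_{\mathbb R/\mathbb Z}$, choose a scale $m=m(\xi)$, and dominate $|\widehat{\mu}(\xi)|$ by a single small factor at that scale. No such uniform separation exists: along the DEL sequence there are frequencies for which the relevant phase is arbitrarily close to an integer, e.g.\ whenever $\mathtt p_1^{\mathtt K}\mid(b^{n_2-n_1}-1)$, which is precisely the set isolated in $\mathbb V_{21}$ of \eqref{V2-decomp}. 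The paper's mechanism is different and essential: it first telescopes $\widehat{\mu}=\sum_m(\widehat{\varphi}_m-\widehat{\varphi}_{m-1})$, then splits each difference into $\mathtt u_m+\mathtt v_m$ (the two stages of $\mathscr{O}$), and for the difficult $\mathtt v_m$-part it partitions the index set $\mathbb V_2$ (see \eqref{V2-decomp} and \eqref{V2-decomp-2}) into an \emph{exceptional} piece where the digit pattern is degenerate and a \emph{generic} piece where it is not. On the exceptional piece it pays only the trivial bound $|\mathtt v_m|\leq C_0$ but uses Schmidt's counting lemmas (Lemmas \ref{Schmidt-lemma-extremal-digits}, \ref{Schmidt-mult-indep-lemma}, \ref{E-cardinality-lemma}) to show the piece is small; on the generic piece it invokes the favorable exponential-sum estimate of Lemma \ref{restricted-lemma}, which is exactly where $\{0,1\}\subseteq\mathscr{D}_m$ is used. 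This ``few bad indices handled by counting, many good indices handled analytically'' structure — averaged through the $N^{-3}$ weights of DEL — rather than any pointwise Fourier decay of $\widehat{\mu}$, is the core of Part \ref{part-normality}. Your plan as written would not recover it.
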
 
\noindent {\em{Remarks: }} 
\begin{enumerate}[1.] 
\item In Part \ref{part-normality} of this paper, we reduce the proof of Proposition \ref{normal-prop} to the verification of a summability criterion; see Proposition \ref{DEL-difference-prop} in Section \ref{normality-overview-section}. A schematic structure of the proof is given in Figure \ref{normality-proof} on page \pageref{normality-proof}. The details are carried out in Sections \ref{normality-overview-section}--\ref{estimating-vm-section-Part2}. 
\item The constants $\kappa_m = \kappa(\mathtt s_m)$ appearing in Proposition \ref{normal-prop} depend only on $\mathtt s_m$, and are related to properties of integers expressed in base $\mathtt s_m$. The defining property of $\kappa_m$ appears in Lemma \ref{Schmidt-lemma-extremal-digits}. See also page \pageref{kappa-crossref}. 
\item Unlike Proposition \ref{Rajchman-prop}, the inclusion of at least two digits in $\mathscr{D}_m$ is essential for Proposition \ref{normal-prop}. Choosing $\mathscr{D}_m = \{0\}$ for all $m$ is in general insufficient for ensuring maximal normality; see Proposition \ref{less-than-maximal-nonnormaility-prop}. 
%as observed in \cite{PZ-1}. Indeed, $\mathscr{C} = \{2\}$ and $\mathscr{B} = \{2n+1 : n \geq 1\}$ for the skewed measure $\nu_{\text{L}}$ considered in that article, and hence $(\mathscr{B}, \overline{\mathscr{C}})$ is not a maximally compatible pair. 
It is noteworthy that the simple expedient of adding an extra digit destroys any extraneous non-normality in $\mu(\pmb{\Pi})$ permitted by $\mathscr{D}_m =\{0\}$.   
\item Propositions \ref{Rajchman-prop} and \ref{normal-prop} point to an interesting connection between the rate of decay of $\widehat{\mu}$ and normality on supp$(\mu)$. The faster the growth of $\mathcal A = \{\mathtt a_m\}, \mathcal B = \{\mathtt b_m\}$ (and hence $\mathtt Q_m$ in \eqref{Qtau}), the slower the decay of the upper bound in \eqref{pre-Rajchman}. At the same time, $\mu$-everywhere normality is directly linked to the phenomenon that {\em{most}} Fourier coefficients are small, in a sense quantified by a criterion of Davenport, Erd\H{o}s and LeVeque (Lemma \ref{DEL-lemma}). The growth conditions on $\mathcal A, \mathcal B$ given by \eqref{Km-choice}, ensure ``smallness of the majority'' of Fourier coefficients. Large values of $|\widehat{\mu}|$, even if infrequent, could be far greater than the average.   
%On the other hand, fast-growing $\{\mathtt a_m\}$ as given by \eqref{Km-choice} and \eqref{normality-hypotheses} is a requirement for the application of Proposition \ref{normal-prop}, which ensures maximal allowable normality on $\text{supp}(\mu)$. We do not know whether the growth condition \eqref{pre-Rajchman} on $\{\mathtt a_m\}$ is essential for maximal normality, or an artifact of the proof technique. 
\item The hypotheses of Proposition \ref{normal-prop} does not place any constraint on $\mathcal E$. In particular, $\pmb{\Pi}$ need not satisfy $\varepsilon_m \rightarrow 0$, as required by Corollary \ref{Rajchman-corollary} for the Rajchman property. 
%A measure $\mu(\pmb{\Pi})$ obeying the requirements (and therefore the conclusion) of Proposition \ref{normal-prop} need not be Rajchman.  
\end{enumerate}

\section{Conditional proofs of Theorems \ref{mainthm-1} and \ref{mainthm-2}} \label{mainproof-section}
\subsection{Propositions \ref{non-normality-prop}, \ref{Rajchman-prop} and \ref{normal-prop} imply Theorem \ref{mainthm-1}} \label{mainthm-1-proof-section} 
\begin{proof} 
Let $(\mathscr{B}, \mathscr{B}')$ be a compatible pair of sets consisting of integer bases, in the sense of Definition \ref{compatibility-definition}. If $\mathscr{B}' = \emptyset$, then $\mathscr{B} = \mathbb N \setminus \{1\}$, so that $\mathscr{N}(\mathscr{B}, \mathscr{B}')$ consists of absolutely normal numbers in $[0,1)$. Borel's theorem \cite{b09} furnishes Lebesgue measure on $[0,1)$ as the necessary Rajchman measure in this case. Henceforth, $\mathscr{B}'$ will be assumed to be non-empty. Set 
\[  \mathscr{A}' := \mathscr{B}', \quad \mathscr{A} := \{2, 3, \ldots\} \setminus \overline{\mathscr{B}'} \supseteq \mathscr{B}, \] 
so that $(\mathscr{A}, \mathscr{A}')$ a maximally compatible pair, as defined on page \pageref{compatibility-definition}. We choose $\pmb{\Pi}$ so that the corresponding skewed measure $\mu = \mu(\pmb{\Pi})$ is Rajchman, and $\mu$-almost every $x$ obeys the criterion
\begin{equation} \label{revised-mainthm-1} x \in \mathscr{N}(\mathscr{A}, \mathscr{A}') \subseteq \mathscr{N}(\mathscr{B}, \mathscr{B}'), \text{ which is the conclusion of Theorem \ref{mainthm-1}}. \end{equation} 
\vskip0.1in
\noindent In order to specify $\pmb{\Pi}$, let us first fix a minimal representation $\mathscr{C} \subseteq \mathbb N \setminus \{1, 2\}$ of $\mathscr{A}'= \mathscr{B}'$ obeying \eqref{what-is-C}, and a sequence $\mathcal E_0 = \{\mathtt e_j : j \geq 1\} \subseteq (0, 1)$ such that 
\begin{equation} \label{kappaj}
\mathtt e_j \searrow 0, \qquad \sum_{j=1}^{\infty} \mathtt e_j = \infty; \quad \text{e.g. setting } \mathtt e_j = \frac{1}{j} \text{ will suffice.} 
\end{equation}
Next, we specify two sequences $\mathcal S = \{\mathtt s_m: m \geq 1\}$ and $\mathcal E = \{\varepsilon_m : m \geq 1\}$ satisfying \eqref{s-condition-plus}--\eqref{M-and-N} and \eqref{s-condition}.  
%Every element of $\mathcal S$ is multiplicatively dependent on some base of $\mathscr{B}'$. 
This can be achieved in many ways. For instance given an enumeration of $\mathscr{C} = \{\mathtt t_1, \mathtt t_2, \ldots \}$, 
%is an enumeration of a minimal representation of $\mathscr{B}'$ as described in Section \ref{nonnormality-statement-section}, i.e.
%\begin{equation} \label{minimal-representation-def}
%\mathtt t_k \nsim \mathtt t_{\ell} \; \text{ for } k \neq \ell, \qquad \overline{\mathscr {C}} = \overline{\mathscr{B}'}, 
%\end{equation} 
a possible choice of $(\mathcal S, \mathcal E)$ is given in Table \ref{table-1}, with
\begin{equation} \mathtt s_m = \mathtt t_{m - 2^j + 2} \; \text{ and } \; \varepsilon_m = \mathtt e_j \; \text{ for all } \; m \in \bigl\{2^j-1, 2^j, \ldots, 2^{j+1}-2 \bigr\}. \label{sm-arrangement} \end{equation}  
We observe that $\mathcal S, \mathcal E$ coincide with $\mathscr{C}, \mathcal{E}_0$ respectively as sets, but their elements appear with multiplicity in a prescribed order. With $\mathcal{S}$ as in Table \ref{table-1}, the sequences $\mathfrak M(\mathtt t) = \{m_j(\mathtt t) \}$ and $\mathfrak N(\mathtt t) = \{n_j(\mathtt t) \}$ given by \eqref{s-condition-0} and \eqref{M-and-N} are automatically determined; namely
%In the notation of \eqref{s-condition-0}--\eqref{s-condition} and 
for $\mathtt t = \mathtt t_k \in \mathscr{C}$, 
\begin{equation} m_j(\mathtt t) = k + 2^j - 2 \quad \text{ and } \quad n_j(\mathtt t) = 1 \quad \text{ for } j \geq 1. \label{M-and-N-choice} \end{equation}  
%A possible enumeration is shown in Figure \ref{table-1}; here every $b' \in \mathscr{B}'$ corresponds to a unique base $\mathtt t \in \mathscr{C}$ that occurs in $\mathcal S$ infinitely often, and $\mathtt t$ is the only element of $\mathcal S$ with the property $\mathtt t \sim b'$. For $\mathcal S$ as in Figure \ref{table-1}, $\mathtt s_{m} = \mathtt t$ for all $m = m_j(\mathtt t)$, and $\mathfrak N(\mathtt t)$ is the constant sequence 1. 
Then we set $\mathscr{D}(\mathtt t) :=  \{0, 1\}$ so that \eqref{0-and-t-1} holds, and
\begin{equation}
%	&\mathscr{D}(\mathtt t) :=  \{0, 1\}\; \; \text{} \nonumber \\
	\mathscr{D}_{m_j(\mathtt t)} := \mathscr{D}(\mathtt t) = \{0, 1\} \subsetneq \mathbb Z_{\mathtt s_{m_j(\mathtt t)}} = \mathbb Z_{\mathtt t}.
	%\; \mathtt s_{m_j(\mathtt t)} = \mathtt t^{n_j(\mathtt t)} = \mathtt t.
	%\Biggl\{\sum_{\ell=0}^{n_j(\mathtt t)-1} \mathtt d_{\ell} \mathtt t^{\ell} \, \bigl| \, \mathtt d_{\ell} \in \mathscr{D}(\mathtt t) \text{ for } 0 \leq \ell < n_j(\mathtt t) \Biggr\} \subsetneq  
	\label{not-Frostman-D} 
\end{equation} 
Thus the conditions \eqref{s-condition-1} and \eqref{normality-digit-01} are satisfied.
%In other words, $\mathfrak N (\mathtt t)$ is the constant sequence 1 for each $\mathtt t \in \mathscr{C}$.
%where $\mathscr{C}$ denotes a minimal representation of $\mathscr{B}'$, as described in Section \ref{nonnormality-statement-section}. 
%There are many choices of $\mathcal S$ and $\mathcal E$ obeying \eqref{s-condition-0} and \eqref{s-condition}. 

\begin{table}[htb]
		\begin{center}
			\begin{tabular}{|l|ll|llll|l|lll|l}
				\hline
				$j$ & 1 &   &  2 &   &   &   & $\cdots$ & $\ell$ &  & & $\cdots$\\
				\hline
				$m$ & 1 & 2 & 3 & 4 & 5 & 6 & $\cdots$ & $2^{\ell}-1$ & $\cdots$ & $2^{\ell +1}-2$ & $\cdots$\\
				\hline
				$\mathtt s_m$ & $\mathtt t_1$ & $\mathtt t_2$ & $\mathtt t_1$ & $\mathtt t_2$ & $\mathtt t_3$ & $\mathtt t_4$ & $\cdots$ & $\mathtt t_1$ & $\cdots$ & $\mathtt t_{2^{\ell}}$ & $\cdots$\\
				\hline
				$\varepsilon_m$ & $\mathtt e_1$ & $\mathtt e_1$ & $\mathtt e_2$ & $\mathtt e_2$ & $\mathtt e_2$ & $\mathtt e_2$ & $\cdots$ & $\mathtt e_{\ell}$ & $\cdots$ & $\mathtt e_{\ell}$ & $\cdots$\\
				\hline
			\end{tabular}
		\end{center}	
		\caption{A possible choice of $m$, $\mathtt s_m$ and $\varepsilon_m$, given $\mathcal{E}_0$ and $\mathscr{C}$} \label{table-1}
\end{table}
\vskip0.1in
\noindent Once $\mathcal S$ is fixed, Proposition \ref{normal-prop} identifies a sequence $\mathfrak{K} = \{ \kappa_m \} \subseteq (0, \infty)$, which in turn gives rise to integer-valued sequences $\mathcal K = \{\mathtt K_m : m\geq 1\}$ obeying \eqref{Km-choice}, and $\mathcal A = \{\mathtt a_m\}$, $\mathcal B = \{\mathtt b_m\}$ as in \eqref{normality-hypotheses}. Choosing $\mathtt K_m \nearrow \infty$ sufficiently rapidly, for instance obeying \eqref{Km-choice}, one can ensure that \eqref{am/bmzero} and \eqref{ambm-take-2} hold. Thus $\pmb{\Pi}$ meets all the hypotheses of Propositions \ref{non-normality-prop}, \ref{Rajchman-prop} and \ref{normal-prop}.    
\vskip0.1in
\noindent It follows from Propositions \ref{non-normality-prop} and \ref{normal-prop} that $\mu$-almost every point is in $\mathscr{N}(\mathscr{A}, \mathscr{A}')$, as claimed in \eqref{revised-mainthm-1}. 
%It follows from the assumptions \eqref{e-condition-1} and \eqref{ambm-take-2} that  
%The quantities $\mathtt N_{m}$, $\mathtt Q_{m}$ and $\tau_{m}$ are defined in \eqref{Qtau}.
%In particular, if the parameters $\mathcal E, \mathcal S, \mathcal A, \mathcal B$ are chosen so that 
%\begin{align}  
%&\mathtt N_m  \geq \mathtt s_m^{\mathtt a_m} \mathtt s_{m-1}^{-\mathtt b_{m-1}} - 1 \geq \mathtt s_{m-1}^{2\mathtt b_{m-1}} -1 \geq 3^{4(m-1)} - 1\rightarrow \infty, \text{ as a result of which}\label{verifying-Rajchman-1} \\
%&\varepsilon_m \rightarrow 0, \quad \tau_m^2 = \mathtt s_m^{\mathtt a_m-\mathtt b_m} \leq \mathtt s_m^{-(m-1) \mathtt a_m} \leq 3^{-(m-1)}\rightarrow 0,  \; \text{ as } \; m \rightarrow \infty. \label{verifying-Rajchman-2} 
%\end{align} 
Since $\varepsilon_m \rightarrow 0$, Corollary \ref{Rajchman-corollary} implies that $|\widehat{\mu}(\xi)| \rightarrow 0$ as $|\xi| \rightarrow \infty$, i.e., $\mu$ is Rajchman.  This completes the proof of Theorem \ref{mainthm-1}, modulo Propositions \ref{non-normality-prop}, \ref{Rajchman-prop} and \ref{normal-prop}.
 \end{proof} 
 \subsection{Propositions \ref{non-normality-prop}--\ref{normal-prop} imply Theorem \ref{mainthm-2}}  \label{mainthm-2-proof-section}
\begin{proof}
Let $\mathscr{B}' \subseteq \{2, 3, \ldots\}$ be any non-empty collection of bases. The class of measures $\mathscr{M}^{\ast}(\mathscr{B}')$, whose existence has been claimed  in Theorem \ref{mainthm-2}, is defined as 
\begin{equation} 
\mathscr{M}^{\ast}(\mathscr{B}') : =\left\{ \mu = \mu(\pmb{\Pi}) \; \Biggl| \; \begin{aligned} &\exists \text{ a minimal representation } \mathscr{C} \text{ of $\mathscr{B}'$ } \text{ as in } \eqref{what-is-C}, \\
%&  \text{the sequence } \mathcal S \text{ is an arrangement of the elements of } \mathscr{B}^{\ast} \\  
&\pmb{\Pi} = (\mathcal S, \mathcal A, \mathcal B, \mathcal D, \mathcal E) \text{ obeys } \eqref{s-condition-plus}-\eqref{am/bmzero}. 
\end{aligned}\right\}.
\end{equation} 
The construction of $\pmb{\Pi}$ in Section \ref{mainthm-1-proof-section} shows that $\mathscr{M}^{\ast}(\mathscr{B}')$ is non-empty. 
%Here $\overline{\mathscr{A}}$ denotes the closure of $\mathscr{A}$ under multiplicative dependence, as noted in \eqref{def-mult-dep} and \eqref{N4}. The collection $\mathscr{M}^{\ast}(\mathscr{B}')$ is non-empty; in fact, we have furnished a member of $\mathscr{M}^{\ast}(\mathscr{B}')$ in Section \ref{mainthm-1-proof-section} for every non-empty $\mathscr{B}'$, with $\mathscr{B}^{\ast} = \mathscr{C}$ is a minimal representation of $\mathscr{B}$.  
%In other words, $\mathscr{M}^{\ast}(\mathscr{B}')$ is a family of skewed probability measures, in the sense of Definition \ref{skewed-measure-def}, whose members obey the hypotheses of Proposition \ref{non-normality-prop}. The collection $\mathscr{B}^{\ast}$ of bases underpinning $\mathcal S$ is a multiplicatively dependent representation of $\mathscr{B}'$; namely, for every base $b' \in \mathscr{B}'$, there is a (not necessarily unique) base $b^{\ast} \in \mathscr{B}^{\ast}$ with $b' \sim b^{\ast}$ as defined in  \eqref{def-mult-dep}. 

\subsubsection{Proof of Theorem \ref{mainthm-2} \eqref{mainthm-2-parta}} 
The class of measures $\mathscr{M}^{\ast}(\mathscr{B}')$ has been crafted so that Proposition \ref{non-normality-prop} applies to its constituents $\mu = \mu(\pmb{\Pi})$. If $\mathscr{C}$ is the minimal representation of $\mathscr{B}'$ underlying $\pmb{\Pi}$, 
%Since $\mathscr{B}^{\ast}$ represents the collection of bases appearing in the sequence $\mathcal S$ of $\pmb{\Pi}$, 
Proposition \ref{non-normality-prop} ensures that $\mu(\pmb{\Pi})$-almost every point fails to be simply normal in every base of $\mathscr{C}$, and therefore is non-normal in every base of $\mathscr{B}'$.  
%In the introduction following the discussion on multiplicative dependence in \eqref{def-mult-dep}, we have noted a result of Schmidt \cite[Theorem 1]{s66} that states: a number is $r$-normal if and only if it is $s$-normal for all $s \sim r$.  This means that for $\mu \in \mathscr{M}^{\ast}(\mathscr{B}')$, $\mu$-almost every point is not normal in $\mathscr{B}'$, which is the conclusion of  part \eqref{mainthm-2-parta} of Theorem \ref{mainthm-2}. 

\subsubsection{Proof of Theorem \ref{mainthm-2} \eqref{mainthm-2-partb}}
 Let $\mathscr{M}(\mathscr{B}')$ denote the sub-collection of $\mathscr{M}^{\ast}(\mathscr{B}')$ for which $\pmb{\Pi}$ additionally obeys the hypotheses of Proposition \ref{normal-prop}, namely \eqref{normality-hypotheses} and \eqref{normality-digit-01}. This collection is also non-empty, as shown in Section \ref{mainthm-1-proof-section}. One concludes from Proposition \ref{normal-prop} that for $\mu \in \mathscr{M}(\mathscr{B}')$, almost every point lies in $\mathscr{N}(\mathscr{B}, \mathscr{B}')$, with $\mathscr{B}$ as in \eqref{normality-seeking-def}. This is the conclusion of part \eqref{mainthm-2-partb} of Theorem \ref{mainthm-2}.

\subsubsection{Proof of Theorem \ref{mainthm-2} \eqref{mainthm-2-partc}}
It remains to show that one can extract a sequence of Rajchman measures from $\mathscr{M}(\mathscr{B}')$ that is also Frostman. Given $\eta \in (0,1)$, we first fix $\mathtt C_0, \mathtt K_0$ satisfying \eqref{C0N0}, then select a minimal representation $\mathscr{C}_0$ of $\mathscr{B}'$ obeying \eqref{C0-large}, and finally choose $\pmb{\Pi}_{\eta}$ that meets all the conditions for applying Propositions \ref{non-normality-prop}, \ref{Frostman-prop}, \ref{normal-prop} and Corollary \ref{Rajchman-corollary}. Indeed, once an enumeration of $\mathscr{C}_0 = \{\mathtt t_1, \mathtt t_2, \ldots \}$ is determined, choosing $\mathcal S, \mathcal E$ as in Table \ref{table-1} will suffice, following \eqref{kappaj}, \eqref{sm-arrangement}, \eqref{M-and-N-choice}. The main distinction from Section \ref{mainthm-1-proof-section} is that this time $\mathscr{D}(\mathtt t)$, and hence $\mathscr{D}_m$, is chosen differently; instead of \eqref{not-Frostman-D}, we set
\[ \mathscr{D}(\mathtt t) = \bigl\{0, 1, \ldots, \underline{\mathtt t}-1 \bigr\} \; \; \text{ with } \; \; \underline{\mathtt t} = \lfloor \mathtt t/\mathtt C_0 \rfloor + 1 \; \text{ so that \eqref{size-Dt} and \eqref{0-and-t-1} hold,}  \]
and use \eqref{s-condition-1} to define $\mathscr{D}_m$ with this choice of $\mathscr{D}(\mathtt t)$.
The resulting measure $\mu_{\eta} = \mu(\pmb{\Pi}_{\eta})$ lies in $\mathscr{M}(\mathscr{B}')$ by definition. Proposition \ref{Frostman-prop} then asserts that $\mu_{\eta}$ satisfies the Frostman condition \eqref{ball-condition-mu}, which is the same as \eqref{Frostman-condition} in Theorem \ref{mainthm-2}. Since $\varepsilon_m \rightarrow 0$,  each measure $\mu_{\eta}$ is also Rajchman by virtue of Corollary \ref{Rajchman-corollary}. This completes the proof of Theorem \ref{mainthm-2}.
\end{proof}

\newpage 
\part{Measure-theoretic properties} \label{Part-nonnorm-F-R}
This segment of the paper is a direct continuation of the preceding one. Properties of skewed measures that were stated in Propositions \ref{non-normality-prop}, \ref{Frostman-prop} and \ref{Rajchman-prop} will be proved here.  

\section{Non-normality on the support of skewed measures} \label{non-normality-section} 
This section contains the proof of Proposition \ref{non-normality-prop}. 
\subsection{A full-measure set of non-normal numbers} 
\noindent The statement of Proposition \ref{non-normality-prop} posits a set $\mathscr{E}^{\ast} \subseteq E = \text{supp}(\mu)$ of full $\mu$-measure, whose elements fail to be simply normal in every base of $\mathscr{C}$. In this section, we identify the set $\mathscr{E}^{\ast}$ in terms of sets $\mathscr{E}_m$ arising at finitary stages of the construction of $\mu$.
\vskip0.1in 
\noindent For $b \in \{2, 3, \ldots \}$ and $x \in [0,1)$, let us express $x$ in base $b$ as  
\begin{equation} 
x = \sum_{\ell=1}^{\infty} d_{\ell}(x, b) b^{-\ell} \; \text{ with } \; d_{\ell}(x, b) \in \mathbb Z_{b} := \{0, 1, \ldots, b-1\} \text{ for all } \ell \geq 1.
%\text{ and define } 
\label{expansion-base-t}   
\end{equation}
Such a digit expansion is unique for most $x$, in the following sense. If $x$ admits two distinct digit expansions of the form \eqref{expansion-base-t}, then both digit sequences must eventually become constant, consisting either of 0 (hence a finite expansion), or $(b-1)$. Such a point $x$ must of course be rational. Points $x$ that are of interest in this section have the property that $d_{\ell} \neq b-1$ for infinitely many indices $\ell$, hence their expansion is unique. 
\vskip0.1in 
\noindent For $\mathtt s_m \in \mathcal S$ and $x$ expanded in base $\mathtt s_m$ as in \eqref{expansion-base-t}, we denote  
%given by \eqref{expansion-base-t}, and an index $m$ as in the statement of the lemma, we write
\[ \Delta(x, m) := \bigl(d_{\mathtt a_m+1}(x,\mathtt s_m), \ldots, d_{\mathtt b_m}(x, \mathtt s_m) \bigr) \in \mathbb Z_{\mathtt s_m}^{\mathtt b_m - \mathtt a_m}, \]  
The set of interest $\mathscr{E}_m$ consists of points $x \in E$ for which $\Delta(x, m)$ falls in the restricted class: 
%$\mathbb L_m^{\ast}$ defined in \eqref{Lm-star}:  
\begin{align}
   \label{Njt}
\mathscr{E}_m &:= \bigl\{x \in E: \Delta(x, m) \in \mathscr{D}_m^{\mathtt b_m - \mathtt a_m} \bigr\} \\ 
&\;= \Bigl\{x \in E : d_{\ell}(x, \mathtt s_m) \in \mathscr{D}_m  \text{ for all } \ell \in \bigl\{\mathtt a_{m}+1, \ldots, \mathtt b_{m} \bigr\} \Bigr\},  \nonumber
\end{align} 
where $\mathscr{D}_m$ is the favoured digit set \eqref{rmsm-assumption} used in the $m^{\text{th}}$ step of the construction. The following lemma concerning $\mathscr{E}_m$ will be important in the construction of $\mathscr{E}^{\ast}$.
\begin{lemma} \label{lemma-prob} 
%Suppose that $\mathtt t \in \mathscr{B}'$ and $\mathtt s_m = \mathtt t$ at the $m^{\text{th}}$ stage of the construction of $\mu$. Then 
The events $\{\mathscr{E}_m: m \geq 1\}$ are (stochastically) independent in the probability space $(\mathbb R, \mathbb P)$ described in Section \ref{indexing-section-2}. Further,  
\begin{equation} \label{prob}
\mu\bigl(\mathscr{E}_m \bigr) = \varepsilon_m + (1 - \varepsilon_m) \left(\frac{\mathtt r_m}{\mathtt s_m} \right)^{\mathtt b_m - \mathtt a_m} \; \text{ for every } \;  m \geq 1,
\end{equation} 
where $\mathtt r_m := \#(\mathscr{D}_m)$ as in \eqref{rmsm-assumption} and $\mu$ is the limiting probability measure as in Proposition \ref{def-mu-prop}. 
\end{lemma}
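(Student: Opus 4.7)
The plan is to leverage the probabilistic description of $\mu$ from Section \ref{indexing-section-2}. I will show that, up to a $\mu$-null set, the event $\mathscr{E}_m$ coincides with $\{\mathfrak{Z}_m\in\mathbb{L}_m^{\ast}\}$ in the underlying probability space, after which both parts of the lemma are immediate: \eqref{prob} becomes a computation of the law of $\mathfrak{Z}_m$, and independence of $\{\mathscr{E}_m\}$ follows from the joint independence of $\{\mathfrak{Z}_m\}$ asserted in \eqref{independence-rvs}.

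The core task is the digit accounting at level $m$. Combining \eqref{relation-between-endpoints} with the relation $\alpha(\mathbf{i}_{m-1})+\eta(\mathbf{i}_{m-1})=\mathtt{m}_m\mathtt{s}_m^{-\mathtt{a}_m}$ read off from \eqref{def-m} and \eqref{etaim}, one obtains
\[
\alpha(\mathbf{i}_m)\,\mathtt{s}_m^{\mathtt{b}_m}=(\mathtt{m}_m+k_m)\,\mathtt{s}_m^{\mathtt{b}_m-\mathtt{a}_m}+\ell_m,
\]
with $0\le\mathtt{m}_m+k_m<\mathtt{s}_m^{\mathtt{a}_m}$ and $0\le\ell_m<\mathtt{s}_m^{\mathtt{b}_m-\mathtt{a}_m}$. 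Expanding both factors in base $\mathtt{s}_m$ reveals that the digits of $\alpha(\mathbf{i}_m)$ at positions $\mathtt{a}_m+1,\ldots,\mathtt{b}_m$ are exactly $\mathbf{d}(\ell_m,\mathtt{s}_m)$, while those at positions $1,\ldots,\mathtt{a}_m$ encode $\mathtt{m}_m+k_m$. For an arbitrary $x=\alpha(\mathbf{i}_m)+\xi\in\mathtt{I}(\mathbf{i}_m)$ with $\xi\in[0,\mathtt{s}_m^{-\mathtt{b}_m})$, the bound $\xi\mathtt{s}_m^{\mathtt{b}_m}<1$ precludes any carry into the first $\mathtt{b}_m$ base-$\mathtt{s}_m$ digits, so those digits of $x$ coincide with those of $\alpha(\mathbf{i}_m)$. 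The remaining case $\xi=\mathtt{s}_m^{-\mathtt{b}_m}$ occurs only on the countable set of right endpoints of basic intervals at level $m$, which is $\mu$-null. Hence, modulo a $\mu$-null set,
\[
\mathscr{E}_m=\bigsqcup_{\mathbf{i}_m:\,\ell_m\in\mathbb{L}_m^{\ast}}\mathtt{I}(\mathbf{i}_m)=\bigl\{\mathfrak{Z}_m\in\mathbb{L}_m^{\ast}\bigr\}.
\]

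With this identification in place, the two claims of the lemma are immediate. From $\mu(\mathtt{I}(\mathbf{i}_m))=\mathtt{w}(\mathbf{i}_m)$ and the factorization in \eqref{wt-sum},
\[
\mu(\mathscr{E}_m)=\sum_{\mathbf{i}_{m-1}}\mathtt{w}(\mathbf{i}_{m-1})\,\mathtt{N}_m^{-1}\sum_{k_m\in\mathbb{K}_m}\sum_{\ell\in\mathbb{L}_m^{\ast}}\vartheta_\ell=\sum_{\ell\in\mathbb{L}_m^{\ast}}\vartheta_\ell,
\]
and substituting \eqref{choice-of-theta} together with $\#\mathbb{L}_m^{\ast}=\mathtt{r}_m^{\mathtt{b}_m-\mathtt{a}_m}$ yields exactly \eqref{prob}. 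Independence of $\{\mathscr{E}_m\}$ then follows verbatim from the mutual independence of $\{\mathfrak{Z}_m\}$ in \eqref{independence-rvs}. The only non-routine ingredient is the digit-position accounting of the middle paragraph; the rest is bookkeeping with the distributions $\vartheta_\ell$ and $\mathtt{N}_m^{-1}$ together with the standard fact that boundaries of basic intervals carry no $\mu$-mass.
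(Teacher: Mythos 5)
Your proposal is correct and follows essentially the same approach as the paper: both arguments identify $\mathscr{E}_m$, modulo $\mu$-null sets, with the event $\{\mathfrak Z_m \in \mathbb L_m^{\ast}\}$ and then read off \eqref{prob} from the law of $\mathfrak Z_m$ (i.e.\ $\sum_{\ell\in\mathbb L_m^{\ast}}\vartheta_{\ell}$) and independence from \eqref{independence-rvs}. The only organizational difference is that you establish this identification up front by explicit digit bookkeeping and use it for both claims, whereas the paper computes $\mu(\mathscr{E}_m)$ directly through the favoured index set $\mathbb I_m^{\ast}$ and invokes the random-variable description only for the independence part. One minor imprecision: $\Delta(\alpha(\mathbf i_m), m)$ is the \emph{reversal} of $\mathbf d(\ell_m,\mathtt s_m)$, since the least-significant digit of $\ell_m$ sits at the deepest position $\mathtt b_m$ in the fractional expansion; this is immaterial here because $\mathscr{D}_m^{\mathtt b_m-\mathtt a_m}$ is a Cartesian power and hence invariant under reversal, so $\mathbb L_m^{\ast}$ is still exactly the right index set.
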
 
\begin{proof} 
We begin with the proof of \eqref{prob}. In order to compute $\mu(\mathscr{E}_m)$, let us recall from \eqref{iteration} and \eqref{def-Em} the description of the set $E_m$ occurring at the $m^{\text{th}}$ stage of the iteration leading to $\mu$. The relation \eqref{relation-between-endpoints} emerging from this iteration decrees that for any vector of digits $\mathbf d \in \mathbb Z_{\mathtt s_m}^{\mathtt b_m-\mathtt a_m}$, every basic interval  $\mathtt I(\mathbf i_{m-1})$ of $E_{m-1}$ generates the same number of basic subintervals $\mathtt I(\mathbf i_m)$ in $E_m$ whose left endpoints $\alpha(\mathbf i_m)$ obey
\[ \Delta(\alpha(\mathbf i_m), m) = \mathbf d; \quad \text{ as a result, } \quad \Delta(x, m) = \mathbf d \text{ for every } x \in \text{int}\bigl[ \mathtt I(\mathbf i_m) \bigr]. \]
The right endpoint of $\mathtt I(\mathbf i_m)$ may or may not lie in $\mathscr{E}_m$, but this does not affect the evaluation of the measure $\mu$, whose mass on any finite number of points is zero. In particular, specializing to $\mathbf d \in \mathscr{D}_m^{\mathtt b_m-\mathtt a_m}$ as in \eqref{Lm-star}, we find that  
\begin{equation}  
\mathscr{E}_m \cap E_m \subseteq \bigcup \bigl\{\mathtt I(\mathbf i_m): \mathbf i_m  \in \mathbb I_m^{\ast} \bigr\} \; \text{ where }  \; \mathbb I_m^{\ast} := \mathbb I_{m-1} \times \mathbb K_m \times \mathbb L_m^{\ast} \subsetneq \mathbb I_m. \label{special-collection}
\end{equation} 
The index sets $\mathbb I_m, \mathbb K_m$ and $\mathbb L_m^{\ast}$ appearing in \eqref{special-collection} have been defined in Sections \ref{indexing-section-1} and \ref{indexing-section-2}. Moreover, the difference of the two sets in \eqref{special-collection}, namely 
\[ \bigcup \bigl\{\mathtt I(\mathbf i_m): \mathbf i_m  \in \mathbb I_m^{\ast} \bigr\} \setminus \bigl(\mathscr{E}_m \cap E_m \bigr) \]  consists of at most finitely many points, and is thus of measure zero according to any of the densities $\varphi_{\mathtt M}$. For $\mathtt M \geq m$, the mass assigned to $\mathscr{E}_m$ by $\varphi_{\mathtt M}$ is therefore
\begin{equation*} 
%\varphi_{\mathtt M} \bigl( \mathscr{N}_m \bigr) = 
\varphi_{\mathtt M} \bigl( \mathscr{E}_m \cap E_m \bigr) = \int_{\mathscr{E}_m \cap E_m} \varphi_{\mathtt M}(x) \, dx = \sum \bigl\{ \varphi_{\mathtt M} \bigl(\mathtt I(\mathbf i_m) \bigr) : \mathbf i_m \in \mathbb I_m^{\ast} \bigr\}.  
\end{equation*} 
But the mass distribution of $\mathscr{O}$ preserves mass on every basic interval (see \eqref{wt-sum}), as a result of which $\varphi_{\mathtt M}(\mathtt I) = \varphi_{m}(\mathtt I)$ for every $m^{\text{th}}$ level basic interval $\mathtt I$ and $\mathtt M \geq m$. This allows us to write  
\begin{align*} 
\mu\bigl(\mathscr{E}_m \bigr) &= \lim_{\mathtt M \rightarrow \infty} \varphi_{\mathtt M} \bigl( \mathscr{E}_m \cap E_m \bigr) = \sum \bigl\{ \varphi_{m} \bigl(\mathtt I(\mathbf i_m) \bigr) : \mathbf i_m \in \mathbb I_m^{\ast} \bigr\} =  \sum \bigl\{ \mathtt w(\mathbf i_m): \mathbf i_m \in \mathbb I_m^{\ast} \bigr\} \\ 
&= \sum_{\mathbf i \in \mathbb I_{m-1}} \mathtt w(\mathbf i) \sum_{k \in \mathbb K_m} \sum_{\ell \in \mathbb L^{\ast}_m} \mathtt N_m^{-1} \vartheta_{\ell}(\mathtt s_m, \mathtt a_m, \mathtt b_m, \mathscr{D}_m, \varepsilon_m) \quad \text{ using \eqref{special-collection}} \\
&= \sum_{\ell \in \mathbb L_m^{\ast}} \vartheta_{\ell}(\mathtt s_m, \mathtt a_m, \mathtt b_m, \mathscr{D}_m, \varepsilon_m) = \mathtt r_m^{\mathtt b_m - \mathtt a_m} \left[ \frac{\varepsilon_m}{\mathtt r_m^{\mathtt b_m-\mathtt a_m}} + \frac{1 - \varepsilon_m}{\mathtt s_m^{\mathtt b_m-\mathtt a_m}}\right],  
 \end{align*}  
which yields the expression in \eqref{prob}. The penultimate equality uses the fact from \eqref{wt-sum} that the weights $\{\mathtt w(\mathbf i) : \mathbf i \in \mathbb I_{m-1}\}$ add up to 1 and that $\#(\mathbb K_m) = \#(\mathbb Z_{\mathtt N_m}) = \mathtt N_m$.  The last step of the display above invokes $\#(\mathbb L_m^{\ast}) = \mathtt r_m^{\mathtt b_m-\mathtt a_m}$, along with the expression of $\vartheta_{\ell}$ from \eqref{choice-of-theta}. 
\vskip0.1in
\noindent Let us continue to the proof of independence of the events $\mathscr{E}_m$. For this, we will use the description of $(E_m, \varphi_m)$  in terms of the random variable $X_m$, as explained in Section \ref{indexing-section-2}. The random variable $X_m$ in turn is specified in terms of an independent collection of random variables $\{\mathfrak Y_k, \mathfrak Z_k, \mathfrak U_k : k \leq m \}$. In this notation, specifically \eqref{independence-rvs} and \eqref{XmYmZmUm}, $\Delta(\cdot, \mathtt s_m)$ is identified with the digits of the random variables $\mathfrak Z_m$ expressed in base $\mathtt s_m$. Thus the event $\mathscr{E}_m$ can be expressed as $\mathscr{E}_m = \{\mathfrak Z_m \in \mathbb L_m^{\ast} \}$, with $\mathbb L_m^{\ast}$ as in \eqref{Lm-star}. Since the random variables $\{\mathfrak Z_m : m \geq 1\}$ are independent by assumption \eqref{independence-rvs}, so are the events $\mathscr{E}_m$, as claimed in Lemma \ref{lemma-prob}. 
\end{proof}
\vskip0.1in  
\noindent We can now define the set $\mathscr{E}^{\ast}$. Recall the collections $\mathscr{B}', \mathscr{C}$ from Section \ref{preliminary-Pi-choice-section}.  Fix $\mathtt t \in \mathscr{C}$, and also recall the infinite sequence $\mathfrak M(\mathtt t) = \{m_j = m_j(\mathtt t) : j \geq 1\} \subseteq \mathbb N$ from \eqref{s-condition-0} and \eqref{M-and-N}. Set 
\[ \bar{\mathscr{E}}_j = \bar{\mathscr{E}}_j(\mathtt t) := \mathscr E_{m_j} \text{ where $ \mathscr E_{m_j}$ is defined as in \eqref{Njt} with $m$ replaced by $m_j(\mathtt t)$}. \] By Lemma \ref{lemma-prob}, the events $\bar{\mathscr{E}}_j$ are independent; moreover, 
\[ \sum_{j=1}^{\infty} \mu\bigl(\bar{\mathscr{E}}_j \bigr) \geq \sum_{j=1}^{\infty} \varepsilon_{m_j(\mathtt t)} = \infty \quad \text{by \eqref{s-condition}}.  \]  
%where the divergence of the latter sum follows from \eqref{s-condition}. 
Therefore by the Borel-Cantelli lemma \cite[Theorem 2.3.6]{d10}, 
\begin{equation} \label{full-measure-Nt} 
\mu \bigl(\bar{\mathscr E}(\mathtt t) \bigr)= 1 \; \; \text{ where } \; \; \bar{\mathscr{E}}(\mathtt t) := \limsup_{j \rightarrow \infty} \bar{\mathscr{E}}_j(\mathtt t) = \bigcap_{\ell=1}^{\infty} \bigcup_{j=\ell}^{\infty} \bar{\mathscr{E}}_j(\mathtt t).   
\end{equation} 
The set of interest $\mathscr{E}^{\ast}$, whose existence is claimed in Proposition \ref{non-normality-prop}, is now defined as follows: 
\begin{equation} \label{full-measure-N-star}
\mathscr{E}^{\ast} := \bigcap_{\mathtt t \in \mathscr{C}} \bar{\mathscr{E}}(\mathtt t); \quad \text{ then } \quad \mu(\mathscr{E}^{\ast}) = 1 
\end{equation} 
by virtue of \eqref{full-measure-Nt}, the countability of $\mathscr{C}$ and the trivial inclusion $[0,1) \setminus \mathscr{E}^{\ast} \subseteq \bigcup_{\mathtt t \in \mathscr{C}} [0,1) \setminus \bar{\mathscr{E}}(\mathtt t)$.     
\subsection{Proof of Proposition \ref{non-normality-prop}} 
Since \eqref{full-measure-N-star} already shows $\mathscr{E}^{\ast}$ to be of full $\mu$-measure in \eqref{full-measure-Nt}, it remains to prove
\begin{equation} \label{N-non-normal} 
\mathscr{E}^{\ast} \subseteq \mathscr N(\cdot, \mathscr{B}'), \; \; \text{ where $\mathscr N(\cdot, \mathscr{B}')$ is the non-normal set defined in \eqref{NB'}}. 
\end{equation} 
For $x \in \mathscr{E}^{\ast}$, it follows from the definition \eqref{full-measure-N-star} that $x \in \bar{\mathscr{E}}(\mathtt t)$ for every $\mathtt t \in \mathscr{C}$. Since $\bar{\mathscr{E}}(\mathtt t)$ is the lim sup of sets $\bar{\mathscr{E}}_j$ by its definition \eqref{full-measure-Nt}, one can find an  infinite collection of strictly increasing indices $\mathscr{J} \subseteq \mathbb N$ depending on $x$ and $\mathtt t$ such that $x \in \bar{\mathscr E}_j = \mathscr{E}_{m_j}$ for every $j \in \mathscr{J}$. In view of the definition \eqref{Njt},  a point $x \in \mathscr{E}_{m_j}$ can be written in the form 
\begin{equation} x = \sum_{\ell=1}^{\infty} d_{\ell}(x, \mathtt s_{m_j}) \mathtt s_{m_j}^{-\ell} \quad \text{ with } d_{\ell} \in \mathscr{D}_{m_j} \subsetneq \mathbb Z_{\mathtt s_{m_j}}\text{ for } \ell \in \bigl\{\mathtt a_{m_j}+1, \ldots, \mathtt b_{m_j} \bigr\}. \label{xrep-1} 
\end{equation} 
Here $\mathscr{D}_{m_j}$ is the collection of favoured digits defined in \eqref{s-condition-1}. Substituting  $\mathtt s_{m_j} = \mathtt t^{n_j}$ from \eqref{M-and-N} into \eqref{xrep-1} and expanding the integers $d_{\ell}$ in base $\mathtt t$, we arrive at 
\begin{equation}   
x =\sum_{\ell=1}^{\infty} \Bigl[ \sum_{k=0}^{n_j-1} \overline{d}_{\ell, k}(x, \mathtt t) \mathtt t^{k} \Bigr] \mathtt t^{-\ell n_j} = \sum_{\ell=1}^{\infty} \sum_{k=0}^{n_j-1} \overline{d}_{\ell, k}(x, \mathtt t) \mathtt t^{-(\ell n_j-k)}. \label{xrep-2}
\end{equation}  
The expansions \eqref{xrep-1} and \eqref{xrep-2} of $x$, in bases $\mathtt s_{m_j}$ and $\mathtt t$ respectively, are unique. Indeed, the assumptions \eqref{0-and-t-1} and \eqref{s-condition-1} dictate that 
\begin{equation}  \label{uniqueness-argument}
d_{\ell} \neq \mathtt t^{n_j}-1 \text{ for all } \ell \in \{\mathtt a_{m_j}+1, \ldots, \mathtt b_{m_j}\} \text{ and } j \in \mathscr{J}, \; {\mathtt s_{m_j}}^{\mathtt a_{m_j}} = {\mathtt t}^{n_j \mathtt a_{m_j}}\to \infty \text{ as } j \to \infty. 
\end{equation}  
Since $d_{\ell}$ assumes the value $\mathtt t^{n_j}-1$ if and only if $\overline{d}_{\ell, k} = \mathtt t-1$ for all $k \in \mathbb Z_{n_j}$, \eqref{uniqueness-argument} establishes the existence infinitely many indices $\ell$ for which $\overline{d}_{\ell, k} \neq \mathtt t - 1$ for some $k \in \mathbb Z_{n_j}$. 
%we know there exists no such $\ell_0 \in \mathbb{N}$ that $\overline{d}_{\ell, k} \equiv \mathtt t - 1$ for all $\ell > \ell_0$. 
Comparing \eqref{xrep-2} and \eqref{expansion-base-t} therefore gives $\overline{d}_{\ell, k}(x, \mathtt t) = d_{\ell n_j-k}(x, \mathtt t)$ for all $\ell \in \mathbb N$ and $k \in \mathbb Z_{n_j}$. The definition \eqref{s-condition-1} of $\mathscr{D}_{m_j}$ implies that  
\[ \overline{d}_{\ell, k}(x, \mathtt t) = d_{\ell n_j-k}(x, \mathtt t) \in \mathscr{D}(\mathtt t) \text{ for }  \ell \in \bigl\{\mathtt a_{m_j}+1, \ldots, \mathtt b_{m_j} \bigr\}, \; k \in \mathbb Z_{n_j}.\] 
Equivalently stated, in the notation of \eqref{expansion-base-t},
\begin{equation} 
 \label{in-D} 
d_{\ell}(x, \mathtt t) \in \mathscr{D}(\mathtt t) \text{ for } \ell \in \bigl\{n_j \mathtt a_{m_j} +1, \ldots, n_j \mathtt b_{m_j}\bigr\}.
\end{equation} 
The inclusion \eqref{in-D} implies that for any $\mathtt d \in \mathbb Z_{\mathtt t}\setminus \mathscr{D}(\mathtt t)$ and all $j \in \mathscr{J}$,  
\begin{equation*} \label{no-d-in-an-bn}  
 \# \Bigl\{1 \leq \ell \leq  n_j \mathtt b_{m_j} : d_\ell(x, \mathtt t) =  \mathtt d \Bigr\} = \# \Bigl\{1 \leq \ell \leq n_j \mathtt a_{m_j} : d_\ell(x, \mathtt t) = \mathtt d \Bigr\}  \leq n_j \mathtt a_{m_j}.
\end{equation*}  
%This is because $x \in \mathscr{E}_{m_j}$ means, by definition \eqref{Njt}, that no entry of $\Delta(x; m_j)$ can take values in $\mathbb Z_{\mathtt s_{m_j}} \setminus \mathscr{D}_{m_j}$.  On the other hand, the assumption \eqref{s-condition} implies 
%\[ \mathtt s_{m_j} = \mathtt t \quad \text{ and } \quad \mathscr{D}_{m_j} = \mathscr{D}_{\mathtt t} \quad \text{ for all } m_j \in \mathfrak M(t). \] Therefore $d_n(x, t)$ cannot equal $\mathtt d$ for any $\mathtt d \in \mathbb Z_{\mathtt t} \setminus \mathscr{D}_{\mathtt t}$ and any $n \in \{\mathtt a_{m_j+1}, \ldots, \mathtt b_{m_j} \}$, leading to \eqref{no-d-in-an-bn}.   
Setting $N = n_j \mathtt b_{m_j}$ and in view of the assumption \eqref{am/bmzero}, this means that 
\begin{align*} 
0 &\leq \liminf_{N \rightarrow \infty} \frac{1}{N} \# \Bigl\{1 \leq \ell \leq N : \; d_\ell(x, \mathtt t) = \mathtt d \Bigr\} \\ &\leq \lim_{j \rightarrow \infty} 
 \Bigl\{\frac{n_j \mathtt a_{m_j}}{n_j \mathtt b_{m_j}} : j \in \mathscr{J} \Bigr\} = 0  \; \text{ for all } \mathtt d \in \mathbb Z_{\mathtt t} \setminus \mathscr{D}(\mathtt t).  \end{align*}  But this violates the requirement \eqref{b-normal} of simple normality in base $\mathtt t$, according to which this limit should be $\mathtt t^{-1}$ for every $\mathtt d \in \mathbb Z_{\mathtt t}$. Thus $x$ is not simply normal in base $\mathtt t$ for any $\mathtt t \in \mathscr{C}$. In particular, this proves \eqref{N-non-normal}.  \qed

\subsection{Remarks on the proof} \label{nonnormality-discussion-section} 
While the hypotheses of Proposition \ref{non-normality-prop} are sufficient for non-normality in the bases of $\overline{\mathscr{B}'}$, stronger restrictions on $\pmb{\Pi}$ may impart non-normality in more bases. In \cite[Section 3]{PZ-1}, we established non-normality for Lyons' measure $\nu_{\mathtt L}$ using Wall's \cite{w49} equivalence for normality in terms of uniform distribution mod one. The proof of \cite[Proposition 3.2]{PZ-1} can be transferred essentially verbatim to prove the following result, stated here for comparison purposes: 
\begin{proposition} \label{less-than-maximal-nonnormaility-prop}
Suppose $\pmb{\Pi}$ obeys the hypotheses of Proposition \ref{non-normality-prop}. If additionally there is a base $\mathtt t \in \mathscr{C}$ for which 
\[ \mathscr{D}_m = \{0\} \quad \text{ for all } m \in \mathfrak M(\mathtt t),  \]
then $\mu$-almost every $x$ is not normal in any base $b$ that is divisible by $\mathtt t$.   
\end{proposition}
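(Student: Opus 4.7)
The plan is to apply Wall's characterization \cite{w49} of $b$-normality (namely, $x$ is $b$-normal iff $(b^n x)_{n \geq 1}$ is equidistributed mod one) to the full $\mu$-measure set $\bar{\mathscr{E}}(\mathtt t) \subseteq \mathscr{E}^{\ast}$ already built in the proof of Proposition \ref{non-normality-prop}. The extra hypothesis $\mathscr{D}_m = \{0\}$ for $m \in \mathfrak M(\mathtt t)$ forces $\mathscr{D}(\mathtt t) = \{0\}$ via \eqref{s-condition-1} (since $0 \in \mathscr{D}(\mathtt t)$ always, and $|\mathscr{D}_m| = 1$ forces $|\mathscr{D}(\mathtt t)| = 1$), so unwinding the argument in that proof shows that every $x \in \bar{\mathscr{E}}(\mathtt t)$ admits an infinite increasing sequence of indices $j$ for which $d_{\ell}(x, \mathtt t) = 0$ at every position $\ell \in (A_j, A_j + B_j]$, where $A_j := n_j \mathtt a_{m_j}$ and $B_j := n_j(\mathtt b_{m_j} - \mathtt a_{m_j})$. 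I will show that the same $x$ fails to be $b$-normal for every integer $b$ divisible by $\mathtt t$, so the same set $\bar{\mathscr{E}}(\mathtt t)$ handles all such $b$ simultaneously and no countable intersection over $b$ is required.

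The core estimate is short. For $j$ as above and any integer $n \in [A_j, A_j + B_j]$, the zero-block yields $\{\mathtt t^n x\} \leq \mathtt t^{-(A_j + B_j - n)}$ directly from the digit expansion. Writing the target base as $b = k \mathtt t$ with $k \in \mathbb N$, the identity $b^n x = k^n \lfloor \mathtt t^n x \rfloor + k^n \{\mathtt t^n x\}$ (the first summand is an integer) gives $\{b^n x\} = \{k^n \{\mathtt t^n x\}\}$. Whenever $k^n \{\mathtt t^n x\} < 1$, this simplifies to $\{b^n x\} \leq b^n / \mathtt t^{A_j + B_j}$. Fixing any $\varepsilon \in (0, 1)$, the inequality $\{b^n x\} \leq \varepsilon$ is certified whenever
\[
n \;\leq\; \rho(A_j + B_j) - \frac{\log(1/\varepsilon)}{\log b}, \qquad \rho := \frac{\log \mathtt t}{\log b} \in (0, 1].
\]

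To finish, set $N_j := A_j + B_j = n_j \mathtt b_{m_j}$. The count of $n \in [A_j, N_j]$ satisfying the above bound is at least $\rho N_j - A_j + O(1) = N_j(\rho - \mathtt a_{m_j}/\mathtt b_{m_j}) - O(1)$. Assumption \eqref{am/bmzero} gives $\mathtt a_{m_j}/\mathtt b_{m_j} \to 0$, so letting $j \to \infty$ along the subsequence furnished by $\bar{\mathscr{E}}(\mathtt t)$ yields
\[
\limsup_{N \to \infty} \frac{1}{N} \#\bigl\{1 \leq n \leq N : \{b^n x\} \leq \varepsilon\bigr\} \;\geq\; \rho \;=\; \frac{\log \mathtt t}{\log b}.
\]
Choosing $\varepsilon \in (0, \rho)$ contradicts equidistribution of $(b^n x)_n$ mod one (which would force this density to converge to $\varepsilon$), so by Wall's theorem $x$ is not $b$-normal. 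The main technical input, and the only non-trivial one, is the rate $\mathtt a_{m_j}/\mathtt b_{m_j} \to 0$ from \eqref{am/bmzero}: without it, the leftover term $A_j/N_j$ could prevent the density from exceeding the threshold $\varepsilon$ and the contradiction would not materialize. The remainder of the argument follows the template of \cite[Proposition 3.2]{PZ-1}.
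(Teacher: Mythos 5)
Your proof is correct and follows the same route the paper indicates, namely Wall's criterion applied to the zero-blocks of length $n_j(\mathtt b_{m_j}-\mathtt a_{m_j})$ in the base-$\mathtt t$ expansion, exactly as in \cite[Proposition 3.2]{PZ-1} to which the paper defers. (One inconsequential slip: the inclusion runs $\mathscr{E}^{\ast} \subseteq \bar{\mathscr{E}}(\mathtt t)$, not the reverse; since you only invoke $\mu(\bar{\mathscr{E}}(\mathtt t))=1$ from \eqref{full-measure-Nt}, nothing is affected.)
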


\section{Ball conditions for skewed measures} \label{Frostman-section} 
The goal of this section is to prove Proposition \ref{Frostman-prop}. This is the claim that a skewed measure $\mu = \mu(\pmb{\Pi})$ obeys a Frostman-type ball condition \eqref{ball-condition-mu}, provided the elements of $\mathcal S$ are large in value and the restricted digit sets $\mathscr{D}_m$ are large in size, in the sense described in assumptions \eqref{C0N0}--\eqref{size-Dt}. We will verify \eqref{ball-condition-mu} first for all basic intervals of $E_m$ (Section \ref{Frostman-basic-section}) and certain subintervals thereof (Section \ref{Frostman-special-scales-section}), from which the general case will follow (Section \ref{Frostman-prop-proof-section}).    
\subsection{Frostman property on basic intervals} \label{Frostman-basic-section}  
\begin{lemma} \label{Frostman-lemma} 
	For $\eta > 0$, let $\mathtt C_0, \mathtt K_0, \pmb{\Pi}_{\eta}$ be as in the statement of Proposition \ref{Frostman-prop}. Then \eqref{ball-condition-mu} holds for the basic intervals generated in the iterative construction of $\mu$. More precisely, one can find a constant $C_{\eta} = C(\mathtt C_0, \mathtt K_0, \pmb{\Pi}_{\eta}) > 0$ such that  
	\begin{align} \label{ball-condition} 
		\varphi_m(\mathtt I(\mathbf i_m)) = \mathtt w(\mathbf i_m) &\leq C_{\eta} \mathtt s_m^{-\mathtt b_m(1 - \eta)} \quad \text{ for all  } \; \; \mathbf i_m \in \mathbb I_m, \; m \geq 1.   
	\end{align} 
	Here $\mathbb I_m$ is the index set for basic intervals as defined in \eqref{def-I}, whereas $\mathtt w(\mathbf i_m)$ is the weight associated with the  basic interval $\mathtt I(\mathbf i_m)$ as mentioned in \eqref{wt-sum}. 
%	Here $\mathtt J_k(\mathtt I(\mathbf i_{m-1}))$ denotes a basic interval at the intermediate step of $\mathscr{O}_m$, as defined in \eqref{JkI} and \eqref{def-K}, with the index set $\mathbb K_m = \mathbb Z_{\mathtt N_m}$ as in \eqref{def-J}. 
	%In other words, $\mu$ obeys \eqref{ball-condition-mu} for all basic intervals $I$ in $E = \text{supp}(\mu)$.
\end{lemma}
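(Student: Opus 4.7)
The plan is to prove \eqref{ball-condition} by induction on $m$, using the multiplicative recursion for the weights from \eqref{wt-sum}:
$$\mathtt w(\mathbf i_m) = \frac{\mathtt w(\mathbf i_{m-1})}{\mathtt N_m} \, \vartheta_{\ell_m}(\mathtt s_m, \mathtt a_m, \mathtt b_m, \mathscr{D}_m, \varepsilon_m).$$
From \eqref{choice-of-theta} one immediately bounds $\vartheta_{\ell_m} \leq (1-\varepsilon_m) \mathtt s_m^{-(\mathtt b_m-\mathtt a_m)} + \varepsilon_m \mathtt r_m^{-(\mathtt b_m-\mathtt a_m)} \leq \mathtt r_m^{-(\mathtt b_m-\mathtt a_m)}$, and from \eqref{def-J} that $\mathtt N_m \geq \mathtt s_m^{\mathtt a_m}/(2\mathtt s_{m-1}^{\mathtt b_{m-1}})$ (which is valid because the growth condition \eqref{ambm-take-2} makes the ratio $\mathtt s_m^{\mathtt a_m}/\mathtt s_{m-1}^{\mathtt b_{m-1}}$ large). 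Substituting into the recursion yields
$$\mathtt w(\mathbf i_m) \leq 2 \, \mathtt w(\mathbf i_{m-1}) \, \mathtt s_{m-1}^{\mathtt b_{m-1}} \, \mathtt s_m^{-\mathtt a_m} \, \mathtt r_m^{-(\mathtt b_m - \mathtt a_m)}.$$

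Next I would convert the $\mathtt r_m$-factor into a power of $\mathtt s_m$, using the structural assumptions \eqref{C0N0}--\eqref{size-Dt}. Since $\mathtt s_m = \mathtt t^{n}$ for some $\mathtt t \in \mathscr{C}_0$ and $n \in \mathbb N$ by \eqref{s-condition-plus}, the construction \eqref{s-condition-1} of $\mathscr{D}_m$ gives $\mathtt r_m = \underline{\mathtt t}^n$. The lower bound $\underline{\mathtt t} \geq \mathtt t/\mathtt C_0$ from \eqref{size-Dt} produces $\mathtt r_m/\mathtt s_m \geq \mathtt C_0^{-n}$. Combined with $\mathtt t \geq \mathtt K_0$ from \eqref{C0-large} and $16 \mathtt C_0^2 < \mathtt K_0^\eta$ from \eqref{C0N0}, this gives
$$\mathtt C_0^n \leq \mathtt t^{n \log \mathtt C_0/\log \mathtt K_0} = \mathtt s_m^{\log \mathtt C_0/\log \mathtt K_0} \leq \mathtt s_m^{\eta/2},$$
yielding the uniform lower bound $\mathtt r_m \geq \mathtt s_m^{1-\eta/2}$. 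Inserting this into the previous display produces the key one-step estimate
$$\mathtt w(\mathbf i_m) \leq 2 \, \mathtt w(\mathbf i_{m-1}) \, \mathtt s_{m-1}^{\mathtt b_{m-1}} \, \mathtt s_m^{-(1-\eta/2)\mathtt b_m - (\eta/2)\mathtt a_m}.$$

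With the inductive hypothesis $\mathtt w(\mathbf i_{m-1}) \leq C_\eta \mathtt s_{m-1}^{-(1-\eta)\mathtt b_{m-1}}$, closing the induction reduces to the numerical inequality
$$2 \, \mathtt s_{m-1}^{\eta \mathtt b_{m-1}} \leq \mathtt s_m^{(\eta/2)(\mathtt a_m + \mathtt b_m)}.$$
This is easily verified: the gap $\mathtt s_m^{\mathtt a_m} \geq \mathtt s_{m-1}^{3 \mathtt b_{m-1}}$ from \eqref{ambm-take-2} gives $\mathtt s_m^{(\eta/2)\mathtt a_m} \geq \mathtt s_{m-1}^{(3\eta/2) \mathtt b_{m-1}}$, and the additional factor $\mathtt s_m^{(\eta/2)\mathtt b_m} \geq \mathtt K_0^{\eta/2}$ absorbs the leading constant $2$ once $\mathtt K_0$ is large (which is the case by \eqref{C0N0}). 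The base case $m = 1$, where $\mathtt s_0^{\mathtt b_0} = 1$ by convention, follows from the same one-step estimate and determines $C_\eta$.

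The content of the argument is straightforward; the main obstacle is the bookkeeping of exponents, namely tracking how the bias quantified by \eqref{size-Dt} and the largeness constraint \eqref{C0N0} propagate through to the exponent $1-\eta/2$ on $\mathtt s_m$, and checking that the growth gap \eqref{ambm-take-2} leaves sufficient surplus to absorb the factor $2 \mathtt s_{m-1}^{\eta \mathtt b_{m-1}}$ inherited from the previous step without deteriorating the induction constant.
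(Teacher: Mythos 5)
Your proof is correct and follows essentially the same approach as the paper: an induction on $m$ using the same recursion $\mathtt w(\mathbf i_m) = \mathtt N_m^{-1}\vartheta_{\ell_m}\mathtt w(\mathbf i_{m-1})$, the bound $\vartheta_{\ell_m}\le\mathtt r_m^{-(\mathtt b_m-\mathtt a_m)}$, the lower bound $\mathtt N_m\gtrsim \mathtt s_m^{\mathtt a_m}\mathtt s_{m-1}^{-\mathtt b_{m-1}}$, and the growth gap \eqref{ambm-take-2} together with \eqref{C0N0} to close the induction. The only cosmetic difference is that you convert $\mathtt r_m\ge \mathtt s_m^{1-\eta/2}$ up front, whereas the paper carries the factor $\mathtt C_0^{n_k(\mathtt b_k-\mathtt a_k)}$ through to the final step and absorbs it there using $2\mathtt C_0\mathtt K_0^{-\eta/2}\le 1$; the exponent bookkeeping is the same.
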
 
\begin{proof} 
	Let us choose the constant $C_{\eta} > 0$ in \eqref{ball-condition} according to the relation   
	\begin{equation} \label{define-Ceta}
		\frac{1}{\mathtt N_1} \mathtt r_1^{-(\mathtt b_1-\mathtt a_1)} \leq C_{\eta} \mathtt s_1^{-\mathtt b_1(1 - \eta)}
		\quad \text{ with }\; \mathtt r_1 \; \text{ as in }\; \eqref{rmsm-assumption}.
		%2 \mathtt C_0^{n_1(\mathtt b_1 - \mathtt a_1)} < C_{\eta} \mathtt s_1^{\eta \mathtt b_1}, 
	\end{equation}   
	%where $\mathtt t \in \mathscr{C}_0$ is the unique base and $n_1 \in \mathfrak{N}(\mathtt t)$ the corresponding exponent obeying $\mathtt s_1 = \mathtt t^{n_1}$.
	We will prove \eqref{ball-condition} using induction on $m$. Let us start with the base case $m=1$. For any $\mathbf{i}_1 \in \mathbb I_1$, it follows from the mass distribution formula \eqref{second-step-weight}, \eqref{choice-of-theta} and \eqref{define-Ceta} that 
	%If $\mathtt s_1^{-\mathtt b_1} \leq |I| \leq 1$, then $I$ is contained in at most two (adjacent) basic intervals of $E_1$, each of length $\mathtt s_1^{-\mathtt b_1}$. Therefore 
	\begin{align*} 
		\mathtt w(\mathbf{i}_1) &\leq \frac{1}{\mathtt N_1} \bigl[\varepsilon_1 \mathtt r_1^{-(\mathtt b_1 - \mathtt a_1)} +  (1 - \varepsilon_1) \mathtt s_1^{-(\mathtt b_1 - \mathtt a_1)} \bigr]  \leq \frac{1}{\mathtt N_1} \times \mathtt r_1^{-(\mathtt b_1 - \mathtt a_1)}  \leq C_{\eta} \mathtt s_1^{-(1 - \eta)}.
		%&\leq 2\mathtt s_1^{-\mathtt a_1} \mathtt C_0^{n_1(\mathtt b_1-\mathtt a_1)} \mathtt s_1^{-(\mathtt b_1 - \mathtt a_1)} \leq C_{\eta} \mathtt s_1^{-\mathtt b_1(1 - \eta)}.
	\end{align*} 
	%The third inequality in the display above uses the bound on $\mathtt r_1 = \mathtt s_1/$ and the definition $\mathtt N_1 + 1= \lfloor \mathtt s_1^{\mathtt a_1} \rfloor$ from \eqref{N}, which leads to $\mathtt N_1 \geq \mathtt s_1^{\mathtt a_1}/2$.  The penultimate inequality follows from \eqref{define-Ceta}. 
	This concludes the proof of the base case. Let us proceed to the induction step. Suppose that the estimate \eqref{ball-condition} holds for all $1 \leq m \leq k-1$. For any $k\in\mathbb{N}$, the assumption \eqref{s-condition-plus} ensures the existence of some $\mathtt t_k \in \mathscr{C}_0$ and $n_k \in \mathbb{N}$ such that $\mathtt s_k = \mathtt t_k^{n_k}$. Then \eqref{0-and-t-1}, \eqref{s-condition-1} and \eqref{size-Dt} yield 
	\begin{equation} \label{skrk-induction} 
	\#\bigl[\mathscr{D}_k \bigr] = \mathtt r_k = \bigl( \underline{\mathtt t_k} \bigr)^{n_k}\ \ \text{with}\ \ \underline{\mathtt t_k} := \#\bigl[\mathscr{D} (\mathtt t_k) \bigr], \quad 
	\mathtt s_k = \mathtt t_k^{n_k} \leq \mathtt C_0^{n_k} \bigl( \underline{\mathtt t_k} \bigr)^{n_k} = \mathtt C_0^{n_k} \mathtt r_k.  
	\end{equation}  
	For $\mathbf i_{ k} \in \mathbb I_k$, estimates similar to the base case combined with \eqref{skrk-induction} lead to
	\begin{align} 
		\mathtt w(\mathbf i_{ k}) &\leq \Bigl[{\mathtt w(\mathbf i_{k-1})} \times \mathtt N_{k}^{-1} \Bigr] \times \Bigl[\varepsilon_{ k} \mathtt r_{k}^{-(\mathtt b_{ k} - \mathtt a_{ k})} + (1 - \varepsilon_{k}) \mathtt s_{k}^{-(\mathtt b_{k} - \mathtt a_{k})}\Bigr] \nonumber \\ 
		&\leq \mathtt w(\mathbf i_{ k-1}) \times \Bigl[2 {\mathtt s_{k-1}^{\mathtt b_{k-1}}}{\mathtt s_{k}^{-\mathtt a_{k}}} \Bigr] \times \mathtt r_{k}^{-(\mathtt b_{\mathtt k} - \mathtt a_{k})} \nonumber \\ 
		&\leq \Bigl[ C_{\eta} \mathtt s_{k-1}^{-\mathtt b_{k-1}(1 - \eta)} \Bigr] \times \Bigl[ 2 {\mathtt s_{k-1}^{\mathtt b_{k-1}}}{\mathtt s_{k}^{-\mathtt a_{k}}} \Bigr] \times \Bigl[ \mathtt C_0^{n_k(\mathtt b_{k} - \mathtt a_{k})} \mathtt s_{k}^{-(\mathtt b_{k} - \mathtt a_{k})}  \Bigr]  \nonumber = 2C_{\eta} \mathtt C_0^{n_k(\mathtt b_{k} - \mathtt a_{k})} \mathtt s_{k-1}^{\eta \mathtt b_{k-1}} \mathtt s_{k}^{- \mathtt b_{k}} \nonumber \\  
		&\leq C_{\eta} \Bigl[2 \mathtt C_0^{n_k \mathtt b_{k}} \mathtt s_{k}^{-\eta \mathtt b_{k}/2}\Bigr] \times \Bigl[\mathtt s_{k-1}^{ \mathtt b_{k-1}} \mathtt s_{k}^{-\mathtt b_{k}/2} \Bigr]^{\eta} \times  \mathtt s_{k}^{-\mathtt b_{k}(1 - \eta)} \nonumber \\ 
		&\leq C_{\eta} \Bigl[2 \mathtt C_0 {\mathtt t_k^{-\frac{\eta}{2}} }\Bigr]^{\mathtt b_{k}n_k} \times  \mathtt s_{k}^{-\mathtt b_{k}(1 - \eta)}
		\leq C_{\eta} \Bigl[2 \mathtt C_0 \mathtt K_0^{-\frac{\eta}{2}} \Bigr]^{\mathtt b_{k}n_k}   \mathtt s_{k}^{-\mathtt b_{k}(1 - \eta)}  \leq C_{\eta} \mathtt s_{k}^{-\mathtt b_k(1 - \eta)}.  \label{ball-condition-step2}
	\end{align}   
	The second inequality in the display above uses the relation $\mathtt r_k < \mathtt s_k$ and also 
	\begin{equation}  \mathtt N_k+ 1 = \lfloor \mathtt s_k^{\mathtt a_k}/\mathtt s_{k-1}^{\mathtt b_{k-1}}\rfloor \geq {\mathtt s_k^{\mathtt a_k}/(2\mathtt s_{k-1}^{\mathtt b_{k-1}})+1,} \; \text{ a consequence of the definition \eqref{def-J} of $\mathtt N_k$.} \label{Nm-lower-bound}  
	\end{equation}
	The third inequality uses the induction hypothesis \eqref{ball-condition} with $m = k-1$ and the relation $\mathtt s_k \leq \mathtt C_0^{n_k} \mathtt r_k$ from \eqref{skrk-induction}. The first inequality in \eqref{ball-condition-step2} follows from the relation {$\mathtt s_k = \mathtt t_k^{n_k}$} and the assumption \eqref{ambm-take-2}, which ensures that $\mathtt s_{k-1}^{\mathtt b_{k}-1} \leq \mathtt s_{k}^{\mathtt b_{k}/2}$. The next two use the bound {$\mathtt t_k \geq \mathtt K_0$} from \eqref{C0-large} and the choice \eqref{C0N0} of $\mathtt C_0, \mathtt K_0$. This completes the induction and the proof of \eqref{ball-condition}.      
\end{proof} 
\subsection{Frostman property on non-basic special scales} \label{Frostman-special-scales-section} 
The inequality \eqref{ball-condition} establishes the ball condition \eqref{ball-condition-mu} for basic intervals of length $\mathtt s_m^{-\mathtt b_m}$  that appear in $\mathscr{O}_m$. Here we prove \eqref{ball-condition-mu} for intervals at certain intermediate scales of length $\mathtt s_m^{-\mathtt a_m-j}$, $0 \leq j \leq \mathtt b_m - \mathtt a_m$.   
\vskip0.1in 
\noindent Let $m \geq 1$. 
%Temporarily suppressing the suffix $m$, and writing $\mathtt a$, $\mathtt b$, $\mathtt s = \mathtt t^n$ and $\mathtt r = \underline{\mathtt t}^n$. 
For every $\mathbf i_{m-1} \in \mathbb I_{m-1}$ and following \eqref{relation-between-endpoints}, let us recall the interval $\mathtt J$ as in \eqref{JkI} and define its subinterval ${I}_{\ell}(j)$ as follows. For $k \in \mathbb Z_{\mathtt N_m}$, $0 \leq \ell < \mathtt s_m^j$ and $0\leq j \leq \mathtt b_m-\mathtt a_m$,
\begin{align} 
	&\mathtt J = \mathtt J_k = \alpha(\mathbf i_{m-1})+ \eta(\mathbf i_{m-1}) + {k}{\mathtt s_m^{-\mathtt a_m}} + \bigl[0, \mathtt s_m^{-\mathtt a_m}\bigr], \text{ and }  \nonumber\\  
	&I_{\ell}(j) := \alpha (\mathbf i_{m-1}) + \eta (\mathbf i_{m-1}) + k \mathtt s_{m}^{-\mathtt a_m } + \ell \mathtt s_m^{-\mathtt a_m - j} + \left[0, \mathtt s_m^{-\mathtt a_m-j} \right] \subseteq \mathtt J \subseteq \mathtt I(\mathbf i_{m-1}). \label{def-Ilj}
\end{align}
%\begin{equation}
%	{I}_{\ell}(j) := \alpha(\mathbf i_{m-1})+ \eta(\mathbf i_{m-1}) + k \mathtt s_m^{-\mathtt a_m}+ \ell \mathtt s_m^{-\mathtt a_m - j} + \left[0, \mathtt s_m^{-\mathtt a_m - j} \right] \subseteq \mathtt J \subseteq \mathtt I(\mathbf i_{m-1}) \label{def-Ilj}.
%\end{equation} 
Thus $\mathtt J$ is a basic interval obtained after the first step of $\mathscr{O}_m$. For each fixed $j$, the intervals $\{{I}_{\ell}(j) : 0 \leq \ell < \mathtt s_m^j \}$ are (essentially) disjoint constituents of $\mathtt J$, each of length $\mathtt s_m^{-\mathtt a_m - j}$. Both $\mathtt J$ and $I_{\ell}(j)$ depend on $k \in \mathbb Z_{\mathtt N_m}$; however the remainder of the argument in this section works for any fixed $k$, and we choose to omit its reference to de-clutter notation.
\begin{lemma} 
	Under the assumptions of Lemma \ref{Frostman-lemma}, the intervals ${I}_{\ell}(j)$ as in \eqref{def-Ilj} obey 
	%and $\mathscr{D}(\mathtt t)$ obeying \eqref{uniform-Dt}, 
	%the following conclusions hold:
	%\begin{align} 
	%&\max_{n} \# \Bigl\{\ell \in \mathbb Z_{\mathtt t^j} \cap \bigl[n, n+ \mathtt C_0\bigr] : \exists \mathtt I \in \mathscr{J}^{\ast} \ni \mathtt I \subset I_{\ell}(j) \Bigr\} \leq \frac{\mathtt C_0}{2}, \\ 
	%&\max_{\ell, j} \# \bigl\{\mathtt I \in \mathscr{J}^{\ast} : \mathtt I \subset I_{\ell}(j) \bigr\} \leq \underline{\mathtt t}^{n(\mathtt b - \mathtt a)-j}.
	%\end{align} 
	%As a result, for any choice of $\eta, \mathtt C_0, \mathtt K_0$ and $\pmb{\Pi}_{\eta}$ as in Lemma \ref{Frostman-lemma}, there exists $C_{\eta} > 0$ such that 
	\begin{equation} \label{Frostman-nonspecial-scales}
		\varphi_{m} \bigl({I}_{\ell}(j)\bigr) \leq C_{\eta} \mathtt s_m^{-(1 - \eta)(\mathtt a_m + j)} \quad \text{ for all } m \geq 1, \; \ell \in \mathbb Z_{\mathtt s_m^j}, \; 0 \leq j < \mathtt b_m-\mathtt a_m.  
	\end{equation} 
\end{lemma}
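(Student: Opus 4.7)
The plan is to adapt the computation in \eqref{ball-condition-step2} to the intermediate scale $\mathtt s_m^{-\mathtt a_m - j}$ by exploiting the fact that each interval $I_{\ell}(j)$ captures the mass of only those $m^{\text{th}}$-level basic intervals $\mathtt I(\mathbf i_m)$ whose index $\ell_m \in \mathbb L_m$ has its top $j$ digits (in base $\mathtt s_m$) prescribed by $\ell$. First I would unravel the definitions: for the specified $(\mathbf i_{m-1}, k)$ underlying $I_{\ell}(j)$, the relation \eqref{relation-between-endpoints} shows that $\mathtt I(\mathbf i_m) \subseteq I_{\ell}(j)$ if and only if $\ell_m \in \mathbb L_m(\ell,j) := \{\ell \mathtt s_m^{\mathtt b_m - \mathtt a_m - j}, \ldots, (\ell+1) \mathtt s_m^{\mathtt b_m - \mathtt a_m - j} - 1\}$, i.e., the $j$ leading base-$\mathtt s_m$ digits of $\ell_m$ are determined by $\ell$ while the remaining $\mathtt b_m - \mathtt a_m - j$ digits are free.

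Next, using the mass formula \eqref{second-step-weight}--\eqref{choice-of-theta}, I would compute
\[
\varphi_m\bigl(I_{\ell}(j)\bigr) = \frac{\mathtt w(\mathbf i_{m-1})}{\mathtt N_m} \sum_{\ell_m \in \mathbb L_m(\ell,j)} \vartheta_{\ell_m}.
\]
The sum splits into the uniform contribution $(1 - \varepsilon_m) \mathtt s_m^{-j}$ (all $\mathtt s_m^{\mathtt b_m-\mathtt a_m-j}$ descendants are counted with weight $\mathtt s_m^{-(\mathtt b_m - \mathtt a_m)}$) and a biased contribution which is nonzero only if the prescribed $j$ leading digits of $\ell$ belong to $\mathscr D_m^j$, in which case there are $\mathtt r_m^{\mathtt b_m - \mathtt a_m - j}$ elements of $\mathbb L_m^{\ast}$ in $\mathbb L_m(\ell,j)$, each weighted by $\varepsilon_m \mathtt r_m^{-(\mathtt b_m - \mathtt a_m)}$. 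In both cases, since $\mathtt r_m < \mathtt s_m$, the total is bounded above by the convex combination $(1 - \varepsilon_m) \mathtt s_m^{-j} + \varepsilon_m \mathtt r_m^{-j} \leq \mathtt r_m^{-j}$.

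The final step is to insert this into the mass formula and mimic the chain of inequalities in \eqref{ball-condition-step2}. Using the induction hypothesis $\mathtt w(\mathbf i_{m-1}) \leq C_{\eta} \mathtt s_{m-1}^{-\mathtt b_{m-1}(1-\eta)}$ from Lemma \ref{Frostman-lemma}, the lower bound $\mathtt N_m \geq \mathtt s_m^{\mathtt a_m}/(2 \mathtt s_{m-1}^{\mathtt b_{m-1}})$ from \eqref{Nm-lower-bound}, and the relation $\mathtt r_m^{-j} \leq \mathtt C_0^{n_m j} \mathtt s_m^{-j}$ coming from \eqref{skrk-induction}, one obtains
\[
\varphi_m\bigl(I_{\ell}(j)\bigr) \leq 2 C_{\eta} \, \mathtt s_{m-1}^{\eta \mathtt b_{m-1}} \, \mathtt C_0^{n_m j} \, \mathtt s_m^{-(\mathtt a_m + j)}.
\]
To absorb the two error factors into $\mathtt s_m^{\eta(\mathtt a_m + j)}$, I would use: (i) the growth hypothesis \eqref{ambm-take-2}, which gives $\mathtt s_{m-1}^{\eta \mathtt b_{m-1}} \leq \mathtt s_m^{\eta \mathtt a_m/3}$; and (ii) the smallness of $\mathtt C_0/\mathtt t_m^{\eta}$, which via \eqref{C0N0} and \eqref{C0-large} yields $\mathtt t_m^{\eta} \geq \mathtt K_0^{\eta} > 16 \mathtt C_0^2 > \mathtt C_0$, so $\mathtt C_0^{n_m j} \leq \mathtt s_m^{\eta j}$. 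A factor of $\mathtt s_m^{2\eta \mathtt a_m/3} \geq 2$ then absorbs the leading $2$, completing the bound \eqref{Frostman-nonspecial-scales}.

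The only point where care is required is the bookkeeping for the biased contribution: one must verify that the pessimistic bound $\mathtt r_m^{-j}$ is achieved in the worst case (when the prescribed leading digits of $\ell$ lie in $\mathscr D_m$), since otherwise the $\varepsilon_m$-term vanishes and the estimate improves. This is the main conceptual step, but once identified, the argument is a direct transcription of \eqref{ball-condition-step2} with $\mathtt b_m$ replaced by $\mathtt a_m + j$.
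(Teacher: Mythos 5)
Your proposal is correct and follows essentially the same approach as the paper's proof. The one stylistic difference is that you apply the worst-case bound $\varepsilon_m\mathtt r_m^{-j} + (1-\varepsilon_m)\mathtt s_m^{-j} \leq \mathtt r_m^{-j}$ uniformly, whereas the paper splits into two cases (depending on whether $(\tilde\ell_0,\dots,\tilde\ell_{j-1}) \in \mathscr D_m^j$) and uses the slightly sharper bound $\mathtt s_m^{-j}$ when the leading digits of $\ell$ fall outside $\mathscr D_m$; since $\mathtt s_m^{-j} \leq \mathtt r_m^{-j}$ and the final absorption works in the worse case anyway, this unification is harmless and arguably cleaner. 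The bookkeeping of which intervals $\mathtt I(\mathbf i_m)$ lie in $I_\ell(j)$, the count $\mathtt r_m^{\mathtt b_m-\mathtt a_m-j}$ of favoured sub-intervals, the inductive invocation of \eqref{ball-condition} with \eqref{Nm-lower-bound} and \eqref{skrk-induction}, and the absorption via $\mathtt s_{m-1}^{\eta\mathtt b_{m-1}} \leq \mathtt s_m^{\eta\mathtt a_m/3}$ and $\mathtt C_0 < \mathtt t_m^\eta$ all match the paper's argument.
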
 
\begin{proof} 
	%The interval $\mathtt J$ contains $\mathtt s_m^{\mathtt b_m-\mathtt a_m} = \mathtt t^{n_m(\mathtt b_m - \mathtt a_m)}$ basic intervals of $E_m$ of length $\mathtt s_m^{-\mathtt b_m}$, of which exactly $\mathtt r_m^{\mathtt b_m-\mathtt a_m} = \underline{\mathtt t}^{n_m(\mathtt b_m - \mathtt a_m)}$ correspond to the favoured digit set $\mathscr{D}_m \subsetneq \mathbb Z_{\mathtt s_m}$ and hence receive greater mass.  
	The mass distribution principle  \eqref{choice-of-theta} dictates that $\varphi_m({I}_{\ell}(j))$ depends on the number of basic intervals of the form $\mathtt I(\mathbf i_m)$ contained within ${I}_{\ell}(j)$ that correspond to the favoured digit set $\mathscr{D}_m \subseteq \mathbb Z_{\mathtt s_m}$. In order to make this precise, let us denote by $\mathscr{J}$ (respectively $\mathscr{J}^{\ast}$) the collection of all (respectively favoured) basic intervals $\mathtt I(\mathbf i_m)$ contained in $\mathtt J$. 
	%The assumption \eqref{uniform-Dt} allows us to determine the distribution of the favoured intervals in $\mathscr{J}^{\ast}$ within $\mathtt J$. This is the content of the next lemma. 
	Then for a fixed index $j$:
	\begin{itemize} 
		\item Any $\mathtt I \in \mathscr{J}$ is either contained in ${I}_{\ell}(j)$, or its interior is disjoint from $I_{\ell}(j)$. Each interval $I_{\ell}(j)$ contains the same number of intervals of $\mathscr{J}$,
		\begin{equation} 
			\# \bigl\{\mathtt I \in \mathscr{J} : \mathtt I \subset {I}_{\ell}(j) \bigr\} = {\mathtt s_m}^{\mathtt b_m - \mathtt a_m-j}.
		\end{equation}
		\item Similarly, 
		%Every interval $I_{\ell}(j)$ that intersects the interior of some favoured interval contains the same number of favoured intervals: 
		\begin{equation}  \label{favoured-numbers} 
			\begin{aligned} 
				\text{ if } \ell &= \sum_{r=0}^{j-1} \tilde{\ell}_r \mathtt s_m^{r} \in \mathbb Z_{\mathtt s_m^j}, \; \; \text{ then } \\  \# \bigl\{\mathtt I \in \mathscr{J}^{\ast} : \mathtt I \subset {I}_{\ell}(j) \bigr\} &= \begin{cases}  {\mathtt r_m}^{\mathtt b_m - \mathtt a_m - j} &\text{ if $(\tilde{\ell}_0, \ldots, \tilde{\ell}_{j-1}) \in \mathscr{D}_m^j$}, \\ 0 &\text{ otherwise.}  \end{cases} \end{aligned} \end{equation} 
	\end{itemize} 
	Motivated by \eqref{favoured-numbers}, the inequality \eqref{Frostman-nonspecial-scales} is proved in two cases. If $\tilde{\ell}_r \notin \mathscr{D}_m$ for some $r \in \{0, 1, \ldots, j-1\}$, then \eqref{favoured-numbers} says that all the basic intervals $\mathtt I(\mathbf i_m) \subseteq I_{\ell}(j)$ belong to $\mathscr{J} \setminus \mathscr{J}^{\ast}$ and therefore receive the same mass $(1 - \varepsilon_m) \mathtt s_m^{-(\mathtt b_m - \mathtt a_m)}$. Applying this along with the already established ball condition \eqref{ball-condition} for $\mathtt I(\mathbf i_{m-1})$, \eqref{Nm-lower-bound}, and \eqref{ambm-take-2}, we arrive at
	\begin{align*} 
		\varphi_m(I_{\ell}(j)) &= \Bigl[{\mathtt w(\mathbf i_{m-1})} \times {\mathtt N_m}^{-1} \Bigr] \times \mathtt s_m^{\mathtt b_m - \mathtt a_m - j} \times \Bigl[(1 - \varepsilon_m) \mathtt s_m^{-(\mathtt b_m - \mathtt a_m)} \Bigr] \\ &\leq \mathtt w(\mathbf i_{m-1}) \times \Bigl[2 \mathtt s_m^{-\mathtt a_m} \mathtt s_{m-1}^{\mathtt b_{m-1}} \Bigr] \times \mathtt s_m^{-j} \leq 2C_{\eta} \mathtt s_{m-1}^{\eta \mathtt b_{m-1}} \mathtt s_m^{-\mathtt a_m - j} \\ &\leq 2C_{\eta} \mathtt s_{m-1}^{\eta \mathtt b_{m-1}} \mathtt s_m^{-\eta(\mathtt a_m + j)} |I_{\ell}(j)|^{1 - \eta} 
		\leq 2C_{\eta} \Bigl[\mathtt s_{m-1}^{\mathtt b_{m-1}} \mathtt s_{m}^{-\mathtt a_m} \Bigr]^{\eta} |I_{\ell}(j)|^{1 - \eta} \leq C_{\eta} |I_{\ell}(j)|^{1 - \eta}, 
	\end{align*} 
	which is the desired inequality \eqref{Frostman-nonspecial-scales}. If $(\tilde{\ell}_0, \ldots, \tilde{\ell}_{j-1}) \in \mathscr{D}_m^j$, then \eqref{choice-of-theta} and \eqref{favoured-numbers} give    
	\begin{align*} 
		\frac{\varphi_m(I_{\ell}(j))}{\varphi_m(\mathtt J)} &={\mathtt r_m}^{\mathtt b_m - \mathtt a_m - j} \Bigl[ \varepsilon_m \mathtt r_m^{-(\mathtt b_m - \mathtt a_m)} + (1 - \varepsilon_m) \mathtt s_m^{-(\mathtt b_m- \mathtt a_m)}\Bigr] \\ &\hskip1.5in + \bigl(\mathtt s_m^{\mathtt b_m - \mathtt a_m - j} - {\mathtt r_m}^{\mathtt b_m - \mathtt a_m - j} \bigr) (1 - \varepsilon_m) \mathtt s_m^{-(\mathtt b_m - \mathtt a_m)}   \\
		&= \varepsilon_m {\mathtt r_m}^{-j} + (1 - \varepsilon_m) \mathtt s_m^{-j}. %\mathtt r_m = \underline{\mathtt t}^{n_m}, \; \mathtt s_m = \mathtt t^{n_m}. 
	\end{align*} 
	The same reasoning as above involving \eqref{ball-condition}, \eqref{skrk-induction}, \eqref{Nm-lower-bound} and \eqref{C0N0}--\eqref{size-Dt} leads to 
	\begin{align*} 
		\varphi_m(I_{\ell}(j)) &= \Bigl[{\mathtt w(\mathbf i_{m-1})}{\mathtt N_m}^{-1} \Bigr] \times \Bigl[ \varepsilon_m {\mathtt r_m}^{-j} + (1 - \varepsilon_m) \mathtt s_m^{-j} \Bigr] \leq \Bigl[{\mathtt w(\mathbf i_{m-1})}{\mathtt N_m}^{-1} \Bigr] \times {\mathtt r_m}^{-j}\\ &\leq {\mathtt w(\mathbf i_{m-1})}\Bigl[2{\mathtt s_m^{-\mathtt a_m}} \mathtt s_{m-1}^{\mathtt b_{m-1}} \Bigr] {\mathtt r_m}^{-j} \leq 2C_{\eta} \mathtt C_0^{n_m j} \mathtt s_{m-1}^{\eta \mathtt b_{m-1}} \mathtt s_m^{-\mathtt a_m-j} \\ &= 2C_{\eta} |I_{\ell}(j)|^{1 - \eta} \mathtt C_0^{n_m j} \mathtt s_m^{-\eta(\mathtt a_m +j)} \mathtt s_{m-1}^{\eta \mathtt b_{m-1}} = 2C_{\eta} |I_{\ell}(j)|^{1 - \eta} \Bigl[\mathtt s_{m-1}^{\mathtt b_{m-1}} \mathtt s_m^{-\mathtt a_m}\Bigr]^{\eta} \times \Bigl[ \mathtt C_0^{n_m} \mathtt s_m^{-\eta}\Bigr]^j  \\ 
		&\leq C_{\eta}  |I_{\ell}(j)|^{1 - \eta}  \times \Bigl[ \mathtt C_0 \mathtt t_m^{-\eta}\Bigr]^{n_m j} \leq C_{\eta}  |I_{\ell}(j)|^{1 - \eta}  \times \Bigl[ \mathtt C_0 \mathtt K_0^{-\eta}\Bigr]^{n_m j}
		\leq C_{\eta} |I_{\ell}(j)|^{1 - \eta}.
	\end{align*} 
%	The second line's final inequality in the display above uses the relation $\mathtt r_m \geq \mathtt C_0^{-n_m} \mathtt s_m$ from \eqref{skrk-induction}.  
	The last line uses \eqref{skrk-induction}, \eqref{C0-large} and \eqref{C0N0}, which ensure that $\mathtt s_m = \mathtt t_m^{n_m}$, $\mathtt K_0 \le \mathtt t_m \in \mathscr{C}_0$ and $\mathtt C_0 \mathtt K_0^{-\eta} \le 1$. This completes the proof of \eqref{Frostman-nonspecial-scales}. 
\end{proof}

\subsection{Proof of Proposition \ref{Frostman-prop}} \label{Frostman-prop-proof-section} 
\begin{proof} 
	Since $E = \text{supp}(\mu) \subseteq [0,1]$, we only need to verify \eqref{ball-condition-mu} for intervals $I$ with $I \subseteq  [0,1]$. Given such an interval $I$, let $m \geq 1$ be the unique integer such that 
	\begin{equation} \mathtt s_m^{-\mathtt b_m} \leq |I| < \mathtt s_{m-1}^{-\mathtt b_{m-1}}. \label{I-size} \end{equation}
%	Because $\text{supp}(\mu) \subseteq E_{m-1}$, which consists of intervals of equal length $\mathtt s_{m-1}^{-\mathtt b_{m-1}}$ with disjoint interiors, we may assume $I \subseteq E_{m-1}$. 
	We consider two cases, depending on the length of $I$ relative to the intermediate scale $\mathtt s_m^{-\mathtt a_m}$. 
	\vskip0.1in 
	\noindent {\em{Case 1:}} First suppose that 
	\begin{equation} 
		\mathtt s_m^{-\mathtt a_m} \leq |I| < \mathtt s_{m-1}^{-\mathtt b_{m-1}}. \label{case-1-assumption} 
	\end{equation} 
	Then there are at most two adjacent intervals 
	%$\mathtt I$ of $E_{m-1}$, say 
	$\mathtt I_1$ and $\mathtt I_2$ of the form  
	\[ \mathtt I_1 = \bigl[n_1 \mathtt s_{m-1}^{-\mathtt b_{m-1}}, (n_1+1) \mathtt s_{m-1}^{-\mathtt b_{m-1}} \bigr], \; 
	\mathtt I_2 = \bigl[(n_1+1) \mathtt s_{m-1}^{-\mathtt b_{m-1}}, (n_1+2) \mathtt s_{m-1}^{-\mathtt b_{m-1}} \bigr], \text{ with } n_1 \in \mathbb N \cup \{0\}\] such that $I \subseteq \mathtt I_1 \cup \mathtt I_2$. If $\mathtt I_j$ is not a basic interval of $E_{m-1}$ for some $j \in \{1, 2\}$, then $\varphi_{m-1}(I \cap \mathtt I_j) = 0$, and hence $\varphi_{\mathtt M}(I \cap \mathtt I_j) = 0$ for all $\mathtt M \geq m$. As a result $\mu(I \cap \mathtt I_j) =0$, or $\mu(I) = \mu(I \setminus \mathtt I_j)$ for such $j$. Thus we may assume without loss of generality that both $\mathtt I_1$ and $\mathtt I_2$ are basic intervals of $E_{m-1}$.  The portion of $I$ that falls outside the intermediate basic intervals $\mathtt J_k(\mathtt I_i)$ (given by\eqref{JkI} and of length $\mathtt s_m^{-\mathtt a_m}$) also receives zero mass at stage $m$ and beyond. As a result,  
	\begin{align*} 
		&\varphi_{\mathtt M}\Bigl[I \setminus \bigcup_{i, k} \big\{\mathtt J_k(\mathtt I_i) \cap I :  i = 1, 2 ,\; {k \in \mathbb K_m } \bigr\}\Bigr] = 0 \; \text{ for all } \mathtt M \geq m \; \text{ and } \\
		&\max_{i=1,2}\#\bigl\{k \in \mathbb K_m = \mathbb Z_{\mathtt N_m}: \mathtt J_k(\mathtt I_i) \cap I \ne \emptyset \bigr\} \leq \lfloor |I| \mathtt s_m^{\mathtt a_m} \rfloor + 1 \leq 2 |I| \mathtt s_m^{\mathtt a_m}. 
	\end{align*}  It follows from the mass distribution principle \eqref{mass-wk} that for $\mathtt M \geq m$, 
	\begin{align*} \varphi_{\mathtt M}(I) &\leq \sum_{i, k} \bigl\{\varphi_{\mathtt M}\bigl(\mathtt J_k(\mathtt I_i) \bigr): \mathtt J_k(\mathtt I_i) \cap I \neq \emptyset, i = 1, 2  \bigr\} \\ &= \sum_{i, k} \bigl\{\varphi_{m}\bigl(\mathtt J_k(\mathtt I_i) \bigr): \mathtt J_k(\mathtt I_i) \cap I \neq \emptyset, i=1, 2  \bigr\}  \\ &\leq 2 |I| \mathtt s_m^{\mathtt a_m} \mathtt N_m^{-1} \max \bigl\{ \mathtt w(\mathbf i_{m-1}) : \mathbf i_{m-1} \in \mathbb I_{m-1} \bigr\}.
	\end{align*} 
	Since the right side is independent of $\mathtt M$, letting $\mathtt M \rightarrow \infty$ and applying \eqref{Nm-lower-bound}, \eqref{ball-condition} shows
	\[
	\mu(I) \leq 4|I| \mathtt s_{m-1}^{\mathtt b_{m-1}} \max \bigl\{ \mathtt w(\mathbf i_{m-1}) : \mathbf i_{m-1} \in \mathbb I_{m-1} \bigr\} \leq 4C_{\eta}|I| \mathtt s_{m-1}^{\mathtt b_{m-1}} \mathtt s_{m-1}^{- \mathtt b_{m-1}(1- \eta)} \\ 
	\leq 4 C_{\eta} |I|^{1 - \eta}.  
	\]
	The last inequality uses the assumption \eqref{case-1-assumption}. This proves the inequality \eqref{ball-condition-mu} in case 1. 
	\vskip0.1in
	\noindent {\em{Case 2:}} In the complementary scenario where 
	%In view of the assumptions \eqref{s-condition-0}, \eqref{s-condition} on $\mathfrak M(\mathtt t)$, $\mathfrak N(\mathtt t)$, there exist $\mathtt t \in \mathscr{C}_0$ and an index $j$ such that $m = m_j(\mathtt t)$, $n = n_j(\mathtt t)$ and $\mathtt s_m = \mathtt t^{n}$. For ease of exposition, we will suppress the suffix $m$, and write    
	\begin{equation} 
		\mathtt s_m^{-\mathtt b_m} \leq |I| < \mathtt s_{m}^{-\mathtt a_{m}},
		%\mathtt s_m = \mathtt s = \mathtt t^{n}, \; \; \mathtt a_m = \mathtt a, \; \; \mathtt b_m = \mathtt b, \text{ so that } \mathtt t^{-n\mathtt b} \leq |I| \leq \mathtt t^{-n \mathtt a}. 
		\label{case-2-assumption} 
	\end{equation} 
	one can find an index $j \in \{0, 1, \ldots, \mathtt b_m - \mathtt a_m - 1\}$ such that
	\begin{equation} \label{case-2-assumption-plus}
		\mathtt s_m^{-\mathtt a_m - j - 1} \leq |I| < \mathtt s_m^{-\mathtt a_m - j}.
	\end{equation}  
	Then $I$ is contained in at most {$2|I| \mathtt s_m^{\mathtt a_m + j + 1}$} (adjacent) intervals of the form $I_{\ell}(j+1)$, given by \eqref{def-Ilj}. Invoking the ball condition \eqref{Frostman-nonspecial-scales} for $I_{\ell}(j+1)$ with $\eta$ replaced by $\eta/2$, we find that 
	\begin{align*}  
		\varphi_m(I) \leq C_{\eta/2} \bigl(2|I| \mathtt s_m^{\mathtt a_m +j + 1}\bigr) \times \bigl( \mathtt s_m^{-\mathtt a_m - j - 1} \bigr)^{1 - \eta/2} 
		{\leq 2C_{\eta/2} |I| \mathtt s_m^{{\eta}(\mathtt a_m  + j + 1)/2} \leq 2C_{\eta/2} |I|^{1 - \eta}. }
	\end{align*}  
	The last inequality is a consequence of {$|I|^{\eta} \mathtt s_m^{\frac{\eta}{2}(\mathtt a_m + j + 1)} \leq 1$,} {which follows from $|I| \mathtt s_m^{(\mathtt a_m  + j + 1)/2} \leq 1$.} In the {range \eqref{case-2-assumption-plus},} this is equivalent to checking that 
	\[ \mathtt s_m^{-(\mathtt a_m + j)} \mathtt s_m^{(\mathtt a_m + j + 1)/2} = \mathtt s_m^{-\frac{1}{2}(\mathtt a_m + j) + \frac{1}{2}} \leq 1, \] 
	which is of course true for {$\mathtt s_m = \mathtt t_m^{n_m} \geq \mathtt K_0^{n_m}$} and $\mathtt a_m \geq 2$. This proves Proposition \ref{Frostman-prop}. 
\end{proof}

\section{Exponential sum estimates related to $\mathscr{O}$} \label{exp-sum-section} 
In this section and the next we focus on $\widehat{\mu}$, with the intent to establish the Rajchman property of $\mu$ in Section \ref{Rajchman-section}. 
%and the partial normality of its support in Section \ref{normality-overview-section} and beyond. 
Given the fundamental role of the operation $\mathscr{O}$ in the construction of $\mu$ (Sections \ref{elem-op-section} and \ref{iteration-section}), it is natural that the Fourier coefficients of $\mu$ build on those arising from $\mathscr{O}$. Part of this section identifies the exponential sums that occur in the Fourier coefficients of  the output function  $\Phi_1$ of the operation $\mathscr{O}$. 
The rest of it serves as a repository of estimates for these exponential sums that we will draw upon at various subsequent stages. 
%These basic estimates will be implemented in Section \ref{mu-Rajchman-section} in the estimation of $\widehat{\varphi}_{m}$, culminating in the proof of Proposition \ref{Rajchman-prop} in Section \ref{Rajchman-section}. Some of the estimates recorded in this section will be used again in Part \ref{part-normality}. 
%in the proof of almost everywhere normality on the support of $\mu$. 
%The relations collected in these two sections are central to the remainder of this article.  
\subsection{Preliminary estimates involving $\widehat{1}$} 
As noted in \eqref{density}, the density function $\varphi_{m}$ is a weighted linear combination of indicator functions of intervals. We start by recording the Fourier estimates of these building blocks. 
%as they will permeate the rest of the analysis. 
Let $\mathtt 1$ denote the constant function on $[0,1]$ with unit value. Then 
\begin{equation} \label{1-hat} 
\widehat{\mathtt 1}(\xi) = \int_{0}^{1} e(x \xi) \, dx = \frac{1 - e(\xi)}{2 \pi i \xi} \; \; \text{ where } \; \; e(y) := e^{-2 \pi i y}, \qquad |\widehat{\mathtt 1}(\cdot)| \leq 1.  
\end{equation} 
\begin{lemma} \label{1-hat-lemma}
The function $\widehat{\mathtt 1}(\cdot)$ is analytic on $\mathbb R$ and obeys the estimates
\begin{equation} 
\bigl|\widehat{\mathtt 1}(\xi) \bigr| \leq \min \left[1, \frac{1}{\pi |\xi|} \right]  \; \; \text{ and } \; \; \bigl|\widehat{\mathtt 1}'(\xi) \bigr| \leq 2 \pi \min \left[ 1, \frac{1}{|\xi|}\right]  \quad \text{ for all } \xi \in \mathbb R. \label{1-hat-bound} 
\end{equation} 
\end{lemma}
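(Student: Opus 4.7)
The plan is to treat the four claims — analyticity, the two pointwise bounds on $\widehat{\mathtt 1}$, and the two pointwise bounds on $\widehat{\mathtt 1}'$ — independently, since each follows from elementary manipulation of one of the two representations of $\widehat{\mathtt 1}$ already displayed in \eqref{1-hat}: the integral form $\int_{0}^{1} e(x\xi)\,dx$ and the closed form $(1-e(\xi))/(2\pi i \xi)$. Analyticity is immediate from either perspective: the integrand $e(x\xi) = e^{-2\pi i x \xi}$ is entire in $\xi$ and the interval $[0,1]$ is compact, so differentiation under the integral sign produces an entire extension; equivalently, from the closed form, the numerator $1-e(\xi)$ and the denominator $2\pi i \xi$ are entire, and the Taylor expansion $1-e(\xi) = 2\pi i \xi + O(\xi^{2})$ shows that the apparent singularity at $\xi = 0$ is removable. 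The bound $|\widehat{\mathtt 1}(\xi)| \leq 1$ comes from $|e(x\xi)| = 1$ and integration over a unit interval, while $|\widehat{\mathtt 1}(\xi)| \leq 1/(\pi|\xi|)$ drops out of the closed form together with $|1-e(\xi)| \leq 2$.

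For the derivative bounds, I would differentiate under the integral to get $\widehat{\mathtt 1}'(\xi) = \int_{0}^{1} (-2\pi i x)\, e(x\xi)\,dx$, which yields the universal estimate $|\widehat{\mathtt 1}'(\xi)| \leq 2\pi \int_{0}^{1} x\,dx = \pi$, comfortably below the $2\pi$ in the stated bound. To extract the $1/|\xi|$ decay, I would differentiate the closed form instead: a short quotient-rule calculation gives the identity
\[
\widehat{\mathtt 1}'(\xi) \;=\; \frac{e(\xi)}{\xi} \;-\; \frac{\widehat{\mathtt 1}(\xi)}{\xi},
\]
after which the triangle inequality together with $|e(\xi)|=1$ and the already established bound $|\widehat{\mathtt 1}(\xi)| \leq 1/(\pi |\xi|)$ yields $|\widehat{\mathtt 1}'(\xi)| \leq (1 + 1/(\pi|\xi|))/|\xi|$. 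Restricting to $|\xi| \geq 1$ this is at most $(1 + 1/\pi)/|\xi| < 2\pi/|\xi|$, while for $|\xi| \leq 1$ the trivial bound $\pi$ already gives $|\widehat{\mathtt 1}'(\xi)| \leq 2\pi = 2\pi \min(1, 1/|\xi|)$. Combining the two ranges gives the claimed estimate on $\widehat{\mathtt 1}'$.

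There is no real obstacle here, as the lemma is a warm-up organizing standard pointwise decay of Fourier transforms of characteristic functions; the only thing to watch is the constant, and in particular making sure the closed-form differentiation reproduces the compact identity above rather than an unwieldy expression (an alternative route, integration by parts applied to $\int_{0}^{1} x\, e(x\xi)\,dx$, yields the same identity and could be used as a sanity check).
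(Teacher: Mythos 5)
Your proposal is correct and follows essentially the same route as the paper: the two bounds on $\widehat{\mathtt 1}$ come respectively from the integral and closed-form representations in \eqref{1-hat}, and the two bounds on $\widehat{\mathtt 1}'$ from differentiating under the integral and differentiating the closed form. Your rewriting of $\widehat{\mathtt 1}'(\xi)$ as $e(\xi)/\xi - \widehat{\mathtt 1}(\xi)/\xi$ is algebraically the same identity the paper records in \eqref{1-hat'-bound2}, and your universal bound $\pi$ from $\int_0^1 x\,dx = 1/2$ is actually a touch sharper than the paper's stated $2\pi$, though both comfortably fit the claimed estimate.
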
 
\begin{proof} 
The trivial bound $|\widehat{\mathtt 1}(\xi)| \leq 1$ has already been noted in \eqref{1-hat}, and follows from the triangle inequality on the defining integral of $\widehat{\mathtt 1}$.  The second bound on $\widehat{\mathtt 1}$ follows from the explicit formula for $\widehat{\mathtt 1}$: 
\[ \bigl| \widehat{\mathtt 1}(\xi) \bigr| =  \Bigl| \frac{1 - e(\xi)}{2 \pi i \xi}\Bigr| \leq \frac{2}{2 \pi |\xi|} = \frac{1}{\pi |\xi|}. \] 
Combining the two estimates leads to the first inequality \eqref{1-hat-bound}. 
\vskip0.1in 
\noindent The estimation for $\widehat{\mathtt 1}'$ is similar. The relation \eqref{1-hat} can be used in two ways:
\begin{align} 
\widehat{\mathtt 1}'(\xi) &= \int_{0}^{1} (- 2\pi i x) e(x \xi) \, dx, \; \; \text{ which gives } \; \; \bigl| \widehat{\mathtt 1}'(\xi) \bigr| \leq 2 \pi, \quad \text{ and } \label{1hat'-bound1} \\ 
\widehat{\mathtt 1}'(\xi) &= \frac{d}{d \xi} \left[\frac{1 - e(\xi)}{2 \pi i \xi} \right] = \frac{2 \pi i \xi e(\xi) - (1 - e(\xi))}{2 \pi i \xi^2}, \; \; \text{ which yields }  \; \; \bigl|\widehat{\mathtt 1}'(\xi) \bigr| \leq \frac{2}{|\xi|}. \label{1-hat'-bound2}
\end{align}  
The second inequality in \eqref{1-hat-bound} is a consequence of \eqref{1hat'-bound1} and \eqref{1-hat'-bound2}.  
\end{proof} 
\begin{lemma} \label{1-hat-difference-lemma}
There exists an absolute constant $C_0 > 0$ such that for all $\theta_1, \theta_2 \in \mathbb R$, with $|\theta_2| > |\theta_1|$, the following relation holds:  
\begin{equation} 
\bigl| \widehat{\mathtt 1}(\theta_2) - \widehat{\mathtt 1}(\theta_1) \bigr| \leq C_0 \min(1, |\theta_1 - \theta_2|) \times \min \bigl(1, {|\theta_1|}^{-1}\bigr).  \label{1-hat-difference-estimate}
\end{equation}   
\end{lemma}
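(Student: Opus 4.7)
The plan is to prove this lemma by a case analysis on the size of $|\theta_1 - \theta_2|$ relative to $1$, in each case reducing the claim to the pointwise bounds already furnished by Lemma \ref{1-hat-lemma}.

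In the first regime, when $|\theta_1 - \theta_2| \geq 1$, the factor $\min(1, |\theta_1 - \theta_2|)$ equals $1$, so it will suffice to establish $|\widehat{\mathtt 1}(\theta_2) - \widehat{\mathtt 1}(\theta_1)| \leq C_0 \min(1, |\theta_1|^{-1})$. This follows immediately from the triangle inequality together with the first bound in \eqref{1-hat-bound}, using the hypothesis $|\theta_2| > |\theta_1|$ to ensure $\min(1, (\pi|\theta_2|)^{-1}) \leq \min(1, (\pi|\theta_1|)^{-1})$.

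In the complementary regime $|\theta_1 - \theta_2| < 1$, I will invoke the mean value theorem on the segment joining $\theta_1$ and $\theta_2$, which reduces matters to a uniform bound on $|\widehat{\mathtt 1}'(\xi)|$ along that segment. A further split on the size of $|\theta_1|$ is convenient: when $|\theta_1| \leq 1$ the factor $\min(1, |\theta_1|^{-1})$ equals $1$, and the trivial bound $|\widehat{\mathtt 1}'| \leq 2\pi$ from \eqref{1-hat-bound} already suffices; when $|\theta_1| > 1$, the constraint $|\theta_1 - \theta_2| < 1 < |\theta_1|$ together with $|\theta_2| > |\theta_1|$ forces $\theta_1$ and $\theta_2$ to lie on the same side of the origin, whence $|\xi| \geq |\theta_1|$ for every $\xi$ on the joining segment, and the second estimate in \eqref{1-hat-bound} delivers $|\widehat{\mathtt 1}'(\xi)| \leq 2\pi |\theta_1|^{-1}$.

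The argument is genuinely short and I expect no serious obstacle; the only minor subtlety — namely, preventing the larger values of $|\widehat{\mathtt 1}'|$ near zero from spoiling the factor $\min(1, |\theta_1|^{-1})$ — is handled automatically, because that factor saturates at $1$ precisely in the range $|\theta_1| \leq 1$ where the derivative is merely $O(1)$.
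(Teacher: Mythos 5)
Your proof is correct and follows essentially the same route as the paper's: triangle inequality plus Lemma~\ref{1-hat-lemma} when $|\theta_1-\theta_2|\geq 1$, and the mean value theorem plus the derivative bound when $|\theta_1-\theta_2|<1$. Your version is in fact slightly more careful than the paper's. The paper asserts, as an intermediate step, that
\[
\sup\bigl\{\,|\widehat{\mathtt 1}'(\theta)| : \theta\in[\theta_1,\theta_2]\,\bigr\} \leq C_0\min\bigl(1,|\theta_1|^{-1}\bigr)
\]
without restricting to $|\theta_1-\theta_2|<1$; as an unconditional statement this fails (take, e.g., $\theta_1=-100$, $\theta_2=101$: the segment crosses the origin where $|\widehat{\mathtt 1}'|$ is of order $1$, yet $\min(1,|\theta_1|^{-1})=1/100$). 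The oversight is harmless because the final minimum is saturated by the triangle-inequality bound precisely when $|\theta_1-\theta_2|\geq 1$, which is also exactly the regime where the unconditional derivative bound can fail — but the paper never says so. Your split on $|\theta_1-\theta_2|$ first, followed by the observation that $|\theta_1-\theta_2|<1<|\theta_1|$ forces $\theta_1$ and $\theta_2$ onto the same side of the origin (else $|\theta_1-\theta_2|=|\theta_1|+|\theta_2|>2$), makes the argument airtight where the paper is merely implicit.
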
 
\begin{proof} 
The triangle inequality, combined with the estimate \eqref{1-hat-bound} for $\widehat{\mathtt 1}$ from Lemma \ref{1-hat-lemma}, gives 
\begin{equation} 
\bigl| \widehat{\mathtt 1}(\theta_2) - \widehat{\mathtt 1}(\theta_1) \bigr| \leq \bigl| \widehat{1}(\theta_1) \bigr| + \bigl| \widehat{1}(\theta_2)\bigr| \leq \min \Bigl[1, \frac{1}{\pi |\theta_1|}\Bigr] + \min \Bigl[1, \frac{1}{\pi |\theta_2|}\Bigr] \leq C_0 \min\bigl(1, {|\theta_1|}^{-1} \bigr). \label{difference-est-1}
\end{equation}  
The mean value theorem, coupled with the inequality \eqref{1-hat-bound} for $\widehat{\mathtt 1}'$, yields a different estimate: 
\begin{align} 
%\bigl| \widehat{\mathtt 1}(\theta_2) - \widehat{\mathtt 1}(\theta_1) \bigr| &= \Bigl|\int_{0}^1 \bigl[ e(x \theta_2) - e(x \theta_1) \bigr] \, dx \Bigr| \nonumber \\ 
%&\leq \Bigl|\int_0^1 \Bigl[ \int_{0}^{1} \frac{d}{dt} e(t\theta_1 + (1 - t) \theta_2)\, dt \Bigr] \, dx  \Bigr| \leq |\theta_1 - \theta_2| \min \left[1, \frac{1}{\theta_2} \right], \; \text{ and }   \\ 
\bigl| \widehat{\mathtt 1}(\theta_2) - \widehat{\mathtt 1}(\theta_1) \bigr| &\leq |\theta_2 - \theta_1| \sup \Bigl\{\bigl| \widehat{\mathtt 1}'(\theta)\bigr| : \theta \in [\theta_1, \theta_2] \Bigr\} \nonumber \\ 
&\leq C_0 |\theta_2-\theta_1| \min\bigl(1, {|\theta_1|}^{-1}\bigr). 
%\text{ since } \bigl|\widehat{\mathtt 1}'(\theta)\bigr| \leq C_0 \min \bigl(1, |\theta|^{-1} \bigr). 
\label{difference-est-2}
\end{align}  
The two inequalities \eqref{difference-est-1} and \eqref{difference-est-2} have been jointly summarized in \eqref{1-hat-difference-estimate}. 
\end{proof} 

\subsection{Factorization of densities}  \label{density-factorization-section} 
Let us recall the expressions \eqref{Phi0} and \eqref{Phi1} for the densities $\Phi_0$ and $\Phi_1$ in the construction $\mathscr{O}$. Then  for $j=0,1$ and $\xi \in \mathbb Z$, the $\xi^{\text{th}}$ Fourier coefficient of $\Phi_j$ is given by 
\begin{equation}  \label{Phi1-hat} 
\widehat{\Phi}_j(\xi) = \int_{0}^{1} \Phi_j(x) e(x \xi) \, dx  =:  \Upsilon_j(\xi) \times \Biggl\{\begin{aligned} \widehat{1}(\xi \mathtt t^{-1}) &\text{ if } j = 0, \\  \widehat{\mathtt 1}(\xi \mathtt s^{-\mathtt b}) &\text{ if } j =1,  \end{aligned}  \; \text{ where } \; e(y) := e^{-2 \pi i y}. 
\end{equation}  
The factors $\Upsilon_j$ in \eqref{Phi1-hat} are exponential sums of the form 
\begin{equation} \label{Yj}  
\Upsilon_j(\xi) = \Upsilon_j(\xi; \pmb{\gamma}) = \sum_{\mathtt I \in \mathscr{I}_j} \mathtt w(\mathtt I) e \bigl(\xi \alpha(\mathtt I) \bigr), \quad j = 0, 1, 
\end{equation} 
where $\alpha(\mathtt I)$ denotes the left endpoint of $\mathtt I$, $\mathscr{I}_j$ is the collection of basic intervals of $\mathtt E_j$, $j = 0, 1$,
and $\pmb{\gamma}$ is the input vector of $\mathscr{O}$ defined in \eqref{O-def}. The reader will recognize $\Upsilon_j$ as the Fourier coefficient of the discrete probability measure that is supported on the points $\{\alpha(\mathtt I) : \mathtt I \in \mathscr{I}_j\}$, with a mass $\mathtt w(\mathtt I)$ assigned to $\alpha(\mathtt I)$. We are especially interested in the behaviour of the exponential sum $\Upsilon_1$ associated with $(\mathtt E_1, \Phi_1) = \mathscr{O}(\mathtt E_0, \Phi_0)$. In fact, $\Upsilon_1$ is uniquely specified by $\pmb{\gamma}$.
%\begin{equation} \label{def-gamma}
%\pmb{\gamma} := ( \mathtt E_0, \Phi_0, \mathtt t, \mathtt s, \mathtt a, \mathtt b, \varepsilon), \text{ the input vector of the operation $\mathscr{O}$.} 
%\end{equation} 
Inserting the formula \eqref{JKL} of $\mathscr I_1$ into \eqref{Yj} with $j = 1$, we arrive at       
\begin{align} 
 \Upsilon_1(\xi) &= \Upsilon_1(\xi; \pmb{\gamma}) :=  \sum_{\mathtt I \in \mathscr{I}_0} \sum_{\mathbf j \in \mathbb J} \mathtt w(\mathtt I(\mathbf j)) e\bigl[\xi \alpha(\mathtt I(\mathbf j)) \bigr], \text{ which simplifies to }  \label{Y-0} \\
 &=  \sum_{\mathtt I \in \mathscr{I}_0} \mathtt w(\mathtt I) \sum_{\mathbf j \in \mathbb J} \frac{\vartheta_{\ell}}{\mathtt N} e\Bigl(\xi \Bigl[ \alpha(\mathtt I) + \eta(\mathtt I) + \frac{k}{\mathtt s^{\mathtt a}} + \frac{\ell}{\mathtt s^{\mathtt b}} \Bigr]\Bigr) \nonumber \\
 &=  \Bigl[\sum_{\mathtt I \in \mathscr{I}_0} \mathtt w(\mathtt I) e \bigl(\xi \bigl[ \alpha(\mathtt I) + \eta(\mathtt I) \bigr] \bigr) \Bigr] \times \Bigl[\sum_{k \in \mathbb K} \frac{1}{\mathtt N} e \bigl(\xi k \mathtt s^{-\mathtt a} \bigr) \Bigr] \times \Bigl[ \sum_{\ell \in \mathbb L} \vartheta_{\ell} e(\xi \ell \mathtt s^{-\mathtt b}) \Bigr]
  \label{Y} \\
  &=:  \Omega_1(\xi) \times \mathfrak A(\xi) \times \mathfrak B(\xi). \label{factorization} 
\end{align}
%using \eqref{step-2-basic} and \eqref{second-step-weight}. 
The computations above rely on the description of $\mathtt I(\mathbf j)$, the product structure of the index set $\mathbb J$ and the weights $\mathtt w(\mathtt I(\mathbf j))$ given by \eqref{step-2-basic}, \eqref{JKL} and \eqref{second-step-weight}. Of the three factors of $\Upsilon_1$ in \eqref{factorization}, 
%leading to the expression
%despite the product structure of the weights $\mathtt w(\mathtt I(\mathbf j))$ given in \eqref{second-step-weight}, 
 %This is due to the presence of the spill-over terms $\eta(\mathtt I)$, which reflects the lack of self-similarity among the basic intervals of $\mathtt E_1$. We define an auxiliary funcion $\Omega$ that removes this term, and is therefore easier to handle, at the expense of a quantifiable error.
%\begin{equation} \label{factorization} 
%\Upsilon_1(\xi) 
%&:=  \sum_{\mathtt I \in \mathscr{I}_0} \mathtt w(\mathtt I) \sum_{\mathbf j \in \mathbb J} \frac{\vartheta_{\ell}}{\mathtt N} e\Bigl(\xi \Bigl[ \alpha(\mathtt I) + \frac{k}{\mathtt s^{\mathtt a}} + \frac{\ell}{\mathtt s^{\mathtt b}} \Bigr]\Bigr) \\ 
%\end{equation}  
the first factor $\Omega_1$ given by
\begin{equation} \label{Omega_1} 
\Omega_1(\xi) = \Omega_1(\xi; \pmb{\gamma}) := \sum_{\mathtt I \in \mathscr{I}_0} \mathtt w(\mathtt I)  e \Bigl(\xi \bigl[\alpha(\mathtt I) + \eta(\mathtt I) \bigr]\Bigr) 
\end{equation} 
is the counterpart of $\Upsilon_j$ for the intermediate density $\Psi_1$ given by \eqref{intermediate-density-2} and \eqref{intermediate-density}. Indeed, computing the Fourier coefficient of $\Psi_1$ from the formulae \eqref{intermediate-density-2} and \eqref{intermediate-density} yield two equivalent expressions for $\widehat{\Psi}_1$:   
\begin{align} \label{intermediate-Fourier-coefficient} 
\widehat{\Psi}_1(\xi) &= \widehat{1}(\xi \mathtt N \mathtt s^{-\mathtt a}) \sum_{\mathtt I \in \mathscr{I}_0} \mathtt w(\mathtt I)  e \Bigl(\xi \bigl[\alpha(\mathtt I) + \eta(\mathtt I) \bigr]\Bigr)  =  \widehat{1}(\xi \mathtt N \mathtt s^{-\mathtt a}) \times \Omega_1(\xi) \\ 
 &=  \widehat{1}(\xi \mathtt s^{-\mathtt a}) \times \mathfrak A(\xi) \times \Omega_1(\xi). \label{intermediate-Fourier-coefficient-2} 
\end{align}   
The functions $\mathfrak A$ and $\mathfrak B$, which are the second and third factors in the product \eqref{Y}, will be discussed in Section \ref{A-and-B-section} below momentarily. As with the exponential sum $\Upsilon_j$, the function $\widehat{\Psi}_1$ can also be identified as the $\xi^{\text{th}}$ Fourier coefficient of a discrete measure that assigns mass $\mathtt w(\mathtt I)$ to the point $\alpha(\mathtt I) + \eta(\mathtt I)$ for every $\mathtt I \in \mathscr{I}_0$.    
The function $\Omega_1$ is very similar in structure to $\Upsilon_0$, except for the presence of the spill-over terms $\eta(\mathtt I)$ in the exponent. Each can therefore be estimated by the other, at the expense of a quantifiable error. We record this error in Lemma \ref{error-lemma} below. The factors $\mathfrak A$ and $\mathfrak B$ will be examined in Sections \ref{A-and-B-section}, \ref{AB-single-section} and \ref{AB-double-section}.  
\begin{lemma} \label{error-lemma} 
The functions $\Upsilon_0$ and $\Omega_1$ defined in \eqref{Yj} and \eqref{factorization} satisfy the estimate:  
\begin{equation} \label{difference} 
|\Omega_1(\xi) - \Upsilon_0(\xi)| \leq \min \left[2, 2 \pi |\xi| \mathtt s^{-\mathtt a}\right] \quad \text{ for all } \xi \in \mathbb Z. 
\end{equation}
\end{lemma}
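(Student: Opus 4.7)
The plan is to form the difference $\Omega_1(\xi)-\Upsilon_0(\xi)$ directly from the defining formulas \eqref{Yj} (with $j=0$) and \eqref{Omega_1}, factor out the common phase $e(\xi\alpha(\mathtt I))$, and then bound the resulting per-interval error $e(\xi\eta(\mathtt I))-1$ in two different ways.

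More concretely, I would write
\begin{equation*}
\Omega_1(\xi)-\Upsilon_0(\xi)=\sum_{\mathtt I\in\mathscr{I}_0}\mathtt w(\mathtt I)\,e\bigl(\xi\alpha(\mathtt I)\bigr)\Bigl[e\bigl(\xi\eta(\mathtt I)\bigr)-1\Bigr].
\end{equation*}
Since $|e(y)|=1$ for all real $y$, the triangle inequality gives
\begin{equation*}
\bigl|\Omega_1(\xi)-\Upsilon_0(\xi)\bigr|\le\sum_{\mathtt I\in\mathscr{I}_0}\mathtt w(\mathtt I)\,\bigl|e\bigl(\xi\eta(\mathtt I)\bigr)-1\bigr|,
\end{equation*}
and I will use $\sum_{\mathtt I\in\mathscr{I}_0}\mathtt w(\mathtt I)=1$ from \eqref{weights} to normalize after bounding the bracket.

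For the universal bound by $2$, I would use $|e(y)-1|\le 2$ for every real $y$; inserting this into the previous display immediately yields $|\Omega_1(\xi)-\Upsilon_0(\xi)|\le 2$. For the bound $2\pi|\xi|\mathtt s^{-\mathtt a}$, I would invoke the elementary inequality $|e^{-2\pi i y}-1|\le 2\pi|y|$ (which follows from $|e^{i\theta}-1|\le|\theta|$ applied to $\theta=-2\pi y$) to get $|e(\xi\eta(\mathtt I))-1|\le 2\pi|\xi|\,\eta(\mathtt I)$. The key input from the construction of $\mathscr{O}$ is \eqref{def-m}, which guarantees $\eta(\mathtt I)\in[0,\mathtt s^{-\mathtt a})$ uniformly in $\mathtt I\in\mathscr{I}_0$. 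Substituting this into the sum and using $\sum\mathtt w(\mathtt I)=1$ gives $|\Omega_1(\xi)-\Upsilon_0(\xi)|\le 2\pi|\xi|\mathtt s^{-\mathtt a}$. Taking the minimum of the two bounds yields \eqref{difference}.

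There is no real obstacle: the whole argument is a one-line factorization followed by two elementary phase estimates. The only point that requires care is ensuring that the pointwise bound $\eta(\mathtt I)<\mathtt s^{-\mathtt a}$ from \eqref{def-m} is used in its sharp form (open on the right), so that the constant in front of $|\xi|\mathtt s^{-\mathtt a}$ is exactly $2\pi$ and matches the statement of the lemma.
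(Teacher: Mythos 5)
Your proof is correct and takes essentially the same route as the paper: factor out the common phase $e(\xi\alpha(\mathtt I))$, apply the triangle inequality, bound $|e(\xi\eta(\mathtt I))-1|$ trivially by $2$ or by $2\pi|\xi|\eta(\mathtt I)$, and use $\eta(\mathtt I)<\mathtt s^{-\mathtt a}$ together with $\sum\mathtt w(\mathtt I)=1$. The only cosmetic difference is that the paper gets the bound $2$ from $|\Omega_1|,|\Upsilon_0|\le 1$ rather than per-term, and the paper restricts to $\pi|\xi|\le\mathtt s^{\mathtt a}$ before invoking its Lipschitz inequality \eqref{Lipschitz} (which it states only for $|y|\le 1$), whereas you correctly observe that $|e^{-2\pi i y}-1|\le 2\pi|y|$ holds for all real $y$.
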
 
\begin{proof} 
Since the weights $\{\mathtt w(\mathtt I): \mathtt I \in \mathscr{I}_0 \}$ sum to 1 by \eqref{weights}, both $\Omega_1$ and $\Upsilon_0$ are uniformly bounded above by 1. The triangle inequality therefore gives the trivial bound of 2 in \eqref{difference}. Now suppose that $\pi |\xi| \leq \mathtt s^{\mathtt a}$. The map $y \mapsto e(y)$ is Lipschitz with constant $2 \pi$, which means 
\begin{equation} \label{Lipschitz} 
|e(y) -1 | \leq 2\pi |y| \quad \text{ for all } \quad |y| \leq 1.
\end{equation} 
Using this inequality and comparing \eqref{Yj} and \eqref{Omega_1} leads to the desired conclusion: 
\begin{align*}
|\Omega_1(\xi) - \Upsilon_0(\xi)| &= \Biggl| \sum_{\mathtt I \in \mathscr{I}_0} \mathtt w(\mathtt I) e \bigl(\xi \alpha(\mathtt I)\bigr) \bigl[ e \bigl(\xi\eta(\mathtt I)\bigr) - 1\bigr] \Biggr| \\
& \leq   \sum_{\mathtt I \in \mathscr{I}_0} \mathtt w(\mathtt I) \bigl|  e \bigl(\xi \eta(\mathtt I)\bigr) - 1 \bigr| \leq 2 \pi \sum_{\mathtt I \in \mathscr{I}_0} \mathtt w(\mathtt I) |\xi|  \eta(\mathtt I) \\
&\leq 2 \pi |\xi| \mathtt s^{-\mathtt a} \sum_{\mathtt I \in \mathscr{I}_0} \mathtt w(\mathtt I)  = 2 \pi |\xi| \mathtt s^{-\mathtt a}.
\end{align*} 
Here we have used the fact that $\eta(\mathtt I) \in [0, \mathtt s^{-\mathtt a})$, a consequence of its defining property \eqref{def-m}.
\end{proof}
\subsection{The factors $\mathfrak A$ and $\mathfrak B$} \label{A-and-B-section} 
The factorization \eqref{factorization} of $\Upsilon_1$ yields two other factors $\mathfrak A$ and $\mathfrak B$. These are exponential sums in their own right, each bounded above by 1 in absolute value, and can be further simplified. For instance, $\mathfrak A$ is a geometric sum of complex exponentials:  
\begin{equation} \label{def-A}
\mathfrak A(\xi; \mathtt N, \mathtt s, \mathtt a) = \mathfrak A(\xi)  :=  \frac{1}{\mathtt N} \sum_{k=0}^{\mathtt N -1}  e \left(\xi k \mathtt s^{-\mathtt a} \right)  = \left\{ \begin{aligned}  &1  &\text{ if } \mathtt s^{\mathtt a} \mid \xi, \\ &\frac{1}{\mathtt N}\frac{1 - e(\xi \mathtt N \mathtt s^{-\mathtt a})}{1 - e(\xi \mathtt s^{-\mathtt a})}  &  \text{ if } \mathtt s^{\mathtt a} \nmid \xi. \end{aligned} \right\}
\end{equation}
Here and throughout, the notation $\mathtt p \mid \mathtt q$ means that $\mathtt p$ is an integer that divides $\mathtt q$. Similarly, $\mathtt p \nmid \mathtt q$ implies that $\mathtt q$ is not divisible by $\mathtt p$. 
\vskip0.1in
\noindent The factor $\mathfrak B$ in \eqref{factorization}, on the other hand, admits the following representation, in view of \eqref{choice-of-theta}: 
 \begin{align}
&\mathfrak B(\xi; \mathtt s, \mathtt a, \mathtt b, \mathscr{D}, \varepsilon) = \mathfrak B (\xi) :=   \sum_{\ell \in \mathbb L} \vartheta_{\ell} e \left(\xi \ell \mathtt s^{-\mathtt b} \right)  = \varepsilon \mathfrak B^{\ast}(\xi) + (1 - \varepsilon) \mathfrak C(\xi), \text{ where } \label{def-B} \\ 
&\qquad\mathfrak B^{\ast}(\xi) := {\mathtt r}^{\mathtt a - \mathtt b} \sum_{\ell \in \mathbb L^{\ast}} e(\xi \ell \mathtt s^{-\mathtt b}) \; \; \text{ and } \; \; \mathfrak C(\xi) :=  \mathtt s^{\mathtt a - \mathtt b}  \sum_{\ell \in \mathbb L} e\bigl( \xi \ell \mathtt s^{-\mathtt b} \bigr).  \label{B*C}
\end{align}
 The definitions \eqref{def-A} and \eqref{B*C} express the three exponential sums $\mathfrak A$, $\mathfrak B^{\ast}$ and $\mathfrak C$ as averages of uni-modular terms, which imply the following trivial bound for all of them:
 \begin{equation} \label{B*C-trivial-bound}
\bigl|\mathfrak A(\xi) \bigr| \leq 1, \quad  \bigl|\mathfrak B^{\ast}(\xi) \bigr| \leq 1, \quad \bigl|\mathfrak C(\xi) \bigr| \leq 1 \; \; \text{ and therefore }  \; \;  \bigl|\mathfrak B(\xi) \bigr| \leq 1. 
\end{equation} 
Let us pause for a moment to observe that among the three functions $\mathfrak A$, $\mathfrak B^{\ast}$ and $\mathfrak C$, the set $\mathscr{D}$ of restricted digits involved in $\mathscr{O}$ appears only in $\mathfrak B^{\ast}$. This dependence is through the index set $\mathbb L^{\ast}$, which is identified with $\mathscr{D}^{\mathtt b - \mathtt a}$ according to \eqref{L*}. We ask the reader to note that the estimates derived in this section and the next use only the trivial bound \eqref{B*C-trivial-bound} on $\mathfrak B^{\ast}$ and is therefore independent of the choice of $\mathscr{D}$.  This observation substantiates Remark \ref{remark-Rajchman} following Proposition \ref{Rajchman-prop}, concerning the presence of the Rajchman property of $\mu$ for any choice of $\mathcal D$. 
\subsection{The function $\mathfrak C$} \label{C-section} 
\begin{lemma} \label{BC-lemma}
The function 
%$\mathfrak B^{\ast}$ and 
$\mathfrak C$ defined in \eqref{B*C} is $\mathtt s^{\mathtt b}$-periodic, and can be expressed as follows: 
\begin{equation}  
%\label{B*}
%\mathfrak B^{\ast}(\xi) &=  2^{\mathtt a - \mathtt b} \prod_{\mathtt k=\mathtt a+1}^{\mathtt b} \left[ 1 + e \bigl(\xi \mathtt s^{-\mathtt k} \bigr) \right], \quad \text{ whereas }  \\ 
%\end{equation} 
%On the other hand, 
%If we write $\xi = \xi_{\mathtt s} \xi'_{\mathtt s}$, where $\xi_{\mathtt s}$ is a power of $\mathtt s$, and $\mathtt s \nmid \xi'_{\mathtt s}$, then 
%\begin{equation} 
\label{C} 
\mathfrak C(\xi) = \left\{ 
\begin{aligned} &\mathtt s^{\mathtt a - \mathtt b}\frac{1 - e(\xi \mathtt s^{-\mathtt a})}{1 - e(\xi \mathtt s^{-\mathtt b})} \quad \text{ if } \mathtt s^{\mathtt a} \nmid \xi, \\
&0 \hskip1.2in \text{ if  } \mathtt s^{\mathtt a} \mid \xi, \; \mathtt s^{\mathtt b} \nmid \xi, \\  
&1 \hskip1.2in  \text{ if } \mathtt s^{\mathtt b} \mid \xi.  
\end{aligned} \right\}
\end{equation} 
\end{lemma}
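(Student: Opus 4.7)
The proof is essentially a geometric series computation combined with case analysis, since $\mathfrak C(\xi) = \mathtt s^{\mathtt a - \mathtt b} \sum_{\ell=0}^{\mathtt s^{\mathtt b-\mathtt a}-1} \omega^\ell$ with $\omega := e(\xi \mathtt s^{-\mathtt b})$. The plan is to establish periodicity first, then evaluate the sum explicitly in each of the three regimes indicated in \eqref{C}.

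For periodicity, I would note that replacing $\xi$ by $\xi + \mathtt s^{\mathtt b}$ changes the summand $e(\xi \ell \mathtt s^{-\mathtt b})$ by a factor of $e(\ell) = 1$ for every $\ell \in \mathbb Z$, so $\mathfrak C$ is $\mathtt s^{\mathtt b}$-periodic on $\mathbb Z$. Next, I would consider the three cases. When $\mathtt s^{\mathtt b} \mid \xi$, the ratio $\omega$ equals $1$, so every term in the sum is $1$ and the count $\#\mathbb L = \mathtt s^{\mathtt b-\mathtt a}$ combined with the prefactor $\mathtt s^{\mathtt a - \mathtt b}$ gives $\mathfrak C(\xi) = 1$. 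When $\mathtt s^{\mathtt b} \nmid \xi$, the ratio $\omega \neq 1$, and the standard geometric-sum identity $\sum_{\ell=0}^{N-1} \omega^\ell = (1-\omega^N)/(1-\omega)$ with $N = \mathtt s^{\mathtt b-\mathtt a}$ gives
\begin{equation*}
\mathfrak C(\xi) = \mathtt s^{\mathtt a-\mathtt b} \cdot \frac{1 - e(\xi \mathtt s^{-\mathtt a})}{1 - e(\xi \mathtt s^{-\mathtt b})},
\end{equation*}
using $\omega^{\mathtt s^{\mathtt b-\mathtt a}} = e(\xi \mathtt s^{-\mathtt a})$.

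This closed form already accounts for the first line of \eqref{C} when $\mathtt s^{\mathtt a} \nmid \xi$ (note that $\mathtt s^{\mathtt a} \nmid \xi$ automatically forces $\mathtt s^{\mathtt b} \nmid \xi$ since $\mathtt a < \mathtt b$, so the denominator does not vanish). For the middle line, I would observe that if $\mathtt s^{\mathtt a} \mid \xi$ while $\mathtt s^{\mathtt b} \nmid \xi$, then $e(\xi \mathtt s^{-\mathtt a}) = 1$ but $e(\xi \mathtt s^{-\mathtt b}) \neq 1$, so the numerator vanishes and the denominator does not, yielding $\mathfrak C(\xi) = 0$.

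There is no real obstacle here; the only point requiring mild care is the bookkeeping between $\mathtt s^{\mathtt a} \mid \xi$ versus $\mathtt s^{\mathtt b} \mid \xi$ to ensure the three cases in \eqref{C} are exhaustive and mutually exclusive, and that the denominator in the closed-form expression is non-vanishing exactly when it is used. A one-line verification that $\mathtt s^{\mathtt b} \mid \xi \Rightarrow \mathtt s^{\mathtt a} \mid \xi$ (since $\mathtt a < \mathtt b$) will confirm the partition of $\mathbb Z$ into the three cases stated.
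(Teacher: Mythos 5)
Your proof is correct, but it takes a genuinely different and more direct route than the paper's. You observe that $\mathfrak C(\xi) = \mathtt s^{\mathtt a - \mathtt b}\sum_{\ell=0}^{\mathtt s^{\mathtt b - \mathtt a}-1} \omega^{\ell}$ with $\omega = e(\xi\mathtt s^{-\mathtt b})$ is a single geometric progression and apply the closed-form geometric-sum formula once, noting that $\omega^{\mathtt s^{\mathtt b-\mathtt a}} = e(\xi \mathtt s^{-\mathtt a})$; the three cases of \eqref{C} then drop out from whether $\omega = 1$ (i.e., $\mathtt s^{\mathtt b} \mid \xi$) and whether the resulting numerator $1 - e(\xi \mathtt s^{-\mathtt a})$ vanishes (i.e., $\mathtt s^{\mathtt a} \mid \xi$). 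The paper instead expands $\ell$ in its base-$\mathtt s$ digits, writes $\mathtt s^{\mathtt b-\mathtt a}\mathfrak C(\xi) = \prod_{\mathtt k=\mathtt a+1}^{\mathtt b}\bigl[1 + e(\xi\mathtt s^{-\mathtt k}) + \cdots + e(\xi(\mathtt s-1)\mathtt s^{-\mathtt k})\bigr]$ as a product of $\mathtt b-\mathtt a$ shorter geometric sums, and then telescopes. Your argument is shorter and requires less bookkeeping for this lemma in isolation, but the paper's product decomposition is not wasted effort: the same factor-by-factor structure (via the functions $\mathfrak g(\xi, j; \mathtt s, \cdot)$ of \eqref{exp-sum-g-def}) is the key device later for $\mathfrak B^{\ast}$, where the sum is over the restricted index set $\mathbb L^{\ast}$ and is \emph{not} a plain geometric series, so your direct approach would not carry over there. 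Both routes are valid; the paper's choice buys uniformity with the treatment of $\mathfrak B^{\ast}$ in Sections \ref{w1-section} and \ref{v-pointwise-section-take-2}.
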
 
\begin{proof} 
%Let us start with the sum $\mathfrak B^{\ast}$ in \eqref{B*C}. 
Since $\mathfrak C$ is an exponential sum over the index set $\mathbb L$ defined in \eqref{step-2-decomp}, let us start with an explicit representation of its summands. Every $\ell \in \mathbb L$ is characterized by a digit vector $\mathtt d = (\mathtt d_0, \ldots, \mathtt d_{\mathtt b-\mathtt a-1}) \in \mathbb Z_{\mathtt s}^{\mathtt b - \mathtt a}$ obeying the relation in \eqref{rep-l-s}. Therefore 
%Recalling the definition \eqref{L*} of $\mathbb L^{\ast}$, each summand of $\mathfrak B^{\ast}$ can be expressed as 
\begin{align*}
e(\xi \ell \mathtt s^{-\mathtt b}) = e \Bigl( \xi \sum_{j=0}^{\mathtt b - \mathtt a-1} \mathtt d_j \mathtt s^{j - \mathtt b}\Bigr) = \prod_{j=0}^{\mathtt b-\mathtt a-1} e \bigl(\xi \mathtt d_j \mathtt s^{j - \mathtt b} \bigr) 
\end{align*} 
for some $\mathbf d = (\mathtt d_0, \ldots, \mathtt d_{\mathtt b-\mathtt a-1}) \in \mathbb Z_{\mathtt s}^{\mathtt b-\mathtt a}$.  As a result, the sum  $\mathfrak C$  decomposes into a product: 
%\begin{align*} 
%2^{\mathtt b-\mathtt a}\mathfrak B^{\ast}(\xi) &=  \sum_{\mathbf d \in \{0,1\}^{\mathtt b-\mathtt a}} \prod_{j=0}^{\mathtt b-\mathtt a-1} e \bigl(\xi \mathtt d_j \mathtt s^{j - \mathtt b} \bigr) = \prod_{j=0}^{\mathtt b - \mathtt a-1} \Bigl[ \sum_{\mathtt d_j \in \{0, 1\}} e \bigl(\xi \mathtt d_j \mathtt s^{j - \mathtt b} \bigr) \Bigr] \\
%&= \prod_{j=0}^{\mathtt b - \mathtt a-1} \Bigl[ 1 + e \bigl(\xi\mathtt s^{j - \mathtt b} \bigr) \Bigr] = \prod_{\mathtt k =\mathtt a+1}^{\mathtt b} \Bigl[ 1 + e \bigl(\xi\mathtt s^{-\mathtt k} \bigr) \Bigr], \text{ which is \eqref{B*}}. 
%\end{align*}
%The function $\mathfrak C$ permits an analogous factorization. 
%An argument similar to $\mathfrak B^{\ast}$ leads to 
\begin{align} 
\mathtt s^{\mathtt b-\mathtt a} \mathfrak C(\xi) &= \sum_{\mathbf d \in \mathbb Z_{\mathtt s}^{\mathtt b-\mathtt a}} \prod_{j=0}^{\mathtt b-\mathtt a-1} e \bigl(\xi \mathtt d_j \mathtt s^{j - \mathtt b} \bigr) = \prod_{j=0}^{\mathtt b - \mathtt a-1} \Bigl[ \sum_{\mathtt d_j =0}^{\mathtt s-1} e\bigl( \xi \mathtt d_j \mathtt s^{j-\mathtt b} \bigr) \Bigr] =  \prod_{\mathtt k=\mathtt a+1}^{\mathtt b} \Bigl[ \sum_{\mathtt d =0}^{\mathtt s-1}  e\bigl( \xi \mathtt d \mathtt s^{-\mathtt k} \bigr) \Bigr]  \nonumber \\
&= \prod_{\mathtt k=\mathtt a+1}^{\mathtt b} \left[ 1 + e(\xi \mathtt s^{-\mathtt k}) + \ldots + e \bigl(\xi(\mathtt s-1) \mathtt s^{- \mathtt k} \bigr) \right].  \label{C-factor} 
\end{align} 
Each factor occurring in \eqref{C-factor} is a geometric sum, with the $\mathtt k^{\text{th}}$ factor given by 
\begin{equation}   1 + e(\xi \mathtt s^{-\mathtt k}) + \ldots + e \bigl(\xi(\mathtt s-1) \mathtt s^{- \mathtt k} \bigr) = \left\{ \begin{aligned} &\mathtt s &\text{ if } \mathtt s^{\mathtt k} | \xi, \\ &\frac{1 - e(\xi \mathtt s^{-\mathtt k + 1})}{1 - e(\xi \mathtt s^{-\mathtt k})} &\text{ if } \mathtt s^{\mathtt k} \nmid \xi.  \end{aligned} \right\} \label{geometric-sum}  \end{equation} 
Three cases arise. If $\mathtt s^{\mathtt b} \mid \xi$, then each factor of \eqref{C-factor} is $\mathtt s$, which yields $\mathfrak C(\xi) = 1$. If $\mathtt s^{\mathtt a} \nmid \xi$ then inserting \eqref{geometric-sum} into \eqref{C-factor} leads to a telescoping product: 
\[\mathfrak C(\xi) = \mathtt s^{\mathtt a - \mathtt b} \prod_{\mathtt k = \mathtt a+1}^{\mathtt b} \left[ \frac{1 - e(\xi \mathtt s^{-\mathtt k + 1})}{1 - e(\xi \mathtt s^{-\mathtt k})} \right] = \mathtt s^{\mathtt a- \mathtt b} \frac{1 - e(\xi \mathtt s^{-\mathtt a})}{1 - e(\xi \mathtt s^{-\mathtt b})}. \] 
Finally, if $\mathtt s^{\mathtt a} \mid \xi$, $\mathtt s^{\mathtt b} \nmid \xi$ and $\mathtt k_0 \in \{\mathtt a+1, \ldots, \mathtt b\}$ is the smallest integer such that $\mathtt s^{\mathtt k_0} \nmid \xi$, then each factor of \eqref{C-factor} corresponding to $\mathtt k < \mathtt k_0$ equals $\mathtt s$, whereas the factor corresponding to $\mathtt k = \mathtt k_0$ equals zero. Combining all these observations establishes \eqref{C}.  
\end{proof}

\subsection{Product $\mathfrak A \mathfrak B$ at a single scale} \label{AB-single-section} 
In the factorization formula \eqref{factorization} for $\Upsilon_1$ (and hence $\widehat{\Phi}_1$), the exponential sums $\mathfrak A$ and $\mathfrak B$ given by \eqref{def-A} and \eqref{def-B} occur in the form of a product. Accordingly, let us define 
\begin{equation} \label{AB} 
\mathfrak F(\xi) = \mathfrak F(\xi; \mathtt N, \mathtt s, \mathtt a, \mathtt b, \mathscr{D}, \varepsilon) := \mathfrak A(\xi; \mathtt N, \mathtt s, \mathtt a) \times \mathfrak B(\xi; \mathtt s, \mathtt a, \mathtt b, \mathscr{D}, \varepsilon). 
\end{equation}  
We will estimate $\mathfrak F$ in this sub-section. 
\begin{lemma} \label{single-scale-product-lemma}
For any choice of parameters $\pmb{\gamma}$ as in \eqref{O-def}, let $\mathfrak F$ be the product given in \eqref{AB} and $\mathtt N$ be as in \eqref{N}. Then for all $\xi \in \mathbb Z$ with $1 \leq |\xi| < \mathtt s^{\mathtt b}$, one has the estimate:
\begin{equation} \label{F-est}
|\mathfrak F(\xi)| \leq \Bigl[ \varepsilon + \frac{\mathtt s^{\mathtt a - \mathtt b}}{\mathtt N} \bigl| \sin \bigl(\pi \xi \mathtt s^{-\mathtt b} \bigr)\bigr|^{-1}\Bigr].  
\end{equation} 
\end{lemma}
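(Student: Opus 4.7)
The plan is to split $\mathfrak F$ using the decomposition \eqref{def-B} of $\mathfrak B$. Writing
\[ \mathfrak F(\xi) = \mathfrak A(\xi) \mathfrak B(\xi) = \varepsilon \mathfrak A(\xi) \mathfrak B^{\ast}(\xi) + (1 - \varepsilon) \mathfrak A(\xi) \mathfrak C(\xi), \]
the triangle inequality combined with the trivial bounds $|\mathfrak A|, |\mathfrak B^{\ast}| \leq 1$ from \eqref{B*C-trivial-bound} already contributes the $\varepsilon$ term in \eqref{F-est}. It therefore suffices to show, for $1 \leq |\xi| < \mathtt s^{\mathtt b}$, that
\[ \bigl| \mathfrak A(\xi) \mathfrak C(\xi) \bigr| \leq \frac{\mathtt s^{\mathtt a - \mathtt b}}{\mathtt N} \bigl| \sin(\pi \xi \mathtt s^{-\mathtt b}) \bigr|^{-1}. \]
This is the heart of the lemma, and will be proved by a short case analysis based on the explicit formulas \eqref{def-A} for $\mathfrak A$ and \eqref{C} for $\mathfrak C$.

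In the range $1 \leq |\xi| < \mathtt s^{\mathtt b}$, the divisibility condition $\mathtt s^{\mathtt b} \mid \xi$ is impossible, so only two of the three cases in \eqref{C} are relevant. If $\mathtt s^{\mathtt a} \mid \xi$ (but $\mathtt s^{\mathtt b} \nmid \xi$), then Lemma \ref{BC-lemma} gives $\mathfrak C(\xi) = 0$, so the product vanishes and the bound holds trivially. Otherwise $\mathtt s^{\mathtt a} \nmid \xi$, and both \eqref{def-A} and \eqref{C} supply closed-form quotients of the type $(1 - e(\cdot))/(1 - e(\cdot))$. Multiplying them produces the key telescoping cancellation
\[ \mathfrak A(\xi) \mathfrak C(\xi) = \frac{\mathtt s^{\mathtt a - \mathtt b}}{\mathtt N} \cdot \frac{1 - e(\xi \mathtt N \mathtt s^{-\mathtt a})}{1 - e(\xi \mathtt s^{-\mathtt a})} \cdot \frac{1 - e(\xi \mathtt s^{-\mathtt a})}{1 - e(\xi \mathtt s^{-\mathtt b})} = \frac{\mathtt s^{\mathtt a - \mathtt b}}{\mathtt N} \cdot \frac{1 - e(\xi \mathtt N \mathtt s^{-\mathtt a})}{1 - e(\xi \mathtt s^{-\mathtt b})}, \]
where the factor $1 - e(\xi \mathtt s^{-\mathtt a})$ in the denominator of $\mathfrak A$ cancels with the numerator of $\mathfrak C$.

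To conclude, I would invoke the elementary identity $|1 - e(\theta)| = 2 |\sin(\pi \theta)|$, yielding
\[ \bigl| \mathfrak A(\xi) \mathfrak C(\xi) \bigr| = \frac{\mathtt s^{\mathtt a - \mathtt b}}{\mathtt N} \cdot \frac{|\sin(\pi \xi \mathtt N \mathtt s^{-\mathtt a})|}{|\sin(\pi \xi \mathtt s^{-\mathtt b})|} \leq \frac{\mathtt s^{\mathtt a - \mathtt b}}{\mathtt N} \cdot \frac{1}{|\sin(\pi \xi \mathtt s^{-\mathtt b})|}, \]
where the last inequality uses $|\sin| \leq 1$ in the numerator. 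Note the denominator does not vanish, since $0 < |\xi| < \mathtt s^{\mathtt b}$ forces $\xi \mathtt s^{-\mathtt b} \notin \mathbb Z$. Combining the two contributions gives \eqref{F-est}.

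There is no genuine obstacle here; the argument is essentially algebraic manipulation. The only point requiring care is handling the exceptional divisibility case $\mathtt s^{\mathtt a} \mid \xi$ correctly (where the quotient representations of $\mathfrak A$ and $\mathfrak C$ break down but happily $\mathfrak C$ vanishes) and verifying the range condition that makes $\mathtt s^{\mathtt b} \mid \xi$ impossible, so $\sin(\pi \xi \mathtt s^{-\mathtt b}) \neq 0$.
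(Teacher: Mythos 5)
Your proof is correct and takes essentially the same approach as the paper: decompose $\mathfrak F$ via $\mathfrak B = \varepsilon \mathfrak B^{\ast} + (1-\varepsilon)\mathfrak C$, bound the first piece trivially by $\varepsilon$, and for the second piece do the case split on $\mathtt s^{\mathtt a} \mid \xi$ (where $\mathfrak C$ vanishes) versus $\mathtt s^{\mathtt a} \nmid \xi$ (where the quotient formulas for $\mathfrak A$ and $\mathfrak C$ telescope). The only cosmetic difference is that you apply the triangle inequality before case-splitting, while the paper splits first and then estimates; the paper also bounds the stray numerator via $|1 - e(\cdot)| \leq 2$ rather than $|\sin| \leq 1$, but these are the same step.
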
 
\begin{proof} 
The assumption $|\xi| < \mathtt s^{\mathtt b}$ ensures that $\mathtt s^{\mathtt b} \nmid \xi$. In view of the expressions for $\mathfrak A$, $\mathfrak B$ and $\mathfrak C$ from \eqref{def-A}, \eqref{def-B} and Lemma \ref{BC-lemma} respectively, two cases arise. 
\vskip0.1in
\noindent {\em{Case 1: }} Suppose $\mathtt s^{\mathtt a} \mid \xi$. Then $\mathfrak A(\xi) = 1$ from \eqref{def-A} and $\mathfrak C(\xi) = 0$ from Lemma \ref{BC-lemma}. Therefore, 
\begin{equation}  
|\mathfrak F(\xi)| = | \mathfrak B(\xi) | = \varepsilon |\mathfrak B^{\ast}(\xi)| \leq \varepsilon,     \label{F1} 
\end{equation} 
where the last step uses the trivial bound \eqref{B*C-trivial-bound} of $\mathfrak B^{\ast}$.
\vskip0.1in 
\noindent {\em{Case 2: }} If $\mathtt s^{\mathtt a} \nmid \xi$, then \eqref{def-A}-\eqref{B*C} and Lemma \ref{BC-lemma} yield different expressions for $\mathfrak A$ and $\mathfrak C$:  
\begin{align} 
|\mathfrak F(\xi)| &= |\varepsilon \mathfrak A(\xi) \mathfrak B^{\ast}(\xi) +  (1 - \varepsilon) \mathfrak A(\xi) \mathfrak C(\xi)| \leq \varepsilon + |\mathfrak A(\xi) \mathfrak C(\xi)| \nonumber \\ 
&\leq \varepsilon + \left|\frac{1}{\mathtt N}\frac{1 - e(\xi \mathtt N \mathtt s^{-\mathtt a})}{1 - e(\xi \mathtt s^{-\mathtt a})} \right| \times \left|\mathtt s^{\mathtt a - \mathtt b}\frac{1 - e(\xi \mathtt s^{-\mathtt a})}{1 - e(\xi \mathtt s^{-\mathtt b})} \right| \nonumber \\
&\leq \varepsilon + \frac{2\mathtt s^{\mathtt a - \mathtt b}}{\mathtt N} \bigl| 1 - e(\xi \mathtt s^{-\mathtt b}) \bigr|^{-1} =  \varepsilon + \frac{\mathtt s^{\mathtt a - \mathtt b}}{\mathtt N} |\sin(\pi \xi \mathtt s^{-\mathtt b})|^{-1}.  \label{F2} 
\end{align} 
The display above incorporates several trivial bounds: $|\mathfrak A|, |\mathfrak B^{\ast}| \leq 1$ from \eqref{B*C-trivial-bound} in the first inequality, and $|1 - e(\cdot)| \leq 2$ in the third.  
\vskip0.1in 
\noindent Comparing \eqref{F1} and \eqref{F2}, we reach \eqref{F-est} in both cases. This completes the proof. 
\end{proof} 
\subsection{Product $\mathfrak A \mathfrak B$ at consecutive scales} \label{AB-double-section} 
In the definition \eqref{AB} of $\mathfrak F$, the factors $\mathfrak A$ and $\mathfrak B$ correspond to the two parts of a single application of the elementary operation $\mathscr{O}$. However, as we have seen in Section \ref{iteration-subsection}, the construction of the measure $\mu$ involves repeated applications of $\mathscr{O}$, where the collection of basic intervals $\mathscr{I}_1$ from one step serves as $\mathscr{I}_0$ for the next, in the notation of Section \ref{elem-op-section}. The Fourier coefficient of $\mu$ therefore involves terms of the form $\mathfrak A$ and $\mathfrak B$ at many different scales, in a way that has been made precise in Section \ref{mu-Rajchman-section}. In anticipation of this application, it is useful to study the behaviour of the product $\mathfrak A \mathfrak B$ where each factor originates in a different scale. 
\vskip0.1in 
\noindent Towards that end, suppose that the operation $\mathscr{O}$ is applied twice in a row, with the following choice of parameters: $\mathbf s \in (\mathbb{N} \setminus \{1,2\})^3$, $\mathbf a, \mathbf b \in \mathbb N^{3}$, $\pmb{\varepsilon} \in (0,1)^3$, 
%$\mathbf s = (\mathtt s_0, \mathtt s_1, \mathtt s_2)$, $\mathbf a = (\mathtt a_0, \mathtt a_1, \mathtt a_2), \mathbf b = (\mathtt b_0, \mathtt b_1, \mathtt b_2) \in \mathbb N^3$, $\pmb{\varepsilon} = (\varepsilon_0, \varepsilon_1, \varepsilon_2) \in (0, 1)^3$, 
\begin{equation}  
\mathtt a_i < \mathtt b_i, \quad \text{ and } \quad 16 \mathtt s_{i}^{\mathtt b_i} < \mathtt s_{i+1}^{\mathtt a_{i+1}}, \quad i = 0,1. \label{sabe}   
\end{equation} 
Here $\mathbf s = (\mathtt s_0, \mathtt s_1, \mathtt s_2)$, and similarly for $\mathbf a, \mathbf b, \pmb{\varepsilon}$.  As can be seen by comparing with the criterion \eqref{ambm-take-2} of admissibility, the condition \eqref{sabe} holds for corresponding triples $(\mathbf s, \mathbf a, \mathbf b)$ of $\mathcal S, \mathcal A, \mathcal B$ with consecutive entries, for an admissible choice of parameters $\pmb{\Pi}$. 
\vskip0.1in 
\noindent The two-fold application $\mathscr{O}$ proceeds as follows: for $i = 1, 2$, we set 
\begin{equation*} 
(\mathtt E_{i}, \Phi_{i}) = \mathscr{O}_{i}(\mathtt E_{i-1}, \Phi_{i-1}) := \mathscr{O}(\mathtt E_{i-1}, \Phi_{i-1}; \mathtt t_{i}, \mathtt s_{i}, \mathtt a_{i}, \mathtt b_{i},  \mathscr{D}_i, \varepsilon_{i}) \; 
%\quad (\mathtt E_2, \Phi_2) = \mathscr{O}_2(\mathtt E_1, \Phi_1) = \mathscr{O}(\mathtt E_1, \Phi_1; \mathtt s_1^{\mathtt b_1}, \mathtt s_2, \mathtt a_2, \mathtt b_2)  
\text{ where } \mathtt t_{i} = \mathtt s_{i-1}^{\mathtt b_{i-1}}.
\end{equation*}
We will explore the behaviour of the product $\mathfrak A \mathfrak B$ if the factor $\mathfrak B$ comes from the second step of $\mathscr{O}_1$, and
$\mathfrak A$ comes from the first step of $\mathscr{O}_2$. Accordingly, let us define 
\begin{equation} \label{def-G} 
\mathfrak G(\xi) = \mathfrak G(\xi; \mathbf s, \mathbf a, \mathbf b, \pmb{\varepsilon}) := \mathfrak B(\xi; \mathtt s_1, \mathtt a_1, \mathtt b_1, \mathscr{D}_1, \varepsilon_1)  \times \mathfrak A(\xi; \mathtt N_2, \mathtt s_2, \mathtt a_2), 
\end{equation}   
where $\mathfrak A$ and $\mathfrak B$ are as in \eqref{def-A} and \eqref{def-B} respectively. The integer $\mathtt N_2$ is the counterpart of $\mathtt N$ for $\mathscr{O}_2$, and is defined by the relation \eqref{N}:  
\begin{equation} \label{N2} 
\mathtt N_2 + 1 \leq \mathtt s_2^{\mathtt a_2} \mathtt s_1^{-\mathtt b_1} < \mathtt N_2 + 2, \quad \text{i.e.} \quad \mathtt N_2 + 1 = \lfloor {\mathtt s_2^{\mathtt a_2}}{\mathtt s_1^{-\mathtt b_1}} \rfloor.  
\end{equation} 
The main result of this subsection, Proposition \ref{prop-G} below, gives an estimate for $\mathfrak G$. The intermediate lemmas will provide the key ingredients of its proof.  
\begin{lemma} \label{A2-lemma}
There exists an absolute constant $C_0 > 0$ such that for $\mathbf s, \mathbf a, \mathbf b$ as in \eqref{sabe}, and  
\begin{equation} \label{xi-range-0} 
\text{ for } \xi \in \mathbb Z \; \; \text{ with } \; \; \mathtt s_1^{\mathtt b_1} \mid \xi \; \;  \text{ and } \; \; |\xi|^2 \leq \mathtt s_1^{\mathtt b_1} \mathtt s_2^{\mathtt a_2}, 
\end{equation} the following estimate holds:  
\begin{equation} \label{A2} \bigl|\mathfrak A(\xi; \mathtt N_2, \mathtt s_2, \mathtt a_2) \bigr| \leq C_0 \mathtt N_2^{-1} \leq C_0 \mathtt s_1^{\mathtt b_1} \mathtt s_2^{-\mathtt a_2}. \end{equation}  
\end{lemma}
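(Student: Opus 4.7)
}
The case $\xi = 0$ gives $|\mathfrak A(0)| = 1$, which is compatible with the claimed bound only if we interpret the hypothesis as $\xi \neq 0$, so we assume this throughout. Set $y := \xi \mathtt s_2^{-\mathtt a_2}$, $\rho := \mathtt s_1^{\mathtt b_1} \mathtt s_2^{-\mathtt a_2}$, and $q := \xi/\mathtt s_1^{\mathtt b_1} \in \mathbb Z \setminus \{0\}$. The hypothesis \eqref{sabe} yields $\rho < 1/16$, and $|\xi|^2 \leq \mathtt s_1^{\mathtt b_1} \mathtt s_2^{\mathtt a_2}$ translates to $|q| \leq \rho^{-1/2}$; in particular $|y| = |q|\rho \leq \sqrt{\rho} < 1/4$, so $\|y\| = |y|$, where $\|\cdot\|$ denotes distance to the nearest integer. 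The case $\mathtt s_2^{\mathtt a_2} \mid \xi$ is impossible, as it would force $\mathtt s_2^{\mathtt a_2} \leq |\xi| \leq \sqrt{\mathtt s_1^{\mathtt b_1} \mathtt s_2^{\mathtt a_2}}$, i.e.\ $\mathtt s_2^{\mathtt a_2} \leq \mathtt s_1^{\mathtt b_1}$, contradicting \eqref{sabe}. So the nontrivial branch of \eqref{def-A} applies, giving
\[
|\mathfrak A(\xi)| \;=\; \frac{|\sin(\pi \mathtt N_2 y)|}{\mathtt N_2\,|\sin(\pi y)|}.
\]

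The key step is to relocate $\mathtt N_2 y$ near an integer. The defining relation \eqref{N2} reads $(\mathtt N_2+1)\rho \leq 1 < (\mathtt N_2+2)\rho$, so $\mathtt N_2 \rho = 1 - \sigma$ for some $\sigma \in [\rho,\, 2\rho)$, and therefore $\mathtt N_2 y = q - q\sigma$. Since $|q\sigma| \leq 2|q|\rho \leq 2\sqrt{\rho} < 1/2$, the fractional part is exactly $\|\mathtt N_2 y\| = |q|\sigma$. The elementary inequalities $|\sin(\pi t)| \leq \pi\|t\|$ and $|\sin(\pi t)| \geq 2\|t\|$ then yield
\[
|\mathfrak A(\xi)| \;\leq\; \frac{\pi\,\|\mathtt N_2 y\|}{2\,\mathtt N_2\,\|y\|} \;=\; \frac{\pi\,\sigma}{2\,\mathtt N_2\,\rho} \;\leq\; \frac{\pi}{\mathtt N_2},
\]
since $\sigma/\rho < 2$. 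This is the first inequality in \eqref{A2}. The second is immediate from \eqref{N2} combined with \eqref{sabe}, which give $\mathtt N_2 \geq \mathtt s_2^{\mathtt a_2}/\mathtt s_1^{\mathtt b_1} - 2 \geq \tfrac12 \mathtt s_2^{\mathtt a_2}/\mathtt s_1^{\mathtt b_1}$, hence $\mathtt N_2^{-1} \leq 2\,\mathtt s_1^{\mathtt b_1}\mathtt s_2^{-\mathtt a_2}$.

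The subtle point sits in the first step above: the naive geometric-series estimate $|\mathfrak A(\xi)| \leq (2\mathtt N_2\|y\|)^{-1}$ is merely of order $1$ in the regime considered, because $\|y\| = |q|\rho$ can be as small as $\rho \approx \mathtt N_2^{-1}$, and on its own it cannot deliver $1/\mathtt N_2$ decay. The extra decay is bought by the arithmetic observation that, thanks to $\mathtt s_1^{\mathtt b_1} \mid \xi$ and \eqref{N2}, the point $\mathtt N_2 y$ is pinned $O(|q|\rho)$-close to the integer $q$, so $\|\mathtt N_2 y\|$ and $\|y\|$ are of comparable size and the ratio of sines remains uniformly bounded rather than growing like $\mathtt N_2$. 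The constraint $|\xi|^2 \leq \mathtt s_1^{\mathtt b_1}\mathtt s_2^{\mathtt a_2}$ is used precisely to force $|q|$ small enough for the linearisation of $\sin$ to be valid simultaneously for numerator and denominator, via $|q\sigma| < 1/2$ and $|y| < 1/2$.
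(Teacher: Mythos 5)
Your proof is correct and takes essentially the same route as the paper: both exploit the arithmetic identity $\mathtt N_2\, \xi\, \mathtt s_2^{-\mathtt a_2} = \xi \mathtt s_1^{-\mathtt b_1} - (1+\delta)\xi \mathtt s_2^{-\mathtt a_2} \in \mathbb Z - (1+\delta)\xi \mathtt s_2^{-\mathtt a_2}$ coming from $\mathtt s_1^{\mathtt b_1}\mid\xi$ and \eqref{N2}, then linearize the sine in both numerator and denominator of the closed form \eqref{def-A}; your $q,\rho,\sigma$ bookkeeping with $\|\cdot\|$ and the paper's $\mathfrak n/\mathfrak d$ split with $\delta$ are the same computation. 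You also rightly flag that the lemma as written needs $\xi \neq 0$ (since $\mathtt s_2^{\mathtt a_2}\mid 0$ and $\mathfrak A(0)=1$), an implicit assumption the paper glosses over when it asserts ``$|\xi| < \tfrac14 \mathtt s_2^{\mathtt a_2}$ hence $\mathtt s_2^{\mathtt a_2}\nmid\xi$''.
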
 
\begin{proof} 
The second inequality in \eqref{A2} is a consequence of the size bound on $\mathtt N_2$ given in \eqref{N2}. We therefore focus on proving the first inequality in \eqref{A2}.
\vskip0.1in
\noindent The range of $\xi$ given by the last inequality in \eqref{xi-range-0} ensures that 
\begin{equation} |\xi| \leq \mathtt s_1^{\frac{\mathtt b_1}{2}} \mathtt s_2^{\frac{\mathtt a_2}{2}} < \frac{1}{4}\mathtt s_2^{\mathtt a_2} \text{ by \eqref{sabe}}, \quad \text{ and hence } \quad \mathtt s_2^{\mathtt a_2} \nmid \xi. \label{range-of-xi-reduced} \end{equation} 
As a result, \eqref{def-A} provides the following expression for $\mathfrak A$: 
\[ \mathfrak A(\xi; \mathtt N_2, \mathtt s_2, \mathtt a_2) = \frac{1}{\mathtt N_2} \frac{\mathfrak n}{\mathfrak d}, \quad \text{ where } \quad \mathfrak n:= 1 - e(\xi \mathtt N_2 \mathtt s_2^{-\mathtt a_2}), \quad \mathfrak d:= 1 - e(\xi \mathtt s_2^{-\mathtt a_2}). \] 
The desired conclusion \eqref{A2} will follow from \eqref{N2} and the two inequalities 
\begin{equation} \label{nd} 
|\mathfrak n| = 2\bigl| \sin(\pi \xi \mathtt N_2 \mathtt s_2^{-\mathtt a_2} )\bigr| \leq C_0 |\xi| \mathtt s_2^{-\mathtt a_2}, \qquad |\mathfrak d| =2 \bigl| \sin(\pi \xi \mathtt s_2^{-\mathtt a_2})\bigr| \geq C_0^{-1} |\xi| \mathtt s_2^{-\mathtt a_2}.  
\end{equation} 
Both inequalities in \eqref{nd} will be proved using the well-known property of the sine function:
\begin{equation} \label{sine}  
C_0^{-1} |t| \leq |\sin(\pi t)| \leq C_0 |t| \quad \text{  for }  |t| \in \bigl[0, {1}/{2}\bigr].
\end{equation} 
The second estimate in \eqref{nd} is a direct consequence of the left inequality in \eqref{sine}, applied with $t = \xi \mathtt s_2^{-\mathtt a_2}$. The condition \eqref{range-of-xi-reduced} verifies that $t \in [0, 1/2]$, as required for \eqref{sine}. 
\vskip0.1in
\noindent We therefore focus on proving the first inequality in \eqref{nd}, which requires us to understand the fractional part of $\xi \mathtt N_2 \mathtt s_2^{-\mathtt a_2}$. Let us note from \eqref{N2} that 
\begin{equation}  \label{N2+1}
\mathtt N_2 + 1 = \lfloor \mathtt s_2^{\mathtt a_2} \mathtt s_1^{-\mathtt b_1} \rfloor = \mathtt s_2^{\mathtt a_2} \mathtt s_1^{-\mathtt b_1} - \delta, \; \text{ and hence } \; \mathtt N_2 + 1 = \mathtt s_2^{\mathtt a_2} \mathtt s_1^{-\mathtt b_1} - \delta, 
\end{equation} 
where $\delta \in [0, 1)$ is the fractional part of $\mathtt s_2^{\mathtt a_2} \mathtt s_1^{-\mathtt b_1}$.  It follows from \eqref{N2+1} that 
\[ \xi \mathtt N_2 \mathtt s_2^{-\mathtt a_2} = \xi \mathtt s_1^{- \mathtt b_1} - (1 + \delta) \xi \mathtt s_2^{-\mathtt a_2} \in \mathbb Z - (1 + \delta) \xi \mathtt s_2^{-\mathtt a_2} , \text{ since } \mathtt s_1^{\mathtt b_1} \mid \xi \text{ by assumption \eqref{xi-range-0}}.\] 
The  periodicity of the sine function, the range of $\xi$ given by \eqref{range-of-xi-reduced} and the right inequality in \eqref{sine} now imply the first estimate in \eqref{nd}, namely
\[ |\mathfrak n| = 2 |\sin \bigl(\pi (1 + \delta) \xi \mathtt s_2^{-\mathtt a_2}\bigr)| \leq C_0 |\xi| \mathtt s_2^{-\mathtt a_2}, \quad
\text{ since } \quad (1 + \delta) |\xi| \mathtt s_2^{-\mathtt a_2} \leq  2|\xi| \mathtt s_2^{-\mathtt a_2} < \frac{1}{2}. \] 
This completes the proof of \eqref{nd} and hence also the proof of the lemma.  
\end{proof} 
\begin{lemma} \label{H-lemma}
There exists an absolute constant $C_0 > 0$ with the following property. Let $\mathbf s, \mathbf a, \mathbf b$ be as in \eqref{sabe} and $\mathtt N_2$ as in \eqref{N2}. Then the function $\mathfrak H(\xi)$ given by 
\begin{align} 
&\mathfrak H(\xi) := \frac{1 - e(\xi \mathtt N_2 \mathtt s_2^{-\mathtt a_2})}{1 - e(\xi \mathtt s_1^{-\mathtt b_1})} \quad \text{ obeys } \quad |\mathfrak H (\xi) - 1| \leq C_0 |\xi| \mathtt s_1^{\mathtt b_1} \mathtt s_2^{-\mathtt a_2},  \label{H}  \\ &\text{ provided } \quad \xi \in \mathbb Z, \quad  \mathtt s_1^{\mathtt b_1} \nmid \xi, \quad \text{ and } \quad |\xi|^2 \leq \mathtt s_1^{\mathtt b_1} \mathtt s_2^{\mathtt a_2}. \label{xi-range} 
\end{align} 
\end{lemma}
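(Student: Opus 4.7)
The plan is to use the explicit representation of $\mathtt N_2$ from \eqref{N2+1} to relate the numerator of $\mathfrak H(\xi)$ to the denominator, then estimate the difference $\mathfrak H(\xi)-1$ as a quotient whose numerator is small and denominator is bounded below.

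First, I would invoke \eqref{N2+1}, which gives $\mathtt N_2 + 1 = \mathtt s_2^{\mathtt a_2}\mathtt s_1^{-\mathtt b_1} - \delta$ for some $\delta \in [0,1)$, and hence
\begin{equation*}
\xi \mathtt N_2 \mathtt s_2^{-\mathtt a_2} \; = \; \xi \mathtt s_1^{-\mathtt b_1} - (1+\delta)\xi \mathtt s_2^{-\mathtt a_2}.
\end{equation*}
Writing $\theta := \xi \mathtt s_1^{-\mathtt b_1}$ and $\epsilon := (1+\delta)\xi \mathtt s_2^{-\mathtt a_2}$, this recasts $\mathfrak H$ as
\begin{equation*}
\mathfrak H(\xi) \; = \; \frac{1 - e(\theta - \epsilon)}{1 - e(\theta)}, \quad \text{so} \quad \mathfrak H(\xi) - 1 \; = \; \frac{e(\theta) - e(\theta - \epsilon)}{1 - e(\theta)} \; = \; \frac{e(\theta)\bigl[1 - e(-\epsilon)\bigr]}{1 - e(\theta)}.
\end{equation*}

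Next, I would estimate the numerator and denominator separately. For the numerator, the Lipschitz bound \eqref{Lipschitz} (used in Lemma \ref{error-lemma}) together with $|{\delta}| < 1$ yields
\begin{equation*}
\bigl| 1 - e(-\epsilon) \bigr| \; \leq \; 2\pi |\epsilon| \; \leq \; 4\pi |\xi| \mathtt s_2^{-\mathtt a_2}.
\end{equation*}
The applicability of \eqref{Lipschitz} is justified by the range hypothesis $|\xi|^2 \leq \mathtt s_1^{\mathtt b_1} \mathtt s_2^{\mathtt a_2}$ combined with \eqref{sabe}, which forces $|\epsilon| \leq 2|\xi|\mathtt s_2^{-\mathtt a_2} \leq 2\mathtt s_1^{\mathtt b_1/2}\mathtt s_2^{-\mathtt a_2/2} \leq 1/2$. (Even without this, the trivial bound $\leq 2$ suffices when $|\epsilon|$ is large, since the total claim becomes vacuous.) For the denominator, since $\mathtt s_1^{\mathtt b_1} \nmid \xi$, the fractional-part distance satisfies $\|\theta\| \geq \mathtt s_1^{-\mathtt b_1}$, where $\|\cdot\|$ denotes distance to the nearest integer. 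Then the elementary lower bound $|\sin(\pi t)| \geq 2\|t\|$ gives
\begin{equation*}
\bigl| 1 - e(\theta) \bigr| \; = \; 2 \bigl|\sin(\pi \theta) \bigr| \; \geq \; 4 \|\theta\| \; \geq \; 4\, \mathtt s_1^{-\mathtt b_1}.
\end{equation*}

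Combining the two bounds and recalling $|e(\theta)|=1$,
\begin{equation*}
\bigl| \mathfrak H(\xi) - 1 \bigr| \; \leq \; \frac{4\pi |\xi| \mathtt s_2^{-\mathtt a_2}}{4\, \mathtt s_1^{-\mathtt b_1}} \; = \; \pi\, |\xi| \mathtt s_1^{\mathtt b_1} \mathtt s_2^{-\mathtt a_2},
\end{equation*}
which is \eqref{H} with an explicit absolute constant $C_0 = \pi$. There is no serious obstacle; the only subtlety is ensuring the sine lower bound holds uniformly, which is where the non-divisibility hypothesis $\mathtt s_1^{\mathtt b_1} \nmid \xi$ enters in an essential way. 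The quadratic range restriction $|\xi|^2 \leq \mathtt s_1^{\mathtt b_1}\mathtt s_2^{\mathtt a_2}$ is not needed to make the estimate hold, but ensures (via \eqref{sabe}) that the RHS is in fact a nontrivial bound strictly less than $1$, which is what will be useful when $\mathfrak H$ is deployed in subsequent arguments.
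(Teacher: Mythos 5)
Your proof is correct and follows essentially the same approach as the paper: both decompose $\mathfrak H(\xi) - 1$ as a quotient, bound the numerator via the Lipschitz property of $e(\cdot)$ applied to $\mathtt s_1^{-\mathtt b_1} - \mathtt N_2 \mathtt s_2^{-\mathtt a_2} \in [\mathtt s_2^{-\mathtt a_2}, 2\mathtt s_2^{-\mathtt a_2}]$, and bound the denominator from below using $\mathtt s_1^{\mathtt b_1}\nmid\xi$ together with an elementary sine estimate. Your closing observation that the quadratic range $|\xi|^2\leq \mathtt s_1^{\mathtt b_1}\mathtt s_2^{\mathtt a_2}$ is not strictly needed for the inequality itself (since the Lipschitz bound $|1-e(y)|\leq 2\pi|y|$ holds globally) is a correct and mildly sharpening remark, though the paper invokes the range to guarantee smallness of the argument before applying \eqref{Lipschitz}.
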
  
\begin{proof} 
For $\mathfrak H$ defined in \eqref{H}, the expression $\mathfrak H - 1$ can  be written as 
\begin{equation} \label{HND}
\mathfrak H(\xi) -  1= \frac{\mathfrak N}{\mathfrak D}, \; \text{ where } \; \mathfrak N := e(\xi \mathtt s_1^{-\mathtt b_1}) - e(\xi \mathtt N_2 \mathtt s_2^{-\mathtt a_2}) \; \text{ and } \; \mathfrak D := 1 - e(\xi \mathtt s_1^{-\mathtt b_1}). 
\end{equation}   
The numerator $\mathfrak N$ is estimated from above, as follows: 
\begin{equation} \label{N-est}  
|\mathfrak N| = \bigl| e \bigl(\xi( \mathtt s_1^{-\mathtt b_1} - \mathtt N_2 \mathtt s_2^{-\mathtt a_2}) \bigr) -1 \bigr| \leq C_0 |\xi| \bigl| \mathtt s_1^{-\mathtt b_1}  - \mathtt N_2 \mathtt s_2^{-\mathtt a_2} \bigr| \leq C_0 |\xi| \mathtt s_2^{-\mathtt a_2}. 
\end{equation}   
The first inequality in \eqref{N-est} follows from the Lipschitz property \eqref{Lipschitz} of the map $t \mapsto e(t)$ on $[-1,1]$. The second inequality is a consequence of the definition \eqref{N2} of $\mathtt N_2$, which says that 
\[ \mathtt s_2^{-\mathtt a_2} \leq \mathtt s_1^{-\mathtt b_1} - \mathtt N_2 \mathtt s_2^{-\mathtt a_2} \leq 2 \mathtt s_2^{-\mathtt a_2}. \]  The range of $\xi$ assumed in \eqref{xi-range} and the size conditions on $\mathtt s_1, \mathtt s_2$ specified in \eqref{sabe} ensure that 
\[ |\xi| \mathtt s_2^{-\mathtt a_2} \leq  \mathtt s_1^{\frac{\mathtt b_1}{2}} \mathtt s_2^{-\frac{\mathtt a_2}{2}} < 1, \]
which justifies this step. 
\vskip0.1in 
\noindent Let us turn now to the denominator $\mathfrak D$ in \eqref{HND}, whose absolute value 
\[ |\mathfrak D| = 2 |\sin(\pi \xi \mathtt s_1^{-\mathtt b_1})| = 2 |\sin \bigl(\pi \{\xi \mathtt s_1^{-\mathtt b_1} \} \bigr)| \]  we bound from below. Here $\{x\} \in [0, 1)$ denotes the fractional part of $x$. Since $\mathtt s_1^{\mathtt b_1} \nmid \xi$, we have $\xi \mathtt s_1^{-\mathtt b_1} \in \mathbb Z + \mathtt s_1^{-\mathtt b_1} \{1, 2, \ldots, \mathtt s_1^{\mathtt b_1}-1\}$. 
In other words, $\{\xi \mathtt s_1^{-\mathtt b_1}\} \in (0, 1)$ is a nonzero integer multiple of $\mathtt s_1^{-\mathtt b_1}$. In view of the monotone increasing nature of the function $t \mapsto |\sin(t)|$ on $[0, \frac{\pi}{2}]$ and its symmetry about $t = \frac{\pi}{2}$, we deduce   
\begin{equation} \label{D-est} 
|\mathfrak D| = 2|\sin(\pi \xi \mathtt s_1^{-\mathtt b_1})| =  2|\sin \bigl(\pi \{\xi \mathtt s_1^{-\mathtt b_1} \} \bigr)| \geq 2|\sin(\pi \mathtt s_1^{-\mathtt b_1})| \geq C_0^{-1} \mathtt s_1^{-\mathtt b_1}.
\end{equation} 
The last step above follows from the relation \eqref{sine} applied with $t = \mathtt s_1^{-\mathtt b_1}$. Combining \eqref{N-est}, \eqref{D-est} with \eqref{HND} leads to the conclusion \eqref{H}. 
\end{proof} 
\begin{corollary} \label{H-cor} 
There is an absolute constant $C_0 > 0$ for which the function $\mathfrak H$ in \eqref{H} obeys
\begin{equation} \label{s-new}
 |\mathfrak{H}(\xi)| \leq C_0, \; \text{ provided } \; \mathtt s_2^{\mathtt a_2} > \mathtt s_1^{3\mathtt b_1} \; \text{ and  $\xi$ obeys the requirements of \eqref{xi-range}}. 
 % \label{H2}
\end{equation} 
\end{corollary}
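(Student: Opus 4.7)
The plan is to obtain the bound \eqref{s-new} as an immediate consequence of Lemma \ref{H-lemma} combined with the triangle inequality, using the extra hypothesis $\mathtt s_2^{\mathtt a_2} > \mathtt s_1^{3\mathtt b_1}$ to control the error term in \eqref{H}.

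First I would invoke Lemma \ref{H-lemma}, whose hypotheses hold because we are assuming $\xi$ satisfies \eqref{xi-range}. This yields
\[
|\mathfrak H(\xi)| \leq 1 + |\mathfrak H(\xi)-1| \leq 1 + C_0 |\xi| \mathtt s_1^{\mathtt b_1} \mathtt s_2^{-\mathtt a_2}.
\]
Next I would estimate the factor $|\xi|\mathtt s_1^{\mathtt b_1}\mathtt s_2^{-\mathtt a_2}$ by using the size constraint $|\xi|^2 \leq \mathtt s_1^{\mathtt b_1}\mathtt s_2^{\mathtt a_2}$ from \eqref{xi-range}, which gives $|\xi|\leq \mathtt s_1^{\mathtt b_1/2}\mathtt s_2^{\mathtt a_2/2}$, and hence
\[
|\xi|\mathtt s_1^{\mathtt b_1}\mathtt s_2^{-\mathtt a_2} \leq \mathtt s_1^{3\mathtt b_1/2}\mathtt s_2^{-\mathtt a_2/2}.
\]
Finally, the new growth hypothesis $\mathtt s_2^{\mathtt a_2} > \mathtt s_1^{3\mathtt b_1}$ is exactly what makes the right-hand side $<1$, giving $|\mathfrak H(\xi)| \leq 1+C_0$, which is an absolute constant (after relabelling $C_0$).

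There is no real obstacle here: the content is entirely in Lemma \ref{H-lemma}, and the corollary just promotes the ``$\mathfrak H$ is close to $1$'' estimate to the uniform bound needed in subsequent applications, where the stronger scale separation \eqref{ambm-take-2} (which implies $\mathtt s_m^{\mathtt a_m} > \mathtt s_{m-1}^{3\mathtt b_{m-1}}$ used in Propositions \ref{Frostman-prop} and \ref{Rajchman-prop}) replaces the weaker separation \eqref{sabe} required for Lemma \ref{H-lemma}.
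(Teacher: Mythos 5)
Your proof is correct and follows the paper's argument exactly: apply Lemma \ref{H-lemma}, bound $|\xi|\mathtt s_1^{\mathtt b_1}\mathtt s_2^{-\mathtt a_2} \leq \mathtt s_1^{3\mathtt b_1/2}\mathtt s_2^{-\mathtt a_2/2}$ using \eqref{xi-range}, observe this is $<1$ by the hypothesis $\mathtt s_2^{\mathtt a_2}>\mathtt s_1^{3\mathtt b_1}$, and conclude by the triangle inequality.
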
 
\begin{proof} 
Under the growth assumption \eqref{s-new} on $\mathbf s$, any $\xi$ in the range \eqref{xi-range} obeys 
\[ |\xi| \mathtt s_1^{\mathtt b_1} \mathtt s_2^{-\mathtt a_2} \leq \mathtt s_1^{\frac{3 \mathtt b_1}{2}} \mathtt s_2^{-\frac{\mathtt a_2}{2}} < 1. \]
Substituting this into the bound \eqref{H} of $|\mathfrak H - 1|$ establishes \eqref{s-new}, by the triangle inequality.    
\end{proof}
\begin{proposition} \label{prop-G} 
There exists an absolute constant $C_0 > 0$ as follows. For any choice of parameters $\mathbf s, \mathbf a, \mathbf b, \pmb{\varepsilon}$ obeying \eqref{sabe} and the stronger growth condition $\mathtt s_2^{\mathtt a_2} > \mathtt s_1^{3\mathtt b_1}$ as in \eqref{s-new}, the function $\mathfrak G$ defined in \eqref{def-G} satsfies the estimate: 
\begin{equation} \label{G-est}
|\mathfrak G(\xi)| \leq C_0 \left[ \varepsilon_1 + \mathtt s_1^{\mathtt a_1} |\xi|^{-1} + \mathtt s_1^{\mathtt b_1} \mathtt s_2^{-\mathtt a_2} \right]
\quad \text{ for all } \xi \in \mathbb Z, \; \; 2 \mathtt s_1^{\mathtt a_1} \leq |\xi| < \mathtt s_1^{\frac{\mathtt b_1}{2}}\mathtt s_2^{\frac{\mathtt a_2}{2}}.   
\end{equation} 
\end{proposition}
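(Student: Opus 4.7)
The plan is to split the analysis of $\mathfrak G(\xi) = \mathfrak B(\xi;\mathtt s_1,\mathtt a_1,\mathtt b_1,\mathscr D_1,\varepsilon_1)\cdot\mathfrak A(\xi;\mathtt N_2,\mathtt s_2,\mathtt a_2)$ into three cases based on the divisibility of $\xi$ by $\mathtt s_1^{\mathtt a_1}$ and $\mathtt s_1^{\mathtt b_1}$. In each case exactly one of the three terms on the right side of \eqref{G-est} will be the dominant contribution, so it suffices to show the corresponding bound in each regime. Note first that throughout the assumed range $|\xi|<\mathtt s_1^{\mathtt b_1/2}\mathtt s_2^{\mathtt a_2/2}$, the growth condition $\mathtt s_2^{\mathtt a_2}>\mathtt s_1^{3\mathtt b_1}$ forces $|\xi|^2<\mathtt s_1^{\mathtt b_1}\mathtt s_2^{\mathtt a_2}$ and $|\xi|<\mathtt s_2^{\mathtt a_2}/4$; in particular $\mathtt s_2^{\mathtt a_2}\nmid\xi$, so the quotient formula for $\mathfrak A$ from \eqref{def-A} is available.

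In \textbf{Case 1} where $\mathtt s_1^{\mathtt b_1}\mid\xi$, Lemma \ref{A2-lemma} applies directly and yields $|\mathfrak A(\xi;\mathtt N_2,\mathtt s_2,\mathtt a_2)|\leq C_0\mathtt s_1^{\mathtt b_1}\mathtt s_2^{-\mathtt a_2}$. Combined with the trivial bound $|\mathfrak B|\leq 1$ from \eqref{B*C-trivial-bound}, this produces the $\mathtt s_1^{\mathtt b_1}\mathtt s_2^{-\mathtt a_2}$ term in \eqref{G-est}. In \textbf{Case 2a} where $\mathtt s_1^{\mathtt a_1}\mid\xi$ but $\mathtt s_1^{\mathtt b_1}\nmid\xi$, Lemma \ref{BC-lemma} gives $\mathfrak C(\xi)=0$, so the decomposition \eqref{def-B} reduces to $\mathfrak B(\xi)=\varepsilon_1\mathfrak B^{\ast}(\xi)$, and then $|\mathfrak G(\xi)|\leq\varepsilon_1|\mathfrak B^{\ast}||\mathfrak A|\leq\varepsilon_1$.

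The interesting case is \textbf{Case 2b} where $\mathtt s_1^{\mathtt a_1}\nmid\xi$. Here Lemma \ref{BC-lemma} gives the quotient representation $\mathfrak C(\xi)=\mathtt s_1^{\mathtt a_1-\mathtt b_1}(1-e(\xi\mathtt s_1^{-\mathtt a_1}))/(1-e(\xi\mathtt s_1^{-\mathtt b_1}))$, and combining with $\mathfrak A(\xi;\mathtt N_2,\mathtt s_2,\mathtt a_2)=\mathtt N_2^{-1}(1-e(\xi\mathtt N_2\mathtt s_2^{-\mathtt a_2}))/(1-e(\xi\mathtt s_2^{-\mathtt a_2}))$ produces a key cancellation: the product $\mathfrak C(\xi)\mathfrak A(\xi)$ factors as
\[
\mathfrak C(\xi)\mathfrak A(\xi)=\frac{\mathtt s_1^{\mathtt a_1-\mathtt b_1}\bigl(1-e(\xi\mathtt s_1^{-\mathtt a_1})\bigr)}{\mathtt N_2\bigl(1-e(\xi\mathtt s_2^{-\mathtt a_2})\bigr)}\cdot\mathfrak H(\xi),
\]
where $\mathfrak H$ is precisely the function studied in Corollary \ref{H-cor}, which bounds $|\mathfrak H(\xi)|\leq C_0$ under our growth hypothesis. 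It remains to estimate the other factor: the numerator $|1-e(\xi\mathtt s_1^{-\mathtt a_1})|\leq 2$ by the trivial bound, while in the denominator we use $\mathtt N_2\geq C_0^{-1}\mathtt s_2^{\mathtt a_2}\mathtt s_1^{-\mathtt b_1}$ from \eqref{N2} together with $|1-e(\xi\mathtt s_2^{-\mathtt a_2})|=2|\sin(\pi\xi\mathtt s_2^{-\mathtt a_2})|\geq C_0^{-1}|\xi|\mathtt s_2^{-\mathtt a_2}$ from \eqref{sine} (applicable since $|\xi\mathtt s_2^{-\mathtt a_2}|<1/2$). Multiplying these estimates gives $|\mathfrak C(\xi)\mathfrak A(\xi)|\leq C_0\mathtt s_1^{\mathtt a_1}|\xi|^{-1}$, and adding $\varepsilon_1|\mathfrak B^{\ast}\mathfrak A|\leq\varepsilon_1$ yields the required bound on $|\mathfrak G(\xi)|$ in this case.

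The main obstacle — already handled in the preparatory Lemma \ref{A2-lemma}, Lemma \ref{H-lemma} and Corollary \ref{H-cor} — is the ``telescoping'' cancellation between the denominator $1-e(\xi\mathtt s_1^{-\mathtt b_1})$ of $\mathfrak C$ and the numerator $1-e(\xi\mathtt N_2\mathtt s_2^{-\mathtt a_2})$ of $\mathfrak A$. Were it not for this cancellation, which relies crucially on the fact that $\mathtt N_2\mathtt s_2^{-\mathtt a_2}$ differs from $\mathtt s_1^{-\mathtt b_1}$ by at most $2\mathtt s_2^{-\mathtt a_2}$, one would only obtain the much weaker (and insufficient) bound coming from $|\mathfrak A|\leq 1$ and the explicit size of $|\mathfrak C|$. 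With that cancellation in hand, the remainder of the argument is a routine book-keeping of the three regimes above.
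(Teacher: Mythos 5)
Your proposal is correct and follows essentially the same route as the paper: the same three-case split on divisibility of $\xi$ by $\mathtt s_1^{\mathtt a_1}$ and $\mathtt s_1^{\mathtt b_1}$, the same use of the trivial bound $|\mathfrak B| \leq 1$ plus Lemma \ref{A2-lemma} when $\mathtt s_1^{\mathtt b_1}\mid\xi$, the vanishing of $\mathfrak C$ when $\mathtt s_1^{\mathtt a_1}\mid\xi$ but $\mathtt s_1^{\mathtt b_1}\nmid\xi$, and the same rearrangement of $\mathfrak C\mathfrak A$ to isolate $\mathfrak H$ in the remaining case. The only difference is cosmetic ordering of the cases.
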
 
\begin{proof} 
For brevity of notation, we will write $\mathfrak G = \mathfrak B_1 \times \mathfrak A_2$, where $\mathfrak B_1$ and $\mathfrak A_2$ denote the first and second factors of \eqref{def-G} respectively. Using the expressions of $\mathfrak A$ and $\mathfrak B$ from \eqref{def-A} and \eqref{def-B}, and the trivial bound $|\mathfrak B_1^{\ast}|, |\mathfrak A_2| \leq 1$ from \eqref{B*C-trivial-bound},  we find that
\begin{align}
|\mathfrak G(\xi)| &= \bigl| \varepsilon_1 \mathfrak A_2(\xi) \mathfrak B_1^{\ast}(\xi) + (1 - \varepsilon_1) \mathfrak A_2(\xi) \mathfrak C_1(\xi) \bigr| \nonumber \\ 
&\leq \varepsilon_1 + |\mathfrak A_2(\xi)| \times |\mathfrak C_1(\xi)|. \label{G2}
\end{align}  
As in the proof of Lemma \ref{single-scale-product-lemma}, the argument henceforth splits into several cases depending on the divisibility of $\xi$ by powers of $\mathtt s_1$. 
%Let us write 
%\begin{equation} 
%\xi = \mathtt s_1^{\mathtt c_1} \xi', \quad \text{ where } \quad \mathtt s_1 \nmid \xi', \quad \mathtt c_1 \in \mathbb N \cup \{0\}. 
%\end{equation}   
\vskip0.1in 
\noindent {\em{Case 1: }} Suppose first $\mathtt s_1^{\mathtt a_1} \mid \xi$ but $\mathtt s_1^{\mathtt b_1} \nmid \xi$. It follows from Lemma \ref{BC-lemma} that in this case $\mathfrak C_1(\xi) = 0$. As a result, \eqref{G2} yields $|\mathfrak G(\xi)| \leq \varepsilon_1$, which is in fact a stronger estimate than the one claimed in \eqref{G-est}. 
\vskip0.1in
\noindent {\em{Case 2: }} Next assume $\mathtt s_1^{\mathtt b_1} \mid \xi$. Then Lemma \ref{A2-lemma} applies to $\mathfrak A_2$, with \eqref{A2} dictating that 
\[ |\mathfrak G| = |\mathfrak B_1| \times |\mathfrak A_2| \leq |\mathfrak A_2| \leq C_0 \mathtt s_1^{\mathtt b_1} \mathtt s_2^{-\mathtt a_2}. \]
This too is stronger than the estimate claimed in \eqref{G-est}.   
\vskip0.1in
\noindent {\em{Case 3: }} Finally suppose that $\mathtt s_1^{\mathtt a_1} \nmid \xi$. Let us consider the product $\mathfrak A_2 \mathfrak C_1$ appearing in \eqref{G2}. It follows from \eqref{def-A} and Lemma \ref{BC-lemma} that 
\begin{align}  |\mathfrak A_2(\xi) \mathfrak C_1(\xi)| &= \frac{\mathtt s_1^{\mathtt a_1-\mathtt b_1}}{\mathtt N_2} \left| \frac{1 - e(\xi \mathtt N_2 \mathtt s_2^{-\mathtt a_2})}{1 - e(\xi \mathtt s_2^{-\mathtt a_2})} \times \frac{1 - e(\xi \mathtt s_1^{-\mathtt a_1})}{1 - e(\xi \mathtt s_1^{-\mathtt b_1})} \right| \nonumber \\ 
&\leq C_0 \frac{\mathtt s_1^{\mathtt a_1}}{\mathtt s_2^{\mathtt a_2}} |\mathfrak H(\xi)| \times |\sin(\pi \xi \mathtt s_2^{-\mathtt a_2})|^{-1} 
%\nonumber \\ 
%&\leq C_0 \mathtt s_1^{\mathtt a_1} |\xi|^{-1} |\mathfrak H(\xi)| 
\leq C_0 \mathtt s_1^{\mathtt a_1} |\xi|^{-1}. \label{case2} 
\end{align} 
Here $\mathfrak H(\xi)$ is the function defined in \eqref{H}. The first inequality of the display above follows from the trivial bound 
\[ |1 - e(\xi \mathtt s_1^{-\mathtt a_1})| \leq 2 \quad \text{ and the estimate } \quad \mathtt N_2 \geq \frac{1}{2} \mathtt s_2^{\mathtt a_2} \mathtt s_1^{-\mathtt b_1}, \] which in turn is a consequence of \eqref{N2}. The last inequality in \eqref{case2} involves two separate estimates: the first factor $\mathfrak H$ is estimated using the bound \eqref{s-new} from Corollary \ref{H-cor}. The assumption $\mathtt s_1^{\mathtt a_1} \nmid \xi$ ensures $\mathtt s_1^{\mathtt b_1} \nmid \xi$ as well; further, the choice of $\xi$ in \eqref{G-est} meets the requirements in \eqref{xi-range}, permitting the application of the corollary. The second factor $\sin(\pi \xi \mathtt s_2^{-\mathtt a_2})$ is estimated using the lower bound \eqref{sine} on the sine function, based on the observation that any $\xi$ in the range given in \eqref{G-est} obeys $|\xi| < \frac{1}{2} \mathtt s_2^{\mathtt a_2}$. Substituting the estimate in \eqref{case2} into \eqref{G2} yields an upper bound for $|\mathfrak G|$ that is consistent with \eqref{G-est}. This completes the analysis of all three cases, and concludes the proof.   
\end{proof}

\section{Fourier coefficients of $\mu$} \label{mu-Rajchman-section}
Backed by the oscillatory estimates from Section \ref{exp-sum-section}, we are ready to delve into the study of Fourier coefficients of $\mu$.  The weak convergence property \eqref{mu-weak-limit} implies that for all $\xi \in \mathbb Z$, 
\begin{equation} \label{Fourier} 
\widehat{\mu}(\xi) := \int_{0}^1 e(x\xi) d\mu(x) = \lim_{\mathtt M \rightarrow \infty} \widehat{\varphi}_{\mathtt M}(\xi) = \lim_{\mathtt M \rightarrow \infty} \int_{0}^{1} \varphi_{\mathtt M}(x) e(x\xi) \, dx,\end{equation}   
where $\varphi_{\mathtt M}$ is the probability density associated with the $\mathtt M^{\text{th}}$ step of the construction described in Section \ref{iteration-section}. We therefore seek to understand the behaviour of $\widehat{\varphi}_{\mathtt M}(\xi)$ for all sufficiently large $\mathtt M$. 
%depending on (and decaying in) $\xi$ but uniform in $\mathtt M$ for all sufficiently large $\mathtt M$.  
\subsection{Reduction to products of $\mathfrak A$ and $\mathfrak B$}  
Our first task is to derive an explicit formula for $\widehat{\varphi}_{\mathtt M}$ for $\mathtt M \geq 1$ and reduce it to a form amenable to the analysis presented in Section \ref{exp-sum-section}. Let us recall from \eqref{density} that 
\begin{align}  
&\varphi_{\mathtt M}(x) = \mathtt s_{\mathtt M}^{\mathtt b_{\mathtt M}} \sum_{\mathbf i_{\mathtt M}\in \mathbb I_{\mathtt M}}  \mathtt w(\mathbf i_{\mathtt M}) 1_{\mathtt I(\mathbf i_{\mathtt M})}, \; \text{ and therefore } \nonumber \\ \widehat{\varphi}_{\mathtt M}(\xi) =  &\widehat{\mathtt 1} \bigl(\xi \mathtt s_{\mathtt M}^{-\mathtt b_{\mathtt M}} \bigr) \Gamma_{\mathtt M}(\xi) \quad \text{ where } \quad
\Gamma_{\mathtt M}(\xi) := \sum_{\mathbf i_{\mathtt M} \in \mathbb I_{\mathtt M}} \mathtt w(\mathbf i_{\mathtt M})  e \bigl(\xi \alpha(\mathbf i_{\mathtt M})\bigr), \quad \xi \in \mathbb Z. \label{Fourier2} 
\end{align}  
The function $\Gamma_{\mathtt M}$ is not new. Since $(E_{\mathtt M}, \varphi_{\mathtt M})$ is the output of an elementary operation $\mathscr{O}$, the exponential sum $\Gamma_{\mathtt M}$ is in fact the same as $\Upsilon_1$ appearing in \eqref{Yj}, for an appropriate choice of parameters. Specifically, in the notation of \eqref{O-def}, \eqref{Y-0} and \eqref{Y}, 
\begin{equation} \label{big-and-little-gamma}
\Gamma_{\mathtt M}(\xi) = \Upsilon_1(\xi; \pmb{\gamma}_{\mathtt M}) \quad \text{ where } \quad \pmb{\gamma}_{\mathtt M} := \bigl(E_{\mathtt M-1}, \varphi_{\mathtt M-1}; \mathtt s_{\mathtt M-1}^{\mathtt b_{\mathtt M-1}}, \mathtt s_{\mathtt M}, \mathtt a_{\mathtt M}, \mathtt b_{\mathtt M}, \mathscr{D}_{\mathtt M}, \varepsilon_{\mathtt M}\bigr).
\end{equation}  
In \eqref{Fourier2}, $\alpha(\mathbf i_{\mathtt M})$ and $\mathtt w(\mathbf i_{\mathtt M})$ denote respectively the left endpoint and the mass of the basic interval $\mathtt I(\mathbf i_{\mathtt M})$, as has been described in the indexing scheme in Sections \ref{indexing-section-1} and \ref{indexing-section-2}. Both these quantities are described via certain recursion formulae, given by \eqref{relation-between-endpoints} and \eqref{wt-sum}. Applying these relations repeatedly leads to the following expressions: for every $m < \mathtt M$, 
\begin{align}  
%\varphi_{\mathtt M}(x) = \mathtt s_{\mathtt M}^{\mathtt b_{\mathtt M}} \sum_{\mathbf i_{\mathtt M}\in \mathbb I_M} \mathtt w(\mathbf i_{\mathtt M}) 1_{\mathtt I(\mathbf i_{\mathtt M})}, \; \text{ where } \; 
&\alpha(\mathbf i_{\mathtt M}) = \alpha(\mathbf i_m) + \sum_{r=m+1}^{\mathtt M} \Bigl[ \eta(\mathbf i_{r-1}) + \frac{k_r}{\mathtt s_r^{\mathtt a_r}} + \frac{\ell_r}{\mathtt s_r^{\mathtt b_r}} \Bigr], \label{alpha} \\
&\mathtt w(\mathbf i_{\mathtt M}) = \mathtt w(\mathbf i_m) \times  \prod_{r=m+1}^{\mathtt M} \frac{\vartheta_{\ell_r}(\varepsilon_r, \mathtt s_r, \mathtt a_r, \mathtt b_r)}{\mathtt N_r}, \quad \text{ where } \label{w} \\ 
&\mathbf i_{\mathtt M} = (\mathbf i_{m}, k_{m+1}, \ell_{m+1}, \ldots, k_{\mathtt M}, \ell_{\mathtt M}) \in \mathbb I_{\mathtt M} = \mathbb I_{m} \times \prod_{r=m+1}^{\mathtt M} \mathbb J_r. \label{index-product}
\end{align}  
The quantities $\mathtt N_r$ and $\vartheta_{\ell_r}$ in \eqref{w} are defined by \eqref{N} and \eqref{choice-of-theta} respectively. 
%denotes the left endpoint of the basic interval $\mathtt I(\mathbf i_{\mathtt M})$, and the weights $\mathtt w(\mathbf i_{\mathtt M})$ are prescribed according to the relation \eqref{wt-sum}. 
Substituting \eqref{alpha} and \eqref{w} into \eqref{Fourier2} generates the following representation of $\Gamma_{\mathtt M}$ for every $m < \mathtt M$: 
\begin{align}
\Gamma_{\mathtt M}(\xi) &= \sum_{\mathbf i_{\mathtt M} \in \mathbb I_{\mathtt M}} \mathtt w(\mathbf i_{\mathtt M})  e\Bigl(\xi \alpha(\mathbf i_{m}) + \xi  \sum_{r=m+1}^{\mathtt M}  \Bigl[ \eta(\mathbf i_{r-1}) + \frac{k_r}{\mathtt s_r^{\mathtt a_r}} + \frac{\ell_r}{\mathtt s_r^{\mathtt b_r}} \Bigr] \Bigr) \label{YM0} \\ 
%&=  \widehat{\mathtt 1} \bigl(\xi \mathtt s_{\mathtt M}^{-\mathtt b_{\mathtt M}} \bigr) \sum_{\mathbf i_{\mathtt M} \in \mathbb I_{\mathtt M}} \mathtt w(\mathbf i_{\mathtt M})  e \bigl(\xi \alpha(\mathbf i_{\mathtt M})\bigr); \text{ for every $m < \mathtt M$, this can be re-written as } \nonumber  \\ 
&= \sum_{\mathbf i_{\mathtt M} \in \mathbb I_{\mathtt M}} \mathtt w(\mathbf i_{m}) \Bigl[ \prod_{r=m+1}^{\mathtt M} \frac{\vartheta_{\ell_r}}{\mathtt N_r}  \Bigr] \times e\Bigl(\xi \alpha(\mathbf i_{m}) + \xi  \sum_{r=m+1}^{\mathtt M}  \Bigl[ \eta(\mathbf i_{r-1}) + \frac{k_r}{\mathtt s_r^{\mathtt a_r}} + \frac{\ell_r}{\mathtt s_r^{\mathtt b_r}} \Bigr] \Bigr). \label{not-a-product} 
\end{align} 
Both the index set $\mathbb I_{\mathtt M}$ and the weight $\mathtt w(\mathbf i_{\mathtt M})$ appearing in $\Gamma_{\mathtt M}$ enjoy a product structure, as can be seen from \eqref{index-product} and \eqref{w} respectively. In view of this, one would ideally like $\Gamma_{\mathtt M}(\xi)$ to factorize into a product of $(\mathtt M - m)$ factors, each corresponding to an elementary operation in the construction of $\mu$. However, a preliminary examination of \eqref{not-a-product} reveals that the presence of the spill-overs $\eta(\mathbf i_{r-1})$ prevent such a factorization. In order to simplify the estimation, we define an auxiliary function $\Delta_{\mathtt M}$ that omits these terms and therefore allows the necessary factorization: 
%Here $\mathtt 1$ denotes the constant function on $[0,1]$ with unit value, so that $\widehat{\mathtt 1}(\xi) = (1 - e(\xi)/(2 \pi i \xi)$. Let us note that
%despite the product structure of the weights $\mathtt w(\mathbf i_{\mathtt M})$ given in \eqref{wt-sum}, the sum in \eqref{not-a-product} cannot be expressed as an $\mathtt M$-fold product of factors, each associated with a specific stage of the construction. This is due to the presence of the spill-over terms $\eta(\mathbf i_{r-1})$, which reflects the lack of self-similarity among the basic intervals even within a single step of the construction. We define an auxiliary funcion $\Psi_{\mathtt M}$ that removes these terms, and is therefore easier to manage. For $m < \mathtt M$, we set    
\begin{align} 
\Delta_{\mathtt M} \bigl(\xi;m\bigr)  &:=  \sum_{\mathbf i_{\mathtt M} \in \mathbb I_{\mathtt M}} \mathtt w(\mathbf i_{\mathtt M})  e\Bigl(\xi \alpha(\mathbf i_{m}) + \xi  \sum_{r=m+1}^{\mathtt M}  \Bigl[\frac{k_r}{\mathtt s_r^{\mathtt a_r}} + \frac{\ell_r}{\mathtt s_r^{\mathtt b_r}} \Bigr] \Bigr) \label{PsiM0} \\ 
%\widehat{1} \bigl(\xi \mathtt s_{\mathtt M}^{-\mathtt b_{\mathtt M}} \bigr) \sum_{\mathbf i_{\mathtt M} \in \mathbb I_{\mathtt M}} \mathtt w(\mathbf i_{\mathtt M})  e \bigl(\xi \alpha(\mathbf i_{m})\bigr) e\Bigl(\xi  \sum_{r=m+1}^{\mathtt M}  \Bigl[ \frac{k_r}{\mathbf s_r^{\mathtt a_r}} + \frac{\ell_r}{\mathtt s_r^{\mathtt b_r}} \Bigr] \Bigr); \text{ then by \eqref{wt-sum},} \nonumber \\ 
&=\sum_{\mathbf i_{\mathtt M} \in \mathbb I_{\mathtt M}} \mathtt w(\mathbf i_{m})  e \bigl(\xi \alpha(\mathbf i_{m}) \bigr) \prod_{r=m+1}^{\mathtt M} \frac{\vartheta_{\ell_r}}{\mathtt N_r} e \Bigl( \xi \Bigl[  \frac{k_r}{\mathtt s_r^{\mathtt a_r}} + \frac{\ell_r}{\mathtt s_r^{\mathtt b_r}} \Bigr] \Bigr) \nonumber \\ 
 &= \sum_{\mathbf i_{m} \in \mathbb I_{m}} \mathtt w(\mathbf i_{m})  e \bigl(\xi \alpha(\mathbf i_{m}) \bigr) \prod_{r=m+1}^{\mathtt M} \mathfrak A_r \times \mathfrak B_r = \Gamma_m(\xi) \prod_{r=m+1}^{\mathtt M} \mathfrak A_r \times \mathfrak B_r \nonumber \\ &= \Lambda_m(\xi) \prod_{r=m}^{\mathtt M} \mathfrak A_r \times \mathfrak B_r.  \label{Psi-factors}
\end{align} 
Here $\mathfrak A_r$ and $\mathfrak B_r$ are the functions $\mathfrak A$ and $\mathfrak B$ defined in \eqref{def-A} and \eqref{def-B} respectively, with parameter choices depending on $r$:
\begin{equation} \label{ArBr}  
\mathfrak A_r(\xi) := \mathfrak A(\xi; \mathtt N_r, \mathtt s_r, \mathtt a_r), \qquad \mathfrak B_r(\xi) := \mathfrak B(\xi; \mathtt s_r, \mathtt a_r, \mathtt b_r, \mathscr{D}_r, \varepsilon_r).
\end{equation}
The last expression in \eqref{Psi-factors} follows from the factorization \eqref{factorization} of $\Gamma_m = \Upsilon_1(\cdot; \pmb{\gamma}_m)$, with $\Lambda_m$ being the analogue of $\Omega_1$ in that decomposition, and $\pmb{\gamma}_m$ the choice of parameters as in \eqref{big-and-little-gamma}: 
\begin{equation} \Lambda_m(\xi) := \Omega_1(\xi; \pmb{\gamma}_m) := \sum_{\mathbf i \in \mathbb I_{m-1}} \mathtt w(\mathbf i) e \bigl(\xi (\alpha(\mathbf i) + \eta(\mathbf i)) \bigr). \label{def-Lambda} \end{equation}  
In Section \ref{exp-sum-section}, we have already derived estimates for $\mathfrak A \mathfrak B$ at single and multiple scales. The goal is to
put them together to obtain estimates for $\Gamma_{\mathtt M}$ and from there to $\widehat{\varphi}_{\mathtt M}$ and hence $\widehat{\mu}$. 
\subsection{Estimating $\Gamma_{\mathtt M}$ and $\Delta_{\mathtt M}$} 
We first ascertain that $\Delta_{\mathtt M}$ is a good approximation of $\Gamma_{\mathtt M}$ for certain ranges of $\xi$. 
\begin{lemma} \label{approximation-lemma} 
For any $\xi \in \mathbb Z$ and any $m < \mathtt M$, $m, \mathtt M \in \mathbb N$, 
\begin{equation} \label{Y-Psi-error}
|\Gamma_{\mathtt M}(\xi) - \Delta_{\mathtt M}(\xi; m)| \leq C_0 \min \bigl[1, |\xi| \mathtt s_{m+1}^{-\mathtt a_{m+1}} \bigr]. 
\end{equation} 
\end{lemma}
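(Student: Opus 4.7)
The plan is to exploit the fact that $\Gamma_{\mathtt M}$ and $\Delta_{\mathtt M}$ differ only by the spill-over exponential factor $e\bigl(\xi \sum_{r=m+1}^{\mathtt M} \eta(\mathbf i_{r-1})\bigr)$. This is the multi-scale analogue of the single-scale comparison already handled in Lemma \ref{error-lemma}, so the strategy mirrors that proof.

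First I would write both $\Gamma_{\mathtt M}(\xi)$ and $\Delta_{\mathtt M}(\xi;m)$ via their common expression \eqref{YM0}, \eqref{PsiM0}, factor out the shared exponential, and use the triangle inequality to obtain
\begin{equation*}
|\Gamma_{\mathtt M}(\xi) - \Delta_{\mathtt M}(\xi;m)| \le \sum_{\mathbf i_{\mathtt M} \in \mathbb I_{\mathtt M}} \mathtt w(\mathbf i_{\mathtt M}) \,\Bigl| e\Bigl(\xi \sum_{r=m+1}^{\mathtt M} \eta(\mathbf i_{r-1})\Bigr) - 1 \Bigr|.
\end{equation*}
Next I invoke the Lipschitz bound \eqref{Lipschitz} together with the trivial bound $|e(y)-1|\le 2$ to estimate the integrand by $\min\!\bigl(2, 2\pi |\xi| \sum_{r} \eta(\mathbf i_{r-1})\bigr)$. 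Since $\eta(\mathbf i_{r-1}) \in [0,\mathtt s_r^{-\mathtt a_r})$ by \eqref{etaim}, the sum is dominated by $\sum_{r=m+1}^{\mathtt M} \mathtt s_r^{-\mathtt a_r}$, independent of the index $\mathbf i_{\mathtt M}$.

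The only remaining ingredient is geometric decay of this tail. The growth hypothesis \eqref{abs} (respectively \eqref{ambm-take-2}) yields $\mathtt s_{r+1}^{\mathtt a_{r+1}} > 2 \mathtt s_r^{\mathtt b_r} \ge 2 \mathtt s_r^{\mathtt a_r}$, so the sequence $\mathtt s_r^{\mathtt a_r}$ at least doubles at each step. Consequently
\begin{equation*}
\sum_{r=m+1}^{\mathtt M} \mathtt s_r^{-\mathtt a_r} \le \mathtt s_{m+1}^{-\mathtt a_{m+1}} \sum_{j=0}^{\infty} 2^{-j} = 2\, \mathtt s_{m+1}^{-\mathtt a_{m+1}}.
\end{equation*}
Substituting this uniform bound back, and using the fact that $\sum_{\mathbf i_{\mathtt M}} \mathtt w(\mathbf i_{\mathtt M}) = 1$ by \eqref{wt-sum}, gives
\begin{equation*}
|\Gamma_{\mathtt M}(\xi) - \Delta_{\mathtt M}(\xi;m)| \le \min\!\bigl(2,\, 4\pi |\xi| \mathtt s_{m+1}^{-\mathtt a_{m+1}}\bigr) \le C_0 \min\!\bigl(1, |\xi|\mathtt s_{m+1}^{-\mathtt a_{m+1}}\bigr),
\end{equation*}
which is precisely \eqref{Y-Psi-error}.

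There is no real obstacle here: the argument is essentially a bookkeeping exercise that promotes the single-step error estimate of Lemma \ref{error-lemma} to a multi-step one. The only point requiring care is tracking that each spill-over $\eta(\mathbf i_{r-1})$ depends on $\mathbf i_{r-1}$ (and hence varies with $\mathbf i_{\mathtt M}$), but since the pointwise bound $\eta(\mathbf i_{r-1}) < \mathtt s_r^{-\mathtt a_r}$ is uniform in $\mathbf i_{r-1}$, this dependence disappears when we apply the triangle inequality.
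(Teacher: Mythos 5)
Your argument is correct and follows the same route as the paper: isolate the common exponential factor, estimate $\bigl|e\bigl(\xi\sum_r \eta(\mathbf i_{r-1})\bigr)-1\bigr|$ via the Lipschitz/trivial bound, use the uniform bound $\eta(\mathbf i_{r-1})<\mathtt s_r^{-\mathtt a_r}$ from \eqref{etaim}, sum the resulting geometric series, and use that the weights sum to one. The only cosmetic difference is the growth constant invoked for the geometric decay (you cite the factor $2$ from \eqref{abs}; the paper uses the factor $16$ from \eqref{sabe}, which is in force in the Rajchman section via \eqref{ambm-take-2}) — either suffices, and the constants are absorbed into $C_0$. You are slightly more explicit than the paper in carrying the trivial bound $2$ alongside the Lipschitz bound, which is in fact needed to obtain the $\min[1,\cdot]$ in \eqref{Y-Psi-error}; the paper leaves that step implicit.
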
 
\begin{proof}
The argument is similar to the one used in Lemma \ref{error-lemma}. By definition, both $\Gamma_{\mathtt M}$ and $\Delta_{\mathtt M}(\cdot; m)$ are sums over the same index set $\mathbb I_{\mathtt M}$; moreover, their summands are almost identical, except for the presence (or omission) of the multiplicative factor involving $\eta(\mathbf i_{r-1})$. Thus comparing \eqref{YM0} and \eqref{PsiM0}, we find that their difference can be estimated as follows,    
\begin{align*} 
|\Gamma_{\mathtt M}(\xi) - \Delta_{\mathtt M}(\xi; m)| 
&= \Bigl| \sum_{\mathbf i_{\mathtt M} \in \mathbb I_{\mathtt M}} 
\mathtt w(\mathbf i_{\mathtt M}) e\Bigl(\xi \Bigl[\alpha(\mathbf i_{m}) +  \sum_{r = m+1}^{\mathtt M} \Bigl( \frac{k_r}{\mathtt s_r^{\mathtt a_r}} + \frac{\ell_r}{\mathtt s_r^{\mathtt b_r}} \Bigr) \Bigr] \Bigr) \\ &\hskip1.8in \times \Bigl[ e\Bigl( \xi \sum_{r=m+1}^{\mathtt M} \eta(\mathbf i_{r-1}) \Bigr) - 1\Bigr] \Bigr| \\ 
&\leq C_0 \sum_{\mathbf i_{\mathtt M} \in \mathbb I_{\mathtt M}} \mathtt w(\mathbf i_{\mathtt M}) \Bigl|  \xi \sum_{r=m+1}^{\mathtt M} \eta(\mathbf i_{r-1}) \Bigr| \leq C_0 \sum_{\mathbf i_{\mathtt M} \in \mathbb I_{\mathtt M}} \mathtt w(\mathbf i_{\mathtt M}) \Bigl[ |\xi| \sum_{r=m+1}^{\mathtt M} \mathtt s_{r}^{-\mathtt a_r} \Bigr] \\
&\leq C_0 |\xi| \sum_{r=m+1}^{\mathtt M} \mathtt s_{r}^{-\mathtt a_r} \leq C_0 |\xi| \mathtt s_{m+1}^{-\mathtt a_{m+1}} \sum_{r=0}^{\mathtt M-m-1} 16^{-r} \leq  C_0 |\xi| \mathtt s_{m+1}^{-\mathtt a_{m+1}}.  
\end{align*} 
The first inequality above uses the Lipschitz property \eqref{Lipschitz} of the map $y \mapsto e(y)$. The second one uses the fact $\eta(\mathbf i_{r-1}) \in [0, \mathtt s_{r}^{-\mathtt a_r})$, stated in \eqref{etaim}. The sum of $\{\mathtt w(\mathbf i_{\mathtt M}) : \mathbf i_{\mathtt M} \in \mathbb I_{\mathtt M} \}$ is 1, which leads to the third inequality. The geometric nature of the sum established in the fourth inequality follows from the growth assumption in \eqref{sabe}, which provides 
\[ \mathtt s_{r+1}^{\mathtt a_{r+1}} > 16 \mathtt s_r^{\mathtt b_r} > 16 \mathtt s_r^{\mathtt a_r} \text{ for all } r \geq 1. \] 
This completes the proof.  
\end{proof}
\noindent In view of \eqref{Y-Psi-error},  the onus of estimating $\Gamma_{\mathtt M}$ shifts to $\Delta_{\mathtt M}(\cdot, m)$. The product form of $\Delta_{\mathtt M}$ offers a variety of possible upper bounds. We list a few of them here.  
\begin{lemma} \label{Psi-bound-lemma} 
For $\Delta_{\mathtt M}(\xi; m)$ as in \eqref{PsiM0}, one has the estimate 
\begin{equation}  \label{Psi-bound}
|\Delta_{\mathtt M}(\xi; m)| \leq \min \bigl\{|\mathfrak F_r(\xi)|, |\mathfrak G_r(\xi)| : r = m, \ldots, \mathtt M \bigr\}, 
\end{equation} 
where 
\begin{equation} 
\mathfrak F_r := \mathfrak A_r \mathfrak B_r \quad \text{ and } \quad \mathfrak G_r := \mathfrak B_r \mathfrak A_{r+1}
\end{equation} are the analogues of the functions $\mathfrak F$ and $\mathfrak G$ defined in \eqref{AB} and \eqref{def-G} respectively, with $\mathfrak A_r, \mathfrak B_r$ as in \eqref{ArBr}. 
\end{lemma}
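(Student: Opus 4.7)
\medskip

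\noindent\textbf{Proof proposal for Lemma \ref{Psi-bound-lemma}.} The plan is to exploit directly the product factorization of $\Delta_{\mathtt M}(\xi; m)$ established in \eqref{Psi-factors}, combined with the trivial unit-modulus bounds on each constituent factor.

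First, I would observe that from \eqref{Psi-factors} we have the product representation
\begin{equation*}
\Delta_{\mathtt M}(\xi; m) = \Lambda_m(\xi) \prod_{r=m}^{\mathtt M} \mathfrak A_r(\xi) \, \mathfrak B_r(\xi),
\end{equation*}
and that the three key bounds $|\Lambda_m(\xi)| \leq 1$, $|\mathfrak A_r(\xi)| \leq 1$, $|\mathfrak B_r(\xi)| \leq 1$ all hold uniformly in $\xi \in \mathbb Z$. The first inequality follows from \eqref{def-Lambda}, since $\Lambda_m$ is a sum of unit-modulus exponentials weighted by $\{\mathtt w(\mathbf i) : \mathbf i \in \mathbb I_{m-1}\}$, which sum to $1$ by \eqref{wt-sum}. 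The remaining two inequalities are the trivial bounds recorded in \eqref{B*C-trivial-bound}.

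For the $\mathfrak F_r$ estimate, fix any $r_0 \in \{m, \ldots, \mathtt M\}$ and group the product as
\begin{equation*}
\Delta_{\mathtt M}(\xi;m) = \Lambda_m(\xi) \cdot \bigl[\mathfrak A_{r_0}(\xi) \mathfrak B_{r_0}(\xi)\bigr] \cdot \prod_{\substack{m \leq r \leq \mathtt M \\ r \ne r_0}} \mathfrak A_r(\xi) \mathfrak B_r(\xi) = \Lambda_m(\xi) \cdot \mathfrak F_{r_0}(\xi) \cdot \prod_{\substack{m \leq r \leq \mathtt M \\ r \ne r_0}} \mathfrak A_r(\xi) \mathfrak B_r(\xi).
\end{equation*}
Applying the trivial bounds termwise to every factor other than $\mathfrak F_{r_0}$ yields $|\Delta_{\mathtt M}(\xi; m)| \leq |\mathfrak F_{r_0}(\xi)|$.

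For the $\mathfrak G_r$ estimate, the idea is to re-associate the product so that each pair $(\mathfrak B_r, \mathfrak A_{r+1})$ is grouped together instead of $(\mathfrak A_r, \mathfrak B_r)$. Concretely,
\begin{equation*}
\prod_{r=m}^{\mathtt M} \mathfrak A_r \mathfrak B_r = \mathfrak A_m \cdot \Bigl[\prod_{r=m}^{\mathtt M-1} \mathfrak B_r \mathfrak A_{r+1}\Bigr] \cdot \mathfrak B_{\mathtt M} = \mathfrak A_m \cdot \prod_{r=m}^{\mathtt M-1} \mathfrak G_r \cdot \mathfrak B_{\mathtt M}.
\end{equation*}
Fixing $r_0 \in \{m, \ldots, \mathtt M - 1\}$, isolating the factor $\mathfrak G_{r_0}$ and applying the trivial bounds to everything else gives $|\Delta_{\mathtt M}(\xi;m)| \leq |\mathfrak G_{r_0}(\xi)|$. (The endpoint $r_0 = \mathtt M$, if intended in the statement, either requires interpreting $\mathfrak A_{\mathtt M+1}$ via an additional trivial step, or is a mild abuse to be clarified.)

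This is essentially a bookkeeping argument and I do not expect any serious obstacle; the only mildly delicate point is to make sure the re-association into $\mathfrak G_r$'s is done correctly, leaving the orphan factors $\mathfrak A_m$ and $\mathfrak B_{\mathtt M}$ at the two ends, both of which are bounded by $1$. Taking the minimum over all admissible $r$ yields \eqref{Psi-bound}.
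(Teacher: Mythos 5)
Your proof is correct and follows essentially the same route as the paper: expand $\Delta_{\mathtt M}(\xi;m)$ via the factorization \eqref{Psi-factors}, isolate the desired factor $\mathfrak F_{r_0}$ or $\mathfrak G_{r_0}$, and bound every remaining factor (including $\Lambda_m$) by one using \eqref{B*C-trivial-bound}. The paper packages the leftover product into explicit complementary functions $\mathfrak F_r^{\ast}, \mathfrak G_r^{\ast}$ rather than re-associating in place, but that is cosmetic. You are in fact slightly more careful than the paper at the endpoint $r = \mathtt M$ for the $\mathfrak G_r$ bound: the paper's identity $\Delta_{\mathtt M} = \mathfrak G_{\mathtt M}\mathfrak G_{\mathtt M}^{\ast}$ introduces a spurious factor $\mathfrak A_{\mathtt M+1}$, and since $|\mathfrak A_{\mathtt M+1}|\le 1$ points the wrong way, the estimate $|\Delta_{\mathtt M}|\le|\mathfrak G_{\mathtt M}|$ does not follow from that factorization alone; your explicit flagging of this boundary case is a genuine (minor) improvement. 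It is harmless in practice because the applications (Lemmas \ref{Xi1-lemma} and \ref{Xi2-lemma}) only use $\mathfrak G_m$ and $\mathfrak F_{m+1}$, never $\mathfrak G_{\mathtt M}$.
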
 
\begin{proof} 
The estimate \eqref{Psi-bound} is a direct consequence of the factorization \eqref{Psi-factors}. For each $r \in \{m, \ldots, \mathtt M\}$, this factorization expresses $\Delta_{\mathtt M}(\xi; m)$ as a product: 
\[ \Delta_{\mathtt M}(\xi; m) = \mathfrak F_r \mathfrak F^{\ast}_r = \mathfrak G_r \mathfrak G_r^{\ast}, \qquad \mathfrak F^{\ast}_r := \Lambda_m \prod_{\begin{subarray}{c}r' = m \\ r' \ne r \end{subarray}}^{\mathtt M} \mathfrak A_{r'} \mathfrak B_{r'}, \quad \mathfrak G^{\ast}_r := \Lambda_m \prod_{\begin{subarray}{c}r' = m \\ r' \ne r+1 \end{subarray}}^{\mathtt M} \mathfrak A_{r'} \prod_{\begin{subarray}{c}r' = m \\ r' \ne r\end{subarray}}^{\mathtt M}\mathfrak B_{r'}. \] 
Each one of the functions $\mathfrak F_r^{\ast}$, $\mathfrak G_r^{\ast}$ is a product of factors in its own right, 
%with the factors being of the form $\mathfrak A_{r'}, \mathfrak B_{r'}$ or $\Upsilon_m$. Regardless, 
where each factor has absolute value at most 1, by virtue of the trivial bounds recorded in \eqref{B*C-trivial-bound}. Therefore $|\mathfrak F^{\ast}_r|, |\mathfrak G^{\ast}_r| \leq 1$, resulting in \eqref{Psi-bound}.   
\end{proof} 
\section{Rajchman property of skewed measures} \label{Rajchman-section} 
With the exponential sum estimates gathered in Section \ref{exp-sum-section} and \ref{mu-Rajchman-section}, we are now ready to show that the measure $\mu = \mu(\pmb{\Pi})$ given by Proposition \ref{def-mu-prop} obeys the Fourier estimate \eqref{pre-Rajchman}, for an admissible choice of parameters $\pmb{\Pi}$. Proving this estimate is the main objective of this section.
\vskip0.1in 
\noindent Let us recall the definition $\mathtt Q_m = \mathtt s_m^{(\mathtt a_m + \mathtt b_m)/2}$ from \eqref{Qtau} and fix a frequency $\xi \in \mathbb Z$ with $|\xi| \in (\mathtt Q_m, \mathtt Q_{m+1}]$. Not surprisingly, in view of the weak convergence of $\varphi_{\mathtt M}$ to $\mu$, the estimate \eqref{pre-Rajchman} will follow from a similar estimate on $\widehat{\varphi}_{\mathtt M}$ that depends on $m$ but is independent of $\mathtt M$. The formula \eqref{Fourier2} of $\widehat{\varphi}_{\mathtt M}$, combined with the bound \eqref{1-hat} on $\widehat{\mathtt 1}$, gives 
 \begin{equation} \label{phiM-hat-bound} 
 |\widehat{\varphi}_{\mathtt M}(\xi)| \leq |\Gamma_{\mathtt M}(\xi)| \leq   \bigl|\Gamma_{\mathtt M}(\xi) - \Delta_{\mathtt M}(\xi; n)\bigr| + \bigl|\Delta_{\mathtt M}(\xi;n) \bigr| \quad \text{ for any index } n \in \mathbb N. 
 \end{equation} 
 The two summands above will be controlled using the functions obtained in Lemma \ref{approximation-lemma} and \ref{Psi-bound-lemma} for a suitable choice of $n$ depending on $\xi$. In particular, functions of the form $\mathfrak F, \mathfrak G$ dominate $\Delta_{\mathtt M}$. These in turn will be estimated using corresponding bounds from Section \ref{exp-sum-section} -  Lemma \ref{single-scale-product-lemma} and Proposition \ref{prop-G} respectively.
 \vskip0.1in 
 \noindent Let us make this precise. In Lemmas \ref{Xi1-lemma} and \ref{Xi2-lemma} below, let $\pmb{\Pi}$ be a choice of parameters as in Proposition \ref{Rajchman-prop}. We decompose the range of $|\xi|$ into two parts: 
\begin{align}
&\bigl(\mathtt Q_{m}, \mathtt Q_{m+1} \bigr] = \Xi_1(m) \sqcup \Xi_2(m), \quad\text{ where } \nonumber \\ \Xi_1(m) := &\Bigl(\mathtt Q_{m}, \mathtt s_{m}^{{\mathtt b_{m}}/{2}} \mathtt s_{m+1}^{{\mathtt a_{m+1}}/{2}} \Bigr], \quad \Xi_2(m) := \Bigl(\mathtt s_{m}^{{\mathtt b_{m}}/{2}} \mathtt s_{m+1}^{{\mathtt a_{m+1}}/{2}}, \mathtt Q_{m+1} \Bigr].  \label{Xi-12}
\end{align}
We will estimate $\widehat{\varphi}_{\mathtt M}$ separately in each range. 
\subsection{Frequencies in $\Xi_1$} 
\begin{lemma} \label{Xi1-lemma}
There exists a constant $C = C(\pmb{\Pi}) > 0$ such that
\begin{equation} 
|\widehat{\varphi}_{\mathtt M}(\xi)| \leq   C \Bigl[ \varepsilon_{m} + \tau_{m} + \mathtt N_{m+1}^{-{1}/{2}}\Bigr] \quad  \text{ for all } \;  |\xi| \in \Xi_1(m) \cap \mathbb N, \; \mathtt M \geq m \geq 1,  \label{mu-hat-case1}
\end{equation} 
where $\tau_m = \mathtt s_m^{(\mathtt a_m-\mathtt b_m)/2}$ is defined in \eqref{Qtau}. 
\end{lemma}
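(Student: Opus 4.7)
The plan is to apply the splitting \eqref{phiM-hat-bound} with the choice $n=m$, so that
\[
|\widehat{\varphi}_{\mathtt M}(\xi)| \le \bigl|\Gamma_{\mathtt M}(\xi)-\Delta_{\mathtt M}(\xi;m)\bigr| + \bigl|\Delta_{\mathtt M}(\xi;m)\bigr|,
\]
and then bound the two summands separately using results already in hand. Both bounds should produce only the three quantities $\varepsilon_m$, $\tau_m$, $\mathtt N_{m+1}^{-1/2}$, uniformly in $\mathtt M \ge m$, so passing to the Fourier coefficient $\widehat\varphi_{\mathtt M}$ and eventually to $\widehat\mu$ is automatic.

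For the first summand, Lemma \ref{approximation-lemma} gives $|\Gamma_{\mathtt M}(\xi)-\Delta_{\mathtt M}(\xi;m)| \le C_0|\xi|\mathtt s_{m+1}^{-\mathtt a_{m+1}}$. Since $|\xi|\in\Xi_1(m)$ means $|\xi|\le \mathtt s_m^{\mathtt b_m/2}\mathtt s_{m+1}^{\mathtt a_{m+1}/2}$, this is at most $(\mathtt s_m^{\mathtt b_m}/\mathtt s_{m+1}^{\mathtt a_{m+1}})^{1/2}$, which by the definition \eqref{Qtau} of $\mathtt N_{m+1}$ is $\le C\,\mathtt N_{m+1}^{-1/2}$. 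This is precisely the last contribution appearing on the right of \eqref{mu-hat-case1}.

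For the second summand, Lemma \ref{Psi-bound-lemma} (specialised to $r=m$) gives $|\Delta_{\mathtt M}(\xi;m)|\le |\mathfrak G_m(\xi)|$ with $\mathfrak G_m = \mathfrak B_m\mathfrak A_{m+1}$, which is precisely the object controlled by Proposition \ref{prop-G} applied to the triple of scales indexed by $m-1,m,m+1$. The hypotheses of that proposition hold: \eqref{sabe} and the stronger $\mathtt s_{m+1}^{\mathtt a_{m+1}}>\mathtt s_m^{3\mathtt b_m}$ requirement are both consequences of \eqref{ambm-take-2}, while the frequency condition $2\mathtt s_m^{\mathtt a_m}\le |\xi| < \mathtt s_m^{\mathtt b_m/2}\mathtt s_{m+1}^{\mathtt a_{m+1}/2}$ follows from $|\xi|>\mathtt Q_m=\mathtt s_m^{(\mathtt a_m+\mathtt b_m)/2}\ge \mathtt s_m^{1/2}\mathtt s_m^{\mathtt a_m}\ge 2\mathtt s_m^{\mathtt a_m}$ (using $\mathtt s_m\ge 3$ from \eqref{param-seq}). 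Proposition \ref{prop-G} then yields
\[
|\mathfrak G_m(\xi)| \le C_0\bigl[\varepsilon_m + \mathtt s_m^{\mathtt a_m}|\xi|^{-1} + \mathtt s_m^{\mathtt b_m}\mathtt s_{m+1}^{-\mathtt a_{m+1}}\bigr].
\]
The middle term is bounded by $\mathtt s_m^{\mathtt a_m-(\mathtt a_m+\mathtt b_m)/2}=\mathtt s_m^{(\mathtt a_m-\mathtt b_m)/2}=\tau_m$ since $|\xi|>\mathtt Q_m$, while the third term is at most $\mathtt N_{m+1}^{-1}\le \mathtt N_{m+1}^{-1/2}$ by \eqref{Qtau}. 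Adding these two pieces yields \eqref{mu-hat-case1}.

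I do not anticipate a genuine obstacle: everything is a straightforward combination of Lemmas \ref{approximation-lemma}, \ref{Psi-bound-lemma} and Proposition \ref{prop-G}. The only care required is bookkeeping -- matching the range $\Xi_1(m)$ to the hypotheses of Proposition \ref{prop-G} and recognising that the extra factor of $\mathtt N_{m+1}^{-1/2}$ (rather than $\mathtt N_{m+1}^{-1}$) in the statement is dictated by the \emph{approximation} error in Lemma \ref{approximation-lemma}, not by the multiplicative structure of $\mathfrak G_m$ itself. That asymmetry is precisely why the upper cutoff $\mathtt s_m^{\mathtt b_m/2}\mathtt s_{m+1}^{\mathtt a_{m+1}/2}$ has been chosen for $\Xi_1(m)$.
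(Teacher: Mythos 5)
Your proposal matches the paper's proof step for step: same decomposition \eqref{phiM-hat-bound} with $n=m$, the same appeal to Lemma \ref{approximation-lemma} for the error term, the same bound $|\Delta_{\mathtt M}(\xi;m)|\le|\mathfrak G_m(\xi)|$ from Lemma \ref{Psi-bound-lemma}, and the same invocation of Proposition \ref{prop-G}, with each of the four resulting pieces simplified using the range $\Xi_1(m)$ exactly as in \eqref{ineq-1}--\eqref{ineq-23}.

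One small slip in the part where you verify the frequency hypothesis of Proposition \ref{prop-G}: you write $\mathtt Q_m \ge \mathtt s_m^{1/2}\mathtt s_m^{\mathtt a_m} \ge 2\mathtt s_m^{\mathtt a_m}$ "using $\mathtt s_m\ge 3$," but $\sqrt{3}<2$, so this chain fails when $\mathtt s_m=3$ and $\mathtt b_m=\mathtt a_m+1$. It needs either $\mathtt s_m\ge 4$ or $\mathtt b_m\ge \mathtt a_m+2$. In practice this is harmless -- the lower bound $2\mathtt s_1^{\mathtt a_1}\le|\xi|$ in \eqref{G-est} is never actually used in the case analysis of Proposition \ref{prop-G} (only the upper bound and $\xi\ne 0$ enter), and the term it feeds into is $\mathtt s_m^{\mathtt a_m}/|\xi|\le\tau_m$ which you bound correctly regardless -- but the claimed inequality as written is false.
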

\begin{proof} 
We apply the inequality \eqref{phiM-hat-bound} with the choice of index $n=m$, and use $|\mathfrak G_m|$ as an upper bound of $|\Delta_{\mathtt M}(\xi;m)|$ for $1 \leq m \leq \mathtt M$. This is permissible in view of \eqref{Psi-bound} from Lemma \ref{Psi-bound-lemma}. The error bound \eqref{Y-Psi-error} from Lemma \ref{approximation-lemma} provides an estimate for the difference $\Gamma_{\mathtt M} - \Delta_{\mathtt M}$ in \eqref{phiM-hat-bound}. Putting all of this together, we arrive at  
%Then \eqref{1-hat}, \eqref{Fourier2} and \eqref{Y-Psi-error} jointly imply that for all $\mathtt M > m$, 
\begin{align} 
|\widehat{\varphi}_{\mathtt M}(\xi)| 
%\leq |\Gamma_{\mathtt M}(\xi)| 
%&\leq   \bigl|\Gamma_{\mathtt M}(\xi) - \Psi_{\mathtt M}(\xi; m)\bigr| + \bigl|\Psi_{\mathtt M}(\xi;m) \bigr| \nonumber 
&\leq  \bigl|\Gamma_{\mathtt M}(\xi) - \Delta_{\mathtt M}(\xi; m)\bigr| + |\mathfrak G_{m}(\xi)| \nonumber \\ 
&\leq C \Bigl[ |\xi| \mathtt s_{m+1}^{-\mathtt a_{m+1}} + \varepsilon_{m} + \mathtt s_{m}^{\mathtt a_{m}} |\xi|^{-1} + \mathtt s_{m}^{\mathtt b_{m}} \mathtt s_{m+1}^{-\mathtt a_{m+1}} \Bigr]. \label{mu-hat-case-0} 
%&\leq C_0 \Bigl[ \varepsilon_{m} + \tau_{m} + \mathtt N_{m+1}^{-{1}/{2}}\Bigr].
\end{align} 
%In the third inequality of the display above, we have applied , with $r = m$. The fourth inequality is a 
The last step \eqref{mu-hat-case-0} is a consequence of the estimate \eqref{G-est} of $\mathfrak G$, since
 \[ \mathfrak G_m(\cdot):= \mathfrak G(\cdot; \mathbf s, \mathbf a, \mathbf b, \pmb{\varepsilon})  \; \; \text{ where } \; \; \mathbf s = (\mathtt s_{m -1}, \mathtt s_{m}, \mathtt s_{m+1}), \;\; \mathbf a = (\mathtt a_{m -1}, \mathtt a_{m}, \mathtt a_{m+1}), 
 %\mathbf b = (\mathtt b_{m-1}, \mathtt b_{m}, \mathtt b_{m+1}), \; \pmb{\varepsilon} = (\varepsilon_{m-1}, \varepsilon_{m}, \varepsilon_{m+1}). 
 \] 
and similarly for $\mathbf b, \pmb{\varepsilon}$. 
%Several inequalities contribute to the final expression in \eqref{mu-hat-case1}, one for each summand in  \eqref{mu-hat-case-0}. For example, the last term is bounded as
\vskip0.1in
\noindent Let us estimate each summand in \eqref{mu-hat-case-0} separately. The last term is bounded as 
 \begin{equation} \mathtt s_{m}^{\mathtt b_{m}} \mathtt s_{m+1}^{-\mathtt a_{m+1}} \leq C_0 \mathtt N_{m+1}^{-1}, \; \text{ a consequence of the defining property of $\mathtt N_{m}$ from \eqref{Qtau}}. \label{ineq-1} \end{equation} 
Estimates specific to  $|\xi| \in \Xi_1(m)$ lead to upper bounds for two of the other summands in \eqref{mu-hat-case-0}:  
\begin{equation} |\xi| \mathtt s_{m+1}^{-\mathtt a_{m+1}} \leq \mathtt s_{m}^{\mathtt b_{m}/2}  \mathtt s_{m+1}^{-\mathtt a_{m+1}/2} \leq C_0 \mathtt N_{m+1}^{-\frac{1}{2}} \quad \text{ and } \quad \mathtt s_{m}^{\mathtt a_{m}} |\xi|^{-1} \leq \mathtt s_{m}^{\mathtt a_{m}} \mathtt Q_{m}^{-1} = \tau_{m}. \label{ineq-23}  \end{equation} 
The first inequality above uses the upper bound on $|\xi|$ offered by $\Xi_1(m)$ in \eqref{Xi-12}; the second inequality utilizes the lower one.  Inserting the estimates from \eqref{ineq-1} and \eqref{ineq-23} into \eqref{mu-hat-case-0} leads to \eqref{mu-hat-case1}.
\end{proof} 
\subsection{Frequencies in $\Xi_2$} 
\begin{lemma} \label{Xi2-lemma} 
There exists a constant $C = C(\pmb{\Pi}) > 0$ such that
\begin{equation} 
|\widehat{\varphi}_{\mathtt M}(\xi)| \leq C \Bigl[ \tau_{m+1} + \varepsilon_{m+1} + \mathtt N_{m+1}^{-\frac{1}{2}} \Bigr]  \quad  \text{ for all } \;  |\xi| \in \Xi_2(m) \cap \mathbb N, \; \mathtt M \geq m \geq 1,  \label{mu-hat-case-2}
\end{equation} 
where $\tau_m = \mathtt s_m^{(\mathtt a_m-\mathtt b_m)/2}$ is defined in \eqref{Qtau}. 
\end{lemma}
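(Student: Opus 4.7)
The plan is to follow the same template as the proof of Lemma \ref{Xi1-lemma}, but with the index shifted upward by one: I would apply \eqref{phiM-hat-bound} with $n = m+1$ (instead of $n = m$), and majorize $|\Delta_{\mathtt M}(\xi; m+1)|$ by $|\mathfrak F_{m+1}(\xi)|$ (instead of $|\mathfrak G_m(\xi)|$). This shift is forced by the shape of the target bound \eqref{mu-hat-case-2}, whose three terms are all indexed by $m+1$. Concretely, combining Lemma \ref{approximation-lemma} with $n = m+1$ and Lemma \ref{Psi-bound-lemma} with $r = m+1$ would give, for $|\xi| \in \Xi_2(m)$ and $\mathtt M \geq m+1$,
\begin{equation*}
|\widehat{\varphi}_{\mathtt M}(\xi)| \leq C_0 |\xi| \mathtt s_{m+2}^{-\mathtt a_{m+2}} + |\mathfrak F_{m+1}(\xi)|.
\end{equation*}

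First, I would dispose of the approximation error: since $|\xi| \leq \mathtt Q_{m+1}$ and the growth hypothesis \eqref{ambm-take-2} (applied to the pair $(\mathtt s_{m+1}, \mathtt s_{m+2})$) yields $\mathtt s_{m+2}^{\mathtt a_{m+2}} > \mathtt s_{m+1}^{3\mathtt b_{m+1}}$, the first summand is bounded by $\mathtt s_{m+1}^{(\mathtt a_{m+1}-5\mathtt b_{m+1})/2} \leq \tau_{m+1}$, using $\mathtt b_{m+1} > 0$. For the principal term, Lemma \ref{single-scale-product-lemma} is directly applicable because $1 \leq |\xi| \leq \mathtt Q_{m+1} < \mathtt s_{m+1}^{\mathtt b_{m+1}}$, and it produces
\begin{equation*}
|\mathfrak F_{m+1}(\xi)| \leq \varepsilon_{m+1} + \frac{\mathtt s_{m+1}^{\mathtt a_{m+1} - \mathtt b_{m+1}}}{\mathtt N_{m+1}} \bigl| \sin \bigl(\pi \xi \mathtt s_{m+1}^{-\mathtt b_{m+1}} \bigr) \bigr|^{-1}.
\end{equation*}
I would then control the sine factor using \eqref{sine}: since $\xi \mathtt s_{m+1}^{-\mathtt b_{m+1}} \in (0, \tau_{m+1}]$, and assuming without loss of generality $\tau_{m+1} \leq 1/2$ (otherwise the desired bound is trivial, as the right-hand side of \eqref{mu-hat-case-2} already exceeds $1 \geq |\widehat{\varphi}_{\mathtt M}(\xi)|$), one gets $|\sin(\pi \xi \mathtt s_{m+1}^{-\mathtt b_{m+1}})|^{-1} \leq C_0 \mathtt s_{m+1}^{\mathtt b_{m+1}}/|\xi|$. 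Substituting this and invoking $\mathtt N_{m+1} \geq \tfrac{1}{2}\mathtt s_{m+1}^{\mathtt a_{m+1}}\mathtt s_m^{-\mathtt b_m}$ from \eqref{Qtau} collapses the sine contribution to $C \mathtt s_m^{\mathtt b_m}/|\xi|$, which in turn is at most $C \mathtt s_m^{\mathtt b_m/2}\mathtt s_{m+1}^{-\mathtt a_{m+1}/2} \leq C \mathtt N_{m+1}^{-1/2}$ by virtue of the lower bound $|\xi| > \mathtt s_m^{\mathtt b_m/2}\mathtt s_{m+1}^{\mathtt a_{m+1}/2}$ defining $\Xi_2(m)$. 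Collecting the three pieces ($\tau_{m+1}$, $\varepsilon_{m+1}$, and $\mathtt N_{m+1}^{-1/2}$) will yield \eqref{mu-hat-case-2}.

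The main conceptual point — and what justifies the apparently arbitrary transition scale $\mathtt s_m^{\mathtt b_m/2}\mathtt s_{m+1}^{\mathtt a_{m+1}/2}$ separating $\Xi_1(m)$ from $\Xi_2(m)$ — is a tension between two opposing requirements: to render the sine-ratio estimate effective one needs $|\xi|$ large (so that $\mathtt s_m^{\mathtt b_m}/|\xi|$ is small), whereas to make the approximation-lemma error small at level $n=m$ one wants $|\xi|$ not too large relative to $\mathtt s_{m+1}^{\mathtt a_{m+1}}$. The choice $|\xi| \sim \mathtt s_m^{\mathtt b_m/2}\mathtt s_{m+1}^{\mathtt a_{m+1}/2}$ is the break-even value that equates the sine contribution with $\mathtt N_{m+1}^{-1/2}$, and crossing it is precisely what makes the index shift $n = m \to n = m+1$ preferable. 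Beyond this bookkeeping I anticipate no real obstacle; once the index and the dominating factor are chosen correctly, the remaining work amounts to substituting the ranges in \eqref{Xi-12} into the inequalities already collected in Lemmas \ref{approximation-lemma}, \ref{Psi-bound-lemma} and \ref{single-scale-product-lemma}.
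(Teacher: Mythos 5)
Your proposal is correct and follows essentially the same route as the paper: shift the approximation index to $n=m+1$, dominate $\Delta_{\mathtt M}(\xi;m+1)$ by $\mathfrak F_{m+1}$, apply Lemma \ref{single-scale-product-lemma}, then use the $\Xi_2(m)$ range bounds together with the definition of $\mathtt N_{m+1}$ to collapse each piece into $\tau_{m+1}$, $\varepsilon_{m+1}$, or $\mathtt N_{m+1}^{-1/2}$. Your closing paragraph identifying the transition scale as the break-even point where the sine contribution equals $\mathtt N_{m+1}^{-1/2}$ is a correct and useful observation not made explicit in the paper, though the body of the argument is the same.
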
 
\begin{proof} 
 The argument follows the same broad strokes as Lemma \ref{Xi1-lemma}. The main distinctions are that this time we use the index $n = m+1$ in  the inequality \eqref{phiM-hat-bound}, and then use $\mathfrak F_{m+1}$ to bound $\Delta_{\mathtt M}(\cdot; m+1)$ (instead of $\mathfrak G_m$ in Lemma \ref{Xi1-lemma}).  
% For $|\xi| \in \Xi_2(\mathtt n) \cap \mathbb N$, we employ a similar strategy,  except this time we use $\Psi_{\mathtt M}(\cdot; \mathtt n+1)$ to approximate $\Gamma_{\mathtt M}$ and apply \eqref{Psi-bound} with $\mathfrak F_{\mathtt n+1}$.  
%Accordingly, 
Since \[ \mathfrak F_m(\xi) := \mathfrak F(\xi; \mathtt N_m, \mathtt s_m, \mathtt a_m, \mathtt b_m) \quad \text{ in the notation of \eqref{AB}}, \] 
the estimate \eqref{F-est} derived for $\mathfrak F$ in Lemma \ref{single-scale-product-lemma} is used to bound $\mathfrak F_{m+1}$, with 
\[\mathtt s = \mathtt s_{m+1}, \quad \mathtt a = \mathtt a_{m+1}, \quad \mathtt b = \mathtt b_{m+1} \quad \text{ and } \quad \varepsilon = \varepsilon_{m+1}. \] 
The corresponding steps are therefore 
\begin{align} 
|\widehat{\varphi}_{\mathtt M}(\xi)| 
%&\leq |\Gamma_{\mathtt M}(\xi)| 
&\leq   \bigl|\Gamma_{\mathtt M}(\xi) - \Delta_{\mathtt M}(\xi; m+1)\bigr| + \bigl|\Delta_{\mathtt M}(\xi; m+1) \bigr| \nonumber 
\\ &\leq  \bigl|\Gamma_{\mathtt M}(\xi) - \Delta_{\mathtt M}(\xi; m+1)\bigr| + |\mathfrak F_{m+1}(\xi)| \nonumber \\ 
&\leq  C \Bigl[ |\xi| \mathtt s_{m+2}^{-\mathtt a_{m+2}} + \varepsilon_{m+1} + {\mathtt s_{m+1}^{\mathtt a_{m+1} - \mathtt b_{m+1}}}{\mathtt N_{m+1}^{-1}} \bigl|\sin(\pi \xi \mathtt s_{m + 1}^{-\mathtt b_{m+1}}) \bigr|^{-1} \Bigr] \nonumber \\ 
&\leq C \Bigl[ |\xi| \mathtt s_{m+2}^{-\mathtt a_{m+2}} + \varepsilon_{m+1} + \mathtt s_{m}^{\mathtt b_{m}} |\xi|^{-1} \Bigr]. \label{last-step-X2} 
\end{align} 
The last term in \eqref{last-step-X2} is obtained from its counterpart in the preceding step using the definition \eqref{Qtau} of $\mathtt N_{m+1}$, and the lower bound \eqref{sine} on the sine function.  The latter is applicable since 
\[ |\xi| \leq \mathtt Q_{m+1} \leq \frac{1}{2}\mathtt s_{m+1}^{\mathtt b_{m+1}} \quad \text{ for }  |\xi| \in \Xi_2(m). \]   
%\[ \tau_{m+1}^2 \mathtt N_{m+1}^{-1} \bigl| \sin(\pi \xi \mathtt s_{m+1}^{-\mathtt b_{m+1}}\bigr|^{-1}  \] 
\vskip0.1in
\noindent As in Lemma \ref{Xi1-lemma}, estimates specific to $|\xi| \in \Xi_2(m)$ are needed for further simplification of \eqref{last-step-X2}. For instance, the bounds on $|\xi| \in  \Xi_2(m)$ given by \eqref{Xi-12} and the growth condition \eqref{ambm-take-2} result in the inequalities 
\begin{equation} \label{Xi2-estimates} |\xi| \mathtt s_{m+2}^{-\mathtt a_{m+2}} \leq \mathtt Q_{m+1} \mathtt s_{m+2}^{-\mathtt a_{m+2}} \leq \tau_{m+1}, \quad \text{ whereas } \quad \mathtt s_{m}^{\mathtt b_{m}} |\xi|^{-1} \leq \mathtt s_{m}^{\mathtt b_{m}/2} \mathtt s_{m+1}^{-\mathtt a_{m+1}/2} \leq C \mathtt N_{m+1}^{-\frac{1}{2}}. 
\end{equation}    
Inserting \eqref{Xi2-estimates} into \eqref{last-step-X2} results in the desired estimate \eqref{mu-hat-case-2}, completing the proof.   
\end{proof} 
\subsection{Proof of Proposition \ref{Rajchman-prop}}    
The bulk of the work has already been carried out in Lemmas \ref{Xi1-lemma} and \ref{Xi2-lemma}; we simply put them together. Since every $|\xi| \in (\mathtt Q_m, \mathtt Q_{m+1}]$ must fall either in $\Xi_1(m)$ or $\Xi_2(m)$, one or the other of the two inequalities \eqref{mu-hat-case1} and \eqref{mu-hat-case-2} must hold for $\widehat{\varphi}_{\mathtt M}(\xi)$ for every $\mathtt M > m$. The right hand sides of both these inequalities are dominated by  the upper bound in \eqref{pre-Rajchman}. Lemmas \ref{Xi1-lemma} and \ref{Xi2-lemma} therefore jointly imply that 
\[ |\widehat{\varphi}_{\mathtt M}(\xi)| \leq C \bigl[ \varepsilon_m + \varepsilon_{m+1} + \tau_m + \tau_{m+1} + \mathtt N_{m+1}^{-\frac{1}{2}}\bigr] \text{ for every $\mathtt M > m$.} \]  Since this estimate is uniform in $\mathtt M$, letting $\mathtt M \rightarrow \infty$ recovers the same bound for $\widehat{\mu}(\xi)$, in view of \eqref{Fourier}. The proof of Proposition \ref{Rajchman-prop} is complete.  
\qed

\section{Appendix} \label{Appendix}
\subsection{Absolutely non-normal numbers} \label{abs-non-normal-section}
The Fourier estimate \eqref{pre-Rajchman}  for a skewed measure $\mu$ depends on quantities like $\mathtt a_m$ and $\mathtt b_m$, which in turn depend on the ordered sequence $\mathcal S$ and are open to choice subject to certain specifications. The decay of $\widehat{\mu}$ thus depends on the ordering of bases. However, once the collections $\mathscr{B}', \mathscr{C}, \mathcal S$ are fixed, one can use Proposition \ref{Rajchman-prop} to derive effective bounds for $\widehat{\mu}$. One such example is presented in this section. Using the results proved in Part \ref{Part-nonnorm-F-R}, we construct a Rajchman measure supported on the set of absolutely non-normal numbers, for which a Fourier decay rate can be specified.  We do not know if this decay rate is optimal.
\begin{proposition} \label{abs-non-normal-prop} 
There exist a constant $C > 0$ and a probability measure $\mu$ supported on the set $\mathscr{N}(\emptyset, \mathbb N \setminus \{1\})$ of absolutely non-normal numbers such that 
\begin{equation} \label{abs-non-normal-decay}
\bigl|\widehat{\mu}(n) \bigr| \leq C \bigl( \log^{(3)} |n| \bigr)^{-1} \quad \text{ for all } n \in \mathbb Z, \; |n| \geq C. 
\end{equation} 
Here $\log^{(n)}$ denotes the $n$-fold composition of the logarithm. 
\end{proposition}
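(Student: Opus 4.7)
The plan is to realize $\mu$ as a skewed measure $\mu = \mu(\pmb{\Pi})$ and to quantify the decay produced by Proposition \ref{Rajchman-prop} in terms of iterated logarithms. The broad strategy mirrors Section \ref{mainthm-1-proof-section}, but the growth rates of $\mathcal A$ and $\mathcal B$ must be pinned down explicitly in order to balance the five error terms in \eqref{pre-Rajchman} against the target rate $(\log^{(3)}|n|)^{-1}$.

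First I fix a minimal set $\mathscr{C} \subseteq \mathbb N \setminus \{1,2\}$ with $\overline{\mathscr{C}} = \mathbb N \setminus \{1\}$, enumerated as $\mathscr{C} = \{\mathtt t_1, \mathtt t_2, \ldots\}$, and choose $\mathcal S, \mathcal E$ following Table \ref{table-1}: set $\mathtt s_m := \mathtt t_{m - 2^j + 2}$ and $\varepsilon_m := 1/j$ for $m \in \{2^j - 1, \ldots, 2^{j+1}-2\}$. Set $\mathscr{D}(\mathtt t) := \{0,1\}$ for each $\mathtt t \in \mathscr{C}$ and define $\mathcal D$ via \eqref{s-condition-1}. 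For $\mathcal A, \mathcal B$ I propose $\mathtt a_m := 2^{m^2}$ and $\mathtt b_m := 2m\,\mathtt a_m$. Direct verification then confirms the admissibility requirements \eqref{abs}, \eqref{ambm-take-2}, \eqref{am/bmzero}, together with $\sum_{m \in \mathfrak M(\mathtt t)} \varepsilon_m = \infty$ for every $\mathtt t \in \mathscr{C}$ as in \eqref{s-condition}. By the extension of Proposition \ref{non-normality-prop} noted in item \ref{non-normality-prop-variant} of its remarks, $\mu$-almost every point then lies in $\mathscr N(\cdot, \overline{\mathscr C}) = \mathscr N(\cdot, \mathbb N \setminus \{1\})$, the set of absolutely non-normal numbers.

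For the Fourier estimate I apply Proposition \ref{Rajchman-prop}. With the above parameters, $\mathtt Q_m = \mathtt s_m^{(2m+1)2^{m^2-1}}$, so that $\log^{(2)} \mathtt Q_m \asymp m^2$ and $\log^{(3)} \mathtt Q_m \asymp \log m$. The quantity $\tau_m = \mathtt s_m^{-(2m-1)2^{m^2-1}}$ is super-exponentially small, and $\mathtt N_{m+1}^{-1/2} \lesssim \mathtt s_m^{\mathtt b_m/2} \mathtt s_{m+1}^{-\mathtt a_{m+1}/2}$ decays faster than any power of $m^{-1}$ since $\mathtt a_{m+1}/\mathtt b_m = 2^{2m+1}/(2m) \to \infty$. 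Thus the dominant term in \eqref{pre-Rajchman} is $\varepsilon_m \asymp 1/\log_2 m$, so that for $|\xi| \in (\mathtt Q_m, \mathtt Q_{m+1}] \cap \mathbb N$ with $m$ sufficiently large,
\[ |\widehat \mu(\xi)| \lesssim \frac{1}{\log m} \asymp \frac{1}{\log^{(3)} |\xi|}, \]
which is \eqref{abs-non-normal-decay}.

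The main obstacle is numerical rather than conceptual: calibrating $\mathtt a_m, \mathtt b_m$ so that simultaneously (i) $\tau_m$ and $\mathtt N_{m+1}^{-1/2}$ are dominated by the harmonic quantity $\varepsilon_m = 1/j$, (ii) $\mathtt Q_m$ grows slowly enough that $\log^{(3)} \mathtt Q_m$ stays comparable to $\log m$ rather than outpacing it, and (iii) the admissibility hypotheses of Propositions \ref{non-normality-prop} and \ref{Rajchman-prop} are preserved for all but finitely many $m$. The choice $\mathtt a_m = 2^{m^2}$ sits at this balance, and mild perturbations (e.g.\ $\mathtt a_m = 2^{c m^2}$ with $c \geq 1$) are expected to work just as well. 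Optimality of the $(\log^{(3)}|n|)^{-1}$ rate is not addressed by this plan, and is explicitly left open in the remarks following Theorem \ref{mainthm-1}.
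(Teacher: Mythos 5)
Your proposal is correct and takes essentially the same route as the paper: same enumeration scheme for $\mathcal S, \mathcal E$ via Table~\ref{table-1} with $\varepsilon_m = 1/j$, same invocation of Propositions~\ref{non-normality-prop} (via Remark~\ref{non-normality-prop-variant}, though a minimal $\mathscr C$ with $\mathscr{B}' = \mathscr C$ would let you cite the proposition directly) and~\ref{Rajchman-prop}, same conclusion that $\log^{(3)} \mathtt Q_m \asymp \log m$ makes $\varepsilon_m \asymp 1/\log m$ the dominant term in~\eqref{pre-Rajchman}.

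The only substantive difference is parametric. The paper uses $\mathtt a_m = \mathtt A_0\, m!\sqrt{m}$, $\mathtt b_m = (m+1)!$ for large $m$, prefaced by an explicit piecewise initial segment $\mathtt a_m = 3^m-2$, $\mathtt b_m = 3^m - 1$ for $m \le \mathtt A_0^2$, precisely to force~\eqref{ambm-take-2} from $m=1$ onward; see Lemma~\ref{verification-lemma}, which occupies a full page. Your choice $\mathtt a_m = 2^{m^2}$, $\mathtt b_m = 2m\mathtt a_m$ also yields $\log^{(2)}\mathtt Q_m \asymp m^2$ and hence $\log^{(3)}\mathtt Q_m \asymp \log m$, is arguably cleaner (the paper's $m!\sqrt m$ is not literally an integer), and appears to satisfy~\eqref{abs} and~\eqref{ambm-take-2} without the piecewise fix, since at a block transition $\mathbb H_j \to \mathbb H_{j+1}$ the base $\mathtt s_m$ drops to $\mathtt t_1$ while $\mathtt a_{m+1}/(3\mathtt b_m) = 2^{2m+1}/(6m)$ grows far faster than $\log\mathtt t_{2^j} \lesssim \log m$. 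That said, the phrase ``direct verification then confirms the admissibility requirements'' is the lightest spot in your write-up: this is exactly where the paper expends the most ink, and you would need to actually carry out the check of $\mathtt s_{m-1}^{3\mathtt b_{m-1}} < \mathtt s_m^{\mathtt a_m}$ across block boundaries, for all $m$, rather than assert it. Neither version addresses optimality of the rate, and the paper doesn't either.
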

\subsubsection{Choice of parameter $\pmb{\Pi}$} 
For every $m \geq 1$, let $j = j(m) \in \mathbb N$ be the unique index such that $m \in \mathbb H_j$ where 
\begin{equation} \label{m-and-j}
\mathbb H_j := \bigl\{2^j-1, 2^j, \ldots, 2^{j+1}-2 \bigr\}; \quad \text{ this means } \quad C_0^{-1} \log m \leq j \leq C_0 \log m,   
\end{equation} 
for some absolute constant $C_0 > 0$.  We will apply Propositions  \ref{non-normality-prop} and \ref{Rajchman-prop}, with the following choice of parameters $\pmb{\Pi}$: 
%as specified in Section \ref{preliminary-Pi-choice-section}: 
\begin{itemize}
	\item $\mathscr{B}' = \mathscr{C} := \{\mathtt t_m = m+2 : m \geq 1 \} = \{3, 4, 5, \ldots \}$. Even though $\mathscr{C}$ is not a minimal representation of $\mathscr{B}'$, Proposition \ref{non-normality-prop} still applies; see  Remark \ref{non-normality-prop-variant} on page \pageref{non-normality-prop-variant}.
	%let $\mathscr{C}$ be a minimal representation set of $\mathscr{B}'$ as in \eqref{what-is-C} and satisfy 
	%\begin{align*}
	%	\mathscr{C} = \bigl\{ \mathtt c \in \mathscr{B}'\; :\; \text{if}\; \mathtt b\in \mathscr{B}' \; \text{such that}\; \mathtt b \sim \mathtt c,\;\text{then}\; \mathtt b={\mathtt c}^n\;\text{for some}\; n\in\mathbb{N}\bigr\}.
	%\end{align*}
	%The above minimal set $\mathscr{C}$ is uniquely determined by $\mathscr{B}'$. 
	\item 
	%Obviously $3 \in \mathscr{C}$. We 
	For $j = j(m)$ as in \eqref{m-and-j}, 
	\begin{align}
		&\mathtt s_m := \left\{\begin{aligned}
			&3,\quad 1 \le m \le {\mathtt A_0}^2;\\
			&\mathtt t_{m - 2^j + 2},\quad  m > {\mathtt A_0}^2,
		\end{aligned}\right. \hskip0.8in 
		\varepsilon_m := \frac{1}{j} \; \text{ for } \; m \in \mathbb H_j, \label{pi-1}\\
		&\mathtt a_m := \left\{\begin{aligned}
			&3^m - 2,\quad 1 \le  m  \le {\mathtt A_0}^2;\\
			&\mathtt A_0 m! \sqrt{m},\quad  m > {\mathtt A_0}^2,
		\end{aligned}\right.\hskip0.55in 
		\mathtt b_m := \left\{\begin{aligned}
			&3^m - 1,\quad 1 \le  m  \le {\mathtt A_0}^2;\\
			& (m + 1)!,\quad  m > {\mathtt A_0}^2,
		\end{aligned}\right.\label{pi-2} \\
		&\hskip1.5in \mathscr{D}_m := \{0\} \text{ for all } m \geq 1.\label{pi-3}
	\end{align}
%	\item Set $\mathscr{D}(\mathtt t) := \{0\}$ for any $\mathtt t \in \mathscr{C}$ and define $\mathscr{D}_m$ as in \eqref{s-condition-1}. Then this implies 
%	\begin{align}
%		\mathscr{D}_m = \{0\} \text{ for all } m \geq 1.\label{pi-3}
%	\end{align}
\end{itemize}

%\begin{align}
%&\mathscr{B}' = \{\mathtt t_m = m+2 : m \geq 1 \} = \{3, 4, 5, \ldots \}, \quad \mathcal S = \bigl\{ \mathtt s_m: m \geq 1 \}, \label{pi-1} \\ 
%&\mathtt s_m = \mathtt t_{m - 2^j + 2} \qquad \text{ and } \qquad  \varepsilon_m = \frac{1}{j} \; \text{ for } \; m \in \mathbb H_j, \label{pi-2} \\  
%%&\mathtt a_m = m^{\mathtt A_0}, \qquad \mathtt b_m = m^{\mathtt A_0+1}, \qquad \mathscr {D}_m = \{0\} \text{ for all } m \geq 1.
%&\mathtt a_m = \mathtt A_0 m! \sqrt{m}, \qquad \mathtt b_m = (m+1)!, \qquad \mathscr{D}_m= \{0\} \text{ for all } m \geq 1. \label{pi-3} 
%\end{align} 
\noindent Here $\mathtt A_0 > 0$ is a large absolute integer whose value will be specified in the sequel. 
%The reader should verify that \eqref{pi-2} implies the relation $\mathtt a_m < \mathtt b_m$ for all $m$.
\begin{lemma} \label{verification-lemma} 
The choice of parameters $\pmb{\Pi}$ given by \eqref{pi-1}--\eqref{pi-3} obeys the conditions \eqref{s-condition-plus}--\eqref{ambm-take-2}, required for applying Propositions \ref{non-normality-prop} and \ref{Rajchman-prop}.  
\end{lemma}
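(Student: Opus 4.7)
My plan is to verify the eight conditions \eqref{s-condition-plus}--\eqref{ambm-take-2} in order of increasing difficulty, taking $\mathfrak N(\mathtt t) \equiv 1$ for every $\mathtt t \in \mathscr{C}$ so that \eqref{s-condition-plus} and \eqref{M-and-N} amount to identifying each $\mathtt s_m$ with a single element of $\mathscr{C} = \{3, 4, 5, \ldots\}$. This identification is immediate from \eqref{pi-1}: for $m \leq \mathtt A_0^2$ we have $\mathtt s_m = 3 = \mathtt t_1$, and for $m > \mathtt A_0^2$ we have $\mathtt s_m = \mathtt t_{m - 2^{j(m)} + 2}$ with the subscript lying in $\{1, \ldots, 2^{j(m)}\}$ by the definition of $\mathbb H_{j(m)}$.

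Next I would exhibit the disjoint partition
\[
\mathfrak M(\mathtt t_1) = \{1, \ldots, \mathtt A_0^2\} \cup \{2^j - 1 : 2^j - 1 > \mathtt A_0^2\}, \quad \mathfrak M(\mathtt t_k) = \{2^j + k - 2 : 2^j \geq k,\ 2^j + k - 2 > \mathtt A_0^2\} \text{ for } k \geq 2,
\]
which gives \eqref{s-condition-0}. The divergence \eqref{s-condition} then follows from the harmonic lower bound $\sum\{\varepsilon_m : m \in \mathfrak M(\mathtt t_k)\} \geq \sum_{j \geq \lceil \log_2 k \rceil} j^{-1} = \infty$. Because $n_j \equiv 1$, conditions \eqref{0-and-t-1} and \eqref{s-condition-1} collapse to $\mathscr{D}(\mathtt t) = \mathscr{D}_m = \{0\}$ whenever $\mathtt s_m = \mathtt t$, and the required inclusions $0 \in \{0\} \subsetneq \mathbb Z_{\mathtt t}$ and $\mathtt t - 1 \notin \{0\}$ are trivial since $\mathtt t \geq 3$.

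The substantive work lies in the growth conditions. The inequalities $\mathtt a_m < \mathtt b_m$ are clear in both regimes (the second reduces to $\mathtt A_0 \sqrt{m} < m + 1$, valid for all $m > \mathtt A_0^2$), and $\mathtt a_m / \mathtt b_m = \mathtt A_0 \sqrt{m}/(m+1) \to 0$. For the multiplicative comparison $\mathtt s_{m-1}^{3 \mathtt b_{m-1}} < \mathtt s_m^{\mathtt a_m}$ in \eqref{ambm-take-2}, which already implies the weaker constraint of \eqref{am/bmzero}, I would split into three regimes. When $2 \leq m \leq \mathtt A_0^2$ it reduces to the identity $3^{3(3^{m-1}-1)} < 3^{3^m - 2}$, i.e. $3^m - 3 < 3^m - 2$. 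When $m > \mathtt A_0^2 + 1$, the bound $\mathtt s_{m-1} \leq C_0 m$ coming from \eqref{m-and-j} yields
\[
\log \mathtt s_{m-1}^{3 \mathtt b_{m-1}} \leq 3 m! \log(C_0 m), \qquad \log \mathtt s_m^{\mathtt a_m} \geq \mathtt A_0 m! \sqrt{m} \log 3,
\]
so the required bound reduces to $\mathtt A_0 \sqrt{m} \log 3 > 3 \log(C_0 m)$, valid for all $m > \mathtt A_0^2$ once $\mathtt A_0$ is chosen large.

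The main obstacle is the single transition step $m = \mathtt A_0^2 + 1$, where the two defining formulas for $(\mathtt a_m, \mathtt b_m)$ meet and the argument of the third regime does not apply. Here $\mathtt s_{m-1}^{3 \mathtt b_{m-1}} = 3^{3(3^{\mathtt A_0^2} - 1)}$, while $\mathtt s_m^{\mathtt a_m} \geq 3^{\mathtt A_0 (\mathtt A_0^2 + 1)! \sqrt{\mathtt A_0^2 + 1}}$, so I must verify
\[
\mathtt A_0 (\mathtt A_0^2 + 1)! \sqrt{\mathtt A_0^2 + 1} > 3 (3^{\mathtt A_0^2} - 1).
\]
By Stirling's formula the left side grows roughly as $\mathtt A_0^{2 \mathtt A_0^2}$, eventually dwarfing $3^{\mathtt A_0^2}$; fixing $\mathtt A_0$ large enough to satisfy this one inequality (simultaneously ensuring the linear bound of the third regime) closes the verification.
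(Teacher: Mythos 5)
Your proof is correct and follows essentially the same three-regime skeleton as the paper's: the range $1 \le m \le \mathtt A_0^2$, the transition index $m = \mathtt A_0^2 + 1$, and the tail $m > \mathtt A_0^2 + 1$. You are more thorough than the paper on the bookkeeping conditions \eqref{s-condition-plus}--\eqref{s-condition}, which the paper dismisses as ``easily verifiable from the definitions and left to the reader''; your explicit partition of $\mathfrak M(\mathtt t_k)$ and the harmonic divergence argument are exactly the routine check the authors had in mind, so this is a welcome elaboration rather than a divergence.

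The one place where your route genuinely differs is the tail regime $m > \mathtt A_0^2 + 1$. The paper splits it into two sub-cases according to whether $m-1$ lies in the same block $\mathbb H_j$ as $m$ or in $\mathbb H_{j-1}$, computes $\mathtt s_{m-1}$ and $\mathtt s_m$ explicitly in each case, cancels $m!$ from both sides, and reduces to a clean inequality of the form $n^3 < (n+1)^{\mathtt A_0 \sqrt{m}}$ (resp.\ $(m+2)^3 < 3^{\mathtt A_0 \sqrt{m}}$), from which the concrete value $\mathtt A_0 = 4$ can be read off. You instead take logarithms, use the uniform bound $\mathtt s_{m-1} \leq C_0 m$ (valid since $\mathtt s_{m-1} \leq (m-1)+3$), and reduce to $\mathtt A_0 \sqrt{m} \log 3 > 3 \log(C_0 m)$ in one stroke. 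Your argument is shorter and avoids the case split, at the cost of only asserting ``$\mathtt A_0$ large enough'' rather than pinning down a specific value; the paper's case analysis buys the explicit constant $\mathtt A_0 \geq 4$, which it does not actually use downstream. Both are correct, and the tradeoff is a matter of taste.

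One small remark: you restrict the first regime to $2 \le m \le \mathtt A_0^2$, which is correct for the comparison $\mathtt s_{m-1}^{3\mathtt b_{m-1}} < \mathtt s_m^{\mathtt a_m}$ since the index $m = 1$ involves $\mathtt s_0 = \mathtt b_0 = 1$ by the convention in Section \ref{iteration-subsection}, making the left side equal to $1$; it would be worth a sentence noting this so the reader sees that $m=1$ is not being silently skipped.
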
 
\begin{proof} 
%We need to verify that $\pmb{\Pi}$ given by \eqref{pi-1}--\eqref{pi-3} obeys . 
%since it is obvious that $\pmb{\Pi}$, given by \eqref{pi-1}--\eqref{pi-3}, satisfies 
The criteria \eqref{s-condition-plus}--\eqref{s-condition} are easily verifiable from the definitions and are left to the reader. 
%In fact, $\mathfrak N(\mathtt t)$ is the constant sequence 1 for every $\mathtt t \in \mathscr{C}$. The condition \eqref{s-condition} is easily verifiable from the definition of $\varepsilon_m$ in \eqref{pi-1}. 
Turning to \eqref{am/bmzero} and \eqref{ambm-take-2}, we see that $\mathtt a_m < \mathtt b_m$ holds for all $m$. Further, 
\[ \frac{\mathtt a_m}{\mathtt b_m} = \frac{\mathtt A_0\sqrt{m}}{m+1} \rightarrow 0 \text{ as }  m \rightarrow \infty, \; \text{ which confirms the last condition in \eqref{am/bmzero}.} \]
%The primary requirements \eqref{param-seq}-\eqref{rmsm-assumption} for a skewed measure are easily verifiable from the description \eqref{pi-1}-\eqref{pi-3} of $\pmb{\Pi}$. Following the same steps as in Section \ref{mainthm-1-proof-section} shows that $\pmb{\Pi}$ satisfies the conditions 
%\eqref{0-and-t-1}-\eqref{s-condition-1}. 
%The condition \eqref{s-condition}, which is a requirement for Proposition \ref{non-normality-prop}, has already been verified in Section \ref{mainproof-section}. It is clear that 
%Thus $\pmb{\Pi}$ obeys the criteria for Proposition \ref{non-normality-prop}. 
To complete the proof, we need to verify that the second relation in \eqref{ambm-take-2} holds, which would also imply the second inequality in \eqref{am/bmzero}. 
%so that Proposition \ref{Rajchman-prop} can also be applied. 
%This involves verifying that 
%\begin{equation} \
%\mathtt s_{m-1}^{3 \mathtt b_{m-1}} < \mathtt s_m^{\mathtt a_m} \quad \text{ for all } m \geq 1. 
%\end{equation}    
\vskip0.1in 
\noindent 
For $1\le m \le {\mathtt A_0}^2$, let us note that $\mathtt a_{m} = 3^{m} - 2 > 3(3^{m-1} - 1) = 3\mathtt b_{m-1}$, hence  
%it is obvious that $\mathtt a_m < \mathtt b_m$ and 
\[\mathtt s_{m-1}^{3\mathtt b_{m-1}} = 3^{3\mathtt b_{m-1}} < 3^{\mathtt a_{m}} = \mathtt s_{m} ^{\mathtt a_{m}}, \text{ i.e. the second condition of \eqref{ambm-take-2} holds in this case.} \] For $m = {\mathtt A_0}^2 + 1=: \mathtt B + 1$, a similar computation shows  
%$\mathtt a_{\mathtt B} < \mathtt b_{\mathtt B}$ and 
\begin{align*}
	\mathtt s_{m-1}^{3\mathtt b_{m-1}} = \mathtt s_{{\mathtt B}} ^{3 \mathtt b_{\mathtt B}} = 3 ^{3(3^{\mathtt B} - 1)}
	< 3 ^ { \mathtt A_0 (\mathtt B + 1)!\sqrt{\mathtt B + 1 } }
	\le \mathtt s_{\mathtt B +1} ^ { \mathtt A_0 (\mathtt B + 1)!\sqrt{\mathtt B + 1 } } = \mathtt s_{\mathtt B +1} ^ {\mathtt a_{\mathtt B +1}} = \mathtt s_m^{\mathtt a_m},
\end{align*} 
where the first inequality holds if $\mathtt A_0 \geq 4$. For $m > {\mathtt A_0}^2+1$, 
%Without loss of generality, let us assume that $m \in \mathbb H_j$ for some $j \in \mathbb N$. 
we verify the second inequality of \eqref{ambm-take-2} in two cases, depending on the block $\mathbb H_k$ where $(m-1)$ lies. Let $j=j(m)$ be as in \eqref{m-and-j}.  
\begin{itemize} 
\item {\em{Case 1: }} First suppose $m-1 \in \mathbb H_j$. Then it follows from \eqref{pi-1} and \eqref{pi-2} that 
\[ \mathtt s_{m-1} = \mathtt t_{m-2^j + 1} = m - 2^j + 3, \quad \mathtt s_m = \mathtt t_{m - 2^j + 2} = m - 2^j + 4.\] The inequality of interest in \eqref{ambm-take-2} therefore transforms to 
\begin{align*} 
&\mathtt s_{m-1}^{3 \mathtt b_{m-1}} = (m - 2^j + 3)^{3m!} < (m - 2^j +4)^{\mathtt A_0 m!\sqrt{m}} = \mathtt s_{m}^{\mathtt a_m}, \\  &\text{i.e.,} \; (m - 2^j + 3)^3 < (m - 2^j +4)^{\mathtt A_0 \sqrt{m}}.
\end{align*}
Setting $ n = m- 2^j + 3$, this amounts to proving that  
\begin{equation} \label{eqn-n} n^3 < (n + 1)^{\mathtt A_0 \sqrt{n + 2^j-3}} \; \text{ for all } \; n \in \bigl\{2, 3, \ldots, 2^j+1 \bigr\}. \end{equation}  
Since $n - 3 + 2^j = m \geq 2^j - 1\geq 1$ for $j \geq 1$, we see that \eqref{eqn-n} holds by choosing $\mathtt A_0 \geq 4$. This completes the proof of \eqref{ambm-take-2} in case 1. 
\item {\em{Case 2: }} Next suppose that $m-1 \notin \mathbb H_j$. This means that $m-1 = 2^{j} - 2 \in \mathbb H_{j-1}$, so
\[ \mathtt s_{m-1} = \mathtt t_{2^{j-1}} = 2^{j-1}+2 \quad \text{ and } \quad \mathtt s_m = \mathtt t_1 = 3. \]  
On one hand, $\mathtt s_{m-1}^{3\mathtt b_{m-1}} = (2^{j-1} + 2)^{3m!} < (2^j + 1)^{3m!} = (m+2)^{3 m!}$, whereas $\mathtt s_m^{\mathtt a_m} = 3^{\mathtt A_0 \sqrt{m} m!}$. The second inequality \eqref{ambm-take-2} therefore follows from 
\[ (m+2)^{3m!} < 3^{\mathtt A_0 \sqrt{m} m!}, \text{ which is equivalent to } (m+2)^3 < 3^{\mathtt A_0 \sqrt{m}}.\]
This holds for all $m > \mathtt A_0^2+1$, provided $\mathtt A_0 \geq 4$. 
\end{itemize}
This verifies the hypotheses of both Propositions \ref{non-normality-prop} and \ref{Rajchman-prop} for $\pmb{\Pi}$ as in \eqref{pi-1}--\eqref{pi-3}.  
\end{proof} 
\subsubsection{Proof of Proposition \ref{abs-non-normal-prop}} 
\begin{proof} 
Let $\pmb{\Pi}$ be the choice of parameters given by \eqref{pi-1}-\eqref{pi-3}. Lemma \ref{verification-lemma} justifies the application of Propositions  \ref{non-normality-prop} and \ref{Rajchman-prop} to $\mu = \mu(\pmb{\Pi})$. Since $\overline{\mathscr{B}'} = \mathbb N \setminus \{1, 2\}$, Proposition \ref{Rajchman-prop} implies that $\mu$-almost every point is absolutely non-normal.  On the other hand, applying Proposition \ref{Rajchman-prop} to $\mu$, we obtain from \eqref{pre-Rajchman} that 
\begin{align} 
\bigl| \widehat{\mu}(\xi) \bigr| &\leq C(\pmb{\Pi}) \bigl[\varepsilon_m + \varepsilon_{m+1} + \tau_m + \tau_{m+1} + \mathtt N_m^{-\frac{1}{2}}\bigr] \leq \frac{C(\pmb{\Pi})}{j} \label{mu-decay-step-1} \\ &\text{ for } |\xi| \in \bigl(\mathtt Q_m, \mathtt Q_{m+1} \bigr], \; m \in \mathbb N, \; j=j(m)\;\text{is as in \eqref{m-and-j} such that}\; \mathbb H_j \ni m. \nonumber 
\end{align} 
The quantities $\mathtt N_m, \mathtt Q_m$ and $\tau_m$ are defined  as in \eqref{Qtau}. The last inequality in \eqref{mu-decay-step-1} is derived from the following estimates: $\varepsilon_{m+1} \le \varepsilon_m = 1/j$ by \eqref{pi-1},   
\begin{align*} 
&\mathtt N_m \geq \frac{1}{2} \frac{\mathtt s_m^{\mathtt a_m}}{\mathtt s_{m-1}^{\mathtt b_{m-1}}} \geq \frac{1}{2} \mathtt s_{m}^{2\mathtt a_m/3} \geq \frac{2^{m!}}{3}, \; \text{ while for $m > \mathtt A_0^2$, } \; \tau_m = \mathtt s_m^{(\mathtt a_m-\mathtt b_m)/2} \leq 3^{-m!/2}, \\ 
&\text{as a result of which } \tau_m + \tau_{m+1} + \mathtt N_m^{-\frac{1}{2}} \leq \frac{C(\pmb{\Pi})}{j}.   
\end{align*}  
In order to estimate $j^{-1}$ in terms of $|\xi|$, we observe from \eqref{mu-decay-step-1} and \eqref{Qtau} that 
\begin{align} 
&\mathtt a_{m+1} + \mathtt b_{m+1} = \mathtt A_0 (m+1)! \sqrt{m+1} + (m+2)! \leq m^{\mathtt A_0 m}; \text{ this leads to }\\
&|\xi|^2 \leq \mathtt Q_{m+1}^2 = \mathtt s_{m+1}^{(\mathtt a_{m+1} + \mathtt b_{m+1})} \leq m^{m^{\mathtt A_0 m}}, \; \text{ or } \;  \log^{(3)}|\xi| \leq \mathtt A_0 \log m \leq \mathtt A_0 j.  \label{mu-decay-step-2} 
%&\log |\xi| \leq \mathtt A_0 m^{2\mathtt A_0} \log(2m), \;\text{ or } \; \log \log |\xi| \leq \bigl[ \mathtt A_0 \log m  + \log \log (2m) + \log \mathtt A_0 \bigr]  \leq \mathtt A_0 j. \label{mu-decay-step-2} 
\end{align}
The second inequality in \eqref{mu-decay-step-2} follows from the definition of $\mathtt s_m$ in \eqref{pi-1} for $m > \mathtt A_0^2$: since $m \in \mathbb H_j$ implies $m+1 \in \mathbb H_j$ or $\mathbb H_{j+1}$, we deduce that 
\[ \mathtt s_{m+1} = \mathtt t_{m - 2^k + 2} = m+4 - 2^k \text{ with  $k = j$ or $j+1$}, \] 
In either event, one has the bound $\mathtt s_{m+1} \leq m$ for $j \geq 2$. The last step of \eqref{mu-decay-step-2} uses the relation \eqref{m-and-j} between $m$ and $j$. Substituting the lower bound on $j$ from \eqref{mu-decay-step-2} into the right end of the inequality in \eqref{mu-decay-step-1}, we reach \eqref{abs-non-normal-decay}. 
\vskip0.1in 
\noindent 
\end{proof}   
\subsection{Measures of prescribed normality without the Rajchman property} \label{Pollington-not-Rajchman-section}
In Section \ref{Pollington-not-Rajchman-section}, we stated that the Cantor-like set constructed in \cite{p81} using the elementary operation $\mathscr{P}$ cannot support a Rajchman measure. We prove this statement here. 
\vskip0.1in 
\noindent Let us recall from \cite[p.\,38]{KL-book} that a set $E \subset [0,1)$ is an {\em{$H$-set}} if there exists a strictly increasing sequence of positive integers $\{ n_k : k \geq 1 \}$ and a nonempty interval $I \subset [0,1]$ such that 
\begin{equation}  \label{Hset-def} 
(n_k E) \cap I = \emptyset \text{ for all $k \in \mathbb{N}$,} \; \; \text{where} \; \;  nE  := \big\{\{nx\}\; : x \in E \big\}. 
\end{equation} 
Here $\{x\}$ denotes the fractional part of $x$. In 1922, Rajchman (\cite[p.\,39]{KL-book}) showed that every $H$-set is a set of uniqueness, which further implies that an $H$-set cannot support a Rajchman measure. In what follows, we will show that the set $E$ constructed in \cite{p81} is an $H$-set. 
\vskip0.1in
\noindent  Let $(\mathscr{B}, \mathscr{B}')$ be a maximally compatible pair of bases of $\mathbb{N} \setminus \{1\}$.
%To this end, we first recall the main result of \cite{p81} and some notations used there. In \cite{p81}, Pollington constructed a set $E \subset \mathscr N(\mathscr{B}, \mathscr{B}')$, and showed its Hausdorff dimension equals one. 
The construction of $E$ proceeds in two stages.  First, a sequence of nested Cantor-type sets 
\begin{equation} [0,1]\supset J_1 \supset J_2 \supset \cdots  \text{ is defined; it is shown that }  
F := \bigcap_{i=1}^{\infty} J_i \subseteq \mathscr{N}\bigl( \cdot, \mathscr{B}' \bigr). \label{F-Hset} 
\end{equation}
%any $\xi \in \cap_{i=1}^\infty J_i$ is nonnormal to each base $b \in \mathscr{B}'$. 
Second, a new sequence of nested sets $K_0=[0,1] \supset K_1 \supset K_2 \supset \cdots$ is constructed, with $K_i \subset J_i$ for each $i\in\mathbb{N}$.
It is shown that 
\[ E := \bigcap_{i=1}^{\infty} K_i \subseteq \mathscr{N}(\mathscr{B}, \mathscr{B}'). \] 
%any $\xi \in E:=\cap_{i=1}^\infty K_i$ is normal to each base $b \in \mathscr{B}$. 
To prove that the set $E$ thus constructed is an $H$-set, it is enough to show its superset $F$ defined in \eqref{F-Hset} is an $H$-set. 
\vskip0.1in
\noindent Let $\{s_i: i \in \mathbb N\}$ be an enumeration of $\mathscr{B}'$. The sets $J_i$  constructed in \cite{p81} depend on parameters 
%$\{a_m\}, \{b_m\}$ and $\{s(m)\}$, where 
\[ s(m):=s_{h(m)} \; \; \text{ with } \; \; h(m):=\inf\{h\in\mathbb{N}:\ m \not\equiv 0 \text{ mod } {2^h} \}, \] 
and strictly increasing positive integers $\{a_m\},\{b_m\}$ that are determined by $s(m)$. On one hand, this means that 
%and  and $\{s_i\}_{i\in\mathbb{N}}$ is an enumeration of elements in $\mathscr{B}'$.
%For the precise definition of $J_i$, we refer the reader to \cite{p81}. By the construction of $J_i$ and the observation of 
$s(m)=s_1$ for every odd integer $m \geq 1$. On the other, the construction imposes certain constraints on the digits of $\xi \in F$ in base $s_1$; namely,  
\begin{align*} &\text{ if } \xi=\sum_{j=1}^\infty {d_j}s_1^{-j} \in F \text{ with } d_j \in \mathbb Z_{s_1}, \text{ then } d_j \neq s_1-1 
%d_j\in\{0,1,\cdots, s_1-2\}
%\quad d_j\in\{0,1,\cdots, s_1-1\},
 \text{ for all } j \in \{a_{m}+1, \ldots, b_{m} \}, \text{ $m$ odd; } \\
 % with $k\in\mathbb{N}\cup\{0\}$, $$. 
% Defining $n_m:=s_1^{a_{m}}$, it follows that 
 &\text{as a result, }  \bigl\{s_1^{a_m}\xi \bigr\} =  \sum_{j=a_{m}+1}^\infty {d_j}{s_1^{-j+a_{m}}} \leq  (s_1-2) \sum_{j=1}^{\infty} s_1^{-j} =  \frac{s_1-2}{s_1-1}, \\ 
&\text{which means that } \; \; (s_1^{a_m} F)\cap \bigl(\frac{s_1-2}{s_1-1}, 1\bigr] =\emptyset \text{ for all odd $m$}. \end{align*} 
Comparing this with \eqref{Hset-def} proves that $F$ is an $H$-set. In the special case when $\mathscr{B}'=\{s\}$ for some $s \in \mathbb N \setminus \{1\}$, the set $F$ is just a classical Cantor set given by 
\begin{align*}
	F =\Big\{\xi\in[0,1]:\ \xi=\sum_{j=1}^\infty\frac{d_j}{s^j},\; d_j\in\{0,1,\cdots,s-2\}\Big\}.
\end{align*}

\newpage
\part{Measures seeking normality} \label{part-normality} 
This is the final part of a three-part series. In Part \ref{intro-part}, we introduced the definition of a skewed measure (Sections \ref{elem-op-section}-\ref{iteration-section}) and stated some of its properties (Section \ref{skewed-measure-properties-section}).  Here we investigate one of these properties, namely the feature of normality: what are the bases in which almost every point in the support of a skewed measure is normal? 
%the extent to which this property is governed by the choice of parameters $\pmb{\Pi}$, and in particular by the selection of restricted digits $\mathcal{D}$. 
%The material from Sections \ref{elem-op-section}-\ref{skewed-measure-properties-section} is a pre-requisite for this part. 
\vskip0.1in 
\noindent In terms of specifics, the goal in this part is to prove Proposition \ref{normal-prop}, the remaining unresolved piece in the statements of Theorems \ref{mainthm-1} and \ref{mainthm-2}. En route, we record another result on normality that holds under weaker assumptions, and in particular recovers a version of the result in \cite{PZ-1}. 
%This has other consequences for normality of skewed measures, which we will mention at the end if this section, but first 
%Let us state these requirements. 
%Given a sequence $\{\kappa_m\} \subseteq (0,1)$ of positive numbers, let $\{\mathtt K_m : m \geq 1\} \subseteq \mathbb N$ be an integer sequence obeying the growth condition \eqref{Km-choice}. Suppose that $\mu = \mu(\pmb{\Pi})$ is a skewed measure associated with a choice of $\pmb{\Pi}$ satisfying  all the hypotheses of Proposition \ref{normal-prop}, except possibly \eqref{normality-digit-01}.  
%{\red specifically satisfying
%\begin{equation} \label{weakened-normal-hypotheses}
%1 \leq \mathtt a_m < \mathtt a_{m+1}, \quad \mathtt a_m = \mathtt s_m^{\mathtt K_m^2}, \quad \mathtt b_m = 2m \mathtt a_m, \quad 0 \in \mathscr{D}_m \quad \text{ for all } m \geq 1.  
%\end{equation}  } 
%{\color{blue} Comment:  I think the above red part can be deleted, as these coditions have been stated in \eqref{normality-hypotheses}. }
%The restricted digit set $\mathscr{D}_m$ is allowed to be the singleton $\{0\}$. 
%\vskip0.1in
Together with Proposition \ref{normal-prop}, the other main result of this part of the article is the following.
\begin{proposition} \label{normality-special-prop}
For a non-empty collection of bases $\mathscr{B}' \subseteq \mathbb N \setminus \{1\}$, let $\mathscr{C}$ be as in \eqref{what-is-C} and let $\mathcal S = \{\mathtt s_m : m \geq 1\} \subseteq \mathbb N \setminus \{1,2\}$ be an ordered sequence of (not necessarily distinct) bases obeying \eqref{s-condition-plus}. Then there is a sequence $\{\kappa_m=\kappa(\mathtt s_m):\ m\geq 1\} \subseteq (0,1)$ as follows.   
\vskip0.1in 
\noindent Suppose that $\mu = \mu(\pmb{\Pi})$ is the skewed measure whose parameter $\pmb{\Pi} = (\mathcal S, \mathcal A, \mathcal B, \mathcal D, \mathcal E)$ satisfies the assumptions: $\mathcal A, \mathcal B$ obey \eqref{Km-choice} and \eqref{normality-hypotheses} with the above-mentioned choice of $\{\kappa_m\}$, %and any , and any sequence of digit sets $\mathcal D = \{\mathscr{D}_m \}$ obeying 
\begin{equation*} 
\mathcal E = \{\varepsilon_m \} \subseteq (0, 1), \quad  0 \in \mathscr{D}_m \subsetneq \mathbb Z_{\mathtt s_m} \; \; \text{ for all } m \geq 1.
\end{equation*} 
Then $\mu$-almost every point $x$ is $b$-normal for every $b \in \mathscr{B}_1$, where 
\begin{equation} \label{bases-B1-def}
\mathscr{B}_1 := \bigl\{b \in \mathbb N \setminus \{1,2\} : \text{ for every } \mathtt s \in \mathscr{C}, \exists \,\text{ a prime } \, \mathtt p \ni \;  \mathtt p \nmid b \; \text{ but } \; \mathtt p \mid \mathtt s \bigr\}.
\end{equation} 
\end{proposition}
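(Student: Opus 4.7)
The plan is to follow the blueprint outlined in Section \ref{normality-overview-section}: reduce the pointwise normality statement to a Fourier-coefficient summability criterion via the Davenport--Erd\H{o}s--LeVeque (DEL) lemma (Lemma \ref{DEL-lemma}, leading to Proposition \ref{DEL-difference-prop}), and then verify this criterion using refined exponential-sum estimates underpinned by Schmidt's number-theoretic lemmas from \cite{s60}. Fixing $b \in \mathscr{B}_1$ and $h \in \mathbb Z \setminus \{0\}$, the DEL criterion reduces $b$-normality of $\mu$-almost every $x$ to the finiteness, for every such $(b, h)$, of a sum of the form
\[
\sum_{N \geq 1} \frac{1}{N^{3}} \sum_{n, n' \leq N} \widehat{\mu}\bigl(h(b^{n} - b^{n'})\bigr),
\]
or the quantitative variant furnished by Proposition \ref{DEL-difference-prop}. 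The entire problem is thus reduced to the estimation of $\widehat{\mu}(\xi)$ along the lacunary set of frequencies $\xi = h(b^{n} - b^{n'})$.

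The second step is to localize and factorize. For each such $\xi$, locate the unique index $m = m(\xi)$ with $|\xi| \in (\mathtt{Q}_{m}, \mathtt{Q}_{m+1}]$. The representation of $\widehat{\varphi}_{\mathtt{M}}(\xi)$ in \eqref{Fourier2}, combined with Lemmas \ref{approximation-lemma} and \ref{Psi-bound-lemma}, dominates $|\widehat{\mu}(\xi)|$ essentially by the factor $|\mathfrak{B}_{m}(\xi)|$ (or $|\mathfrak{G}_{m}(\xi)|$) at the critical scale; the difference between $\Gamma_{\mathtt{M}}$ and $\Delta_{\mathtt{M}}$ is absorbed thanks to the super-rapid growth \eqref{Km-choice} of $\mathcal{A}, \mathcal{B}$.

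The third step, which is the crux, extracts arithmetic cancellation from $\mathfrak{B}_{m}(\xi)$. Write $\mathtt{s}_{m} = \mathtt{t}^{k}$ with $\mathtt{t} \in \mathscr{C}$, and invoke the hypothesis $b \in \mathscr{B}_{1}$ to produce a prime $\mathtt{p}$ with $\mathtt{p} \mid \mathtt{t}$ but $\mathtt{p} \nmid b$. A Schmidt-type lemma (Lemma \ref{Schmidt-lemma-extremal-digits}, quantified by the constant $\kappa_{m} = \kappa(\mathtt{s}_{m})$) then asserts that a positive proportion $\kappa_{m}$ of the digits of $h(b^{n} - b^{n'})$, expressed in base $\mathtt{s}_{m}$ over the block of indices $\{\mathtt{a}_{m} + 1, \ldots, \mathtt{b}_{m}\}$, must be \emph{non-extremal}, i.e.\ neither $0$ nor $\mathtt{s}_{m} - 1$. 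Combined with the decomposition $\mathfrak{B}_{m} = \varepsilon_{m} \mathfrak{B}_{m}^{\ast} + (1 - \varepsilon_{m}) \mathfrak{C}_{m}$ from \eqref{def-B} and the vanishing of $\mathfrak{C}_{m}$ on non-extremal digit blocks (Lemma \ref{BC-lemma}), this yields geometric decay of $|\mathfrak{B}_{m}(\xi)|$ in $\kappa_{m}(\mathtt{b}_{m} - \mathtt{a}_{m})$. Finally, summing over scales and using the sparsity of $\{h(b^{n} - b^{n'}) : n, n' \leq N\}$, the required DEL summability is verified in the spirit of Proposition \ref{DEL-difference-prop}.

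The principal obstacle lies in this third step. Under the weaker hypothesis $0 \in \mathscr{D}_{m}$ alone, the favoured digit set could be as thin as $\{0\}$, and the only arithmetic lever available to force non-degeneracy of the digits of $h(b^{n} - b^{n'})$ in base $\mathtt{s}_{m}$ is precisely the existence of a prime dividing $\mathtt{s}_{m}$ but not $b$ --- that is, the $\mathscr{B}_{1}$ condition. If this condition fails, then $b$ and $\mathtt{t}$ share the same set of prime divisors, and nothing rules out that powers of $b$ in base $\mathtt{t}$ exhibit long stretches of zeros in the relevant digit window, which would leave $|\mathfrak{B}_{m}(\xi)|$ of order one and destroy the summability. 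The stronger hypothesis $\{0, 1\} \subseteq \mathscr{D}_{m}$ in Proposition \ref{normal-prop} circumvents this obstruction through a sharper variant of the same Schmidt-type argument, and consequently extends the conclusion to all of $\mathscr{B} \supseteq \mathscr{B}_{1}$.
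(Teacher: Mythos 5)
Your opening two steps correctly mirror the paper's architecture: reduce via the DEL criterion (Lemma~\ref{DEL-lemma}, Corollary~\ref{DEL-corollary}) to the summability condition of Proposition~\ref{DEL-difference-prop}, then telescope $\widehat{\varphi}_m - \widehat{\varphi}_{m-1} = \mathtt u_m + \mathtt v_m$ and control the pieces. But your third step misidentifies the source of cancellation, and this is not a cosmetic point: it is precisely what makes Proposition~\ref{normality-special-prop} true under the weak hypothesis $0 \in \mathscr D_m$.

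You propose to extract the decisive decay from $\mathfrak B_m$, via the decomposition $\mathfrak B_m = \varepsilon_m \mathfrak B_m^{\ast} + (1-\varepsilon_m)\mathfrak C_m$ and geometric decay of $|\mathfrak B_m^{\ast}|$, $|\mathfrak C_m|$ over the digit window $\{\mathtt a_m+1, \ldots, \mathtt b_m\}$. This cannot work here. If $\mathscr D_m = \{0\}$ (allowed under $0 \in \mathscr D_m \subsetneq \mathbb Z_{\mathtt s_m}$), then $\mathbb L_m^{\ast} = \{0\}$ and $\mathfrak B_m^{\ast} \equiv 1$; the remark following Lemma~\ref{restricted-lemma} records exactly this degeneracy. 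Then $|\mathfrak B_m(\xi)| \geq \varepsilon_m - (1-\varepsilon_m)|\mathfrak C_m(\xi)|$ gives no decay below $\varepsilon_m$, and the resulting DEL bound $\mathtt c_m \gtrsim \varepsilon_m$ is not summable: indeed the assumption \eqref{s-condition} requires $\sum_{m \in \mathfrak M(\mathtt t)} \varepsilon_m = \infty$.

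The paper's actual mechanism in the $\mathscr B_1$ case (Section~\ref{X3-estimation-section}) pulls the geometric decay out of the \emph{first-step} factor $\mathfrak A_m$, not $\mathfrak B_m$. Corollary~\ref{v-alt-cor} bounds $|\mathtt v_m(\xi)| \leq 2|\mathfrak A_m(\xi)|$ and then, via \eqref{A-alt-est}, expresses $\mathfrak A_m$ through the factors $\mathfrak g(\xi, k; \mathtt s_m, \mathbb Z_{\mathtt s_m})$ for $k \in \{\mathtt K_m+1, \ldots, \mathtt a_m\}$. The digit set here is $\mathbb Z_{\mathtt s_m}$, which always contains $\{0, 1\}$, so Lemma~\ref{restricted-lemma} applies unconditionally and the quantity $\underline{\mathtt J}(\xi; \mathtt s_m, \mathtt K_m, \mathtt a_m)$ counting non-extremal consecutive digit pairs on $\{\mathtt K_m, \ldots, \mathtt a_m - 1\}$ drives the decay --- note the window $\{\mathtt K_m,\ldots, \mathtt a_m-1\}$, not $\{\mathtt a_m+1,\ldots,\mathtt b_m\}$. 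The $\mathscr B_1$ hypothesis (existence of a prime $\mathtt p_1 \mid \mathtt s_m$, $\mathtt p_1 \nmid b$) is used for three structural inputs, none of which is the one you describe: it gives $(b^u)_{\mathtt s_m} = 1$ so the frequency $hb^u(b^v-1)$ has no $\mathtt s_m$-power absorbed; Lemma~\ref{I1-lemma} then bounds the small set of $v$ with $\mathtt p_1^{\mathtt K_m} \mid (b^v-1)$ (controlling $\mathcal X_1$); and Lemma~\ref{Schmidt-mult-indep-lemma}/\ref{E-cardinality-lemma} bounds the small set of $u$ producing non-generic digit strings (controlling $\mathcal X_2$), leaving $\mathcal X_3$ to the $\mathfrak A$-decay. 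Your closing paragraph anticipates the obstruction qualitatively, but the resolution you propose --- that $\mathscr B_1$ forces non-degeneracy of digits in the $\mathfrak B$-window so that $\mathfrak B_m$ decays --- is not the mechanism and would not close the argument. The switch from the $\mathfrak B$-window to the $\mathfrak A$-window is the essential idea you are missing.
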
 
%{\color{blue} Comment: I think the hypotheses in Propositions \ref{normal-prop} and \ref{normality-special-prop} should be the same except for $\{0,1\}\subseteq \mathscr{D}_m$ in Proposition \ref{normal-prop}, but $0\in \mathscr{D}_m$ in Proposition \ref{normality-special-prop}.}
\noindent {\em{Remarks: }} 
\begin{enumerate}[1.]
\item The only difference in Propositions \ref{normal-prop} and \ref{normality-special-prop} is in their hypotheses for $\mathcal D$.  The former requires $\{0,1\}\subseteq \mathscr{D}_m$, the latter imposes the weaker restriction $0\in \mathscr{D}_m$.
\item Choosing $\mathtt s_m = 2$ and $\mathscr{D}_m = \{0 \}$ for all $m$ recovers a version of \cite{PZ-1}. There the Rajchman measure $\nu_{\text{L}}$ constructed by Lyons was shown to be normal in every base of $\mathscr{B}_1$, namely the set of odd integer bases $\geq 3$.  
\item Similarly, given any non-trivial partition of the primes, say $\mathcal P_1, \mathcal P_2$, one can use Propositions \ref{non-normality-prop} and \ref{normality-special-prop} with $\mathscr{D}_m \equiv \{0\}$ to find a skewed Rajchman measure that is normal with respect to $\mathcal P_1$ and non-normal with respect to $\mathcal P_2$.  
\end{enumerate} 
\section{A sufficient condition for $b$-normality}  \label{normality-overview-section} 
\subsection{Normality and distribution modulo 1} \label{unif-dist-section}
There are several equivalent formulations of normality. One of the earliest such characterizations involves the notion of uniformly distributed sequences. We briefly recall the definition:  a sequence $(x_n)_{n \geq 0}$ is said to be {\em{uniformly distributed modulo 1}} if for every $u, v \in [0, 1)$, $u < v$, the following condition holds:
\[ \lim_{N \rightarrow \infty} \frac{1}{N} \# \Bigl\{0 \leq n \leq N-1: \{x_n \} \in [u, v) \Bigr\} = v - u, \qquad \{x\} := x - \lfloor x \rfloor. \]
A classical theorem known as Weyl's criterion \cite[Theorem 1.2]{b12} states that $(x_n)_{n \geq 1}$ is uniformly distributed modulo 1 if and only if 
\begin{equation} \label{Weyl}
\lim_{N \rightarrow \infty} \frac{1}{N} \sum_{n=0}^{N-1} e(h x_n)  = 0 \quad \text{ for all}\; h \in \mathbb{Z}\setminus\{0\}. 
\end{equation}   
The following result, proved by Wall \cite{w49} in his Ph.D. thesis, connects normality with uniform distribution mod one; a proof can also be found in \cite[Theorem 4.14]{b12}. 
\begin{lemma}[\cite{w49}, {\cite[Theorem 4.14]{b12}}] \label{Weyl-lemma} 
Let $b \geq 2$. Then a real number $\xi$ is normal to base $b$ if and only if $(\xi b^n)_{n \geq 1}$ is uniformly distributed modulo 1. In view of \eqref{Weyl}, this is equivalent to the condition:
\begin{equation} \label{Weyl-2} 
\lim_{N \rightarrow \infty} \frac{1}{N} \sum_{n=0}^{N-1} e(h \xi b^n)  = 0 \quad \text{ for all}\; h \in \mathbb{Z}\setminus\{0\}.
\end{equation}  
\end{lemma}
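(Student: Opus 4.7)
The second equivalence asserted in Lemma \ref{Weyl-lemma} (between uniform distribution of $(\xi b^n)_{n \geq 1}$ modulo 1 and the exponential sum condition \eqref{Weyl-2}) is an immediate consequence of Weyl's criterion as recalled in \eqref{Weyl}, so no work is needed there. My plan is therefore to focus on the first equivalence: $\xi$ is $b$-normal if and only if $(\xi b^n)_{n \geq 1}$ is uniformly distributed modulo 1. The key conceptual step is the translation between digit patterns of $\xi$ in base $b$ and the location of $\{\xi b^n\}$ within $b$-adic subintervals of $[0,1)$.

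First, I would make this translation explicit. Writing $\xi = \lfloor \xi \rfloor + \sum_{j=1}^{\infty} d_j b^{-j}$ with $d_j \in \mathbb Z_b$, a direct computation gives
\begin{equation*}
\{\xi b^n\} \;=\; \sum_{j=1}^{\infty} d_{n+j}\, b^{-j}.
\end{equation*}
Hence, for any $k \geq 1$ and any $u \in \{0, 1, \ldots, b^k - 1\}$ with base-$b$ representation $(e_1, \ldots, e_k)$, the event $\{\xi b^n\} \in [u b^{-k}, (u+1) b^{-k})$ is equivalent to $(d_{n+1}, \ldots, d_{n+k}) = (e_1, \ldots, e_k)$, modulo the measure-zero, single-orbit issue when the tail digits are eventually $b-1$ (which can be ignored since such $\xi$ are countable and can be dismissed at the outset).

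Next, I would carry out the two directions of the equivalence. For the forward direction, assume $\xi$ is $b$-normal. For each $b$-adic interval $[u b^{-k}, (u+1) b^{-k})$, the identification above combined with the defining relation \eqref{b-normal} yields
\begin{equation*}
\lim_{N \to \infty} \frac{1}{N}\# \Bigl\{\,0 \leq n \leq N-1 : \{\xi b^n\} \in \bigl[u b^{-k}, (u+1) b^{-k}\bigr) \Bigr\} \;=\; b^{-k}.
\end{equation*}
Since finite unions of half-open $b$-adic intervals approximate any subinterval $[u, v) \subseteq [0,1)$ from inside and outside to arbitrary precision, a sandwich argument promotes this to the uniform distribution condition for all intervals. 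The reverse direction is essentially the same argument read backwards: uniform distribution, specialized to the $b$-adic intervals $[u b^{-k}, (u+1) b^{-k})$, delivers the frequency condition \eqref{b-normal} for each $k$-block $(e_1, \ldots, e_k) \in \mathbb Z_b^k$.

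None of the steps is technically demanding; the main obstacle is bookkeeping around the non-uniqueness of base-$b$ expansions at $b$-adic rationals. This is handled by observing that such rationals form a countable set and are therefore irrelevant for both the frequency counts in \eqref{b-normal} and the equidistribution limit. With this caveat in place, the equivalence reduces cleanly to the elementary digit-interval correspondence described above, which is exactly the content of Wall's original argument \cite{w49} (see also \cite[Theorem 4.14]{b12}).
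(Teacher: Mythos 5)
The paper does not prove Lemma \ref{Weyl-lemma}; it cites it directly to Wall \cite{w49} and Bugeaud \cite[Theorem 4.14]{b12} and uses it as a black box. Your sketch correctly reconstructs the standard argument that those references give: the identity $\{\xi b^n\} = \sum_{j \geq 1} d_{n+j} b^{-j}$ sets up the bijection between $k$-blocks of digits starting at position $n+1$ and membership of $\{\xi b^n\}$ in the $b$-adic interval of generation $k$, and the two directions of the equivalence then follow from matching \eqref{b-normal} against the u.d. condition, with a routine inner/outer approximation of arbitrary intervals by $b$-adic ones to close the forward direction. The second equivalence is indeed just Weyl's criterion and requires no work.

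One small point of rigour worth tightening: you propose to ``dismiss at the outset'' the countable set of $\xi$ with eventually-$(b-1)$ tails. Since the lemma is an ``iff'' for \emph{every} real $\xi$, you cannot simply discard a countable set --- rather, you should observe that for such $\xi$ (equivalently, $b$-adic rationals) both sides of the equivalence fail: the digit sequence is eventually constant so \eqref{b-normal} fails for $k=1$, and $\{\xi b^n\}$ is eventually $0$, so the orbit is not equidistributed. With that remark added, the equivalence holds vacuously on the exceptional set and your digit--interval correspondence is exact on its complement. This is a bookkeeping fix, not a gap in the idea; the proof is correct and is essentially Wall's.
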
  
\subsection{Summability criterion of Davenport, Erd\H{o}s and LeVeque} 
 A fundamental tool in the theory of uniform distribution is the theorem of Davenport, Erd\H{o}s and LeVeque \cite{del63}. For generic points $x$ in the support of a measure $\nu$ on $[0,1)$, it connects uniform distribution properties of a sequence of the form $\{s_n(x) : n \geq 1\}$ with decay properties of $\widehat{\nu}$. In view of Lemma \ref{Weyl-lemma} and specializing to $s_n(x) = b^n x$, this result provides a sufficient condition for establishing generic $b$-normality of numbers on supp$(\nu)$. While the criterion was originally stated in \cite{del63} for the Lebesgue measure on $[0,1)$, the same proof generalizes verbatim for any probability measure $\nu$, and is now ubiquitous in that generalized form in the literature. We state the latter version with appropriate references.       
\begin{lemma}[\cite{del63}, {\cite[Theorem 1.3]{Gao-Ma-Song-Zhang}}, {\cite[Theorem DEL]{PVZZ}}] \label{DEL-lemma}
Let $\nu$ be a probability measure on $[0,1)$ and $\{s_n(\cdot) : n \geq 1\}$ a sequence of measurable functions, also on $[0,1)$. If 
\begin{equation} \label{DEL-v1} 
\sum_{N=1}^{\infty} N^{-1} \int_{0}^{1} \Bigl| \frac{1}{N} \sum_{n=0}^{N-1} e \bigl(h s_n(x) \bigr)\Bigr|^2 \, d\nu(x) < \infty \quad \text{ for every } h \in \mathbb Z \setminus \{0\},  
\end{equation}  
then $\{s_n(x) : n \geq 1\}$ is uniformly distributed modulo 1 for $\nu$-almost every $x$. 
\vskip0.1in 
\noindent Suppose $b \in \mathbb N \setminus \{1\}$. Specializing \eqref{DEL-v1} to the case $s_n(x) = b^n x$ and applying Lemma \ref{Weyl} leads to the following statement: if the infinite series 
\begin{equation}
\begin{aligned} 
\mathscr{S}(\nu, h, b) &:= \sum_{N=1}^{\infty} N^{-3} \sum_{u = 0}^{N-1} \sum_{v = 0}^{N-1} \widehat{\nu}(h(b^v-b^u)) \\ &= \sum_{N=1}^{\infty} N^{-3} \sum_{u = 0}^{N-1} \sum_{v = 0}^{N-1-u} \widehat{\nu}(hb^u (b^v-1)) \end{aligned} \text{ converges } \label{DEL-sum}
%\text{ or equivalently } \\   
%&\qquad \text{ if } \; \; \mathscr{S}(\nu) : \text{ is convergent,} \label{DEL-v2}
\end{equation}  
 for every $h \in \mathbb Z \setminus \{0\}$, then $\nu$-almost every $x$ is $b$-normal.  
\end{lemma}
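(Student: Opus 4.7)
Writing $S_N(x,h) := \frac{1}{N}\sum_{n=0}^{N-1} e(hs_n(x))$ with $|S_N| \leq 1$, the strategy is to first invoke Weyl's criterion \eqref{Weyl}, which identifies uniform distribution of $\{s_n(x)\}_{n\geq 0}$ modulo $1$ with the condition $S_N(x,h) \to 0$ as $N \to \infty$ for every $h \in \mathbb{Z}\setminus\{0\}$. Since $\mathbb{Z}\setminus\{0\}$ is countable, it suffices to fix $h$ and show $S_N(x,h) \to 0$ for $\nu$-almost every $x$. Tonelli applied to the hypothesis \eqref{DEL-v1} yields $\int \sum_{N=1}^{\infty} N^{-1} |S_N(x,h)|^2 \, d\nu(x) < \infty$, so the non-negative series $\sum_N N^{-1}|S_N(x,h)|^2$ converges for $\nu$-almost every $x$.

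The main analytic step is to upgrade this pointwise summability to pointwise convergence $S_N(x,h) \to 0$. I plan to argue by contradiction: fix $\delta > 0$ and suppose $|S_{N_j}(x,h)| > \delta$ along an infinite sequence $N_1 < N_2 < \cdots$. A short telescoping computation yields the oscillation bound
\[
|S_{N'}(x,h) - S_N(x,h)| \leq \frac{2(N'-N)}{N'} \quad \text{for all } N' \geq N,
\]
which forces $|S_{N'}(x,h)| > \delta/2$ on the entire interval $N' \in [N_j, \lfloor N_j(1 + \delta/4)\rfloor]$. After thinning so that $N_{j+1} > N_j(1 + \delta/4)$ (rendering these intervals pairwise disjoint), each interval contributes at least $(\delta^2/4) \log(1 + \delta/4) \gtrsim \delta^3$ to $\sum_N N^{-1}|S_N(x,h)|^2$, forcing the series to diverge. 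This contradicts the a.e. summability above, so $\limsup_N |S_N(x,h)| \leq \delta$ for $\nu$-a.e. $x$; sending $\delta \to 0$ through a countable sequence completes the proof of the first half.

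For the specialization $s_n(x) = b^n x$, expanding $|S_N|^2 = S_N \overline{S_N}$ and integrating yields
\[
\int_0^1 \bigl|S_N(x,h)\bigr|^2 \, d\nu(x) = \frac{1}{N^2} \sum_{u,v=0}^{N-1} \widehat{\nu}\bigl(h(b^v - b^u)\bigr),
\]
using the definition \eqref{Fourier-coefficient-def} of $\widehat\nu$. Summing against $N^{-1}$ recovers precisely $\mathscr{S}(\nu, h, b)$ as displayed in \eqref{DEL-sum}; the alternate form there comes from reindexing $w := v - u \in \{0, \ldots, N-1-u\}$ and factoring $b^u$ out of $b^v - b^u$. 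Convergence of $\mathscr{S}(\nu, h, b)$ thus verifies hypothesis \eqref{DEL-v1}, and Lemma \ref{Weyl-lemma} (Wall's theorem) then converts uniform distribution of $\{b^n x\}_{n\geq 1}$ modulo $1$ into $b$-normality of $x$. The principal obstacle, handled by the oscillation argument in the previous paragraph, is that the weight $N^{-1}$ in the hypothesis is just barely too weak to run Borel-Cantelli directly on the level sets $\{|S_N| > \delta\}$; the rescue is to exploit the continuity of $N \mapsto S_N$, so that weak (logarithmic) summability of the $L^2$ averages is forced to produce pointwise decay.
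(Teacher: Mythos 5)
The paper does not prove this lemma at all: it states it with three citations (\cite{del63}, \cite{Gao-Ma-Song-Zhang}, \cite{PVZZ}) and explicitly remarks that the criterion ``is now ubiquitous in that generalized form in the literature,'' leaving the argument to those references. So there is no paper proof to compare against; what you have done is supply the standard proof that the authors deliberately omitted, and your version is correct.

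The argument itself is sound. The Tonelli step passing from $\sum_N N^{-1}\int |S_N|^2\,d\nu<\infty$ to $\nu$-a.e.\ convergence of $\sum_N N^{-1}|S_N(x,h)|^2$ is routine. The oscillation bound is verified by writing $S_{N'} = \frac{N}{N'}S_N + \frac{1}{N'}\sum_{n=N}^{N'-1} e(hs_n(x))$, giving $|S_{N'}-S_N|\le \frac{N'-N}{N'}(|S_N|+1)\le \frac{2(N'-N)}{N'}$, exactly as you use it. Each thinned block $[N_j, \lfloor N_j(1+\delta/4)\rfloor]$ then contributes $\ge (\delta/2)^2\log(1+\delta/4)\gtrsim\delta^3$ to the series, so infinitely many such blocks contradict a.e.\ summability. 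Finally, with the paper's convention $e(y)=e^{-2\pi i y}$ and $\widehat\nu(n)=\int e(nx)\,d\nu(x)$, expanding $|S_N|^2 = \frac{1}{N^2}\sum_{u,v}e\bigl(h(b^v-b^u)x\bigr)$ and integrating indeed produces $\frac{1}{N^2}\sum_{u,v}\widehat\nu\bigl(h(b^v-b^u)\bigr)$, recovering $\mathscr{S}(\nu,h,b)$. The one place you gloss is the equality between the two displayed forms of $\mathscr{S}$: after reindexing $w=v-u$ the second form only collects the upper triangle $v\ge u$, so it is not literally equal to the first; the two are related through $\widehat\nu(-n)=\overline{\widehat\nu(n)}$ and the fact that the full double sum is real. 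Since only convergence is asserted, this doesn't affect the criterion, but you should not present it as a pure reindexing. Otherwise the proof is complete and matches the known argument.
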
 
\noindent Thus verifying the convergence of the infinite series \eqref{DEL-sum} offers a mechanism for establishing pointwise normality on the support of $\mu$. Let us state a version of Lemma \ref{DEL-lemma} in the form we will need.  
\begin{corollary} \label{DEL-corollary} 
Let $\{\varphi_m : m \geq 1\}$ be a sequence of absolutely continuous probability measures on $[0, 1)$, 
\begin{equation} \label{nu-weak-convergence} 
\varphi_m \rightarrow \mu \text{ weakly as } m \rightarrow \infty, \quad \text{ for some probability measure $\mu$. }  
\end{equation}  Suppose that $b \geq 2$ is an integer base with the following property: for every $h \in \mathbb Z \setminus \{0\}$, 
 there exists a sequence $\{\mathtt c_m(h, b) : m \geq 1\}$ of non-negative real numbers such that  
\begin{align}
\mathcal S_m(\mu, h, b) &:= \sum_{N=1}^{\infty} \frac{1}{N^{3}} \mathscr{S}_m(N;  h, b) \leq \mathtt c_m(h,b) \; \text{ with } \; \sum_{m=1}^{\infty} \mathtt c_m(h, b) < \infty,  \text{ where } \label{Sm-summable} \\
\mathscr{S}_m(N; h, b) &:= \sum_{v = 0}^{N-1} \sum_{u = 0}^{N-1} \bigl|(\widehat{\varphi}_{m} - \widehat{\varphi}_{m-1})\bigl(h b^u(b^v-1) \bigr) \bigr| \quad \text{with the convention}\; \varphi_0 \equiv 0. \label{def-Sm}
\end{align} 
Then $\mu$-almost every point is $b$-normal. 
\end{corollary}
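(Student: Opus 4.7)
The plan is to apply Lemma \ref{DEL-lemma} in its specialized form \eqref{DEL-sum} directly to the limiting measure $\mu$. It suffices, for each fixed $h \in \mathbb{Z} \setminus \{0\}$, to verify the convergence of
\[ \mathscr{S}(\mu, h, b) = \sum_{N=1}^{\infty} \frac{1}{N^{3}} \sum_{u=0}^{N-1} \sum_{v=0}^{N-1-u} \widehat{\mu}\bigl(hb^u(b^v-1)\bigr), \]
after which Lemma \ref{DEL-lemma} furnishes $\mu$-almost everywhere $b$-normality.

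The first step is to upgrade the hypothesized weak convergence $\varphi_m \to \mu$ to pointwise convergence of Fourier coefficients: since for each $\xi \in \mathbb{Z}$ the function $x \mapsto e(\xi x)$ is bounded and continuous on $[0,1)$ (after the usual $1$-periodic extension), weak convergence of probability measures yields $\widehat{\varphi}_m(\xi) \to \widehat{\mu}(\xi)$. With the convention $\widehat{\varphi}_0 \equiv 0$, the partial sums $\sum_{m=1}^{M} (\widehat{\varphi}_m - \widehat{\varphi}_{m-1})(\xi)$ telescope to $\widehat{\varphi}_M(\xi)$, so letting $M \to \infty$ gives
\[ \widehat{\mu}(\xi) = \sum_{m=1}^{\infty} \bigl(\widehat{\varphi}_m - \widehat{\varphi}_{m-1}\bigr)(\xi), \qquad \xi \in \mathbb{Z}. \]

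The crux of the argument is then a triangle inequality followed by a Tonelli interchange, both unproblematic since after the triangle inequality all quantities are non-negative. Inserting the telescoping expansion into $\mathscr{S}(\mu,h,b)$ and interchanging the $m$-sum with the $(N,u,v)$-sum,
\[ \bigl|\mathscr{S}(\mu,h,b)\bigr| \le \sum_{m=1}^{\infty} \sum_{N=1}^{\infty} \frac{1}{N^{3}} \sum_{u=0}^{N-1} \sum_{v=0}^{N-1-u} \bigl|(\widehat{\varphi}_m - \widehat{\varphi}_{m-1})\bigl(hb^u(b^v-1)\bigr)\bigr| \le \sum_{m=1}^{\infty} \mathcal{S}_m(\mu,h,b), \]
where the last inequality uses the (possibly larger) unrestricted double sum $(u,v) \in [0,N-1]^{2}$ appearing in the definition \eqref{def-Sm} of $\mathscr{S}_m(N;h,b)$, in place of the restricted range $v \le N-1-u$. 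The assumed bound $\mathcal{S}_m(\mu,h,b) \le \mathtt{c}_m(h,b)$ with $\sum_m \mathtt{c}_m(h,b) < \infty$ then closes the argument and Lemma \ref{DEL-lemma} concludes the proof.

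No individual step presents a serious obstacle: the upgrade from weak-$\ast$ to pointwise Fourier convergence is classical, the telescoping is purely formal, and non-negativity makes the interchange of summations automatic. The value of the corollary lies not in its proof but in its role as a bridge within the rest of this part of the article: it replaces the question of estimating $\widehat{\mu}$ at the arithmetic frequencies $hb^u(b^v-1)$, where $\mu$ is defined only as a weak limit, with per-scale estimates on the Fourier-coefficient increments $\widehat{\varphi}_m - \widehat{\varphi}_{m-1}$, which are directly accessible from the iterative construction of $\mu$ carried out in Sections \ref{iteration-subsection}--\ref{indexing-section-2}.
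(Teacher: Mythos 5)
Your proof is correct and follows essentially the same route as the paper: telescope the Fourier increments, pass the absolute values through, and interchange sums using non-negativity. The only cosmetic difference is that the paper applies Fatou's lemma to the finite partial sums $\mathscr{S}^{\ast}(\varphi_{\mathtt M}, h, b)$ (which it first shows are uniformly bounded in $\mathtt M$), whereas you expand $\widehat{\mu}$ directly as the infinite telescoping series and invoke Tonelli — these are interchangeable phrasings of the same interchange-of-limits step.
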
 
\begin{proof}
For $\mathtt M \geq 1$, we write $\varphi_{\mathtt M}$, and therefore $\widehat{\varphi}_{\mathtt M}$, as a telescoping sum: 
\[ \widehat{\varphi}_{\mathtt M} = \widehat{\varphi}_{0} + \sum_{m=1}^{\mathtt M} \bigl(\widehat{\varphi}_{m} - \widehat{\varphi}_{m-1}\bigr), \quad \text{ so that } \quad |\widehat{\varphi}_{\mathtt M}| \leq |\widehat{\varphi}_{0}| + \sum_{m=1}^{\mathtt M} \bigl|\widehat{\varphi}_{m} - \widehat{\varphi}_{m-1}\bigr|.  \] 
A base $b$ obeying the assumption \eqref{Sm-summable} therefore allows us to control sums of the following form $\mathscr{S}^\ast (\varphi_{\mathtt M}, h, b) $ by a bound uniform in $\mathtt M$: 
\begin{align} 
%\mathscr{S}(\varphi_{\mathtt M}, h, b) \leq 
\mathscr{S}^{\ast}(\varphi_{\mathtt M}, h, b) &:= \sum_{N=1}^{\infty} \frac{1}{N^{3}} \sum_{u = 0}^{N-1} \sum_{v = 0}^{N-1} \bigl|\widehat{\varphi}_{\mathtt M}\bigl(hb^u (b^v-1) \bigr) \bigr| \nonumber \\ &\leq  \sum_{N=1}^{\infty}  \frac{1}{N^{3}} \Bigl[ \sum_{m=1}^{\mathtt M} \mathscr{S}_m(N; h, b)  \Bigr] = \sum_{m=1}^{\mathtt M} \mathcal S_m(\mu, h, b) \leq \sum_{m=1}^{\infty} \mathtt c_m(h, b) < \infty. \label{S-uniform-bound} 
\end{align}   
%Here we have defined $\mathscr{S}_0$ as in \eqref{def-Sm}, with the convention $\varphi_{-1} \equiv 0$. 
The weak convergence \eqref{nu-weak-convergence} of $\varphi_m$ to $\mu$ means that 
\[ \lim_{m \rightarrow \infty} \widehat{\varphi}_m(\xi) = \widehat{\mu}(\xi) \quad \text{ for all } \xi \in \mathbb Z.  \]  An application of Fatou's lemma, combined with \eqref{S-uniform-bound}, then leads to the relation 
\begin{align*} \mathscr{S}(\mu, h, b) &\leq \mathscr{S}^{\ast}(\mu, h, b) = \sum_{N=1}^{\infty} \frac{1}{N^{3}} \sum_{u = 0}^{N-1} \sum_{v = 0}^{N-1} \bigl|\lim_{\mathtt M \rightarrow \infty}\widehat{\varphi}_{\mathtt M} \bigl(hb^u (b^v-1) \bigr) \bigr|  \\
 %\sum_{N=1}^{\infty} \frac{1}{N^{3}} \sum_{u = 0}^{N-1} \sum_{v = 0}^{N-1-u} \lim_{\mathtt M \rightarrow \infty}\bigl|\widehat{\nu}_{\mathtt M}\bigl(hb^u (b^v-1) \bigr) \bigr| \\  
 &\leq \lim_{\mathtt M \rightarrow \infty} \mathscr{S}^{\ast}(\varphi_{\mathtt M}, h, b) \leq   \sum_{m=0}^{\infty} \mathtt c_m(h, b)  < \infty. \end{align*} 
%Since the series on the left hand side above dominates $\mathscr{S}(\nu, h, b)$ defined in \eqref{DEL-sum}, 
The criterion of Davenport, Erd\H{o}s and LeVeque (Lemma \ref{DEL-lemma}) now confirms that $\mu$-almost every point is $b$-normal. This is the conclusion of Corollary \ref{DEL-corollary}. 
\end{proof}
\subsection{Conditional proof of Propositions \ref{normality-special-prop} and \ref{normal-prop}} 
%\subsection{Normality on the support of skewed measures}
\noindent We will use Corollary \ref{DEL-corollary} to prove that $\mu = \mu(\pmb{\Pi})$-almost every point is 
\begin{itemize} 
\item in $\mathscr{N}(\mathscr{B}_1, \cdot)$ with $\mathscr{B}_1$ as in \eqref{bases-B1-def}, provided $\pmb{\Pi}$ obeys the hypotheses of Proposition \ref{normality-special-prop};
\item in $\mathscr N(\mathscr{B}, \cdot)$ with $\mathscr{B}$ as in \eqref{bases-BC-bar}, provided $\pmb{\Pi}$ obeys the hypotheses of Proposition \ref{normal-prop}.
\end{itemize} 
Indeed by Corollary \ref{DEL-corollary}, both propositions follow immediately from the result below.
\begin{proposition} \label{DEL-difference-prop}
%There exists a choice of $\mathcal K$ and $\pmb{\varepsilon}$ as in \eqref{block-def} and \eqref{epsilon-assumption} with the following property. For $\mu = \mu[\mathcal K, \pmb{\varepsilon}]$ is as in \eqref{def-mu} and any odd integer $r \in \mathbb N \setminus \{1\}$, 
Let $\pmb{\Pi}$ be any choice of parameters as in \eqref{iteration-parameters-section}, and let $\varphi_m$ be the $m^{\text{th}}$ density function \eqref{density} that occurs in the construction of $\mu = \mu(\pmb{\Pi})$.  Then for every $h \in \mathbb Z \setminus \{0\}$ and $\mathscr{S}_m$ as in \eqref{def-Sm}, the summability condition \eqref{Sm-summable} holds 
\begin{enumerate}[(a)]
\item for every $b \in \mathscr{B}_1$, provided $\pmb{\Pi}$ obeys the hypotheses of Proposition \ref{normality-special-prop};   \label{DEL-difference-prop-parta}
\item for every $b \in \mathscr{B}$, provided $\pmb{\Pi}$ obeys the hypotheses of Proposition \ref{normal-prop}. \label{DEL-difference-prop-partb}
\end{enumerate} 
%As a result, by the Davenport-Erd\H{o}s-LeVeque criterion (Lemma \ref{DEL-lemma}), $\mu$-almost every $x$ is $r$-normal for every odd integer $r \geq 3$. More precisely, $\mu$ is supported on $\mathcal N(\mathcal O, \cdot) \cap [0,1)$, the set of real numbers in $[0, 1)$ that are normal to every odd base. 
\end{proposition}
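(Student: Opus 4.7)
For each $m \geq 1$, the goal is to extract a pointwise bound on $|(\widehat{\varphi}_m - \widehat{\varphi}_{m-1})(\xi)|$ that is strong enough to survive the triple summation over $(N,u,v)$ weighted by $1/N^3$, and then to verify that the resulting $\mathtt c_m(h,b)$ is summable in $m$. I would start from the factorization $\widehat{\varphi}_m(\xi) = \widehat{\mathtt 1}(\xi \mathtt s_m^{-\mathtt b_m}) \Gamma_m(\xi)$ of \eqref{Fourier2} together with the one-step factorization $\Gamma_m = \Lambda_m \, \mathfrak A_m \, \mathfrak B_m$ of \eqref{Psi-factors}--\eqref{def-Lambda}. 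Writing $\mathfrak A_m \mathfrak B_m = 1 + (\mathfrak A_m \mathfrak B_m - 1)$ and combining the Lipschitz control of $\widehat{\mathtt 1}$ (Lemmas \ref{1-hat-lemma}--\ref{1-hat-difference-lemma}) and of $\Lambda_m - \Gamma_{m-1}$ (Lemma \ref{error-lemma}) with the oscillatory estimates of Lemma \ref{single-scale-product-lemma} and Proposition \ref{prop-G}, the difference $\widehat{\varphi}_m - \widehat{\varphi}_{m-1}$ decomposes into a ``mismatch'' piece of order $\min(1, |\xi| \mathtt s_m^{-\mathtt a_m}) + \min(1, |\xi| \mathtt s_m^{-\mathtt b_m})$ and an ``oscillatory'' piece of order
\[
\varepsilon_m \ + \ \frac{1}{\mathtt N_m\,\bigl|\sin(\pi \xi \mathtt s_m^{-\mathtt b_m})\bigr|}.
\]
Only the last summand is delicate.

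Next, I would substitute $\xi = h b^u(b^v - 1)$ and split the $(u,v)$ grid into regimes according to where $|\xi|$ falls with respect to the natural scales $\mathtt s_m^{\mathtt a_m}$ and $\mathtt s_m^{\mathtt b_m}$. The enormous gap $\mathtt s_m^{\mathtt a_m} \gg \mathtt s_{m-1}^{3\mathtt b_{m-1}}$ imposed by \eqref{Km-choice}--\eqref{normality-hypotheses} has the effect that pairs $(u,v)$ producing $|\xi|$ outside a narrow ``resonant band'' around $\mathtt s_m^{\mathtt b_m}$ yield terms that are already geometrically small in $m$, and can be telescoped against neighbouring indices via Lemma \ref{Psi-bound-lemma}. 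A direct count shows that the resonant band contains at most $O\bigl(\mathtt b_m^2 (\log \mathtt s_m / \log b)^2\bigr)$ pairs $(u,v)$, which will later be comfortably absorbed by the decay coming from $\mathtt N_m^{-1}$.

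The hard part is the resonant band itself, where one must extract cancellation from the factor $|\sin(\pi h b^u(b^v-1) \mathtt s_m^{-\mathtt b_m})|^{-1}$. This is exactly where the multiplicative-independence hypothesis $b \not\sim \mathtt s$ for every $\mathtt s \in \mathscr C$ must be used (it holds both for $b \in \mathscr B_1$ and for $b \in \mathscr B$). Invoking a Schmidt-type estimate from \cite[Lemma 5]{s60}, I would argue that when $b \not\sim \mathtt s_m$, the base-$\mathtt s_m$ digit expansion of the integer $h b^u(b^v - 1)$ cannot be concentrated on a degenerate pattern except for a $\kappa_m$-fraction of the resonant pairs $(u,v)$; here $\kappa_m = \kappa(\mathtt s_m) \in (0,1)$ is the constant of Lemma \ref{Schmidt-lemma-extremal-digits}, later carried into the calibration \eqref{Km-choice} of $\mathtt K_m$. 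Concretely, all but a $\kappa_m$-fraction of resonant pairs satisfy a lower bound of the form $\|h b^u(b^v-1) \mathtt s_m^{-\mathtt b_m}\| \geq \mathtt s_m^{-(\mathtt b_m - \mathtt a_m)/2}$, which combined with the factor $\mathtt N_m^{-1} \sim \mathtt s_{m-1}^{\mathtt b_{m-1}}/\mathtt s_m^{\mathtt a_m}$ from $\mathfrak A_m$ gives a contribution summable over $m$. The dichotomy between the two parts of Proposition \ref{DEL-difference-prop} appears at this step: with only $0 \in \mathscr D_m$, the Schmidt argument applies in the form needed only to bases $b$ sharing no prime factor with some $\mathtt s \in \mathscr C$ (whence the definition \eqref{bases-B1-def} of $\mathscr B_1$), whereas the enrichment $\{0,1\} \subseteq \mathscr D_m$ unlocks the sharper quantitative Schmidt input that covers all $b \not\sim \mathtt s$, and hence all $b \in \mathscr B$.

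Assembling the pieces produces a bound of the form $\mathtt c_m(h,b) \ls_{h,b} \varepsilon_m + \tau_m + \kappa_m^{-1} \mathtt N_m^{-1/2}$ (up to factors polynomial in $\mathtt b_m$ and $\log|h|$), which is summable in $m$ by the aggressive growth prescribed in \eqref{Km-choice}--\eqref{normality-hypotheses}. The main obstacle will be to execute the Schmidt-type exceptional-set estimate uniformly in the triple $(N,u,v)$ and in both regimes of $\mathscr D_m$: this is the technical content of the forthcoming Sections \ref{number-theoretic-tools-section} and \ref{u-pointwise-estimate-section}--\ref{estimating-vm-section-Part2}, and it is precisely the need for this step that dictates the simultaneous calibration between the Schmidt constants $\kappa_m$ and the growth rates $\mathtt K_m$, $\mathtt a_m$, $\mathtt b_m$ built into \eqref{Km-choice} and \eqref{normality-hypotheses}.
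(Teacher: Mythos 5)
Your overall plan — decompose $\widehat{\varphi}_m-\widehat{\varphi}_{m-1}$ along the two sub-steps of $\mathscr{O}$, split the $(u,v,N)$ grid into regimes, and use Schmidt's number-theoretic input only in the resonant band — is the right shape and matches the paper's architecture (the decomposition \eqref{difference-decomp} into $\mathtt u_m$, $\mathtt v_m$, the sub-sums $\mathcal U_m$, $\mathcal V_{m1}$, $\mathcal V_{m2}$, and the Schmidt lemmas in \S\ref{number-theoretic-tools-section}). However your proposed pointwise bound for the oscillatory piece, $\varepsilon_m + \tfrac{1}{\mathtt N_m|\sin(\pi\xi\mathtt s_m^{-\mathtt b_m})|}$, contains a fatal $\varepsilon_m$. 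Proposition \ref{normal-prop} is stated with no constraint on $\mathcal E=\{\varepsilon_m\}$ other than $\varepsilon_m\in(0,1)$ (see Remark 5 after it), so the final $\mathtt c_m(h,b)$ must be $\varepsilon$-free. With an $\varepsilon_m$ term surviving, even over the finite resonant band of side $\lesssim \mathtt b_m\log_b\mathtt s_m$ the $\sum_N N^{-3}\sum_{u,v}$ contributes a tail of order $\varepsilon_m\log(\mathtt b_m\log_b\mathtt s_m)$, which is not summable in $m$ unless $\varepsilon_m\to 0$. What kills the $\varepsilon$-dependence in the paper is the algebraic cancellation \eqref{second-term-zero-claim}, namely $\widehat{\mathtt 1}(\xi\mathtt s^{-\mathtt b})\mathfrak C(\xi)=\widehat{\mathtt 1}(\xi\mathtt s^{-\mathtt a})$, which collapses $\mathtt v_m$ to the form $\varepsilon\widehat{\mathtt 1}(\xi\mathtt s^{-\mathtt b})[\mathfrak B^{\ast}(\xi)-\mathfrak C(\xi)]$ of \eqref{wm-alternate}; writing $\mathfrak A_m\mathfrak B_m=1+(\mathfrak A_m\mathfrak B_m-1)$ as you propose does not set up this cancellation, so the $\varepsilon_m$ never disappears.

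The Schmidt step is also not accurate as stated. What Lemma \ref{Schmidt-mult-indep-lemma} gives is an equidistribution bound — a cap on how many $u$'s put $\xi(b^u)'_{\mathtt s}$ in a prescribed residue class mod $\mathtt s^k$ — and, combined with the counting estimate of Lemma \ref{Schmidt-lemma-extremal-digits}, it shows that the exceptional set of $u$'s producing ``digit-degenerate'' $\xi(b^u)'_{\mathtt s}$ has size $\lesssim\kappa_m^{-1}2^{-\ell/4}\mathtt s_m^{\ell}b^{2\mathtt s_m}(\xi)_{\mathtt p_1}$ (Lemma \ref{E-cardinality-lemma}); it does not deliver a lower bound on $\|hb^u(b^v-1)\mathtt s_m^{-\mathtt b_m}\|$, and $\kappa_m$ is the \emph{threshold} defining degeneracy, not an exceptional-set fraction. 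More importantly, even if such a $\|\cdot\|$ lower bound held generically, the $\sin$-based pointwise bound only sees the trailing $\mathtt b_m$-digit block of $\xi$, whereas the paper's bound $|\mathtt v_m(\xi)|\lesssim(1-c_0\mathtt s_m^{-5})^{\mathtt J}$ of Lemma \ref{restricted-lemma} and Corollaries \ref{v-alt-cor}--\ref{corollary-composite} extracts geometric decay from \emph{every} non-extremal adjacent digit pair across positions $\mathtt K_m$ through $\mathtt b_m$, which is what makes the resonant band summable. Your intuition about where $\{0,1\}\subseteq\mathscr D_m$ is needed is also off: the Schmidt input is identical in parts (a) and (b); what changes is that for $b\in\mathscr B\setminus\mathscr B_1$ the factor $\mathtt s_m^{\mathtt k(u)+\mathtt h}$ in \eqref{new-xi} shifts the digit window past position $\mathtt a_m$ into the range where the $\mathfrak B^\ast$-factor $\prod\mathfrak g(\cdot;\mathtt s,\mathscr D)$ lives, and it is the decay \eqref{less-than-one} of those factors (requiring $\{0,1\}\subseteq\mathscr D$) that must kick in; for $b\in\mathscr B_1$ the window stays inside the $\mathfrak A$-factor $\prod\mathfrak g(\cdot;\mathtt s,\mathbb Z_{\mathtt s})$, which always contains $\{0,1\}$.
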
 
\subsubsection{Proof layout of Proposition \ref{DEL-difference-prop}} In view of the above, the task of proving almost everywhere normality of supp$(\mu)$ in bases of $\mathscr{B}_1$ and $\mathscr{B}$ (as claimed in Propositions \ref{normality-special-prop} and \ref{normal-prop} respectively) shifts to verifying a summability criterion of the form \eqref{Sm-summable} (Proposition \ref{DEL-difference-prop}). We henceforth focus on proving the latter result. A few words are in order about the proof strategy from this step onwards. In order to establish \eqref{Sm-summable},    
we will perform a series of reductions on the sum $\mathcal S_m$, decomposing it into summands that are controllable due to different reasons. 
\begin{itemize} 
\item In Section \ref{subsum-reduction-section}, the sum $\mathcal S_m$ is decomposed into two sub-sums $\mathcal U_m$ and $\mathcal V_m$. The sum $\mathcal U_m$ reflects the first part of the elementary operation in step $m$ of the construction of $\mu$, and is relatively straightforward to handle. It is estimated in Section \ref{estimating-um-section}, using pointwise estimates for its summands derived in Section \ref{u-pointwise-estimate-section}. These in turn draw upon fundamental exponential sums introduced in Section \ref{exp-sum-section}.    
\item The sum $\mathcal V_m$ corresponds to the second part of the elementary operation. Estimation of this sum partly employs strategies similar to $\mathcal U_m$. Pointwise estimates for its summand, analogous to $\mathcal U_m$ and relying on Section \ref{exp-sum-section}, are obtained in Section \ref{v-pointwise-section}. The portion of $\mathcal V_m$ amenable to these estimates, termed $\mathcal V_{m1}$,  is isolated in Section \ref{estimating-vm-section} and its estimation is completed in Section \ref{estimating-Vm1-section}.  It is worth noting that the estimates used to bound $\mathcal U_m$ and $\mathcal V_{m1}$ hold for any choice of $b$ and $\mathtt s_m$; in particular, they do not rely on multiplicative independence between $b$ and $\mathtt s_m$. 
\item In contrast, the remaining portion of $\mathcal V_m$, termed $\mathcal V_{m2}$, needs more sophisticated tools customized for this problem. These are developed in two sections. Section \ref{v-pointwise-section-take-2} derives a second set of analytical estimates for the summands of $\mathcal V_m$ that depend on the number-theoretic nature of the frequencies. Section \ref{number-theoretic-tools-section} gathers number-theoretic tools and their consequences, some of these involving multiplicatively independent bases. The estimation of $\mathcal V_{m2}$ is heavily reliant on the material from these two sections. 
\item The estimation of $\mathcal V_{m2}$ itself is carried out in two steps (Sections \ref{estimating-vm-section-Part1} and \ref{estimating-vm-section-Part2}).  Whereas the treatment of $\mathcal U_m$ and $\mathcal V_{m1}$ does not distinguish between $\mathscr{B}_1$ and $\mathscr{B}$ (and is therefore the same for parts \eqref{DEL-difference-prop-parta} and \eqref{DEL-difference-prop-partb} of Proposition \ref{DEL-difference-prop}), estimation of $\mathcal V_{m2}$ requires separate analyses for the two cases. Section \ref{estimating-vm-section-Part1} addresses this for $\mathscr{B}_1$, completing the proof of part \eqref{DEL-difference-prop-parta}. The corresponding steps for the more general part \eqref{DEL-difference-prop-partb} are completed in Section \ref{estimating-vm-section-Part2}.  
%{\red (I am not so clear about this item.)}
\end{itemize} 
A sketch of the main steps of the proof of Proposition \ref{DEL-difference-prop} has been included in Figure \ref{normality-proof}.
\begin{center}
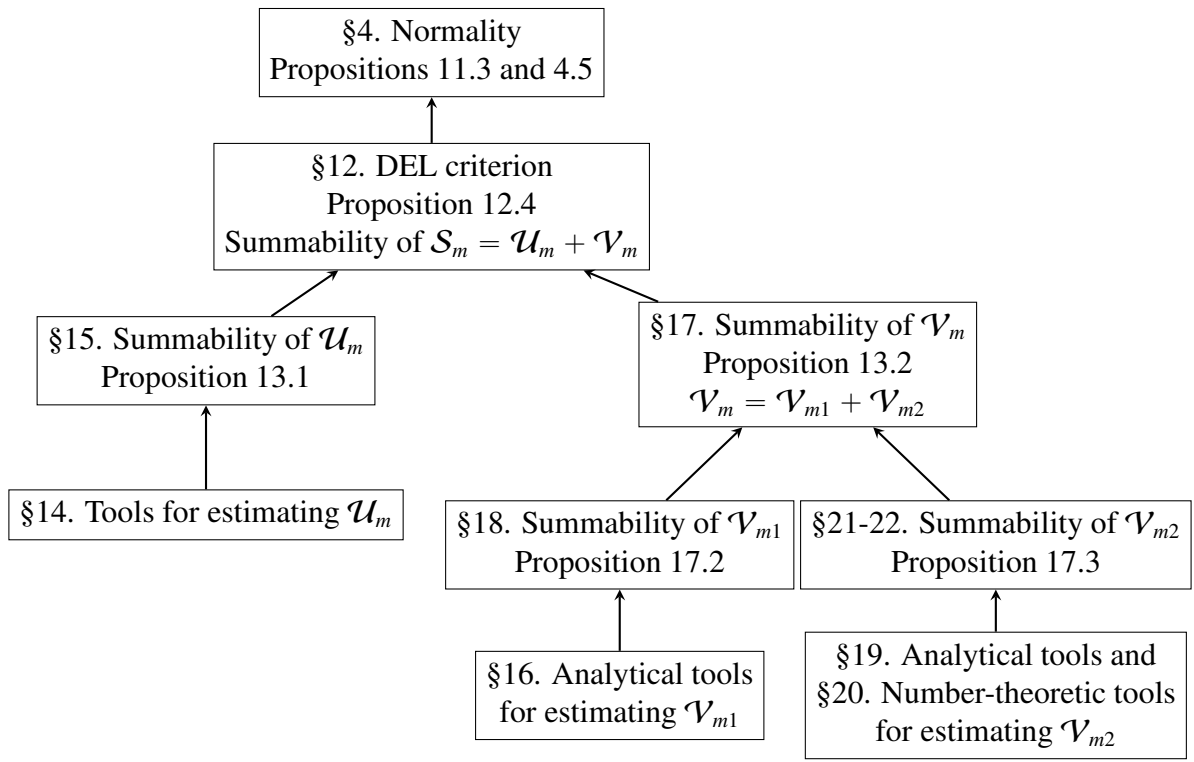
\begin{figure}
\begin{tikzpicture}[node distance=2cm]
\node (thm) [box, align=center] {\S \ref{skewed-measure-properties-section}. Normality \\ Propositions \ref{normality-special-prop} and \ref{normal-prop}};
\node (S5) [box, below of= thm, xshift=0cm, yshift=-0.05cm, align=center] {\S \ref{normality-overview-section}. DEL criterion \\ Proposition \ref{DEL-difference-prop} \\ Summability of $\mathcal{S}_m = \mathcal{U}_m + \mathcal{V}_m$};
\node (S4) [box, below of=S5, xshift=-3cm, yshift=-0.05cm, align=center]{\S \ref{estimating-um-section}. Summability of $\mathcal{U}_m$\\ Proposition \ref{U-prop}};
\node (S6) [box, below of=S5, xshift=5cm, yshift=-0.1cm,align=center] {\S \ref{estimating-vm-section}. Summability of $\mathcal V_m$ \\ Proposition \ref{V-prop} \\ $\mathcal V_m = \mathcal V_{m1} + \mathcal V_{m2}$};
\node (S7) [box, below of= S6, xshift=-2.5cm, yshift=-0.4cm,align=center] {\S \ref{estimating-Vm1-section}. Summability of $\mathcal V_{m1}$ \\ Proposition \ref{W-sum-prop}};
\node (S9) [box, below of= S6, xshift=2.5cm, yshift=-0.4cm,align=center] {\S \ref{estimating-vm-section-Part1}-\ref{estimating-vm-section-Part2}. Summability of $\mathcal V_{m2}$ \\ Proposition \ref{V2-sum-prop}};
\node (Sn) [box, below of= S4, xshift=0cm, yshift=-0.05cm,align=center] {\S \ref{u-pointwise-estimate-section}. Tools for estimating $\mathcal U_m$}; 
%\node (S15-16)[box, below of=Sn, xshift=2cm, yshift=-0.001cm, align=center] {\S 15-16.};
\node (S8)  [box, below of= S7, xshift=0cm, yshift=-0.003cm, align=center] {\S \ref{v-pointwise-section}. Analytical tools \\  for estimating $\mathcal V_{m1}$};
\node (S14)  [box, below of= S9, xshift=0cm, yshift=-0.003cm,align=center] {\S \ref{v-pointwise-section-take-2}. Analytical tools and \\ \S \ref{number-theoretic-tools-section}. Number-theoretic tools \\ for estimating $\mathcal V_{m2}$};
%\node (S15)[box, below of=Sn, xshift=-2cm, yshift=-0.001cm, align=center] {\S 12-13.};
%
\draw [arrow] (S4) -- (S5) ; 
\draw [arrow] (S5) -- (thm);
\draw [arrow] (S6) -- (S5) ;
\draw [arrow] (S7) -- (S6) ;
\draw [arrow] (S9) -- (S6);
\draw [arrow] (Sn) -- (S4);
\draw [arrow] (S14) -- (S9);
%\draw [arrow] (S8) -- (S9);
\draw [arrow] (S8) -- (S7); 
%\draw [arrow] (S11-13) -- (Sn);
%\draw [arrow] (S15-16) -- (Sn);  
%\draw [arrow] (S14) -- (S15-16);
%\draw [arrow] (S8)--(S15-16); 
\end{tikzpicture}
\caption{Proof sketch for Propositions \ref{normality-special-prop} and \ref{normal-prop}} \label{normality-proof}
\end{figure} 
\end{center}

\section{Reduction of $\mathscr{S}_m$ to two sub-sums} \label{subsum-reduction-section}
As noted in Section \ref{elem-op-section}, the elementary operation $\mathscr{O}$ taking $(E_{m-1}, \varphi_{m-1})$ to $(E_m, \varphi_m)$ involves two distinct parts. It is therefore natural that the estimation of $\mathscr{S}_m$ as defined in \eqref{def-Sm} will reflect the two components of the operation. Let us denote by $\psi_m$ the density of the set that appears in the intermediate step of the construction between $E_{m-1}$ and $E_m$. In other words, $\psi_m$ is the counterpart of $\Psi_1$ in \eqref{intermediate-density}, with $\pmb{\gamma} = \pmb{\gamma}_m$ in the notation of \eqref{O-def} and \eqref{iteration}. We consider the following decomposition:
\begin{align} 
\widehat{\varphi}_m  &- \widehat{\varphi}_{m-1} = \mathtt u_m + \mathtt v_m \; \text{ which leads to } \;   \mathscr{S}_m \leq \mathscr{U}_m + \mathscr{V}_m, \text{ with } \label{difference-decomp} \\
\mathscr{U}_m &= \mathscr{U}_m(N; h, b) := \sum_{v = 0}^{N-1} \sum_{u = 0}^{N-1} \bigl|\mathtt u_m \bigl(h b^u(b^v-1) \bigr) \bigr|, \quad \mathtt u_m := \widehat{\psi}_{m} - \widehat{\varphi}_{m-1}, \label{def-um} \\
 \mathscr{V}_m &= \mathscr{V}_m(N; h, b) := \sum_{v = 0}^{N-1} \sum_{u = 0}^{N-1} \bigl|\mathtt v_m \bigl(h b^u(b^v-1) \bigr) \bigr|, \quad  \mathtt {v}_m := \widehat{\varphi}_{m} - \widehat{\psi}_{m}. \label{def-vm}
\end{align}
Proposition \ref{DEL-difference-prop} then follows from the two estimates below. 
 \begin{proposition} \label{U-prop} 
Let $\pmb{\Pi}$ be a choice of parameters obeying \eqref{param-seq}, \eqref{abs} and \eqref{rmsm-assumption}. Then for every $h \in \mathbb Z \setminus \{0\}$ and $b \in \mathbb N \setminus \{1\}$, there is a constant $C(h, b, \pmb{\Pi}) > 0$ such that for all $m \geq 1$, 
\begin{equation} \label{Um-sum} 
\mathcal U_m(h, b) := \sum_{N=1}^{\infty} N^{-3} \mathscr{U}_m(N; h, b)   \leq C(h, b, \pmb{\Pi})  {\mathtt s_{m-1}^{\mathtt b_{m-1}}}{\mathtt s_m^{-\frac{\mathtt a_m}{2}}}.
\end{equation}   
\end{proposition}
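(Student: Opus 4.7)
The plan is to derive a pointwise estimate for $\mathtt u_m(\xi) = \widehat{\psi}_m(\xi) - \widehat{\varphi}_{m-1}(\xi)$, then exploit the geometric progression of the frequencies $\{hb^u(b^v-1)\}$ to compute the double sum in \eqref{Um-sum}. Using \eqref{Fourier2} and \eqref{intermediate-Fourier-coefficient}--\eqref{intermediate-Fourier-coefficient-2} applied at the $m$-th iterate, I have
\begin{equation*}
\widehat{\varphi}_{m-1}(\xi) = \widehat{\mathtt 1}\bigl(\xi \mathtt s_{m-1}^{-\mathtt b_{m-1}}\bigr) \Gamma_{m-1}(\xi), \qquad \widehat{\psi}_m(\xi) = \widehat{\mathtt 1}\bigl(\xi \mathtt N_m \mathtt s_m^{-\mathtt a_m}\bigr) \Lambda_m(\xi),
\end{equation*}
with $\Lambda_m$ as in \eqref{def-Lambda}. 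Inserting and subtracting $\widehat{\mathtt 1}(\xi\mathtt N_m\mathtt s_m^{-\mathtt a_m})\Gamma_{m-1}(\xi)$ splits $\mathtt u_m$ as
\begin{align*}
\mathtt u_m(\xi) &= \widehat{\mathtt 1}\bigl(\xi\mathtt N_m\mathtt s_m^{-\mathtt a_m}\bigr) \bigl[\Lambda_m(\xi) - \Gamma_{m-1}(\xi)\bigr] \\
&\quad + \bigl[\widehat{\mathtt 1}(\xi\mathtt N_m\mathtt s_m^{-\mathtt a_m}) - \widehat{\mathtt 1}(\xi\mathtt s_{m-1}^{-\mathtt b_{m-1}})\bigr]\, \Gamma_{m-1}(\xi),
\end{align*}
where the ``density-difference'' factor is controlled by Lemma \ref{error-lemma}, giving $|\Lambda_m-\Gamma_{m-1}|\leq \min(2,2\pi|\xi|\mathtt s_m^{-\mathtt a_m})$, and the ``scale-difference'' factor is controlled by Lemma \ref{1-hat-difference-lemma} applied with $\theta_1=\xi\mathtt N_m\mathtt s_m^{-\mathtt a_m}$, $\theta_2=\xi\mathtt s_{m-1}^{-\mathtt b_{m-1}}$, using the displacement bound $|\mathtt s_{m-1}^{-\mathtt b_{m-1}}-\mathtt N_m\mathtt s_m^{-\mathtt a_m}|<2\mathtt s_m^{-\mathtt a_m}$ that falls out of the definition \eqref{N} of $\mathtt N_m$. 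Combined with the size estimate $|\widehat{\mathtt 1}(\xi\mathtt N_m\mathtt s_m^{-\mathtt a_m})|\leq C\min(1,\mathtt s_{m-1}^{\mathtt b_{m-1}}|\xi|^{-1})$ from Lemma \ref{1-hat-lemma} (using $\mathtt N_m\geq \tfrac{1}{2}\mathtt s_m^{\mathtt a_m}\mathtt s_{m-1}^{-\mathtt b_{m-1}}$), and the trivial bounds $|\Lambda_m|,|\Gamma_{m-1}|\leq 1$, these pieces assemble into the pointwise estimate
\begin{equation} \label{u-pointwise-plan}
|\mathtt u_m(\xi)| \leq C\, \min\bigl(1,\, \mathtt s_{m-1}^{\mathtt b_{m-1}}|\xi|^{-1}\bigr)\, \min\bigl(1,\, |\xi|\mathtt s_m^{-\mathtt a_m}\bigr), \quad \xi \in \mathbb Z \setminus \{0\},
\end{equation}
while $\mathtt u_m(0)=0$ since $\widehat{\psi}_m(0)=\widehat{\varphi}_{m-1}(0)=1$.

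To estimate $\mathcal U_m(h,b)$, set $S:=\mathtt s_{m-1}^{\mathtt b_{m-1}}$ and $T:=\mathtt s_m^{\mathtt a_m}$, noting that $S<T$ by \eqref{abs}. The right side of \eqref{u-pointwise-plan} is a ``hat function'' of $\log|\xi|$ that peaks at $S/T$ on $[S,T]$ and decays geometrically outside. The $v=0$ slice of $\mathscr U_m(N;h,b)$ vanishes; for $v\geq 1$, the frequency $|\xi_{u,v}|=|h|(b^v-1)b^u$ grows geometrically in $u$. Splitting the $u$-sum into the three regimes $\{|\xi_{u,v}|<S\}$, $\{S\leq|\xi_{u,v}|\leq T\}$ and $\{|\xi_{u,v}|>T\}$, and summing the geometric tails in the first and third regimes, yields the $N,v$-uniform bound
\begin{equation*}
\sum_{u=0}^{N-1}|\mathtt u_m(\xi_{u,v})| \leq C \frac{S}{T} \Bigl(1 + \log\frac{T}{S}\Bigr).
\end{equation*}
Summing trivially over $v\in\{1,\ldots,N-1\}$ gives $\mathscr U_m(N;h,b) \leq C(h,b)\,N\,(S/T)(1+\log(T/S))$, after which $\sum_N N^{-2}<\infty$ collapses the outer sum to $\mathcal U_m(h,b) \leq C(h,b,\pmb{\Pi})\,(S/T)(1+\log(T/S))$. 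Finally the elementary inequality $1+\log(T/S)\leq CT^{1/2}$, valid once $T=\mathtt s_m^{\mathtt a_m}$ exceeds an absolute constant and absorbable into $C(h,b,\pmb{\Pi})$ for the finitely many small indices, converts this to the claimed bound \eqref{Um-sum}.

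The main obstacle is the logarithmic loss in the middle regime $|\xi_{u,v}|\in[S,T]$, where $\log(T/S)$ summands contribute at the peak size $S/T$ with no available cancellation. This loss is benign because the gap between $\mathtt s_{m-1}^{\mathtt b_{m-1}}$ and $\mathtt s_m^{\mathtt a_m}$ enforced by \eqref{abs} ensures that the power-function gain $T^{1/2}=\mathtt s_m^{\mathtt a_m/2}$ comfortably dwarfs $\log(T/S)$. It is worth noting that no number-theoretic input or multiplicative-independence hypothesis is required for this proposition; the argument relies solely on the analytic machinery developed in Section \ref{exp-sum-section} and the triangle inequality applied to the lacunary sequence $\{b^u(b^v-1)\}_{u,v}$.
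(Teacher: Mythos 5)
Your proof is correct and arrives at the same conclusion as the paper's. The pointwise estimate you derive in \eqref{u-pointwise-plan} is in fact slightly sharper than the paper's Lemma \ref{um-est-lemma}, which relaxes the product $\min(1,\mathtt s_{m-1}^{\mathtt b_{m-1}}/|\xi|)\,\min(1,|\xi|/\mathtt s_m^{\mathtt a_m})$ to the weaker $\min[1,\,\mathtt s_{m-1}^{\mathtt b_{m-1}}/|\xi|,\,|\xi|/\mathtt s_m^{\mathtt a_m}]$; both are obtained from the same two ingredients (Lemma \ref{error-lemma} for the density difference and Lemma \ref{1-hat-difference-lemma} for the scale difference), applied with the two symmetric ``add-and-subtract'' splittings of $fg-hk$, so that part is essentially the paper's argument. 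Where you depart is the summation: the paper partitions $\mathbb Z_N^2$ by a single threshold $b^{u+v}\lessgtr\mathtt s_m^{\mathtt a_m/2}$ and applies one branch of the pointwise bound on each side, which lands directly on $S\,T^{-1/2}$ with no logarithm, while you partition by the size of $|\xi_{u,v}|$ into the three regimes $(0,S)$, $[S,T]$, $(T,\infty)$ dictated by the shape of the estimate, pick up the factor $(S/T)(1+\log(T/S))$, and then convert to $S\,T^{-1/2}$ via $1+\log x\le 2\sqrt x$. Your route actually produces the sharper intermediate bound $(S/T)(1+\log(T/S))$, at the cost of one extra elementary inequality; the paper's balanced cutoff trades this sharpness for a one-step conclusion. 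One small cleanup: the caveat about ``finitely many small indices'' is unnecessary, since $1+\log t\le 2\sqrt t$ holds for all $t\ge 1$ with an absolute constant, so no $m$-dependent absorption is needed.
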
 
\noindent {\em{Remark:}} Proposition \ref{U-prop} allows $b$ to be multiplicatively dependent on the bases in $\mathcal S$. 

\begin{proposition} \label{V-prop} 
Suppose that one of the following assumptions holds:
\begin{enumerate}[(a)] 
\item The parameter $\pmb{\Pi}$ obeys the hypotheses of Proposition \ref{normality-special-prop}, and $b \in \mathscr{B}_1$ as in \eqref{bases-B1-def}. 
\item  The parameter $\pmb{\Pi}$ obeys the hypotheses of Proposition \ref{normal-prop}, and $b \in \mathscr{B}$ as in \eqref{bases-BC-bar}. 
\end{enumerate} 
%Let $h, b$ and $\pmb{\Pi}$ be as in Proposition \ref{normal-prop}, so that $b \nsim \mathtt s$ for every $\mathtt s \in \mathcal S$. 
Then there is a sequence of positive real numbers $\{\mathtt c_m(h, b, \pmb{\Pi}) : m \geq 1\}$ such that  
\begin{equation} \label{Vm-sum} 
\mathcal V_m(h,b) := \sum_{N=1}^{\infty} N^{-3} \mathscr{V}_m(N; h, b)  \leq \mathtt c_m(h, b, \pmb{\Pi}), \quad \sum_{m=1}^{\infty} \mathtt c_m(h, b, \pmb{\Pi}) < \infty. 
\end{equation} 
\end{proposition}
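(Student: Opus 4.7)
The plan is to follow the decomposition $\mathcal V_m = \mathcal V_{m1} + \mathcal V_{m2}$ indicated in Figure \ref{normality-proof}. The split will be dictated by the size/divisibility properties of the frequencies $\xi = hb^u(b^v-1)$ relative to the scales $\mathtt s_m^{\mathtt a_m}$ and $\mathtt s_m^{\mathtt b_m}$ governing $\mathtt v_m = \widehat{\varphi}_m - \widehat{\psi}_m$. Intuitively, $\widehat{\psi}_m$ is the portion of $\widehat{\varphi}_m$ coming from the uniform first part of $\mathscr{O}_m$, while $\mathtt v_m$ isolates the bias introduced by the restricted digit set $\mathscr{D}_m$ in the second part. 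Using the factorization \eqref{intermediate-Fourier-coefficient-2} together with \eqref{Y}, one expects a pointwise expression of the form $\mathtt v_m(\xi) = \Lambda_m(\xi)\,\mathfrak A_m(\xi)\,(\mathfrak B_m(\xi)-\mathfrak C_m(\xi))$ (up to error terms already controlled in Section \ref{mu-Rajchman-section}), which manifests an explicit $\varepsilon_m$-factor whenever the cancellation between the favoured and uniform distributions is effective. This is the fundamental pointwise analytic tool to be developed in Section \ref{v-pointwise-section}.

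For $\mathcal V_{m1}$, defined to be the contribution from frequencies $\xi$ outside a small ``resonant'' set $R_m$ related to high divisibility by $\mathtt s_m$, the pointwise estimate will typically yield $|\mathtt v_m(\xi)| \le C\varepsilon_m + $ (geometric decay away from $\mathtt s_m^{\mathtt b_m}\mathbb Z$), analogous to Lemma \ref{single-scale-product-lemma}. Summing over all $0 \le u,v \le N-1$ and over $N$ with the weight $N^{-3}$ gives at worst $C(h,b)\log N$-type factors, which in view of the extreme growth \eqref{Km-choice}--\eqref{normality-hypotheses} will collapse to a summable sequence in $m$. This step requires no number-theoretic input about $b$ versus $\mathtt s_m$, and is the subject of Section \ref{estimating-Vm1-section}. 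It will be formally stated as Proposition \ref{W-sum-prop}.

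The heart of the argument is $\mathcal V_{m2}$, the contribution from the ``resonant'' frequencies where $\mathtt v_m(\xi)$ is essentially of size $O(1)$. Here one needs to show that the number of pairs $(u,v)$ with $0 \le u,v \le N-1$ and $hb^u(b^v-1) \in R_m$ is much smaller than $N^2$. This is exactly where the hypothesis on $b$ enters: multiplicative independence of $b$ from each $\mathtt t \in \mathscr{C}$ limits how often $\mathtt s_m^{\mathtt a_m}$ can divide $hb^u(b^v-1)$. For part (a) (Proposition \ref{normality-special-prop}), the condition \eqref{bases-B1-def} provides a prime $\mathtt p \mid \mathtt s$ with $\mathtt p \nmid b$, so $\mathtt p$-adic valuations alone suffice: $v_{\mathtt p}(hb^u(b^v-1)) = v_{\mathtt p}(h) + v_{\mathtt p}(b^v-1)$, and the latter is controlled by the order of $b$ modulo powers of $\mathtt p$. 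For part (b) (Proposition \ref{normal-prop}), the full condition $b \in \mathscr{B} = \{2,3,\ldots\}\setminus\overline{\mathscr B'}$ plus $\{0,1\}\subseteq \mathscr{D}_m$ permits us to invoke the Schmidt-type Lemma \ref{Schmidt-lemma-extremal-digits} (with its constants $\kappa_m$, which is where the choice in \eqref{Km-choice} originates), furnishing a quantitative lower bound on how often the base-$\mathtt s_m$ digits of $h b^u(b^v-1)$ avoid $\mathscr{D}_m^{\mathtt b_m-\mathtt a_m}$. This is developed in Sections \ref{v-pointwise-section-take-2}--\ref{number-theoretic-tools-section} and assembled in Sections \ref{estimating-vm-section-Part1}--\ref{estimating-vm-section-Part2}, culminating in Proposition \ref{V2-sum-prop}.

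The main obstacle will be part (b): one must trade the size of $R_m$ against the pointwise size of $\mathtt v_m$ on $R_m$ in a way that still gives a summable tail $\mathtt c_m$. The natural choice of $R_m$ grows like $\mathtt s_m^{-\kappa_m \mathtt b_m}$-density, but the count of $(u,v)$ with $\xi \in R_m$, coming from Schmidt's lemma, scales like $N^2 \cdot \mathtt s_m^{-c\kappa_m(\mathtt b_m-\mathtt a_m)}$ only for $N$ in a certain range; the range is dictated by the multiplicative order of $b$ modulo $\mathtt s_m^{\mathtt a_m}$, which need not be uniform in $m$. The fast growth \eqref{Km-choice} of $\mathtt K_m$ (hence of $\mathtt a_m$) relative to $\mathtt K_{m-1}$ and $\mathtt s_m^{C_0}$ is designed precisely to absorb this loss and force $\mathtt c_m$ to decay faster than any polynomial in $m$, so that $\sum_m \mathtt c_m < \infty$ regardless of $\mathcal E$.
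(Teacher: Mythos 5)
Your high-level plan — splitting $\mathcal V_m$ into a benign part controlled by pointwise analytic bounds and a hard part requiring number-theoretic counting via Schmidt-type lemmas — follows the paper's architecture, but the analytic ingredient you propose for $\mathcal V_{m1}$ is the wrong one, and the error is load-bearing.

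You claim the pointwise estimate for $\mathtt v_m$ will take the form $|\mathtt v_m(\xi)| \le C\varepsilon_m + (\text{geometric decay})$, by analogy with Lemma \ref{single-scale-product-lemma}. But that lemma controls $\mathfrak F = \mathfrak A\mathfrak B$, the building block of the Rajchman estimate, whereas $\mathtt v = \widehat{\Phi}_1 - \widehat{\Psi}_1$ has a different structure: by \eqref{identity} and \eqref{wm-alternate}, $\mathtt v(\xi) = \Omega_1(\xi)\,\mathfrak A(\xi)\cdot \varepsilon\,\widehat{\mathtt 1}(\xi\mathtt s^{-\mathtt b})\bigl[\mathfrak B^{\ast}(\xi) - \mathfrak C(\xi)\bigr]$. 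The factor $\bigl[\mathfrak B^{\ast} - \mathfrak C\bigr]$ exhibits cancellation both at low frequencies ($|\xi| \ll \mathtt s^{\mathtt a}$) and high frequencies ($|\xi| \gg \mathtt s^{\mathtt b}$), and it is this cancellation — not the prefactor $\varepsilon$ — that the paper exploits: Lemma \ref{vm-est-lemma} gives $|\mathtt v_m(\xi)| \le C_0 \min\bigl[1, |\xi|/\mathtt s_m^{\mathtt a_m}, \mathtt s_m^{\mathtt b_m}/|\xi|\bigr]$ \emph{uniformly in $\varepsilon_m$}, with the $\varepsilon_m$ absorbed into the constant. A flat $C\varepsilon_m$ term applied over all $(u,v)\in\mathbb Z_N^2$ would contribute $\sum_N N^{-3}\cdot N^2\varepsilon_m = \varepsilon_m\sum_N N^{-1}$, which diverges in $N$ before one even reaches the $m$-summation; and even were the $N$-range restricted, the resulting bound would make summability in $m$ contingent on $\mathcal E$, contradicting the explicit remark after Proposition \ref{normal-prop} that no constraint is placed on $\mathcal E$. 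Relatedly, the paper's split \eqref{Vm-decomposition}--\eqref{def-collection-V2} is not a divisibility-based resonance split as you describe: it is a size split on $b^{u+v}$ and on $N$, tuned to where the $\min$ in Lemma \ref{vm-est-lemma} fails to be small; resonance (divisibility and digit distribution of $hb^u(b^v-1)$ in base $\mathtt s_m$) is introduced only in the further partitioning of $\mathbb V_2$.

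A secondary inaccuracy: for part (a), you suggest that $\mathtt p$-adic valuations alone suffice to control $\mathcal V_{m2}$. They handle only the sub-sum $\mathcal X_1$ of \eqref{V2-decomp} via Lemma \ref{I1-lemma}; the remaining sub-sums $\mathcal X_2, \mathcal X_3$ still require Schmidt's digit-distribution Lemma \ref{Schmidt-lemma-extremal-digits} and the counting Lemma \ref{E-cardinality-lemma}, just as in part (b). The difference between the two cases is in which prime can play the role of $\mathtt p_1$ in \eqref{prime-factorization} and how the quantity $\bigl(|\xi|\bigr)_{\mathtt p_1}$ is controlled, not in whether Schmidt's lemma is needed.
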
  
\noindent Proposition \ref{U-prop} will be proved in Section \ref{estimating-um-section}. Proposition \ref{V-prop} will be proved in Section \ref{Vprop-proof-section-part1}, modulo two other statements that will be verified further along. Using these statements, the proof of Proposition \ref{DEL-difference-prop} is completed as follows.  
\subsection{Proof of Proposition \ref{DEL-difference-prop}, assuming Propositions \ref{U-prop} and \ref{V-prop}} 
In view of the decomposition \eqref{difference-decomp} of $\mathscr{S}_m$ into $\mathscr{U}_m$ and $\mathscr{V}_m$, we see that the sums $\mathcal S_m$, $\mathcal U_m$ and $\mathcal V_m$, given respectively by \eqref{Sm-summable}, \eqref{Um-sum}, \eqref{Vm-sum}, are related by the inequality 
\[ \mathcal S_m(h, b) \leq \mathcal U_m(h, b) + \mathcal V_m(h, b).  \] Thus, in order to prove the summability condition \eqref{Sm-summable} for $\mathcal S_m$,  it suffices to show that both the quantities $\mathcal U_m$ and $\mathcal V_m$ are summable in $m$. For $\mathcal V_m$, this is already ensured by Proposition \ref{V-prop}, since the two parts \eqref{DEL-difference-prop-parta} and \eqref{DEL-difference-prop-partb} of Proposition \ref{DEL-difference-prop} correspond exactly to the two situations covered by Proposition \ref{V-prop}.  On the other hand, the inequality \eqref{Um-sum} for $\mathcal U_m$ in Proposition \ref{U-prop} holds independently of the choice of $b$ and $\pmb{\Pi}$, and therefore applies to both parts of Proposition \ref{DEL-difference-prop} simultaneously.    
%$b$ and $\pmb{\Pi}$ Since the right hand side $\mathtt c_m(h, b)$ of \eqref{Vm-sum} associated with $\mathscr{V}_m$ is already known to be summable in $m$ by Proposition \ref{V-prop}, 
Thus we only need to verify that the quantity on the right hand side of \eqref{Um-sum}, which bounds $\mathcal{U}_m$, is summable. The assumption \eqref{ambm-take-2} on $\mathtt a_m, \mathtt b_m$, which are ensured by \eqref{Km-choice} and \eqref{normality-hypotheses}, dictates that 
\begin{equation} {\mathtt s_{m-1}^{\mathtt b_{m-1}}}{\mathtt s_m^{-\frac{\mathtt a_m}{2}}} \leq  \mathtt s_m^{-\frac{\mathtt a_m}{6}} \leq C(\pmb{\Pi}) 2^{-c_0 \mathtt a_m}. \label{Vm-sum-bound} \end{equation} 
The assumption \eqref{normality-hypotheses} requires the positive integers $\{\mathtt a_m\}$ to be distinct. As a result, the terms $2^{-c_0 \mathtt a_m}$ on the right hand side of \eqref{Vm-sum-bound} are distinct elements of the geometric sequence $\{2^{-c_0m} : m \geq 1\}$. Hence \eqref{Vm-sum-bound} implies that 
\begin{equation}  \label{V-prop-est}
\sum_{m=1}^{\infty} \mathcal{U}_m(h, b) \leq C(\pmb{\Pi}) \sum_{m=1}^{\infty} 2^{-c_0 \mathtt a_m} \leq C(\pmb{\Pi}) \sum_{m=1}^{\infty} 2^{-c_0 m} <  \infty. 
\end{equation}  
This completes the proof of Proposition \ref{DEL-difference-prop}. 
\qed
\vskip0.1in
\noindent To summarize the situation thus far: we have reduced the task of establishing normality to the verification of the summability criteria \eqref{Um-sum} and \eqref{Vm-sum} for $\mathcal U_m$ and $\mathcal V_m$ respectively. We deal with these tasks in the remainder of the article.

\section{A pointwise estimate for $\mathtt u$} \label{u-pointwise-estimate-section}
The decomposition \eqref{difference-decomp} of $\mathscr{S}_m$ and the definitions \eqref{def-um}, \eqref{def-vm} lead us to a study of
%\subsection{Exponential sums associated to $\widehat{\Phi}_1 - \widehat{\Phi}_0$} \label{uv-section} 
%This section is devoted to 
the exponential sums that occur in the Fourier coefficients of the density difference $\Phi_1 - \Phi_0$, where $\Phi_0$ and $\Phi_1$ are the densities in \eqref{Phi0} and \eqref{Phi1} respectively. As noted in Section \ref{elem-op-section}, the elementary operation $\mathscr{O}$ taking $(\mathtt E_0, \Phi_0)$ to $(\mathtt E_1, \Phi_1)$ involves two distinct parts. To reflect this, we write the difference as the sum of two terms: 
\begin{equation}  \widehat{\Phi}_1 - \widehat{\Phi}_0 = \mathtt u + \mathtt v, \quad \text{ where } \quad  \mathtt u := \widehat{\Psi}_1 - \widehat{\Phi}_0, \quad \mathtt v := \widehat{\Phi}_1 - \widehat{\Psi}_1.  \label{uv-def} \end{equation} 
Here $\Psi_1$ is the density at the intermediate step of the operation $\mathscr{O}$, given by \eqref{intermediate-density}. 
\vskip0.1in
\noindent The purpose of this section is to develop the analytical machinery needed to prove Proposition \ref{U-prop}. The estimates on $\mathtt u$ recorded below apply to $\mathtt u_m$ and therefore to $\mathcal U_m$, and will be used in Section \ref{estimating-um-section} for this purpose.  A similar exercise will be repeated for $\mathtt v$ and $\mathtt v_m$ later, but with certain key differences.   
\begin{lemma} \label{u-lemma}
There exists an absolute constant $C_0 > 0$ for which the function $\mathtt u$ defined in \eqref{uv-def} obeys the estimate
\begin{equation} \label{u-pointwise-estimate}
|\mathtt u(\xi)| \leq C_0 \min \left[1, \frac{\mathtt t}{|\xi|}, \frac{|\xi|}{\mathtt s^{\mathtt a}} \right], \quad \xi \in \mathbb Z. 
\end{equation} 
\end{lemma}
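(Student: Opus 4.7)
The plan is to establish the three bounds in the minimum separately, relying on the representations of $\widehat{\Phi}_0$ and $\widehat{\Psi}_1$ already developed in Section \ref{density-factorization-section}. From \eqref{Phi1-hat} with $j=0$ and the first line \eqref{intermediate-Fourier-coefficient} of Section \ref{density-factorization-section}, the quantity of interest can be written as
\begin{equation} \label{u-decomp-plan}
\mathtt u(\xi) = \widehat{\mathtt 1}\bigl(\xi \mathtt N \mathtt s^{-\mathtt a}\bigr) \Omega_1(\xi) - \widehat{\mathtt 1}\bigl(\xi \mathtt t^{-1}\bigr)\Upsilon_0(\xi),
\end{equation}
so all three bounds will come from combining trivial facts ($|\Omega_1|, |\Upsilon_0|\le 1$, $|\widehat{\mathtt 1}|\le 1$) with the more refined tools of Section \ref{exp-sum-section}.

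The bound $|\mathtt u(\xi)|\le 2$ is immediate from the triangle inequality applied to \eqref{u-decomp-plan}, since each factor is at most $1$ in absolute value. For the bound $|\mathtt u(\xi)|\le C_0\mathtt t/|\xi|$, I would bound each summand of \eqref{u-decomp-plan} separately using the decay estimate $|\widehat{\mathtt 1}(\theta)|\le 1/(\pi|\theta|)$ from Lemma \ref{1-hat-lemma}. The first summand gives $|\widehat{\mathtt 1}(\xi\mathtt N\mathtt s^{-\mathtt a})|\le \mathtt s^{\mathtt a}/(\pi|\xi|\mathtt N)$, and the definition \eqref{N} of $\mathtt N$ combined with $\mathtt s^{\mathtt a}>2\mathtt t$ (built into the parameter choice in Section \ref{param-elem-op-section}) gives $\mathtt N\ge \mathtt s^{\mathtt a}/(2\mathtt t)$, hence $\mathtt s^{\mathtt a}/\mathtt N\le 2\mathtt t$. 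The second summand directly yields $\mathtt t/(\pi|\xi|)$.

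The bound $|\mathtt u(\xi)|\le C_0|\xi|\mathtt s^{-\mathtt a}$ is the heart of the lemma and the one new point; it is what will later make the exponential sum sum over small frequencies controllable. Here I would rearrange \eqref{u-decomp-plan} as
\begin{equation} \label{u-split-plan}
\mathtt u(\xi)= \bigl[\Omega_1(\xi)-\Upsilon_0(\xi)\bigr]\widehat{\mathtt 1}\bigl(\xi\mathtt N\mathtt s^{-\mathtt a}\bigr)+\Upsilon_0(\xi)\bigl[\widehat{\mathtt 1}\bigl(\xi\mathtt N\mathtt s^{-\mathtt a}\bigr)-\widehat{\mathtt 1}\bigl(\xi\mathtt t^{-1}\bigr)\bigr].
\end{equation}
For the first bracket apply Lemma \ref{error-lemma} to obtain $|\Omega_1-\Upsilon_0|\le 2\pi|\xi|\mathtt s^{-\mathtt a}$, while $|\widehat{\mathtt 1}|\le 1$. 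For the second bracket I would use the mean value theorem together with the derivative bound $|\widehat{\mathtt 1}'|\le 2\pi$ from Lemma \ref{1-hat-lemma}, noting that the defining inequality \eqref{N} gives $|\mathtt t^{-1}-\mathtt N\mathtt s^{-\mathtt a}|<2\mathtt s^{-\mathtt a}$, so the separation of the two arguments of $\widehat{\mathtt 1}$ is at most $2|\xi|\mathtt s^{-\mathtt a}$.

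The only subtlety in the execution is the second bracket of \eqref{u-split-plan}: a naive triangle inequality $|\widehat{\mathtt 1}(\cdot)|+|\widehat{\mathtt 1}(\cdot)|\le 2$ would only recover the trivial bound, so the gain of a factor $|\xi|\mathtt s^{-\mathtt a}$ really depends on exploiting the closeness of $\xi\mathtt N\mathtt s^{-\mathtt a}$ to $\xi\mathtt t^{-1}$, which in turn depends on how sharply $\mathtt N$ approximates $\mathtt s^{\mathtt a}/\mathtt t$. This is already encoded in \eqref{N}, so the mean value argument closes without further work, and combining the three bounds produces \eqref{u-pointwise-estimate}.
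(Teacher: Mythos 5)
Your proof is correct and follows essentially the same path as the paper, which decomposes $\mathtt u = \mathtt z_1 + \mathtt z_2$ with $\mathtt z_1 = [\widehat{\mathtt 1}(\xi\mathtt N\mathtt s^{-\mathtt a})-\widehat{\mathtt 1}(\xi\mathtt t^{-1})]\Omega_1$ and $\mathtt z_2 = \widehat{\mathtt 1}(\xi\mathtt t^{-1})[\Omega_1-\Upsilon_0]$ and invokes Lemmas \ref{1-hat-lemma}, \ref{1-hat-difference-lemma} and \ref{error-lemma} to obtain all three branches of the minimum simultaneously, whereas you obtain the branches one at a time and use the mirror decomposition \eqref{u-split-plan} for the third; the underlying ingredients are identical. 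One small arithmetic correction: your claim $\mathtt N\ge\mathtt s^{\mathtt a}/(2\mathtt t)$ can fail when $\mathtt s^{\mathtt a}/\mathtt t<4$ (for instance $\mathtt s^{\mathtt a}/\mathtt t=2.9$ gives $\mathtt N=1<1.45$), but \eqref{N} always yields $\mathtt N\ge\max(1,\,\mathtt s^{\mathtt a}/\mathtt t-2)\ge\mathtt s^{\mathtt a}/(4\mathtt t)$, so $\mathtt s^{\mathtt a}/\mathtt N\le 4\mathtt t$, which is all the argument requires.
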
 
\begin{proof} 
\noindent Let us recall from \eqref{Phi1-hat} and \eqref{intermediate-Fourier-coefficient} that
%the definitions of $\widehat{\psi}_m$ and $\widehat{\varphi}_{m-1}$ from Section \ref{mu-Rajchman-section}: 
\[ \widehat{\Phi}_{0}(\xi) = \widehat{\mathtt 1} \bigl(\xi \mathtt t^{-1} \bigr) \Upsilon_0(\xi) \quad \text{ and } \quad 
\widehat{\Psi}_1(\xi) = \widehat{\mathtt 1}\bigl( \xi \mathtt N \mathtt s^{-\mathtt a}\bigr) \Omega_1(\xi), \]  
where $\Upsilon_0$ and $\Omega_1$ are the exponential sums defined in \eqref{Yj} and \eqref{Omega_1} respectively. We estimate the difference of the two as follows, 
\begin{align} \mathtt u(\xi) &= \widehat{\Psi}_1(\xi) - \widehat{\Phi}_{0}(\xi) 
%\bigl(\widehat{\psi}_m - \widehat{\varphi}_{m-1}\bigr)(\xi) \nonumber 
%\widehat{1}\bigl(\xi \mathtt N_m \mathtt s_m^{-\mathtt a_m} \bigr) \Lambda_m(\xi) &-  \widehat{\mathtt 1}\bigl( \xi \mathtt s_{m-1}^{-\mathtt b_{m-1}}\bigr) \Gamma_{m-1}(\xi) 
= \mathtt z_{1}(\xi) + \mathtt z_{2}(\xi), \; \text{ where } \label{vm-decomp} \\ 
\mathtt z_{1}(\xi) & := \Bigl[ \widehat{\mathtt 1}(\xi \mathtt N \mathtt s^{-\mathtt a}) - \widehat{\mathtt 1}(\xi \mathtt t^{-1}) \Bigr] \Omega_{1}(\xi), \label{def-vm1} \\ 
\mathtt z_{2}(\xi) &:= \widehat{\mathtt 1}(\xi \mathtt t^{-1}) \bigl[ \Omega_1(\xi) - \Upsilon_{0}(\xi) \bigr]. \label{def-vm2} 
%\text{ so that } 
%\bigl| \mathtt v_m(\xi) \bigr| \leq  |\mathtt v_{m1}(\xi)| + |\mathtt v_{m2}(\xi)|. 
%\bigl| \widehat{\mathtt 1}(\xi \mathtt N_m \mathtt s_m^{-\mathtt a_m}) - \widehat{\mathtt 1}(\xi \mathtt s_{m-1}^{-\mathtt b_{m-1}}) \bigr| + \bigl|  \widehat{\mathtt 1}(\xi \mathtt s_{m-1}^{-\mathtt b_{m-1}})\bigr| \times \bigl| \Lambda_m(\xi) - \Gamma_{m-1}(\xi) \bigr|.
\end{align}  
With this decomposition of $\mathtt u$ behind us, we are ready to prove \eqref{u-pointwise-estimate}. 
%As observed in Sections \ref{exp-sum-section}, both functions $\Gamma_m$ and $\Lambda_m$ are exponential sums of a specific type; their summands are of the form $\mathtt w(\mathtt I) e(\xi \alpha(\mathtt I))$, with $\alpha(\mathtt I)$ being the left endpoint of a basic interval and $\mathtt w(\mathtt I)$ its weight according to the mass distribution of $\mu$.  In particular, the expression for $\widehat{\psi}_m$ permits a further simplification: 
%\begin{align*}
% \Omega_1(\xi; \pmb{\gamma}_m) &= \sum_{\mathbf i \in \mathbb I_{m-1}} \sum_{k \in \mathbb K_m} \mathtt w \bigl(\mathbf i \bigr) \frac{1}{\mathtt N_m} e \Bigl(\xi \bigl[\alpha(\mathbf i) + \eta(\mathbf i) + k \mathtt N_m^{-1} \bigr] \Bigr)  \\  &= \left[ \right] \times \Delta_m(\xi) 
%\end{align*}     
\vskip0.1in
\noindent In view of \eqref{vm-decomp}, we will estimate $\mathtt z_{1}$ and $\mathtt z_{2}$ separately. Let us start with $\mathtt z_{1}$, given by \eqref{def-vm1} as a product of two factors. The first factor lends itself to the estimate obtained in Lemma \ref{1-hat-difference-lemma}, while $\Omega_1$ admits the trivial bound 1. Setting  
$\theta_1 = \xi \mathtt N \mathtt s^{-\mathtt a}$, $\theta_2 = \xi \mathtt t^{-1}$ in \eqref{1-hat-difference-estimate}, we obtain 
\begin{align}
|\mathtt z_{1}(\xi) | &\leq \bigl| \widehat{\mathtt 1}(\xi \mathtt N \mathtt s^{-\mathtt a}) - \widehat{\mathtt 1}(\xi \mathtt t^{-1}) \bigr| \leq C_0 \min \bigl(1, |\theta_1-\theta_2|\bigr) \times \min \bigl(1, |\theta_1|^{-1} \bigr) \nonumber \\ 
&\leq C_0 \min \left[1, \frac{1}{|\theta_1|}, |\theta_1 - \theta_2| \right] \leq C_0  \min \left[1, \frac{\mathtt t}{|\xi|}, \frac{|\xi|}{\mathtt s^{\mathtt a}}\right]. \label{vm1-est}
\end{align}
The display above relies on the defining property \eqref{N} of $\mathtt N$, which implies  
\[ |\theta_1| = |\xi| \mathtt N \mathtt s^{-\mathtt a} \leq |\xi| \mathtt t^{-1} = |\theta_2|, \; \; \frac{1}{|\theta_1|} = \frac{\mathtt s^{\mathtt a}}{\mathtt N |\xi|} \leq \frac{C_0 \mathtt t}{|\xi|} \; \; \text{ and } \; \; |\theta_1 - \theta_2| \leq 3 |\xi| \mathtt s^{-\mathtt a}. \]
\vskip0.1in  
\noindent Next we turn to the estimation of the quantity $\mathtt z_{2}$ defined in \eqref{def-vm2}. This too is a product of two factors; the first one is of the form $\widehat{\mathtt 1}(\cdot)$, and hence can be estimated using Lemma \ref{1-hat-lemma}. The second factor  $|\pmb{\Upsilon}_0 - \Omega_1|$ has already been estimated in Lemma \ref{error-lemma}.
%we observe that  %\[ \Gamma_{m-1} (\xi) = \Upsilon_1(\xi; \pmb{\gamma}_{m-1}) = \Upsilon_0(\xi; \pmb{\gamma}_m), \quad \Lambda_m(\xi) = \Omega_1(\xi; \pmb{\gamma}_m), \]
%where $\Upsilon_0, \Upsilon_1, \Omega_1$ are the exponential sums \eqref{Yj}, \eqref{factorization} associated with $\mathscr{O}$ defined in Section \ref{density-factorization-section}. As we have seen in this section, $\Upsilon_0$ and $\Omega_1$ are very similar in size and form; 
%Lemma \ref{error-lemma}. quantifies the error and hence $|\Lambda_m - \Gamma_{m-1}|$. 
Invoking the estimates \eqref{1-hat-bound} and \eqref{difference} from Lemmas \ref{1-hat-lemma} and \ref{error-lemma} respectively, we obtain 
\begin{align} 
|\mathtt z_{2}(\xi)| &= \bigl| \widehat{\mathtt 1}(\xi \mathtt t^{-1}) \bigr| \times \bigl| \Omega_1 (\xi) - \Upsilon_{0}(\xi)\bigr| \nonumber \\
&\leq C_0 \min \left[1, \frac{\mathtt t}{|\xi|}\right] \min \left[1, \frac{|\xi|}{\mathtt s^{\mathtt a}}\right] \leq C_0  \min \left[1, \frac{\mathtt t}{|\xi|}, \frac{|\xi|}{\mathtt s^{\mathtt a}}\right]. \label{vm2-est}
\end{align} 
Combining \eqref{vm1-est} and \eqref{vm2-est} with \eqref{vm-decomp} results in the desired conclusion \eqref{u-pointwise-estimate}. 
\end{proof} 

\section{Estimating $\mathcal{U}_m$} \label{estimating-um-section} 
The task of estimating the sum $\mathscr{U}_m$ defined in \eqref{def-um} involves a study of its summands $\mathtt u_m$. In Section \ref{u-pointwise-estimate-section}, we have recorded a pointwise estimate for the function $\mathtt u$ that can be lifted directly to obtain a similar estimate for $\mathtt u_m$. We state this in Lemma \ref{um-est-lemma} below in the form that we will apply. This estimate is the only ingredient needed to produce the bound for $\mathscr{U}_m$ stated in Proposition \ref{U-prop}. We complete the proof of Proposition \ref{U-prop} in this section. 
%These functions obey certain pointwise estimates that we record in this section. 
\begin{lemma} \label{um-est-lemma}
The function $\mathtt u_m$ defined in \eqref{def-um} admits the following estimate:
\begin{equation} \label{um-est}
|\mathtt u_m(\xi)| \leq C_0 \min \left[1, \frac{\mathtt s_{m-1}^{\mathtt b_{m-1}}}{|\xi|}, \frac{|\xi|}{\mathtt s_m^{\mathtt a_m}}\right], \quad \xi \in \mathbb Z. 
\end{equation} 
\end{lemma}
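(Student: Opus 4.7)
The plan is to reduce Lemma \ref{um-est-lemma} to Lemma \ref{u-lemma} by recognizing $\mathtt u_m$ as an instance of the generic function $\mathtt u$ defined in \eqref{uv-def}, evaluated at the parameters specific to the $m^{\text{th}}$ step of the iterative construction of $\mu$. This is a structural observation, not a new calculation.

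More precisely, I would first recall from Section \ref{iteration-subsection}, specifically from \eqref{iteration}, that the $m^{\text{th}}$ step of the construction applies the elementary operation $\mathscr{O}$ to the input vector
\[ \pmb{\gamma}_m = \bigl(E_{m-1}, \varphi_{m-1}; \mathtt s_{m-1}^{\mathtt b_{m-1}}, \mathtt s_m, \mathtt a_m, \mathtt b_m, \mathscr{D}_m, \varepsilon_m\bigr), \]
producing the output pair $(E_m, \varphi_m)$. The intermediate density $\psi_m$ produced between $\varphi_{m-1}$ and $\varphi_m$ is, by construction in Section \ref{second-step-section}, the precise analogue of $\Psi_1$ in \eqref{intermediate-density-2} for the parameters $\pmb{\gamma}_m$. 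Consequently, the function $\mathtt u_m = \widehat{\psi}_m - \widehat{\varphi}_{m-1}$ coincides with the function $\mathtt u$ defined in \eqref{uv-def}, when the latter is applied with $\mathtt t = \mathtt s_{m-1}^{\mathtt b_{m-1}}$, $\mathtt s = \mathtt s_m$, $\mathtt a = \mathtt a_m$, and the remaining parameters inherited from $\pmb{\gamma}_m$.

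Second, I would apply Lemma \ref{u-lemma} to this instance of $\mathtt u$. Substituting the parameter identifications above into the estimate \eqref{u-pointwise-estimate} immediately yields
\[ |\mathtt u_m(\xi)| \leq C_0 \min\left[1, \frac{\mathtt s_{m-1}^{\mathtt b_{m-1}}}{|\xi|}, \frac{|\xi|}{\mathtt s_m^{\mathtt a_m}}\right], \]
which is exactly \eqref{um-est}. No case analysis or additional estimates are needed, since Lemma \ref{u-lemma} already does all the work (decomposition into $\mathtt z_1 + \mathtt z_2$, application of Lemma \ref{1-hat-difference-lemma} to the $\widehat{\mathtt 1}$-difference, and the error bound from Lemma \ref{error-lemma} comparing $\Omega_1$ to $\Upsilon_0$).

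There is no real obstacle here; the only minor point to verify is that the verification of the hypotheses of $\mathscr{O}$ at step $m$ (namely $\mathtt s_m^{\mathtt a_m} > 2 \mathtt s_{m-1}^{\mathtt b_{m-1}}$) follows from the standing assumption \eqref{abs} on $\pmb{\Pi}$, so that Lemma \ref{u-lemma} is legitimately applicable at every scale $m$. Once this observation is made, the lemma is immediate.
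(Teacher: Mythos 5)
Your proposal is correct and coincides exactly with the paper's proof, which simply notes that Lemma \ref{um-est-lemma} is a re-statement of \eqref{u-pointwise-estimate} in Lemma \ref{u-lemma} under the substitution $\mathtt t = \mathtt s_{m-1}^{\mathtt b_{m-1}}$, $\mathtt s = \mathtt s_m$, $\mathtt a = \mathtt a_m$. Your extra remark about checking $\mathtt s_m^{\mathtt a_m} > 2\mathtt s_{m-1}^{\mathtt b_{m-1}}$ via \eqref{abs} is a small bonus the paper leaves implicit.
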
 
\begin{proof} 
This is a re-statement of \eqref{u-pointwise-estimate} in Lemma \ref{u-lemma}, with $\mathtt t = \mathtt s_{m-1}^{\mathtt b_{m-1}}$, $\mathtt s = \mathtt s_m$ and $\mathtt a = \mathtt a_m$.   
\end{proof} 
\noindent {\em{Remark: }} The bound \eqref{um-est} of $|\mathtt u_m(\xi)|$ depends only on $|\xi|$, and therefore cannot account for finer estimates that may result from special arithmetic properties of $\xi$. 
%offers non-trivial estimates (strictly smaller than $C_0$)  in the sense that it applies to every $\xi \in \mathbb Z$. 
This will be an important point of distinction from $\mathtt v_m$.  \label{uniform-bound-remark}  
\subsection{Proof of Proposition \ref{U-prop}}\label{V-prop-proof}  
The inequality \eqref{Um-sum} claimed in Proposition \ref{U-prop} involves a triple sum $\mathcal U_m$. The inner sum $\mathscr{U}_m$ given by \eqref{def-um} depends in turn on the summands $\mathtt u_m$, which we have studied in Section \ref{u-pointwise-estimate-section} and Lemma \ref{um-est-lemma}.   
The estimate \eqref{um-est} obtained in Lemma \ref{um-est-lemma} provides two non-trivial upper bounds for $\mathtt u_m(\xi)$. %The collection of $\xi \in \mathbb Z$ with $|\xi| \in [\mathtt s_{m-1}^{\mathtt b_{m-1}}, \mathtt s_m^{\mathtt a_m})$ can therefore be decomposed into two complementary regimes, where one of these bounds is smaller than the other. 
This motivates our treatment of $\mathscr{U}_m$; we decompose the defining sum of $\mathscr{U}_m$ into two sub-sums over $(u, v)$ in complementary regimes, where one of these bounds is smaller than the other. This is formalized as follows; set     
\begin{align} 
c_m &:= \frac{\mathtt a_m}{2} \log_b(\mathtt s_m) \; \text{ and decompose } \; \mathbb Z_N^2 = \mathbb U_1 \sqcup \mathbb U_2, \text{ where }  \label{def-cm} \\ 
\mathbb U_1 &:= \bigl\{(u, v) \in \mathbb Z_{N}^2 : b^{u+v} \leq \mathtt s_m^{\frac{\mathtt a_m}{2}}\bigr\} = \bigl\{(u, v) \in \mathbb Z_N^2 : u+v \leq c_m \bigr\}, \nonumber \\  
\mathbb U_2 &:= \bigl\{(u, v) \in \mathbb Z_N^2 : b^{u+v} > \mathtt s_m^{\frac{\mathtt a_m}{2}}\bigr\} = \bigl\{(u,v) \in \mathbb Z_N^2 : u + v > c_m \bigr\}. \nonumber
\end{align} 
According to the definition \eqref{def-um}, the set $\mathbb Z_{N}^2$ is the domain of summation of $\mathscr{U}_m$. The splitting \eqref{def-cm} of $\mathbb Z_{N}^2$ therefore leads to a corresponding decomposition of the sum $\mathscr{U}_m$:
\begin{equation} 
\mathscr{U}_m = \mathscr{U}_{m1} + \mathscr{U}_{m2}, 
%\quad \mathscr{V}_{mj}(h; b, N) = \sum \bigl\{\bigl|\mathtt v_m(hb^u (b^v-1)) \bigr| : (u, v) \in \mathbb V_j \} 
\end{equation}  
where $\mathscr{U}_{mj}(N; h, b)$ represents the sum of $\bigl|\mathtt u_m(hb^u (b^v-1)) \bigr|$ over all indices $(u, v) \in \mathbb U_j$, $j = 1,2$. For $\mathscr{U}_{m1}$, where the argument of $\mathtt u_m(\cdot)$ is relatively small, we invoke the bound $|\mathtt u_m(\xi)| \leq C_0 |\xi| \mathtt s_m^{-\mathtt a_m}$ from Lemma \ref{um-est-lemma}. This gives 
\begin{align}
\mathscr{U}_{m1} &\leq \sum_{(u,v) \in \mathbb U_1} \bigl| \mathtt u_m \bigl(h b^u (b^v -1) \bigr)\bigr| \leq C_0 \sum_{(u,v) \in \mathbb U_1}\frac{|h b^u(b^v-1)|}{\mathtt s_m^{\mathtt a_m}} \nonumber \\ 
&\leq C_0 |h| \sum_{(u, v) \in \mathbb U_1} { b^{u+v}}{\mathtt s_m^{-\mathtt a_m}} \leq C(h) \mathtt s_m^{-\mathtt a_m} \sum_{u=0}^{N-1} \sum_{r=0}^{\lfloor c_m \rfloor} b^{r} \nonumber \\
&\leq C(h, b) N \mathtt s_m^{-\mathtt a_m} b^{c_m} \leq C(h, b) N \mathtt s_m^{-\frac{\mathtt a_m}{2}}. \label{Um1-est}
\end{align}
The fourth inequality above results from a change of variable in the double sum, replacing the summation index $(u,v)$ by $(u, r)$, with $r = u+v$. The corresponding inequality then follows from the inclusion $\mathbb U_1 \subseteq \{(u, v): u \in \mathbb Z_N, u+v \leq c_m\}$. The last step uses the definition \eqref{def-cm} of $c_m$, which says that $b^{c_m} = \mathtt s_m^{\mathtt a_m/2}$. 
\vskip0.1in
\noindent The estimation for $\mathscr{U}_{m2}$ is similar, except this time the bound $|\mathtt u_m(\xi)| \leq C_0 \mathtt s_{m-1}^{\mathtt b_{m-1}}/|\xi|$ from Lemma \ref{um-est-lemma} is better. A sequence of analogous computations leads to 
\begin{align}
\mathscr{U}_{m2} &\leq \sum_{(u,v) \in \mathbb U_2} \bigl| \mathtt u_m \bigl(h b^u (b^v -1) \bigr)\bigr| \leq C_0 \sum_{(u,v) \in \mathbb U_2} \frac{\mathtt s_{m-1}^{\mathtt b_{m-1}}}{|h b^u (b^v-1)|} \nonumber \\
&\leq \frac{C(b)}{|h|} \mathtt s_{m-1}^{\mathtt b_{m-1}} \sum_{(u,v) \in \mathbb U_2}  b^{-(u+v)} \leq C(h, b) \mathtt s_{m-1}^{\mathtt b_{m-1}} \sum_{u=1}^{N} \sum_{r= \lfloor c_m \rfloor}^N b^{-r} \nonumber \\ &\leq C(h,b) \mathtt s_{m-1}^{\mathtt b_{m-1}} N b^{-c_m}  
\leq  C(h, b) N \mathtt s_{m-1}^{\mathtt b_{m-1}} \mathtt s_{m}^{-\frac{\mathtt a_m}{2}}. \label{Um2-est}
\end{align} 
Inserting \eqref{Um1-est} and \eqref{Um2-est} into the expression for $\mathcal U_{m2}$, we obtain 
\[ \mathcal U_m(h, b) = \sum_{N=1}^{\infty} N^{-3} \mathscr{U}_{m}(N; h, b) \leq C(h, b) \mathtt s_{m-1}^{\mathtt b_{m-1}} \mathtt s_{m}^{-\frac{\mathtt a_m}{2}} \sum_{N=1}^{\infty} N^{-2} 
\leq C(h, b)   \mathtt s_{m-1}^{\mathtt b_{m-1}} \mathtt s_{m}^{-\frac{\mathtt a_m}{2}}. \] 
This is the estimate claimed in \eqref{Um-sum}.
\qed
\vskip0.1in
\noindent As noted in the remark following Proposition \ref{U-prop}, its proof works for any choice of $b \in \mathbb N \setminus \{1\}$, independent of $\mathtt s_m$. 

\section{A pointwise estimate for $\mathtt v$: Take 1} \label{v-pointwise-section} 
%\section{Analytical tools for normality: Part 2} 
We now turn our attention to Proposition \ref{V-prop}, which involves the estimation of the sum $\mathcal V_m$ defined in \eqref{Vm-sum}. The procedure shares some similarities with the treatment of $\mathcal U_m$. Like $\mathtt u_m$, the summands $\mathtt v_m(\xi)$ of $\mathscr{V}_m$ obey pointwise estimates depending only on $|\xi|$. Recorded in Section \ref{w-pointwise-section} below, these estimates will turn out to be useful in controlling parts of the sum $\mathcal{V}_m$. 
%Being independent of any arithmetic relation between $b$ and $\mathtt s_m$, such estimates cannot control all of $\mathcal V_m$. This section will isolate the sub-sums of $\mathcal{V}_{m}$ that are amenable to these pointwise bounds, thereby identifying the portion of the sum that rely on special number-theoretic properties of the relevant bases.  
\vskip0.1in 
\noindent The feature that distinguishes $\mathcal V_m$ from $\mathcal U_m$ is this.  Since the pointwise estimates for $\mathtt v_m$ originating in this section do not rely on any special number-theoretic property of $\mathtt s_m$ and $b$ such as multiplicative independence, they cannot entirely account for the proof of Proposition \ref{V-prop}. However, by dispatching certain sub-sums of $\mathcal{V}_m$, these pointwise estimates reduce the problem of estimating $\mathcal V_m$ to considering a finite sub-sum of $\mathscr{V}_m$ localized on intermediate frequencies. Different tools are needed for studying the latter sum. 
\subsection{An identity} 
\noindent As in the case for $\mathtt u$, we first simplify the expression \eqref{uv-def} for $\mathtt v$. Combining \eqref{Phi1-hat}, \eqref{factorization} and \eqref{intermediate-Fourier-coefficient-2} yields 
\begin{align}
\mathtt v(\xi) & = \widehat{\Phi}_1(\xi) - \widehat{\Psi}_1(\xi) = \Omega_1(\xi) \times \mathfrak A(\xi) \times \bigl[ \widehat{1}(\xi \mathtt s^{-\mathtt b}) \mathfrak B(\xi) - \widehat{1}(\xi \mathtt s^{-\mathtt a}) \bigr],\; \xi \in \mathbb{Z}, \label{identity}  \\  &\text{ so that } \bigl| \mathtt v(\xi) \bigr| \leq \bigl|\mathfrak A(\xi) \bigr| \times \bigl| \mathfrak w(\xi) \bigr|, \; \; \text{ with } \;
\mathfrak w(\xi) :=  \widehat{1}(\xi \mathtt s^{-\mathtt b}) \mathfrak B(\xi) - \widehat{1}(\xi \mathtt s^{-\mathtt a}). \label{def-wm}
\end{align}  
The functions $\mathfrak A, \mathfrak B$ in the display above are the geometric sums of complex exponentials defined in \eqref{def-A}, \eqref{def-B} respectively. We observe that the factor $\mathfrak A$ is tied to the first part of the elementary operation $\mathscr{O}$, whereas $\mathfrak w$ corresponds to the second part. Each factor will play an important role in the proof of normality. 
\vskip0.1in 
\noindent The relation \eqref{def-wm} says that in order to estimate $\mathtt v$, we need
%The upper bound for $\mathtt u_m$ given by \eqref{def-wm} consists of two factors $\mathfrak A_m$ and $\mathtt w_m$. In \eqref{def-A}, we have recorded an explicit expression for $\mathfrak A_m$ in \eqref{def-A}. 
estimates for both $\mathfrak A$ and $\mathfrak w$. Let us first record the latter function in a simplified form. 
\begin{lemma} 
The function $\mathfrak w(\xi)$ defined in \eqref{def-wm} admits the following form:
\begin{align} \label{wm-alternate}
\mathfrak w(\xi) &= \varepsilon \widehat{\mathtt 1}(\xi \mathtt s^{-\mathtt b}) \Bigl[ \mathfrak B^{\ast}(\xi) - \mathfrak C(\xi) \Bigr] = \underline{\mathfrak w}_1(\xi) + \underline{\mathfrak w}_2(\xi),  \text{ with }\; \xi \in \mathbb{Z},\\
\underline{\mathfrak w}_1(\xi) &:=  \varepsilon \widehat{\mathtt 1}(\xi \mathtt s^{-\mathtt b}) \Bigl[ \mathfrak B^{\ast}(\xi) - 1 \Bigr], \quad \underline{\mathfrak w}_2(\xi) := \varepsilon \widehat{\mathtt 1}(\xi \mathtt s^{-\mathtt b}) \Bigl[  1 - \mathfrak C(\xi) \Bigr], \label{def-w1w2}
\end{align} 
where $\mathfrak B^{\ast}$ and $\mathfrak C$ are the exponential sums defined in \eqref{B*C} and $\varepsilon \in (0,1)$ is the ``bias" parameter as in Section \ref{param-elem-op-section}. 
\end{lemma}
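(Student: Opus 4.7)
The plan is to identify a single key identity, namely
\[
\widehat{\mathtt 1}(\xi \mathtt s^{-\mathtt a}) \;=\; \widehat{\mathtt 1}(\xi \mathtt s^{-\mathtt b}) \, \mathfrak C(\xi), \qquad \xi \in \mathbb Z,
\]
from which the rest of the lemma follows by algebraic manipulation. Once this identity is in hand, substituting it into the defining relation \eqref{def-wm} for $\mathfrak w$ and then using the definition \eqref{def-B} of $\mathfrak B$ in terms of $\mathfrak B^\ast$ and $\mathfrak C$ gives
\[
\mathfrak w(\xi) \;=\; \widehat{\mathtt 1}(\xi \mathtt s^{-\mathtt b})\bigl[\mathfrak B(\xi) - \mathfrak C(\xi)\bigr] \;=\; \varepsilon\, \widehat{\mathtt 1}(\xi \mathtt s^{-\mathtt b})\bigl[\mathfrak B^\ast(\xi) - \mathfrak C(\xi)\bigr],
\]
which is the first equality in \eqref{wm-alternate}. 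The decomposition $\mathfrak w = \underline{\mathfrak w}_1 + \underline{\mathfrak w}_2$ is then obtained by the trivial addition and subtraction of the constant $1$ inside the bracket.

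First I would prove the key identity via a direct computation based on the elementary observation that the interval $[0, \mathtt s^{-\mathtt a}]$ is the essentially disjoint union of the sub-intervals $[\ell \mathtt s^{-\mathtt b}, (\ell+1)\mathtt s^{-\mathtt b}]$ for $\ell \in \mathbb L = \mathbb Z_{\mathtt s^{\mathtt b - \mathtt a}}$. Integrating $e(\xi x)$ over both sides and using the change of variable $x \mapsto y \mathtt s^{-\mathtt a}$ (respectively $x \mapsto \ell \mathtt s^{-\mathtt b} + y \mathtt s^{-\mathtt b}$), one obtains
\[
\mathtt s^{-\mathtt a}\, \widehat{\mathtt 1}(\xi \mathtt s^{-\mathtt a}) \;=\; \sum_{\ell \in \mathbb L} e(\xi \ell \mathtt s^{-\mathtt b}) \int_{0}^{\mathtt s^{-\mathtt b}} e(\xi x)\, dx \;=\; \mathtt s^{-\mathtt b}\, \widehat{\mathtt 1}(\xi \mathtt s^{-\mathtt b}) \sum_{\ell \in \mathbb L} e(\xi \ell \mathtt s^{-\mathtt b}),
\]
and by the definition of $\mathfrak C$ in \eqref{B*C}, the sum on the right equals $\mathtt s^{\mathtt b - \mathtt a}\, \mathfrak C(\xi)$. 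Cancelling $\mathtt s^{-\mathtt a}$ yields the identity. (As a sanity check, one may verify consistency with the three case-by-case expressions for $\mathfrak C$ in Lemma \ref{BC-lemma}: the identity holds trivially when $\mathtt s^{\mathtt b} \mid \xi$, since both sides equal $\widehat{\mathtt 1}$ evaluated at an integer; when $\mathtt s^{\mathtt a} \mid \xi$ but $\mathtt s^{\mathtt b} \nmid \xi$, both sides vanish; and when $\mathtt s^{\mathtt a} \nmid \xi$, the formula $\widehat{\mathtt 1}(\eta) = (1-e(\eta))/(2\pi i \eta)$ combined with the closed form \eqref{C} gives the identity directly.)

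The remaining two displays in the lemma are mechanical. Substituting $\mathfrak B = \varepsilon \mathfrak B^\ast + (1-\varepsilon)\mathfrak C$ into $\widehat{\mathtt 1}(\xi \mathtt s^{-\mathtt b})\mathfrak B(\xi) - \widehat{\mathtt 1}(\xi \mathtt s^{-\mathtt a})$ and applying the identity to rewrite the subtracted term collapses the $(1-\varepsilon)$ contribution against the $\mathfrak C$ term, leaving precisely the displayed $\varepsilon$-multiple. The final split is then $\mathfrak B^\ast - \mathfrak C = (\mathfrak B^\ast - 1) + (1 - \mathfrak C)$, which matches \eqref{def-w1w2} term for term. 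There is no real obstacle here; the only minor point requiring care is keeping track of the three divisibility cases when invoking Lemma \ref{BC-lemma}, but the direct integral argument sketched above bypasses this and gives a case-free proof in a single line.
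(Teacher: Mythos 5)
Your proof is correct, and the overall skeleton matches the paper's: both reduce the lemma to the single identity $\widehat{\mathtt 1}(\xi \mathtt s^{-\mathtt a}) = \widehat{\mathtt 1}(\xi \mathtt s^{-\mathtt b})\,\mathfrak C(\xi)$ (the paper states it as claim \eqref{second-term-zero-claim}), after which everything else is bookkeeping with the decomposition $\mathfrak B = \varepsilon \mathfrak B^{\ast} + (1-\varepsilon)\mathfrak C$.

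Where you diverge from the paper is in how that identity is proved. The paper checks it by brute force over the three divisibility regimes of $\xi$ (namely $\mathtt s^{\mathtt b} \mid \xi$; $\mathtt s^{\mathtt a}\mid\xi$, $\mathtt s^{\mathtt b}\nmid\xi$; $\mathtt s^{\mathtt a}\nmid\xi$), plugging in the closed form for $\mathfrak C$ from Lemma \ref{BC-lemma} in each case. You instead derive the identity in one stroke by integrating $e(\xi x)$ over the essentially disjoint decomposition $[0,\mathtt s^{-\mathtt a}]=\bigcup_{\ell\in\mathbb L}[\ell\mathtt s^{-\mathtt b},(\ell+1)\mathtt s^{-\mathtt b}]$ and rescaling, which gives $\mathtt s^{-\mathtt a}\widehat{\mathtt 1}(\xi\mathtt s^{-\mathtt a}) = \mathtt s^{-\mathtt b}\widehat{\mathtt 1}(\xi\mathtt s^{-\mathtt b})\sum_{\ell\in\mathbb L}e(\xi\ell\mathtt s^{-\mathtt b})$, and then recognizing the sum as $\mathtt s^{\mathtt b-\mathtt a}\mathfrak C(\xi)$ via \eqref{B*C}. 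This is cleaner: it is case-free, does not invoke Lemma \ref{BC-lemma} at all, and makes transparent that the identity is nothing but the convolution/Fourier expression of the fact that the uniform distribution on $[0,\mathtt s^{-\mathtt a}]$ equals the mixture of uniform distributions on the $\mathtt s^{\mathtt b-\mathtt a}$ subintervals of length $\mathtt s^{-\mathtt b}$. The paper's route has the minor advantage of reusing a lemma it needs elsewhere anyway, but as a self-contained proof of this lemma yours is shorter and more illuminating. One small remark: your checking only at the very end that the decomposition $\mathfrak B^\ast - \mathfrak C = (\mathfrak B^\ast-1)+(1-\mathfrak C)$ gives \eqref{def-w1w2} is exactly the paper's observation that the second display is a trivial consequence of the first; no issue there.
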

\begin{proof} 
The second equality in \eqref{wm-alternate} is a direct consequence of the first, with the given definitions of $\underline{\mathfrak w}_1, \underline{\mathfrak w}_2$ in \eqref{def-w1w2}. It suffices therefore to just prove the first identity. Combining the definition \eqref{def-wm} of $\mathfrak w$ with the expression \eqref{def-B} for $\mathfrak B$, we find that 
\[ \mathfrak w(\xi) = \varepsilon \widehat{\mathtt 1}(\xi \mathtt s^{-\mathtt b}) \Bigl[ \mathfrak B^{\ast}(\xi) - \mathfrak C(\xi) \Bigr] + \Bigl[ \mathfrak C(\xi) \widehat{\mathtt 1}(\xi \mathtt s^{-\mathtt b}) - \widehat{\mathtt 1}(\xi \mathtt s^{-\mathtt a})\Bigr], \]
where $\mathfrak B^{\ast}$ and $\mathfrak C$ are defined in \eqref{B*C}. The right hand side of the display above is a sum of two terms, the first of which coincides with the claimed expression for $\mathfrak w$ in \eqref{wm-alternate}. The desired formula \eqref{wm-alternate} for $\mathfrak w$ thus follows from the claim that the second term is zero, namely
\begin{equation} 
\mathfrak C(\xi) \widehat{\mathtt 1}(\xi \mathtt s^{-\mathtt b}) - \widehat{\mathtt 1}(\xi \mathtt s^{-\mathtt a}) \equiv 0 \quad \text{ for all } \xi \in \mathbb Z.  \label{second-term-zero-claim}
\end{equation}
\noindent Lemma \ref{BC-lemma} provides a closed-form expression for $\mathfrak C(\xi)$ in three mutually exclusive and exhaustive scenarios depending on the divisibility properties of $\xi$ in terms of powers of $\mathtt s$.  
%whereas $\widehat{\mathtt 1}$ is given by \eqref{1-hat}.  
We will verify the claim \eqref{second-term-zero-claim} in each of these three cases. 
\begin{itemize} 
\item First suppose that $\mathtt s^{\mathtt b} \mid \xi$. It follows from the expression \eqref{1-hat} for $\widehat{\mathtt 1}$ that $\widehat{1}(\xi \mathtt s^{-\mathtt b}) =  \widehat{1}(\xi \mathtt s^{-\mathtt a}) = 0$ since $\mathtt a < \mathtt b$. Therefore each term in the left hand side of \eqref{second-term-zero-claim} vanishes.
\item Next suppose that  $\mathtt s^{\mathtt a} \mid \xi$, but $\mathtt s^{\mathtt b} \nmid \xi$. Then $\widehat{1}(\xi \mathtt s^{-\mathtt a}) = 0$ from \eqref{1-hat}, whereas $\mathfrak C(\xi) = 0$ from \eqref{C} in Lemma \ref{BC-lemma}. Once again, each term on the left side of \eqref{second-term-zero-claim} vanishes. 
\item Finally, let $\mathtt s^{\mathtt a} \nmid \xi$. Then we obtain from \eqref{C} in Lemma \ref{BC-lemma} that
\begin{align*} 
\mathfrak C(\xi) \times \widehat{\mathtt 1}(\xi \mathtt s^{-\mathtt b}) &= \Bigl[\mathtt s^{\mathtt a - \mathtt b} \frac{1 - e(\xi \mathtt s^{-\mathtt a})}{1 - e(\xi \mathtt s^{- \mathtt b})} \Bigr] \times \Bigl[\frac{1 - e \bigl(\xi \mathtt s^{-\mathtt b}\bigr)}{2 \pi i \xi \mathtt s^{-\mathtt b}} \Bigr] \\
&= \frac{1 - e(\xi \mathtt s^{-\mathtt a})}{2 \pi i \xi \mathtt s^{-\mathtt a}} = \widehat{\mathtt 1}(\xi \mathtt s^{-\mathtt a}).
\end{align*}
\end{itemize}
This concludes the proof of the claim \eqref{second-term-zero-claim}, and therefore the proof of the lemma.  
\end{proof}
\subsection{The function $\underline{\mathfrak w}_1$} \label{w1-section}
The expression \eqref{def-w1w2} identifies the function $\underline{\mathfrak w}_1$ as dependent on $\mathfrak B^{\ast}$ defined in \eqref{B*C}. In Lemma \ref{BC-lemma} of Section \ref{C-section}, we presented a way of expressing a certain sum of exponentials as a product. An argument similar to Lemma \ref{BC-lemma} yields a product formula for $\mathfrak B^{\ast}$, recorded here. Let us recall, from \eqref{special-digits} and \eqref{L*} respectively, that $\mathtt r = \# \mathscr{D} = \# [\mathscr{D}(\mathtt s)]< \mathtt s$ and elements $\ell \in \mathbb{L}^\ast$ are identified via the representation $\ell = \sum_{j = 0}^{\mathtt b-\mathtt a - 1} \mathtt d_j \mathtt s^j$ with $\mathtt d_j \in \mathscr{D}$. Using these relations, we obtain 
\begin{align} 
\mathfrak B^{\ast}(\xi) &= \mathtt r^{\mathtt a - \mathtt b} \sum_{\mathbf d \in \mathscr{D}^{\mathtt b -\mathtt a}} e \Bigl[ \frac{\xi}{\mathtt s^{\mathtt b}} \sum_{j=0}^{\mathtt b- \mathtt a -1} \mathtt d_j \mathtt s^j \Bigr] \nonumber \\ &= \mathtt r^{\mathtt a - \mathtt b}\prod_{j=0}^{\mathtt b -\mathtt a -1} \Bigl[ \sum_{\mathtt d \in \mathscr{D}} e\Bigl(\xi \mathtt d \mathtt s^{j - \mathtt b} \Bigr) \Bigr]  = \prod_{j= \mathtt a +1}^{\mathtt b} \mathfrak g(\xi, j; \mathtt s, \mathscr{D}), \; \text{ where } \label{B*-rep} \\  \mathfrak g(\xi, j) &= \mathfrak g(\xi, j ; \mathtt s,  \mathscr{A} ) := \frac{1}{\#( \mathscr{A} )}\sum_{\mathtt d \in  \mathscr{A} } e\bigl(\xi \mathtt d \mathtt s^{-j} \bigr) \; \text{ for }  \mathscr{A}  \subseteq  \mathbb Z_{\mathtt s}, \; j \in \mathbb N. \label{exp-sum-g-def} 
\end{align}
\begin{lemma} \label{(B*-1)-lemma}
There exists an absolute constant $C_0 > 0$ for which the function $\underline{\mathfrak w}_1$ given by \eqref{def-w1w2}
%Suppose that the collection of favoured digits $\mathscr{D}$ in \eqref{special-digits} is an arithmetic progression of length $\mathtt r \geq 2$, i.e. there exists $n \in \mathbb Z_{\mathtt s}$ such that  \[ \mathscr{D} = n + \{0, 1, \ldots, \mathtt r-1\} \subsetneq \mathbb Z_{\mathtt s}.   \]
%Let $\mathbb L^{\ast}$ be the corresponding collection of integers \eqref{L*} whose digits are drawn from \eqref{special-digits}. Then 
%the exponential average $\mathfrak B^{\ast}$ defined in \eqref{B*C} 
%over this index set $\mathbb L^{\ast}$ 
obeys the following estimate uniformly in $\varepsilon\in (0,1)$:  
 \begin{equation} \label{(B*-1)-estimate}
\bigl| \underline{\mathfrak w}_1(\xi) \bigr| \leq C_0 \min \Bigl[1, \frac{\mathtt s^{\mathtt b}}{|\xi|}\Bigr] \times \min \Bigl[1, \frac{|\xi|}{\mathtt s^{\mathtt a}}\Bigr],\; \xi \in \mathbb{Z}. 
 \end{equation} 
\end{lemma}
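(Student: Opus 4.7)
The plan is to factor $\underline{\mathfrak w}_1$ into two independent pieces, one responsible for the first minimum in \eqref{(B*-1)-estimate} and the other for the second. Since by \eqref{def-w1w2} we have $\underline{\mathfrak w}_1(\xi) = \varepsilon \, \widehat{\mathtt 1}(\xi \mathtt s^{-\mathtt b}) [\mathfrak B^{\ast}(\xi) - 1]$ and $\varepsilon \in (0,1)$, it suffices to prove the two estimates
\begin{equation}\label{w1-plan-1}
\bigl|\widehat{\mathtt 1}(\xi \mathtt s^{-\mathtt b})\bigr| \leq C_0 \min\Bigl[1, \frac{\mathtt s^{\mathtt b}}{|\xi|}\Bigr],
\qquad
\bigl|\mathfrak B^{\ast}(\xi) - 1\bigr| \leq C_0 \min\Bigl[1, \frac{|\xi|}{\mathtt s^{\mathtt a}}\Bigr],
\end{equation}
and then multiply. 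The first inequality in \eqref{w1-plan-1} is immediate from Lemma \ref{1-hat-lemma} applied with $\xi$ replaced by $\xi \mathtt s^{-\mathtt b}$.

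For the second inequality in \eqref{w1-plan-1}, the trivial bound $|\mathfrak B^{\ast}(\xi) - 1| \leq 2$ from \eqref{B*C-trivial-bound} handles the case $|\xi| \geq \mathtt s^{\mathtt a}$. In the complementary range, the plan is to exploit the product formula \eqref{B*-rep}--\eqref{exp-sum-g-def}, which expresses $\mathfrak B^{\ast}$ as a product of $(\mathtt b - \mathtt a)$ factors $\mathfrak g(\xi, j; \mathtt s, \mathscr{D})$ each of absolute value at most 1. Since each $\mathfrak g$ is a convex average of unimodular terms and every summand $\mathtt d \in \mathscr{D} \subseteq \mathbb Z_{\mathtt s}$ satisfies $\mathtt d < \mathtt s$, the Lipschitz property \eqref{Lipschitz} of $y \mapsto e(y)$ yields the per-factor bound
\begin{equation*}
\bigl|\mathfrak g(\xi,j;\mathtt s,\mathscr{D}) - 1 \bigr| \leq \frac{1}{\#\mathscr{D}} \sum_{\mathtt d \in \mathscr{D}} \bigl|e(\xi \mathtt d \mathtt s^{-j}) - 1 \bigr| \leq 2\pi |\xi| \mathtt s^{-j+1}.
\end{equation*}
The elementary telescoping inequality $\bigl| \prod_{j} a_j - 1 \bigr| \leq \sum_j |a_j - 1|$, valid whenever $|a_j| \leq 1$ and easily verified by induction on the number of factors, then gives
\begin{equation*}
\bigl|\mathfrak B^{\ast}(\xi) - 1\bigr| \leq \sum_{j=\mathtt a + 1}^{\mathtt b} \bigl|\mathfrak g(\xi,j;\mathtt s, \mathscr{D}) - 1\bigr| \leq 2\pi |\xi| \sum_{j = \mathtt a + 1}^{\mathtt b} \mathtt s^{-j+1} \leq \frac{2\pi\, |\xi|\, \mathtt s^{-\mathtt a}}{1 - \mathtt s^{-1}} \leq C_0 |\xi| \mathtt s^{-\mathtt a},
\end{equation*}
where the geometric-series bound uses $\mathtt s \geq 3$ from \eqref{special-digits}. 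Combining this with the trivial bound of 2 establishes the second estimate in \eqref{w1-plan-1}, and multiplying the two estimates finishes the proof.

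No single step looks like a serious obstacle; the only mild subtlety is the step that converts the per-factor bound into a bound on the full product without losing the factor $(\mathtt b - \mathtt a)$ in front. The telescoping inequality avoids this by summing the factor-wise differences, and the geometric decay $\mathtt s^{-j}$ then absorbs the number of factors into an absolute constant. Note also that the argument makes no use of any arithmetic property of $\mathscr{D}$ beyond $\mathscr{D} \subseteq \mathbb Z_{\mathtt s}$, and it is independent of $\varepsilon$, consistent with the uniformity claim.
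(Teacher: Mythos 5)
Your proof is correct and follows essentially the same route as the paper: factor $\underline{\mathfrak w}_1 = \varepsilon\,\widehat{\mathtt 1}(\xi\mathtt s^{-\mathtt b})[\mathfrak B^{\ast}(\xi)-1]$, bound the $\widehat{\mathtt 1}$-factor by Lemma \ref{1-hat-lemma}, and bound $|\mathfrak B^{\ast}-1|$ via the product formula \eqref{B*-rep} together with the telescoping estimate $|\prod_j a_j - 1| \le \sum_j |a_j - 1|$ (the paper writes this telescoping out explicitly rather than invoking it as a named inequality, but it is the same argument). The per-factor Lipschitz bound $|\mathfrak g(\xi,j)-1| \lesssim |\xi|\mathtt s^{-j+1}$ and the geometric summation matching the paper's computation line by line.
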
  
\begin{proof} 
The defining relation \eqref{def-w1w2} of $\underline{\mathfrak w}_1$ implies 
\begin{equation} \label{w1-preliminary-estimate}
\bigl| \underline{\mathfrak w}_1(\xi)\bigr| \leq  \bigl| \widehat{1}(\xi \mathtt s^{-\mathtt b}) \bigr| \times |\mathfrak B^{\ast}(\xi) - 1|, \quad \text{ uniformly in } \varepsilon\in(0,1). 
\end{equation} 
The estimate \eqref{1-hat-bound} on $\widehat{1}$ from Lemma \ref{1-hat-lemma} leads to the first factor on the right hand side of \eqref{(B*-1)-estimate}. It therefore suffices to show that the second factor generates an upper bound for $|\mathfrak B^{\ast} -1|$.
\vskip0.1in 
\noindent The representation \eqref{B*-rep} permits the expansion of $\mathfrak B^{\ast}$ as a telescoping sum, using progressively fewer factors of the form $\mathfrak g$. Each factor $\mathfrak g$ is an average of uni-modular terms, and therefore at most one in absolute value. This results in the following estimate: 
\begin{align} 
\bigl|\mathfrak B^{\ast}(\xi) - 1 \bigr|  &= \Bigl| \Bigl[\mathfrak g(\xi, \mathtt a+1) - 1\Bigr] + \sum_{k = \mathtt a+1}^{\mathtt b -1} \Bigl[ \prod_{j=\mathtt a+1}^{k+1} \mathfrak g(\xi, j) - \prod_{j=\mathtt a+1}^{k} \mathfrak g(\xi, j) \Bigr] \Bigr| \nonumber \\  
&\leq  \bigl|\mathfrak g(\xi, \mathtt a+1) - 1\bigr| +   \sum_{k = \mathtt a+1}^{\mathtt b -1} \bigl|\mathfrak g(\xi, k+1) - 1\bigr| = \sum_{k=\mathtt a}^{\mathtt b} \bigl| \mathfrak g(\xi, k) -1 \bigr| \nonumber \\
&\leq |\xi| \sum_{k = \mathtt a+1}^{\mathtt b} \mathtt s^{-(k-1)} \leq C_0 |\xi| \mathtt s^{-\mathtt a}. \label{B*-1-final}
\end{align}  
The penultimate inequality follows from 
\[ \bigl|\mathfrak g(\xi, k) - 1 \bigr| \leq \frac{1}{\mathtt r} \sum_{\mathtt d \in \mathscr{D}} \bigl| e \bigl(\xi \mathtt d \mathtt s^{-k} \bigr) - 1\bigr| \leq \frac{1}{\mathtt r} \sum_{\mathtt d \in \mathscr{D}} |\xi| \mathtt d \mathtt s^{-k} \leq |\xi| \mathtt s^{-(k-1)}\;  \text{with}\; \mathtt d < \mathtt s,\]
applied for $k = \mathtt a+1, \ldots, \mathtt b$. Combining \eqref{B*-1-final} with the trivial bound \eqref{B*C-trivial-bound} on $\mathfrak B^{\ast}$, we obtain
\[ \bigl| \mathfrak B^{\ast}(\xi) -1 \bigr| \leq C_0 \min \Bigl[1, \frac{|\xi|}{\mathtt s^{\mathtt a}} \Bigr]. \] 
Inserting this into \eqref{w1-preliminary-estimate} leads to the estimate \eqref{(B*-1)-estimate}. 
 \end{proof}  
\subsection{Estimates for the function $\underline{\mathfrak w}_2$} \label{w2-section}
Like $\underline{\mathfrak w}_1$, the function $\underline{\mathfrak w}_2$ defined in \eqref{def-w1w2} depends on one of the elementary functions introduced in \eqref{B*C}, namely $\mathfrak C$. Section \ref{C-section} provided an explicit formula for the function $\mathfrak C$. The following estimate for $\underline{\mathfrak w}_2$ is a direct consequence of Lemma \ref{BC-lemma} in that section. 
\begin{lemma} \label{1-hat-C-corollary}
There exists an absolute constant $C_0 > 0$ such that 
%{\red for any choice of $\mathtt s, \mathtt a, \mathtt b$ as in Section \ref{param-elem-op-section} } {\color{blue} Comment: Compared with Lemma \ref{(B*-1)-lemma}, I think this red part can be deleted. } 
the following estimate holds uniformly in $\varepsilon\in (0,1)$:
\begin{equation} \label{1-hat-C} 
\bigl| \underline{\mathfrak w}_2(\xi) \bigr| \leq \bigl|\widehat{\mathtt 1}(\xi \mathtt s^{-\mathtt b}) \bigr| \times \bigl|\mathfrak C(\xi) - 1\bigr| \leq C_0 \min \left[1, \frac{|\xi|}{\mathtt s^{\mathtt a}}, \frac{\mathtt s^{\mathtt b}}{|\xi|} \right],\; \xi \in \mathbb{Z}.
\end{equation} 
\end{lemma}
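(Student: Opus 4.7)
The first inequality is immediate from the definition \eqref{def-w1w2} of $\underline{\mathfrak w}_2$: taking absolute values and using $\varepsilon \in (0,1)$ gives
\[
|\underline{\mathfrak w}_2(\xi)| = \varepsilon \bigl|\widehat{\mathtt 1}(\xi \mathtt s^{-\mathtt b})\bigr| \cdot |1 - \mathfrak C(\xi)| \leq \bigl|\widehat{\mathtt 1}(\xi \mathtt s^{-\mathtt b})\bigr| \cdot |\mathfrak C(\xi) - 1|,
\]
uniformly in $\varepsilon$. The crux is therefore the second inequality.

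For that, the plan is to exploit an identity that was already established (as part of the verification of \eqref{second-term-zero-claim} in the proof of \eqref{wm-alternate}): namely, case-by-case analysis using the explicit formula \eqref{C} from Lemma \ref{BC-lemma} shows that
\[
\mathfrak C(\xi)\, \widehat{\mathtt 1}(\xi \mathtt s^{-\mathtt b}) \equiv \widehat{\mathtt 1}(\xi \mathtt s^{-\mathtt a}) \quad \text{for every } \xi \in \mathbb Z.
\]
Consequently,
\[
\widehat{\mathtt 1}(\xi \mathtt s^{-\mathtt b})\bigl[1 - \mathfrak C(\xi)\bigr] = \widehat{\mathtt 1}(\xi \mathtt s^{-\mathtt b}) - \widehat{\mathtt 1}(\xi \mathtt s^{-\mathtt a}),
\]
so the product on the left collapses to a single difference of Fourier transforms of $\mathtt 1$ at the two scales $\mathtt s^{-\mathtt b}$ and $\mathtt s^{-\mathtt a}$.

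At this point I would apply Lemma \ref{1-hat-difference-lemma} with $\theta_1 := \xi \mathtt s^{-\mathtt b}$ and $\theta_2 := \xi \mathtt s^{-\mathtt a}$ (permissible since $\mathtt a < \mathtt b$ forces $|\theta_2| \geq |\theta_1|$). This yields
\[
\bigl|\widehat{\mathtt 1}(\xi \mathtt s^{-\mathtt b}) - \widehat{\mathtt 1}(\xi \mathtt s^{-\mathtt a})\bigr| \leq C_0 \min\bigl(1, |\xi|(\mathtt s^{-\mathtt a} - \mathtt s^{-\mathtt b})\bigr) \times \min\bigl(1, \mathtt s^{\mathtt b}/|\xi|\bigr) \leq C_0 \min\bigl(1, |\xi|/\mathtt s^{\mathtt a}\bigr) \min\bigl(1, \mathtt s^{\mathtt b}/|\xi|\bigr),
\]
where in the last step I used $\mathtt s^{-\mathtt a} - \mathtt s^{-\mathtt b} \leq \mathtt s^{-\mathtt a}$. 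Finally, the elementary inequality $\min(1, x)\min(1, y) \leq \min(1, x, y)$ (valid for all $x, y \geq 0$) converts the product of two-term minima into the desired three-term minimum \eqref{1-hat-C}.

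There is no real obstacle here: the entire argument rides on recognizing that the cancellation identity $\mathfrak C\, \widehat{\mathtt 1}(\xi \mathtt s^{-\mathtt b}) = \widehat{\mathtt 1}(\xi \mathtt s^{-\mathtt a})$, proved incidentally in the previous lemma, transforms the factor $\widehat{\mathtt 1}(\xi \mathtt s^{-\mathtt b})(1 - \mathfrak C(\xi))$ into a Fourier-transform difference to which Lemma \ref{1-hat-difference-lemma} applies cleanly. The case analysis inside Lemma \ref{BC-lemma} (where $\mathfrak C$ behaves differently depending on the divisibility of $\xi$ by powers of $\mathtt s$) is thereby subsumed into the single identity, avoiding a repeat of the three-case bookkeeping.
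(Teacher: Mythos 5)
Your proof is correct, and it takes a genuinely cleaner route than the paper's. The paper proves the lemma by re-running the three-way case analysis (divisibility of $\xi$ by $\mathtt s^{\mathtt b}$, by $\mathtt s^{\mathtt a}$ but not $\mathtt s^{\mathtt b}$, or by neither), treating each case separately. You instead observe that the identity $\mathfrak C(\xi)\,\widehat{\mathtt 1}(\xi \mathtt s^{-\mathtt b}) \equiv \widehat{\mathtt 1}(\xi \mathtt s^{-\mathtt a})$ --- which is precisely claim \eqref{second-term-zero-claim} and was already verified (case by case) in the preceding lemma --- holds uniformly in $\xi$, so that
\[
\widehat{\mathtt 1}(\xi \mathtt s^{-\mathtt b})\bigl[1 - \mathfrak C(\xi)\bigr] = \widehat{\mathtt 1}(\xi \mathtt s^{-\mathtt b}) - \widehat{\mathtt 1}(\xi \mathtt s^{-\mathtt a})
\]
with no case distinctions needed. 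From there a single application of Lemma \ref{1-hat-difference-lemma} (with $\theta_1 = \xi\mathtt s^{-\mathtt b}$, $\theta_2 = \xi\mathtt s^{-\mathtt a}$, valid since $\mathtt a < \mathtt b$ gives $|\theta_1| < |\theta_2|$ for $\xi \neq 0$, and the $\xi = 0$ case is trivial) finishes the job. The paper in fact applies \eqref{1-hat-difference-estimate} with these same $\theta_1, \theta_2$ in its third case, so your argument reuses the same engine --- the win is that by promoting \eqref{second-term-zero-claim} from a stepping-stone to the central identity, you avoid re-litigating the first two cases altogether. The final passage from $\min(1,x)\min(1,y)$ to $\min(1,x,y)$ is the elementary bound $\min(1,x)\min(1,y) \leq \min(1,x) \leq x$ (and symmetrically $\leq y$, $\leq 1$), which is sound.
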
 
\begin{proof} 
Motivated by \eqref{C}, we consider the expression of the left side of \eqref{1-hat-C} in the three cases that lead to different values of $\mathfrak C$.
\begin{itemize} 
\item If $\mathtt s^{\mathtt b} \mid \xi$, then \eqref{C} dictates that $\mathfrak C(\xi) = 1$, in which case the relation \eqref{1-hat-C} holds trivially, since the expression on its left side is zero. 
\item  If $\mathtt s^{\mathtt a} \mid \xi, \mathtt s^{\mathtt b} \nmid \xi$, then $\mathfrak C(\xi) = 0$ by \eqref{C}. Combining this with Lemma \ref{1-hat-lemma} we obtain 
\[ \bigl|\widehat{\mathtt 1}(\xi \mathtt s^{-\mathtt b}) \bigr| \times \bigl|\mathfrak C(\xi) - 1\bigr| =  \bigl|\widehat{\mathtt 1}(\xi \mathtt s^{-\mathtt b}) \bigr| \leq  \min \Bigl[1, \frac{\mathtt s^{\mathtt b}}{\pi |\xi|} \Bigr] \leq C_0   \min \Bigl[1, \frac{\mathtt s^{\mathtt b}}{|\xi|}, \frac{|\xi|}{\mathtt s^{\mathtt a}} \Bigr]. \] 
The final inequality uses the assumption $\mathtt s^{\mathtt a} \mid \xi$, which means $|\xi| \mathtt s^{-\mathtt a} \geq 1$. 
\item Finally, suppose that $\mathtt s^{\mathtt a} \nmid \xi$. Then it follows from the expression \eqref{1-hat} of $\widehat{\mathtt 1}$ and \eqref{C} that 
\begin{align*} 
\bigl|\widehat{\mathtt 1}(\xi \mathtt s^{-\mathtt b}) \bigr| \times \bigl|\mathfrak C(\xi) - 1\bigr| &= \left| \frac{1 - e(\xi \mathtt s^{-\mathtt b})}{2 \pi i \xi \mathtt s^{-\mathtt b}}\right| \times \left| \mathtt s^{\mathtt a - \mathtt b} \frac{1 - e(\xi \mathtt s^{-\mathtt a})}{1 - e(\xi \mathtt s^{-\mathtt b})} - 1 \right| \\ 
&= \Bigl| \widehat{\mathtt 1}(\xi \mathtt s^{-\mathtt a}) - \widehat{\mathtt 1}(\xi \mathtt s^{-\mathtt b})\Bigr|  \leq C_0 \min \left[1, \frac{|\xi|}{\mathtt s^{\mathtt a}}\right] \times \min \Bigl[1, \frac{\mathtt s^{\mathtt b}}{|\xi|} \Bigr] \\
&\leq C_0 \min \Bigl[1, \frac{|\xi|}{\mathtt s^{\mathtt a}}, \frac{\mathtt s^{\mathtt b}}{|\xi|} \Bigr].
\end{align*} 
The penultimate inequality is a consequence of \eqref{1-hat-difference-estimate}, with $\theta_1 = \xi \mathtt s^{-\mathtt b}$, $\theta_2 = \xi \mathtt s^{-\mathtt a}$.  
\end{itemize} 
The estimates obtained in all three cases are consistent with \eqref{1-hat-C}, completing the proof. 
\end{proof} 

\subsection{A pointwise estimate for $\mathtt v$ and $\mathfrak w$} \label{w-pointwise-section}  
The results from Sections \ref{w1-section} and \ref{w2-section} can be combined to infer a pointwise estimate for $\mathtt v$ that will used shortly in Section \ref{estimating-vm-section} to control the sum $\mathcal V_m$. 
\begin{lemma} \label{v-lemma} 
There exists an absolute constant $C_0 > 0$ such that the function $\mathtt v$ given by \eqref{uv-def} (or \eqref{identity}) obeys the estimate:
%for which the function $\mathfrak w_m(\xi)$ given by \eqref{def-wm} obeys the following estimate for all $m \geq 1$:
\begin{equation} \label{v-pointwise-estimate}
\bigl| \mathtt v(\xi) \bigr| \leq \bigl|\mathfrak w(\xi) \bigr| 
\leq C_0 \min \Bigl[1, \frac{|\xi|}{\mathtt s^{\mathtt a}}, \frac{\mathtt s^{\mathtt b}}{|\xi|}\Bigr],\; \xi\in\mathbb{Z}.
\end{equation}  
\end{lemma}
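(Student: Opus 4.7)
\medskip

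\noindent\textbf{Proof proposal.} The plan is to assemble the lemma directly from the algebraic identity for $\mathtt v$ in \eqref{identity} together with the two pointwise bounds for $\underline{\mathfrak w}_1$ and $\underline{\mathfrak w}_2$ already established in Lemmas \ref{(B*-1)-lemma} and \ref{1-hat-C-corollary}. No new calculation should be needed; this lemma is essentially a packaging result.

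First I would prove the inequality $|\mathtt v(\xi)| \leq |\mathfrak w(\xi)|$. This is immediate from \eqref{def-wm}, which provides the factorization $|\mathtt v(\xi)| \leq |\mathfrak A(\xi)| \cdot |\mathfrak w(\xi)|$, combined with the trivial bound $|\mathfrak A(\xi)| \leq 1$ recorded in \eqref{B*C-trivial-bound}. The factor $\Omega_1$, which also appears in the original identity \eqref{identity}, is similarly absorbed by the same trivial bound (it is an average of unimodular terms over the $\mathtt w(\mathtt I)$).

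Next I would prove the bound $|\mathfrak w(\xi)| \leq C_0 \min[1, |\xi|/\mathtt s^{\mathtt a}, \mathtt s^{\mathtt b}/|\xi|]$. The decomposition \eqref{wm-alternate} yields $\mathfrak w = \underline{\mathfrak w}_1 + \underline{\mathfrak w}_2$, so by the triangle inequality
\begin{equation*}
|\mathfrak w(\xi)| \leq |\underline{\mathfrak w}_1(\xi)| + |\underline{\mathfrak w}_2(\xi)|.
\end{equation*}
Lemma \ref{(B*-1)-lemma} controls the first summand by $C_0 \min[1, \mathtt s^{\mathtt b}/|\xi|] \cdot \min[1, |\xi|/\mathtt s^{\mathtt a}]$, which is itself dominated by $C_0 \min[1, |\xi|/\mathtt s^{\mathtt a}, \mathtt s^{\mathtt b}/|\xi|]$ since the product of two quantities each in $[0,1]$ is at most their minimum. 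Lemma \ref{1-hat-C-corollary} controls the second summand by the same expression $C_0 \min[1, |\xi|/\mathtt s^{\mathtt a}, \mathtt s^{\mathtt b}/|\xi|]$ directly. Adding the two estimates and adjusting $C_0$ yields the claim. The uniformity of those lemmas in the bias parameter $\varepsilon \in (0,1)$ ensures that the $\varepsilon$-prefactors in \eqref{def-w1w2} are harmlessly absorbed into the constant.

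There is really no substantive obstacle: the lemma is a synthesis step whose content lies entirely in the two preceding lemmas. The only point requiring a moment's care is the elementary observation that for $a,b \in [0,1]$ one has $ab \leq \min(a,b)$, which is what converts the product bound in Lemma \ref{(B*-1)-lemma} into a form compatible with the minimum appearing in \eqref{v-pointwise-estimate}. The proof should run to only a few lines.
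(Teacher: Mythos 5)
Your proposal is correct and follows the same route as the paper's proof: the bound $|\mathtt v|\le|\mathfrak w|$ comes from \eqref{def-wm} together with $|\mathfrak A|\le 1$, and then $|\mathfrak w|$ is bounded by adding the estimates from Lemmas \ref{(B*-1)-lemma} and \ref{1-hat-C-corollary}, with the product in the first converted to a minimum exactly as you observe. The remarks about $\Omega_1$ being absorbed into \eqref{def-wm} and the $\varepsilon$-uniformity are accurate but not strictly needed.
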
 
%In Sections \ref{exp-sum-section} and \ref{Rajchman-section}, we derived explicit formulae for the Fourier coefficients of the densities $\varphi_m$ and $\psi_m$. These will be useful for the estimation of $\mathtt u_m$ and $\mathtt v_m$. For instance, it follows from the relations in \eqref{Fourier2}  and \eqref{big-and-little-gamma} that  
%\begin{align} 
%\widehat{\varphi}_m(\xi) &= \widehat{\mathtt 1}(\xi \mathtt s_m^{-\mathtt b_m}) \Gamma_m(\xi) = \widehat{\mathtt 1}(\xi \mathtt s_m^{-\mathtt b_m}) \Upsilon_1(\xi; \pmb{\gamma}_m) \nonumber \\ 
%&= \widehat{\mathtt 1}(\xi \mathtt s_m^{-\mathtt b_m}) \times  \mathfrak A(\xi; \mathtt N_m, \mathtt s_m, \mathtt a_m) \times \mathfrak B(\xi; \mathtt s_m, \mathtt a_m, \mathtt b_m, \varepsilon_m) \times \Omega_1(\xi; \pmb{\gamma}_m) \nonumber \\  
%&= \widehat{\mathtt 1}(\xi \mathtt s_m^{-\mathtt b_m}) \times  \mathfrak A_m(\xi) \times \mathfrak B_m(\xi) \times \Lambda_m(\xi).   \label{um-formula-step1}
%\end{align} 
%The factorization in the penultimate step follows from \eqref{Y}; the definition of each factor can be found in \eqref{def-A}, \eqref{def-B}, \eqref{ArBr} and \eqref{def-Lambda}. 
%Similarly, the formula \eqref{intermediate-Fourier-coefficient-2} for $\widehat{\Psi}_1$ leads us to the following expression: 
%\begin{equation}
%\widehat{\psi}_m(\xi) = \widehat{\mathtt 1}(\xi \mathtt s_m^{-\mathtt a_m}) \times \mathfrak A_m(\xi) \times \Lambda_m(\xi). \label{um-formula-step2}
%\end{equation} 
\begin{proof}
In view of the trivial estimate $|\mathfrak A| \leq 1$ from \eqref{B*C-trivial-bound}, the relation \eqref{def-wm} implies that $|\mathtt v(\xi)| \leq |\mathfrak w(\xi)|$.  
Combining this with \eqref{wm-alternate}, and inserting the estimates for $\underline{\mathfrak w}_1, \underline{\mathfrak w}_2$ from \eqref{(B*-1)-estimate}, \eqref{1-hat-C} (Lemmas \ref{(B*-1)-lemma}, \ref{1-hat-C-corollary}) results in  
\begin{align*} 
 \bigl|\mathfrak w(\xi)  \bigr| & \leq \bigl| \underline{\mathfrak w}_1(\xi) \bigr| + \bigl|\underline{\mathfrak w}_2(\xi)  \bigr| \\ 
%\leq \bigl|\widehat{1}(\xi \mathtt s^{-\mathtt b})\bigr| \times \bigl| \mathfrak B^{\ast} - \mathfrak C(\xi) \bigr| \leq   \bigl|\widehat{1}(\xi \mathtt s^{-\mathtt b})\bigr|  \times \left[ \bigl| \mathfrak B^{\ast} - 1 \bigr| + \bigl|\mathfrak C(\xi) -1 \bigr| \right] \\ 
%&\leq  \bigl|\widehat{1}(\xi \mathtt s^{-\mathtt b})\bigr|  \times \bigl| \mathfrak B^{\ast} - 1 \bigr| +  \bigl|\widehat{1}(\xi \mathtt s^{-\mathtt b})\bigr| \times \bigl|\mathfrak C(\xi) -1 \bigr| \\ 
&\leq C_0  \min \Bigl[1, \frac{\mathtt s^{\mathtt b}}{|\xi|}\Bigr] \times \min \Bigl[1, \frac{|\xi|}{\mathtt s^{\mathtt a}}\Bigr] + C_0 \min \Bigl[1, \frac{|\xi|}{\mathtt s^{\mathtt a}}, \frac{\mathtt s^{\mathtt b}}{|\xi|} \Bigr] \\ 
&\leq C_0 \min \Bigl[1, \frac{|\xi|}{\mathtt s^{\mathtt a}}, \frac{\mathtt s^{\mathtt b}}{|\xi|} \Bigr],
\end{align*}
which is the claimed bound \eqref{v-pointwise-estimate}. 
\end{proof}

\section{Decomposition of $\mathcal{V}_m$ into multiple frequency ranges} \label{estimating-vm-section}
In Lemma \ref{u-lemma}, we obtained a pointwise estimate of $\mathtt u$, which translated to a pointwise estimate for $\mathtt u_m$ in Lemma \ref{um-est-lemma}. Exactly in the same way, Lemma \ref{v-lemma} gives rise to a pointwise estimate for $\mathtt v_m$ below, by substituting $\mathtt s= \mathtt s_m$, $\mathtt a = \mathtt a_m$, $\mathtt b = \mathtt b_m$.  
\begin{lemma} \label{vm-est-lemma}
There is an absolute constant $C_0 > 0$ such that for any choice of parameters $\pmb{\Pi}$ obeying \ref{param-seq}, 
\eqref{abs} and \eqref{rmsm-assumption}, 
\begin{equation} \label{vm-pointwise-estimate} 
\bigl| \mathtt v_m(\xi) \bigr| \leq C_0 \min \left[1, \frac{|\xi|}{\mathtt s_m^{\mathtt a_m}}, \frac{\mathtt s_m^{\mathtt b_m}}{|\xi|}\right], \quad m \geq 1, \; \xi \in \mathbb Z.   
\end{equation}  
\end{lemma}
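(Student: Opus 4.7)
The plan is to deduce this directly from Lemma \ref{v-lemma} via a substitution argument, in complete analogy with how Lemma \ref{um-est-lemma} was derived from Lemma \ref{u-lemma}. First I would identify $\mathtt v_m$ with the function $\mathtt v$ arising in the elementary operation $\mathscr{O}$ applied at the $m$-th stage of the iteration. Recalling the parameter vector $\pmb{\gamma}_m = (E_{m-1}, \varphi_{m-1}; \mathtt s_{m-1}^{\mathtt b_{m-1}}, \mathtt s_m, \mathtt a_m, \mathtt b_m, \mathscr{D}_m, \varepsilon_m)$ specified in \eqref{iteration} and \eqref{big-and-little-gamma}, the pair $(E_m, \varphi_m) = \mathscr{O}(\pmb{\gamma}_m)$ factors through an intermediate density $\psi_m$, which by the construction of Section \ref{O-description-section} plays exactly the role of $\Psi_1$ with $(\mathtt E_0, \Phi_0) = (E_{m-1}, \varphi_{m-1})$. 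Consequently, the function $\mathtt v_m = \widehat{\varphi}_m - \widehat{\psi}_m$ of \eqref{def-vm} coincides with the generic function $\mathtt v = \widehat{\Phi}_1 - \widehat{\Psi}_1$ of \eqref{uv-def} for this particular choice of parameters.

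With this identification in hand, the estimate \eqref{vm-pointwise-estimate} is obtained by substituting $\mathtt s = \mathtt s_m$, $\mathtt a = \mathtt a_m$, $\mathtt b = \mathtt b_m$ into the bound \eqref{v-pointwise-estimate} of Lemma \ref{v-lemma}. The hypotheses of that lemma require only the basic admissibility conditions $\mathtt a < \mathtt b$ and $\mathtt s^{\mathtt a} > 2 \mathtt t$ (guaranteed by \eqref{abs} applied to $\mathtt t = \mathtt s_{m-1}^{\mathtt b_{m-1}}$), together with the structural conditions on $\mathscr{D}$ and $\varepsilon$ from Section \ref{param-elem-op-section}, which follow from \eqref{rmsm-assumption} and $\varepsilon_m \in (0,1)$ in \eqref{param-seq}. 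Since the constant $C_0$ in Lemma \ref{v-lemma} is absolute and the argument does not appeal to any number-theoretic property of $\mathtt s_m$, the resulting bound is uniform across $m$ with the same constant. I do not anticipate any obstacle here: the work was all done in Sections \ref{w1-section}--\ref{w-pointwise-section}, and this lemma simply packages their conclusion into the notation needed for the subsequent estimation of $\mathcal V_m$ in Section \ref{estimating-vm-section}.
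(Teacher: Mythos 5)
Your proof is correct and takes exactly the same route as the paper, which itself gives no detailed argument but simply notes in the sentence preceding the lemma that it "follows by substituting $\mathtt s = \mathtt s_m$, $\mathtt a = \mathtt a_m$, $\mathtt b = \mathtt b_m$" into Lemma \ref{v-lemma}, in parallel to how Lemma \ref{um-est-lemma} follows from Lemma \ref{u-lemma}. Your identification of $\mathtt v_m$ with the generic $\mathtt v$ via $\pmb{\gamma}_m$ and your check that the admissibility conditions $\mathtt a_m < \mathtt b_m$ and $2\mathtt s_{m-1}^{\mathtt b_{m-1}} < \mathtt s_m^{\mathtt a_m}$ hold under \eqref{abs} fill in precisely what the paper leaves implicit.
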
 
\noindent The bound on $\mathtt v_m$ recorded in \eqref{vm-pointwise-estimate} is free of the arithmetic properties of $\xi$, by the same reasoning as the remark on page \pageref{uniform-bound-remark} following Lemma \ref{um-est-lemma}. This section will isolate the sub-sums of $\mathcal{V}_{m}$ that are amenable to the pointwise bound \eqref{vm-pointwise-estimate}, thereby identifying the portion of the sum that rely on special number-theoretic properties of the relevant bases. The latter sum is estimated in the subsequent sections using different tools from number theory. With this goal in mind, we partition the domain of summation of $\mathcal V_m = \mathcal V_m(h, b)$ as follows: 
\begin{equation} \label{Vm-decomposition} 
\mathbb V := \bigl\{(u, v, N) : (u, v) \in \mathbb Z_N^2, \; N \in \mathbb N \bigr\} = \mathbb V_1 \sqcup \mathbb V_2, \text{ with } \mathcal V_m = \mathcal V_{m1} + \mathcal V_{m2} 
\end{equation}  
The sets $\mathbb V_1, \mathbb V_2$ of the partition will be described momentarily. The quantity $\mathcal V_{mj} = \mathcal V_{mj}(h, b)$ denotes the sum of $N^{-3} |\mathtt v_m(h b^u(b^v-1))|$ over $\mathbb V_j$ for $j = 1, 2$. The sum $\mathcal V_{m1}$ can be handled using \eqref{vm-pointwise-estimate} alone, but not $\mathcal V_{m2}$. 
%Motivated by \eqref{vm-pointwise-estimate}, we will write $\mathcal V_m$ in \eqref{Vm-sum} as the sum of two parts, 
\vskip0.1in 
\noindent Let us describe the decomposition \eqref{Vm-decomposition} in a bit more detail. The set $\mathbb V_1$ covers very low or very high values either of the frequency $hb^{u}(b^v-1)$ or of $N$. It is described as a four-fold union: 
\begin{align} 
\mathbb V_1 := \bigcup_{j=1}^{4} \mathbb V_{1j}, \; &\text{ so that the sum $\mathcal V_{m1}$ can be estimated as } \;  \mathcal V_{m1} \leq \sum_{j=1}^{4} \mathcal W_j, \text{ where } \label{sum-of-W} \\ \mathcal W_{j} &:= \sum \Bigl\{\frac{1}{N^3}|\mathtt v_m(h b^u(b^v-1))| : (u, v, N) \in \mathbb V_{1j} \Bigr\}.
\label{Wj} \end{align} 
The subsets $\mathbb V_{1j}$ are determined depending on the size of $b^{u+v}$ or $N$:
\begin{align} 
\mathbb V_{11} &:= \bigl\{(u, v, N) \in \mathbb V: b^{u+v} \leq \mathtt s_m^{\frac{\mathtt a_m}{2}}\bigr\}, \; 
\mathbb V_{12} := \bigl\{(u, v, N) \in \mathbb V : b^{u+v} \geq m^2 \mathtt s_m^{\mathtt b_m}\bigr\}, \nonumber\\
\mathbb V_{13} &:=  \bigl\{(u, v, N) \in \mathbb V : N \leq \frac{\mathtt a_m}{4} \log_{b} (\mathtt s_m) \bigr\}, \nonumber\\
\mathbb V_{14} &:= \bigl\{ (u, v, N) \in \mathbb V : N \geq m \mathtt b_m \log_{b} (\mathtt s_m) \bigr\}.
\end{align}
The collections $\mathbb V_{1j}$ are distinct in the sense that none of them is contained in another, but they are not necessarily disjoint. The intermediate values of $b^{u+v}$ and $N$ are collected in $\mathcal V_{m2}$:
\begin{align} 
\mathcal V_{m2} &= \mathcal V_{m2}(h, b) := \sum \Bigl\{\frac{1}{N^{3}} \bigl| \mathtt v_m \bigl(h b^u(b^v-1)\bigr) \bigr| : (u, v, N) \in \mathbb V_2 \Bigr\}, \text{ where } \label{Vm2-sum-definition} \\  
\mathbb V_2 &:= \left\{(u, v, N) \in \mathbb V \; \Biggl| \; \begin{aligned} &\mathtt s_m^{\frac{\mathtt a_m}{2}} < b^{u+v} < m^2 \mathtt s_m^{\mathtt b_m}, \text{ and } \\  
& \frac{\mathtt a_m}{4} \log_{b} (\mathtt s_m) < N < m \mathtt b_m \log_{b} (\mathtt s_m)
\end{aligned}  \right\}. \label{def-collection-V2}
\end{align} 
The main results concerning $\mathcal V_{mj}$ appear in Section \ref{Vprop-proof-section-part1}, and are similar to $\mathcal U_m$. Namely for $j=1,2$, there is a summable sequence $\{\mathtt c_m(h, b, j) : m \geq 1\}$ that dominates $\mathcal V_{mj}$ element-wise: 
\begin{itemize} 
\item For $\mathcal V_{m1}$, this is true for {\em{any}} two bases $b$ and $\mathtt s_m$. 
\item For $\mathcal V_{m2}$, the base $b$ has to be multiplicatively independent of $\mathtt s_m$. 
\end{itemize} 
\subsection{Conditional proof of Proposition \ref{V-prop}}  \label{Vprop-proof-section-part1} 
In light of the decomposition \eqref{Vm-decomposition} of $\mathcal V_m(h,b)$, it is clear that Proposition \ref{V-prop} follows from the two propositions below. Both propositions are similar in their statements and jointly lead to Proposition \ref{V-prop}, but they rely on quite different reasoning. 
\begin{proposition} \label{W-sum-prop}
Let $\pmb{\Pi}$ be any choice of parameters obeying \eqref{param-seq}, \eqref{abs} and \eqref{rmsm-assumption}. Then for any $h \in \mathbb Z \setminus \{0\}$, $b \in \mathbb N \setminus \{1\}$, there exists a sequence $\{\mathtt c_m(h, b) : m \geq 1\} \subseteq (0, \infty)$ such that   
 \begin{equation} \label{W-sum-estimate}
 \mathcal V_{m1} \leq \sum_{j=1}^{4} \mathcal W_j \leq \mathtt c_m(h, b) \; \; \text{ for all } m \geq 1, \quad\text{with}\; \;  \sum_{m=1}^{\infty} \mathtt c_m(h, b) < \infty. 
 \end{equation} 
% In particular, $\mathcal V_{m1}$ obeys a summability condition of the form \eqref{} under the hypotheses of Proposition \ref{V-prop}.  
\end{proposition}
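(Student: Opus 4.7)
The strategy is to estimate each sub-sum $\mathcal W_j$ separately using the pointwise bound $|\mathtt v_m(\xi)| \leq C_0 \min[1, |\xi|/\mathtt s_m^{\mathtt a_m}, \mathtt s_m^{\mathtt b_m}/|\xi|]$ supplied by Lemma \ref{vm-est-lemma}, and then show that the total $\mathtt c_m(h, b) := \mathcal W_1 + \mathcal W_2 + \mathcal W_3 + \mathcal W_4$ is summable in $m$. For brevity, set $\alpha_m := \mathtt a_m \log_b(\mathtt s_m)$ and $\beta_m := \mathtt b_m \log_b(\mathtt s_m)$, so that $b^{\alpha_m} = \mathtt s_m^{\mathtt a_m}$ and $b^{\beta_m} = \mathtt s_m^{\mathtt b_m}$. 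Observe that $\mathtt v_m(0) = 0$ (both $\varphi_m$ and $\psi_m$ are probability densities), so pairs $(u, v)$ with $h b^u(b^v-1) = 0$ contribute nothing and may be discarded. In what remains, $v \geq 1$, hence $\max(u, v) \geq 1$ and $|h b^u(b^v - 1)| \geq |h| b^{u+v}/2$.

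For $\mathcal W_1$ (frequencies with $b^{u+v} \leq \mathtt s_m^{\mathtt a_m/2}$) and $\mathcal W_2$ (frequencies with $b^{u+v} \geq m^2 \mathtt s_m^{\mathtt b_m}$), the plan is to select the appropriate branch of the min in the pointwise bound and then swap the order of summation, converting $\sum_N N^{-3}$ into the decay factor $\sum_{N > \max(u,v)} N^{-3} \leq C \max(u,v)^{-2} \leq C(u+v)^{-2}$. For $\mathcal W_1$ this reduces matters to controlling $\mathtt s_m^{-\mathtt a_m} \sum_{r = 0}^{\alpha_m/2} b^r/(r+1)$, a geometric sum dominated by its final term, yielding $\mathcal W_1 \leq C(h, b)\mathtt s_m^{-\mathtt a_m/2}$. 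For $\mathcal W_2$ the analogous tail estimate $\mathtt s_m^{\mathtt b_m} \sum_{r \geq r_m} b^{-r}/r$, with $b^{r_m} = m^2 \mathtt s_m^{\mathtt b_m}$, is dominated by its first term and gives $\mathcal W_2 \leq C(h, b)/m^2$.

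For $\mathcal W_3$, the constraint $N \leq \alpha_m/4$ forces $u + v \leq 2(N-1) < \alpha_m/2$ for every pair in $\mathbb Z_N^2$, so the bound $|\mathtt v_m(\xi)| \leq C(h) b^{u+v}/\mathtt s_m^{\mathtt a_m}$ applies uniformly. Summing the geometric series $\sum_{(u, v) \in \mathbb Z_N^2} b^{u + v} \leq C b^{2N}$ and then the $N$-sum gives $\mathcal W_3 \leq C(h, b) \mathtt s_m^{-\mathtt a_m} \sum_{N \leq \alpha_m/4} N^{-3} b^{2N}$, once again dominated by the endpoint $N = \alpha_m/4$, and producing $\mathcal W_3 \leq C(h, b) \mathtt s_m^{-\mathtt a_m/2}$.

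The main obstacle is $\mathcal W_4$, where $N \geq m \beta_m$, because in the intermediate frequency band $\alpha_m < u+v < \beta_m$ the pointwise estimate for $|\mathtt v_m|$ collapses to the trivial bound $1$. The plan is to trisect $\mathbb Z_N^2$ by the value of $u + v$: on the low and high zones ($u+v \leq \alpha_m$ and $u+v \geq \beta_m$) the decaying bounds from Lemma \ref{vm-est-lemma} yield contributions of order $\alpha_m$ and $\beta_m$ respectively via geometric series, whereas the middle zone is bounded simply by counting lattice points, contributing of order $\beta_m^2$. Thus $\sum_{(u, v) \in \mathbb Z_N^2} |\mathtt v_m(h b^u(b^v - 1))| \leq C(h, b)\beta_m^2$ uniformly for $N \geq m \beta_m$, and $\mathcal W_4 \leq C(h, b) \beta_m^2 \sum_{N \geq m\beta_m} N^{-3} \leq C(h, b)/m^2$. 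Combining the four estimates yields $\mathtt c_m(h, b) \leq C(h, b)[\mathtt s_m^{-\mathtt a_m/2} + m^{-2}]$, which is summable in $m$: the term $m^{-2}$ converges as a $p$-series, while $\mathtt s_m^{-\mathtt a_m/2}$ is geometrically summable because \eqref{abs} forces $\mathtt s_m^{\mathtt a_m} > 2 \mathtt s_{m-1}^{\mathtt a_{m-1}}$.
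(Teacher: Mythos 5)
Your proposal is correct and in spirit the same as the paper's: every sub-sum is controlled by the pointwise estimate $|\mathtt v_m(\xi)|\leq C_0\min[1, |\xi|\mathtt s_m^{-\mathtt a_m}, \mathtt s_m^{\mathtt b_m}|\xi|^{-1}]$, with the appropriate branch of the minimum chosen according to the size of $b^{u+v}$, and the key structural observation that $u+v$ is forced into a controlled range on each of the four pieces.

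The differences are organizational. For $\mathcal W_1$ and $\mathcal W_2$ you sum over $N$ first, extracting a factor $\max(u,v)^{-2}$, then sum the geometric series in $r = u+v$; the paper sums over $(u,v)$ first for each fixed $N$, extracting a factor $N$ and paying with $\sum N^{-2}$. Both bookkeeping schemes give the same $\mathtt s_m^{-\mathtt a_m/2}$ and $m^{-2}$ bounds. The paper bundles $\mathcal W_1$ and $\mathcal W_3$ into a single computation since both force $b^{u+v}\leq\mathtt s_m^{\mathtt a_m/2}$; you treat $\mathcal W_3$ separately, which works equally well. For $\mathcal W_4$ your trisection by the size of $u+v$ is precisely the content of the paper's step that defines $\mathcal W_4^*$ as the restriction of $\mathcal W_4$ to $\mathtt s_m^{\mathtt a_m/2}\leq b^{u+v}\leq m^2\mathtt s_m^{\mathtt b_m}$ and observes that $|\mathcal W_4 - \mathcal W_4^*|\lesssim \mathcal W_1 + \mathcal W_2$; you estimate the low and high zones inline, which avoids appealing back to the earlier estimates but costs a few extra lines. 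In both routes the dominant piece is the middle band, where only the trivial bound $|\mathtt v_m|\leq 1$ is available, and the lattice-point count $O(\beta_m^2)$ against $\sum_{N\geq m\beta_m}N^{-3}=O((m\beta_m)^{-2})$ gives $O(m^{-2})$. Your final summability argument, using \eqref{abs} to derive $\mathtt s_m^{\mathtt a_m}\gtrsim 2^m$, is the same one the paper invokes.
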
 
%\begin{align} 
%\mathcal{V}_{m1} &= \sum_{j=1}^{3} \mathcal{W}_{j}, \quad \mathscr{V}_{m1} = \sum_{N=1}^{\infty} N^{-3} \sum \bigl\{\bigl| \mathtt u_m \bigl(h b^u(b^v-1)\bigr) \bigr| : (u, v) \in \mathbb U_j \bigr\}, \\  
% \mathbb U_2 &:=  \bigl\{(u, v) \in \mathbb Z_N^2 : \mathtt s_m^{\frac{\mathtt a_m}{2}} < b^{u+v} < m^2 s_m^{\mathtt b_m}\bigr\}, \; \text{ so that } \;  \mathbb Z_N^2 = \bigsqcup_{j=1}^{3}\mathbb U_j. 
%\end{align} 
\begin{proposition} \label{V2-sum-prop}
Suppose that exactly one of the following assumptions is true. 
\begin{enumerate}[(a)] 
\item The parameter $\pmb{\Pi}$ obeys the hypotheses of Proposition \ref{normality-special-prop}, and  
%{\red , specifically \eqref{weakened-normal-hypotheses} } {\color{blue} Comment: I think this red part can be deleted, because the hypotheses of Proposition \ref{normality-special-prop} are not simple enough to restate it here}. 
the base $b$ lies in $\mathscr{B}_1$, the collection of bases given by \eqref{bases-B1-def}. \label{V2-sum-prop-parta}
%\begin{equation} \mathtt s_m \nsim b \quad \text{ for all } b \in \mathscr{B} \text{ and all } m \geq 1.  \end{equation} 
\item  The parameter $\pmb{\Pi}$ obeys the hypotheses of Proposition \ref{normal-prop}, 
%{\red , specifically \eqref{normality-hypotheses} and \eqref{normality-digit-01} } {\color{blue} This red part also can be deleted}. 
and the base $b$ lies in $\mathscr{B}$, the collection of bases given by \eqref{bases-BC-bar}. \label{V2-sum-prop-partb}
\end{enumerate} 
For each pair $(\pmb{\Pi}, b)$ obeying either \eqref{V2-sum-prop-parta} or \eqref{V2-sum-prop-partb} above, and any $h \in \mathbb Z \setminus \{0\}$, there exists a sequence $\{\mathtt c_m(h, b) : m \geq 1\} \subseteq (0, \infty)$ for which the sum $\mathcal V_{m2}$ given by \eqref{Vm2-sum-definition} satisfies the conclusion
\begin{equation} \label{V2-sum-estimate}
\mathcal V_{m2}(h, b) \leq \mathtt c_m(h, b) \; \; \text{ for all } m \geq 1, \; \; \text{ with } \; \; \sum_{m=1}^{\infty} \mathtt c_m(h, b) < \infty.  
\end{equation}   
\end{proposition}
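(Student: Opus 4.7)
In the intermediate regime $\mathbb V_2$, the uniform pointwise bound \eqref{vm-pointwise-estimate} degenerates to a constant and is therefore inadequate. The plan is to use the sharper factorization recorded in \eqref{def-wm}, namely $|\mathtt v_m(\xi)|\le |\mathfrak A_m(\xi)|\cdot|\mathfrak w_m(\xi)|$, where $\mathfrak A_m$ is tied to the first step of $\mathscr O_m$ (see \eqref{def-A} and \eqref{ArBr}) and $\mathfrak w_m=\underline{\mathfrak w}_{m,1}+\underline{\mathfrak w}_{m,2}$ encodes the second step, as in \eqref{wm-alternate}--\eqref{def-w1w2}. Both factors admit two distinct types of control: one purely analytic (Lemmas \ref{(B*-1)-lemma}, \ref{1-hat-C-corollary}, and Section \ref{v-pointwise-section-take-2}), and one arithmetic, exploiting how $\xi=hb^u(b^v-1)$ is positioned relative to multiples of $\mathtt s_m^{\mathtt a_m}$ and $\mathtt s_m^{\mathtt b_m}$ and how its digit block in base $\mathtt s_m$ on positions $\mathtt a_m+1,\dots,\mathtt b_m$ distributes over $\mathscr D_m$ versus $\mathbb Z_{\mathtt s_m}\setminus\mathscr D_m$.

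First I would dispatch $\mathfrak A_m$. When $\xi$ is far from the nearest multiple of $\mathtt s_m^{\mathtt a_m}$, the formula \eqref{def-A} combined with the sine lower bound \eqref{sine} gives $|\mathfrak A_m(\xi)|\le C\mathtt N_m^{-1}\bigl\|\xi \mathtt s_m^{-\mathtt a_m}\bigr\|^{-1}$, where $\|\cdot\|$ is distance to the nearest integer. Averaging this estimate against the set $\{hb^u(b^v-1):(u,v,N)\in\mathbb V_2\}$ reduces, via dyadic pigeonholing in $\|hb^u(b^v-1)\mathtt s_m^{-\mathtt a_m}\|$, to counting how many $(u,v)$ place this quantity inside each dyadic shell. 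This is exactly where the multiplicative independence of $b$ from $\mathtt s_m$ enters: in case \eqref{V2-sum-prop-partb} the hypothesis $b\in\mathscr B$ yields $b\not\sim \mathtt s_m$ directly, while in case \eqref{V2-sum-prop-parta} the prime-separation condition in \eqref{bases-B1-def} ensures that, for any $\mathtt t\in\mathscr C$ appearing in $\mathcal S$, $b$ and $\mathtt t$ share no common power, hence $b\not\sim \mathtt s_m$ since $\mathtt s_m=\mathtt t^{n}$. The Schmidt-type lemmas from Section \ref{number-theoretic-tools-section} (in particular the small-shell counting statement underlying $\kappa_m=\kappa(\mathtt s_m)$) then control the shell counts uniformly in $(u,v)$ in the $\mathbb V_2$-range. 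The contribution from the complementary regime, where $\xi$ is close to a multiple of $\mathtt s_m^{\mathtt a_m}$, is handled by the $\mathfrak w_m$-factor: in that regime $\mathfrak C_m(\xi)$ is either $0$ or $1$ by Lemma \ref{BC-lemma}, so $|\mathfrak w_m(\xi)|$ reduces to an explicit expression involving the empirical frequencies of digits from $\mathscr D_m$ in the block of $\xi$ between positions $\mathtt a_m+1$ and $\mathtt b_m$ in base $\mathtt s_m$.

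Next I would estimate $\underline{\mathfrak w}_{m,1}$, which by \eqref{def-w1w2} and \eqref{B*-rep} is essentially $\varepsilon_m$ times a product of the factors $\mathfrak g(\xi,j;\mathtt s_m,\mathscr D_m)$ over $j=\mathtt a_m+1,\dots,\mathtt b_m$. Each factor $\mathfrak g(\xi,j)$ has modulus at most $1$, and is bounded strictly away from $1$ unless the $j$-th digit of $\xi$ in base $\mathtt s_m$ lies in $\mathscr D_m$; the gap is quantitative, depending only on $\#\mathscr D_m/\mathtt s_m$, provided $\mathscr D_m\subsetneq \mathbb Z_{\mathtt s_m}$. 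Hence, if at least a positive proportion of the digit positions $j\in\{\mathtt a_m+1,\dots,\mathtt b_m\}$ of $\xi=hb^u(b^v-1)$ fall outside $\mathscr D_m$ in base $\mathtt s_m$, we gain a factor of the form $(1-c)^{\rho(\mathtt b_m-\mathtt a_m)}$. Under the growth \eqref{normality-hypotheses} with $\mathtt b_m=2m\mathtt a_m$ and $\mathtt a_m=\mathtt s_m^{\mathtt K_m^2}$, any such fixed-rate geometric decay beats any polynomial in the remaining parameters. The arithmetic input that secures a positive digit-avoidance rate is again Schmidt's lemma \cite{s60} in the quantitative form recorded in Lemma \ref{Schmidt-lemma-extremal-digits} (whence $\kappa_m$ arises): for $b\not\sim \mathtt s_m$ and for all but at most $O(\kappa_m^{-1}\mathtt s_m^{O(1)})$ many residues in the relevant range, at least a $\kappa_m$-proportion of the base-$\mathtt s_m$ digits of $hb^u(b^v-1)$ on the window $[\mathtt a_m+1,\mathtt b_m]$ does not lie in $\mathscr D_m$. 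In case \eqref{V2-sum-prop-partb}, the additional hypothesis $\{0,1\}\subseteq \mathscr D_m$ is crucial because it extends this digit-avoidance statement to \emph{every} base $b\not\sim \mathtt s_m$ rather than only those $b$ with a prime avoiding $\mathtt s_m$; this is the precise point where parts (a) and (b) diverge.

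Finally I would assemble the pieces. Using $|\mathtt v_m|\le|\mathfrak A_m|\cdot(|\underline{\mathfrak w}_{m,1}|+|\underline{\mathfrak w}_{m,2}|)$, splitting $\mathbb V_2$ into the two complementary subranges indicated above, and summing the shell-by-shell and digit-by-digit bounds over $(u,v,N)$, one obtains
\[
\mathcal V_{m2}(h,b)\le C(h,b)\Bigl[\mathtt N_m^{-c}+(1-c)^{c\kappa_m(\mathtt b_m-\mathtt a_m)}\Bigr]\bigl(\log \mathtt s_m^{\mathtt b_m}\bigr)^{C}
\]
for some absolute $c>0$. The growth \eqref{Km-choice}--\eqref{normality-hypotheses}, which forces $\kappa_m(\mathtt b_m-\mathtt a_m)\to\infty$ faster than any polynomial in $m\mathtt s_m$ and $\mathtt N_m\ge \mathtt s_m^{\mathtt a_m/3}$, then makes the right-hand side a summable sequence $\mathtt c_m(h,b)$ in $m$, yielding \eqref{V2-sum-estimate}. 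The main obstacle I anticipate is the second paragraph: turning the qualitative multiplicative independence of $b$ and $\mathtt s_m$ into a \emph{quantitative} shell-count and digit-avoidance estimate that is uniform over the wide range of $(u,v)$ in $\mathbb V_2$ and gives a gain which dominates the logarithmic volume of the summation domain. The constants $\kappa_m$ and the rapid growth \eqref{normality-hypotheses} are tailored precisely to win this race, and checking that the race is indeed won is where the bulk of the technical work in Sections \ref{v-pointwise-section-take-2}--\ref{estimating-vm-section-Part2} will be spent.
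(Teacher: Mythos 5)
There is a genuine gap in your proposal: the gain mechanism you invoke for the $\mathfrak g$-factors is not the one the paper uses, and as a result your attribution of where the decay comes from in case~(a) is wrong. You assert that each factor $\mathfrak g(\xi,j;\mathtt s_m,\mathscr D_m)$ is ``bounded strictly away from $1$ unless the $j$-th digit of $\xi$ in base $\mathtt s_m$ lies in $\mathscr D_m$,'' and that Schmidt's lemma gives a $\kappa_m$-proportion of digits in the window $[\mathtt a_m+1,\mathtt b_m]$ \emph{avoiding} $\mathscr D_m$. Neither is correct. The quantitative gain (Lemma~\ref{restricted-lemma}) is $|\mathfrak g(\xi,j+1;\mathtt s,\mathscr A)|\le 1-c_0/(\mathtt r\mathtt s^4)$ whenever the \emph{consecutive digit pair} $(\mathtt d_{j-1}(|\xi|),\mathtt d_j(|\xi|))$ avoids the two extremal pairs $(0,0)$ and $(\mathtt s-1,\mathtt s-1)$, i.e.\ obeys \eqref{not-extreme}. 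This criterion makes no reference to $\mathscr D_m$ at all; it applies equally to $\mathscr A=\mathbb Z_{\mathtt s}$ and to $\mathscr A=\mathscr D$, provided only that $\{0,1\}\subseteq\mathscr A$. Schmidt's Lemma~\ref{Schmidt-lemma-extremal-digits} correspondingly counts extremal digit pairs, not occurrences of digits outside $\mathscr D_m$. Because you have the criterion wrong, you also misplace the source of decay. In case~(a), where $b\in\mathscr B_1$, the saving comes entirely from the factor $\mathfrak A_m$, expanded via the identity \eqref{A-alt-est} as a product of $\mathfrak g(\xi,k;\mathtt s_m,\mathbb Z_{\mathtt s_m})$ over $k\in\{\mathtt K_m+1,\dots,\mathtt a_m\}$, all with the \emph{full} alphabet $\mathbb Z_{\mathtt s_m}\supseteq\{0,1\}$. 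This is precisely why Proposition~\ref{normality-special-prop} gets away with $\mathscr D_m=\{0\}$: the $\mathfrak w$-factors $\mathfrak g(\cdot;\mathtt s_m,\mathscr D_m)$ play no role in the decay, and the $\{0,1\}$-requirement is only imposed on the factors coming from $\mathfrak A_m$, where it is automatic. The condition $\{0,1\}\subseteq\mathscr D_m$ in case~(b) is needed because the factorization $hb^u(b^v-1)=\zeta\,\mathtt s_m^{\mathtt k(u)+\mathtt h}$ shifts $\zeta$'s digit window into the $\mathfrak w$-range $[\mathtt a_m+1,\mathtt b_m]$ (see Corollary~\ref{corollary-composite} and the estimate \eqref{v1-bound-final}), and there the alphabet is $\mathscr D_m$, so the gain \eqref{less-than-one} requires $\{0,1\}\subseteq\mathscr D_m$.

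A second, structural difference: your plan to bound $\mathfrak A_m$ via $|\mathfrak A_m(\xi)|\lesssim(\mathtt N_m\|\xi\mathtt s_m^{-\mathtt a_m}\|)^{-1}$ and then dyadically pigeonhole $(u,v)$ into shells of $\|hb^u(b^v-1)\mathtt s_m^{-\mathtt a_m}\|$ is not what the paper does, and you do not say how to carry out the shell count. The route the paper actually takes avoids the fractional-distance bound entirely: it rewrites $\mathfrak A_m$ via \eqref{A-alt-est} as a small error plus a product of $\mathfrak g$-factors, converts the estimate to a digit-pair count $\underline{\mathtt J}$ or $\mathtt J$, and then splits $\mathbb V_2$ into three pieces ($\mathbb V_{21},\mathbb V_{22},\mathbb V_{23}$, cf.\ \eqref{V2-decomp}) in case~(a), or four pieces ($\overline{\mathbb V}_{21},\dots,\overline{\mathbb V}_{24}$, cf.\ \eqref{V2-decomp-2}--\eqref{V2-decomp-2-4}) in case~(b). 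The sets where the digit-pair count is small (hence $|\mathtt v_m|$ possibly large) are shown to be few via an order argument (Lemma~\ref{I1-lemma}) and a residue-count from multiplicative independence (Lemmas~\ref{Schmidt-mult-indep-lemma} and~\ref{E-cardinality-lemma}); on the complementary sets, the pointwise bound gives geometric decay. Your proposal contains the right ingredients (Schmidt's lemmas, the factorization $|\mathtt v_m|\le|\mathfrak A_m||\mathfrak w_m|$, rapid growth of $\mathtt K_m$ to win the race) but you would need to replace the ``digit not in $\mathscr D_m$''/dyadic-shell plan with the product expansion and the explicit sub-index-set decomposition to make it close.
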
 
\noindent We will prove Proposition \ref{W-sum-prop} in Section \ref{W-sum-prop-proof-section} and Proposition \ref{V2-sum-prop} in Sections \ref{estimating-vm-section-Part1} and \ref{estimating-vm-section-Part2}. Parts \eqref{V2-sum-prop-parta} and \eqref{V2-sum-prop-partb} of Proposition \ref{V2-sum-prop} are covered in Corollaries \ref{V2-sum-corollary-parta} and \ref{V2-sum-corollary-partb} respectively.   

\section{Estimating $\mathcal{V}_{m1}$} \label{estimating-Vm1-section} 
We will prove Proposition \ref{W-sum-prop} in this section, using the pointwise estimate on $\mathtt v_m$ obtained in Lemma \ref{vm-est-lemma}. We have noted in \eqref{sum-of-W} that $\mathcal V_{m1}$ is bounded above by four sums $\{\mathcal W_j: 1 \leq j \leq 4\}$. Of these, $\mathcal W_1$ and $\mathcal W_3$ correspond respectively to a small value of $b^{u+v}$ or of $N$, whereas for $\mathcal W_2, \mathcal W_4$, at least one of these two quantities is large. We treat them separately.  
 \subsection{Proof of Proposition \ref{W-sum-prop}} \label{W-sum-prop-proof-section} 
\subsubsection{Estimation of $\mathcal{V}_{m1}$ in low frequencies} 
\begin{lemma}
For $\pmb{\Pi}, h, b$ as in Proposition \ref{W-sum-prop}, there is a constant $C(\pmb{\Pi}, h, b) > 0$ for which the sums $\mathcal W_1$, $\mathcal W_3$ defined in \eqref{Wj} obey the estimate
\begin{equation} \label{W1W3-bound} 
\mathcal W_1 + \mathcal W_3 \leq C(\pmb{\Pi}, h, b) \mathtt s_m^{-\frac{\mathtt a_m}{2}} \leq C(\pmb{\Pi}, h, b) 2^{-\frac{m}{2}}.
\end{equation} 
The last inequality of \eqref{W1W3-bound} follows from the assumption $2\mathtt s_m^{\mathtt a_m} < 2\mathtt s_m^{\mathtt b_m} \leq \mathtt s_{m+1}^{\mathtt a_{m+1}}$ ensured by \eqref{abs}.
%Since the elements of $\{\mathtt a_m : m \geq 1\}$ are distinct positive integers by \eqref{weakened-normal-hypotheses}, the right hand side of \eqref{W1W3-bound} is summable in $m$.  
\end{lemma}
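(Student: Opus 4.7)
The plan is to exploit the pointwise estimate on $\mathtt v_m$ from Lemma \ref{vm-est-lemma}, specifically the bound $|\mathtt v_m(\xi)| \leq C_0 |\xi|/\mathtt s_m^{\mathtt a_m}$, which is the sharpest of the three options when $|\xi|$ is small compared to $\mathtt s_m^{\mathtt a_m}$. Both index sets $\mathbb V_{11}$ and $\mathbb V_{13}$ will turn out to lie in this low-frequency regime, so a single uniform argument will handle them both.

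The first observation to make is that $\mathbb V_{13} \subseteq \mathbb V_{11}$. Indeed, if $(u,v,N) \in \mathbb V_{13}$, then $u,v \in \mathbb Z_N$ forces $u+v \leq 2(N-1) \leq \mathtt a_m \log_b(\mathtt s_m)/2$, and hence $b^{u+v} \leq \mathtt s_m^{\mathtt a_m/2}$. Since every summand of $\mathcal W_j$ is non-negative, this gives $\mathcal W_3 \leq \mathcal W_1$, so it suffices to estimate $\mathcal W_1$.

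For $\mathcal W_1$, the constraint $b^{u+v} \leq \mathtt s_m^{\mathtt a_m/2}$ ensures that $|hb^u(b^v-1)| \leq |h| b^{u+v}$, so Lemma \ref{vm-est-lemma} yields $|\mathtt v_m(hb^u(b^v-1))| \leq C_0 |h| b^{u+v} \mathtt s_m^{-\mathtt a_m}$. I would then change variables from $(u,v)$ to $(u,r)$ with $r = u+v$, noting that the number of pairs $(u,v) \in \mathbb Z_N^2$ with $u+v = r$ is at most $\min(r+1, N)$. The geometric nature of the sum in $r$ produces, after summation over $r$ up to $c_m := \lfloor \mathtt a_m \log_b(\mathtt s_m)/2 \rfloor$, a bound of the form $C(b)\min(N, c_m) b^{c_m}$. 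Combined with $\sum_{N\geq 1} \min(N, c_m)/N^3 \leq \sum_{N\geq 1} N^{-2} = \pi^2/6$, this gives
\[
\mathcal W_1 \;\leq\; C(h,b)\, \mathtt s_m^{-\mathtt a_m}\, b^{c_m} \;=\; C(h,b)\, \mathtt s_m^{-\mathtt a_m/2},
\]
and hence $\mathcal W_1 + \mathcal W_3 \leq 2\mathcal W_1$ enjoys the same bound.

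The final inequality in \eqref{W1W3-bound} is essentially bookkeeping: iterating the growth condition \eqref{abs}, which implies $2\mathtt s_m^{\mathtt a_m} \leq \mathtt s_{m+1}^{\mathtt a_{m+1}}$, gives $\mathtt s_m^{\mathtt a_m} \geq 2^{m-1} \mathtt s_1^{\mathtt a_1}$, and hence $\mathtt s_m^{-\mathtt a_m/2} \leq C(\pmb{\Pi})\, 2^{-m/2}$. No genuine obstacle arises in this proof; the only subtlety is noticing the inclusion $\mathbb V_{13} \subseteq \mathbb V_{11}$ to avoid double-counting, and correctly handling the geometric sum in $r$ so that the exponential growth $b^{u+v}$ is compensated exactly by the gain $\mathtt s_m^{-\mathtt a_m}$ from the pointwise estimate, leaving the clean decay rate $\mathtt s_m^{-\mathtt a_m/2}$.
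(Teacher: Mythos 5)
Your proof is correct and follows essentially the same route as the paper: you invoke the low-frequency bound $|\mathtt v_m(\xi)| \leq C_0 |\xi| \mathtt s_m^{-\mathtt a_m}$ from Lemma \ref{vm-est-lemma}, change variables to $r=u+v$, and close with a geometric sum against $\sum N^{-2}$. The only cosmetic difference is your observation that $\mathbb V_{13}\subseteq\mathbb V_{11}$ (which, as a side note, shows the paper's remark that ``none of the $\mathbb V_{1j}$ is contained in another'' is slightly overstated), whereas the paper simply checks that $b^{u+v}\leq \mathtt s_m^{\mathtt a_m/2}$ holds on both sets and runs the identical estimate over the union; the two formulations lead to the same computation.
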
 
\begin{proof} 
The procedure for estimating $\mathcal W_1 + \mathcal W_3$ is similar to the estimation of $\mathscr{U}_{m1}$ in Proposition \ref{U-prop}. We use the pointwise upper bound $|\xi| \mathtt s_m^{-\mathtt a_m}$ on $\mathtt v_m(\xi)$ provided by \eqref{vm-pointwise-estimate}:
%the trivial bound on $\mathfrak A_m$ and the estimate on $\mathfrak w_m$ provided by Lemma  \ref{pointwise-upper-bound-wm-lemma}:
\begin{align} 
\mathcal W_1 + \mathcal W_3 &= \sum \Bigl\{ \frac{1}{N^{3}} \bigl| \mathtt v_m \bigl(hb^u(b^v-1) \bigr) \bigr| : (u, v) \in \mathbb V_{11} \cup \mathbb V_{13} \Bigr\} \nonumber \\  &\leq C_0 \sum \left\{\frac{1}{N^{3}} \frac{|h b^u(b^v-1)|}{\mathtt s_m^{\mathtt a_m}} : \mathbb V_{11} \cup \mathbb V_{13} \right\}. \label{w1+w3-step1}
\end{align} 
We observe the following property of the indices $(u, v, N) \in \mathbb V_{11} \cup \mathbb V_{13}$:
\[b^{u+v} \leq \begin{cases} \mathtt s_m^{\frac{\mathtt a_m}{2}} \; \; \text{ by definition }  &\text{ on } \mathbb V_{11} \\ b^{2N} \leq b^{\frac{\mathtt a_m}{2}  \log_{\mathtt b} (\mathtt s_m) } = \mathtt s_m^{\frac{\mathtt a_m}{2}} &\text{ on } \mathbb V_{13}. \end{cases} \] 
Substituting this into the bound \eqref{w1+w3-step1} for the sum above, we find that
\begin{align*} 
\mathcal W_1 + \mathcal W_3 &\leq 2 \mathtt s_m^{-\mathtt a_m} \sum \Bigl\{\frac{1}{N^3}|h| b^{u+v} : b^{u+v} \leq \mathtt s_m^{\frac{\mathtt a_m}{2}}, (u, v) \in \mathbb Z_N^2, N \in \mathbb N \Bigr\} \\ &\leq C(h)  \mathtt s_m^{-\mathtt a_m} \sum_{N=1}^{\infty} N^{-3} \sum_{u=1}^{N} \sum_r \left\{b^r : 1 \leq b^r \leq \mathtt s_m^{\frac{\mathtt a_m}{2}} \right\} \\ &\leq C(h, b) \mathtt s_m^{-\mathtt a_m} \mathtt s_m^{\frac{\mathtt a_m}{2}} \sum_{N=1}^{\infty} N^{-2} \leq C(h, b)  \mathtt s_m^{-\frac{\mathtt a_m}{2}}. 
\end{align*} 
The second inequality is a consequence of a change of variables $(u, v) \mapsto (u, r)$, with $r = u+v$. This is the claimed inequality \eqref{W1W3-bound}
\end{proof} 
\subsubsection{Estimation of $\mathcal{V}_{m1}$ in high frequencies} 
\begin{lemma} 
For $\pmb{\Pi}, h, b$ as in Proposition \ref{W-sum-prop}, there is a constant $C(\pmb{\Pi}, h, b) > 0$ for which the sums $\mathcal W_2$ and $\mathcal W_4$ defined in \eqref{Wj} obey the estimate
\begin{equation} 
\mathcal W_2 + \mathcal W_4 \leq C(h, b) m^{-2} \quad \text{for all}\;m \geq 4. \label{W2W4-bound}
\end{equation} 
Thus $\mathcal W_2 + \mathcal W_4$ is dominated by a term that is summable in $m$. 
\end{lemma}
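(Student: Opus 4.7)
The key analytical input is the pointwise bound $|\mathtt v_m(\xi)| \leq C_0 \min[1, |\xi|/\mathtt s_m^{\mathtt a_m}, \mathtt s_m^{\mathtt b_m}/|\xi|]$ from Lemma \ref{vm-est-lemma}, together with the observation that $\mathtt v_m(0) = 0$, so terms with $v = 0$ contribute nothing and we may assume $v \geq 1$, in which case $|h b^{u}(b^v-1)| \geq |h| b^{u+v}/2$. I also set $r_1 := \lceil \mathtt b_m \log_b \mathtt s_m \rceil$ and $r_2 := \lceil \log_b(m^2 \mathtt s_m^{\mathtt b_m}) \rceil = r_1 + O(\log m)$, and use the elementary count $\#\{(u,v) \in \mathbb Z_N^2 : u+v = r\} \leq \min(r+1, N)$.

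For $\mathcal W_2$, every $(u,v,N)$ obeys $b^{u+v} \geq m^2 \mathtt s_m^{\mathtt b_m}$, which is large. I would apply the bound $|\mathtt v_m(\xi)| \leq C_0 \mathtt s_m^{\mathtt b_m}/|\xi|$ directly, so that, after summing the resulting geometric series in $r = u+v$, the inner double sum over $(u,v) \in \mathbb Z_N^2$ is at most $C(h,b) \mathtt s_m^{\mathtt b_m} \cdot N b^{-r_2} \leq C(h,b) N/(m^2 \mathtt s_m^{\mathtt b_m})$. Multiplying by $\mathtt s_m^{\mathtt b_m}/|h|$ and summing against $N^{-3}$ telescopes the $\mathtt s_m^{\mathtt b_m}$ factor and leaves $C(h,b) m^{-2} \sum_N N^{-2} \leq C(h,b) m^{-2}$, as desired.

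For $\mathcal W_4$, the naive strategy of inserting $|\mathtt v_m| \leq 1$ fails, since it produces $\sum_{N \geq N_0} N^{-3} \cdot N^2$, a divergent series. This is the main obstacle, and the fix is to split the inner $(u,v)$-sum at $u+v = r_1$: on $\{u+v \leq r_1\}$ I use the trivial bound $|\mathtt v_m| \leq 1$ together with the elementary count, bounding the contribution by $O(r_1^2)$; on $\{u+v > r_1\}$ I apply $|\mathtt v_m(\xi)| \leq C_0 \mathtt s_m^{\mathtt b_m}/|\xi|$ and sum the geometric tail to obtain $O(\mathtt s_m^{\mathtt b_m} \cdot b^{-r_1}) = O(1)$. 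Consequently the inner sum over $(u,v)$ is $\leq C(h,b)\, r_1^2$. Here the choice of the threshold $N_0 = m \mathtt b_m \log_b \mathtt s_m = m r_1$ in the definition of $\mathbb V_{14}$ becomes crucial: summing $N^{-3}$ over $N \geq N_0$ gives $O(N_0^{-2}) = O(r_1^{-2} m^{-2})$, which exactly cancels the $r_1^2$ from the inner sum and yields $\mathcal W_4 \leq C(h,b)\, m^{-2}$. Combining the two estimates produces \eqref{W2W4-bound}, and since $\sum_{m \geq 4} m^{-2} < \infty$ this together with \eqref{W1W3-bound} furnishes the summable sequence $\mathtt c_m(h,b)$ required in Proposition \ref{W-sum-prop}.
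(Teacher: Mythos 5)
Your argument is correct and follows essentially the same route as the paper, using the same pointwise bounds from Lemma \ref{vm-est-lemma} and the same mechanism: the factor $m^{-2}$ is produced by summing $N^{-3}$ over the range $N \geq m\mathtt b_m\log_b\mathtt s_m$ imposed by $\mathbb V_{14}$. One bookkeeping slip in the $\mathcal W_2$ computation: the inner double sum is bounded by $C(h,b)\,\mathtt s_m^{\mathtt b_m}\, N b^{-r_2} \leq C(h,b)\, N/m^{2}$, not $C(h,b)\,N/(m^2\mathtt s_m^{\mathtt b_m})$; the extraneous $\mathtt s_m^{\mathtt b_m}$ is then spuriously reinserted when you ``multiply by $\mathtt s_m^{\mathtt b_m}/|h|$'', so the two mistakes cancel and the final bound is right, but as written the intermediate inequality is off by a factor $\mathtt s_m^{\mathtt b_m}$. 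Your $\mathcal W_4$ treatment is a minor variant of the paper's: you split the inner $(u,v)$-sum at $u+v=r_1$ and dispatch the tail with the decay bound, whereas the paper notes that the portions of $\mathbb V_{14}$ with $b^{u+v}$ outside the intermediate window are already absorbed by $\mathcal W_1$ and $\mathcal W_2$, and on the remainder $\mathbb V_{14}^{\ast}$ both $u$ and $v$ are $\leq 2\mathtt b_m\log_b\mathtt s_m$. Either way the inner sum is $O\bigl((\mathtt b_m\log_b\mathtt s_m)^2\bigr)$, and the restriction on $N$ contributes the matching $(\mathtt b_m\log_b\mathtt s_m)^{-2}m^{-2}$; yours is slightly more self-contained since it does not lean on the previously established bounds for $\mathcal W_1,\mathcal W_2$, but the key mechanism is identical.
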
 
\begin{proof} 
The quantity $\mathcal W_2$ is handled in a way simiilar to $\mathcal W_1$ and $\mathcal W_3$, except here we use the bound $|\mathtt v_m(\xi)| \leq C_0 \mathtt s_m^{\mathtt b_m}/|\xi|$ from \eqref{vm-pointwise-estimate}:  
\begin{align} 
\mathcal W_2 &= \sum \Bigl\{ \frac{1}{N^3}\bigl| \mathtt v_m \bigl(hb^u(b^v-1) \bigr) \bigr| : (u, v) \in \mathbb V_{12} \Bigr\} \nonumber  \\ 
&\leq C_0  \sum \Bigl\{ \frac{1}{N^{3}} \frac{\mathtt s_m^{\mathtt b_m}}{|h b^u(b^v-1)|}  : (u, v) \in \mathbb V_{12} \Bigr\} \nonumber \\
&\leq C_0 \mathtt s_m^{\mathtt b_m}  \sum \Bigl\{ \frac{1}{N^{3} |h| b^{u+v}} : b^{u+v} \geq m^2 \mathtt s_m^{{\mathtt b_m}}, (u, v) \in \mathbb Z_N^2, N\in\mathbb{N} \Bigr\} \nonumber \\ 
&\leq C(h) \mathtt s_m^{\mathtt b_m} \sum_{N=1}^{\infty} N^{-3} \sum_{u=1}^{N} \sum_r \left\{b^{-r} : b^r \geq m^2 \mathtt s_m^{\mathtt b_m} \right\} \nonumber \\ 
&\leq C(h, b) \mathtt s_m^{\mathtt b_m} m^{-2} \mathtt s_m^{-\mathtt b_m} \sum_{N=1}^{\infty} N^{-2} \leq C(h, b) m^{-2}. \label{W2} 
\end{align}
The quantity $\mathcal W_4$ is analyzed in a different way. In view of the already obtained bounds on $\mathcal W_1$ and $\mathcal W_2$,  it suffices to only bound the quantity $\mathcal W_4^{\ast}$, the sum of $|\mathtt v_m(h b^u (b^v-1))|/N^3$ on
\[ \mathbb V_{14}^{\ast} := \bigl\{(u, v, N) \in \mathbb V_{14} : \mathtt s_m^{\frac{\mathtt a_m}{2}} \leq b^{u+v} \leq m^2 \mathtt s_m^{\mathtt b_m}\bigr\}, \]
since $|\mathcal W_4 - \mathcal W_4^{\ast}|$ is bounded above by at most a constant multiple of $\mathcal W_1 + \mathcal W_2$. 
On $\mathbb V_{14}^{\ast}$, the indices $(u, v)$ satisfy the relation
\begin{equation}  \max(u, v) \leq u + v \leq \log_b\bigl(m^2 \mathtt s_m^{\mathtt b_m}\bigr) \leq 2 \mathtt b_m \log_b (\mathtt s_m). \label{uv-max} \end{equation} 
The last inequality follows from \eqref{abs}, which implies \[ m^2 \leq 2^{m} \leq \mathtt s_m^{\mathtt b_m} \quad \text{ for all } m \geq 4,  \] since $2\mathtt s_m^{\mathtt b_m} \leq \mathtt s_{m+1}^{\mathtt a_{m+1}} <\mathtt s_{m+1}^{\mathtt b_{m+1} }$ and $\mathtt s_{0}^{\mathtt b_{0} } = 1$. Combining the bound \eqref{uv-max} on $u, v$ with the trivial bound $|\mathtt v_m| \leq 1$ from \eqref{vm-pointwise-estimate}, we obtain 
\begin{align} 
\mathcal W_4^{\ast} &\leq \sum \Bigl\{\frac{1}{N^3}|\mathtt v_m \bigl(hb^u(b^v-1) \bigr)| : (u, v, N) \in \mathbb V_{14}^{\ast}  \Bigr\} \nonumber \\
&\leq \sum \Bigl\{\frac{1}{N^3} : u, v \leq 2\mathtt b_m \log_b (\mathtt s_m), \;  N \geq m \mathtt b_m \log_b (\mathtt s_m) \Bigr\} \nonumber \\ 
&\leq \sum \Bigl\{ \frac{1}{N^3}  \bigl(2\mathtt b_m \log_b (\mathtt s_m) \bigr)^2 : N \geq m \mathtt b_m \log_b (\mathtt s_m) \Bigr\} \nonumber \\
&\leq C_0  \bigl( \mathtt b_m \log_b (\mathtt s_m) \bigr)^2 \bigl(  m \mathtt b_m \log_b (\mathtt s_m) \bigr)^{-2} \leq C_0 m^{-2}.  \label{W4}
\end{align}    
Combining \eqref{W2} and \eqref{W4} leads to \eqref{W2W4-bound}. 
\end{proof} 
\noindent As a direct consequence of \eqref{sum-of-W}, \eqref{W1W3-bound} and \eqref{W2W4-bound}, we find that the conclusion \eqref{W-sum-estimate} of Proposition \ref{W-sum-prop} holds, with the summable sequence $\mathtt c_m(h,b) = C(h,b) m^{-2}$.
 \qed

\section{Pointwise estimates for $\mathtt v$: Take 2} \label{v-pointwise-section-take-2} 
So far, we have reduced the proof of Proposition \ref{normal-prop} to that of Proposition \ref{V2-sum-prop}, i.e., the estimation of $\mathcal V_{m2}$ given by \eqref{Vm2-sum-definition}. As indicated earlier, the proof of this proposition is heavily dependent on the multiplicative independence of the bases $b$ and $\mathtt s_m$. In this section, we record a few analytic estimates for $\mathtt v$ that will be needed for the proof of Proposition \ref{V2-sum-prop}. These estimates offer potentially sharper bounds than Lemma \ref{vm-est-lemma} in the intermediate frequency ranges of $\xi$ where \eqref{vm-pointwise-estimate} is not useful, but these bounds are significantly strong only for frequencies $\xi$ with special arithmetic properties. In our applications, frequencies with such properties turn out to be numerous only due to the multiplicative independence of $b$ and $\mathtt s_m$. The number-theoretic facts necessary for the proof are gathered in Section \ref{number-theoretic-tools-section}. The proof of Proposition \ref{V2-sum-prop} combining all this information follows thereafter.  

\subsection{The function $\mathfrak w$ revisited}
The inequality \eqref{def-wm} provides an upper bound for the function $\mathtt v$ in terms of $\mathfrak A$ and $\mathfrak w$. In Lemma \ref{v-lemma}, we derived a pointwise estimate on $\mathfrak w(\xi)$ depending only on $|\xi|$. Finding a finer one is the goal of this section. 
%\subsubsection{Non-trivial estimates for $\mathfrak B^{\ast}$ and $\mathfrak C$} 
\vskip0.1in 
\noindent Let us recall the expression of $\mathfrak w$ from \eqref{wm-alternate}, which involves $\mathfrak B^{\ast}$ and $\mathfrak C$. The product formulae for $\mathfrak B^{\ast}$ and $\mathfrak C$ from \eqref{B*-rep} and \eqref{C-factor} give rise to 
%respectively definition of $\mathfrak g$ from \eqref{} It is worth noting, in view of \eqref{C-factor} and the definition of $\mathfrak g$, that 
\begin{equation} \label{B*C-product}
\mathfrak B^{\ast}(\xi) = \prod_{j = \mathtt a + 1}^{\mathtt b} \mathfrak g(\xi, j; \mathtt s, \mathscr{D}), \qquad \mathfrak C(\xi) = \prod_{j=\mathtt a+1}^{\mathtt b} \mathfrak g(\xi, j; \mathtt s, \mathbb Z_{\mathtt s}), 
\end{equation} 
where each factor $\mathfrak g(\xi, j; \cdot, \cdot)$ is defined as in \eqref{exp-sum-g-def}. It is clear from \eqref{B*C-product} that the sizes of $\mathfrak B^{\ast}$ and $\mathfrak C$ are dictated by the number of indices $j$ for which the factor $\mathfrak g(\xi, j; \cdot, \cdot)$ is quantifiably smaller than the trivial bound 1. The following lemma makes this precise.  
\begin{lemma} \label{restricted-lemma}
There exists an absolute constant $c_0 > 0$ with the following property. Suppose that $\mathtt s \in \mathbb N \setminus \{1\}$, $\xi \in \mathbb Z \setminus \{0\}$, and that 
\begin{equation} \label{xi-expansion} |\xi| = \sum_{j=0}^{\infty} \mathtt d_j(|\xi|) \mathtt s^j, \qquad \mathtt d_j(|\xi|) \in \{0, 1, \ldots, \mathtt s-1\},  \end{equation} 
denotes the digit expansion of $|\xi|$ in base $\mathtt s$. 
%{\color{blue} Noticing that there is another definiton of $\mathtt d_j(\xi)$ at the beginning of Section \ref{consecutive-digits-section}, to make the notation be the same, I want to restrict $\xi$ in this section to $\xi\in\mathbb{N}$, and do not know whether it is OK. }
\begin{enumerate}[(a)]
\item \label{restricted-lemma-part-a} If there exists an index $j \geq 1$ such that 
\begin{equation} \label{not-extreme}
1 \leq \mathtt d_{j-1}(|\xi|) + \mathtt d_j(|\xi|) \mathtt s \leq \mathtt s^2-2,   
\end{equation}  
then for any choice of a digit set $\mathscr{A} \subseteq \mathbb Z_{\mathtt s}$ containing $0$ and $1$, with $\#\mathscr{A}=:\mathtt r$,
\begin{equation} \label{less-than-one}
\bigl|\mathfrak g(\xi, j+1; \mathtt s, \mathscr{A})| \leq 1 - \frac{c_0}{\mathtt r \mathtt s^4} < 1. 
\end{equation}   
\item \label{restricted-lemma-part-b} Let
\begin{equation} \label{def-cardinality-J}
\overline{\mathtt J} = \overline{\mathtt J}(\xi; \mathtt s) :=  \# \bigl\{ \mathtt a \leq j \leq \mathtt b-1 : 1 \leq \mathtt d_{j-1}(|\xi|) + \mathtt d_j(|\xi|) \mathtt s \leq \mathtt s^2-2 \bigr\}
\end{equation} 
denote the number of indices $j \in \{\mathtt a, \ldots, \mathtt b-1\}$ obeying \eqref{not-extreme}. Then for all $\mathscr{D} \subseteq \mathbb Z_{\mathtt s}$ containing $\{0, 1\}$, with $\#\mathscr{D}=:\mathtt r$, one has the estimates
\begin{equation} \label{B*C-special-estimates}
\bigl| \mathfrak B^{\ast}(\xi) \bigr| \leq \left(1 - \frac{c_0}{\mathtt r \mathtt s^4} \right)^{\overline{\mathtt J}} \quad \text{ and } \quad \bigl| \mathfrak C(\xi) \bigr| \leq \left(1 - \frac{c_0}{\mathtt s^5}\right)^{\overline{\mathtt J}}.
\end{equation} 
As a result, the functions $\mathtt v$ and $\mathfrak w$ given by \eqref{identity} and \eqref{wm-alternate} obey 
\begin{equation} 
|\mathtt v(\xi)| \leq |\mathfrak w(\xi)| \leq |\mathfrak B^{\ast}(\xi)| + |\mathfrak C(\xi)| \leq 2 \left(1 - \frac{c_0}{\mathtt s^5}\right)^{\overline{\mathtt J}}. \label{v-pointwise-2}
\end{equation}  
\end{enumerate} 
\end{lemma}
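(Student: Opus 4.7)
\textbf{Proof plan for Lemma \ref{restricted-lemma}.}

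The plan for part (a) is to exploit the fact that $\{0,1\} \subseteq \mathscr{A}$ to isolate, in the $\mathtt r$-term average $\mathfrak g(\xi, j+1; \mathtt s, \mathscr{A})$, the two summands coming from $\mathtt d = 0$ and $\mathtt d = 1$; these contribute $\frac{1}{\mathtt r}(1 + e(\alpha))$ with $\alpha := \xi \mathtt s^{-(j+1)}$, while the remaining $\mathtt r-2$ unit-modulus terms contribute at most $(\mathtt r-2)/\mathtt r$. Since $|1 + e(\alpha)| = 2|\cos(\pi\alpha)|$, the triangle inequality gives
\begin{equation*} |\mathfrak g(\xi, j+1; \mathtt s, \mathscr{A})| \leq \frac{2|\cos(\pi\alpha)|}{\mathtt r} + \frac{\mathtt r-2}{\mathtt r} = 1 - \frac{2(1 - |\cos(\pi\alpha)|)}{\mathtt r}. \end{equation*}
Combined with the elementary bound $1 - |\cos(\pi \alpha)| \geq 2\|\alpha\|^2$ (valid since $\sin(\pi\delta) \geq 2\delta$ on $[0,1/2]$, where $\|\alpha\|$ denotes the distance from $\alpha$ to the nearest integer), matters reduce to establishing the lower bound $\|\alpha\| \geq \mathtt s^{-2}$.

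To prove $\|\alpha\| \geq \mathtt s^{-2}$, I would use the base-$\mathtt s$ expansion \eqref{xi-expansion} of $|\xi|$ to write
\begin{equation*} \bigl\{|\xi| \mathtt s^{-(j+1)}\bigr\} = \mathtt s^{-(j+1)} \bigl(\mathtt d_0 + \mathtt d_1 \mathtt s + \cdots + \mathtt d_j \mathtt s^j\bigr), \end{equation*}
and isolate the contribution of the top two digits as $(\mathtt d_{j-1} + \mathtt d_j \mathtt s) \mathtt s^{j-1}$, with the lower-order terms contributing a non-negative integer at most $\mathtt s^{j-1}-1$. The hypothesis $1 \leq \mathtt d_{j-1} + \mathtt d_j \mathtt s \leq \mathtt s^2-2$ then yields the two-sided estimate $\mathtt s^{-2} \leq \{|\xi|\mathtt s^{-(j+1)}\} \leq 1 - \mathtt s^{-2} - \mathtt s^{-(j+1)}$, so $\|\alpha\| \geq \mathtt s^{-2}$. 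Feeding this back into the earlier display, part (a) follows with any absolute $c_0 \leq 4$.

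For part (b), I would apply the product formula \eqref{B*C-product}: for each of the $\overline{\mathtt J}$ indices $j \in \{\mathtt a,\ldots,\mathtt b-1\}$ satisfying \eqref{not-extreme}, the factor $\mathfrak g(\xi, j+1; \mathtt s, \mathscr{D})$ obeys \eqref{less-than-one} with $\mathtt r = \#\mathscr{D}$, while the factors at the remaining indices are controlled by the trivial bound $|\mathfrak g| \leq 1$. The same reasoning applied with $\mathscr{A} = \mathbb Z_{\mathtt s}$ (so $\mathtt r = \mathtt s$) handles $\mathfrak C$. The index ranges match up exactly, since $j \in \{\mathtt a, \ldots, \mathtt b-1\}$ corresponds to $j+1 \in \{\mathtt a+1, \ldots, \mathtt b\}$, producing \eqref{B*C-special-estimates}. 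Since $\mathtt r \leq \mathtt s$ we have $1 - c_0/(\mathtt r \mathtt s^4) \leq 1 - c_0/\mathtt s^5$, unifying the two bounds. Finally, \eqref{v-pointwise-2} follows from the decomposition \eqref{wm-alternate} writing $\mathfrak w$ as $\varepsilon \widehat{\mathtt 1}(\xi \mathtt s^{-\mathtt b})\bigl(\mathfrak B^{\ast}(\xi) - \mathfrak C(\xi)\bigr)$, the trivial bounds $\varepsilon \leq 1$ and $|\widehat{\mathtt 1}| \leq 1$, one application of the triangle inequality, and the inequality $|\mathtt v| \leq |\mathfrak w|$ already recorded in \eqref{def-wm}.

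I do not anticipate any substantive obstacle beyond confirming the sharpness of the digit-pair condition: the two excluded values $\mathtt d_{j-1} + \mathtt d_j \mathtt s \in \{0, \mathtt s^2-1\}$ correspond precisely to the cases when the top two digits fail to keep the fractional part $\{|\xi|\mathtt s^{-(j+1)}\}$ away from $0$ and $1$ respectively, and this endpoint analysis must be carried out uniformly, including for the degenerate case $j = 1$ where the lower-order digit block is vacuous.
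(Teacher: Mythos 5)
Your proposal is correct and follows essentially the same route as the paper: isolate the $\mathtt d=0,1$ contribution to $\mathfrak g$, rewrite it as $\frac{2}{\mathtt r}|\cos(\pi\alpha)|$ with $\alpha = \xi \mathtt s^{-(j+1)}$, bound the remaining $\mathtt r-2$ terms trivially, and show that \eqref{not-extreme} pins $\{|\xi|\mathtt s^{-(j+1)}\}$ to the interval $[\mathtt s^{-2}, 1-\mathtt s^{-2}]$ so that the cosine is bounded away from 1. The only cosmetic difference is that the paper first does the case $\mathscr{A}=\{0,1\}$ (obtaining $|\mathfrak g|\le\cos(\pi\mathtt s^{-2})$) and then rescales by $\mathtt r$, while you perform the rescaling and the cosine estimate in a single pass via $1-|\cos(\pi\alpha)|\ge 2\|\alpha\|^2$; both yield the same $1-c_0/(\mathtt r\mathtt s^4)$, and the part (b) step — applying the single-factor estimate to the $\overline{\mathtt J}$ good factors in the products \eqref{B*C-product} and bounding the rest by $1$ — is identical.
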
 
\vskip0.1in 
\noindent {\em{Remarks: }} \begin{enumerate}[1.]
\item The improvement in \eqref{less-than-one} over the trivial bound is absent if $\mathscr{A}$ is a singleton. In the latter case $|\mathfrak g(\xi, j; \mathtt s, \mathscr{A}| \equiv 1$, as can be seen from \eqref{exp-sum-g-def}. 
\item The estimates in \eqref{B*C-special-estimates} present a geometric (in $\overline{\mathtt J}$) improvement over the trivial estimates for $\mathfrak B^{\ast}$ and $\mathfrak C$. This is significantly better than the trivial bounds \eqref{B*C-trivial-bound} especially when $\overline{\mathtt J}(\xi; \mathtt s)$ is large.    
\end{enumerate} 
\begin{proof} 
The relations in \eqref{B*C-product} identify $\mathfrak B^{\ast}$ and $\mathfrak C$ as products of factors of the form $\mathfrak g(\xi, j; \mathtt s, \cdot)$. Applying the bound \eqref{less-than-one} to each factor, with $\mathscr{A} = \mathscr{D}$ and $\mathbb Z_{\mathtt s}$ for $\mathfrak{B}^{\ast}$ and $\mathfrak C$ respectively, we see that part \eqref{restricted-lemma-part-b} follows from part \eqref{restricted-lemma-part-a}. We therefore focus only on proving part \eqref{restricted-lemma-part-a}. We first do this in the special situation $\mathscr{A} = \{0, 1\}$. In this case 
\begin{align} 
&\mathfrak g(\xi, j+1; \mathtt s, \mathscr{A}) = \frac{1}{2} \bigl( 1 + e(\xi \mathtt s^{-j-1})\bigr), \text{ so that } \nonumber  \\ 
&\bigl|\mathfrak g(\xi, j+1; \mathtt s, \mathscr{A}) \bigr| = \bigl| \cos \bigl({\pi \xi}{\mathtt s^{-j-1}}\bigr) \bigr| =   \bigl|\cos \bigl(\pi\bigl\{{ |\xi|}{\mathtt s^{-j-1}} \bigr\}\bigr) \bigr|,  \quad \{x\} = x - \lfloor x \rfloor.  \label{D01} 
\end{align} 
From the digit expansion \eqref{xi-expansion}, we see that 
\[ \{|\xi| \mathtt s^{-j-1}\} = \mathtt s^{-(j+1)} \sum_{k=0}^j \mathtt  d_k(|\xi|) \mathtt s^k.  \]   
If the index $j$ obeys the assumption \eqref{not-extreme}, the relation above yields two estimates:
 \begin{align}  
 \left\{\frac{|\xi|}{\mathtt s^{j+1}}\right\}  &=  \mathtt s^{-j-1} \sum_{k=0}^{j} \mathtt d_k(|\xi|) \mathtt s^k \geq \mathtt s^{-j-1} \bigl( \mathtt d_{j-1}(|\xi|) \mathtt s^{j-1} + \mathtt d_j(|\xi|) \mathtt s^j \bigr) \geq \mathtt s^{-2}; \text{ similarly } \label{D01+}  \\ 
\left\{\frac{|\xi|}{\mathtt s^{j+1}}\right\} & 
%\leq \mathtt s^{-j-1} \Bigl[\sum_{k=0}^{j-2}  \mathtt d_k(\xi) \mathtt s^k + \mathtt s^{j-1}(\mathtt s^2-2) \Bigr]  
\leq  \mathtt s^{-j-1} \Bigl[ \sum_{k=0}^{j-2}  (\mathtt s-1) \mathtt s^k + \mathtt s^{j-1}(\mathtt s^2-2)  \Bigr]  = \mathtt s^{-j-1} \left({\mathtt s^{j-1}-1}\right) + \mathtt s^{-2}(\mathtt s^2-2)  \nonumber \\ &\leq \mathtt s^{-2} + \mathtt s^{-2}(\mathtt s^2-2)  \leq 1 - \mathtt s^{-2}. \label{D01-}
 \end{align}
Substituting the two bounds \eqref{D01+} and \eqref{D01-} into \eqref{D01}, and using the fact that the function $\theta \mapsto |\cos(\pi \theta)|$ is symmetric about the origin and decreasing on $[0, 1/2]$ we obtain 
\begin{equation} \label{g-nontrivial-bound}
|\mathfrak g(\xi, j+1; \mathtt s, \mathscr{A})| \leq \bigl|\cos \bigl({\pi \bigl\{|\xi|}{\mathtt s^{-j-1}} \bigr\}\bigr) \bigr| \leq \cos \bigl(\pi \mathtt s^{-2} \bigr) \leq 1 - \frac{c_0}{\mathtt s^2}
\end{equation}    
for some absolute constant $c_0 > 0$. This proves \eqref{less-than-one}, in fact a stronger bound than \eqref{less-than-one}, in the special case $\mathscr{A} = \{0,1\}$.  
\vskip0.1in
\noindent Let us continue to the proof of \eqref{less-than-one} for a general set of restricted digits $\mathscr{A}$ containing $\{0,1\}$. Here we express the exponential sum $ \mathfrak g(\xi, j+1; \mathtt s, \mathscr{A})$ in two parts: one corresponding to the digits $\{0, 1\}$ on which we apply our previously found estimate, and the remainder which is treated using trivial bounds. This leads to the following estimate: 
\begin{align*} \mathfrak g(\xi, j+1; \mathtt s, \mathscr{A}) &= \frac{1}{\mathtt r} \Bigl[ 2 \mathfrak g(\xi, j+1; \mathtt s, \{0, 1\}) + \sum_{\mathtt d \in \mathscr{A} \setminus \{0, 1\}} e \bigl(\xi \mathtt d \mathtt s^{-j-1}\bigr) \Bigr]  \text{ so that } \\ 
\bigl| \mathfrak g(\xi, j+1; \mathtt s, \mathscr{A}) \bigr| &\leq \frac{1}{\mathtt r} \bigl| 2 \cos \bigl( \pi \mathtt s^{-2} \bigr) + \mathtt r - 2 \bigr| \leq \frac{1}{\mathtt r} \Bigl[\mathtt r - 4 \sin^2 \left(\frac{\pi}{2\mathtt s^2}\right)\Bigr] \\ &\leq \frac{1}{\mathtt r} \left(\mathtt r - \frac{c_0}{\mathtt s^4}\right) \leq 1 - \frac{c_0}{\mathtt r \mathtt s^4} \; \text{ for some absolute } c_0 > 0. 
\end{align*} 
This completes the proof of Lemma \ref{restricted-lemma}.  
\end{proof}

\subsection{The function $\mathfrak A$ revisited}
As can be seen from \eqref{def-wm}, the function $\mathfrak A$ defined as in \eqref{def-A} also plays a role in estimating $\mathtt v$, along with $\mathfrak w$. In Section \ref{A-and-B-section}, we  obtained an explicit formula for $\mathfrak A$ and derived certain estimates that were useful in establishing the Rajchman property; see for example Lemma \ref{A2-lemma}. This lemma applies only for frequencies $\xi$ that are multiples of $\mathtt s^{\mathtt b}$ in a certain range. Here we will establish a different estimate for $\mathfrak A$ that shows that under specified conditions, $\mathfrak A$ may be small for a larger collection of $\xi$. Such a statement is helpful in proving convergence of $\mathcal V_{m2}$. 
\begin{lemma} 
Let $\mathfrak A(\xi; \mathtt N, \mathtt s, \mathtt a)$ be the function defined in \eqref{def-A} with $\mathtt N, \mathtt s, \mathtt a$ as in Section \ref{param-elem-op-section}. Then for every $\mathtt a' \in \mathbb N$ with $\mathtt a' < \mathtt a$, the following inequality holds: 
\begin{equation} \label{A-alt-est} 
\bigl| \mathfrak A(\xi; \mathtt N, \mathtt s, \mathtt a) \bigr| \leq \frac{\mathtt s^{\mathtt a- \mathtt a'}}{\mathtt N} + \prod_{k = \mathtt a'+1}^{\mathtt a} |\mathfrak g(\xi, k; \mathtt s, \mathbb Z_{\mathtt s})|,\; \xi \in \mathbb{Z}, 
\end{equation} 
with $\mathfrak g(\xi, j; \mathtt s, \mathbb Z_{\mathtt s})$ as in \eqref{exp-sum-g-def}. 
\end{lemma}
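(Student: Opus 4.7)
The strategy is to reveal a hidden block structure in the geometric sum defining $\mathfrak A$, where the block length corresponds exactly to the product of $\mathfrak g$-factors on the right-hand side of \eqref{A-alt-est}. Set $M := \mathtt s^{\mathtt a - \mathtt a'}$. The first step is to establish the identity
\[
\sum_{k=0}^{M-1} e\bigl(\xi k \mathtt s^{-\mathtt a}\bigr) \;=\; M \prod_{j=\mathtt a'+1}^{\mathtt a} \mathfrak g(\xi, j; \mathtt s, \mathbb Z_{\mathtt s}),
\]
by the same telescoping product argument that yields \eqref{B*-rep} and \eqref{C-factor}: write $k = \sum_{i=0}^{\mathtt a - \mathtt a'-1} d_i \mathtt s^i$ with $d_i \in \mathbb Z_{\mathtt s}$, reindex via $j = \mathtt a - i$ so that $k \mathtt s^{-\mathtt a} = \sum_{j=\mathtt a'+1}^{\mathtt a} d_{\mathtt a-j} \mathtt s^{-j}$, and factor the resulting multiple sum.

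Next, perform Euclidean division $\mathtt N = q M + r$ with $0 \leq r < M$ and split the defining sum
\[
\mathtt N \, \mathfrak A(\xi; \mathtt N, \mathtt s, \mathtt a) \;=\; \sum_{j=0}^{q-1} \sum_{k=0}^{M-1} e\bigl(\xi (jM+k) \mathtt s^{-\mathtt a}\bigr) \;+\; \sum_{k=0}^{r-1} e\bigl(\xi (qM+k) \mathtt s^{-\mathtt a}\bigr).
\]
In the inner sum of the first term the phase $e(\xi jM \mathtt s^{-\mathtt a})$ factors out and the remaining sum over $k$ equals $M \prod_{j=\mathtt a'+1}^{\mathtt a}\mathfrak g(\xi,j;\mathtt s,\mathbb Z_{\mathtt s})$ by the identity above. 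Taking absolute values, bounding $|e(\,\cdot\,)| \leq 1$ throughout, and using $qM \leq \mathtt N$ together with $r < M = \mathtt s^{\mathtt a - \mathtt a'}$, one obtains
\[
|\mathfrak A(\xi; \mathtt N, \mathtt s, \mathtt a)| \;\leq\; \frac{qM}{\mathtt N} \prod_{j=\mathtt a'+1}^{\mathtt a} |\mathfrak g(\xi, j; \mathtt s, \mathbb Z_{\mathtt s})| + \frac{r}{\mathtt N} \;\leq\; \prod_{j=\mathtt a'+1}^{\mathtt a} |\mathfrak g(\xi, j; \mathtt s, \mathbb Z_{\mathtt s})| + \frac{\mathtt s^{\mathtt a - \mathtt a'}}{\mathtt N},
\]
which is \eqref{A-alt-est}. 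The degenerate case $\mathtt N < M$ corresponds to $q = 0$, $r = \mathtt N$, where the right-hand side of \eqref{A-alt-est} already exceeds $1$ and so dominates the trivial bound $|\mathfrak A|\leq 1$.

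There is no serious obstacle here; the only point requiring care is the bookkeeping in the block decomposition (verifying that the phase indeed factors out of each block of length $M$, which hinges on the fact that $M \mathtt s^{-\mathtt a} = \mathtt s^{-\mathtt a'}$ is the scale at which the $\mathfrak g$-factors are evaluated). The lemma is essentially an ``incomplete geometric sum versus completed sum'' estimate, with the completed sum admitting the product factorization inherited from Lemma \ref{BC-lemma}.
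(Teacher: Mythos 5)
Your proof is correct and follows essentially the same route as the paper: both perform Euclidean division $\mathtt N = q\,\mathtt s^{\mathtt a-\mathtt a'} + r$, split the geometric sum defining $\mathtt N\mathfrak A$ into $q$ complete blocks of length $\mathtt s^{\mathtt a-\mathtt a'}$ plus a remainder, factor the complete-block sum into $\prod_{j=\mathtt a'+1}^{\mathtt a}\mathfrak g(\xi,j;\mathtt s,\mathbb Z_{\mathtt s})$ via the base-$\mathtt s$ digit expansion, and bound the remainder trivially by its length $r < \mathtt s^{\mathtt a-\mathtt a'}$. Your $q$, $r$, $M$ correspond to the paper's $\mathtt Q$, $\mathtt N'$, $\mathtt s^{\mathtt a-\mathtt a'}$, and the phase $e(\xi j M \mathtt s^{-\mathtt a}) = e(\xi j \mathtt s^{-\mathtt a'})$ factoring out of each block is the same observation the paper makes in \eqref{NA-R}.
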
 
\begin{proof} 
Given an integer $1 \leq \mathtt a' < \mathtt a$, let us define the unique non-negative integer $\mathtt Q$ as follows: 
\begin{equation} \label{division} 
\mathtt N = \mathtt Q \mathtt s^{\mathtt a-\mathtt a'} + \mathtt N', \quad \text{ where } \quad \mathtt N' \in \bigl\{0, 1, \ldots, \mathtt s^{\mathtt a - \mathtt a'} - 1 \bigr\}.  
\end{equation} 
Using this, we decompose the sum defining $\mathfrak A$ into $\mathtt Q+1$ blocks:
\begin{align} 
\mathtt N \mathfrak A(\xi) &= \sum_{\ell=0}^{\mathtt N-1} e \left(\xi \ell \mathtt s^{-\mathtt a} \right) = \sum_{j=0}^{\mathtt Q}  \sum_{\ell} \Bigl\{ e \left(\xi \ell \mathtt s^{-\mathtt a} \right) : j\mathtt s^{\mathtt a-\mathtt a'} \leq \ell < \min \bigl[(j+1) \mathtt s^{\mathtt a-\mathtt a'}, \mathtt N \bigr] \Bigr\} \nonumber  \\ 
&= \sum_{j, \rho} \Biggl\{ e \left[\xi \mathtt s^{-\mathtt a}\bigl( j \mathtt s^{\mathtt a-\mathtt a'} + \rho\bigr)\right] \Bigl| \; \begin{aligned} &0 \leq j \leq \mathtt Q-1 \\ &0 \leq \rho < \mathtt s^{\mathtt a-\mathtt a'} \end{aligned} \Biggr\} + \mathfrak R(\xi),  \; \text{ where } \label{NA-formula} \\ 
\mathfrak R(\xi) &:=  \sum  \Bigl\{ e \left(\xi \ell \mathtt s^{-\mathtt a} \right) : \mathtt Q \mathtt s^{\mathtt a-\mathtt a'} \leq \ell < \mathtt N \Bigr\}, \qquad \bigl| \mathfrak R(\xi) \bigr| \leq \mathtt N' + 1 < \mathtt s^{\mathtt a-\mathtt a'}.  \label{remainder} 
\end{align} 
The representation \eqref{NA-formula} can be further simplified. Expressing $\rho \in \{0, 1, \ldots, \mathtt s^{\mathtt a-\mathtt a'}-1\}$ via its digit expansion in base $\mathtt s$, we find that  
\begin{align} 
\mathtt N \mathfrak A(\xi) - \mathfrak R(\xi) &= \sum_{j=0}^{\mathtt Q-1} e \bigl(\xi j \mathtt s^{-\mathtt a'}\bigr) \sum_{\rho} \Bigl\{ e \bigl( \xi \rho \mathtt s^{-\mathtt a} \bigr) \; \Bigl| \; 0 \leq \rho < \mathtt s^{\mathtt a-\mathtt a'} \Bigr\} \nonumber \\ 
&= \sum_{j=0}^{\mathtt Q-1} e \bigl(\xi j \mathtt s^{-\mathtt a'} \bigr) \sum_{\pmb{\rho}} \Bigl\{ e \Bigl[\xi \mathtt s^{-\mathtt a} \sum_{r=0}^{\mathtt a-\mathtt a'-1} \rho_r \mathtt s^r \Bigr] \; \Bigl| \; \pmb{\rho} = (\rho_0, \rho_1, \ldots, \rho_{\mathtt a-\mathtt a'-1})\in \mathbb Z_{\mathtt s}^{\mathtt a-\mathtt a'} \Bigr\} \nonumber \\ 
&= \sum_{j=0}^{\mathtt Q-1} e \bigl(\xi j \mathtt s^{-\mathtt a'}\bigr) \prod_{k=\mathtt a'+1}^{\mathtt a} \Bigl[ 1 + e \bigl(\xi \mathtt s^{ -k} \bigr) + e\bigl( 2\xi \mathtt s^{- k} \bigr) + \ldots + e \bigl( \xi(\mathtt s-1) \mathtt s^{-k} \bigr) \Bigr] \nonumber \\ 
&= \sum_{j=0}^{\mathtt Q-1} e \bigl( \xi j \mathtt s^{-\mathtt a'}\bigr) \mathtt s^{\mathtt a-\mathtt a'} \prod_{k=\mathtt a'+1}^{\mathtt a} \mathfrak g(\xi, k; \mathtt s, \mathbb Z_{\mathtt s}). \label{NA-R}
\end{align} 
The last step follows from the definition \eqref{exp-sum-g-def} of $\mathfrak g$. Combining \eqref{NA-R}, \eqref{NA-formula}, \eqref{remainder} and \eqref{division} yields  
\[ \mathtt N |\mathfrak A(\xi)| \leq \mathtt s^{\mathtt a-\mathtt a'}\mathtt Q \prod_{k=\mathtt a'+1}^{\mathtt a} \bigl| \mathfrak g(\xi, k; \mathtt s, \mathbb Z_{\mathtt s}) \bigr| + \bigl|\mathfrak R(\xi) \bigr| \leq \mathtt N \prod_{k=\mathtt a'+1}^{\mathtt a}\bigl| \mathfrak g(\xi, k; \mathtt s, \mathbb Z_{\mathtt s}) \bigr|  +  \mathtt s^{\mathtt a-\mathtt a'}. \] 
This is a re-statement of the intended conclusion \eqref{A-alt-est}. 
\end{proof} 

\subsection{New estimates for $\mathtt v$}
\noindent The inequality \eqref{A-alt-est} for $\mathfrak A$ leads to pointwise estimates for $\mathtt v$ that are different from \eqref{v-pointwise-estimate} and \eqref{v-pointwise-2}. The two estimates derived below will be used in Sections \ref{X3-estimation-section} and \ref{Y4-estimation-section} respectively. 
\begin{corollary} \label{v-alt-cor}
For any $\mathtt a' \in \mathbb N$ with $\mathtt a' < \mathtt a$, and $\xi \in \mathbb Z$ given by \eqref{xi-expansion}, let
\begin{equation} \label{def-cardinality-J-bar}
\underline{\mathtt J} = \underline{\mathtt J}(\xi; \mathtt s, \mathtt a', \mathtt a) :=  \# \bigl\{ \mathtt a' \leq j \leq \mathtt a-1 : 1 \leq \mathtt d_{j-1}(|\xi|) + \mathtt d_j(|\xi|) \mathtt s \leq \mathtt s^2-2 \bigr\}
\end{equation} 
denote the number of indices $j \in \{\mathtt a', \ldots, \mathtt a-1\}$ obeying \eqref{not-extreme}. Then the function $\mathtt v$ given by \eqref{uv-def} (or equivalently \eqref{identity}) obeys the estimate: 
\begin{equation} \label{v-alt-estimate} 
\bigl| \mathtt v(\xi) \bigr| \leq 2\bigl| \mathfrak A(\xi; \mathtt N, \mathtt s, \mathtt a)\bigr| \leq 2 \Bigl[\frac{\mathtt s^{\mathtt a-\mathtt a'}}{\mathtt N} + \left(1 - \frac{c_0}{\mathtt s^{5}} \right)^{\underline{\mathtt J}} \Bigr].
\end{equation} 
\end{corollary}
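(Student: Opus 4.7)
\noindent\emph{Proof plan for Corollary \ref{v-alt-cor}.} The statement is essentially a corollary of the three main estimates already recorded above, and the plan is simply to assemble them. First I would establish the leftmost inequality $|\mathtt v(\xi)| \leq 2|\mathfrak A(\xi; \mathtt N, \mathtt s, \mathtt a)|$. Starting from the product expression \eqref{def-wm}, namely $|\mathtt v(\xi)| \leq |\mathfrak A(\xi)|\,|\mathfrak w(\xi)|$, I would bound the $\mathfrak w$-factor using the representation \eqref{wm-alternate}: since $|\widehat{\mathtt 1}(\xi\mathtt s^{-\mathtt b})| \leq 1$ by Lemma \ref{1-hat-lemma} and $|\mathfrak B^\ast|, |\mathfrak C| \leq 1$ by the trivial bounds \eqref{B*C-trivial-bound}, one obtains $|\mathfrak w(\xi)| \leq \varepsilon(|\mathfrak B^\ast(\xi)| + |\mathfrak C(\xi)|) \leq 2\varepsilon \leq 2$. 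This immediately gives the first inequality in \eqref{v-alt-estimate}.

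Next I would invoke \eqref{A-alt-est} to reduce the second inequality to controlling the product
\[
\prod_{k=\mathtt a'+1}^{\mathtt a} \bigl|\mathfrak g(\xi, k; \mathtt s, \mathbb Z_{\mathtt s})\bigr|.
\]
Here I would apply Lemma \ref{restricted-lemma}\eqref{restricted-lemma-part-a} with $\mathscr{A} = \mathbb Z_{\mathtt s}$, so that $\mathtt r = \mathtt s$ and the constraint $\{0,1\}\subseteq \mathscr{A}$ is satisfied automatically. The lemma asserts that whenever an index $j \in \{\mathtt a', \ldots, \mathtt a-1\}$ satisfies the non-extremality condition \eqref{not-extreme}, the corresponding factor $|\mathfrak g(\xi, j+1; \mathtt s, \mathbb Z_{\mathtt s})|$ is at most $1 - c_0/(\mathtt s\cdot \mathtt s^4) = 1 - c_0/\mathtt s^5$. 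For every other index in $\{\mathtt a', \ldots, \mathtt a-1\}$ I would use the trivial bound $|\mathfrak g| \leq 1$ that is manifest from the definition \eqref{exp-sum-g-def}.

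The total number of favourable factors is, by definition, exactly $\underline{\mathtt J}(\xi; \mathtt s, \mathtt a', \mathtt a)$ given in \eqref{def-cardinality-J-bar}, because the map $j \mapsto k = j+1$ is a bijection between $\{\mathtt a', \ldots, \mathtt a-1\}$ and $\{\mathtt a'+1, \ldots, \mathtt a\}$. Therefore
\[
\prod_{k=\mathtt a'+1}^{\mathtt a} |\mathfrak g(\xi, k; \mathtt s, \mathbb Z_{\mathtt s})| \leq \left(1 - \frac{c_0}{\mathtt s^5}\right)^{\underline{\mathtt J}},
\]
and substituting this into \eqref{A-alt-est} yields the second inequality in \eqref{v-alt-estimate}. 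Since the argument is purely a concatenation of Lemma \ref{restricted-lemma} and the inequality \eqref{A-alt-est}, there is no genuine obstacle; the only point that requires a moment's care is verifying that the index bijection correctly matches the indexing conventions in \eqref{def-cardinality-J-bar} and in the product \eqref{A-alt-est}.
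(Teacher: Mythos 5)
Your proof is correct and takes essentially the same route as the paper: both deduce $|\mathtt v(\xi)| \le 2|\mathfrak A(\xi)|$ from the identity \eqref{identity} together with the trivial bounds $|\Omega_1| \le 1$ and $|\mathfrak w| \le 2$, and both obtain the second inequality by combining \eqref{A-alt-est} with Lemma \ref{restricted-lemma}\eqref{restricted-lemma-part-a} for $\mathscr A = \mathbb Z_{\mathtt s}$, matching the $\underline{\mathtt J}$ indices $j$ in \eqref{def-cardinality-J-bar} to the factors $k=j+1$ in the product. (Your detour through \eqref{wm-alternate} to bound $|\mathfrak w|$ is a valid but unnecessary refinement; the triangle inequality applied directly to \eqref{def-wm} already gives $|\mathfrak w| \le 2$.)
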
 
\begin{proof} 
The first inequality in \eqref{v-alt-estimate} is a direct consequence of the identity \eqref{identity}, combined with the trivial estimates for $\Omega_1$ and $\mathfrak w$. The second inequality follows from \eqref{A-alt-est} and \eqref{less-than-one}, since exactly $\underline{\mathtt J}$ of the factors occurring in the product of \eqref{A-alt-est}, namely those corresponding to the indices $j + 1 \in \{\mathtt a'+1, \ldots, \mathtt a\}$, obey the estimate \eqref{less-than-one}.  
\end{proof} 
\begin{corollary} \label{corollary-composite}
Suppose that $\xi \in \mathbb Z \setminus \{0\}$ is of the form $\xi = \mathtt s^{\mathtt k_0} \zeta$ for some $\zeta \in \mathbb Z$ and a non-negative integer $\mathtt k_0$. For positive integers $\mathtt a'< \mathtt a < \mathtt b$,  let  
\begin{equation} \label{Jab} 
\mathtt J = \mathtt J(\xi; \mathtt s, \mathtt a', \mathtt b) :=  \# \bigl\{ \mathtt a' -\mathtt k_0 \leq j \leq \mathtt b - \mathtt k_0-1 : 1 \leq \mathtt d_{j-1}(|\zeta|) + \mathtt d_j(|\zeta|) \mathtt s \leq \mathtt s^2-2 \bigr\}, 
\end{equation} 
where $\{\mathtt d_j(|\zeta|): j \geq 0\}$ denote the digits of $|\zeta| \in \mathbb N$ in base $\mathtt s$. For $\mathtt a' -\mathtt k_0 <0 $, we declare $\mathtt d_j(|\zeta|) = 0$ if $j < 0$. Then the function $\mathtt v$ given by \eqref{uv-def} (or \eqref{identity}) obeys the estimate 
\begin{equation} \label{corollary-composite-estimate}
\bigl| \mathtt v(\xi) \bigr| \leq C_0 \Bigl[ \frac{\mathtt s^{\mathtt a-\mathtt a'}}{\mathtt N} + \Bigl(1 - \frac{c_0}{\mathtt s^5}\Bigr)^{\mathtt J} \Bigr].  
\end{equation} 
\end{corollary}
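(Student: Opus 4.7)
The plan is to expand $\mathtt v$ via the factorization \eqref{identity} and then bound the two factors $\mathfrak A$ and $\mathfrak w$ by products of the elementary exponential sums $\mathfrak g(\xi, k; \mathtt s, \cdot)$ from \eqref{exp-sum-g-def}, so that the two product ranges $\{\mathtt a'+1,\dots,\mathtt a\}$ (coming from $\mathfrak A$) and $\{\mathtt a+1,\dots,\mathtt b\}$ (coming from $\mathfrak w$) together cover the full range $\{\mathtt a'+1,\dots,\mathtt b\}$ from which the exponent $\mathtt J$ in \eqref{Jab} is drawn. Using $|\Omega_1|\leq 1$ in \eqref{identity} together with the alternative representation \eqref{wm-alternate} of $\mathfrak w$ and the trivial bounds $|\widehat{\mathtt 1}|,\varepsilon\leq 1$, I will first reduce to
\begin{equation*}
|\mathtt v(\xi)| \leq |\mathfrak A(\xi)|\cdot|\mathfrak B^{\ast}(\xi)| + |\mathfrak A(\xi)|\cdot|\mathfrak C(\xi)|,
\end{equation*}
and then bound each of the two products by $\mathtt s^{\mathtt a-\mathtt a'}/\mathtt N + (1-c_0/\mathtt s^5)^{\mathtt J}$, which will yield \eqref{corollary-composite-estimate} with $C_0=2$.

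The key bookkeeping step will be to translate the digit condition in \eqref{Jab} from digits of $\zeta$ to digits of $\xi$. Since $\xi=\mathtt s^{\mathtt k_0}\zeta$, one has $\mathtt d_{j}(|\xi|)=\mathtt d_{j-\mathtt k_0}(|\zeta|)$ for $j\geq\mathtt k_0$ and $\mathtt d_j(|\xi|)=0$ for $0\leq j<\mathtt k_0$, so the substitution $j'=j+\mathtt k_0$ will identify the inequality $1\leq\mathtt d_{j-1}(|\zeta|)+\mathtt d_j(|\zeta|)\mathtt s\leq\mathtt s^2-2$ with the same inequality for $\mathtt d_{j'-1}(|\xi|)+\mathtt d_{j'}(|\xi|)\mathtt s$, and will map $j\in\{\mathtt a'-\mathtt k_0,\dots,\mathtt b-\mathtt k_0-1\}$ bijectively onto $j'\in\{\mathtt a',\dots,\mathtt b-1\}$. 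Splitting according to whether $j'\in\{\mathtt a',\dots,\mathtt a-1\}$ or $j'\in\{\mathtt a,\dots,\mathtt b-1\}$, I will write $\mathtt J=\mathtt J_1+\mathtt J_2$, where $\mathtt J_1$ and $\mathtt J_2$ count the good indices for $\xi$ in the respective subranges. Combining \eqref{A-alt-est} with the product representation of $\mathfrak C$ from \eqref{B*C-product}, invoking the factorwise estimate of Lemma \ref{restricted-lemma}(a) with $\mathscr{A}=\mathbb Z_{\mathtt s}$ on each of the $\mathtt J$ favourable factors, and using $|\mathfrak C|\leq 1$ on the error term produced by \eqref{A-alt-est}, one obtains
\begin{equation*}
|\mathfrak A\mathfrak C| \leq \frac{\mathtt s^{\mathtt a-\mathtt a'}}{\mathtt N}|\mathfrak C| + \prod_{k=\mathtt a'+1}^{\mathtt b}\bigl|\mathfrak g(\xi,k;\mathtt s,\mathbb Z_{\mathtt s})\bigr| \leq \frac{\mathtt s^{\mathtt a-\mathtt a'}}{\mathtt N}+\Bigl(1-\frac{c_0}{\mathtt s^5}\Bigr)^{\mathtt J}.
\end{equation*}
The same procedure will handle $|\mathfrak A\mathfrak B^{\ast}|$, now using the standing assumption $\{0,1\}\subseteq\mathscr{D}$ (in force for every application of the corollary, cf.\ \eqref{normality-digit-01}) so that Lemma \ref{restricted-lemma}(a) applies to each factor of $\mathfrak B^{\ast}=\prod_{k=\mathtt a+1}^{\mathtt b}\mathfrak g(\xi,k;\mathtt s,\mathscr{D})$ with decay $1-c_0/(\mathtt r\mathtt s^4)\leq 1-c_0/\mathtt s^5$, the last inequality using $\mathtt r\leq\mathtt s$; the $\mathtt J_1$ favourable factors contributed by $\mathfrak A$ and the $\mathtt J_2$ contributed by $\mathfrak B^{\ast}$ then combine to the full count $\mathtt J$, yielding the same bound.

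The main obstacle will be bookkeeping rather than analysis: one must carefully align the two product ranges $\{\mathtt a'+1,\dots,\mathtt a\}$ and $\{\mathtt a+1,\dots,\mathtt b\}$ and verify that their corresponding good-index counts for $\xi$ sum to precisely the count $\mathtt J$ defined on $\zeta$ in \eqref{Jab}. This hinges on the digit translation $j\leftrightarrow j+\mathtt k_0$ made possible by the factorization $\xi=\mathtt s^{\mathtt k_0}\zeta$, together with the convention $\mathtt d_j(|\zeta|)=0$ for $j<0$ that handles the boundary case $j=0$ of \eqref{Jab}. Once this identification is in place, the rest of the argument is a direct concatenation of \eqref{A-alt-est}, \eqref{B*C-product}, and Lemma \ref{restricted-lemma}(a), with the absolute constant $C_0=2$ absorbing the two summands $|\mathfrak A\mathfrak C|$ and $|\mathfrak A\mathfrak B^{\ast}|$.
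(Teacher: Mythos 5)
Your proof is correct and follows essentially the same route as the paper's: both start from $|\mathtt v| \leq |\mathfrak A| \, |\mathfrak w|$ via \eqref{def-wm}, both expand $\mathfrak w$ using \eqref{wm-alternate} into $\mathfrak B^{\ast}$ and $\mathfrak C$ pieces, both invoke \eqref{A-alt-est} and the product representations \eqref{B*C-product} to reduce to factors of $\mathfrak g$ over the range $\{\mathtt a'+1, \dots, \mathtt b\}$, and both conclude with Lemma \ref{restricted-lemma}(a). The only cosmetic difference is that you translate the digit condition in \eqref{Jab} from $\zeta$-indices to $\xi$-indices and then apply the lemma with argument $\xi$, whereas the paper uses the identity $\mathfrak g(\xi, k; \mathtt s, \cdot) = \mathfrak g(\zeta, k - \mathtt k_0; \mathtt s, \cdot)$ to reindex the products and apply the lemma with argument $\zeta$; the two are equivalent, and your observation that the hypothesis $\{0,1\} \subseteq \mathscr{D}$ is implicitly in force (via \eqref{normality-digit-01}) when the corollary is used is accurate.
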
 
\begin{proof} 
We appeal to the bound \eqref{def-wm} for $\mathtt v$, using the inequality \eqref{A-alt-est} to bound $\mathfrak A$ and the identity \eqref{wm-alternate} for $\mathfrak w$ in terms of $\mathfrak B^{\ast}$ and $\mathfrak C$.  This leads to  
\begin{align} 
\bigl| \mathtt v(\xi) \bigr| &\leq \bigl| \mathfrak A(\xi) \bigr| \times \bigl| \mathfrak w(\xi) \bigr| \nonumber \\ 
&\leq \Bigl[ \frac{\mathtt s^{\mathtt a - \mathtt a'}}{\mathtt N} + \Bigl| \prod_{k = \mathtt a'+1}^{\mathtt a} \mathfrak g(\xi, k; \mathtt s, \mathbb Z_{\mathtt s}) \Bigr|\Bigr] \times \bigl| \mathfrak w(\xi) \bigr| \nonumber \\ 
&\leq  2 \Bigl[ \frac{\mathtt s^{\mathtt a - \mathtt a'}}{\mathtt N} +   \Bigl| \prod_{k = \mathtt a'+1}^{\mathtt a} \mathfrak g(\xi, k; \mathtt s, \mathbb Z_{\mathtt s}) \Bigr| \times \bigl| \mathfrak w(\xi) \bigr| \Bigr] \nonumber \\ 
&\leq 2 \Bigl[ \frac{\mathtt s^{\mathtt a - \mathtt a'}}{\mathtt N} + \bigl| {\mathfrak v}_1(\xi) \bigr|  + \bigl| {\mathfrak v}_2(\xi) \bigr|  \Bigr]. \label{v-v1-v2}
\end{align} 
At the penultimate step, we have used the trivial bound $|\mathfrak w(\xi)| \leq 2$ for the first term. The last step is based on the factorizations \eqref{B*C-product} for $\mathfrak B^{\ast}$ and $\mathfrak C$, with the following definitions for $\mathfrak v_1, \mathfrak v_2$:  
\begin{align} 
\mathfrak v_1(\xi) &:=  \prod_{k = \mathtt a'+1}^{\mathtt a} \mathfrak g(\xi, k; \mathtt s,  \mathbb Z_{\mathtt s}) \times \mathfrak B^{\ast}(\xi) = \prod_{k = \mathtt a'+1}^{\mathtt a} \mathfrak g(\xi, k; \mathtt s, \mathbb Z_{\mathtt s}) \times \prod_{k=\mathtt a+1}^{\mathtt b} \mathfrak g(\xi, k; \mathtt s, \mathscr{D}), \label{v1-formula}  \\ 
\mathfrak v_2(\xi) &:=  \prod_{k = \mathtt a'+1}^{\mathtt a} \mathfrak g(\xi, k; \mathtt s, \mathbb Z_{\mathtt s}) \times \mathfrak C(\xi) =   \prod_{k = \mathtt a'+1}^{\mathtt b} \mathfrak g(\xi, k; \mathtt s, \mathbb Z_{\mathtt s}).  \label{v2-formula} 
%= \prod_{k = \mathtt a'+1}^{\mathtt b} \mathfrak g(\zeta, k - \mathtt k_0; \mathtt s, \mathbb Z_{\mathtt s}). 
%= .
%\\&= \prod_{k = \mathtt a'+1}^{\mathtt a} \mathfrak g(\zeta, k-\mathtt k_0; \mathtt s,  \mathbb Z_{\mathtt s}) \times \prod_{k=\mathtt a+1}^{\mathtt b} \mathfrak g(\zeta, k - \mathtt k_0; \mathtt s, \mathscr{D}) \\  &= ,  \\ 
\end{align} 
The definition \eqref{exp-sum-g-def} of $\mathfrak g$ implies that for $\xi = \mathtt s^{\mathtt k_0} \zeta$,
\begin{equation*}  \mathfrak g(\xi, k; \mathtt s, \cdot) = 
	\mathfrak g(\zeta, k - \mathtt k_0; \mathtt s, \cdot).
%	\begin{cases} \mathfrak g(\zeta, k - \mathtt k_0; \mathtt s, \cdot) &\text{ if } \xi \in \mathbb N, \\  \mathfrak g(-\zeta, k - \mathtt k_0; \mathtt s, \cdot) &\text{ if } -\xi \in \mathbb N.  \end{cases} \label{pm-zeta} 
\end{equation*} 
%We will henceforth write $\mathfrak g(\xi, k; \mathtt s, \cdot)$ as $\mathfrak g(\pm \zeta, k; \mathtt s, \cdot)$, abbreviated to mean \eqref{pm-zeta}.
Substituting this into \eqref{v1-formula}, \eqref{v2-formula} and invoking \eqref{less-than-one}, we obtain  
\begin{align} 
\bigl|\mathfrak v_1(\xi) \bigr| &= \prod_{j = \mathtt a' - \mathtt k_0 +1}^{\mathtt a - \mathtt k_0} \bigl|\mathfrak g(\zeta, j; \mathtt s, \mathbb Z_{\mathtt s}) \bigr| \times \prod_{j=\mathtt a - \mathtt k_0+1}^{\mathtt b - \mathtt k_0} \bigl|\mathfrak g(\zeta, j; \mathtt s, \mathscr{D})\bigr| \nonumber \\ 
&=  \prod_{j = \mathtt a' - \mathtt k_0}^{\mathtt a - \mathtt k_0-1} \bigl|\mathfrak g(\zeta, j+1; \mathtt s, \mathbb Z_{\mathtt s}) \bigr| \times \prod_{j=\mathtt a - \mathtt k_0}^{\mathtt b - \mathtt k_0-1} \bigl|\mathfrak g(\zeta, j+1; \mathtt s, \mathscr{D})\bigr| \leq \left(1 - \frac{c_0}{\mathtt s^5}\right)^{\mathtt J}, \label{v1-bound-final} \\ 
\bigl|\mathfrak v_2(\xi) \bigr| &= \prod_{j = \mathtt a' - \mathtt k_0+1}^{\mathtt b - \mathtt k_0} \bigl| \mathfrak g(\zeta, j; \mathtt s, \mathbb Z_{\mathtt s}) \bigr| = \prod_{j = \mathtt a' - \mathtt k_0}^{\mathtt b - \mathtt k_0-1} \bigl| \mathfrak g(\zeta, j+1; \mathtt s, \mathbb Z_{\mathtt s}) \bigr| \leq \left(1 - \frac{c_0}{\mathtt s^5}\right)^{\mathtt J}.  \label{v2-bound-final} 
\end{align} 
The last two inequalities above follow from the definition \eqref{Jab} of $\mathtt J$. For both $\mathfrak v_1$ and $\mathfrak v_2$, the quantity $\mathtt J$ represents the number of factors, indexed by $j \in \{\mathtt a'-\mathtt k_0, \ldots, \mathtt b - \mathtt k_0-1\}$, for which \eqref{less-than-one} applies.  Substituting \eqref{v1-bound-final} and \eqref{v2-bound-final} into \eqref{v-v1-v2} yields the conclusion \eqref{corollary-composite-estimate}. 
\end{proof}

\section{Number-theoretic tools} \label{number-theoretic-tools-section} 
 This section is given over to a collection of number-theoretic facts that will be needed for the proof of Proposition \ref{V2-sum-prop} (Sections \ref{estimating-vm-section-Part1} and \ref{estimating-vm-section-Part2}). Some of these facts are well-known in number theory, and we state them with appropriate references. Others follow from the work of Schmidt \cite{s60}, and are deduced here. 
\subsection{Values generated by consecutive digits} \label{consecutive-digits-section}
Given any integer $\xi \in \mathbb N$ and a base $\mathtt s \in \mathbb N \setminus \{1\}$, let 
\begin{equation*} 
\xi = \sum_{j=0}^{\infty} \mathtt d_j(\xi) {\mathtt s}^j, \qquad \mathtt d_j(\xi) \in \{0, 1, \ldots, \mathtt s-1\} 
\end{equation*} 
denote the digit expansion of $\xi$ in base $\mathtt s$. In view of the role that \eqref{not-extreme} plays in the estimation of $\mathtt v(\xi)$ (see Lemma \ref{restricted-lemma}, Corollaries \ref{v-alt-cor} and \ref{corollary-composite}), it is natural to ask about the frequency of integers $\xi$ where this condition is met for many indices $j$, resulting in a large value of the parameters $\overline{\mathtt J}, \underline{\mathtt J}$ or $\mathtt J$. 
%{\red  about the distribution of numbers generated by consecutive digits $(\mathtt d_{j-1}, \mathtt d_j)$.} {\color{blue} Comment: I am not clear about this sentence} 
In principle, the number $\mathtt d_{j-1} + \mathtt d_j \mathtt s$ can lie anywhere in $\{0, 1, \ldots, \mathtt s^2-1\}$, but intuitively for most $\xi$ and most indices $j$, it does not assume the extremal values $0$ or $\mathtt s^2-1$. In other words, there cannot be too many integers $\xi \in \{0, 1, \ldots, \mathtt s^{\ell}-1\}$ whose digit sequence contains numerous consecutive digit pairs of the form $(0, 0)$ or $(\mathtt s-1, \mathtt s-1)$. Equivalently stated, quantities like $\overline{\mathtt J}(\xi; \mathtt s)$ defined in \eqref{def-cardinality-J} (or their variants $\underline{\mathtt J}$ and $\mathtt J$) are large for most $\xi$. A lemma of Schmidt \cite{s60} makes this intuition precise. For $\mathbf d = (\mathtt d_0, \ldots, \mathtt d_{\ell-1}) \in \mathbb Z_{\mathtt s}^{\ell}$, set 
\begin{equation} \label{def-nl}
\mathfrak n_{\ell}(\mathbf d, \mathtt s) := \# \{ 1 \leq j \leq \ell-1 : 1 \leq \mathtt d_{j-1} + \mathtt d_j \mathtt s \leq \mathtt s^2-2 \}. 
\end{equation} 
\begin{lemma}[{\cite[Lemma 3]{s60}}] \label{Schmidt-lemma-extremal-digits}
For $\mathtt s \in \mathbb N \setminus \{1\}$, let $\kappa = \kappa(\mathtt s) \in (0, \frac{1}{4})$ be a constant such that 
\begin{equation} \label{def-kappa}
\frac{(\mathtt s^2-2)^{\kappa} 2^{\frac{1}{2} - \kappa}}{(2 \kappa)^{\kappa} (1 - 2\kappa)^{\frac{1}{2} - \kappa}} < 2^{\frac{3}{4}}.
\end{equation} 
Then there exists a positive integer $\mathtt K_0(\mathtt s) $ such that for all $\ell > \mathtt K_0(\mathtt s)$,
\begin{equation} \label{extremal-digit-bounds} 
\#\left(\mathbb D(\ell, \mathtt s) \right)  < 2^{\frac{3}{4}\ell}, \; \; \text{ where } \; \; \mathbb D ( \ell, \mathtt s ) :=  \left\{\mathbf d = (\mathtt d_0, \ldots, \mathtt d_{\ell-1}) \in \mathbb Z_{\mathtt s}^{\ell} : \mathfrak n_{\ell}(\mathbf d, \mathtt s) \leq \kappa \ell \right\}. 
\end{equation} \label{extremal-digit-bounds-modified} 
Equivalently, there exists $\kappa = \kappa(\mathtt s) > 0$ obeying \eqref{def-kappa} for which we have the following bound
\begin{equation} \label{choice-of-kappa-2}
\#\bigl(\mathbb D(\ell, \mathtt s) \bigr)  < \kappa^{-1} 2^{\frac{3}{4}\ell} \quad \text{ for all } \ell \geq 1.  
\end{equation} 
\end{lemma}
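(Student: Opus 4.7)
The plan is to reduce the digit-counting problem to a binomial sum controlled by Stirling's formula. First, observing that as $\kappa \to 0^+$ the left-hand side of \eqref{def-kappa} tends to $2^{1/2}$, which is strictly less than $2^{3/4}$, we see that the admissible set of $\kappa \in (0, 1/4)$ satisfying \eqref{def-kappa} is non-empty; fix one such $\kappa$ throughout. It then suffices to establish \eqref{extremal-digit-bounds} for all $\ell > \mathtt K_0(\mathtt s)$ for some threshold $\mathtt K_0(\mathtt s)$, since the uniform bound \eqref{choice-of-kappa-2} follows by absorbing the finitely many small values of $\ell$ (which satisfy the trivial bound $\mathtt s^{\ell}$) into the prefactor $\kappa^{-1}$, possibly after shrinking $\kappa$ further.

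The combinatorial heart of the argument is to replace \emph{overlapping} pairs of consecutive digits in \eqref{def-nl} by \emph{non-overlapping} ones. I would partition $\{0, 1, \ldots, \ell-1\}$ into the consecutive blocks $(\mathtt d_{2i}, \mathtt d_{2i+1})$ for $i = 0, \ldots, \lfloor \ell/2 \rfloor - 1$, leaving at most one unpaired digit when $\ell$ is odd. Call a block \emph{extremal} if it equals $(0,0)$ or $(\mathtt s - 1, \mathtt s - 1)$; equivalently, the block $(\mathtt d_{2i}, \mathtt d_{2i+1})$ is extremal if and only if the odd index $j = 2i+1$ fails the condition in \eqref{def-nl}. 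A non-extremal block can take one of $\mathtt s^2 - 2$ values and an extremal block one of $2$ values. If $\mathbf d \in \mathbb D(\ell, \mathtt s)$, then $\mathfrak n_\ell(\mathbf d, \mathtt s) \le \kappa \ell$, so in particular the number of non-extremal blocks, being a subset of the good indices of $\mathbf d$, is at most $\kappa \ell$.

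Counting sequences by first choosing which $k \le \lfloor \kappa \ell \rfloor$ blocks are non-extremal, then assigning values to each block (and to the possibly unpaired digit when $\ell$ is odd), yields
\begin{equation*}
\# \mathbb D(\ell, \mathtt s) \le \mathtt s \sum_{k = 0}^{\lfloor \kappa \ell \rfloor} \binom{\lfloor \ell / 2 \rfloor}{k} (\mathtt s^2 - 2)^k \, 2^{\lfloor \ell/2 \rfloor - k}.
\end{equation*}
I would then check that in the range $0 \le k < \lfloor \kappa \ell \rfloor \le \ell/4$ the summands are increasing in $k$ (since the ratio of consecutive terms is $\frac{\lfloor \ell/2 \rfloor - k}{k+1} \cdot \frac{\mathtt s^2 - 2}{2} > 1$ for $\kappa$ small), so the sum is dominated by $(\kappa \ell + 1)$ times its last term. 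A direct application of Stirling's formula to $\binom{\lfloor \ell/2 \rfloor}{\lfloor \kappa \ell \rfloor}$ then produces
\begin{equation*}
\# \mathbb D(\ell, \mathtt s) \le C(\mathtt s) \, \ell \left( \frac{(\mathtt s^2 - 2)^{\kappa} \, 2^{1/2 - \kappa}}{(2\kappa)^{\kappa} (1 - 2\kappa)^{1/2 - \kappa}} \right)^{\ell},
\end{equation*}
exactly the expression appearing in \eqref{def-kappa}. By hypothesis, the base is strictly smaller than $2^{3/4}$, so the polynomial prefactor $C(\mathtt s) \, \ell$ is absorbed for all $\ell$ exceeding a threshold $\mathtt K_0(\mathtt s)$, yielding \eqref{extremal-digit-bounds}.

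The main obstacle is the Stirling bookkeeping step: one must track the error terms in the approximation $\binom{\lfloor \ell/2 \rfloor}{\lfloor \kappa \ell \rfloor} \sim \bigl( (2\kappa)^{2\kappa} (1-2\kappa)^{1-2\kappa} \bigr)^{-\ell/2}$ uniformly in $\ell$, and verify the monotonicity of the summands carefully when $\mathtt s = 2$ (where $\mathtt s^2 - 2 = 2$ and the ratio argument degenerates). Once the base is confirmed below $2^{3/4}$ via \eqref{def-kappa}, the strict inequality in \eqref{extremal-digit-bounds} is immediate for $\ell > \mathtt K_0(\mathtt s)$, and \eqref{choice-of-kappa-2} follows by the absorption argument above.
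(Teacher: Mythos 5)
The paper does not prove this statement; it is cited verbatim from Schmidt \cite[Lemma 3]{s60} and used as a black box, with the uniform variant \eqref{choice-of-kappa-2} obtained from \eqref{extremal-digit-bounds} by absorbing the finitely many small $\ell$ into the prefactor. Your reconstruction is a correct rendering of Schmidt's original argument: splitting the string into $\lfloor \ell/2\rfloor$ non-overlapping digit pairs, noting that the number of non-extremal pairs is a subset of the good indices counted by $\mathfrak n_\ell$ and hence at most $\kappa\ell$ on $\mathbb D(\ell,\mathtt s)$, bounding by the binomial sum $\mathtt s\sum_{k\le\lfloor\kappa\ell\rfloor}\binom{\lfloor\ell/2\rfloor}{k}(\mathtt s^2-2)^k 2^{\lfloor\ell/2\rfloor-k}$, and extracting the exponential base via $\binom{n}{k}\le (n/k)^k(n/(n-k))^{n-k}$ (or Stirling) at $k=\lfloor\kappa\ell\rfloor$, which produces exactly the quantity on the left of \eqref{def-kappa}. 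Two of your flagged worries are benign on closer inspection: the ratio $\frac{\lfloor\ell/2\rfloor-k}{k+1}\cdot\frac{\mathtt s^2-2}{2}$ exceeds $1$ in the range $k<\lfloor\kappa\ell\rfloor<\ell/4$ even when $\mathtt s=2$ (where it reduces to $\frac{\lfloor\ell/2\rfloor-k}{k+1}$ and $2k+1<\lfloor\ell/2\rfloor$ for $\ell$ large), and the deduction of \eqref{choice-of-kappa-2} by shrinking $\kappa$ is sound because the admissible $\kappa$'s form a downward-closed set near $0$, $\mathbb D(\ell,\mathtt s)$ shrinks as $\kappa$ decreases, and $\mathtt K_0$ can be fixed from the original $\kappa$ before the shrinkage. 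The argument is complete modulo the explicit constant tracking you already acknowledge.
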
  
\noindent This lemma will be used in Section \ref{X1-estimation-section}. 
\subsection{Order of an integer} \label{integer-order-section} 
The main result in this sub-section is Lemma \ref{I1-lemma}. The following two facts will be useful in its proof. For two co-prime positive integers $\mathtt a$ and $\mathtt m$, the {\em{order}} of $\mathtt a$ (mod $\mathtt m$), denoted $\text{ord}_{\mathtt m}(\mathtt a)$ is defined as 
\begin{equation} 
\text{ord}_{\mathtt m}(\mathtt a) := \min\bigl\{n \in \mathbb N: {\mathtt a}^{\mathtt n} \equiv 1 \; (\text{mod } \mathtt m) \bigr\}. 
\end{equation}   
\begin{lemma}[{\cite[Theorem 88]{hw08}}] \label{hw-lemma} 
Suppose that $\mathtt a, \mathtt m \in \mathbb N$, with {\rm gcd}$(\mathtt a, \mathtt m) = 1$. Then for any $\mathtt n \in \mathbb N$, 
\begin{equation*} 
\mathtt a^{\mathtt n} \equiv 1 \pmod {\mathtt m} 
 \quad \text{ if and only if } \quad \text{ \rm ord}_{\mathtt m}(\mathtt a) \text{ divides } \mathtt n.  
\end{equation*} 
\end{lemma}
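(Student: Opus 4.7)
\textbf{Proof plan for Lemma \ref{hw-lemma}.} The plan is to invoke the division algorithm together with the minimality built into the definition of $\text{ord}_m(\mathtt a)$. First I would note that the coprimality hypothesis $\gcd(\mathtt a, \mathtt m) = 1$ ensures that $\mathtt a$ is a unit in the ring $\mathbb Z/\mathtt m\mathbb Z$; in particular, Euler's theorem guarantees $\mathtt a^{\varphi(\mathtt m)} \equiv 1 \pmod{\mathtt m}$, so the set $\{n \in \mathbb N : \mathtt a^n \equiv 1 \pmod{\mathtt m}\}$ is non-empty and $d := \text{ord}_{\mathtt m}(\mathtt a)$ is well-defined as a positive integer.

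Next I would set up the two directions symmetrically using the division algorithm. Given $\mathtt n \in \mathbb N$, write $\mathtt n = qd + r$ with $q \in \mathbb Z_{\geq 0}$ and $0 \leq r < d$. For the easy direction, if $d \mid \mathtt n$ then $r = 0$, and $\mathtt a^{\mathtt n} = (\mathtt a^d)^q \equiv 1^q = 1 \pmod{\mathtt m}$. For the converse, suppose $\mathtt a^{\mathtt n} \equiv 1 \pmod{\mathtt m}$. Using invertibility of $\mathtt a$ modulo $\mathtt m$, multiply by the inverse of $(\mathtt a^d)^q$ to obtain
\[
\mathtt a^{r} \;\equiv\; \mathtt a^{\mathtt n}\cdot (\mathtt a^d)^{-q} \;\equiv\; 1 \pmod{\mathtt m}.
\]
Since $0 \leq r < d$ and $d$ is by definition the \emph{smallest} positive integer with $\mathtt a^d \equiv 1 \pmod{\mathtt m}$, the only possibility is $r = 0$. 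Hence $d \mid \mathtt n$, completing the equivalence.

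There is no real obstacle here; the argument is a two-line application of the division algorithm and the minimality of the order, and the only point requiring any care is the invocation of coprimality to justify inverting $\mathtt a^d$ modulo $\mathtt m$. As the statement is a standard textbook fact (cited to \cite{hw08}), the proof would be recorded briefly for completeness rather than developed at length.
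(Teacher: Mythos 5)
Your argument is correct: it is the standard division-algorithm-plus-minimality proof of this classical fact. The paper itself supplies no proof and simply cites \cite[Theorem 88]{hw08}, so your write-up is exactly the brief textbook justification one would expect behind that citation.
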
 
\begin{lemma}[{\cite[Lemma 4]{s60}}] \label{Schmidt-lemma-1}
Assume $b \in \mathbb N \setminus \{1\}$ and $\mathtt p$ is a prime such that $\mathtt p \nmid b$. Then  
\begin{equation} \label{ord-lower-bound} 
\text{\rm ord}_{\mathtt p^k}(b) \geq \frac{\mathtt p^{k}}{2b^{\mathtt p}} \quad \text{ for all } k \in \mathbb N. 
\end{equation} 
\end{lemma}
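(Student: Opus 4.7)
The plan is to estimate $n := \operatorname{ord}_{\mathtt p^k}(b)$ by bounding the $\mathtt p$-adic valuation of $b^n - 1$ from above in terms of $n$ and $b$. By Lemma \ref{hw-lemma}, we have $\mathtt p^k \mid b^n - 1$, so $k \leq v_{\mathtt p}(b^n - 1)$; any upper bound for $v_{\mathtt p}(b^n - 1)$ that grows only polynomially in $\log_{\mathtt p} n$ and $\log_{\mathtt p} b$ will immediately yield the sought lower bound on $n$. The cleanest tool available is the Lifting the Exponent identity, which I would apply separately for odd $\mathtt p$ and for $\mathtt p = 2$.

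For odd $\mathtt p$, let $d := \operatorname{ord}_{\mathtt p}(b)$, which divides $\mathtt p - 1$ by Fermat's little theorem. Since $b^n \equiv 1 \pmod{\mathtt p}$, Lemma \ref{hw-lemma} yields $d \mid n$; write $n = d m$ and set $a := b^d$, so that $a \equiv 1 \pmod{\mathtt p}$. The standard Lifting the Exponent identity for odd primes then gives
\[ v_{\mathtt p}(a^m - 1) = v_{\mathtt p}(a - 1) + v_{\mathtt p}(m), \]
so $k \leq v_{\mathtt p}(b^d - 1) + v_{\mathtt p}(m)$. Using the trivial estimate $\mathtt p^{v_{\mathtt p}(b^d - 1)} \leq b^d - 1 < b^d$ together with $d \leq \mathtt p - 1$, I would conclude
\[ n \geq m \geq \mathtt p^{v_{\mathtt p}(m)} \geq \frac{\mathtt p^k}{\mathtt p^{v_{\mathtt p}(b^d - 1)}} \geq \frac{\mathtt p^k}{b^{\mathtt p - 1}} > \frac{\mathtt p^k}{2 b^{\mathtt p}}, \]
which is even a bit stronger than the stated inequality.

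For $\mathtt p = 2$, $b$ is odd, so I would write $n = 2^t q$ with $q$ odd and exploit the factorization
\[ b^n - 1 = (b^q - 1)(b^q + 1)(b^{2q} + 1)(b^{4q} + 1) \cdots (b^{2^{t-1} q} + 1). \]
Elementary parity counts will give $v_2(b^q - 1) = v_2(b - 1)$ and $v_2(b^q + 1) = v_2(b + 1)$ (both relying on $q$ being odd), while for $i \geq 2$ the factor $b^{2^{i-1} q}$ is an odd perfect square and hence $\equiv 1 \pmod 8$, forcing $v_2(b^{2^{i-1} q} + 1) = 1$. Summing and using $v_2(b - 1) + v_2(b + 1) = v_2(b^2 - 1)$ produces
\[ v_2(b^n - 1) = \begin{cases} v_2(b - 1), & t = 0, \\ v_2(b^2 - 1) + t - 1, & t \geq 1. \end{cases} \]
The case $t = 0$ forces $2^k \leq b - 1 < b$, whence $n \geq 1 > 2^k/(2 b^2)$ holds trivially. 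For $t \geq 1$, $2^{v_2(b^2 - 1)} \leq b^2 - 1 < b^2$ gives $2^t \geq 2^{k+1}/(b^2 - 1) > 2^k / b^2$, and therefore $n \geq 2^t > 2^k/(2 b^{\mathtt p})$.

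The main obstacle will be the $\mathtt p = 2$ case, where the group $(\mathbb Z/2^k\mathbb Z)^*$ ceases to be cyclic for $k \geq 3$ and the standard Lifting the Exponent formula takes a modified form (with the extra $v_2(b+1)$ contribution and the shift by one). This corrected $2$-adic identity must be established by hand via the explicit factorization above, keyed on the classical observation that every odd square is $\equiv 1 \pmod 8$. Once this identity is in place, the remaining analysis in both cases reduces to a straightforward chain of elementary $\mathtt p$-adic inequalities.
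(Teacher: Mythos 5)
Your proof is correct, and it takes a genuinely different route from the paper's. The paper does not establish the inequality from scratch: it invokes Schmidt's Lemma 4 \cite{s60} as a black box, which yields $\text{ord}_{\mathtt p^k}(b) \geq c(b,\mathtt p)\,\mathtt p^k$ with an unspecified constant, and then simply retraces Schmidt's argument to make the constant $c(b,\mathtt p)$ explicit. The key object there is the integer $\alpha$ defined by $b^g \equiv 1 + \mathtt q\,\mathtt p^{\alpha-1} \pmod{\mathtt p^\alpha}$ with $\mathtt q \not\equiv 0 \pmod{\mathtt p}$, where $g = \mathtt p-1$ for odd $\mathtt p$ and $g=2$ for $\mathtt p=2$; the trivial bound $\mathtt p^{\alpha-1} \leq b^g-1 < b^g$ then gives $c(b,\mathtt p) \geq g/\mathtt p^\alpha \geq (\mathtt p-1)/(\mathtt p\, b^{\mathtt p}) \geq 1/(2b^{\mathtt p})$. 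Your argument instead builds the whole thing from the ground up via Lifting the Exponent: writing $n = \text{ord}_{\mathtt p^k}(b) = dm$ with $d = \text{ord}_{\mathtt p}(b)$ for odd $\mathtt p$ and applying LTE to $a = b^d \equiv 1\pmod{\mathtt p}$; and for $\mathtt p=2$, working out the $2$-adic valuation of $b^n - 1$ by hand through the factorization $b^{2^t q} - 1 = (b^q - 1)(b^q + 1)\prod_{i=2}^{t}(b^{2^{i-1}q}+1)$ and the ``odd squares are $1 \bmod 8$'' observation. Mathematically these proofs share the same underlying engine (Schmidt's $\alpha$ is precisely what LTE tracks), but the presentations differ substantially. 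Your version is self-contained and does not require the reader to chase the cited source; it also delivers the slightly sharper bound $\mathtt p^k/b^{\mathtt p-1}$ for odd $\mathtt p$, whereas the paper's version is shorter on the page precisely because it outsources the heavy lifting to Schmidt. Both are sound.
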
 
\begin{proof} 
Lemma 4 in \cite{s60} proves the statement \eqref{ord-lower-bound} in a slightly weaker form: 
\begin{equation} \label{schmidt-ord-bound} 
\text{ord}_{\mathtt p^k}(b) \geq c(b, \mathtt p) \mathtt p^{-k} \text{ without explicitly specifying the constant $c(b, \mathtt p)$},
\end{equation}  
but one can derive a lower bound on $c(b, \mathtt p)$ by carefully following the proof. We sketch the details. The proof given in \cite{s60} shows that 
\begin{equation} \label{schmidt-c} 
c(b, \mathtt p) \geq \frac{g}{\mathtt p^{\alpha}},
\end{equation}  
where $\alpha = \alpha(b, \mathtt p) > 0$ is an integer constant such that 
\begin{equation}  
b^{g} \equiv 1 + \mathtt q \mathtt p^{\alpha-1} (\text{mod } \mathtt p^{\alpha}) \;  \text{ with } \mathtt q \not\equiv 0 \text{ mod $\mathtt p$,} \; \;  g = g(\mathtt p) = \begin{cases} \mathtt p-1 &\text{ if $\mathtt p$ is odd}, \\ 2 &\text{ if } \mathtt p=2.  \end{cases} 
\end{equation} 
Fermat's theorem \cite[Theorem 71]{hw08} ensures that $b^{g} \equiv 1$ (mod $\mathtt p$); one can therefore choose $\alpha$ to be the largest integer such that $\mathtt p^{\alpha-1}$ divides $b^g-1$. This implies in particular that
\begin{equation} \label{towards-c}
b^{g} > b^g-1 \geq \mathtt p^{\alpha-1} \quad \text{ or } \quad \mathtt p^{\alpha} \leq \mathtt p b^{g} \leq \mathtt p b^{\mathtt p}.
\end{equation}     
Substituting \eqref{towards-c} into \eqref{schmidt-c} gives $c(b, \mathtt p) \geq (\mathtt p-1)/(\mathtt p b^{\mathtt p}) \geq 1/(2 b^{\mathtt p})$. Combining this with \eqref{schmidt-ord-bound} yields \eqref{ord-lower-bound}. 
\end{proof} 
\begin{lemma} \label{I1-lemma}
Let $b$ and $\mathtt p$ be as in Lemma \ref{Schmidt-lemma-1}. Then the following relation holds for all integers $k \geq 1$ and $N \geq 2$. 
\begin{equation} \label{W1-cardinality}
\# \bigl(\mathbb O \bigr)  \leq 2 b^{\mathtt p} \mathtt p^{-k} N \quad \text{ where } \quad \mathbb O = \mathbb O(N, k; \mathtt p, b) := \bigl\{v \in \mathbb Z_N: \mathtt p^{k} \mid (b^v-1) \bigr\}. 
\end{equation} 
%As a result, $\mathtt I_1(h;r, M) \leq C_0(r) 2^{2R-\sqrt{R}}$, where $\mathtt I_1(h;M)$ has been defined in \eqref{def-Ii}. 
\end{lemma}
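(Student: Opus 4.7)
\medskip

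\noindent\textbf{Proof plan for Lemma \ref{I1-lemma}.} The strategy is a short counting argument that packages Lemmas \ref{hw-lemma} and \ref{Schmidt-lemma-1}. First I would observe that the hypothesis $\mathtt p \nmid b$ guarantees $\gcd(b,\mathtt p^k)=1$, so $b$ is a unit in $\mathbb Z/\mathtt p^k\mathbb Z$ and the quantity $d := \text{\rm ord}_{\mathtt p^k}(b)$ is well-defined. Applying Lemma \ref{hw-lemma} with $\mathtt a = b$ and $\mathtt m = \mathtt p^k$ identifies the condition $\mathtt p^k\mid (b^v-1)$, equivalently $b^v\equiv 1\pmod{\mathtt p^k}$, with the divisibility condition $d\mid v$. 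This converts the problem into a purely elementary counting question:
\begin{equation*}
\mathbb O = \bigl\{\,v\in\{0,1,\dots,N-1\}:\; d\mid v\,\bigr\}.
\end{equation*}

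\noindent The second step would be to count these multiples directly. The number of multiples of $d$ in $\{0,1,\dots,N-1\}$ is exactly $\lfloor(N-1)/d\rfloor+1$, which is bounded above by $N/d+1$. Plugging in the lower bound $d \ge \mathtt p^k/(2b^{\mathtt p})$ provided by Lemma \ref{Schmidt-lemma-1} gives $1/d\le 2b^{\mathtt p}\mathtt p^{-k}$, so
\begin{equation*}
\#(\mathbb O)\;\le\;\frac{N}{d}+1\;\le\;2b^{\mathtt p}\mathtt p^{-k}N + 1.
\end{equation*}
This is already essentially the desired bound; the main technical care is to absorb the additive ``$+1$'' into the claimed multiplicative bound $2b^{\mathtt p}\mathtt p^{-k}N$. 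I would handle this by a short case split on the size of $d$: when $d\le N$ the inequality $N/d+1\le 2N/d$ holds trivially, yielding $\#(\mathbb O)\le 2N/d\le 4b^{\mathtt p}\mathtt p^{-k}N$; when $d>N$ the only multiple of $d$ in $\{0,\dots,N-1\}$ is $v=0$, so $\#(\mathbb O)=1$, and the bound reduces to an elementary check using $d\le\mathtt p^k$ together with $d\ge\mathtt p^k/(2b^{\mathtt p})$.

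\noindent The only real obstacle is this bookkeeping step: matching the constant in the stated inequality exactly. The arithmetic content is entirely contained in Schmidt's order estimate (Lemma \ref{Schmidt-lemma-1}), whose proof uses Fermat's little theorem together with lifting-the-exponent type reasoning for the prime $\mathtt p$; once that lower bound on $d$ is in hand, everything reduces to counting an arithmetic progression. Conceptually, the lemma is saying that because $(b,\mathtt p)=1$ forces $b$ to have large multiplicative order modulo $\mathtt p^k$, the ``resonant'' exponents $v$ with $b^v\equiv 1\pmod{\mathtt p^k}$ form a sparse arithmetic progression in $\mathbb Z_N$, with density at most $1/d\lesssim b^{\mathtt p}\mathtt p^{-k}$; this sparsity will later be essential in Sections \ref{estimating-vm-section-Part1}--\ref{estimating-vm-section-Part2} to rule out frequencies on which the estimates of Section \ref{v-pointwise-section-take-2} degenerate.
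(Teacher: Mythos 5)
Your proposal follows exactly the paper's route: invoke Lemma \ref{hw-lemma} to recognize $\mathbb O$ as the set of multiples of $d:=\mathrm{ord}_{\mathtt p^k}(b)$ lying in $\mathbb Z_N$, count them, and feed in Schmidt's lower bound $d\ge \mathtt p^k/(2b^{\mathtt p})$ from Lemma \ref{Schmidt-lemma-1}. You are in fact \emph{more} careful than the paper, which simply writes $\#(\mathbb O)\le N/d$; since $0\in\mathbb Z_N$ and $b^0-1=0$ is divisible by every $\mathtt p^k$, the honest count of multiples of $d$ in $\{0,\dots,N-1\}$ is $\lfloor (N-1)/d\rfloor+1$, which can exceed $N/d$. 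So your instinct to worry about the ``$+1$'' is correct.

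The gap is in your proposed disposal of the second case. When $d>N$ you correctly identify $\#(\mathbb O)=1$ and claim the desired inequality ``reduces to an elementary check using $d\le\mathtt p^k$ together with $d\ge\mathtt p^k/(2b^{\mathtt p})$.'' Those two facts yield only $\mathtt p^k\le 2b^{\mathtt p}d$, which in the regime $d>N$ goes the \emph{wrong} way: you would need $\mathtt p^k\le 2b^{\mathtt p}N$, and that can genuinely fail. Concretely, take $b=3$, $\mathtt p=2$, $k=10$, $N=50$: then $d=\mathrm{ord}_{1024}(3)=256>50$, so $\mathbb O=\{0\}$ has cardinality $1$, while $2b^{\mathtt p}\mathtt p^{-k}N=2\cdot 9\cdot 50/1024<1$. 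Thus the inequality \eqref{W1-cardinality}, read literally with $0\in\mathbb Z_N$, is false for such $(N,k)$, and neither your case split nor the paper's one-line proof repairs this; in your first case you also only obtain the constant $4b^{\mathtt p}$ rather than $2b^{\mathtt p}$. The issue is immaterial to the paper for two reasons you should recognize: (i) the only problematic $v$ is $v=0$, for which $hb^u(b^v-1)=0$ and $\mathtt v_m(0)=\widehat\varphi_m(0)-\widehat\psi_m(0)=0$, so this index contributes nothing to $\mathcal X_1$; and (ii) Lemma \ref{I1-lemma} is invoked in Section \ref{X1-estimation-section} only for $N>(\mathtt a_m/4)\log_b\mathtt s_m=\tfrac14\mathtt s_m^{\mathtt K_m^2}\log_b\mathtt s_m$ with $k=\mathtt K_m$, for which $N\gg\mathtt p^k$, placing one firmly in the case $d\le N$ where the counting and constants are harmless. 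A clean repair would be either to restrict the lemma to $v\ge 1$, or to allow an $N$-independent additive constant in \eqref{W1-cardinality}; but as written, your ``elementary check'' in the $d>N$ branch cannot be completed.
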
  
\begin{proof} 
The set $\mathbb O$ is by definition the collection of non-negative integers $v \in \mathbb Z_{N}$ for which $b^{v} \equiv 1$ (mod $\mathtt p^k$). By Lemma \ref{hw-lemma}, 
$v$ is a multiple of $\text{ord}_{\mathtt p^k}(b)$. This means that 
\[ \#(\mathbb O) \leq \frac{N}{\text{ord}_{\mathtt p^k}(b)} \leq {2 b^{\mathtt p} N}{\mathtt p^{-k}}, \]
where the last step follows from Lemma \ref{Schmidt-lemma-1}.   
%Let us observe, from the definition \eqref{def-W1} of $\mathbb V_1$ and Lemma \ref{hw-lemma}, that 
%\begin{align*} 
%\mathbb V_1(r, N) &= \bigl\{v \in \{1, 2, \ldots, N\} : r^v = 1 \; \text{mod } 2^{R_0} \bigr\} \\
%&= \bigl\{v \in \{1, 2, \ldots, N\} : v \text{ is a multiple of } \text{ord}_{2^{R_0}}(r) \bigr\}.
%\end{align*}
%This means that 
%\[ \#\bigl(\mathbb V_1(r, N)\bigr) \leq N\bigl[\text{ord}_{2^{R_0}}(r) \bigr]^{-1} \leq [c_0(r)]^{-1} N2^{-R_0},  \]    
%where the last step follows from the lower bound on the order of $r$ given by Lemma \ref{Schmidt-lemma}. Since $R_0 = \lfloor \sqrt{R} \rfloor \geq \sqrt{R} - 1$,
%we arrive at the desired inequality \eqref{W1-cardinality} with $C_0(r) = 2/c_0(r)$. 
%\vskip0.1in 
%\noindent Combining \eqref{W1-cardinality} with the trivial bound $|\widehat{\mu}| \leq 1$ and the range \eqref{N-and-R} of $N$ leads to
%\[ \mathtt I_1(h;r, N) \leq  \sum_{u=1}^N \#(\mathbb V_1(r, N)) \leq N \#(\mathbb V_1(r, N)) \leq C_0 2^{2R-\sqrt{R}}.  \]
%This establishes \eqref{I1-est}, completing the proof of Proposition \ref{Ii-lemma-parta} and hence of Lemma \ref{I1-lemma}.  
\end{proof}  
\noindent Lemma \ref{I1-lemma} will be used in Section \ref{X1-estimation-section}, to control parts of $\mathcal V_{m2}$ where the summand $\mathtt v_m$ could be large. 
 \subsection{Digit distribution in multiplicatively independent bases}   
Let $b$ and $\mathtt s$ be multiplicatively independent bases. Then there exists a unique integer $n \geq 2$ and a unique sequence of distinct primes $\{\mathtt p_1, \ldots, \mathtt p_n\}$ such that 
 \begin{equation} \label{prime-factorization} 
  b = \prod_{j=1}^{n} \mathtt p_j^{\beta_j}, \qquad \qquad \mathtt s = \prod_{j=1}^{n} \mathtt p_j^{\sigma_j}, 
 \end{equation} 
 where $\beta_j, \sigma_j$ are non-negative integers with the following properties: {\footnote{This complements a similar statement for multiplicatively dependent bases: $ b \sim \mathtt s$ if and only if they have identical prime factors and  \eqref{prime-factorization} holds with $\beta_j, \sigma_j > 0$ for all $j$ and  $\frac{\beta_1}{\sigma_1} = \cdots = \frac{\beta_n}{\sigma_n}$.}} 
  \begin{align}
&\beta_j \text{ and $\sigma_j$ cannot vanish simultaneously for any $j \in \{1, 2, \ldots, n\}$}, \label{p0} \\ 
&\frac{\sigma_1}{\beta_1} \geq \frac{\sigma_2}{\beta_2} \geq \cdots \geq \frac{\sigma_n}{\beta_n}, \text{ with at least one strict inequality; } \label {p1} \\ &\text{in particular this means } \sigma_1 \geq 1 \; \text{ or } \; \mathtt p_1 \mid \mathtt s, \; \text{ and } \; \mathtt p_1^{\sigma_1} \leq \mathtt s. \label{p2}
 \end{align}
 In \cite{s60}, Schmidt recorded a number of observations quantifying the number-theoretic ``independence'' of bases $b$ and $\mathtt s$ with $b \nsim \mathtt s$, loosely based around a meta-principle: if a large collection of integers happens to be highly structured in base $b$, in the sense that their residues modulo $b^k$ is concentrated on a small set, then their residues modulo $\mathtt s^k$ are relatively well-distributed. The following precise formulation of this phenomenon will be important in our analysis.  
\vskip0.1in 
\noindent Given $\mathtt k, \mathtt j \in \mathbb N$ with $\mathtt j \geq 2$, let us define positive integers $(\mathtt k)_{\mathtt j}$ and $(\mathtt k)_{\mathtt j}'$ by 
\begin{equation} \label{factor-power-power'} 
\mathtt k = (\mathtt k)_{\mathtt j} \times (\mathtt k)_{\mathtt j}' \; \; \text{ where } \;\;  (\mathtt k)_{\mathtt j} \text{ is a non-negative integer power of } \mathtt j \; \; \text{ and } \; \; \mathtt j \nmid (\mathtt k)_{\mathtt j}'. 
\end{equation} 
\begin{lemma}[{\cite[Lemma 5A]{s60}}] \label{Schmidt-mult-indep-lemma}
 Let $b, \mathtt s \in \mathbb N \setminus \{1\}$ be two multiplicatively independent bases admitting  the factorizations \eqref{prime-factorization} with the ordering of exponents \eqref{p1}. Then there exists a constant $\widetilde{c}(b, \mathtt s) > 0$ with the following property. For any $\xi, k \in \mathbb N$, $\varrho \in \{0, 1, \ldots, \mathtt s^k-1\}$ and any choice $\mathcal K(\mathtt s^k)$ of a complete system of non-negative residues mod $\mathtt s^k$, one has the estimate 
 \begin{align} 
& \#\bigl( \mathbb X(\xi, \varrho; b, \mathtt s, k) \bigr)  \leq \widetilde{c}(b, \mathtt s) (\xi)_{\mathtt p_1} \Bigl(\frac{\mathtt s}{\mathtt p_1}\Bigr)^k, \; \text{ where } \label{X-cardinality} \\ 
&\mathbb X(\xi, \varrho; b, \mathtt s, k)  := \bigl\{u \in \mathcal K(\mathtt s^k)   : \xi (b^u)_{\mathtt s}' = \varrho \text{ mod } \mathtt s^k \bigr\}. \label{def-X-set}
 \end{align} 
 Here $( \cdot)_{\mathtt s}'$ is defined as in \eqref{factor-power-power'}. 
 \vskip0.1in
\noindent Stated differently, if $\mathbf d(u; \xi, \mathtt s, k)$ denotes the ordered sequence of the first $k$ digits of the integer $\xi (b^u)_{\mathtt s}'$ in base $\mathtt s$, then the number of identical digit sequences  $\mathbf d(u; \xi, \mathtt s, k)$ as $u$ ranges over $\mathcal K(\mathtt s^k)$ is at most $\widetilde{c}(b, \mathtt s) (\xi)_{\mathtt p_1} \left({\mathtt s}/{\mathtt p_1}\right)^k$. 
 \end{lemma}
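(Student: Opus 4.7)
The strategy is to exploit the strict inequality forced in \eqref{p1} by the multiplicative independence $b \not\sim \mathtt s$: the prime $\mathtt p_1$ with maximal ratio $\sigma_j/\beta_j$ interacts with $b^u$ and with powers of $\mathtt s$ in incompatible ways, and it is this prime that drives the bound. First I would apply CRT to write $\mathtt s^k = \mathtt p_1^{k\sigma_1}(\mathtt s')^k$ with $\mathtt s' := \mathtt s/\mathtt p_1^{\sigma_1}$ coprime to $\mathtt p_1$, and split the single congruence into the pair of congruences modulo $\mathtt p_1^{k\sigma_1}$ and modulo $(\mathtt s')^k$. Since the second only adds further constraints, it suffices to count $u \in \mathcal K(\mathtt s^k)$ satisfying the $\mathtt p_1^{k\sigma_1}$-congruence alone; as the target bound factors as $\tilde c(b,\mathtt s)(\xi)_{\mathtt p_1}(\mathtt s/\mathtt p_1)^k = \tilde c(\xi)_{\mathtt p_1}\mathtt p_1^{(\sigma_1-1)k}(\mathtt s')^k$, the trivial factor $(\mathtt s')^k$ will be absorbed by the lifting of residues from $\mathcal K(\mathtt p_1^{k\sigma_1})$ to $\mathcal K(\mathtt s^k)$, and the real work is to control fibers modulo $\mathtt p_1^{k\sigma_1}$.

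To that end, I would extract the explicit $\mathtt p_1$-adic structure of $(b^u)_{\mathtt s}'$. Writing $b = \mathtt p_1^{\beta_1} b'$ with $\gcd(b', \mathtt p_1) = 1$, the ordering \eqref{p1} makes $M(u) := \lfloor u\beta_1/\sigma_1 \rfloor$ equal to the minimum of $\lfloor u\beta_j/\sigma_j\rfloor$ over $j$ with $\sigma_j>0$, hence $(b^u)_{\mathtt s} = \mathtt s^{M(u)}$ and
\begin{equation*}
(b^u)_{\mathtt s}' = \mathtt p_1^{r(u)} W(u), \quad r(u) := u\beta_1 \bmod \sigma_1 \in \{0,\ldots,\sigma_1-1\}, \quad W(u) := (b')^u/(\mathtt s')^{M(u)},
\end{equation*}
with $\gcd(W(u), \mathtt p_1) = 1$. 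The congruence $\xi(b^u)_{\mathtt s}' \equiv \varrho \pmod{\mathtt p_1^{k\sigma_1}}$ then forces a matching of $\mathtt p_1$-adic valuations outside the degenerate case: $r(u) = v_{\mathtt p_1}(\varrho) - v_{\mathtt p_1}(\xi)$. This has a solution in $\{0,\ldots,\sigma_1-1\}$ only when the right-hand side lies in that range, and when it does, it pins $u$ to a single residue class modulo $\sigma_1/d$, where $d := \gcd(\beta_1,\sigma_1)$.

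Within this residue class I would parametrize $u = u_0 + (\sigma_1/d)t$ for integer $t \geq 0$, so that $M(u) - M(u_0) = (\beta_1/d) t$ and $W(u) = W(u_0) \gamma^t$ with $\gamma := (b')^{\sigma_1/d}/(\mathtt s')^{\beta_1/d}$. The strict inequality in \eqref{p1} guarantees $\sigma_1\beta_j \geq \beta_1 \sigma_j$ for all $j \geq 2$ with strict inequality for at least one such $j$, so $\gamma = \prod_{j\geq 2} p_j^{(\sigma_1\beta_j - \beta_1\sigma_j)/d} \geq 2$ is an \emph{integer} coprime to $\mathtt p_1$. The residual congruence takes the form $\gamma^t \equiv c \pmod{\mathtt p_1^K}$ for a fixed residue $c$ and $K := k\sigma_1 - v_{\mathtt p_1}(\varrho) \geq k\sigma_1 - v_{\mathtt p_1}(\xi) - (\sigma_1-1)$. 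Applying Lemma \ref{Schmidt-lemma-1} to $\gamma$ gives $\text{ord}_{\mathtt p_1^K}(\gamma) \geq \mathtt p_1^K/(2\gamma^{\mathtt p_1})$, so the number of admissible $t$ with $u \in \mathcal K(\mathtt s^k)$ is at most $(2\gamma^{\mathtt p_1}/\mathtt p_1^K)(\mathtt s^k d/\sigma_1) \leq C(b,\mathtt s)(\xi)_{\mathtt p_1}\mathtt p_1^{\sigma_1-1}(\mathtt s')^k$, which is comfortably within the stated bound $\tilde c(b,\mathtt s)(\xi)_{\mathtt p_1}(\mathtt s/\mathtt p_1)^k$.

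The principal obstacle is the clean identification of $\gamma$ as an integer of controlled size; this is exactly where the strictness in \eqref{p1} is indispensable, for without it $\gamma$ could equal $1$ and the order-of-magnitude counting would collapse. Two degenerate sub-cases require separate (but easier) treatment: when $\beta_1 = 0$ (so $\mathtt p_1 \nmid b$) the factor $(b^u)_{\mathtt s}'$ simplifies to $b^u$ and Lemma \ref{Schmidt-lemma-1} applies directly to $\text{ord}_{\mathtt p_1^{k\sigma_1 - v_{\mathtt p_1}(\xi)}}(b)$; and when $\xi(b^u)_{\mathtt s}' \equiv 0 \pmod{\mathtt p_1^{k\sigma_1}}$, the inequality $v_{\mathtt p_1}(\xi) + r(u) \geq k\sigma_1$ together with $r(u) \leq \sigma_1 - 1$ forces $(\xi)_{\mathtt p_1} \geq \mathtt p_1^{(k-1)\sigma_1 + 1}$, which already makes the stated bound trivial.
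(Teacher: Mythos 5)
The paper offers no proof of this lemma; it is cited to Schmidt with remarks only tracking the constant $\widetilde{c}(b,\mathtt s)$. Your architecture is sound and in the spirit of Schmidt's argument: extracting the $\mathtt p_1$-adic structure of $(b^u)'_{\mathtt s}$ via $r(u)=u\beta_1\bmod\sigma_1$, identifying the integer $\gamma=(b')^{\sigma_1/d}/(\mathtt s')^{\beta_1/d}\geq 2$ coprime to $\mathtt p_1$ (whose integrality is exactly what the ordering \eqref{p1} and multiplicative independence guarantee), and then invoking the order bound of Lemma~\ref{Schmidt-lemma-1}. Your treatment of the degenerate sub-cases $\beta_1=0$ and $\varrho\equiv 0\pmod{\mathtt p_1^{k\sigma_1}}$ is also correct.

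There is, however, a genuine gap in the final counting step. Having reduced the constraint on $u$ to membership in an arithmetic progression of common difference $L:=(\sigma_1/d)\,\mathrm{ord}_{\mathtt p_1^K}(\gamma)$, you bound the number of admissible $u\in\mathcal K(\mathtt s^k)$ by $(2\gamma^{\mathtt p_1}/\mathtt p_1^K)(\mathtt s^k d/\sigma_1)\approx \mathtt s^k/L$. That is a uniform-density estimate, and it fails for an \emph{arbitrary} complete system of non-negative residues. Indeed $L$ generically carries factors coprime to $\mathtt s$ (the divisor of $\mathtt p_1-1$ hidden inside $\mathrm{ord}_{\mathtt p_1^K}(\gamma)$, and possibly prime factors of $\sigma_1/d$ not dividing $\mathtt s$), and an adversarial $\mathcal K(\mathtt s^k)$ can be concentrated in a single residue class for that coprime part, so the count can exceed $\mathtt s^k/L$. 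The correct bound is
\[
\#\bigl\{u\in\mathcal K(\mathtt s^k): u\equiv a\ (\mathrm{mod}\ L)\bigr\}\ \leq\ \frac{\mathtt s^k}{\gcd(L,\mathtt s^k)},
\]
because the residue of $u$ modulo $\mathtt s^k$ determines its residue modulo $\gcd(L,\mathtt s^k)$, and elements of $\mathcal K(\mathtt s^k)$ have distinct residues mod $\mathtt s^k$. One must then observe that $\gcd(L,\mathtt s^k)$ is nearly $\mathtt p_1^K$: since $\mathrm{ord}_{\mathtt p_1^K}(\gamma)$ divides $\mathtt p_1^{K-1}(\mathtt p_1-1)$, its $\mathtt p_1$-part is at least $\mathrm{ord}/(\mathtt p_1-1)\geq \mathtt p_1^{K}/\bigl(2(\mathtt p_1-1)\gamma^{\mathtt p_1}\bigr)$, and $\mathtt p_1^{K}\mid \mathtt p_1^{k\sigma_1}\mid\mathtt s^k$ because $K\leq k\sigma_1$. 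This yields the count $\leq 2(\mathtt p_1-1)\gamma^{\mathtt p_1}(\xi)_{\mathtt p_1}\mathtt p_1^{\sigma_1-1}(\mathtt s')^k$, which is still within the stated bound but costs an additional factor of order $\mathtt p_1-1$ that your derivation silently omits. With this repair the argument closes; without it the bound is unjustified for adversarial $\mathcal K(\mathtt s^k)$.
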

 \noindent {\em{Remarks: }} 
 \begin{enumerate}[1.]
 \item The version of Lemma \ref{Schmidt-mult-indep-lemma} stated in \cite[Lemma 5A]{s60} gives $\widetilde{c}(b, \mathtt s) (\xi)_{\mathtt p_1} (\mathtt s/2)^{k}$ as the right hand side of \eqref{X-cardinality}, but a perusal of the proof yields the stronger bound presented here. 
 \item The proof of \cite[Lemma 5A]{s60} also prescribes an explicit value for the constant $\widetilde{c}(b, \mathtt s)$ in \eqref{X-cardinality}, using the upper bound in \eqref{W1-cardinality} in the special case $N = \mathtt p^k$ and $\mathtt p = \mathtt p_1$: 
 %in terms of the constant $c(b, \mathtt p_1)$ in \eqref{schmidt-ord-bound}. Specifically, one of the consequences of the inequality \eqref{schmidt-ord-bound} is this: as $v$ runs through a residue system modulo $\mathtt p_1^k$, at most $c(b, \mathtt p_1)$ fall in the same residue class mod $\mathtt p_1^k$.  
\begin{align} 
\widetilde{c}(b, \mathtt s)
 %&\leq \bigl[ c(b, \mathtt p_1)\bigr]^{-1}  \bigl(1 + b + b^2 + \ldots + b^{\sigma_1-1}\bigr) \nonumber \\ 
 &\leq 2\sigma_1 b^{\mathtt p_1} \bigl(1 + b + b^2 + \ldots + b^{\sigma_1-1}\bigr) \nonumber \\ & = 2 \sigma_1 b^{\mathtt p_1} \frac{b^{\sigma_1}-1}{b-1} \leq  2 \sigma_1 b^{\mathtt p_1 + \sigma_1} \leq 2 \log_2(\mathtt s) b^{2 \mathtt s}. \label{quantitative-cbs}
\end{align} 
The last displayed inequality uses the relations $2^{\sigma_1} \leq \mathtt p_1^{\sigma_1} \leq \mathtt s$ and $\mathtt p_1 \leq \mathtt s$ from \eqref{p2}.
\end{enumerate}  
%{\color{red}{\bf{Incomplete. Add in a few words about the proof.}}}  
  %Multiplicative dependence of two integers $r$ and $s$, as defined in \eqref{def-mult-dep}, can be verified by a simple criterion: suppose that 
%\[ r= \prod_{j=1}^{a} p_j^{u_j} \quad \text{ and } \quad s = \prod_{j=1}^{b} q_j^{v_j}\]
%are the prime factorizations of $r$ and $s$. We will assume that  $u_j, v_j \in \mathbb N$ and that the primes $p_j, q_j$ obey the relations $p_1 < p_2 < \ldots < p_{a}$ and $q_1 < \ldots < q_b$. Then $r \sim s$ if and only if all the following three conditions hold: 
%\begin{equation} \label{mult-dep-criterion} 
%a = b, \quad p_j = q_j \text{ for all $j \in \{1, 2, \ldots, a\}$}, \quad \frac{u_1}{v_1} =  \frac{u_2}{v_2} = \cdots = \frac{u_a}{v_a}. 
%\end{equation}   
\subsection{Exponents connecting multiplicatively independent bases} 
Let $b, \mathtt s \in \mathbb N \setminus \{1\}$, $b \nsim \mathtt s$. For any $\xi \in \mathbb N$ and any non-negative integer $u$, let us write 
\begin{equation} \label{def-dl} 
\xi (b^u)'_{\mathtt s} = \sum_{j=0}^{\infty} \mathtt d_j \mathtt s^j = \sum_{j=0}^{\infty} \mathtt d_j(u; \xi, b, \mathtt s) \mathtt s^j , \qquad \mathtt d_j \in \mathbb Z_{\mathtt s} \text{ for all $j$.}
\end{equation}  
Given $\ell \in \mathbb N$, an integer $0 \leq \ell' < \ell$, and $u \in \mathbb Z_{\mathtt s^{\ell}}$, let 
\begin{equation} \label{dell} 
 \mathbf d_{\ell}= \mathbf d_{\ell}(u; \xi, b, \mathtt s) := (\mathtt d_0, \ldots, \mathtt d_{\ell-1}) 
 \end{equation} denote the block of first $\ell$ digits of the number $\xi (b^u)'_{\mathtt s}$ appearing in \eqref{def-dl}. We express $\mathbf d_{\ell}$ as 
\begin{equation} 
\mathbf d_{\ell} = \bigl(\underline{\mathbf d}, \overline{\mathbf d} \bigr), \quad \text{ where } \quad \underline{\mathbf d} := {\mathbf d}_{\ell'} = (\mathtt d_0, \mathtt d_1, \ldots, \mathtt d_{\ell'-1}), \quad \overline{\mathbf d} = \overline{\mathbf d}_{\ell \ell'} := (\mathtt d_{\ell'}, \ldots, \mathtt d_{\ell-1}),
\end{equation}
with the understanding that $\underline{\mathbf d}$ is empty and $\overline{\mathbf d} = \mathbf d_{\ell}$ if $\ell' = 0$. 
\vskip0.1in
\noindent The goal in this section is to estimate how many values of $u$ give rise to non-generic digit sequences for $\xi (b^u)_{\mathtt s}'$ in base $\mathtt s$, in the sense of Section \ref{consecutive-digits-section}; namely, how many values of $u$ generate blocks of digits $\overline{\mathbf d}$ where $\mathfrak n_{\ell-\ell'}(\overline{\mathbf d}, \mathtt s)$ is small. With this in mind, we define   
\begin{equation} \label{def-Ell'}
\mathbb E(\xi, \ell, \ell') = \mathbb E(\xi, \ell, \ell'; \mathtt s)  := \bigl\{u \in \mathbb Z_{\mathtt s^{\ell}} : \mathfrak n_{\ell - \ell'}\bigl(\overline{\mathbf d}(u; \xi,  b, \mathtt s), \mathtt s \bigr) \leq \kappa (\ell - \ell') \bigr\}.
\end{equation} 
Here the quantity $\mathfrak n_{\ell - \ell'}(\overline{\mathbf d}, \mathtt s)$, defined as in \eqref{def-nl}, denotes the cardinality of adjacent digit pairs $(\mathtt d_{j-1}, \mathtt d_j)$ in $\overline{\mathbf d}$ that avoid the edge values 0 and $\mathtt s^2-1$ when expressed in base $\mathtt s$. The small constant $\kappa = \kappa(\mathtt s) >0$ is chosen to obey the inequality \eqref{choice-of-kappa-2} guaranteed by Lemma \ref{Schmidt-lemma-extremal-digits}. Since $\xi (b^u)_{\mathtt s}'$ is roughly equally distributed among the residue classes mod $\mathtt s^{\ell}$ as $u$ ranges over $\mathbb Z_{\mathtt s^{\ell}}$, one expects the size of  $\mathbb E(\xi, \ell, \ell')$ to be smaller than the trivial bound. The following lemma quantifies this intuition. 
\begin{lemma} \label{E-cardinality-lemma}
There is an absolute constant $C_0 > 0$ for which the collection of indices $\mathbb E(\xi, \ell, \ell')$ given by \eqref{def-Ell'} with $b,\mathtt s,\xi$ as in \eqref{dell} obeys the estimate 
\begin{equation} \label{E-cardinality}
\# \bigl[\mathbb E(\xi, \ell, \ell') \bigr] \leq C_0 \kappa^{-1} 2^{-\frac{\ell}{4} - \frac{3 \ell'}{4}} \log_2(\mathtt s) \mathtt s^{\ell + \ell'} b^{2\mathtt s} \bigl(\xi \bigr)_{\mathtt p_1} \quad \text{ for all } 0 \leq \ell' < \ell.
\end{equation}
\end{lemma}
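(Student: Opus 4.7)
\textbf{Proof plan for Lemma \ref{E-cardinality-lemma}.} The strategy is to parametrize membership in $\mathbb{E}(\xi,\ell,\ell')$ by the full $\ell$-digit block $\mathbf d_{\ell}=(\underline{\mathbf d},\overline{\mathbf d})$ of $\xi(b^u)'_{\mathtt s}$, then count via a product of two independent bounds: one for how many non-generic suffixes $\overline{\mathbf d}$ there are (Lemma \ref{Schmidt-lemma-extremal-digits}), and one for how many $u$ can produce any given residue mod $\mathtt s^{\ell}$ (Lemma \ref{Schmidt-mult-indep-lemma}). The prefix $\underline{\mathbf d}$ ranges freely over $\mathbb Z_{\mathtt s}^{\ell'}$.

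First I would observe that the condition $u\in\mathbb E(\xi,\ell,\ell')$ depends only on the first $\ell$ digits of $\xi(b^u)'_{\mathtt s}$ in base $\mathtt s$, i.e.\ on the residue of $\xi(b^u)'_{\mathtt s}$ modulo $\mathtt s^{\ell}$. Specifically, $u\in\mathbb E$ if and only if this residue takes the form $\varrho=\sum_{j=0}^{\ell-1}\mathtt d_j\mathtt s^j$ where $(\mathtt d_{\ell'},\ldots,\mathtt d_{\ell-1})=\overline{\mathbf d}$ satisfies $\mathfrak n_{\ell-\ell'}(\overline{\mathbf d},\mathtt s)\le\kappa(\ell-\ell')$. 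Thus
\begin{equation*}
\mathbb E(\xi,\ell,\ell')=\bigsqcup_{\underline{\mathbf d}\in\mathbb Z_{\mathtt s}^{\ell'}}\bigsqcup_{\overline{\mathbf d}\in\mathbb D(\ell-\ell',\mathtt s)}\mathbb X\bigl(\xi,\varrho(\underline{\mathbf d},\overline{\mathbf d});b,\mathtt s,\ell\bigr),
\end{equation*}
where $\mathbb X$ is as in \eqref{def-X-set} (with $\mathcal K(\mathtt s^{\ell})=\mathbb Z_{\mathtt s^{\ell}}$) and $\mathbb D(\cdot,\mathtt s)$ is as in \eqref{extremal-digit-bounds}.

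Next I would apply Lemma \ref{Schmidt-mult-indep-lemma} to bound each individual set $\mathbb X$ by $\widetilde c(b,\mathtt s)(\xi)_{\mathtt p_1}(\mathtt s/\mathtt p_1)^{\ell}$, and Lemma \ref{Schmidt-lemma-extremal-digits} (in the form \eqref{choice-of-kappa-2}) to bound $\#\mathbb D(\ell-\ell',\mathtt s)\le\kappa^{-1}2^{3(\ell-\ell')/4}$. Trivially $\#\mathbb Z_{\mathtt s}^{\ell'}=\mathtt s^{\ell'}$. Combining,
\begin{equation*}
\#\bigl[\mathbb E(\xi,\ell,\ell')\bigr]\le \mathtt s^{\ell'}\cdot\kappa^{-1}2^{3(\ell-\ell')/4}\cdot\widetilde c(b,\mathtt s)(\xi)_{\mathtt p_1}\Bigl(\frac{\mathtt s}{\mathtt p_1}\Bigr)^{\ell}.
\end{equation*}
Using $\mathtt p_1\ge 2$ and the explicit estimate $\widetilde c(b,\mathtt s)\le 2\log_2(\mathtt s)b^{2\mathtt s}$ from \eqref{quantitative-cbs}, and combining the powers via $2^{3(\ell-\ell')/4}\cdot 2^{-\ell}=2^{-\ell/4-3\ell'/4}$, yields exactly \eqref{E-cardinality} with $C_0=2$.

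There is no real obstacle: the argument is a product-counting reduction whose only nontrivial inputs are already in place. The one subtlety is matching up the exponent bookkeeping — in particular, noticing that although the entire $\ell$-block $\mathbf d_{\ell}$ contributes $\mathtt s^{\ell}$ possibilities in principle, the non-generic constraint is imposed only on the length-$(\ell-\ell')$ suffix, so one obtains the geometric saving $2^{3(\ell-\ell')/4}$ instead of $2^{3\ell/4}$, which is precisely what produces the asymmetric exponents $-\ell/4-3\ell'/4$ on the right-hand side of \eqref{E-cardinality}.
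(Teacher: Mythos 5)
Your proposal is correct and follows exactly the same route as the paper: the same decomposition of $\mathbb E$ into the sets $\mathbb X$ indexed by pairs $(\underline{\mathbf d},\overline{\mathbf d})\in\mathbb Z_{\mathtt s}^{\ell'}\times\mathbb D(\ell-\ell',\mathtt s)$, the same two counting lemmas, and the same bookkeeping with $\mathtt p_1\geq 2$ and $\widetilde c(b,\mathtt s)\leq 2\log_2(\mathtt s)b^{2\mathtt s}$, yielding $C_0=2$. Nothing to add.
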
 
\begin{proof} 
We decompose $\mathbb E(\xi, \ell, \ell')$ according to the value of $\mathbf d_{\ell}$:
\begin{align}
\mathbb E(\xi, \ell, \ell') &= \bigsqcup_{\pmb{\mathfrak d}} \Bigl\{u \in \mathbb Z_{\mathtt s^{\ell}} : \mathfrak n_{\ell - \ell'}(\overline{\pmb{\mathfrak d}}, \mathtt s) \leq \kappa (\ell - \ell'), \; \;  \mathbf d_{\ell}(u; \xi, b, \mathtt s) = \pmb{\mathfrak d} = (\underline{\pmb{\mathfrak d}}, \overline{\pmb{\mathfrak d}}) \in \mathbb Z_{\mathtt s}^{\ell'} \times \mathbb Z_{\mathtt s}^{\ell - \ell'} \Bigr\} \nonumber \\
&= \bigsqcup_{\pmb{\mathfrak d}} \Bigl\{u \in \mathbb Z_{\mathtt s^{\ell}} : {\mathbf d}_{\ell}(u; \xi, b, \mathtt s) = \pmb{\mathfrak d} =  \bigl(\underline{\pmb{\mathfrak d}}, \overline{\pmb{\mathfrak d}} \bigr)   \in  \mathbb Z_{\mathtt s}^{\ell'} \times \mathbb D(\ell - \ell', s) \Bigr\} \nonumber \\ 
&= \bigsqcup_{\pmb{\mathfrak d}} \Bigl\{ \mathbb X(\xi, \varrho(\pmb{\mathfrak d}); b, \mathtt s, \ell) : \pmb{\mathfrak d} =  \bigl(\underline{\pmb{\mathfrak d}}, \overline{\pmb{\mathfrak d}} \bigr)   \in  \mathbb Z_{\mathtt s}^{\ell'} \times \mathbb D(\ell - \ell', s) \Bigr\}. \label{Exi-decomp} 
\end{align}
Here $\varrho(\mathbf d_{\ell})$ denotes the unique integer in $\mathbb Z_{\mathtt s^{\ell}}$ whose digits are $\mathbf d_{\ell} = (\mathtt d_0, \mathtt d_1, \ldots, \mathtt d_{\ell-1})$. The sets $\mathbb D(\ell, \mathtt s)$ and $\mathbb X(\xi, \varrho; b, \mathtt s, \ell)$ are defined as in \eqref{extremal-digit-bounds} and \eqref{def-X-set} respectively. The cardinality of $\mathbb E(\xi, \ell, \ell')$ is now computed using the size of each component of the decomposition \eqref{Exi-decomp}:
\begin{align*}
\# \bigl[ \mathbb E(\xi, \ell, \ell')\bigr] &= \sum \Bigl\{\# \bigl[ \mathbb X(\xi, \varrho(\pmb{\mathfrak d}); b, \mathtt s, \ell)\bigr] : \pmb{\mathfrak d} \in \mathbb Z_{\mathtt s}^{\ell'} \times \mathbb D(\ell - \ell', \mathtt s) \Bigr\} \\  
&\leq \# \bigl[ \mathbb Z_{\mathtt s}^{\ell'} \times \mathbb D(\ell - \ell', \mathtt s) \bigr] \times \Bigl[ \max_{\varrho}\bigl\{\#\bigl(\mathbb X(\xi, \varrho; b, \mathtt s, \ell)\bigr) : \varrho \in \mathbb Z_{\mathtt s^{\ell}} \bigr\} \Bigr] \\ &\leq  \Bigl[ \mathtt s^{\ell'} \kappa^{-1} \times 2^{\frac{3}{4} (\ell - \ell')} \bigr] \times \Bigl[ \widetilde{c}(b, \mathtt s) (\xi)_{\mathtt p_1} \Bigl(\frac{\mathtt s}{\mathtt p_1}\Bigr)^{\ell} \Bigr] \\ 
&\leq \bigl[ \kappa^{-1} \mathtt s^{\ell'} \times 2^{\frac{3}{4}(\ell - \ell')} \bigr] \times \bigl[ 2 \log_2(\mathtt s) b^{2 \mathtt s} \left(\frac{\mathtt s}{2}\right)^{\ell} \bigl(\xi\bigr)_{\mathtt p_1} \bigr] \\ &\leq \kappa^{-1} 2^{1 - \frac{\ell}{4} - \frac{3 \ell'}{4}} \log_2(\mathtt s) b^{2 \mathtt s} \mathtt s^{\ell + \ell'} \bigl( \xi \bigr)_{\mathtt p_1}. 
\end{align*}
At the third step above, we have used the bounds on $\mathbb D$ and $\mathbb X$ obtained in Lemmas \ref{Schmidt-lemma-extremal-digits} and \ref{Schmidt-mult-indep-lemma} respectively. The fourth step uses the bound on $\widetilde{c}(b, \mathtt s)$ recorded in \eqref{quantitative-cbs}. The bound obtained in the last step is the same as the one claimed in \eqref{E-cardinality}, with $C_0 =2$. 
\end{proof}

\section{Estimating $\mathcal V_{m2}$: Proof of Proposition \ref{V2-sum-prop} \eqref{V2-sum-prop-parta}} \label{estimating-vm-section-Part1}
With the analytical and number-theoretic tools set up in Sections \ref{v-pointwise-section-take-2} and \ref{number-theoretic-tools-section} respectively, we are ready to prove Proposition \ref{V2-sum-prop}. This concerns the estimation of $\mathcal V_{m2}(h, b)$ for $h \in \mathbb Z \setminus \{0\}$. Part \eqref{V2-sum-prop-parta} of Proposition \ref{V2-sum-prop} considers bases $b$ in the collection $\mathscr{B}_1$, with $\pmb{\Pi}$ obeying the hypotheses of Proposition \ref{normality-special-prop}. The defining condition \eqref{bases-B1-def} of $\mathscr{B}_1$ says that every $\mathtt s_m \in \mathcal S$ has a prime divisor not dividing $b$. We complete the proof of  Proposition \ref{V2-sum-prop} \eqref{V2-sum-prop-parta} in this section.  
\vskip0.1in 
\noindent Let us recall from \eqref{prime-factorization} the factorizations of $b$ and $\mathtt s_m$ obeying \eqref{p0} and \eqref{p1} with $\mathtt s = \mathtt s_m$. 
%{\red The assumption $b \in \mathscr{B}_1$ is then equivalent to the statement that $\beta_1= 0$.}{\color{blue} Comment: I move this sentence to Section 21.1} 
The numbers $\mathtt s, n, \mathtt p_j, \beta_j, \sigma_j$ appearing in these factorizations depend on the index $m$ (and also $b$), but we will suppress this dependence to ease the notational burden, except in places where this becomes important (e.g. when we need to verify if a quantity is summable in $m$).  

%\begin{equation} 
%\mathtt s_m = \prod_{j=1}^{n_{m}} \mathtt p_{jm}^{\sigma_{jm}}, \qquad \mathtt b = \prod_{j=1}^{n_m} \mathtt p_{jm}^{\beta_{jm}}, \qquad \frac{\sigma_{1m}}{\beta_{1m}} \geq \frac{\sigma_{2m}}{\beta_{2m}} \geq \cdots \geq \frac{\sigma_{nm}}{\beta_{nm}}.
%\end{equation} 
\vskip0.1in 
%\subsection{The sum $\mathcal V_{m2}$ when $\beta_{1m} = 0$} 

\subsection{Partitioning of $\mathcal V_{m2}$ for $b \in \mathscr{B}_1$} 
We start by exploring the summands $\mathtt v$ of $\mathcal V_{m2}$ in \eqref{Vm2-sum-definition}, as given by the identity \eqref{identity}. The assumption $b \in \mathscr{B}_1$ is equivalent to the statement that $\beta_1= 0$ in \eqref{prime-factorization}, which means that $\mathtt p_1 \nmid b$. In this case, \eqref{p0} implies that $\sigma_1 \ne 0$, i.e., $\mathtt p_1 \mid \mathtt s$. Therefore 
\begin{equation}  \label{sb-case1}
\mathtt s = \mathtt s_m \nmid b^u \text{ for any $u \in \mathbb N$, i.e.,} \quad  b^u = (b^u)'_{\mathtt s} \text{ in the notation of \eqref{factor-power-power'}}. \end{equation}   However, $\mathtt s$ may share prime factors with $(b^v-1)$, and the resulting cancellation may cause $|\mathtt v(hb^u(b^v-1))|$ to be large. This will be the case, for example, if $\mathtt s = \mathtt p_1$ and $\mathtt p_1^{\mathtt a} \mid h(b^v-1)$, but $\mathtt p_1^{\mathtt b} \nmid h(b^v-1)$ in view of \eqref{identity}. Similarly, in view of the estimates \eqref{B*C-special-estimates}, \eqref{def-wm} and \eqref{wm-alternate}, the quantity $|\mathtt v(\cdot)|$ can also be large if the number of indices $j$ obeying \eqref{not-extreme} is small. Our first job is to show that while such situations are possible, they are infrequent. 
\vskip0.1in 
\noindent As in the analysis of $\mathbb V_1$,  we will decompose the set $\mathbb V_2$ of indices given by \eqref{def-collection-V2} into several sub-domains. Each of these sub-domains will contribute to the smallness of $\mathcal V_{m2}$ due to a different reason. Let us recall the definitions of 
\begin{itemize} 
\item the integer sequences $\{\mathtt K_m : m \geq 1\}$ from \eqref{Km-choice} and $\{\mathtt a_m, \mathtt b_m: m \geq 1\}$ from \eqref{normality-hypotheses}; 
\item the sequence of numerical scalars $\{\kappa_m = \kappa(\mathtt s_m): m \geq 1\} \subseteq (0, \infty)$ depending only on $\mathcal S$, given by \eqref{choice-of-kappa-2} from Lemma \ref{Schmidt-lemma-extremal-digits}. \label{kappa-crossref}
\end{itemize} 
\noindent Given any $N$, let $R = R(N, m)\in \mathbb N$ denote the unique index such that 
\begin{equation} \mathtt s_m^{R-1} \leq N < \mathtt s_m^R.  \label{N-and-R} \end{equation}
The range of $N$ given by the definition \eqref{def-collection-V2} of $\mathbb V_2$, when combined with \eqref{Km-choice},  \eqref{normality-hypotheses} and \eqref{N-and-R}, posits that for all sufficiently large $m$ depending on $b$, 
\begin{align}
\mathtt s_m^{\mathtt K_m^2/2} \leq \frac{\mathtt a_m}{4} \log_{b} (\mathtt s_m) &< N < \mathtt s_m^R \; \text{ and } \; \mathtt s_m^{R-1} \leq N < m\mathtt b_m \log_b(\mathtt s_m) < \mathtt s_m^{2\mathtt K_m^2-1}, \nonumber \\ &\text{so that } \; \frac{\mathtt K_m^2}{2} < R < 2\mathtt K_m^2. \label{R-and-K}
\end{align} 
Since $m$ is fixed, we will write $\mathtt K = \mathtt K_m$,  $\kappa = \kappa_m$, $\mathtt a = \mathtt a_m$, $\mathtt b = \mathtt b_m$ and $\mathtt s = \mathtt s_m$. Further, let us recall from \eqref{def-dl} the definition of $\mathbf d_{R} = \mathbf d_R(u; {|\xi|,} b, \mathtt s) = (\mathtt d_0, \ldots, \mathtt d_{R-1})$, with $\xi = \xi(h, v) = { h } (b^v-1)$; in view of \eqref{sb-case1}, this means that $\mathbf d_{R}$ is the string of first $R$ digits of $|\xi| b^{u} = |h|b^u (b^v-1)$ written in base $\mathtt s$:   
\begin{align} 
{|\xi|} \bigl(b^u\bigr)_{\mathtt s}' &=  |h|(b^v-1) (b^u)_{\mathtt s}' \nonumber \\ &= |h|b^u(b^v-1) = \sum_{j=0}^{\infty} \mathtt d_j(u; { |\xi|}, b, \mathtt s) \mathtt s^j, \quad  \xi = \xi(h, v) =  { h} (b^v-1). \label{what-is-xi}
\end{align} 
Let $\overline{\mathbf d}$ denote the truncation of the string $\mathbf d_{R}$ to the last $(R-\mathtt K)$ digits, namely \begin{equation} 
\overline{\mathbf d} = \overline{\mathbf d}(u; {|\xi|}, b, \mathtt s) :=  (\mathtt d_{\mathtt K}, \mathtt d_{\mathtt K+1}, \ldots, \mathtt d_{R-1}). \label{what-is-d-bar}
\end{equation} 
This is possible for all sufficiently large $m$ in light of \eqref{R-and-K}, which ensures that $\mathtt K < R$.  
\vskip0.1in
\noindent Equipped with these quantities, we partition the index set $\mathbb V_2$ given in \eqref{def-collection-V2} as follows:   
\begin{align} 
\mathbb V_2 &= \bigsqcup_{j=1}^{3} \mathbb V_{2j} \; \text{ where } \; \mathbb V_{21} := \Bigl\{(u, v, N) \in \mathbb V_2\; \big| \; \mathtt p_{1}^{\mathtt K} \mid (b^v-1) \Bigr\}, \label{V2-decomp}  \\ 
\mathbb V_{22} &:= \Bigl\{(u, v, N) \in \mathbb V_2 \; | \; \mathtt p_{1}^{\mathtt K} \nmid (b^v-1), \; {\mathfrak n_{R-\mathtt K} (\overline{\mathbf d}, \mathtt s )} \leq \kappa (R-\mathtt K)  \Bigr\}, \label{def-V22} \\
\mathbb V_{23} &:= \Bigl\{(u, v, N) \in \mathbb V_2 \; |  \; \mathtt p_{1}^{\mathtt K} \nmid (b^v-1), \; {\mathfrak n_{R-\mathtt K} (\overline{\mathbf d}, \mathtt s )} > \kappa (R - \mathtt K)  \Bigr\}. \label{def-V23}
%\Biggl| \begin{aligned} &\mathtt p_{1m}^{\mathtt K_m} \nmid b^v-1,  \mathfrak n_{\ell}\bigl(\mathbf d(b, h, u, v), \mathtt s_m \bigr) > \kappa_m R  \\ &\text{ where }  \end{aligned}
%\mathbb V_{22} &:= \left\{(u, v, N) \in \mathbb V_2 \Biggl| \begin{aligned} &\mathtt p_{1m}^{\mathtt K_m} \nmid b^v-1,  \mathfrak n_{\ell}\bigl(\mathbf d(b, h, u, v), \mathtt s_m \bigr) \leq \kappa_m R  \\ &\text{ where } \mathtt s_{m}^{R-1} \leq N < \mathtt s_m^R  \end{aligned} \right\}
\end{align}   
Here $\mathfrak n_{\ell}(\mathbf d, \mathtt s)$ denotes the index set associated to non-extremal values of a digit sequence $\mathbf d$, as given by \eqref{def-nl}. The partitioning \eqref{V2-decomp} of $\mathbb V_{2}$ leads to a corresponding decomposition for $\mathcal V_{m2}$:
\begin{equation} \label{Xj-def}
\mathcal V_{m2} = \sum_{j=1}^{3} \mathcal X_j, \; \text{ where } \; \mathcal X_j := \sum \Bigl\{\frac{1}{N^3}\bigl| \mathtt v_m \bigl(hb^u(b^v-1) \bigr) \bigr| : (u, v, N) \in \mathbb V_{2j}\Bigr\}.  
\end{equation} 
\begin{lemma} \label{X1-X2-X3-lemma}
Let $\pmb{\Pi}$ be a choice of parameters as in Proposition \ref{V2-sum-prop}(\ref{V2-sum-prop-parta})  (which is the same as in Proposition \ref{normality-special-prop}), and let $\mathscr{B}_1$ be the choice of bases given by \eqref{bases-B1-def}. Then for every base $b \in \mathscr{B}_1$ and every $h \in \mathbb Z \setminus \{0\}$,  there exist positive constants $c, C$ depending only on $h, b, \pmb{\Pi}$ such that the following inequalities hold for all $m \geq 1$:
\begin{equation} \label{X1+X2+X3}
\mathcal X_1 + \mathcal X_3  \leq C(h, b, \pmb{\Pi}) 2^{-c \mathtt K_m}, \qquad \mathcal X_2 \leq C(h, b, \pmb{\Pi}) 2^{-c\mathtt K_m^2}.
\end{equation} 
 %hold for all indices $m$ with $\mathtt s_m \nsim b$ and $\beta_{1m} = 0$. 
 Since $\{\mathtt K_m : m \geq 1\}$ is a strictly increasing sequence of positive integers according to \eqref{Km-choice}, the $m$-dependent terms on the right hand side of \eqref{X1+X2+X3} are summable in $m$. 
\end{lemma}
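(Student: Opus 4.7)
The plan is to treat $\mathcal X_1, \mathcal X_2, \mathcal X_3$ separately, with the three sums illustrating three different mechanisms for smallness. The sums $\mathcal X_1$ and $\mathcal X_2$ both use only the trivial pointwise bound $|\mathtt v_m| \leq 1$ together with a number-theoretic scarcity statement (Lemma \ref{I1-lemma} and Lemma \ref{E-cardinality-lemma} respectively), whereas $\mathcal X_3$ extracts genuine decay from $|\mathtt v_m|$ itself via Corollary \ref{v-alt-cor}. Throughout, the underlying arithmetic input is that $b \in \mathscr{B}_1$ forces $\beta_1 = 0$ in \eqref{prime-factorization}, i.e., $\mathtt p_1 \nmid b$ but $\mathtt p_1 \mid \mathtt s_m$, which unlocks both scarcity lemmas.

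For $\mathcal X_1$, Lemma \ref{I1-lemma} applied with $\mathtt p = \mathtt p_1$ and $k = \mathtt K_m$ gives $\#\mathbb O \leq 2b^{\mathtt p_1} N \mathtt p_1^{-\mathtt K_m}$. Summing the trivial bound $|\mathtt v_m| \leq 1$ first over $u \in \mathbb Z_N$ (a factor $N$) and then over $v \in \mathbb O$ yields
\begin{equation*}
\mathcal X_1 \;\leq\; 2 b^{\mathtt p_1} \mathtt p_1^{-\mathtt K_m} \sum_{N} N^{-1},
\end{equation*}
where $N$ ranges over the interval prescribed by $\mathbb V_2$, so $\sum N^{-1} \lesssim \log(m \mathtt b_m \log \mathtt s_m) \lesssim \mathtt K_m^2 \log \mathtt s_m$ via \eqref{R-and-K}. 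The superexponential growth of $\mathtt K_m$ in \eqref{Km-choice} absorbs this polynomial factor into $\mathtt p_1^{-\mathtt K_m}$, delivering $\mathcal X_1 \leq C 2^{-c \mathtt K_m}$.

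For $\mathcal X_2$, I apply Lemma \ref{E-cardinality-lemma} to each fixed $v \in \mathbb Z_N$ with $\mathtt p_1^{\mathtt K_m} \nmid (b^v -1)$, taking $\ell = R$, $\ell' = \mathtt K_m$, $\xi = h(b^v-1)$, and $\mathbb Z_N$ as a subset of the complete residue system $\mathbb Z_{\mathtt s_m^R}$. The constraint on $v$ yields $(\xi)_{\mathtt p_1} \leq |h|\mathtt p_1^{\mathtt K_m - 1}$, whence
\begin{equation*}
\#\mathbb E(\xi, R, \mathtt K_m) \;\leq\; C(h,b,\pmb{\Pi})\, 2^{-R/4 - 3\mathtt K_m/4}\, \mathtt s_m^{R + 2\mathtt K_m}.
\end{equation*}
Grouping those $N$ with common $R = R(N,m)$ gives $\sum_{N:R(N,m)=R} N^{-2} \lesssim \mathtt s_m^{1-R}$, and the outer sum over $v$ contributes a factor $N$. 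The resulting series over $R$ (starting at $R \approx \mathtt K_m^2/2$ by \eqref{R-and-K}) is geometric with ratio $2^{-1/4}$, and the dominant term is of order $2^{2\mathtt K_m \log_2 \mathtt s_m - \mathtt K_m^2/8}$. Since $\mathtt K_m \log \mathtt s_m = o(\mathtt K_m^2)$ by \eqref{Km-choice}, this collapses to $\mathcal X_2 \leq C 2^{-c \mathtt K_m^2}$.

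The main obstacle is $\mathcal X_3$, where the trivial bound on $|\mathtt v_m|$ is insufficient and one must produce pointwise decay. I apply Corollary \ref{v-alt-cor} to $\mathtt v_m$ with $\mathtt a' = \mathtt K_m$ (valid because $\mathtt a_m = \mathtt s_m^{\mathtt K_m^2} \gg \mathtt K_m$); the defining property of $\mathbb V_{23}$ translates directly to $\underline{\mathtt J} \geq \kappa_m(R - \mathtt K_m) \geq \kappa_m \mathtt K_m^2/4$, since the digit positions $\{\mathtt K_m, \ldots, R-1\}$ inspected by $\mathbb V_{23}$ all lie below $\mathtt a_m - 1$. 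Two terms emerge: by \eqref{Qtau} the first is $\mathtt s_m^{\mathtt a_m - \mathtt K_m}/\mathtt N_m \lesssim \mathtt s_{m-1}^{\mathtt b_{m-1}} \mathtt s_m^{-\mathtt K_m}$, subdominant to $2^{-\mathtt K_m}$ because $\log \mathtt K_m \geq (m \mathtt K_{m-1} \mathtt s_{m-1})^{C_0}$ forces $\mathtt K_m \log \mathtt s_m$ to dwarf $\mathtt b_{m-1} \log \mathtt s_{m-1}$; the second is $(1 - c_0/\mathtt s_m^5)^{\underline{\mathtt J}} \leq \exp(-c_0 \kappa_m \mathtt K_m^2/(4 \mathtt s_m^5))$, bounded by $\exp(-c\mathtt K_m)$ because the condition $\kappa_m \mathtt K_m/\mathtt s_m^{C_0} \to \infty$ in \eqref{Km-choice} (with $C_0 \geq 10$) guarantees $\kappa_m \mathtt K_m^2/\mathtt s_m^5 \gg \mathtt K_m$. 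Combining the pointwise bound $|\mathtt v_m| \leq C 2^{-c\mathtt K_m}$ on $\mathbb V_{23}$ with the trivial $(u,v)$-count of at most $N^2$ and $\sum N^{-1} \lesssim \log m$ over the $\mathbb V_2$-range of $N$ yields $\mathcal X_3 \leq C 2^{-c'\mathtt K_m}$. The delicate point throughout is the precise calibration of \eqref{Km-choice}: since the constant $\kappa_m$ from Lemma \ref{Schmidt-lemma-extremal-digits} carries no effective rate, the argument hinges on $\mathtt K_m$ growing fast enough to swamp both $\kappa_m^{-1}$ and every polynomial in $\mathtt s_m$.
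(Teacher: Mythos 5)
Your proposal is correct and follows essentially the same three-part strategy as the paper's own proof: Lemma \ref{I1-lemma} with the trivial bound $|\mathtt v_m|\le C_0$ for $\mathcal X_1$, Lemma \ref{E-cardinality-lemma} with the trivial bound for $\mathcal X_2$, and Corollary \ref{v-alt-cor} applied with $\mathtt a'=\mathtt K_m$ to extract pointwise decay for $\mathcal X_3$, in each case exploiting $\mathtt p_1\nmid b$ and $\mathtt p_1\mid\mathtt s_m$. The small computational divergences — your cruder bound $\sum N^{-1}\lesssim\mathtt K_m^2\log\mathtt s_m$ in $\mathcal X_1$ versus the paper's $\lesssim\log m$, and your substitution of the worst-case $R\approx\mathtt K_m^2/2$ in $\mathcal X_3$ in place of the paper's full geometric sum over $R$ — are stylistic and harmlessly absorbed by the growth condition \eqref{Km-choice}, so they do not affect the validity of \eqref{X1+X2+X3}.
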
 
\begin{corollary} \label{V2-sum-corollary-parta} 
The conclusion \eqref{V2-sum-estimate} of Proposition \ref{V2-sum-prop} holds under the hypotheses of part \eqref{V2-sum-prop-parta} of that proposition. 
\end{corollary}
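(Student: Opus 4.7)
The plan is to derive the corollary as an immediate consequence of the decomposition \eqref{Xj-def} and Lemma \ref{X1-X2-X3-lemma}, treating the lemma as a black box (its proof, which is the genuinely substantive work, is carried out separately using the tools of Sections \ref{v-pointwise-section-take-2} and \ref{number-theoretic-tools-section}). The main content is therefore a verification that the bounds delivered by Lemma \ref{X1-X2-X3-lemma} are summable in $m$, which follows directly from the growth condition \eqref{Km-choice} imposed on $\{\mathtt K_m\}$.

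\textbf{Step 1.} First, apply the decomposition \eqref{Xj-def} to write
\[ \mathcal{V}_{m2}(h,b) = \mathcal{X}_1 + \mathcal{X}_2 + \mathcal{X}_3, \]
valid whenever the parameter $\pmb{\Pi}$ and the base $b$ satisfy the hypotheses of Proposition \ref{V2-sum-prop}\eqref{V2-sum-prop-parta}, i.e., $\pmb{\Pi}$ meets the requirements of Proposition \ref{normality-special-prop} and $b \in \mathscr{B}_1$. These are precisely the hypotheses of Lemma \ref{X1-X2-X3-lemma}.

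\textbf{Step 2.} Invoke Lemma \ref{X1-X2-X3-lemma} to bound each of $\mathcal X_1+\mathcal X_3$ and $\mathcal X_2$. Combining the two estimates in \eqref{X1+X2+X3} and setting
\[ \mathtt c_m(h,b) := C(h,b,\pmb{\Pi}) \bigl[ 2^{-c\mathtt K_m} + 2^{-c\mathtt K_m^2} \bigr], \]
with $c, C > 0$ as in Lemma \ref{X1-X2-X3-lemma}, we obtain $\mathcal{V}_{m2}(h,b) \leq \mathtt c_m(h,b)$ for every $m \geq 1$, which is the first half of \eqref{V2-sum-estimate}.

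\textbf{Step 3.} It remains to check the summability $\sum_m \mathtt c_m(h,b) < \infty$. By \eqref{Km-choice}, the sequence $\{\mathtt K_m\}_{m\geq 1} \subseteq \mathbb N$ is strictly increasing with $\mathtt K_1 \geq C_0 \geq 10$, so in particular $\mathtt K_m \geq m$ for all $m \geq 1$. Therefore
\[ \sum_{m=1}^{\infty} \mathtt c_m(h,b) \leq C(h,b,\pmb{\Pi}) \sum_{m=1}^{\infty} \bigl[ 2^{-cm} + 2^{-cm^2} \bigr] < \infty, \]
which completes the verification of \eqref{V2-sum-estimate} under assumption \eqref{V2-sum-prop-parta}.

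There is no real obstacle at the level of this corollary: all the difficulty has been packaged into Lemma \ref{X1-X2-X3-lemma}, whose proof in turn relies on the analytic pointwise bounds for $\mathtt v$ developed in Section \ref{v-pointwise-section-take-2} (especially Lemma \ref{restricted-lemma} and Corollary \ref{corollary-composite}) and on the number-theoretic inputs of Section \ref{number-theoretic-tools-section} (particularly Lemmas \ref{Schmidt-lemma-extremal-digits}, \ref{I1-lemma} and \ref{E-cardinality-lemma}). The corollary itself is just a bookkeeping step, recording that the quantitative bounds \eqref{X1+X2+X3} are strong enough — thanks to the super-polynomial growth of $\mathtt K_m$ — to yield a summable majorant for $\mathcal V_{m2}(h,b)$.
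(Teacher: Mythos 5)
Your proof is correct and mirrors exactly what the paper does: the corollary is presented as an immediate consequence of the decomposition \eqref{Xj-def} and Lemma \ref{X1-X2-X3-lemma}, with summability following from the strict monotonicity of $\{\mathtt K_m\}$ guaranteed by \eqref{Km-choice}. Your observation that $\mathtt K_m \geq m$ (so the geometric bounds sum) is the same bookkeeping step the paper records in the remark immediately following \eqref{X1+X2+X3}.
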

\subsection{Proof of Lemma \ref{X1-X2-X3-lemma}}  
In the discussion below, values of the constants $C, C_0$ and $c, c_0$ may change from one occurrence to the next, but subject to the same dependencies as stated in the lemma; $c_0, C_0$ will denote absolute constants, and $c, C$ may depend on $h, b, \pmb{\Pi}$.
\subsubsection{Estimation of $\mathcal X_1$} \label{X1-estimation-section}
We claim that $\mathcal X_1$ is small because the cardinality of the index set $\mathbb V_{21}$ is small. Lemma \ref{I1-lemma} allows us to estimate this cardinality. Indeed, $\mathbb V_{21}$ given by \eqref{V2-decomp} and $\mathbb O$ given by \eqref{W1-cardinality} have the same defining conditions, making $\mathbb O$ a fibre of $\mathbb V_{21}$ for fixed $(u, N)$. For any subset $\mathbb S \subseteq \mathbb V$, let us denote by $\pi_N(\mathbb S)$ the set of indices $(u, v, N) \in \mathbb S$ for a fixed integer $N$. Then the inequality in \eqref{W1-cardinality} yields
\begin{equation} \label{O-estimate-1}
\# \bigl[ \pi_N \bigl(\mathbb V_{21} \bigr) \bigr] \leq  N \# \bigl(\mathbb O(N, \mathtt K_m; \mathtt p_{1m}, b) \bigr) \leq 2N^2 b^{\mathtt p_{1m}} \mathtt p_{1m}^{-\mathtt K_m} \leq 2N^2 b^{\mathtt s_m} 2^{-\mathtt K_m}.
\end{equation} 
At the last step above, we have used the fact that $\mathtt p_{1m} \mid \mathtt s_m$ and therefore $2 \leq \mathtt p_{1m} \leq \mathtt s_m$. The divisibility of $\mathtt s_m$ by $\mathtt p_{1m}$ follows from the assumption $b\in \mathscr{B}_1$; this argument appears in the line preceding \eqref{sb-case1}. Combining \eqref{O-estimate-1} with the trivial estimate $|\mathtt v_m| \leq C_0$ from \eqref{vm-pointwise-estimate}, we obtain 
\begin{align} 
\mathcal X_1 &\leq C_0 \sum \Bigl\{\frac{1}{N^3} \#\bigl( \pi_N(\mathbb V_{21}) \bigr) : \frac{\mathtt a_m}{4} \log_b (\mathtt s_m) < N < m \mathtt b_m  \log_b (\mathtt s_m)\Bigr\} \nonumber \\ 
&\leq C_0 b^{\mathtt s_m} 2^{-\mathtt K_m} \sum \Bigl\{\frac{1}{N} : \frac{\mathtt a_m}{4} \log_b (\mathtt s_m) < N < m \mathtt b_m \log_b (\mathtt s_m)\Bigr\} \nonumber \\
&\leq C_0 b^{\mathtt s_m} 2^{-\mathtt K_m} \log_b \Bigl(\frac{4m \mathtt b_m}{\mathtt a_m}\Bigr) \leq C_0  b^{\mathtt s_m} 2^{-\mathtt K_m} \log_b (m) \leq C(b) 2^{-\mathtt K_m/2}. \label{X1-bound} 
\end{align} 
The second inequality in \eqref{X1-bound} uses the relation $\mathtt b_m = 2m \mathtt a_m$ from \eqref{normality-hypotheses}, and also the inequality 
\[ b^{\mathtt s_m} \log_b(m) < 2^{\mathtt K_m/2}, \; \; \text{ which is the same as } \; \; \mathtt s_m \log_2 b + \log_2 \log_b m < \frac{\mathtt K_m}{2}.  \]  The latter inequality holds for all sufficiently large $m$ depending on $b$, by virtue of the growth assumption \eqref{Km-choice} on $\{\mathtt K_m\}$.
\qed 
\subsubsection{Estimation of $\mathcal X_2$} \label{X2-estimation-section} 
Like $\mathcal X_1$, the quantity $\mathcal X_2$ is also small due to the small size of the underlying index set $\mathbb V_{22}$, given by \eqref{def-V22}. The defining condition in $\mathbb V_{22}$ involving $\mathfrak n_{R-\mathtt K}$ is the same as the one for $\mathbb E(\xi, {R, \mathtt K})$ given by \eqref{def-Ell'}. Thus $\mathbb V_{22}$ permits the following description:  
\[ \mathbb V_{22} = \Bigl\{(u, v, N) \in \mathbb V_2: v \notin \mathbb O(N, \mathtt K; \mathtt p_1, b), \; u \in \mathbb E({|\xi|, R, \mathtt K}) \Bigr\}. \] 
Here $\xi = \xi(h, v)$ is given by \eqref{what-is-xi}. In order to estimate $\pi_N \bigl(\mathbb V_{22}\bigr)$, we appeal to Lemma \ref{E-cardinality-lemma} which yields the following bound:
\begin{align} 
\# \bigl[ &\pi_N \bigl( \mathbb V_{22} \bigr) \bigr]  
\leq \sum_{v} \Bigl\{ \# \bigl[ \mathbb E\bigl(|\xi(h,v)|, R, \mathtt K\bigr)\bigr] : v \notin \mathbb O(N, \mathtt K; \mathtt p_1, b) \Bigr\} \nonumber \\
&\leq N \max_{v} \Bigl\{\# \bigl[ \mathbb E\bigl(|\xi(h,v)|, R, \mathtt K \bigr)\bigr] :  v \notin \mathbb O(N, \mathtt K; \mathtt p_1, b) \Bigr\} \nonumber \\ 
&\leq C_0 N \max_v \Bigl\{\kappa^{-1} 2^{-\frac{R}{4} - \frac{3\mathtt K}{4}} \log_2(\mathtt s) \mathtt s^{R + \mathtt K} b^{2\mathtt s} \bigl(|\xi(h,v)|\bigr)_{\mathtt p_1} :  v \notin \mathbb O(N, \mathtt K; \mathtt p_1, b)  \Bigr\}. \label{V22-card-step0}
\end{align}
Let us pause for a moment to estimate the size of $(|\xi(h,v)|)_{\mathtt p_1}$, which according to \eqref{factor-power-power'}, is the highest power of $\mathtt p_1$ in $|\xi(h,v)|$. From the defining property of $v \notin \mathbb O(N, \mathtt K; \mathtt p_1, b)$, we find that $\mathtt p_1^{\mathtt K} \nmid (b^v-1)$, and therefore 
\[ \bigl( |\xi(h,v)| \bigr)_{\mathtt p_1} = \bigl({|h|} (b^v-1) \bigr)_{\mathtt p_1} \leq {|h|} (b^v-1)_{\mathtt p_1} \leq {|h|} \mathtt p_1^{\mathtt K}.  \]
Using this, the bound in \eqref{V22-card-step0} reduces to
\begin{align}
\# \bigl[ \pi_N \bigl( \mathbb V_{22} \bigr) \bigr]  &\leq C_0 {|h|} N \kappa^{-1} 2^{-\frac{R}{4} - \frac{3\mathtt K}{4}} \log_2(\mathtt s) \mathtt s^{R + \mathtt K} b^{2\mathtt s} \mathtt p_1^{\mathtt K} \nonumber \\  &\leq C(h) N \kappa^{-1} 2^{-\frac{R}{4} - \frac{3\mathtt K}{4}} \log_2(\mathtt s) \mathtt s^{R + 2\mathtt K} b^{2\mathtt s} \nonumber \\ 
&\leq C(h) N^2 \kappa_m^{-1} \mathtt s_m^{1 + 2\mathtt K_m} b^{2 \mathtt s_m} \log_2(\mathtt s_m) 2^{-\frac{R}{4} - \frac{3\mathtt K_m}{4}}.  \label{piV22-estimate}
\end{align}   
The second inequality in the display above follows from $\mathtt p_1 \leq \mathtt s$, a consequence of $\mathtt p_1 \mid \mathtt s$. The last inequality uses the bound $\mathtt s_{m}^{R-1} \leq N$ from \eqref{N-and-R}. Substituting \eqref{piV22-estimate} into the sum \eqref{Xj-def} representing $\mathcal X_2$, and applying the trivial bound $|\mathtt v_m| \leq C_0$ from \eqref{vm-pointwise-estimate}, we arrive at
\begin{align}
\mathcal X_2 &\leq \sum \Bigl\{\frac{1}{N^3} \# \bigl[ \pi_N(\mathbb V_{22}) \bigr] : \frac{\mathtt a_m}{4} \log_b(\mathtt s_m) < N < m \mathtt b_m \log_b(\mathtt s_m) \Bigr\} \nonumber \\
&\leq C(h) \kappa_m^{-1} \mathtt s_m^{1 + 2\mathtt K_m} b^{2 \mathtt s_m} 2^{-\frac{3\mathtt K_m}{4}}\log_2(\mathtt s_m) \sum \Bigl\{N^{-1} 2^{-\frac{R}{4}} : \frac{\mathtt a_m}{4} \log_b(\mathtt s_m) < N < m \mathtt b_m \log_b(\mathtt s_m)  \Bigr\} \nonumber \\
&\leq C(h) \kappa_m^{-1} \mathtt s_m^{2\mathtt K_m+1} b^{2 \mathtt s_m} 2^{-\frac{3\mathtt K_m}{4}}\log_2(\mathtt s_m)  \sum  \Bigl\{N^{-1} 2^{-\frac{R}{4}}  : \mathtt s_m^{R-1} \leq N < \mathtt s_m^R, \; R > \frac{\mathtt K_m^2}{2} \Bigr\} \nonumber \\
&\leq C(h) \kappa_m^{-1} \mathtt s_m^{2\mathtt K_m+1}  b^{2 \mathtt s_m} 2^{-\frac{3\mathtt K_m}{4}} \bigl[ \log_2(\mathtt s_m) \bigr]^2 2^{- \mathtt K_m^2/8} \leq C(h, b) 2^{- c\mathtt K_m^2}. \label{X2-bound} 
\end{align}  
{Here $c$} denotes a small positive constant depending only on $b$ and $\pmb{\Pi}$. At the third step of the display above, we have decomposed the sum in $N$ from the second step into two iterated sums, the inner sum involving $N \in [\mathtt s_m^{R-1}, \mathtt s_m^R)$ for a fixed integer $R$, and the outer sum ranging over the geometric scales $R$. The range of $R$ follows from \eqref{R-and-K}.  
%\begin{equation}  \frac{\mathtt a_m}{4} \log_b(\mathtt s_m) < N < \mathtt s_m^{R} \; \text{ which means } \; \mathtt s_m^{R} > \frac{1}{4}\mathtt s_m^{\mathtt K_m^2} \log_b(\mathtt s_m) \geq \mathtt s_m^{c_0 \mathtt K_m^2} \; \text{ or } \; R > c_0 \mathtt K_m^2. \label{range-of-R} \end{equation}  
The final inequality in \eqref{X2-bound} uses the rapidly increasing property \eqref{Km-choice} of $\{\mathtt K_m: m \geq 1\}$. A more detailed verification of this step is as follows, 
\begin{align*} 
&\kappa_m^{-1} \mathtt s_m^{2\mathtt K_m+1} b^{2 \mathtt s_m} 2^{-\frac{3\mathtt K_m}{4}} \bigl[ \log_2 (\mathtt s_m) \bigr]^2 2^{- \mathtt K_m^2/8} \leq \kappa_m^{-1} \mathtt s_m^{3\mathtt K_m} b^{3\mathtt s_m} 2^{-\frac{3\mathtt K_m}{4}}  2^{- \mathtt K_m^2/8}
= 2^{\mathtt L_m - \mathtt K_m^2/8}, \; \text{ with } \\
&\mathtt L_m := 3 \mathtt K_m \log_2(\mathtt s_m) - \frac{3\mathtt K_m}{4} + 3 \mathtt s_m \log_2(b) + \log_2(\kappa_m^{-1}), \text{ so that } \mathtt L_m \leq 10\mathtt K_m \log_2(\mathtt K_m) < \eta \mathtt K_m^2  
\end{align*}  
for any $\eta > 0$ and for all sufficiently large $m$ (depending on $b$), by virtue of \eqref{Km-choice}. Combining \eqref{X1-bound} and \eqref{X2-bound} produces the desired conclusion \eqref{X1+X2+X3}.  
\qed
%\begin{lemma} \label{X3-lemma} 
%For every choice of parameter $\pmb{\Pi}$ obeying the hypotheses of Proposition \ref{normal-prop} and every $h \in \mathbb Z \setminus \{0\}$, $b \in \mathbb N \setminus \{1\}$, there exist positive constants $c_0, C_0$ depending on $\pmb{\Pi}, h, b$ such that 
%\begin{equation} \label{X3} 
%\mathcal X_3 \leq C_0 2^{-c_0 \mathtt K_m} \quad \text{ with $\mathcal X_3$ as in \eqref{Xj-def}.}
%\end{equation}  
%The sequence on the right hand side of \eqref{X3} is summable in $m$. 
%\end{lemma} 
\subsubsection{Estimation of $\mathcal X_3$}  \label{X3-estimation-section} 
Finally, we turn to $\mathcal X_3$. Let us recall the definitions of $\mathbb V_{23}$ and $\underline{\mathtt J}$ from \eqref{def-V23} and \eqref{def-cardinality-J-bar} respectively. It follows that for $(u,v, N) \in \mathbb V_{23}$ and $\zeta = h b^u(b^v-1)$, 
\begin{align} 
\underline{\mathtt J}(\zeta; \mathtt s, \mathtt K, \mathtt a) &= \# \bigl\{ \mathtt K \leq j \leq \mathtt a -1: 1 \leq \mathtt d_{j-1}({|\zeta|}) + \mathtt d_j({|\zeta|}) \mathtt s \leq \mathtt s^2-2  \bigr\} \nonumber \\
&\geq \# \bigl\{ \mathtt K + 1 \leq j \leq   R-1: 1 \leq \mathtt d_{j-1}({|\zeta|}) + \mathtt d_j({|\zeta|}) \mathtt s \leq \mathtt s^2-2  \bigr\} \nonumber \\ 
& = \mathfrak n_{R - \mathtt K}(\overline{\mathbf d}, \mathtt s) >  \kappa (R - \mathtt K).  \label{J-lower-bound} 
\end{align} 
The inequality at the second step of the preceding display uses the inclusion $\{\mathtt K, \mathtt K+1, \ldots, \mathtt a-1\} \subseteq \{\mathtt K+1, \mathtt K+2, \ldots, \mathtt R-1 \}$. This in turn follows from the assumptions \eqref{Km-choice}, \eqref{normality-hypotheses} on $\mathtt a_m$ and the relation \eqref{R-and-K} between $R$ and $\mathtt K$. Specifically, for $\mathtt K \geq 3$, \eqref{normality-hypotheses} and \eqref{R-and-K} give 
%draws {\red from  with requiring $\mathtt K_m \geq 3$,} , showing that 
\[ \mathtt a -1 = \mathtt s^{\mathtt K^2} -1 > 2 \mathtt K^2 -1 > R -1 > \frac{\mathtt K^2}{2} -1> \mathtt K. \] 
The last inequality in \eqref{J-lower-bound} follows from the definition \eqref{def-V23} of $\mathbb V_{23}$. 
%It follows from \eqref{def-wm}, \eqref{wm-alternate} and \eqref{B*C-special-estimates} that 
%\begin{align}  \bigl| \mathtt v_m(\zeta) \bigr| &\leq |\mathfrak w_m(\zeta)| \leq \varepsilon_m \bigl| \mathfrak B_m^{\ast}(\zeta) - \mathfrak C_m(\zeta) \bigr| \\ &\leq \bigl| \mathfrak B_m^{\ast}(\zeta) \bigr| + \bigl| \mathfrak C_m(\zeta) \bigr|  \leq 2 \left(1 - \frac{c_0}{\mathtt s_m^5}\right)^{\kappa R}. \label{vm-zeta} \end{align} 
%The last inequality follows from the definition \eqref{def-cardinality-J}, which gives $\mathtt J(\zeta) > \kappa R$ 
Substituting \eqref{J-lower-bound} into \eqref{v-alt-estimate} with $\mathtt a' = \mathtt K$, we obtain for all large $m$ that 
\begin{align} 
\bigl| \mathtt v_m(\zeta) \bigr| &\leq 2 \Bigl[ \frac{\mathtt s_m^{\mathtt a_m - \mathtt K_m}}{\mathtt N_m} + \Bigl(1 - \frac{c_0}{\mathtt s_m^5}\Bigr)^{\kappa_m(R - \mathtt K_m)} \Bigr] \nonumber \\
&\leq 4 \Bigl[ \frac{\mathtt s_m^{\mathtt a_m-\mathtt K_m} \mathtt s_{m-1}^{\mathtt b_{m-1}}}{\mathtt s_m^{\mathtt a_m}} + \Bigl(1 - \frac{c_0}{\mathtt s_m^5}\Bigr)^{\kappa_m R/2} \Bigr]  \nonumber \\ 
&\leq 4 \Bigl[\mathtt s_{m-1}^{\mathtt b_{m-1}} \mathtt s_m^{-\mathtt K_m} +  \Bigl(1 - \frac{c_0}{\mathtt s_m^5}\Bigr)^{\kappa_m R/2}  \Bigr]. \label{vm-bound-X3}
\end{align} 
The second inequality uses the relations $\mathtt N_m \geq \frac{1}{2}\mathtt s_m^{\mathtt a_m} \mathtt s_{m-1}^{-\mathtt b_{m-1}}$ and $R - \mathtt K_m \geq R/2$, which follow from \eqref{def-J} and \eqref{R-and-K} respectively. The latter inequality, which is equivalent to $R > 2\mathtt K_m$ follows from $R > \mathtt K_m^2/2$ in \eqref{R-and-K}, provided $\mathtt K_m \geq 4$. Inserting the bound \eqref{vm-bound-X3} for $\bigl|\mathtt v_m(\zeta) \bigr|$ into the expression \eqref{Xj-def} for $\mathcal X_3$ leads to 
\begin{align} 
\mathcal X_3 &= \sum \Bigl\{\frac{1}{N^3} \bigl|\mathtt v_m \bigl( hb^u(b^v-1)\bigr) \bigr| \; \Bigl| \; (u, v, N) \in \mathbb V_{23} \Bigr\} \nonumber \\ &\leq C_0 \sum \Biggl\{ \frac{1}{N^3} \Biggl[ \mathtt s_{m-1}^{\mathtt b_{m-1}} \mathtt s_m^{-\mathtt K_m} +  \Bigl(1 - \frac{c_0}{\mathtt s_m^5}\Bigr)^{\kappa_m R/2} \Biggr] \; \Bigl| \; (u, v, N) \in \mathbb V_{23} \Biggr\} \nonumber \\ 
&\leq C_0 \sum \Biggl\{ \frac{1}{N} \Biggl[ \frac{\mathtt s_{m-1}^{\mathtt b_{m-1}}}{\mathtt s_m^{\mathtt K_m}}  + \Bigl(1 - \frac{c_0}{\mathtt s_m^5}\Bigr)^{\kappa_m R/2} \Biggr] \; \Bigl| \; \frac{\mathtt a_m}{4} \log_b(\mathtt s_m) < N < m \mathtt b_m \log_b(\mathtt s_m)   \Biggr\}. \label{X3-step-1}
\end{align}
The range of $N$ in the sum above is the same as it was for $\mathcal X_2$, since for every $j = 1,2,3$, the collection $\mathbb V_{2j}$ inherits this property from $\mathbb V_2$ given in \eqref{def-collection-V2}. As in the proof for $\mathcal X_2$, we decompose the sum in $N$ into a double sum, with the inner sum over $N \in [\mathtt s_{m}^{R-1}, \mathtt s_m^R)$ and the outer sum over $R$ in the range \eqref{R-and-K}. Inserting these into \eqref{X3-step-1} yields a geometric sum in $R$:
\begin{align} 
\mathcal X_3 &\leq C_0 \sum \Biggl\{ \frac{1}{N} \Biggl[ \frac{\mathtt s_{m-1}^{\mathtt b_{m-1}}}{\mathtt s_m^{\mathtt K_m}}   + \Bigl(1 - \frac{c_0}{\mathtt s_m^5}\Bigr)^{\kappa_m R/2} \Biggr] \; \Biggl| \; \mathtt s_m^{R-1} \leq N < \mathtt s_m^R, \; \frac{\mathtt K_m^2}{2} <  R < 2 \mathtt K_m^2   \Biggr\} \nonumber \\ 
&\leq C_0 {\log_2 }(\mathtt s_m) \sum \Bigl\{ \frac{\mathtt s_{m-1}^{\mathtt b_{m-1}}}{\mathtt s_m^{\mathtt K_m}}   + \Bigl(1 - \frac{c_0}{\mathtt s_m^5}\Bigr)^{\kappa_m R/2} \; \Bigl| \; \frac{\mathtt K_m^2}{2} < R < 2\mathtt K_m^2   \Bigr\} \nonumber \\ 
&\leq C_0 {\log_2 }(\mathtt s_m) \Biggl[\mathtt K_m^2  \mathtt s_{m-1}^{\mathtt b_{m-1}}\mathtt s_m^{-\mathtt K_m} + \kappa_m^{-1} \mathtt s_m^5 \Bigl(1 - \frac{c_0}{\mathtt s_m^5}\Bigr)^{\kappa_m \mathtt K_m^2/4} \Biggr] \leq C_0 2^{-{c_0} \mathtt K_m}. \label{X3-final}
\end{align} 
The steps in the display above require some justification. The penultimate inequality involves the evaluation of a geometric sum  of the form 
\begin{equation} \label{theta-sum} \sum_{R} \Bigl\{ \vartheta^{R}: \frac{\mathtt K^2}{2} < R < \mathtt K^2\Bigr\} \leq \frac{\vartheta^{\mathtt K^2/2}}{1 - \vartheta}, \quad \text{ with } \quad \vartheta =  \Bigl(1 - \frac{c_0}{\mathtt s_m^5}\Bigr)^{\kappa_m /2}. \end{equation}  Bernoulli's inequality says that $(1-x)^r\le 1-rx$ for all real numbers $0\le r\le 1$ and $x\leq 1$. Since $\vartheta$ is of the form $(1-x)^r$ with $r = \kappa_m/2 \in (0,1)$ and $x = c_0/\mathtt s_m^5 \leq 1$, we use this inequality to estimate $(1 - \vartheta)$ from below,  
\begin{equation}  \label{1-theta-estimate}
1 - \vartheta = 1 - \Big(1 - \frac{c_0}{\mathtt s_m^5}\Big)^{\kappa_m/2}
\geq \frac{\kappa_m c_0}{2 \mathtt s_m^5}.
%\left[ 1-\Big[ 1 + \Big(\frac{-\kappa_m c_0}{2\mathtt s_m^5}\Big)\Big] \right]^{-1} = \frac{2\mathtt s_m^5}{\kappa_m c_0}.
\end{equation}
Combining \eqref{theta-sum} and \eqref{1-theta-estimate} leads to the first expression in \eqref{X3-final}. The last inequality in \eqref{X3-final} is ensured by the choice of $\mathtt a_m, \mathtt b_m$ {in \eqref{normality-hypotheses} } and the growth properties \eqref{Km-choice} of $\mathtt K_m$. Specifically, the assumption $\log \mathtt K_m \geq (m \mathtt K_{m-1} \mathtt s_{m-1})^{C_0}$ implies that
\[ \mathtt b_{m-1} \log_2 (\mathtt s_{m-1}) = 2(m-1) \mathtt s_{m-1}^{\mathtt K_{m-1}^2} \log_2 (\mathtt s_{m-1})  < c_0 \mathtt K_m\quad\text{for some}\ 0<{c_0}<1, \]
as a result of which we obtain the inequality 
\begin{equation} \label{X3-verification-1} 
\log_2(\mathtt s_m) \mathtt K_m^2 \mathtt s_{m-1}^{\mathtt b_{m-1}} \mathtt s_m^{-\mathtt K_m} \leq \mathtt K_m^3  \mathtt s_{m}^{\mathtt b_{m-1} \log_2 (\mathtt s_{m-1}) } \mathtt s_m^{-\mathtt K_m} \leq 2^{-c_0 \mathtt K_m},
\end{equation} 
which bounds the first summand in \eqref{X3-final}. Similarly the relation $\kappa_m \mathtt K_m \geq 2\mathtt s_m^5$, which is a consequence of $\kappa_m \mathtt K_m \mathtt s_m^{-C_0} \rightarrow \infty$ from \eqref{Km-choice} implies 
\begin{equation} 
\kappa_m^{-1} \mathtt s_m^5 \Bigl(1 - \frac{c_0}{\mathtt s_m^5}\Bigr)^{\kappa_m \mathtt K_m^2/2} \leq \mathtt K_m \Bigl(1 - \frac{c_0}{\mathtt s_m^5}\Bigr)^{\mathtt s_m^5 \mathtt K_m} \leq {C_0}2^{-c_0 \mathtt K_m}. 
\label{X3-verification-2}
\end{equation}
This justifies the bound on the second summand in \eqref{X3-final}. The establishment of the bound \eqref{X3-final} completes the proof of \eqref{X1+X2+X3}. 
\qed

\section{Estimating $\mathcal V_{m2}$: Proof of Proposition \ref{V2-sum-prop} \eqref{V2-sum-prop-partb}} \label{estimating-vm-section-Part2}
This section assumes the hypotheses \eqref{normality-hypotheses} and \eqref{normality-digit-01} on $\pmb{\Pi}$, as required by Proposition \ref{normal-prop}. 
%{The reason to assume stronger hypotheses \eqref{normality-digit-01} is that \eqref{normality-digit-01} is necessary for applying Corollary \ref{corollary-composite} in this section. However, it is not necessary for applying Corollary \ref{v-alt-cor} in Section \ref{estimating-vm-section-Part1}.}
Under these assumptions, we aim to establish the relation \eqref{V2-sum-estimate} for a base $b$ in the collection $\mathscr{B}$ defined in Proposition \ref{normal-prop}. The case where $b \in \mathscr{B}_1 \subsetneq \mathscr{B}$ corresponds to $\beta_1= \beta_{1m} = 0$ in the factorization \eqref{prime-factorization}. This case has already been addressed in Section \ref{estimating-vm-section-Part1} as part of the proof of Proposition \ref{normality-special-prop}, under the restriction $0\in \mathscr{D}_m$ which is weaker than \eqref{normality-digit-01}. This section is devoted to the proof of \eqref{V2-sum-estimate} in the complementary case $b \in \mathscr{B} \setminus \mathscr{B}_1$. This corresponds to the assumption that $\beta_{1} = \beta_{1m}> 0$ in \eqref{prime-factorization}, which in turn means by \eqref{p0} and \eqref{p1} that
\begin{equation} \label{beta1-nonzero} 
\beta_{j} = \beta_{jm}> 0 \text{ for every $1 \leq j \leq n = n(m)$ in the factorization \eqref{prime-factorization}.} 
\end{equation} 
In other words, each prime divisor of $\mathtt s = \mathtt s_m$ divides $b$. Obviously $\mathtt s \nmid (b^v-1)$ for any $v\in\mathbb{N}$. 
\subsection{Partitioning of $\mathcal V_{m2}$ for $b \in \mathscr{B} \setminus \mathscr{B}_1$} 
%\noindent The estimation for $\mathcal V_{m2}$ is conducted separately in two cases, depending on the vanishing or otherwise of the exponent $\beta_1 = \beta_{1m}$ in \eqref{factorization}. For fixed $i=1,2$ and any index $m$ in case $i$, the quantity $\mathcal V_{m2}$ is shown to be bounded by a positive constant $\mathtt c_m(h,b,i)$ that depends on $b, h, \mathtt s_m$ and is summable in $m$. The two cases collectively suffice to establish \eqref{V2-sum-estimate}, by choosing $\mathtt c_m(h, b) = \mathtt c_m(h, b, 1) + \mathtt c_m(h, b, 2)$ , which of course is also summable in $m$.  The complementary case will be treated in the next section. 
\noindent An important quantity in the analysis of this case is
\begin{equation} \label{what-is-taum}
\tau = \tau_m := \frac{\beta_{1m}}{\sigma_{1m}} > 0; \quad  \text{ so that } \quad \tau = \tau_m = \min \left\{\frac{ \beta_{jm}}{\sigma_{jm}} : 1 \leq j \leq n \right\}.
\end{equation}  
This leads to the following identity for all non-negative integers $u$:
\begin{equation} \label{ku}
\bigl(b^{u} \bigr)_{\mathtt s} = \mathtt s^{\mathtt k(u)}, \quad \mathtt k(u) := \lfloor u \tau \rfloor. 
\end{equation}
For $h \in \mathbb Z \setminus \{0\}$, let us factorize $h$ as in \eqref{factor-power-power'}, 
\begin{equation} \label{h-factorization} 
h = (h)_{\mathtt s} (h)'_{\mathtt s} = \mathtt s^{\mathtt h} (h)'_{\mathtt s}, \quad  \text{ where } (h)_{\mathtt s} = \mathtt s^{\mathtt h}, \; \mathtt h \in \mathbb N \cup \{0\}, \; \mathtt s \nmid (h)'_{\mathtt s}. 
\end{equation}  
Combining \eqref{ku} and \eqref{h-factorization}, one can express a frequency $hb^u(b^v-1) \in \mathbb Z \setminus \{0\}$ in the form  
\begin{equation} \label{new-xi} 
\left\{
\begin{aligned} 
&{h}b^u(b^v-1) = \xi (b^u)_{\mathtt s}' \mathtt s^{\mathtt k(u) + \mathtt h} = \zeta \mathtt s^{\mathtt k(u) + \mathtt h}, \; \text{ with } \\ &\xi  = \xi(h, v) := (h)_{\mathtt s}'(b^v-1), \; \zeta := \xi (b^u)_{\mathtt s}'.  
\end{aligned} \right. 
\end{equation} 
Let $\mathtt K = \mathtt K_m$ and $R = R(N, m)$ be as in Section \ref{estimating-vm-section-Part1}, given by \eqref{Km-choice} and \eqref{N-and-R} respectively. As before, we decompose the index set $\mathbb V_2$ given by \eqref{def-collection-V2} into several sub-domains,   
%\[ \xi \mathtt s^{-k} = \bigl( b^{u})_{\mathtt s}' (b^v-1) \mathtt s^{-k + \mathtt k(u)} = \zeta \mathtt s^{-k + \mathtt k(u)}, \quad \text{ hence }  \quad \mathfrak g(\xi, k;  \cdot) = \mathfrak g(\zeta, k - \mathtt k(\mathtt u); \cdot).  \] 
%From \eqref{corollary-composite} in Corollary \ref{corollary-composite-estimate},  
%\begin{equation} 
%\bigl| \mathtt v_m(\xi) \bigr| \leq \bigl| \mathfrak A_m(\xi) \times \mathfrak w_m(\xi) \bigr|  
%\end{equation} 
\begin{align} 
\mathbb V_2 &= \bigsqcup_{j=1}^{4} \overline{\mathbb V}_{2j} \; \; \text{ where } \; \; \overline{\mathbb V}_{21} := \bigl\{(u, v, N) \in \mathbb V_2 : 0 \leq \mathtt k(u) + \mathtt h \leq \mathtt K \bigr\},  \label{V2-decomp-2} \\ 
\overline{\mathbb V}_{22} &:= \Bigl\{(u, v, N) \in \mathbb V_2 : \mathtt k(u) + \mathtt h \geq \mathtt b - R \Bigr\}, \\  
\overline{\mathbb V}_{23} &:= \Bigl\{(u, v, N) \in \mathbb V_2 : \mathtt K < \mathtt k(u) + \mathtt h < \mathtt b - R, \;  \mathfrak n_{R}\bigl({\mathbf d}_R(u; {|\xi|}, b, \mathtt s), \mathtt s \bigr) \leq \kappa R  \Bigr\}, \label{V23-bar} \\ 
\overline{\mathbb V}_{24} &:= \Bigl\{(u, v, N) \in \mathbb V_2 : \mathtt K < \mathtt k(u) + \mathtt h < \mathtt b - R, \;  \mathfrak n_{R}\bigl({\mathbf d}_R(u; {|\xi|}, b, \mathtt s), \mathtt s \bigr) > \kappa R  \Bigr\}. \label{V2-decomp-2-4}
\end{align} 
Here $\kappa$ is the $\mathtt s$-dependent positive constant specified by \eqref{choice-of-kappa-2}. The vector $\mathbf d
_{R}$, defined as in \eqref{def-dl} and \eqref{dell}, denotes the sequence of first $R$ digits of $|\xi| (b^u)_{\mathtt s}'$, with $\xi$ given by \eqref{new-xi}. The quantity $\mathfrak n_{R}$ has been defined in \eqref{def-nl} and has also appeared in \eqref{def-Ell'}. The relation 
\[ \mathtt K + R + 1 < \mathtt K + 2 \mathtt K^2 + 1< \mathtt a = \mathtt s^{\mathtt K^2} < \mathtt b, \] obtained from \eqref{R-and-K} and \eqref{normality-hypotheses}, justifies the validity of the definitions \eqref{V2-decomp-2}--\eqref{V2-decomp-2-4}. The decomposition \eqref{V2-decomp-2} of $\mathbb V_2$ splits the sum $\mathcal V_{m2}$ into four parts:   
\begin{equation} \label{Yj-def}
\mathcal V_{m2} = \sum_{j=1}^{4} \mathcal Y_j, \; \text{ where } \; \mathcal Y_j := \sum \Bigl\{\frac{1}{N^3}\bigl| \mathtt v_m \bigl(hb^u(b^v-1) \bigr) \bigr| : (u, v, N) \in \overline{\mathbb V}_{2j}\Bigr\}.  
\end{equation}
The analogues of Lemma \ref{X1-X2-X3-lemma} and Corollary \ref{V2-sum-corollary-parta} in this setting are the following: 
\begin{lemma} \label{Yj-lemma}
Let $\pmb{\Pi}$ be a choice of parameters {as in Proposition \ref{V2-sum-prop} \eqref{V2-sum-prop-partb}}, and let $\mathscr{B}$ be the choice of bases given in Proposition \ref{normal-prop}. Then for every base $b \in \mathscr{B}$ and every $h \in \mathbb Z \setminus \{0\}$,  there exist positive constants $c, C$ depending only on $h, b, \pmb{\Pi}$ such that the following inequalities hold.
\begin{equation} \label{Y1+Y2+Y3+Y4}
\mathcal Y_1 + \mathcal Y_2 + \mathcal Y_3  \leq C(h,b, \pmb{\Pi}) 2^{-c \mathtt K_m^2}, \qquad \mathcal Y_4 \leq C(h,b, \pmb{\Pi}) 2^{-c\mathtt K_m}.
\end{equation} 
 %hold for all indices $m$ with $\mathtt s_m \nsim b$ and $\beta_{1m} = 0$. 
 \end{lemma}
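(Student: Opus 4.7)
The plan is to treat the four pieces $\mathcal Y_1,\ldots,\mathcal Y_4$ in turn, following the template of Sections \ref{X1-estimation-section}--\ref{X3-estimation-section} with one critical new twist. The factorisation \eqref{new-xi} writes the frequency as $hb^u(b^v-1)=\zeta\,\mathtt s_m^{\mathtt k_0}$, with $\mathtt k_0 = \mathtt k(u)+\mathtt h$ and $\mathtt s_m \nmid \zeta$, so Corollary \ref{corollary-composite} --- designed precisely for arguments of this form --- becomes the natural replacement of the pointwise bounds used in Section \ref{estimating-vm-section-Part1}. The decomposition \eqref{V2-decomp-2} of $\mathbb V_2$ isolates the ranges of $\mathtt k_0$ and the digit profiles of $|\zeta|$ in which each tool is applicable.

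The two endpoint sets $\overline{\mathbb V}_{21}$ and $\overline{\mathbb V}_{22}$ are handled by cardinality arguments. In $\overline{\mathbb V}_{21}$, the constraint $\mathtt k_0 \leq \mathtt K_m$ and the identity $\mathtt k(u) = \lfloor u\tau_m \rfloor$, with $\tau_m = \beta_{1m}/\sigma_{1m} \geq 1/\log_2\mathtt s_m$, restricts $u$ to at most $C(b)\mathtt K_m\log_2\mathtt s_m$ values; the trivial bound $|\mathtt v_m|\leq C_0$ from Lemma \ref{vm-est-lemma} then produces $\mathcal Y_1 \leq C(h,b,\pmb{\Pi})\mathtt K_m\log\mathtt s_m/\mathtt a_m \leq 2^{-c\mathtt K_m^2}$ via \eqref{normality-hypotheses}. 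In $\overline{\mathbb V}_{22}$ the sub-region $\mathtt k_0 \geq \mathtt b_m$ in fact contributes nothing: $\mathtt s_m^{\mathtt b_m}\mid hb^u(b^v-1)$ makes both $\widehat{\mathtt 1}(\xi\mathtt s_m^{-\mathtt b_m})$ and $\widehat{\mathtt 1}(\xi\mathtt N_m\mathtt s_m^{-\mathtt a_m})$ vanish at nonzero integer arguments, so $\widehat\Phi_1(\xi)=\widehat\Psi_1(\xi)=0$ by \eqref{Phi1-hat}--\eqref{intermediate-Fourier-coefficient} and hence $\mathtt v_m(\xi)=0$; the leftover sub-range $\mathtt b_m - R \leq \mathtt k_0 < \mathtt b_m$ permits only $R\leq 2\mathtt K_m^2$ values of $\mathtt k_0$ and is estimated as in $\mathcal Y_1$.

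The interior sets $\overline{\mathbb V}_{23}$ and $\overline{\mathbb V}_{24}$ mirror $\mathcal X_2$ and $\mathcal X_3$ respectively. The set $\overline{\mathbb V}_{23}$ collects precisely the $u$ placing the first $R$ digits of $|\zeta|$ in the class $\mathbb D(R,\mathtt s_m)$ of Lemma \ref{Schmidt-lemma-extremal-digits}; Lemma \ref{E-cardinality-lemma}, applicable because $b\nsim\mathtt s_m$, bounds its cardinality, and the argument of Section \ref{X2-estimation-section} transfers verbatim to deliver $\mathcal Y_3 \leq C 2^{-c\mathtt K_m^2}$. For $\mathcal Y_4$ the argument splits according to whether $\mathtt k_0 < \mathtt a_m$ or $\mathtt a_m \leq \mathtt k_0 < \mathtt b_m - R$. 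In the first sub-case Corollary \ref{corollary-composite} applies with $\mathtt a' = \mathtt k_0 + 1$: since $\mathtt b_m - \mathtt k_0 > R$, the index range of \eqref{Jab} contains $\{1,\ldots,R-1\}$, and the defining condition $\mathfrak n_R > \kappa_m R$ of $\overline{\mathbb V}_{24}$ gives $\mathtt J \geq \kappa_m R$. In the second sub-case $\mathfrak A_m(\xi) = 1$ identically and no decay comes from that factor; we instead apply part (b) of Lemma \ref{restricted-lemma} directly to $\mathfrak w$, noting that the digits of $|\xi|$ on $\{\mathtt a_m,\ldots,\mathtt b_m-1\}$ vanish for $j<\mathtt k_0$ and coincide with those of $|\zeta|$ afterwards, so that $\overline{\mathtt J} \geq \kappa_m R$ once more. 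The uniform bound $|\mathtt v_m|\leq C(1-c_0/\mathtt s_m^5)^{\kappa_m R}$ in both sub-cases is then summed exactly as in \eqref{X3-step-1}--\eqref{X3-final}. The hardest step is this second sub-case of $\mathcal Y_4$: the geometric decay $(1-c_0/\mathtt s_m^5)^{\kappa_m R/2}$, with $\kappa_m$ depending on $\mathtt s_m$ through \eqref{def-kappa}, must overcome a combinatorial accumulation of order $\mathtt s_m^{\mathtt K_m^2/2}\log\mathtt s_m$ arising from the triple sum, which is exactly why the fast-growth condition $\kappa_m\mathtt K_m/\mathtt s_m^{C_0}\to\infty$ in \eqref{Km-choice} is indispensable.
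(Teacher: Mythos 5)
Your decomposition into $\overline{\mathbb V}_{21},\dots,\overline{\mathbb V}_{24}$ and the treatments of $\mathcal Y_1$ and $\mathcal Y_3$ match the paper. For $\mathcal Y_2$ you actually improve on the paper: noticing that $\mathtt s_m^{\mathtt b_m}\mid hb^u(b^v-1)$ forces $\xi\mathtt s_m^{-\mathtt b_m}$ and $\xi\mathtt N_m\mathtt s_m^{-\mathtt a_m}$ to be nonzero integers, so $\widehat{\mathtt 1}$ vanishes at both, $\widehat\varphi_m(\xi)=\widehat\psi_m(\xi)=0$, and the $\mathtt k_0\ge\mathtt b_m$ contribution is exactly zero; the paper instead estimates that sub-sum via the $\mathtt s_m^{\mathtt b_m}/|\xi|$ decay in \eqref{vm-pointwise-estimate}, which works but is heavier. (A small care point in quoting $\mathcal X_2$ for $\mathcal Y_3$: the relevant application of Lemma~\ref{E-cardinality-lemma} here takes $\ell'=0$ rather than $\ell'=\mathtt K_m$, and the reason $(|\xi(h,v)|)_{\mathtt p_1}\le|h|$ is $\mathtt p_1\mid b$, not the condition $\mathtt p_1^{\mathtt K}\nmid(b^v-1)$; the conclusion is unchanged.)

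For $\mathcal Y_4$ you diverge from the paper by splitting $\mathtt k_0<\mathtt a_m$ from $\mathtt k_0\ge\mathtt a_m$. The paper avoids the split entirely: it applies Corollary~\ref{corollary-composite} with the fixed choice $\mathtt a'=\mathtt K_m$ over the whole range $\mathtt K_m<\mathtt k_0<\mathtt b_m-R$, notes that the index window $\{\mathtt K_m-\mathtt k_0,\dots,\mathtt b_m-\mathtt k_0-1\}$ of $\mathtt J$ always contains $\{1,\dots,R-1\}$, and concludes $\mathtt J\geq\mathfrak n_R>\kappa_m R$ in one stroke. Your two-case argument is workable, and the sub-case analysis with $\mathfrak A_m(\xi)=1$ and $\overline{\mathtt J}\ge\kappa_m R$ is sound, but it has a boundary slip: with $\mathtt a'=\mathtt k_0+1$ and $\mathtt k_0=\mathtt a_m-1$ you get $\mathtt a'=\mathtt a_m$, violating the hypothesis $\mathtt a'<\mathtt a$ of Corollary~\ref{corollary-composite}. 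You can repair it by taking $\mathtt a'=\min(\mathtt k_0+1,\mathtt a_m-1)$ or simply using $\mathtt a'=\mathtt K_m$ as the paper does, which also removes the need for the case split (the error term $\mathtt s_m^{\mathtt a_m-\mathtt K_m}/\mathtt N_m\le 2\mathtt s_{m-1}^{\mathtt b_{m-1}}\mathtt s_m^{-\mathtt K_m}$ is already summable by \eqref{X3-verification-1}).
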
 
\begin{corollary} \label{V2-sum-corollary-partb}
The conclusion \eqref{V2-sum-estimate} of Proposition \ref{V2-sum-prop} holds under the hypotheses of part \eqref{V2-sum-prop-partb} of that proposition. 
\end{corollary}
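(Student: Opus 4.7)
The corollary follows immediately from Lemma \ref{Yj-lemma}: combining \eqref{Y1+Y2+Y3+Y4} with the partition \eqref{Yj-def} gives $\mathcal V_{m2}(h,b)\le C(h,b,\pmb{\Pi})2^{-c\mathtt K_m}$, and \eqref{Km-choice} makes $\sum_m 2^{-c\mathtt K_m}$ summable (in fact super-exponentially small in $m$). So the real task is to prove Lemma \ref{Yj-lemma}, whose outline parallels Lemma \ref{X1-X2-X3-lemma} from Section \ref{estimating-vm-section-Part1} but must accommodate the new complication that when $b\in\mathscr{B}\setminus\mathscr{B}_1$ every prime factor of $\mathtt s_m$ also divides $b$; consequently the shift $\mathtt k_0:=\mathtt k(u)+\mathtt h$ can range over all of $\{0,\ldots,\mathtt b_m\}$, which is precisely why the decomposition \eqref{V2-decomp-2}--\eqref{V2-decomp-2-4} is four-fold rather than three-fold.

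\textbf{The boundary regimes $\mathcal Y_1$ and $\mathcal Y_2$.} Both are handled by cardinality arguments on the $u$-slice. For $\mathcal Y_1$, the condition $\mathtt k_0\le\mathtt K_m$ confines $u$ to at most $(\mathtt K_m+1)/\tau_m\le C(b)\mathtt K_m$ integers, since $\tau_m\ge 1/\log_2 b$. Combined with the trivial bound $|\mathtt v_m|\le C_0$ from Lemma \ref{vm-est-lemma}, the trivial $v$-count $N$, and $\sum_{N\ge \mathtt a_m\log_b(\mathtt s_m)/4}N^{-2}\le C/\mathtt a_m$, this gives $\mathcal Y_1\le C(h,b)\mathtt K_m/\mathtt a_m = C(h,b)\mathtt K_m\mathtt s_m^{-\mathtt K_m^2}\le 2^{-c\mathtt K_m^2}$. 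For $\mathcal Y_2$, I will show that $\overline{\mathbb V}_{22}$ is actually empty for all sufficiently large $m$: the shift condition $\mathtt k(u)+\mathtt h\ge\mathtt b_m-R$ forces $u\ge(\mathtt b_m-R-\mathtt h)/\tau_m$, while the $\mathbb V_2$-range constraint $b^{u+v}\le m^2\mathtt s_m^{\mathtt b_m}$ (with $v\ge 0$) forces $u\le\mathtt b_m\log_b(\mathtt s_m)+2\log_b m$. Setting $\delta_m:=1/\tau_m-\log_b\mathtt s_m$, compatibility of the two requires $\mathtt b_m\delta_m\le(R+\mathtt h)/\tau_m+2\log_b m$. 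A direct computation from the factorizations \eqref{prime-factorization} under $b\nsim\mathtt s_m$ yields $\delta_m>0$, with an elementary Diophantine lower bound $\delta_m\ge c(b)/\mathtt s_m^{O(\log b)}$ coming from the integrality inequality $|b^{\tilde\sigma_1}-\mathtt t^{\beta_1}|\ge 1$ (writing $\mathtt s_m=\mathtt t^{n_m}$, $\mathtt t\in\mathscr{C}$). Since $\mathtt b_m=2m\mathtt s_m^{\mathtt K_m^2}$ dwarfs $\mathtt s_m^{O(\log b)}\cdot(\mathtt K_m^2+\log m)$ by \eqref{Km-choice}, the two constraints are incompatible for all large $m$.

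\textbf{The interior regimes $\mathcal Y_3$ and $\mathcal Y_4$.} For $\mathcal Y_3$ I will apply Lemma \ref{E-cardinality-lemma} to the $u$-slice with $\ell=R$ and $\ell'=0$. The key simplification relative to the analysis of $\mathcal X_2$ in Section \ref{estimating-vm-section-Part1} is that because $\mathtt p_{1m}\mid b$ in the present regime, one has $\gcd(b^v-1,\mathtt p_{1m})=1$, and hence $(|\xi(v)|)_{\mathtt p_{1m}}\le|h|$ uniformly in $v$ — no extra $\mathtt p_{1m}^{\mathtt K_m}$ factor is incurred. This yields $\#[\pi_N(\overline{\mathbb V}_{23})]\le C(h)N\kappa_m^{-1}2^{-R/4}\log_2(\mathtt s_m)\mathtt s_m^R b^{2\mathtt s_m}$; plugging the trivial bound $|\mathtt v_m|\le C_0$, decomposing the $N$-sum into the $R$-blocks $N\in[\mathtt s_m^{R-1},\mathtt s_m^R)$ with $R$ in the range \eqref{R-and-K}, and absorbing polynomial factors of $\mathtt s_m$ and $\kappa_m^{-1}$ via \eqref{Km-choice}, I obtain $\mathcal Y_3\le C(h,b)2^{-c\mathtt K_m^2}$. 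For $\mathcal Y_4$, the intermediate-shift hypothesis $\mathtt K_m<\mathtt k_0<\mathtt b_m-R$ ensures that the counting range for $\mathtt J$ in Corollary \ref{corollary-composite} (with $\mathtt a'=\mathtt K_m$) contains $\{0,1,\ldots,R-1\}$, so the digit condition $\mathfrak n_R(\mathbf d_R,\mathtt s_m)>\kappa_m R$ forces $\mathtt J>\kappa_m R$; the sharper pointwise estimate $|\mathtt v_m|\le C_0[\mathtt s_m^{\mathtt a_m-\mathtt K_m}/\mathtt N_m+(1-c_0/\mathtt s_m^5)^{\kappa_m R}]$ then applies, and the remaining estimation mirrors that of $\mathcal X_3$ in \eqref{X3-step-1}--\eqref{X3-verification-2}, producing $\mathcal Y_4\le C(h,b,\pmb{\Pi})2^{-c\mathtt K_m}$.

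\textbf{Main obstacle.} The delicate step is $\mathcal Y_2$, as it is the unique place that requires a \emph{quantitative} version of multiplicative independence between $b$ and $\mathtt s_m$ (through the gap $\delta_m$), rather than the softer input flowing through Lemma \ref{E-cardinality-lemma} used in $\mathcal Y_3$ or the purely analytic input used in $\mathcal Y_4$. The potential worry that $\delta_m$ might degenerate rapidly as $\mathtt t\in\mathscr{C}$ varies is controlled by the elementary lower bound above, which is far weaker than what is needed thanks to the enormous gap between $\mathtt b_m$ and $\mathtt K_m^2$ built into \eqref{Km-choice}; everything else tracks Section \ref{estimating-vm-section-Part1} closely.
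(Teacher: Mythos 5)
Your reduction to Lemma \ref{Yj-lemma} is sound, and the treatments of $\mathcal Y_1$, $\mathcal Y_3$, and $\mathcal Y_4$ track the paper's Section \ref{estimating-vm-section-Part2} quite closely (same cardinality count for $\mathcal Y_1$, same invocation of Lemma \ref{E-cardinality-lemma} with $\ell'=0$ and the observation $\mathtt p_{1m}\nmid(b^v-1)$ for $\mathcal Y_3$, same use of Corollary \ref{corollary-composite} with $\mathtt a'=\mathtt K_m$ for $\mathcal Y_4$). One slip: the claim ``$\tau_m\ge 1/\log_2 b$'' is false in general, since $\tau_m^{-1}=\sigma_{1m}/\beta_{1m}$ and $\sigma_{1m}$ grows like $\log_2\mathtt s_m$; the correct bound is $\tau_m^{-1}\le\log_2\mathtt s_m$ from \eqref{beta-sigma-inequality}, so the $u$-slice in $\mathcal Y_1$ has $O(\mathtt K_m\log_2\mathtt s_m)$ elements, not $O_b(\mathtt K_m)$. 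This is harmless because the $\mathtt a_m^{-1}=\mathtt s_m^{-\mathtt K_m^2}$ factor overwhelms any polynomial in $\mathtt s_m$.

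Where you genuinely depart from the paper is $\mathcal Y_2$. The paper splits $\mathcal Y_2=\mathcal Y_2'+\mathcal Y_2''$ at the threshold $\mathtt k(u)+\mathtt h=\mathtt b_m$ and estimates each piece analytically via \eqref{vm-pointwise-estimate}, whereas you observe that $\overline{\mathbb V}_{22}$ is actually \emph{empty} for all sufficiently large $m$. This is correct and a strictly stronger conclusion, and in fact you can prove it with a cleaner and uniform Diophantine input than the $\delta_m\ge c(b)/\mathtt s_m^{O(\log b)}$ you propose: writing $\delta_m=\tau_m^{-1}-\log_b\mathtt s_m$, the ordering \eqref{p1} gives $\sigma_1\beta_j/\beta_1-\sigma_j\ge 0$ for all $j$ and $\sigma_1\beta_n-\sigma_n\beta_1\ge 1$ (integrality plus strictness), so
\[
\tau_m^{-1}\log b-\log\mathtt s_m=\sum_{j=1}^{n}\Bigl(\frac{\sigma_1\beta_j}{\beta_1}-\sigma_j\Bigr)\log\mathtt p_j\ge\frac{\log 2}{\beta_1},
\]
whence $\delta_m\ge(\log 2)/(\beta_1\log b)=:c_1(b)>0$ independently of $m$. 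The incompatibility of $u\ge(\mathtt b_m-R-\mathtt h)/\tau_m$ with $u+v\le 2\log_b m+\mathtt b_m\log_b\mathtt s_m$ then follows immediately since $\mathtt b_m\delta_m\ge 2c_1(b)m\mathtt s_m^{\mathtt K_m^2}$ dwarfs $(R+\mathtt h)/\tau_m+2\log_b m\le C(h)\mathtt K_m^2\log_2\mathtt s_m$. The trade-off: the paper's argument is more robust (it never needs emptiness and would survive a weaker parameter regime where $\overline{\mathbb V}_{22}$ might be populated), while yours is more elementary and makes transparent that the high-shift regime never contributes anything at all.
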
 
\subsection{Estimation of $\mathcal Y_1$}
We control $\mathcal Y_1$ using the bound $|\mathtt v_m| \leq C_0$ from \eqref{vm-pointwise-estimate}, the defining properties \eqref{def-collection-V2} of $\mathbb V_2$, and the small size of $\overline{\mathbb V}_{21}$. Summing consecutively in $u, v, N$, we obtain
\begin{align} 
\mathcal Y_1 &\leq \sum_{u, v, N} \Biggl\{\frac{1}{N^3}\bigl| \mathtt v_m \bigl(hb^u(b^v-1) \bigr) \bigr| \; \; \Biggl| \; \; \begin{aligned} &\mathtt k(\mathtt u) + \mathtt h = \lfloor u\tau_m \rfloor + \mathtt h \leq  \mathtt K_m, \; v\in \mathbb Z_N, \\  &\frac{\mathtt a_m}{4} \log_b (\mathtt s_m) < N < m\mathtt b_m \log_b (\mathtt s_m) \end{aligned} \Biggr\} \nonumber \\
&\leq C_0 \sum_{N} \Bigl\{\frac{\mathtt K_m}{\tau_m N^2} :  \frac{\mathtt a_m}{4} \log_b (\mathtt s_m) < N < m\mathtt b_m \log_b (\mathtt s_m) \Bigr\} \label{Y1-initial} \\ 
&\leq C_0 \frac{\mathtt K_m}{\tau_m \mathtt a_m \log_b (\mathtt s_m)}   = C_0 \mathtt K_m \mathtt s_m^{-\mathtt K_m^2} \frac{\sigma_{1m}}{\beta_{1m}} \frac{1}{\log_b (\mathtt s_m)} \; \; (\text{using \eqref{what-is-taum} and}\ \mathtt a_m=\mathtt s_m^{\mathtt K_m^2})\nonumber \\
&\leq C_0 \mathtt K_m \mathtt s_m^{-\mathtt K_m^2} \frac{\log_2 (\mathtt s_m)}{\log_b (\mathtt s_m)} \leq C(b) \mathtt K_m \mathtt s_m^{-\mathtt K_m^2} \leq C(b, \pmb{\Pi}) 2^{-c_0 \mathtt K_m^2} \text{ for some } c_0 > 0. \label{Y1-final}
\end{align}
The estimate \eqref{Y1-initial} in the summation above uses the fact that the number of summands $(u, v)$ is at most $2N \mathtt K_m \tau_m^{-1}$, which in turn follows from 
\[ u \tau_m - 1 + \mathtt h < \lfloor u \tau_m \rfloor + \mathtt h \leq \mathtt K_m, \quad \text{i.e.} \quad 0 \leq u \leq u + \frac{\mathtt h}{\tau_m} \leq \frac{\mathtt K_m+1}{\tau_m} \leq \frac{2\mathtt K_m}{\tau_m}. \] In the last line \eqref{Y1-final} of the display, we have used the inequalities 
\begin{equation} \label{beta-sigma-inequality} \beta_{1m} \geq 1 \quad \text{ and } \quad \sigma_{1m} \leq \sum_{j=1}^{n} \sigma_{jm} \log_2 (\mathtt p_j) \leq \log_2 (\mathtt s_m). \end{equation}
The first inequality in \eqref{beta-sigma-inequality} is the main assumption \eqref{beta1-nonzero}, the second follows from \eqref{prime-factorization}. 
\subsection{Estimation of $\mathcal Y_2$}
The sum $\mathcal Y_2$ is also controlled using \eqref{vm-pointwise-estimate}, although we use different parts of that inequality here.  
%partly using $|\mathtt v_m(\xi)| \leq C_0$ and the remaining using $|\mathtt v_m(\xi)| \leq C_0 \mathtt s_m^{\mathtt b_m}/|\xi|$. 
Let us write  
\begin{equation} \mathcal Y_2 = \mathcal Y_{2}' + \mathcal Y_{2}''; \label{Y2-decomp} \end{equation} 
the sum $\mathcal Y_2'$ ranges over the subset of $\overline{\mathbb V}_{22}$ where $\mathtt b_m - R \leq \mathtt k(u) + \mathtt h \le \mathtt b_m$. The sum $\mathcal Y_{2}''$ picks up the remaining portion. The trivial bound $|\mathtt v_m| \leq C_0$ in \eqref{vm-pointwise-estimate} and the range of $R$ in \eqref{R-and-K} yield  
\begin{align} 
\mathcal Y_{2}' & \leq \sum_{u, v, N} \Biggl\{\frac{1}{N^3}\bigl| \mathtt v_m \bigl(hb^u(b^v-1) \bigr) \bigr| \; \; \Biggl| \; \; \begin{aligned} & \mathtt b_m - R \leq \mathtt k(u) + \mathtt h \leq  \mathtt b_m, \; v \in \mathbb Z_N, \\  &\frac{\mathtt a_m}{4} \log_b (\mathtt s_m) < N  \end{aligned} \Biggr\} \nonumber \\ 
&\leq \sum_{u, v, N} \Biggl\{\frac{1}{N^3}\bigl| \mathtt v_m \bigl(hb^u(b^v-1) \bigr) \bigr| \; \; \Biggl| \; \; \begin{aligned} & \mathtt b_m - R \leq u\tau_m + \mathtt h \leq  \mathtt b_m + 1, \; v \in \mathbb Z_N, \\  &\frac{\mathtt a_m}{4} \log_b (\mathtt s_m) < N  \end{aligned} \Biggr\} \nonumber \\
&\leq C_0 \sum_N \Bigl\{\frac{R}{\tau_m N^2} \; \Bigl| \; \frac{\mathtt a_m}{4} \log_b (\mathtt s_m) < N  \Bigr\} \leq   \frac{C_0 \mathtt K_m^2}{\tau_m \mathtt a_m \log_b(\mathtt s_m)} \nonumber \\ &\leq C_0 \frac{\sigma_{1m}}{\beta_{1m}} \frac{\mathtt K_m^2}{\log_b (\mathtt s_m)} \mathtt s_m^{-\mathtt K_m^2} \leq C(b) \mathtt K_m^2 \mathtt s_m^{-\mathtt K_m^2} \leq C(b, \pmb{\Pi}) 2^{-c_0 \mathtt K_m^2}. \label{Y2-final-1} 
\end{align} 
The last step uses the fact $\mathtt a_m = \mathtt s_m^{\mathtt K_m^2}$ from \eqref{normality-hypotheses} and the inequality \eqref{beta-sigma-inequality} in the same way it was used to derive \eqref{Y1-final}. 
\vskip0.1in
\noindent For $\mathcal Y_2''$, we use the pointwise inequality $|\mathtt v_m(\xi)| \leq C_0 \mathtt s_m^{\mathtt b_m}/|\xi|$ from \eqref{vm-pointwise-estimate}. Combining this with the fact that $\sum_{v \geq 0} b^{-v} = b/(b-1)$, we obtain  for all large $m$, 
\begin{align} 
\mathcal Y_{2}'' &\leq \sum_{u, v, N} \Biggl\{\frac{1}{N^3}\bigl| \mathtt v_m \bigl(hb^u(b^v-1) \bigr) \bigr| \; \; \Biggl| \; \; \begin{aligned} &  \mathtt k(u) + \mathtt h >  \mathtt b_m, \; v \in \mathbb Z_N, \\  &N > \frac{\mathtt a_m}{4} \log_b (\mathtt s_m)\end{aligned} \Biggr\} \nonumber \\ 
&\leq C_0 \sum_{u, v, N}  \Biggl\{\frac{1}{N^3} \frac{\mathtt s_m^{\mathtt b_m}}{|h|b^{u+v}} \; \; \Biggl| \; \; \begin{aligned} &  u\tau_m >  \mathtt b_m - \mathtt h, \; v \in \mathbb Z_N, \\  &N > \frac{\mathtt a_m}{4} \log_b (\mathtt s_m)  \end{aligned} \Biggr\} \nonumber \\ 
&\leq C(h, b) \sum_{u, N} \Bigl\{ \frac{\mathtt s_m^{\mathtt b_m}}{N^3} b^{-u} \; \Bigl| \; u\tau_m >  \mathtt b_m - \mathtt h, \; N > \frac{\mathtt a_m}{4} \log_b (\mathtt s_m)  \Bigr\} \label{Y2-intermediate} \\ 
&\leq C(h, b) \mathtt s_m^{\mathtt b_m} b^{- \lfloor(\mathtt b_m - \mathtt h)/\tau_m \rfloor} \sum_N \Bigl\{N^{-3} | N > \frac{\mathtt a_m}{4}  \log_b (\mathtt s_m) \Bigr\} \nonumber \\  
& \leq C(h, b) \bigl[\mathtt s_m^{\mathtt b_m} b^{-\mathtt b_m /\tau_m} \bigr] \times b^{\mathtt h/\tau_m} \times \sum_N \Bigl\{N^{-3} | N > \frac{\mathtt a_m}{4}  \log_b (\mathtt s_m) \Bigr\} \nonumber  \\ 
&\leq C(h, b) \mathtt s_m^{\mathtt K_m} \bigl[ \mathtt a_m \log_b (\mathtt s_m) \bigr]^{-2}  \leq C(h, b) \mathtt s_m^{- c_0 \mathtt K_m^2} \text{ for some } c_0 > 0. \label{Y2-final-2}
\end{align} 
In \eqref{Y2-intermediate}, we observe from \eqref{h-factorization} that $2^{\mathtt h} \leq \mathtt s^{\mathtt h} \leq h$, i.e., $\mathtt h \leq \log_2(h)$, whereas $\mathtt b_m \rightarrow \infty$. As a result $\mathtt b_m - \mathtt h >0$ for all sufficiently large $m$ depending on $h$, justifying the summation in $u$ for such $m$.
In the last line \eqref{Y2-final-2} above, we have invoked the relation \eqref{p1} to bound the factors $\mathtt s^{\mathtt b} b^{-\mathtt b/\tau}$ and $b^{\mathtt h/\tau}$ from the preceding step:
\[
b^{\mathtt b/\tau} = \bigl(b^{\sigma_1/\beta_1} \bigr)^{\mathtt b} = \Bigl[ \prod_{j=1}^{n} \mathtt p_j^{\beta_j \sigma_1/\beta_1}\Bigr]^{\mathtt b}  \geq  \Bigl[ \prod_{j=1}^{n} \mathtt p_j^{\sigma_j} \Bigr]^{\mathtt b} = \mathtt s^{\mathtt b}, \text{ so that } \mathtt s^{\mathtt b} b^{-\mathtt b/\tau} \leq 1. \]
On the other hand, $b^{\mathtt h/\tau} \leq \mathtt s^{\mathtt K}$, or equivalently $\mathtt h \leq \tau \mathtt K \log_{b}(\mathtt s) = \mathtt K \frac{\beta_1}{\sigma_1} \log_b(\mathtt s)$ for all sufficiently large $m$ depending on $b$ and $h$. This follows from the observation that $\mathtt h \leq \log_2(h)$ is bounded above by a constant independent of $m$, whereas the relation $\sigma_{1m} \leq \log_2(\mathtt s_m)$ from \eqref{beta-sigma-inequality} dictates that 
\[ \mathtt K_m \frac{\beta_{1m}}{\sigma_{1m}} \log_{b}(\mathtt s_m) \geq \mathtt K_m \sigma_{1m}^{-1} \log_{2}(\mathtt s_m)/\log_{2}(b) \geq \mathtt K_m/\log_2(b) \rightarrow \infty. \] 
In view of \eqref{Y2-decomp}, \eqref{Y2-final-1} and \eqref{Y2-final-2}, we arrive at the estimate 
\begin{equation} \label{Y2-final} 
\mathcal Y_2 \leq C(h, b, \pmb{\Pi}) 2^{-c_0 \mathtt K_m^2}.
\end{equation} 
\subsection{Estimation of $\mathcal Y_3$}
The estimation of $\mathcal Y_3$ is similar to that of $\mathcal X_2$ in Section \ref{X2-estimation-section}, with small adjustments. Recalling the relation \eqref{N-and-R} between $N$ and $R$, and comparing the defining condition \eqref{V23-bar} of $\overline{\mathbb V}_{23}$ with the definition \eqref{def-Ell'} of $\mathbb E(\xi, \ell, \ell')$, we find that  
\begin{align} 
&\overline{\mathbb{\mathbb V}}_{23} \subseteq \bigl\{(u, v, N) \in \mathbb V_2 : u \in \mathbb E(|\xi(h,v)|, R, 0) \bigr\} \text{ with $\xi$ as in \eqref{new-xi}; as a result } \nonumber \\ 
&\# \bigl[ \pi_N \bigl( \overline{\mathbb V}_{23}\bigr) \bigr] \leq  N \sup_{v} \#\bigl[ \mathbb E(|\xi(h,v)|, R, 0) \bigr] \nonumber \\ &\hskip0.75in \leq C_0  {N}\kappa_m^{-1} 2^{-\frac{R}{4}} \log_2 (\mathtt s_m) \mathtt s_m^R b^{2 \mathtt s_m} \bigl(|\xi(h,v)|\bigr)_{\mathtt p_1} \nonumber \\
&\hskip0.75in \leq C(h) {N} \kappa_m^{-1} 2^{-\frac{R}{4}} \log_2 (\mathtt s_m) \mathtt s_m^R b^{2 \mathtt s_m}, \label{cardinality-V23-bar} 
\end{align}  
where $\pi_N(\cdot)$ is defined as in Section \ref{X1-estimation-section}.The second inequality in the display above uses the cardinality estimate \eqref{E-cardinality} for $\mathbb E$ derived in Lemma \ref{E-cardinality-lemma} with $\ell' = 0$, $\ell=R$. The inequality \eqref{cardinality-V23-bar} uses the assumption \eqref{beta1-nonzero} that $\beta_{1m} > 0$ or  equivalently $\mathtt p_1 \mid b$, as a result of which we deduce  
\begin{align*}
& \mathtt p_1 \mid b^v \text{ for all } v \in \mathbb N, \text{ which means } \mathtt p_1 \nmid (b^v-1), \text{ i.e. } (b^v-1)_{\mathtt p_1} = 1, \; \text{ and therefore } \\
&\bigl( |\xi(h,v)|\bigr)_{\mathtt p_1} = \bigl({|(h)'_{\mathtt s}|}(b^v-1)\bigr)_{\mathtt p_1} = ({|(h)'_{\mathtt s}|})_{\mathtt p_1}  (b^v-1)_{\mathtt p_1} \leq |(h)'_{\mathtt s}| \leq |h|. 
\end{align*} 
Substituting \eqref{cardinality-V23-bar} into the expression for $\mathcal Y_3$ {and using $|\mathtt v_m(\xi)| \leq C_0$ from \eqref{vm-pointwise-estimate} lead} to
\begin{align} 
\mathcal Y_3 &= \sum_{u,v, N} \Bigl\{\frac{1}{N^3}\bigl| \mathtt v_m \bigl(hb^u(b^v-1) \bigr) \bigr| \; \Bigl| \; (u, v, N) \in \overline{\mathbb V}_{23} \Bigr\} \leq C_0 \sum_{N} N^{-3} \# \bigl[\pi_N \bigl(\overline{\mathbb V}_{23} \bigr) \bigr] \nonumber \\ 
&\leq C_0 \sum_N \Bigl\{\frac{1}{N^2} \kappa_m^{-1} 2^{-\frac{R}{4}} \log_2 (\mathtt s_m) \mathtt s_m^R b^{2 \mathtt s_m} \; \Bigr| \; N > \frac{\mathtt a_m}{4} \log_2(\mathtt s_m) \Bigr\} \nonumber \\
&\leq C(h) \bigl[ \mathtt a_m \log_b(\mathtt s_m) \bigr]^{-1} \kappa_m^{-1} 2^{-c_0 \mathtt K_m^2} \log_2(\mathtt s_m) b^{2 \mathtt s_m} \leq C(h, b) 2^{-{c} \mathtt K_m^2}, \label{Y3-final}
\end{align} 
where the positive constant $c$ depends on $b$. The final inequality again follows from the rapid growth \eqref{Km-choice} of $\mathtt K_m$:
\[ \mathtt a_m^{-1} \kappa_m^{-1} 2^{-c_0 \mathtt K_m^2} b^{2 \mathtt s_m} = \kappa_m^{-1}  \mathtt s_m^{-\mathtt K_m^2} 2^{-c_0 \mathtt K_m^2 + 2 \mathtt s_m \log_2 b} \leq 2^{-{c} \mathtt K_m^2}. \] 
\subsection{Estimation of $\mathcal Y_4$} \label{Y4-estimation-section}
The procedure for estimating $\mathcal Y_4$ is similar to that for $\mathcal X_3$ in Section \ref{X3-estimation-section}. One of the defining conditions for $\overline{\mathbb V}_{24}$ dictates that $\mathtt K_m < \mathtt k(u) + \mathtt h < \mathtt b_m - R$ for $(u, v, N) \in \overline{\mathbb V}_{24}$. This means that
%This means that the integer $\mathtt k(u)$ defined in \eqref{ku} obeys $\mathtt K_m < \mathtt k(u) + \mathtt h < \mathtt b_m-R$, i.e.,
\begin{equation} \label{R-ku-h-inclusion} 
%= \lfloor u \tau_m \rfloor \geq u \tau_m -1 + \mathtt h > \mathtt K_m -1 \; \text{ and } \; \mathtt k(u) + \mathtt h = \lfloor u \tau_m \rfloor + \mathtt h \leq u \tau_m  + \mathtt h < \mathtt b_m - R;      \\
\{1, 2, \ldots, R\} \subseteq \bigl\{\mathtt K_m - \mathtt k(u) - \mathtt h, \ldots, \mathtt b_m - \mathtt k(u) - \mathtt h \bigr\}.  
\end{equation}  
The inclusion \eqref{R-ku-h-inclusion} results in a lower bound on the quantity $\mathtt J(h b^u(b^v-1); \mathtt s_m, \mathtt K_m, \mathtt b_m)$ given by \eqref{Jab}. In view of the factorization $hb^u(b^v-1) = {\mathtt s}^{\mathtt k(u) + \mathtt h} \xi (b^u)_{\mathtt s}' = \mathtt s^{\mathtt k(u) + \mathtt h} \zeta$ from \eqref{new-xi},
%\[hb^u(b^v-1) = \mathtt s^{\mathtt k(u)} \zeta \quad \text{ with } \zeta = h(b^v-1)(b^u)'_{\mathtt s} = \xi (b^u)'_{\mathtt s} \text{ and $\xi$ as in \eqref{new-xi}},  \]
%with {$\mathtt a' = \mathtt K_m$, $\mathtt k_0 = \mathtt k(u)$, $\xi$ as in \eqref{new-xi} and ${\red |\zeta| = (|\xi|)_{\mathtt s}' (b^u)_{\mathtt s}'}$. Then,} 
we find that for $(u, v) \in \overline{\mathbb V}_{24}$,
\begin{align*} 
\mathtt J &\bigl(h b^u(b^v-1); \, \mathtt s_m, \mathtt K_m, \mathtt b_m \bigr) \\  &:= \#\bigl\{\mathtt K_m - \mathtt k(u) - \mathtt h \leq j \leq \mathtt b_m -\mathtt k(u) - \mathtt h -1 : 1 \leq \mathtt d_{j-1}(|\zeta|) + \mathtt s \mathtt d_j(|\zeta|) \leq \mathtt s_m^2-2 \bigr\}  \\
&\geq \#\bigl\{1 \leq j \leq R-1: 1 \leq \mathtt d_{j-1}(|\zeta|) + \mathtt s_m \mathtt d_j(|\zeta|) \leq \mathtt s_m^2-2 \bigr\}\\ 
& = \mathfrak n_{R} \bigl(\mathbf d_R(u; {|\xi|}, b, \mathtt s_m), \mathtt s_m\bigr) > \kappa_m R, 
\end{align*} 
where $\mathbf d_R(u; {|\xi|}, b, \mathtt s_m)$ is defined as in \eqref{def-dl} and \eqref{dell}.
%{where the penultimate inequality follows from the following implication: if $(|\xi|)_{\mathtt s} := {\mathtt s}^{k_0}$ for some $k_0 \in \mathbb{N} \cup \{0\}$, then ${\red\mathbf d_j(u; {|\xi|}}, b, \mathtt s_m) = 0$ for all $0\le j\le k_0-1$. }
Substituting this into the estimate for $\mathtt v_m$ given by \eqref{corollary-composite-estimate} in Corollary \ref{corollary-composite} leads to the following bound for $\mathcal Y_4$:
\begin{align}
\mathcal Y_4 &= \sum_{u, v, N} \Bigl\{\frac{1}{N^3}\bigl| \mathtt v_m \bigl( hb^u(b^v-1) \bigr) \bigr| \; \Bigl| \; (u, v, N) \in \overline{\mathbb V}_{24} \Bigr\} \nonumber \\ 
&\leq C_0 \sum_{u, v, N} \Biggl\{ \frac{1}{N^3} \Bigl[ \frac{\mathtt s_m^{\mathtt a_m - \mathtt K_m}}{\mathtt N_m} + \Bigl( 1 - \frac{c_0}{\mathtt s_m^5}\Bigr)^{\kappa_m R} \Bigr] \; \Biggl| \; \begin{aligned} &(u, v) \in \mathbb Z_N^2, \\ &\frac{\mathtt a_m}{4}  {\log_b (\mathtt s_m) } < N < m \mathtt b_m \log_b(\mathtt s_m) \end{aligned} \Biggr\} \nonumber \\
&\leq C_0 \sum_{N, R} \Biggl\{\frac{1}{N} \Bigl[ \frac{\mathtt s_{m-1}^{\mathtt b_{m-1}}}{\mathtt s_m^{\mathtt K_m}} + \Bigl( 1 - \frac{c_0}{\mathtt s_m^5}\Bigr)^{\kappa_m R}\Bigr] \; \Biggl| \; \begin{aligned} &\mathtt s_m^{R-1} \leq N < \mathtt s_m^R \\  &\frac{\mathtt K_m^2}{2} < R < 2\mathtt K_m^2 \end{aligned} \Biggr\} \nonumber \\
&\leq C_0 {\log(\mathtt s_m)} \sum_{R} \Bigl\{\frac{\mathtt s_{m-1}^{\mathtt b_{m-1}}}{\mathtt s_m^{\mathtt K_m}} + \Bigl( 1 - \frac{c_0}{\mathtt s_m^5}\Bigr)^{\kappa_m R} \; \Bigl| \; \frac{\mathtt K_m^2}{2} < R < 2\mathtt K_m^2 \Bigr\} \nonumber \\
&\leq C_0 {\log(\mathtt s_m)} \Bigl[ \mathtt K_m^2 \frac{\mathtt s_{m-1}^{\mathtt b_{m-1}}}{\mathtt s_m^{\mathtt K_m}} + \kappa_m^{-1} \mathtt s_m^5 \Bigl( 1 - \frac{c_0}{\mathtt s_m^5}\Bigr)^{\kappa_m \mathtt K_m^2/4} \Bigr] \leq C_0 2^{-c_0 \mathtt K_m}.  \label{Y4-final}
\end{align} 
The calculations leading to the last inequality are identical to \eqref{X3-final} and have been described in \eqref{theta-sum}--\eqref{X3-verification-2}. 
\vskip0.1in
\noindent Combining the inequalities \eqref{Y1-final}, \eqref{Y2-final}, \eqref{Y3-final} and \eqref{Y4-final} establishes the claim \eqref{Y1+Y2+Y3+Y4} in the statement of Lemma \ref{Yj-lemma}.

\Addresses
\end{document}